\documentclass[10pt,leqno]{article}
\usepackage{graphicx}
\baselineskip=16pt

\usepackage{indentfirst,csquotes}

\topmargin= .5cm
\textheight= 20cm
\textwidth= 32cc
\baselineskip=16pt

\evensidemargin= .9cm
\oddsidemargin= .9cm

\usepackage[english]{babel}
\usepackage{tikz}
\usepackage{multicol}
\usepackage{wrapfig}
\usepackage{quiver}
\usepackage{amscd,epsfig}
\usepackage{hyperref}

%%%%%%%%%%%% Nouvelles braces 

\makeatletter
\DeclareFontEncoding{LS2}{}{\@noaccents}
\makeatother
\DeclareFontSubstitution{LS2}{stix}{m}{n}

\DeclareSymbolFont{largesymbolsstix}{LS2}{stixex}{m}{n}

\DeclareMathDelimiter{\llbrace}{\mathopen}{largesymbolsstix}{"E8}{largesymbolsstix}{"0E}
\DeclareMathDelimiter{\rrbrace}{\mathclose}{largesymbolsstix}{"E9}{largesymbolsstix}{"0F}

%%%%%%%%%%%%%

\usepackage{stmaryrd}
\usepackage{epigraph}
\usepackage{geometry}
\usepackage{bbold}
\usepackage{tikz-cd}
\usepackage[style=alphabetic]{biblatex}
\usepackage{authblk}
\addbibresource{sample.bib}

\usepackage{amssymb,amsthm,amsmath}
\usepackage{xcolor,paralist,fancyhdr,etoolbox}

\title{Pre-Lie algebras up to homotopy with divided powers and homotopy of operadic mapping spaces} %%%%%%%%%%%%
\date{\today}
\author{Marvin VERSTRAETE}

\AtEndDocument{%
  \par
  \medskip
  \begin{tabular}{@{}l@{}}%
    \textsc{Univ. Lille, CNRS, UMR 8524 - Laboratoire Paul Painlevé, F-59000 Lille, France}\\
    \textit{E-mail address}: \texttt{marvin.verstraete@univ-lille.fr}
  \end{tabular}}

\begin{document}

\newtheorem{defi}{Definition}[section]
\newtheorem{thm}[defi]{Theorem}
\newtheorem{thmA}{Theorem}
\newtheorem*{thm*}{Theorem}
\newtheorem{cor}[defi]{Corollary}
\newtheorem{remarque}[defi]{Remark}
\newtheorem{example}[defi]{Example}
\newtheorem{lm}[defi]{Lemma}
\newtheorem{prop}[defi]{Proposition}
\newtheorem{algo}[defi]{Algorithme}
\newtheorem{cons}[defi]{Construction}
\newtheorem{propdef}[defi]{Proposition-Definition}

\setcounter{tocdepth}{2}

\renewcommand*{\thethmA}{\Alph{thmA}}

\maketitle

\newcommand{\antishriek}{\mbox{\footnotesize{\rotatebox[origin=c]{180}{$!$}}}}
\newcommand{\dperm}{\Delta_{\text{\normalfont Perm}}}
\newcommand{\dgperm}{\Delta_{\Gamma\text{\normalfont Perm}}}

\newcommand{\dgMod}{$\text{dg}\widehat{\text{Mod}}_\mathbb{K}$}

\newcommand{\biduledroite}{
    $\displaystyle\left( \underline{01}+ \underline{12}+\sum_{k\geq 1}\begin{tikzpicture}[baseline={([yshift=-.5ex]current bounding box.center)},scale=1]
    \node (ix) at (0,0) {$ \underline{01}$};
    \node (1x) at (-1,1) {$ \underline{12}$};
    \node (2x) at (0,1) {$\overset{k}{\cdots}$};
    \node (3x) at (1,1) {$\underline{12}$};
    \draw (ix) -- (1x);
    \draw (ix) -- (3x);
    \end{tikzpicture}\right)$}

\newcommand{\biduledroitebis}{
    $\displaystyle\left(\underline{01}+ \underline{12}+\sum\begin{tikzpicture}[baseline={([yshift=-.5ex]current bounding box.center)},scale=1]
    \node (ix) at (0,0) {$ \underline{01}$};
    \node (1x) at (-1,1) {$ \underline{12}$};
    \node (2x) at (0,1) {${\cdots}$};
    \node (3x) at (1,1) {$\underline{12}$};
    \draw (ix) -- (1x);
    \draw (ix) -- (3x);
    \end{tikzpicture}\right)$}

\DeclareRobustCommand\longtwoheadrightarrow
     {\relbar\joinrel\twoheadrightarrow}

\let\thefootnote\relax

\begin{abstract}
    \noindent The purpose of this memoir is to study pre-Lie algebras up to homotopy with divided powers, and to use this algebraic structure for the study of mapping spaces in the category of operads. We define a new notion of algebra called $\Gamma\Lambda\mathcal{PL}_\infty$-algebra which characterizes the notion of $\Gamma(\mathcal{P}re\mathcal{L}ie_\infty,-)$-algebra. We also define a notion of a Maurer-Cartan element in complete $\Gamma\Lambda\mathcal{PL}_\infty$-algebras which generalizes the classical definition in Lie algebras. We prove that for every complete brace algebra $A$, and for every $n\geq 0$, the tensor product $ A\otimes\Sigma N^*(\Delta^n)$ is endowed with the structure of a complete $\Gamma\Lambda\mathcal{PL}_\infty$-algebra, and define the simplicial Maurer-Cartan set $\mathcal{MC}_\bullet(A)$ associated to $A$ as the Maurer-Cartan set of $ A\otimes\Sigma N^*(\Delta^\bullet)$. We compute the homotopy groups of this simplicial set, and prove that the functor $\mathcal{MC}_\bullet(-)$ satisfies a homotopy invariance result, which extends the Goldman-Millson theorem in dimension $0$. As an application, we give a description of mapping spaces in the category of non-symmetric operads in terms of this simplicial Maurer-Cartan set. We establish a generalization of the latter result for symmetric operads.
\end{abstract}

\tableofcontents

\section*{Introduction}
\phantomsection
\addcontentsline{toc}{section}{Introduction}

The usual category of topological spaces comes equipped with a functor $\text{\normalfont Map}_{\mathcal{T}op}(-,-):\mathcal{T}op^{op}\times\mathcal{T}op\longrightarrow\text{\normalfont sSet}$ which endows $\mathcal{T}op$ with the structure of a simplicial category (see for instance \cite[$\mathsection$2.1.1]{fresselivre}). This functor can be used in order to handle higher homotopies in the category $\mathcal{T}op$. For every topological spaces $X,Y$, the connected components of $\text{\normalfont Map}_{\mathcal{T}op}(X,Y)$ are in bijection with homotopy classes of morphisms $X\longrightarrow Y$, while the homotopy groups encode higher homotopy relations. This approach allows us to use tools from algebraic topology in order to study homotopy morphisms from $X$ to $Y$. The functor $\text{\normalfont Map}_{\mathcal{T}op}(-,-)$ is defined as follows. For every $X\in\mathcal{T}op$, we define two functors $X\otimes -:\text{\normalfont sSet}\longrightarrow\mathcal{T}op$ and $X^-:\text{\normalfont sSet}^{op}\longrightarrow\mathcal{T}op$ by
$$X\otimes K:=X\times|K|\ ;\ X^K:=\text{\normalfont Mor}_{\mathcal{T}op}(|K|,X),$$

\noindent for every $X\in\mathcal{T}op$ and $K\in\text{\normalfont sSet}$, where $|K|\in\mathcal{T}op$ denotes the geometric realization of the simplicial set $K$. For every $X,Y\in\mathcal{T}op$ and $K\in\text{\normalfont sSet}$, we have the isomorphism
$$\text{\normalfont Mor}_{\mathcal{T}op}(X\otimes K,Y)\simeq\text{\normalfont Mor}_{\mathcal{T}op}(X,Y^K).$$

\noindent We then define $\text{\normalfont Map}_{\mathcal{T}op}(X,Y)$ as the simplicial set $\text{\normalfont Mor}_{\mathcal{T}op}(X\otimes\Delta^\bullet,Y)$, where, for every $n\geq 0$, we denote by $\Delta^n$ the fundamental $n$-simplex.\\

In a general model category $C$, we have an analogue of the functors $X\otimes -$ and $X^-$. Such functors are defined by giving the image of $\Delta^n$ for every $n\geq 0$. The cosimplicial set $X\otimes\Delta^\bullet$ is called a \textit{cosimplicial frame} associated to $X$, while $X^{\Delta^\bullet}$ is called a \textit{simplicial frame} associated to $X$ (see for instance \cite[$\mathsection$3.2.2,$\mathsection$3.2.7]{fresselivre}). However, we only have a zig-zag of weak-equivalences of simplicial sets between $\text{\normalfont Mor}_C(X\otimes\Delta^\bullet,Y)$ and $\text{\normalfont Mor}_C(X,Y^{\Delta^\bullet})$ instead of an isomorphism, provided that $X$ is cofibrant and $Y$ is fibrant. This still allows us to construct a simplicial set $\text{\normalfont Map}_C(X,Y)$, which is unique up to a zig-zag of weak-equivalences. As in $\mathcal{T}op$, the connected components of $\text{\normalfont Map}_C(X,Y)$ are in bijection with homotopy classes of morphisms $X\longrightarrow Y$.\\

In this memoir, we provide an approach in order to study the homotopy of such mapping spaces in the category of non-symmetric operads and in the category of symmetric operads, where in both cases we consider operads defined in the category of differential graded $\mathbb{K}$-modules (dg $\mathbb{K}$-modules for short).\\

The category of operads in dg $\mathbb{K}$-modules inherits a model structures (see \cite{hinich1997homological} and \cite{hinicherratum}). Therefore, we can use the above theory to construct mapping spaces in the category of operads. We aim to give an explicit description of these mapping spaces, extending known results in characteristic zero to the case where the ground ring $\mathbb{K}$ is a field of positive characteristic. More precisely, we describe these mapping spaces as simplicial Maurer-Cartan spaces associated to some pre-Lie algebra up to homotopy with divided powers, a notion that we also define and study in this memoir.\\

We review the known results in characteristic $0$ before explaining our results in details in positive characteristic.

\subsection*{State-of-the-art in characteristic $0$}
\phantomsection
\addcontentsline{toc}{subsection}{State-of-the-art in characteristic $0$}

A comprehensive study of the homotopy type of a mapping spaces in the category of symmetric operads has already been done in the case $char(\mathbb{K})=0$. Let $\mathcal{C}$ be a coaugmented connected cooperad and $\mathcal{P}$ be an augmented connected operad. The computation of the homotopy groups can be deduced from a description of a mapping space $\text{\normalfont Map}_{\Sigma \mathcal{O}p^0}(B^c(\mathcal{C}),\mathcal{P})$ given in \cite{yalin}, in the context of properads. For this purpose, we use an explicit simplicial frame $\mathcal{P}^{\Delta^\bullet}$ associated to the operad $\mathcal{P}$, given by:
$$\mathcal{P}^{\Delta^\bullet}:=\mathcal{P}\otimes\Omega^\ast(\Delta^\bullet),$$

\noindent where, for every $n\geq 0$, we denote by $\Omega^\ast(\Delta^n)$ the Sullivan algebra of de Rham polynomial forms on $\Delta^n$ (see for instance \cite[$\mathsection$2.1]{bousfield}). The $n$-simplices of $\text{\normalfont Map}_{\Sigma \mathcal{O}p^0}(B^c(\mathcal{C}),\mathcal{P})$ then correspond to elements in $\text{\normalfont Hom}_{\Sigma\text{\normalfont Seq}_\mathbb{K}}(\overline{\mathcal{C}},\overline{\mathcal{P}})\widehat{\otimes}\Omega^\ast(\Delta^n)$ which satisfy some equations, where $\widehat{\otimes}$ is the complete tensor product associated to the complete filtered dg modules $\text{\normalfont Hom}_{\Sigma\text{\normalfont Seq}_\mathbb{K}}(\overline{\mathcal{C}},\overline{\mathcal{P}})$ and $\Omega^\ast(\Delta^n)$. These equations can be written by using the Lie algebra structure on $\text{\normalfont Hom}_{\text{\normalfont Seq}_\mathbb{K}}(\overline{\mathcal{C}},\overline{\mathcal{P}})$ induced by its pre-Lie algebra structure (see for instance \cite[$\mathsection$6.4.4]{loday} for a definition of the pre-Lie product). We recall the definitions in the paragraphs to follow.\\

Recall that if $L$ is a complete Lie algebra, then a Maurer-Cartan element is an element $\tau\in L_{-1}$ such that
$$d(\tau)+\frac{1}{2}[\tau,\tau]=0.$$

\noindent We denote by $\mathcal{MC}(L)$ the set of Maurer-Cartan elements in $L$. Note that every $\tau\in\mathcal{MC}(L)$ induces a differential $d_\tau$ defined by
$$d_\tau(x)=d(x)+[x,\tau].$$

\noindent We let $L^\tau$ be the dg $\mathbb{K}$-module $L$ endowed with the differential $d_\tau$. Using that $\Omega^\ast(\Delta^n)$ is endowed with the structure of a commutative algebra for every $n\geq 0$, the dg $\mathbb{K}$-module $L\widehat{\otimes}\Omega^\ast(\Delta^n)$ is endowed with the structure of a Lie algebra. We define the \textit{simplicial Maurer-Cartan set} associated to $L$ as
$$\mathcal{MC}_\bullet(L)=\mathcal{MC}(L\widehat{\otimes}\Omega^\ast(\Delta^\bullet)).$$

\noindent From \cite[Theorem 3.12]{yalin}, for every coaugmented cooperad $\mathcal{C}$, and for every augmented operad $\mathcal{P}$, we obtain the following description:
$$\text{\normalfont Map}_{\Sigma \mathcal{O}p^0}(B^c(\mathcal{C}),\mathcal{P})=\mathcal{MC}_\bullet(\text{\normalfont Hom}_{\Sigma\text{\normalfont Seq}_\mathbb{K}}(\overline{\mathcal{C}},\overline{\mathcal{P}})).$$

The computation of the homotopy groups of the simplicial set $\text{\normalfont Map}_{\Sigma \mathcal{O}p}(B^c(\mathcal{C}),\mathcal{P})$ can be deduced from the general computations of the homotopy groups of $\mathcal{MC}_\bullet(L)$ associated to a given complete Lie algebra $L$. These computations have been made in \cite[Theorem 1.1]{berglund}. Explicitly, if $L$ is a complete Lie algebra, then, for every $\tau\in\mathcal{MC}(L)$ and $k\geq 0$, we have the isomorphism
$$\pi_{k+1}(\mathcal{MC}_\bullet(L),\tau)\simeq H_k(L^\tau),$$

\noindent where $H_0(L^\tau)$ is endowed with the group structure $BCH$ given by the Baker-Campbell-Hausdorff formula.\\

The computation of the connected components of $\text{\normalfont Map}_{\Sigma \mathcal{O}p}(B^c(\mathcal{C}),\mathcal{P})$ can be achieved by using the pre-Lie deformation theory developed in \cite{dotsenko}. Recall that a pre-Lie algebra is a dg $\mathbb{K}$-module $L$ endowed with a linear morphism $\star:L\otimes L\longrightarrow L$ such that
$$(x\star y)\star z-x\star (y\star z)=(-1)^{|y||z|}((x\star z)\star y-x\star (z\star y)).$$

\noindent In particular, any pre-Lie algebra $L$ is endowed with the structure of a Lie algebra with the bracket $[x,y]=x\star y-(-1)^{|x||y|}y\star x$. In \cite{dotsenko}, the author generalized the Lie deformation theory to the pre-Lie context. Explicitly, a Maurer-Cartan element $\tau$ in a pre-Lie algebra $L$ is an element $\tau\in L_{-1}$ such that
$$d(\tau)+\tau\star\tau=0.$$

The gauge group $(L_0,BCH,0)$ can also be written in terms of pre-Lie operations. We consider the subset $1+L_0\subset\mathbb{K}\oplus L$. Under some convergence hypothesis, we define the circular product $\circledcirc:L\times (1+L_0)\longrightarrow L$ by
$$x\circledcirc(1+y)=\sum_{n\geq 0}\frac{1}{n!}x\{\underbrace{y,\ldots,y}_n\},$$

\noindent for every $x\in L$ and $y\in L_0$, where we denote by $-\{-,\ldots,-\}$ the symmetric brace operations associated to $L$ (see \cite{oudom} or \cite{lada}). We can restrict this product to an operation on $1+L_0$ defined by
$$(1+x)\circledcirc (1+y)=1+y+\sum_{n\geq 0}\frac{1}{n!}x\{\underbrace{y,\ldots,y}_n\}$$

\noindent for every $x,y\in L_0$. Then the triple $(1+L_0,\circledcirc,1)$ is a group isomorphic to the gauge group (see \cite[Theorem 2]{dotsenko}). The group $(1+L_0,\circledcirc,1)$ also acts on $\mathcal{MC}(L)$ via
$$(1+\mu)\cdot\tau=(\tau+\mu\star\tau-d(\tau))\circledcirc(1+\mu)^{\circledcirc-1}.$$

\noindent We define the Deligne groupoid $\text{\normalfont Deligne}(L)$ as the category with $\mathcal{MC}(L)$ as set of objects, and $(1+L_0,\circledcirc,1)$ as hom-sets.\\

\noindent Using a cylinder object associated to $B^c(\mathcal{C})$ (see \cite[Theorem 3.2.14]{fresselivre}) and \cite[Corollary 2]{dotsenko}, we obtain a bijection
$$\pi_0\text{\normalfont Map}_{\Sigma\mathcal{O}p^0}(B^c(\mathcal{C}),\mathcal{P})\simeq \pi_0\text{\normalfont Deligne}(\text{\normalfont Hom}_{\Sigma\text{\normalfont Seq}_\mathbb{K}}(\overline{\mathcal{C}},\overline{\mathcal{P}})),$$

\noindent where the right hand-side denotes the set of isomorphism classes of $\text{\normalfont Deligne}(\text{\normalfont Hom}_{\Sigma\text{\normalfont Seq}_\mathbb{K}}(\overline{\mathcal{C}},\overline{\mathcal{P}}))$.

\subsection*{Objectives and Results}
\phantomsection
\addcontentsline{toc}{subsection}{Objective and Results}

If $char(\mathbb{K})>0$, then the simplicial set $\mathcal{P}\otimes\Omega^\ast(\Delta^\bullet)$ given in \cite{yalin} is no longer a simplicial frame associated to $\mathcal{P}$, as the cohomology of $\Omega^\ast(\Delta^n)$ is not $0$ for every $n\geq 0$.\\

The first description of $\pi_0\text{\normalfont Map}_{\Sigma\mathcal{O}p^0}(B^c(\mathcal{C}),\mathcal{P})$ has been generalized to the positive characteristic context in \cite{moi} by using a $\Gamma(\mathcal{P}re\mathcal{L}ie,-)$-algebra structure on $\text{\normalfont Hom}_{\Sigma\text{\normalfont Seq}_\mathbb{K}}(\overline{\mathcal{C}},\overline{\mathcal{P}})$. Recall briefly that a $\Gamma(\mathcal{P}re\mathcal{L}ie,-)$-algebra is a dg module endowed with operations $-\{-,\ldots,-\}_{r_1,\ldots,r_n}$, defined for every integers $r_1,\ldots,r_n\geq 0$, and which mimic the operations
$$x\{y_1,\ldots,y_n\}_{r_1,\ldots,r_n}=\frac{1}{\prod_i r_i!}x\{\underbrace{y_1,\ldots,y_1}_{r_1},\ldots,\underbrace{y_n,\ldots,y_n}_{r_n}\}.$$

\noindent This notion has been studied in the non-graded context in \cite{cesaro}, and generalized to the graded context in \cite{moi}. Following the formulas of \cite{dotsenko}, the pre-Lie deformation theory can be generalized to a deformation theory controlled by $\Gamma(\mathcal{P}re\mathcal{L}ie,-)$-algebras, which is valid over a ring with positive characteristic. For every $\Gamma(\mathcal{P}re\mathcal{L}ie,-)$-algebra $L$, we thus have a notion of Deligne groupoid $\text{\normalfont Deligne}(L)$ (see \cite[Proposition-Definition 2.30]{moi}). Using a $\Gamma(\mathcal{P}re\mathcal{L}ie,-)$-algebra structure on $\text{\normalfont Hom}_{\Sigma\text{\normalfont Seq}_\mathbb{K}}(\overline{\mathcal{C}},\overline{\mathcal{P}})$ (see \cite[Corollary 2.18]{moi}), we retrieve, by \cite[Theorem 3.6]{moi}, a bijection
$$\pi_0\text{\normalfont Map}_{\Sigma\mathcal{O}p^0}(B^c(\mathcal{C}),\mathcal{P})\simeq \pi_0\text{\normalfont Deligne}(\text{\normalfont Hom}_{\Sigma\text{\normalfont Seq}_\mathbb{K}}(\overline{\mathcal{C}},\overline{\mathcal{P}})).$$

In this memoir, we construct an explicit cosimplicial frame associated to $B^c(\mathcal{C})$, in the case where $\mathcal{C}$ is a non-symmetric cooperad. Explicitly, for ever $n\geq 0$, we construct a twisting derivation $\partial^n$ on the operad $\mathcal{F}(\overline{\mathcal{C}}\otimes\Sigma^{-1}N_*(\Delta^n))$ such that
$$B^c(\mathcal{C})\otimes\Delta^\bullet:=(\mathcal{F}(\overline{\mathcal{C}}\otimes\Sigma ^{-1} N_*(\Delta^\bullet)),\partial^\bullet)$$

\noindent is a cosimplicial frame associated to $B^c(\mathcal{C})$ where $N_*(\Delta^n)$ is the normalized chain complex of the simplicial set $\Delta^n$. The $n$-simplices of a mapping space from $B^c(\mathcal{C})$ to $\mathcal{P}$ can then be identified with elements of $\text{\normalfont Hom}_{\text{\normalfont Seq}_\mathbb{K}}(\overline{\mathcal{C}},\overline{\mathcal{P}})\otimes\Sigma N^*(\Delta^n)$ which satisfy some equations. Our purpose is to interpret these equations as Maurer-Cartan equations. Our main ideas are the following. We deal with  $\Gamma(\mathcal{P}re\mathcal{L}ie_\infty,-)$-algebra structures, where $\mathcal{P}re\mathcal{L}ie_\infty$ denotes the operad that governs pre-Lie algebras up to homotopy. The key point is that if $A$ is a brace algebra and if $N$ is an algebra over the Barratt-Eccles operad $\mathcal{E}$, then $A\otimes N$ is a $\Gamma(\mathcal{P}re\mathcal{L}ie_\infty,-)$-algebra. Using this result with $A=\text{\normalfont Hom}_{\text{\normalfont Seq}_\mathbb{K}}(\overline{\mathcal{C}},\overline{\mathcal{P}})$ (which is a brace algebra by \cite[Proposition 6.4.2]{loday} and \cite[Proposition 1]{gerstenhaber2}) and $N=N^*(\Delta^n)$ (see \cite{fresseberger}) precisely give the desired equations.\\

% The main objective of this memoir is to study the structure of $\Gamma(\mathcal{P}re\mathcal{L}ie_\infty,-)$-algebras, and to describe an explicit mapping space in the categories of symmetric and non-symmetric operads in terms of $\Gamma(\mathcal{P}re\mathcal{L}ie_\infty,-)$ operations. We also study the homotopy type of our mapping space in the category of non-symmetric operads by describing it in terms of a simplicial Maurer-Cartan set associated to some complete brace algebra, that we also aim to define.\\

The $\mathcal{P}re\mathcal{L}ie_\infty$-algebras, also called \textit{pre-Lie algebras up to homotopy}, have been studied in \cite{chapoton}. The author characterized the data of a $\mathcal{P}re\mathcal{L}ie_\infty$-algebra structure on $L$ as the data of brace operations which satisfy some identities. We denote these brace operations by $-\llbrace-,\ldots,-\rrbrace$ in this memoir, and we assume that these operations defined on the suspension $\Sigma L$. As for the study of the monad $\Gamma(\mathcal{P}re\mathcal{L}ie,-)$ in \cite{cesaro}, we prove that giving a $\Gamma(\mathcal{P}re\mathcal{L}ie_\infty,-)$-algebra structure on $L$ is equivalent to giving weighted brace operations $-\llbrace-,\ldots,-\rrbrace_{r_1,\ldots,r_n}$ on the suspension $\Sigma  L$ which are similar to the operations
$$x\llbrace y_1,\ldots,y_n\rrbrace_{r_1,\ldots,r_n}=\frac{1}{\prod_i r_i!}x\llbrace\underbrace{y_1,\ldots,y_1}_{r_1},\ldots,\underbrace{y_n,\ldots,y_n}_{r_n}\rrbrace.$$

\noindent We give an other characterization of such objects that will emphasize a notion of $\infty$-morphism. For any graded $\mathbb{K}$-module $V$, we set
$$\Gamma\text{\normalfont Perm}^c(V)=\bigoplus_{n\geq 0} V\otimes (V^{\otimes n})^{\Sigma _n}.$$

\noindent We prove that $\Gamma\text{\normalfont Perm}^c(V)$ is endowed with a coproduct $\dgperm$ which, in some sense, is compatible with the coproduct defined in \cite[$\mathsection$2.3]{chapoton} on $\text{\normalfont Perm}^c(V)$. We then define the category $\Gamma\Lambda\mathcal{PL}_\infty$ formed by pairs $(V,Q)$ where $V$ is a graded $\mathbb{K}$-module and $Q$ a coderivation on $\Gamma\text{\normalfont Perm}^c(V)$ of degree $-1$ such that $Q^2=0$. A morphism in $\Gamma\Lambda\mathcal{PL}_\infty$, also called an $\infty$-morphism, is a morphism of coalgebras which preserve the coderivations. We prove that $L$ is a $\Gamma(\mathcal{P}re\mathcal{L}ie_\infty,-)$-algebra if and only if $\Sigma  L\in\Gamma\Lambda\mathcal{PL}_\infty$. We also define a notion of (complete) filtered $\Gamma\Lambda\mathcal{PL}_\infty$-algebra. The category of complete $\Gamma\Lambda\mathcal{PL}_\infty$-algebras is denoted by $\widehat{\Gamma\Lambda\mathcal{PL}_\infty}$.\\

Given an object $V\in\widehat{\Gamma\Lambda\mathcal{PL}_\infty}$, a Maurer-Cartan element is a degree $0$ element $x\in V$ such that
$$d(x)+\sum_{n\geq 1}x\llbrace x\rrbrace_n=0.$$

\noindent We denote by $\mathcal{MC}(V)$ the set formed by these objects. We prove that any $\infty$-morphism $\phi:V\rightsquigarrow W$ induces a map
$$\mathcal{MC}(\phi):\mathcal{MC}(V)\longrightarrow\mathcal{MC}(W)$$

\noindent so that $\mathcal{MC}:\widehat{\Gamma\Lambda\mathcal{PL}_\infty}\longrightarrow\text{\normalfont Set}$ is a functor.\\

The motivation for using $\Gamma(\mathcal{P}re\mathcal{L}ie_\infty,-)$-algebras is given by the following theorem.

\begin{thmA}\label{theoremD}
    Let $\mathcal{B}race$ be the operad which governs brace algebras (see \cite[Proposition 2]{chapotonbis}). There exists an operad morphism $\mathcal{P}re\mathcal{L}ie_\infty\longrightarrow \mathcal{B}race\underset{\text{\normalfont H}}{\otimes} \mathcal{E}$ which fits in a commutative square
    % https://q.uiver.app/#q=WzAsNCxbMCwwLCJcXG1hdGhjYWx7UH1yZVxcbWF0aGNhbHtMfWllX1xcaW5mdHkiXSxbMSwwLCJcXG1hdGhjYWx7Qn1yYWNlXFx1bmRlcnNldHtcXHRleHR7XFxub3JtYWxmb250IEh9fXtcXG90aW1lc31cXG1hdGhjYWx7RX0iXSxbMSwxLCJcXG1hdGhjYWx7Qn1yYWNlIl0sWzAsMSwiXFxtYXRoY2Fse1B9cmVcXG1hdGhjYWx7TH1pZSJdLFswLDFdLFsxLDJdLFswLDNdLFszLDJdXQ==
\[\begin{tikzcd}
	{\mathcal{P}re\mathcal{L}ie_\infty} & {\mathcal{B}race\underset{\text{\normalfont H}}{\otimes}\mathcal{E}} \\
	{\mathcal{P}re\mathcal{L}ie} & {\mathcal{B}race}
	\arrow[from=1-1, to=1-2]
	\arrow[from=1-1, to=2-1]
	\arrow[from=1-2, to=2-2]
	\arrow[from=2-1, to=2-2]
\end{tikzcd}.\]
\end{thmA}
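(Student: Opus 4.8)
The plan is to realise the sought morphism as a lift. Write $p\colon\mathcal{P}re\mathcal{L}ie_\infty\to\mathcal{P}re\mathcal{L}ie$ for the canonical projection attached to the resolution, $j\colon\mathcal{P}re\mathcal{L}ie\to\mathcal{B}race$ for the morphism sending the pre-Lie product to the binary brace $-\{-\}$ (the fact that $-\{-\}$ satisfies the pre-Lie relation is part of the brace axioms), and $q\colon\mathcal{B}race\underset{\mathrm{H}}{\otimes}\mathcal{E}\to\mathcal{B}race$ for the morphism induced by the augmentation $\epsilon\colon\mathcal{E}\to\mathcal{C}om$ together with the identification $\mathcal{B}race\underset{\mathrm{H}}{\otimes}\mathcal{C}om\cong\mathcal{B}race$ (which holds because $\mathcal{C}om(n)=\mathbb{K}$ in each arity). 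The square in the statement commutes exactly when the top arrow $\phi$ satisfies $q\circ\phi=j\circ p$, so it suffices to construct a lift of the composite $j\circ p$ along $q$.

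First I would check that $q$ is an arity-wise surjective quasi-isomorphism. In each arity $\mathcal{E}(n)=N_*(E\Sigma_n)$ is acyclic over $\mathbb{K}$, so $\epsilon$ is a surjective quasi-isomorphism; since $\mathcal{B}race(n)$ is a free $\mathbb{K}$-module, tensoring preserves both surjectivity and quasi-isomorphism, whence $q$ is a surjective quasi-isomorphism, i.e. an acyclic fibration of dg operads. Its fibre in arity $n$ is the complex $\mathcal{B}race(n)\otimes\widetilde{\mathcal{E}}(n)$, which is acyclic. The heart of the argument is then the construction of the lift. Recall that $\mathcal{P}re\mathcal{L}ie_\infty$ is quasi-free: it is of the form $(\mathcal{F}(X),\partial)$, where $X$ is the desuspension of the reduced Koszul dual cooperad of $\mathcal{P}erm$ (equivalently, $X$ is spanned by Chapoton's brace generators \cite{chapoton}), and the twisting differential $\partial$ sends the arity-$n$ generators into the sub-operad generated in arities $<n$. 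I would therefore define $\phi$ on generators by induction on the arity: having defined $\phi$ compatibly on all generators of arity $<n$, the element $\phi(\partial x)$ attached to a generator $x$ of arity $n$ is a cycle, and lifting $x$ amounts to solving $\partial\,\phi(x)=\phi(\partial x)$ together with $q\,\phi(x)=j\,p(x)$. Since $q$ is surjective this reduces, by the usual obstruction argument, to showing that a prescribed cycle in the fibre $\mathcal{B}race(n)\otimes\widetilde{\mathcal{E}}(n)$ is a boundary, and that this can be arranged $\Sigma_n$-equivariantly over the whole generating module $X(n)$.

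The step I expect to be the real obstacle is precisely this equivariant solvability in positive characteristic, where one can no longer average to enforce invariance. The key point that makes it go through is that $\mathcal{E}(n)$ is a complex of \emph{free} $\mathbb{K}[\Sigma_n]$-modules---this is exactly the defining feature of the Barratt--Eccles operad as an $E_\infty$-operad---so $\mathcal{B}race(n)\otimes\widetilde{\mathcal{E}}(n)$, equipped with the diagonal action, is again $\Sigma_n$-free and acyclic. A bounded-below acyclic complex of projective $\mathbb{K}[\Sigma_n]$-modules admits equivariant contracting data, so every equivariant cycle bounds equivariantly and the inductive lift exists over any field; this is what replaces the characteristic-zero semisimplicity implicit in \cite{yalin,dotsenko}. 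Commutativity of the square is then automatic from the identity $q\circ\phi=j\circ p$.

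I would finally remark that the lift produced this way is not canonical. If one needs the explicit formulas that later yield the Maurer--Cartan equations, one instead writes down the images of the generators by hand---the binary generator mapping to $\beta_2\otimes e_2$ with $e_2\in\mathcal{E}(2)_0$ the product, and the higher generators to sums of brace operations tensored with the positive-degree chains of $\mathcal{E}$ in the sense of \cite{fresseberger}---and checks the defining $\mathcal{P}re\mathcal{L}ie_\infty$-relations against the Barratt--Eccles differential directly. That route bypasses the abstract induction but trades it for a genuinely combinatorial verification, identifying the decomposition of the brace compositions $\beta_k\circ_i\beta_l$ in $\mathcal{B}race(n)$ with the boundary of the chosen chains in $\mathcal{E}(n)$; this matching is the more laborious part of the explicit approach.
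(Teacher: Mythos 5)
Your argument is correct in outline but takes a genuinely different route from the paper. The paper does not lift $j\circ p$ along $q\colon\mathcal{B}race\underset{\text{\normalfont H}}{\otimes}\mathcal{E}\to\mathcal{B}race$; it factors the sought morphism as $\mathcal{P}re\mathcal{L}ie_\infty\to\mathcal{B}race\underset{\text{\normalfont H}}{\otimes}B^c(\Lambda^{-1}\mathcal{B}race^\vee)\to\mathcal{B}race\underset{\text{\normalfont H}}{\otimes}\mathcal{E}$, where the first arrow is given by the explicit Maurer--Cartan element $f(e_1^n)=-\sum_{r(T)=1}T\otimes\Sigma^{-1}T^\vee$ of Lemma \ref{morph2}, and the second is induced by an explicit lift $B^c(\Lambda^{-1}\mathcal{B}race^\vee)\to\mathcal{E}$ of the augmentation to $\mathcal{C}om$, built in Construction \ref{consmu} and Theorem \ref{mu} by induction on $|T|$ using the contracting homotopy $h_\mathcal{E}^{\sigma_T}$ attached to the canonical-order permutation of each planar tree. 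Your version is shorter for bare existence, but the paper needs the closed formulas for the elements $\mu_T$: they drive the computation of the twisting coderivations $\partial^n$ in $\mathsection$\ref{sec:232} and hence every explicit Maurer--Cartan equation in the rest of the memoir. Your closing remark correctly anticipates this trade-off.

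Two points in your obstruction argument deserve care, though both are repairable. First, no model-categorical lifting property is available here: $\text{\normalfont Perm}(n)\cong\mathbb{K}[\Sigma_n/\Sigma_{n-1}]$ is not a projective $\mathbb{K}[\Sigma_n]$-module in positive characteristic, so $\mathcal{P}re\mathcal{L}ie_\infty=B^c(\Lambda^{-1}\text{\normalfont Perm}^c)$ is not $\Sigma_*$-cofibrant and your generating module $X(n)$ is not projective; the hand-made induction is therefore mandatory, and it must produce $\phi$ on all of $X(n)$ equivariantly without ever lifting an equivariant map along a surjection with non-projective source. Second, the reason you give for the equivariant contractibility of the fibre is slightly off: $\widetilde{\mathcal{E}}(n)_0$ is the augmentation ideal of $\mathbb{K}[\Sigma_n]$, which is not free, so the freeness of $\mathcal{B}race(n)\otimes\widetilde{\mathcal{E}}(n)$ in degree $0$ does not follow from the freeness of $\mathcal{E}(n)$. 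What rescues both points is that $\mathcal{B}race(n)=\mathbb{K}[\mathcal{PRT}(n)]$ is itself a free $\Sigma_n$-module, so the diagonal tensor of $\mathcal{B}race(n)$ with any $\Sigma_n$-module is free; this yields an equivariant splitting of $q$ in each arity and exhibits $\ker q$ as a bounded-below acyclic complex of free modules, hence equivariantly contractible, which is exactly what your induction needs. This is the same freeness the paper exploits through its choice of canonical representatives $\sigma_T^{-1}\cdot T$; with these two repairs your proof is complete.
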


\noindent As brace algebras are endowed with the structure of a $\Gamma(\mathcal{P}re\mathcal{L}ie,-)$-algebra (see \cite[Theorem 2.15]{moi}), Theorem \ref{theoremD} implies that every $\mathcal{B}race\underset{\text{\normalfont H}}{\otimes}\mathcal{E}$-algebra $L$ is a $\Gamma(\mathcal{P}re\mathcal{L}ie_\infty,-)$-algebra, via the composite
\begin{center}
\begin{tikzcd}
\Gamma(\mathcal{P}re\mathcal{L}ie_\infty,L)\arrow[r] & \Gamma(\mathcal{B}race\underset{\text{\normalfont H}}{\otimes} \mathcal{E},L) & \mathcal{S}(\mathcal{B}race\underset{\text{\normalfont H}}{\otimes} \mathcal{E},L)\arrow[l, "\simeq"']\arrow[r] & L.
\end{tikzcd}
\end{center}

Using that the normalized cochain complex $N^*(X)$ of a simplicial set $X$ admits the structure of an algebra over the Barratt-Eccles operad (see \cite{fresseberger}) and Theorem \ref{theoremD}, we define the simplicial Maurer-Cartan set associated to a complete brace algebra $A$ as
$$\mathcal{MC}_\bullet(A)=\mathcal{MC}(A\otimes \Sigma N^*(\Delta^\bullet)).$$

\noindent In particular, the $0$-vertices are identified with Maurer-Cartan elements in $A$, when using its underlying $\Gamma(\mathcal{P}re\mathcal{L}ie,-)$-algebra structure (see \cite[Theorem 2.15]{moi}). We explicitly compute the connected components and the homotopy groups of $\mathcal{MC}_\bullet(A)$.

\begin{thmA}\label{theoremE}
    For every complete brace algebra $A$, the simplicial set $\mathcal{MC}_\bullet(A)$ is a Kan complex. Moreover, we have the following computations for every $\tau\in\mathcal{MC}(A)$.
    \begin{itemize}
    \item $\pi_0(\mathcal{MC}_\bullet( A))\simeq\pi_0\text{\normalfont Deligne}(A),$ where $\text{\normalfont Deligne}(A)$ denotes the Deligne groupoid associated to the $\Gamma(\mathcal{P}re\mathcal{L}ie,-)$-algebra $A$ (see \cite[Proposition-Definition 2.30]{moi});
        \item
$\pi_1(\mathcal{MC}_\bullet(A),\tau)\simeq\{\mu\in A_0\ |\ d(\mu)=\tau+\mu\langle\tau\rangle-\tau\circledcirc(1+\mu)\}/\sim_\tau,$ where $\sim_\tau$ is the equivalence relation such that $\mu\sim_\tau \mu'$ if and only if there exists $\psi\in A_1$ such that
$$\mu-\mu'=d(\psi)+\psi\langle\tau\rangle+\sum_{p,q\geq 0}\tau\langle\underbrace{\mu,\ldots,\mu}_{p},\psi,\underbrace{\mu',\ldots,\mu'}_q\rangle.$$

\item $\pi_2(\mathcal{MC}_\bullet(A),\tau)\simeq (H_1(A^\tau),\ast_\tau,0)$, where $\ast_\tau$ is the group structure on $H_1(A^\tau)$ such that
$$[\mu]\ast_\tau[\mu']=[\mu+\mu'+\tau\langle\mu,\mu'\rangle].$$

\item 
$\pi_{n+1}(\mathcal{MC}_\bullet(A),\tau)\simeq H_{n}(A^\tau)$ for every $n\geq 3$.
\end{itemize}
\end{thmA}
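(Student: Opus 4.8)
The plan is to prove the four assertions by the strategy that is by now standard for Maurer--Cartan simplicial sets (Getzler, Berglund), adapted to the positive-characteristic setting in which $\Omega^*(\Delta^\bullet)$ is replaced by the simplicial cochains $N^*(\Delta^\bullet)$ and the $L_\infty$-operations are replaced by the $\Gamma\Lambda\mathcal{PL}_\infty$-operations furnished by Theorem~\ref{theoremD}. First I would establish the Kan property by obstruction theory. Since $A$ is complete, an $n$-simplex of $\mathcal{MC}_\bullet(A)$ is a compatible system of Maurer--Cartan elements in the nilpotent quotients $(A/F_sA)\otimes\Sigma N^*(\Delta^n)$, and a horn $\Lambda^n_k\to\mathcal{MC}_\bullet(A)$ is such a system defined on the faces of the horn. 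To fill the horn I would solve the Maurer--Cartan equation inductively along the tower $A=\lim_s A/F_sA$; the obstruction to extending a partial filler from filtration stage $s$ to $s+1$ lives in the cohomology of the relative cochain complex $N^*(\Delta^n,\Lambda^n_k)$, which is acyclic because $\Lambda^n_k\hookrightarrow\Delta^n$ is a trivial cofibration of simplicial sets. A contracting homotopy for this relative complex, coming from the simplicial structure of $\Delta^n$, produces an explicit filler at each stage, and completeness guarantees convergence of the resulting series; Theorem~\ref{theoremD} ensures that every operation used is well defined on $A\otimes\Sigma N^*(\Delta^n)$.

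Once $\mathcal{MC}_\bullet(A)$ is Kan, I would compute $\pi_{n+1}(\mathcal{MC}_\bullet(A),\tau)$ for $n\geq 0$ using normalized simplices: an element is represented by a Maurer--Cartan element $\xi\in A\otimes\Sigma N^*(\Delta^{n+1})$ whose restriction to every face of $\partial\Delta^{n+1}$ is the constant element $\tau$. Writing $\xi=\tau+\omega$, the correction $\omega$ lies in $A\otimes\Sigma N^*(\Delta^{n+1},\partial\Delta^{n+1})$, the cochains vanishing on the boundary, whose cohomology is concentrated in top degree $n+1$ and carries the fundamental cochain $\sigma$. A degree count — the suspension $\Sigma$ together with the cochain degree $n+1$ — places the coefficient of $\sigma$ in $A_n$, so that for $\xi=\tau+a\otimes\sigma+(\text{lower order})$ the linear part of the twisted Maurer--Cartan equation reads $d_\tau(a)=0$, identifying the leading term of a normalized simplex with a cycle of the twisted complex $A^\tau$ in degree $n$. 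The homotopy relation, encoded by normalized $(n+2)$-simplices, produces exactly the $d_\tau$-boundaries, so the leading-term map induces a bijection with $H_n(A^\tau)$ as soon as the nonlinear terms are negligible. The nonlinearity is governed by the cup products on $N^*(\Delta^{n+1},\partial\Delta^{n+1})$: since $\sigma$ has top degree, $\sigma\cdot\sigma$ vanishes, but the higher Barratt--Eccles operations transmitted by $\mathcal{E}$ through Theorem~\ref{theoremD} may still contribute when the simplicial group law of $\pi_{n+1}$ is computed by horn-filling. For $n$ in the stated range $n\geq 3$ these operations raise the cochain degree beyond $n+1$ and hence vanish, the composition is strictly additive, and the leading-term map is a group isomorphism $\pi_{n+1}(\mathcal{MC}_\bullet(A),\tau)\simeq H_n(A^\tau)$.

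For $\pi_2$ (the case $n=1$) a single surviving cup-correction deforms the additive structure on $H_1(A^\tau)$ into the law $[\mu]\ast_\tau[\mu']=[\mu+\mu'+\tau\langle\mu,\mu'\rangle]$, and I would verify associativity and neutrality of $0$ directly from the brace identities of the $\Gamma(\mathcal{P}re\mathcal{L}ie,-)$-structure on $A$ provided by \cite[Theorem 2.15]{moi}. For $\pi_1$ (the case $n=0$) and $\pi_0$ the equation no longer linearizes. A normalized $1$-simplex restricting to $\tau$ at both endpoints is the datum of a single element $\mu\in A_0$, and the full Maurer--Cartan equation on $A\otimes\Sigma N^*(\Delta^1)$ unwinds to $d(\mu)=\tau+\mu\langle\tau\rangle-\tau\circledcirc(1+\mu)$, while normalized $2$-simplices yield precisely the relation $\sim_\tau$, reproducing the stated description of $\pi_1$. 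For $\pi_0$, I would identify $1$-simplices with gauges: the $\circledcirc$-action recovered from paths in $N^*(\Delta^1)$ coincides with the action of $(1+A_0,\circledcirc,1)$ on $\mathcal{MC}(A)$ from \cite[Proposition-Definition 2.30]{moi}, so two Maurer--Cartan elements are joined by a $1$-simplex exactly when they are isomorphic in $\mathrm{Deligne}(A)$, giving $\pi_0(\mathcal{MC}_\bullet(A))\simeq\pi_0\mathrm{Deligne}(A)$.

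The main obstacle throughout is the control of the Barratt--Eccles operations in positive characteristic. In characteristic zero the contraction of $\Omega^*(\Delta^n)$ gauge-fixes the $L_\infty$-operations away, whereas here the interaction between the divided-power ($\Gamma$) structure and the cup-$i$ products on $N^*(\Delta^\bullet)$ must be made explicit in order to prove both that the Kan fillers converge and that the nonlinear corrections vanish in high degree but survive precisely in the low-degree group laws. Constructing the contracting homotopy compatibly with the $\mathcal{E}$-action, and isolating the surviving correction that produces $\ast_\tau$, is the technical heart of the argument.
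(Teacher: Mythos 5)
Your proposal is correct and follows essentially the same route as the paper: the Kan property via the contracting homotopy of $N^*(\Delta^n)$ combined with completeness of the filtration, normalized simplices concentrated on the top cell of $\Delta^{n+1}$ identified with cycles of $A^\tau$, a degree count killing the nonlinear brace corrections for $n\geq 3$, and explicit low-degree computations recovering the gauge action for $\pi_0$, the relation $\sim_\tau$ for $\pi_1$, and the single surviving correction $\tau\langle\mu,\mu'\rangle$ defining $\ast_\tau$ on $\pi_2$. The technical heart you defer is exactly what the paper supplies through its analysis of the twisting coderivations $\partial^n$ and the lemmas controlling which tree operations can act nontrivially on $N^*(\Delta^{n})$.
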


We have the following homotopy invariance result, which extends the Goldman-Millson theorem in dimension 0.

\begin{thmA}\label{theoremF}
    Let $\Theta:A\longrightarrow B$ be a morphism of complete brace algebras such that $\Theta$ is a weak equivalence in $\text{\normalfont dgMod}_\mathbb{K}$. Then $\mathcal{MC}_\bullet(\Theta):\mathcal{MC}_\bullet(A)\longrightarrow\mathcal{MC}_\bullet(B)$ is a weak equivalence.
\end{thmA}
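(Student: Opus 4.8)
The plan is to prove Theorem~\ref{theoremF} by combining the explicit homotopy-group computations of Theorem~\ref{theoremE} with a standard characterization of weak equivalences of Kan complexes. Since Theorem~\ref{theoremE} guarantees that both $\mathcal{MC}_\bullet(A)$ and $\mathcal{MC}_\bullet(B)$ are Kan complexes, to show that $\mathcal{MC}_\bullet(\Theta)$ is a weak equivalence it suffices to show that it induces a bijection on $\pi_0$ and an isomorphism on all higher homotopy groups at every basepoint. The morphism $\Theta$ being a weak equivalence of complete brace algebras means in particular that it is a quasi-isomorphism of the underlying complete dg $\mathbb{K}$-modules. Thus the first step is to reduce the statement, via Theorem~\ref{theoremE}, to purely (co)homological assertions about $\Theta$.

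For the higher homotopy groups this reduction is direct. First I would observe that $\Theta$, being a morphism of brace algebras, is compatible with all the brace operations, hence induces a morphism of the associated $\Gamma(\mathcal{P}re\mathcal{L}ie,-)$-algebra structures and of the twisted differentials $d_\tau$. Consequently, for each $\tau \in \mathcal{MC}(A)$ with image $\Theta(\tau) \in \mathcal{MC}(B)$, the map $\Theta$ descends to a morphism of twisted complexes $\Theta^\tau : A^\tau \longrightarrow B^{\Theta(\tau)}$. The key point is that if $\Theta$ is a quasi-isomorphism then so is each twist $\Theta^\tau$: since the filtration is complete and exhaustive and the Maurer--Cartan twist $d_\tau - d$ strictly raises filtration degree, one compares the induced maps on the associated graded complexes (where the twist vanishes) and concludes by a standard complete-filtration spectral sequence (or convergence) argument that $\Theta^\tau$ is itself a quasi-isomorphism. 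Using the explicit identifications $\pi_{n+1}(\mathcal{MC}_\bullet(A),\tau) \simeq H_n(A^\tau)$ for $n \geq 3$, and the analogous descriptions for $\pi_1$ and $\pi_2$ in terms of the twisted complex, the isomorphism on $H_*(A^\tau)$ immediately yields isomorphisms on $\pi_n$ for $n \geq 2$, with the understanding that in low degrees one must also check compatibility with the extra algebraic structure (the group law $\ast_\tau$ on $\pi_2$ and the equivalence relation $\sim_\tau$ describing $\pi_1$), which holds because $\Theta$ respects the brace operations defining these structures.

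For $\pi_0$ I would argue that $\mathcal{MC}_\bullet(\Theta)$ induces an isomorphism on connected components. By Theorem~\ref{theoremE} this is the statement that $\pi_0\mathrm{Deligne}(\Theta) : \pi_0\mathrm{Deligne}(A) \longrightarrow \pi_0\mathrm{Deligne}(B)$ is a bijection. This is the $\Gamma(\mathcal{P}re\mathcal{L}ie,-)$-algebra analogue of the classical Goldman--Millson theorem, and I expect it to be provable by the usual Goldman--Millson strategy adapted to the divided-power pre-Lie setting: using the complete filtration one shows inductively, layer by layer along the filtration quotients, that a quasi-isomorphism induces a surjection on Maurer--Cartan elements up to gauge (every Maurer--Cartan element in $B$ is gauge-equivalent to one in the image) and an injection (two Maurer--Cartan elements in $A$ with gauge-equivalent images are themselves gauge-equivalent). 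Each inductive step is an obstruction-theoretic argument in the abelian layer controlled by the relevant cohomology of the twisted complex, which vanishes appropriately because $\Theta$ is a quasi-isomorphism.

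The main obstacle I anticipate is the $\pi_0$ bijection, i.e.\ the Goldman--Millson statement for the Deligne groupoid: the gauge action here is governed by the circular product $\circledcirc$ built from divided-power symmetric brace operations rather than the exponential of a Lie bracket, so the usual characteristic-zero integration and the Baker--Campbell--Hausdorff manipulations are unavailable. One must run the inductive gauge-lifting argument entirely through the $\Gamma(\mathcal{P}re\mathcal{L}ie,-)$-structure, verifying that the relevant convergence and cancellation properties of $\circledcirc$ hold under completeness. The higher $\pi_n$ are comparatively routine given Theorem~\ref{theoremE} and the twisted-quasi-isomorphism lemma; the genuine content, and the place where positive characteristic forces a new argument distinct from \cite{berglund}, is the control of the gauge equivalence at the level of components.
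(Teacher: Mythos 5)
Your strategy is genuinely different from the paper's, and as written it has a gap at its central step. You propose to verify the weak equivalence homotopy group by homotopy group via Theorem~\ref{theoremE}, which forces you to prove two statements that the paper never proves directly: (a) that the twisted map $\Theta^\tau\colon A^\tau\to B^{\Theta(\tau)}$ is a quasi-isomorphism for every $\tau\in\mathcal{MC}(A)$, and (b) a full Goldman--Millson theorem for the Deligne groupoid (plus the corresponding bijectivity on the $\pi_1$-sets $\text{\normalfont Aut}_{\text{\normalfont Deligne}}(\tau)/\sim_\tau$). For (a), your spectral-sequence argument does not go through: comparing the filtration spectral sequences of $A^\tau$ and $B^{\Theta(\tau)}$ requires knowing that $\Theta$ induces a quasi-isomorphism on the associated graded complexes (a \emph{filtered} quasi-isomorphism), whereas the hypothesis only gives a quasi-isomorphism of the total complexes; an isomorphism on abutments does not descend to the $E^0$ or $E^1$ page, so the comparison cannot even start. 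This is not a cosmetic issue --- it is precisely the obstruction that leads the paper (following Rogers) to avoid this route altogether. For (b), you explicitly defer the hardest point (``I expect it to be provable by the usual strategy''); the inductive gauge-lifting argument in the divided-power setting is substantial and is nowhere supplied, so the $\pi_0$ and $\pi_1$ cases remain unproved.

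The paper's actual proof (Theorem~\ref{gm}) sidesteps both difficulties. It first treats the case where $\Theta$ is an acyclic fibration (Lemma~\ref{decoupage}): choosing a section and a contracting homotopy of the kernel, it builds an explicit $\infty$-morphism $A\otimes\Sigma N^*(\Delta^\bullet)\rightsquigarrow (B\times\text{\normalfont Ker}(\Theta))\otimes\Sigma N^*(\Delta^\bullet)$ whose linear part is an isomorphism (hence a bijection on Maurer--Cartan sets), where $\text{\normalfont Ker}(\Theta)$ carries the \emph{trivial} brace structure; the computations of $\mathsection$\ref{sec:242} are then invoked only for this trivial structure, where no twisting occurs and contractibility of $\mathcal{MC}_\bullet(\text{\normalfont Ker}(\Theta))$ reduces to acyclicity of the kernel. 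The general case is then obtained by factoring $\Theta$ through the path object $B^I=B\otimes N^*(\Delta^1)$ of Proposition~\ref{pathobject}, using the pullback construction of Lemma~\ref{pback} to write $\Theta\otimes id$ as a section of an acyclic fibration followed by an acyclic fibration, and concluding by two-out-of-three. If you want to salvage your approach, you would either need to add a hypothesis making $\Theta$ a filtered quasi-isomorphism, or find an independent proof of the twisted quasi-isomorphism statement (it is a \emph{consequence} of Theorems~\ref{theoremE} and~\ref{theoremF} together, so it cannot be used as an input without circularity), and in addition supply the complete obstruction-theoretic argument for $\pi_0$ and $\pi_1$.
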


We use this new deformation theory for the study of the homotopy of mapping spaces in the category of non symmetric operads. For every non-symmetric coaugmented cooperad $\mathcal{C}$ such that $\mathcal{C}(0)=0$, and for every $n\geq 0$, we construct a twisting derivation $\partial^n$ on the operad $\mathcal{F}(\overline{\mathcal{C}}\otimes\Sigma ^{-1} N_*(\Delta^n))$ such that
$$B^c(\mathcal{C})\otimes\Delta^\bullet:=(\mathcal{F}(\overline{\mathcal{C}}\otimes\Sigma ^{-1} N_*(\Delta^\bullet)),\partial^\bullet)$$

\noindent is a cosimplicial frame associated to $B^c(\mathcal{C})$. This leads to the following theorem.

\begin{thmA}\label{theoremG}
    Let $\mathcal{C}$ be a coaugmented cooperad and $\mathcal{P}$ be an augmented operad such that $\mathcal{C}(0)=\mathcal{P}(0)=0$ and $\mathcal{C}(1)=\mathcal{P}(1)=\mathbb{K}$. Then we have an isomorphism of simplicial sets
    $$\text{\normalfont Map}_{\mathcal{O}p}(B^c(\mathcal{C}),\mathcal{P})\simeq\mathcal{MC}_\bullet(\text{\normalfont Hom}_{\text{\normalfont Seq}_\mathbb{K}}(\overline{\mathcal{C}},\overline{\mathcal{P}})).$$
\end{thmA}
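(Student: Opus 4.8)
The plan is to treat Theorem \ref{theoremG} as a frame-enriched version of the operadic bar--cobar adjunction. At each simplicial level, the free--forgetful adjunction for operads should identify the set of $n$-simplices of the mapping space with a set of twisting morphisms, and the equation defining such a twisting morphism should then be recognized as the Maurer--Cartan equation of the complete $\Gamma\Lambda\mathcal{PL}_\infty$-algebra $A\otimes\Sigma N^*(\Delta^n)$, where $A=\text{\normalfont Hom}_{\text{\normalfont Seq}_\mathbb{K}}(\overline{\mathcal{C}},\overline{\mathcal{P}})$ is a complete brace algebra (complete for the arity filtration, since $\overline{\mathcal{C}}$ and $\overline{\mathcal{P}}$ are concentrated in arities $\geq 2$). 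Because both sides are then built levelwise out of $N_*(\Delta^\bullet)$ (resp. its dual $N^*(\Delta^\bullet)$), checking that these bijections are natural in the simplicial variable will upgrade the levelwise bijections to an isomorphism of simplicial sets.

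First I would carry out the levelwise identification. By construction $\text{\normalfont Map}_{\mathcal{O}p}(B^c(\mathcal{C}),\mathcal{P})_n=\text{\normalfont Mor}_{\mathcal{O}p}((\mathcal{F}(\overline{\mathcal{C}}\otimes\Sigma^{-1}N_*(\Delta^n)),\partial^n),\mathcal{P})$. Forgetting differentials, the free operad adjunction identifies an operad morphism out of $\mathcal{F}(\overline{\mathcal{C}}\otimes\Sigma^{-1}N_*(\Delta^n))$ with a morphism of sequences $\overline{\mathcal{C}}\otimes\Sigma^{-1}N_*(\Delta^n)\longrightarrow\mathcal{P}$; using the augmentation and the hypothesis $\mathcal{P}(1)=\mathbb{K}$, such a morphism factors through $\overline{\mathcal{P}}$. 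Since $N_*(\Delta^n)$ is finite dimensional in each degree, the duality $N^*(\Delta^n)=N_*(\Delta^n)^\vee$ together with the suspension conventions yields a natural isomorphism $\text{\normalfont Hom}_{\text{\normalfont Seq}_\mathbb{K}}(\overline{\mathcal{C}}\otimes\Sigma^{-1}N_*(\Delta^n),\overline{\mathcal{P}})\cong A\otimes\Sigma N^*(\Delta^n)$, and the degree-$0$ constraint on an operad morphism singles out precisely the degree-$0$ elements, i.e. the candidates for Maurer--Cartan elements.

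The crux is to show that compatibility of such a morphism with the differentials --- the internal differentials together with the twisting derivation $\partial^n$ on the source and $d_\mathcal{P}$ on the target --- is equivalent to the equation $d(x)+\sum_{k\geq 1}x\llbrace x\rrbrace_k=0$ in $A\otimes\Sigma N^*(\Delta^n)$. Here I would unwind the explicit construction of $\partial^n$ and match it term by term with the $\Gamma\Lambda\mathcal{PL}_\infty$-structure produced by Theorem \ref{theoremD}: the part of $\partial^n$ coming from the chain differential of $N_*(\Delta^n)$ together with the differentials of $\mathcal{C}$ and $\mathcal{P}$ produces the linear term $d(x)$, while the part built from the cooperadic decomposition of $\mathcal{C}$ and the Barratt--Eccles (Alexander--Whitney) structure on $N_*(\Delta^n)$ produces the higher brace terms $x\llbrace x\rrbrace_k$. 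Concretely, the dual pairing between the cocomposition of $\mathcal{C}$ and the composition of $\mathcal{P}$ recovers the brace algebra structure on $A$ (via \cite{loday,gerstenhaber2}), the dual of $N_*(\Delta^n)$ carries the $\mathcal{E}$-algebra structure of \cite{fresseberger}, and Theorem \ref{theoremD} combines them into the $\Gamma\Lambda\mathcal{PL}_\infty$-structure whose Maurer--Cartan operator is exactly $\sum_{k\geq 1}(-)\llbrace(-)\rrbrace_k$.

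The hard part will be precisely this term-by-term matching, and in particular controlling it in positive characteristic. The operations $x\llbrace x\rrbrace_k$ are the divided ($\Gamma$-)refinements of the symmetric brace operations, not their naive characteristic-zero expressions, so I must verify that the combinatorics of $\partial^n$ --- the sum over the ways the cooperadic structure of $\mathcal{C}$ decomposes a generator, whose images are then composed in $\mathcal{P}$ and weighted by the cochain multiplication on $N^*(\Delta^n)$ --- reproduces exactly these divided operations, with integral rather than merely rational coefficients. Once this is settled, naturality in $n$ is comparatively formal: the cosimplicial structure of $B^c(\mathcal{C})\otimes\Delta^\bullet$ is induced by applying $N_*$ to the simplicial operators of $\Delta^\bullet$, and under the duality $N_*(\Delta^\bullet)\leftrightarrow N^*(\Delta^\bullet)$ these are intertwined with the simplicial operators defining $\mathcal{MC}_\bullet(A)=\mathcal{MC}(A\otimes\Sigma N^*(\Delta^\bullet))$; hence the levelwise bijections assemble into the desired isomorphism of simplicial sets.
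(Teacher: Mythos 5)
Your proposal follows essentially the same route as the paper: a levelwise identification of the $n$-simplices of the mapping space with solutions of a twisting-morphism equation via the free-operad adjunction, recognition of that equation as the Maurer--Cartan equation of a complete $\Gamma\Lambda\mathcal{PL}_\infty$-algebra, and a duality isomorphism $\text{\normalfont Hom}_{\text{\normalfont Seq}_\mathbb{K}}(\overline{\mathcal{C}}\otimes\Sigma^{-1}N_*(\Delta^n),\overline{\mathcal{P}})\cong\Sigma\text{\normalfont Hom}_{\text{\normalfont Seq}_\mathbb{K}}(\overline{\mathcal{C}},\overline{\mathcal{P}})\otimes N^*(\Delta^n)$ compatible with the simplicial operators. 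The only organizational difference is how the ``term-by-term matching'' you flag as the hard part is discharged: the paper first puts a $\mathcal{B}race\underset{\text{\normalfont H}}{\otimes}\mathcal{E}$-algebra structure directly on $\text{\normalfont Hom}(\overline{\mathcal{C}}\otimes N_*(\Delta^n),\overline{\mathcal{P}})$ (Lemma \ref{lembe}), built from the same elements $\mu_T$ that define the twisting derivation $\partial^n$ --- so the Maurer--Cartan identification is essentially by construction --- and then proves that the duality map $\phi_n$ is an isomorphism of $\mathcal{B}race\underset{\text{\normalfont H}}{\otimes}\mathcal{E}$-algebras (Lemma \ref{dernier}), so that the positive-characteristic/divided-power concern is settled once and for all by Corollary \ref{bracee} via the freeness of the $\Sigma_n$-action on $\mathcal{B}race\underset{\text{\normalfont H}}{\otimes}\mathcal{E}$, rather than by a direct comparison of weighted brace operations.
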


The computation of the connected components and the homotopy groups of $\text{\normalfont Map}_{\mathcal{O}p}(B^c(\mathcal{C}),\mathcal{P})$ can then be achieved by using Theorem \ref{theoremE}.\\

In the symmetric context, the derivation $\partial^n$ constructed above on $\mathcal{F}(\overline{\mathcal{C}}\otimes\Sigma^{-1}N_*(\Delta^n))$ does not preserve the action of the symmetric group for every $n\geq 2$. We instead consider a $\Sigma_\ast$-cofibrant replacement of $B^c(\mathcal{C})$ given by the map $B^c(\mathcal{C}\underset{\text{\normalfont H}}{\otimes}\textbf{Sur}_\mathbb{K})\overset{\sim}{\longrightarrow} B^c(\mathcal{C})$, where $\textbf{Sur}_\mathbb{K}$ is the surjection cooperad defined in \cite[Theorem A.1]{pdoperads}. Using that the action of $\Sigma_n$ on $\overline{\mathcal{C}}(n){\otimes}\overline{\normalfont\textbf{Sur}}_\mathbb{K}(n)$ is free for every $n\geq 1$, we construct a twisting derivation $\partial^n$ on the operad $\mathcal{F}(\overline{\mathcal{C}}\underset{\text{\normalfont H}}{\otimes}\overline{\normalfont\textbf{Sur}}_\mathbb{K}\otimes\Sigma^{-1}N_*(\Delta^n))$ such that
$$B^c(\mathcal{C}\underset{\text{\normalfont H}}{\otimes}{\normalfont\textbf{Sur}}_\mathbb{K})\otimes\Delta^\bullet:=(\mathcal{F}(\overline{\mathcal{C}}\underset{\text{\normalfont H}}{\otimes}\overline{\normalfont\textbf{Sur}}_\mathbb{K}\otimes\Sigma^{-1} N_*(\Delta^\bullet)),\partial^\bullet)$$

\noindent is a cosimplicial frame associated to $B^c(\mathcal{C}\underset{\text{\normalfont H}}{\otimes}\textbf{Sur}_\mathbb{K})$. We deduce the following theorem.

\begin{thmA}\label{theoremH}
    Let $\mathcal{C}$ be a symmetric coaugmented cooperad and $\mathcal{P}$ be a symmetric augmented operad such that $\mathcal{C}(0)=\mathcal{P}(0)=0$ and $\mathcal{C}(1)=\mathcal{P}(1)=\mathbb{K}$. Then $\Sigma \text{\normalfont Hom}_{\Sigma\text{\normalfont Seq}_\mathbb{K}}(\overline{\mathcal{C}}\underset{\text{\normalfont H}}{\otimes}\overline{\normalfont\textbf{Sur}}_\mathbb{K}\otimes N_*(\Delta^\bullet),\overline{\mathcal{P}})$ is endowed with the structure of a $\widehat{\Gamma\Lambda\mathcal{PL}_\infty}$-algebra such that we have an isomorphism of simplicial sets
    $$\text{\normalfont Map}^h_{\Sigma\mathcal{O}p^0}(B^c(\mathcal{C}),\mathcal{P})\simeq\mathcal{MC}(\Sigma \text{\normalfont Hom}_{\Sigma\text{\normalfont Seq}_\mathbb{K}}(\overline{\mathcal{C}}\underset{\text{\normalfont H}}{\otimes}\overline{\normalfont\textbf{Sur}}_\mathbb{K}\otimes N_*(\Delta^\bullet),\mathcal{P})),$$

    \noindent where $\text{\normalfont Map}^h_{\Sigma\mathcal{O}p^0}(B^c(\mathcal{C}),\mathcal{P}):=\text{\normalfont Map}_{\Sigma\mathcal{O}p^0}(B^c(\mathcal{C}\underset{\text{\normalfont H}}{\otimes}{\normalfont\textbf{Sur}}_\mathbb{K}),\mathcal{P})$
\end{thmA}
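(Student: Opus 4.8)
The plan is to reproduce the argument of Theorem~\ref{theoremG} in the symmetric setting, the only genuinely new ingredient being the passage to the $\Sigma_\ast$-cofibrant model $B^c(\mathcal{C}\underset{\text{\normalfont H}}{\otimes}{\normalfont\textbf{Sur}}_\mathbb{K})$. First I would unwind the definitions: since $B^c(\mathcal{C}\underset{\text{\normalfont H}}{\otimes}{\normalfont\textbf{Sur}}_\mathbb{K})$ is cofibrant and every object of $\Sigma\mathcal{O}p^0$ is fibrant for the Hinich model structure, the mapping space is computed by the cosimplicial frame constructed before the statement, so that
$$\text{\normalfont Map}^h_{\Sigma\mathcal{O}p^0}(B^c(\mathcal{C}),\mathcal{P})_n=\text{\normalfont Mor}_{\Sigma\mathcal{O}p^0}\!\big((\mathcal{F}(\overline{\mathcal{C}}\underset{\text{\normalfont H}}{\otimes}\overline{\normalfont\textbf{Sur}}_\mathbb{K}\otimes\Sigma^{-1}N_*(\Delta^n)),\partial^n),\mathcal{P}\big).$$
Everything then reduces to describing these morphism sets together with their simplicial structure.

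Next I would produce the $\widehat{\Gamma\Lambda\mathcal{PL}_\infty}$-structure. As $N_*(\Delta^n)$ is finite dimensional there is a natural identification
$$\Sigma\,\text{\normalfont Hom}_{\Sigma\text{\normalfont Seq}_\mathbb{K}}(\overline{\mathcal{C}}\underset{\text{\normalfont H}}{\otimes}\overline{\normalfont\textbf{Sur}}_\mathbb{K}\otimes N_*(\Delta^n),\overline{\mathcal{P}})\cong\Sigma\big(A\otimes N^*(\Delta^n)\big),\qquad A:=\text{\normalfont Hom}_{\Sigma\text{\normalfont Seq}_\mathbb{K}}(\overline{\mathcal{C}}\underset{\text{\normalfont H}}{\otimes}\overline{\normalfont\textbf{Sur}}_\mathbb{K},\overline{\mathcal{P}}).$$
The free $\Sigma_\ast$-action on $\overline{\mathcal{C}}\underset{\text{\normalfont H}}{\otimes}\overline{\normalfont\textbf{Sur}}_\mathbb{K}$ supplies the ordering of inputs needed to promote the convolution pre-Lie product on $A$ to a complete brace structure, the arity filtration being complete because $\overline{\mathcal{C}}(1)=\overline{\mathcal{P}}(1)=0$. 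Since $N^*(\Delta^n)$ is an algebra over the Barratt--Eccles operad $\mathcal{E}$ (see \cite{fresseberger}), Theorem~\ref{theoremD} makes $A\otimes N^*(\Delta^n)$ a $\Gamma(\mathcal{P}re\mathcal{L}ie_\infty,-)$-algebra, hence places its suspension $V_n:=\Sigma(A\otimes N^*(\Delta^n))$ in $\Gamma\Lambda\mathcal{PL}_\infty$; these structures are complete and functorial in $[n]$.

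Then I would match morphisms with Maurer--Cartan elements. Writing $M=\overline{\mathcal{C}}\underset{\text{\normalfont H}}{\otimes}\overline{\normalfont\textbf{Sur}}_\mathbb{K}\otimes\Sigma^{-1}N_*(\Delta^n)$, a dg-operad morphism $(\mathcal{F}(M),\partial^n)\to\mathcal{P}$ is uniquely determined by its restriction to generators, that is by a degree $0$ element $\tau\in\text{\normalfont Hom}_{\Sigma\text{\normalfont Seq}_\mathbb{K}}(M,\overline{\mathcal{P}})$, compatibility with the differentials being a single equation. Under the suspension isomorphism $\tau$ becomes a degree $0$ element of $V_n$, and I claim this equation reads
$$d(\tau)+\sum_{m\geq 1}\tau\llbrace\tau\rrbrace_m=0,$$
i.e. $\tau\in\mathcal{MC}(V_n)$. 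Indeed the internal part of $\partial^n$ yields $d(\tau)$, while the cobar part, governed by the coproduct of $\mathcal{C}\underset{\text{\normalfont H}}{\otimes}{\normalfont\textbf{Sur}}_\mathbb{K}$, splits a generator into one principal and several auxiliary factors, applies $\tau$ to each and composes in $\overline{\mathcal{P}}$, thereby reproducing the weighted braces $\tau\llbrace\tau\rrbrace_m$, the divided-power weights arising from the coinvariants in $\Gamma\text{\normalfont Perm}^c$.

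The hard part will be exactly this last term-by-term identification: one must verify that the coproduct of $\mathcal{C}\underset{\text{\normalfont H}}{\otimes}{\normalfont\textbf{Sur}}_\mathbb{K}$ induces, through the convolution structure and the morphism of Theorem~\ref{theoremD}, precisely the operations $\llbrace-\rrbrace_m$ of the $\Gamma\Lambda\mathcal{PL}_\infty$-structure on $V_n$, all while tracking Koszul signs and the ordering data carried by $\overline{\normalfont\textbf{Sur}}_\mathbb{K}$ — this is where the symmetric case departs from Theorem~\ref{theoremG}, the surjection operad being exactly the device that turns the symmetric coproduct into the ordered inputs the brace operations require. Once this matching is secured, the resulting bijections $\text{\normalfont Map}^h_{\Sigma\mathcal{O}p^0}(B^c(\mathcal{C}),\mathcal{P})_n\cong\mathcal{MC}(V_n)$ are natural in $[n]$, since both sides are built functorially from $N_*(\Delta^\bullet)$, and therefore assemble into the asserted isomorphism of simplicial sets.
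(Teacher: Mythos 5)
Your proposal tries to run the non-symmetric argument of Theorem \ref{theoremG} verbatim: identify the convolution object with $A\otimes N^*(\Delta^n)$ for a single complete brace algebra $A=\text{\normalfont Hom}_{\Sigma\text{\normalfont Seq}_\mathbb{K}}(\overline{\mathcal{C}}\underset{\text{\normalfont H}}{\otimes}\overline{\normalfont\textbf{Sur}}_\mathbb{K},\overline{\mathcal{P}})$, invoke Theorem \ref{theoremD}, and conclude that the mapping space is $\mathcal{MC}_\bullet(A)$. This is precisely the route the paper cannot take in the symmetric setting, and the statement of Theorem \ref{theoremH} is deliberately weaker than Theorem \ref{theoremG} for that reason. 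The obstruction is twofold. First, the twisting derivation on the cosimplicial frame cannot be taken of the product form $\Delta_{(\underline{T})}\widetilde{\otimes}\Lambda\mu_T^{N_*(\Delta^n)}$: the homotopy $H_n^0$ entering Construction \ref{consmu}--style formulas does not commute with the symmetric group actions, so $\partial^n$ is instead built by a separate induction (Construction \ref{consbeta}) on a chosen set of representatives $\mathcal{C}_{ns}=\mathcal{C}\underset{\text{\normalfont H}}{\otimes}\textbf{Sur}_\mathbb{K}^{id}$ and then extended by equivariance. Consequently the Maurer--Cartan equation cut out by $\partial^n$ on $n$-simplices is \emph{not} the one coming from the $\mathcal{B}race\underset{\text{\normalfont H}}{\otimes}\mathcal{E}$-structure on $A\otimes N^*(\Delta^n)$, and your ``hard part'' (matching the coproduct of $\mathcal{C}\underset{\text{\normalfont H}}{\otimes}\textbf{Sur}_\mathbb{K}$ with the braces of Theorem \ref{theoremD}) is not merely hard but false as stated. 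Second, your claim that the free $\Sigma_\ast$-action ``promotes the convolution pre-Lie product on $A$ to a complete brace structure'' is unsubstantiated: $\text{\normalfont Hom}_{\Sigma\text{\normalfont Seq}_\mathbb{K}}(\overline{\mathcal{C}}\underset{\text{\normalfont H}}{\otimes}\overline{\normalfont\textbf{Sur}}_\mathbb{K},\overline{\mathcal{P}})$ consists of equivariant maps, the brace operations of the non-symmetric convolution algebra do not preserve equivariance, and no compatibility of the operadic (co)compositions with the chosen representatives is established. The paper never asserts such a brace structure.

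What the paper actually does is introduce the intermediate notion of an \emph{operad up to homotopy} (a twisted coderivation on $\mathcal{F}^c(\Sigma\overline{M})$), prove directly that for such an $M$ the invariants $\bigoplus_{n\geq 2}M(n)^{\Sigma_n}$ carry weighted brace operations satisfying relations $(i)$--$(vi)$ of Theorem \ref{relationsprelieinfinite} --- this is a hands-on verification using the cooperad structure of $\mathcal{F}^c$ and the orbit map $\mathcal{O}$, not an application of Theorem \ref{theoremD} --- and then exhibit $\text{\normalfont Hom}(\mathcal{C}\underset{\text{\normalfont H}}{\otimes}{\normalfont\textbf{Sur}}_\mathbb{K}\otimes N_*(\Delta^n),\mathcal{P})$ as an operad up to homotopy via the morphisms $\beta^n_{(\underline{T})}$ of Construction \ref{consbeta} and Lemma \ref{compbulletsym}. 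If you want to salvage your plan you would need to either prove that the equivariant extension of $\beta^n$ agrees with a $\mathcal{B}race\underset{\text{\normalfont H}}{\otimes}\mathcal{E}$-type twisting (which contradicts the non-equivariance of $H_n^0$), or abandon the reduction to a single brace algebra and work degree-wise with the $\widehat{\Gamma\Lambda\mathcal{PL}_\infty}$-structures, as the paper does.
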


\subsection*{Organization of the memoir}
\phantomsection
\addcontentsline{toc}{subsection}{Organization of the memoir}

In the first part of this memoir, we recall notions that will be useful for explaining our results. In $\mathsection$\ref{sec:211}, we explain our conventions on the context of differential graded $\mathbb{K}$-modules (dg $\mathbb{K}$-modules) in which we carry out our constructions. We examine in particular the definition of dg $\mathbb{K}$-modules which are complete with respect to a filtration and which we use in the definition of Maurer-Cartan elements. In $\mathsection$\ref{sec:212}, we review our conventions on operads, and recall the precise definition of algebras with divided powers over an operad. In $\mathsection$\ref{sec:213}, we recall the definition of the operad that governs brace algebras and its expression in terms of $\mathbb{K}$-modules of planar rooted trees. In $\mathsection$\ref{sec:214}, we recall the definition of the Barratt-Eccles and the definition of the action of this operad on the cochain algebra of simplicial sets through an intermediate operad given by an operad of surjections.\\

In the second part, we study the structure of a $\Gamma(\mathcal{P}re\mathcal{L}ie_\infty,-)$-algebra. In $\mathsection$\ref{sec:221}, we recall the construction of the operad $\mathcal{P}re\mathcal{L}ie_\infty$ and a characterization of $\mathcal{P}re\mathcal{L}ie_\infty$-algebras in terms of twisting coderivation on cofree $\text{\normalfont Perm}$-coalgebras. In $\mathsection$\ref{sec:222}, we explain the definition of the category $\Gamma\Lambda\mathcal{PL}_\infty$. In $\mathsection$\ref{sec:223}, we explain the definition of the weighted brace operations $-\{-,\ldots,-\}_{r_1,\ldots,r_n}$ and of the notion of a Maurer-Cartan element in a complete $\widehat{\Gamma\Lambda\mathcal{PL}_\infty}$-algebra. In $\mathsection$\ref{sec:224}, we explain the equivalence between $\Gamma\Lambda\mathcal{PL}_\infty$-algebras and $\Gamma(\mathcal{P}re\mathcal{L}ie_\infty,-)$-algebras (up to a shift).\\

The goal of part $3$ is to define the morphism $\mathcal{P}re\mathcal{L}ie_\infty\longrightarrow\mathcal{B}race\underset{\text{\normalfont H}}{\otimes}\mathcal{E}$ and to prove Theorem \ref{theoremD}. We actually obtain this morphism as a composite $\mathcal{P}re\mathcal{L}ie_\infty\overset{(1)}{\longrightarrow}\mathcal{B}race\underset{\text{\normalfont H}}{\otimes} B^c(\Lambda^{-1}\mathcal{B}race^\vee)\overset{(2)}{\longrightarrow}\mathcal{B}race\underset{\text{\normalfont H}}{\otimes}\mathcal{E}$ where $(2)$ is induced by a morphism $B^c(\Lambda^{-1}\mathcal{B}race^\vee)\longrightarrow\mathcal{E}$. In $\mathsection$\ref{sec:231}, we explain the construction of the latter morphism $B^c(\Lambda^{-1}\mathcal{B}race^\vee)\longrightarrow\mathcal{E}$. Then, from the general bar duality theory of algebras over operads, every $\mathcal{E}$-algebra $A$ comes with a twisting morphism on $\mathcal{B}race^\vee(\Sigma A)$. In $\mathsection$\ref{sec:232}, we make this twisting morphism explicit in the case $A=N^*(\Delta^n)$ for every $n\geq 0$. We will use this description to control the $\mathcal{P}re\mathcal{L}ie_\infty$-algebra structure on $A\otimes N^*(\Delta^n)$ for every $n\geq 0$, when we study the simplicial Maurer-Cartan set associated to brace algebras. In $\mathsection$\ref{sec:233}, we explain the definition of $(1)$ to complete our construction of the morphism $\mathcal{P}re\mathcal{L}ie_\infty\longrightarrow\mathcal{B}race\underset{\text{\normalfont H}}{\otimes}\mathcal{E}$ and the proof of Theorem \ref{theoremD}.\\

In the fourth part, we define and study our notion of a simplicial Maurer-Cartan set associated to a complete brace algebra. In $\mathsection$\ref{sec:241}, we define this simplicial set and prove that it is a Kan complex. In $\mathsection$\ref{sec:242}, we prove Theorem \ref{theoremE}, which gives a computation of the connected component and the homotopy groups of the simplicial Maurer-Cartan set associated to a complete brace algebra. In $\mathsection$\ref{sec:242b}, we give an interpretation of the first differentials computed in $\mathsection$\ref{sec:232} by computing the first simplices of the simplicial Maurer-Cartan set associated to a chosed complete brace algebra defined in the context of associated algebras up to homotopy. In $\mathsection$\ref{sec:243}, we prove Theorem \ref{theoremF}, which is an extension of the classical Goldman-Millson theorem for Lie algebras. In $\mathsection$\ref{sec:244}, we prove that, in characteristic $0$, our simplicial Maurer-Cartan set is related to the simplicial Maurer-Cartan set defined for Lie algebras via a zig-zag of weak-equivalences.\\

In the fifth part, we show that we can describe a mapping space from the cobar construction of a coaugmented non-symmetric cooperad to an augmented non-symmetric operad as a simplicial Maurer-Cartan set associated to a complete brace algebra. In $\mathsection$\ref{sec:251}, we recall the definition of the free operad generated by a sequence in terms of planar rooted trees with inputs, and recall the model structure on the category of operads. In $\mathsection$\ref{sec:253}, we construct a cosimplicial frame associated to the cobar construction of a coaugmented cooperad. In $\mathsection$\ref{sec:254}, we prove Theorem \ref{theoremG}, which shows that we can describe a mapping space from the cobar construction of a coaugmented non-symmetric cooperad $\mathcal{C}$ to an augmented non-symmetric operad $\mathcal{P}$ as as the simplicial Maurer-Cartan set associated to the complete brace algebra $\text{\normalfont Hom}_{\text{\normalfont Seq}_\mathbb{K}}(\overline{\mathcal{C}},\overline{\mathcal{P}})$.\\

In the last part of this memoir, we show that we can describe a mapping space from the cobar construction of a coaugmented symmetric cooperad to an augmented symmetric operad as a degree-wise Maurer-Cartan set of some $\widehat{\Gamma\Lambda\mathcal{PL}_\infty}$-algebras. In $\mathsection$\ref{sec:261}, we recall the definition of the free operad generated by a symmetric sequence in terms of planar rooted trees with inputs, and recall the model structure on the category of symmetric connected operads. In $\mathsection$\ref{sec:262}, we construct a cosimplicial frame associated to the cobar construction of a symmetric coaugmented cooperad. In $\mathsection$\ref{sec:263}, we prove Theorem \ref{theoremH}, we show that we can describe a mapping space from the cobar construction of a coaugmented cooperad $\mathcal{C}$ to an augmented operad $\mathcal{P}$ as a degree-wise Maurer-Cartan set of some $\widehat{\Gamma\Lambda\mathcal{PL}_\infty}$-algebras.

\subsection*{Acknowledgements}
\phantomsection
\addcontentsline{toc}{subsection}{Acknowledgements}

I acknowledge support from the Labex CEMPI (ANR-11-LABX-0007-01).\\ I also acknowledge the strong support of Benoit Fresse for the reading and discussions around this memoir.

\section{Conventions and notations}

The goal of this section is to give recollections that will be needed in this memoir, and to set on our notations and conventions.\\

In $\mathsection$\ref{sec:211}, we recall basic definitions on dg $\mathbb{K}$-modules, and give the notation used in this memoir. We also give our definitions and notation on the notion of a (complete) filtered dg $\mathbb{K}$-module, with underlying category \dgMod.\\

In $\mathsection$\ref{sec:212}, we recall the notion of an operad and a cooperad. We also recall the operation of the Hadamard tensor product, which is widely used in this memoir. From the definition, we recall the notion of (co)operadic suspension, and study the (co)algebras over suspensions. We finally recall the notion of a $\Gamma(\mathcal{P},-)$-algebra associated to an operad $\mathcal{P}$ such that $\mathcal{P}(0)=0$.\\

In $\mathsection$\ref{sec:213}, we recall the definition of the operad $\mathcal{B}race$ which governs brace algebras in terms of planar rooted trees. We also set on our notations and conventions on planar rooted trees in this subsection.\\

In $\mathsection$\ref{sec:214}, we recall the definition of the Barratt-Eccles and surjection operads, following the conventions of \cite{fresseberger}. We also recall an important example of an algebra over such operads, given by the normalized cochain complex of simplicial sets.\\

In $\mathsection$\ref{sec:211bis}, we recall notions and notations on permutations and symmetric groups. More precisely, we recall the notion of shuffle permutations which gives a set of representatives of $\Sigma_m/\Sigma_{r_1}\times\cdots\times\Sigma_{r_n}$ where $r_1+\cdots+r_n=m$.

\subsection{The category $\text{\normalfont dgMod}_\mathbb{K}$}\label{sec:211}

Let $\mathbb{K}$ be a field. In this memoir, we work in the category \dgMod\ that we aim to define in this subsection.\\

A \textit{graded $\mathbb{K}$-module} is a $\mathbb{K}$-module $V$ equipped with a decomposition
$$V\simeq\bigoplus_{n\in\mathbb{Z}} V_n.$$

\noindent Given such a decomposition, an element $x\in V$ is \textit{homogeneous} if $x\in V_n$ for some $n\in\mathbb{Z}$. The integer $n$ is called the \textit{degree} of $x$, and denoted by $|x|$. A {morphism} of graded $\mathbb{K}$-modules of degree $d$ is a morphism $f:V\longrightarrow W$ of $\mathbb{K}$-modules such that $f(V_n)\subset W_{n+d}$. We denote by $\text{\normalfont Hom}(V,W)_d$ the $\mathbb{K}$-module formed by such morphisms. We set
$$\text{\normalfont Hom}(V,W):=\bigoplus_{d\in\mathbb{Z}}\text{\normalfont Hom}(V,W)_d.$$

\noindent We denote by $\text{\normalfont gMod}_\mathbb{K}$ the category formed by graded $\mathbb{K}$-modules with as set of morphisms from $V$ to $W$ the $\mathbb{K}$-module $\text{\normalfont Hom}(V,W)_0$. The \textit{dual} graded $\mathbb{K}$-module of $V$, denoted by $V^\vee$, is defined by $V^\vee=\text{\normalfont Hom}(V,\mathbb{K})$ where $\mathbb{K}$ is the graded $\mathbb{K}$-module with only one degree $0$ component given by $\mathbb{K}$. Explicitly, we have
$$V^\vee\simeq\bigoplus_{n\in\mathbb{Z}}V_{-n}^\vee.$$

\noindent If $V$ is finite dimensional, then, given a basis $x_1,\ldots,x_n$ of $V$, we endow $V^\vee$ with the basis $x_1^\vee,\ldots,x_n^\vee$ where, for every $1\leq i\leq n$, the linear form $x_i^\vee\in\text{\normalfont Hom}(V,\mathbb{K})$ is defined by
$$x_i^\vee(x_j)=\left\{\begin{array}{ll}
    1 & \text{if }i=j \\
    0 & \text{else}
\end{array}\right..$$

A \textit{differential} on $V$ is a degree $-1$ morphism $d_V:V\longrightarrow V$ such that $d_V\circ d_V=0$. We usually omit the index $V$ if there is no ambiguity on the ambient $\mathbb{K}$-module. The pair $(V,d_V)$ is called a \textit{differential graded $\mathbb{K}$-module} (or dg $\mathbb{K}$-module). A morphism of dg $\mathbb{K}$-modules is a morphism of graded $\mathbb{K}$-modules which commutes with the differentials. If $V$ and $W$ are dg $\mathbb{K}$-modules, then the graded $\mathbb{K}$-module $\text{\normalfont Hom}(V,W)$ comes equipped with a differential $d=d_{\text{\normalfont Hom}(V,W)}$ defined by
$$d(f)=d_W\circ f-(-1)^{|f|}f\circ d_V.$$

\noindent We denote by $\text{dgMod}_\mathbb{K}$ the category formed by dg $\mathbb{K}$-modules with as hom-sets the previous dg $\mathbb{K}$-module. This category is endowed with the structure of a symmetric monoidal category: the tensor product $V\otimes W$ of two elements $V,W\in\text{dgMod}_\mathbb{K}$ is the usual tensor product of $\mathbb{K}$-modules, with as degree $n$ component
$$(V\otimes W)_n=\bigoplus_{p+q=n}V_p\otimes W_q.$$

\noindent The differential on $V\otimes W$ is defined by
$$d_{V\otimes W}(v\otimes w)=d_V(v)\otimes w+(-1)^{|v|}v\otimes d_W(w).$$

\noindent The symmetry operator $\tau:V\otimes W\longrightarrow W\otimes V$ is defined by
$$\tau(v\otimes w)=(-1)^{|v||w|}w\otimes v.$$

\noindent The tensor product $f\otimes g$ of two morphisms of dg $\mathbb{K}$-modules $f:V\longrightarrow V'$ and $g:W\longrightarrow W'$ is defined by
$$(f\otimes g)(v\otimes w)=(-1)^{|g||v|}f(v)\otimes g(w).$$

Note that, as in the non-graded setting, we have an isomorphism of dg $\mathbb{K}$-modules
$$\text{\normalfont Hom}(U\otimes V,W)\overset{\simeq}{\longrightarrow}\text{\normalfont Hom}(U,\text{\normalfont Hom}(V,W))$$

\noindent for every dg $\mathbb{K}$-modules $U,V$ and $W$, defined by sending a morphism $f:U\otimes V\longrightarrow W$ to the morphism which sends $u\in U$ to the morphism $v\in V\longmapsto f(u\otimes v)\in W$.\\

\noindent If $V$ is finite dimensional, we also have an isomorphism of dg $\mathbb{K}$-modules
$$\text{\normalfont Hom}(V,W)\overset{\simeq}{\longrightarrow}W\otimes V^\vee.$$

\noindent This morphism is defined by sending $f\in\text{\normalfont Hom}(V,W)$ to $\sum_{i=1}^nf(e_i)\otimes e_i^\vee$, where $e_1,\ldots,e_n$ is a chosen basis of $V$. Using the two above isomorphisms gives
$$(V\otimes W)^\vee\simeq W^\vee\otimes V^\vee,$$

\noindent provided that $V$ is finite dimensional.\\

If we set $V=A^{\otimes n}, V'=C^{\otimes k}$ and $W=B^{\otimes n}, W'=D^{\otimes k}$ for some dg $\mathbb{K}$-modules $A,B,C,D$ and $n,k\geq 0$, then $f\otimes g$ is a morphism from $A^{\otimes n}\otimes B^{\otimes n}$ to $C^{\otimes k}\otimes D^{\otimes k}$. For our needs, it will sometimes be more convenient to see $f\otimes g$ as a morphism from $(A\otimes B)^{\otimes n}$ to $(C\otimes D)^k$.

\begin{defi}\label{otimesmodif}
    Let $f:A^{\otimes n}\longrightarrow C^{\otimes k}$ and $g:B^{\otimes n}\longrightarrow D^{\otimes k}$. We denote by $f\widetilde{\otimes} g:(A\otimes B)^{\otimes n}\longrightarrow (C\otimes D)^{\otimes k}$ the morphism defined by the following commutative diagram:
    \begin{center}
    $\begin{tikzcd}
        A^{\otimes n}\otimes B^{\otimes n}\arrow[r, "f\otimes g"]& C^{\otimes k}\otimes D^{\otimes k}\\
        (A\otimes B)^{\otimes n}\arrow[r, "f\widetilde{\otimes} g"']\arrow[u, "\simeq"] & (C\otimes D)^{\otimes k}\arrow[u, "\simeq"']
    \end{tikzcd}$
\end{center}

\noindent where we consider the isomorphisms given by the symmetry operator.
\end{defi}

We now recall the definition of the suspension of dg $\mathbb{K}$-modules.

\begin{defi}
    Let $k\in\mathbb{Z}$ and $V\in \text{\normalfont dgMod}_\mathbb{K}$. We denote by $\Sigma ^k$ the dg $\mathbb{K}$-module generated by one degree $k$ element $\Sigma^k\in\Sigma ^k$ with $0$ as differential. We define the {\normalfont $k$-suspension} of $V$ as
    $$\Sigma ^k V= \Sigma ^k\otimes V.$$

    For every $v\in V$, we set $\Sigma^kv:=\Sigma^k\otimes v$. We also set $\Sigma^1 =\Sigma$.
\end{defi}

For every $n,k\geq 0$, we have an isomorphism of $\mathbb{K}$-modules $(\Sigma ^kV)_n\simeq V_{n-k}$. Besides, giving a degree $k$ morphism $V\longrightarrow W$ is equivalent to giving a degree $0$ morphism $V\longrightarrow\Sigma ^k W$, and also equivalent to giving a degree $0$ morphism $\Sigma ^{-k}V\longrightarrow W$.\\

\noindent Note that, for every $k\in\mathbb{Z}$, the $k$-suspension defines an endofunctor in the category $\text{\normalfont dgMod}_\mathbb{K}$: for every $f\in\text{\normalfont Hom}(V,W)$, we define $\Sigma^kf\in\text{\normalfont Hom}(\Sigma^kV,\Sigma^kW)$ by
$$(\Sigma^kf)(\Sigma^kv)=(-1)^{k|f|}\Sigma^kf(v)$$

\noindent for every $v\in V$.\\

We now make explicit the notion of filtration that we consider in this memoir.

\begin{defi}
    Let $V\in\text{\normalfont dgMod}_\mathbb{K}$. A {\normalfont filtration} on $V$ is a sequence $(F_nV)_{n\geq 1}$ of sub dg $\mathbb{K}$-modules of $V$ such that
    $$\cdots\subset F_n V\subset F_{n-1}V\subset\cdots\subset F_1V =V.$$

    \noindent A dg $\mathbb{K}$-module endowed with a filtration is called a {\normalfont filtered} dg $\mathbb{K}$-module. A filtered dg $\mathbb{K}$-module $V$ is said to be {\normalfont complete} if we have an isomorphism
    $$V\simeq \lim_{\longleftarrow}V/F_nV.$$
\end{defi}

For every filtered dg $\mathbb{K}$-module $V$, the \textit{completion} of $V$ with respect to its filtration is the filtered dg $\mathbb{K}$-module defined by
$$\widehat{V}=\lim_{\longleftarrow}V/F_nV,$$

\noindent with as filtration
$$F_m\widehat{V}=\lim_{\longleftarrow}F_mV/(F_nV\cap F_mV).$$

\noindent We immediately see that $\widehat{V}$ is complete.

\begin{remarque}
    If $V$ is complete with respect to the filtration $(F_nV)_{n\geq 1}$, then $\bigcap_{n\geq 1}F_nV=0$. This implies that if $x\in V$ is such that $x\in F_kV\implies x\in F_{k+1}V$ for every $k\geq 1$, then $x=0$.
\end{remarque}

Let $V,W\in\text{dgMod}_\mathbb{K}$ be two complete filtered dg $\mathbb{K}$-modules. We say that a morphism $f:V\longrightarrow W$ \textit{preserves the filtrations} if it satisfies, for all $n\geq 1$,
$$f(F_nV)\subset F_nW.$$

\noindent The complete filtered dg $\mathbb{K}$-modules together with the filtration preserving morphisms define a category denoted by \dgMod. If $V$ and $W$ are filtered, then their tensor product $V\otimes W$ is also filtered with
$$F_n(V\otimes W)=\sum_{p+q=n}F_pV\otimes F_qW.$$

\noindent However, this filtered dg $\mathbb{K}$-module is not complete in general, even if $V$ and $W$ are so. We therefore define the \textit{complete tensor product} by
$$V\widehat{\otimes} W=\lim_{\longleftarrow}(V\otimes W)/{F_n(V\otimes W)}.$$

\noindent We can check that the category \dgMod\ endowed with $\widehat{\otimes}$ is a symmetric monoidal category.

\subsection{The notion of an operad and a cooperad}\label{sec:212}

We briefly recall the notion of an operad and its dual notion, the notion of a cooperad. We will mostly follow \cite{fresselivre} and \cite{loday}.\\

Let $\text{\normalfont Seq}_\mathbb{K}$ be the category whose objects are sequences in $\text{\normalfont dgMod}_\mathbb{K}$. For every $M,N\in\text{\normalfont Seq}_\mathbb{K}$, we denote by $\text{\normalfont Hom}(M,N)$ the sequence defined for every $n\geq 0$ by
$$\text{\normalfont Hom}(M,N)(n)=\text{\normalfont Hom}(M(n),N(n)).$$

\begin{defi}
    A {\normalfont symmetric sequence} is a sequence $M\in\text{\normalfont Seq}_\mathbb{K}$ such that, for every $n\geq 0$, the dg $\mathbb{K}$-module $M(n)$ comes equipped with an action of $\Sigma_n$ on it. A morphism of symmetric sequences is a morphism of sequences which preserves the actions of the symmetric groups.
\end{defi}

We denote by $\Sigma\text{\normalfont Seq}_\mathbb{K}$ the subcategory of symmetric sequences. Note that if $M,N\in\Sigma\text{\normalfont Seq}_\mathbb{K}$, then $\text{\normalfont Hom}(M,N)\in\Sigma\text{\normalfont Seq}_\mathbb{K}$ with the action defined by

$$(\sigma\cdot f)(x)=\sigma\cdot f(\sigma^{-1}\cdot x)$$

\noindent for every $n\geq 0, f\in\text{\normalfont Hom}(M(n),N(n)), x\in M(n)$ and $\sigma\in\Sigma_n$.

\begin{defi}\quad
\begin{itemize}
    \item An {\normalfont operad} is a symmetric sequence $\mathcal{P}\in\Sigma \text{\normalfont Seq}_\mathbb{K}$ endowed with composition products
    $$\gamma:\mathcal{P}(n)\otimes\mathcal{P}(r_1)\otimes\cdots\otimes\mathcal{P}(r_n)\longrightarrow\mathcal{P}\left(\sum_i r_i\right),$$

    \noindent which satisfy associativity, unit and symmetry axioms. The underlying category is denoted by $\Sigma \mathcal{O}p$.

    \item Dually, a {\normalfont cooperad} is a symmetric sequence $\mathcal{C}\in\Sigma \text{\normalfont Seq}_\mathbb{K}$ endowed with composition coproducts
    $$\Delta:\mathcal{C}\left(\sum_i r_i\right)\longrightarrow\mathcal{C}(n)\otimes\mathcal{C}(r_1)\otimes \cdots\otimes\mathcal{C}(r_n),$$

    \noindent which satisfy coassociativity, counit and symmetry axioms. The underlying category is denoted by $\Sigma\mathcal{O}p^c$.
\end{itemize}
\end{defi}

By forgetting the action of the symmetric groups, and the symmetry axioms, we have the notion of a non-symmetric (co)operad. We denote by $\mathcal{O}p$ and $\mathcal{O}p^c$ the underlying categories.

\begin{remarque}\label{coopdual}
    If $\mathcal{P}$ is an operad such that $\mathcal{P}(n)$ is finite dimensional for every $n\geq 0$ and $\mathcal{P}(0)=0$, then the symmetric sequence
    $$\mathcal{P}^\vee(n):=\mathcal{P}(n)^\vee$$

    \noindent is endowed with the structure of a cooperad given by the dualization of the operadic structure of $\mathcal{P}$.
\end{remarque}

If $\mathcal{P}$ is an operad, then we define, for every $n,m\geq 0$ and $1\leq i\leq n$, the \textit{$i$-th partial composition morphism} by
% https://q.uiver.app/#q=WzAsMyxbMCwwLCJcXGNpcmNfaTpcXG1hdGhjYWx7UH0ocClcXG90aW1lc1xcbWF0aGNhbHtQfShxKSJdLFsxLDAsIlxcbWF0aGNhbHtQfShwKVxcb3RpbWVzXFxtYXRoYmJ7S31cXG90aW1lc1xcY2RvdHNcXG90aW1lc1xcdW5kZXJzZXR7aX17XFxtYXRoY2Fse1B9KHEpfVxcb3RpbWVzXFxjZG90c1xcb3RpbWVzXFxtYXRoYmJ7S30iXSxbMiwwLCJcXG1hdGhjYWx7UH0ocCtxLTEpIl0sWzAsMSwiXFxzaW1lcSJdLFsxLDIsIlxcZ2FtbWEiXV0=
\[\begin{tikzcd}
	{\circ_i:\mathcal{P}(p)\otimes\mathcal{P}(q)} & {\mathcal{P}(p)\otimes\mathbb{K}\otimes\cdots\otimes\underset{i}{\mathcal{P}(q)}\otimes\cdots\otimes\mathbb{K}} & {\mathcal{P}(p+q-1)}
	\arrow["\simeq", from=1-1, to=1-2]
	\arrow["\gamma", from=1-2, to=1-3]
\end{tikzcd}\]

\noindent where we plug operadic units in places $j\neq i$. Dually, if $\mathcal{C}$ is a cooperad, we define the \textit{$i$-th partial decomposition morphism} by
% https://q.uiver.app/#q=WzAsMyxbMSwwLCJcXG1hdGhjYWx7Q30ocClcXG90aW1lc1xcbWF0aGJie0t9XFxvdGltZXNcXGNkb3RzXFxvdGltZXNcXHVuZGVyc2V0e2l9e1xcbWF0aGNhbHtDfShxKX1cXG90aW1lc1xcY2RvdHNcXG90aW1lc1xcbWF0aGJie0t9Il0sWzAsMCwiXFxEZWx0YV9pOlxcbWF0aGNhbHtDfShwK3EtMSkiXSxbMiwwLCJcXG1hdGhjYWx7Q30ocClcXG90aW1lc1xcbWF0aGNhbHtDfShxKSJdLFsxLDAsIlxcRGVsdGEiXSxbMCwyLCJcXHNpbWVxIl1d
\[\begin{tikzcd}
	{\Delta_i:\mathcal{C}(p+q-1)} & {\mathcal{C}(p)\otimes\mathbb{K}\otimes\cdots\otimes\underset{i}{\mathcal{C}(q)}\otimes\cdots\otimes\mathbb{K}} & {\mathcal{C}(p)\otimes\mathcal{C}(q)}
	\arrow["\Delta", from=1-1, to=1-2]
	\arrow["\simeq", from=1-2, to=1-3]
\end{tikzcd}\]

\noindent where we plug cooperadic counits in places $j\neq i$.

% \noindent One can show that the definitions of an operad and of a cooperad is equivalent to give such morphisms with some additionnal relations (see for instance \cite[$\mathsection$5.3.7]{loday}).

\begin{example}
    For every $n\geq 0$, we set $\mathcal{C}om(n)=\mathbb{K}$ endowed with the trivial $\Sigma_n$-action. The isomorphism $\mathbb{K}\otimes\mathbb{K}\simeq\mathbb{K}$ endows $\mathcal{C}om$ with the structure of an operad called the {\normalfont commutative operad}.
\end{example}

\begin{example}
    Let $V\in\text{\normalfont dgMod}_\mathbb{K}$. We define the symmetric sequences $\text{\normalfont End}_V$ and $\text{\normalfont CoEnd}_V$ by
    $$\begin{array}{rll}
        \text{\normalfont End}_V(n) & = & \text{\normalfont Hom}(V^{\otimes n},V); \\
        \text{\normalfont CoEnd}_V(n) & = & \text{\normalfont Hom}(V,V^{\otimes n}).
    \end{array}$$

    \noindent These symmetric sequences are endowed with the structure of a symmetric operad defined as follows. Let $f\in\text{\normalfont End}_V(p),g\in\text{\normalfont End}_V(q)$ and $1\leq i\leq p$. We set
    $$f\circ_i g=f\circ ( id_V\otimes\cdots\otimes\underset{i}{g}\otimes\cdots\otimes id_V).$$

    \noindent Let $\phi\in\text{\normalfont CoEnd}_V(p),\psi\in\text{\normalfont CoEnd}_V(q)$ and $1\leq i\leq p$. We set,
    $$\phi\circ_i\psi=(-1)^{|\phi||\psi|}(id_V\otimes\cdots\otimes\underset{i}{\psi}\otimes\cdots\otimes id_V)\circ\phi.$$
    
    These operads are called respectively called the {\normalfont endomorphism} and {\normalfont coendomorphism} operads generated by $V$.
\end{example}

\begin{remarque}\label{remlabel}
    Let $n \geq 0$ and $\mathcal{P}$ be an operad. The elements of $\mathcal{P}(n)$ are seen as operations with abstract variables labeled by $1,\ldots,n$. In this memoir, we often label these variables by elements of a finite set $X$ with $n$ elements. This can be formalized as follows. Let $X$ be a set with $n$ elements, and $\Sigma(n,X)$ be the set of bijections from $\llbracket 1,n\rrbracket$ to $X$. We set
    $$\mathcal{P}(X)=(\mathcal{P}(n)\otimes \mathbb{K}[\Sigma(n,X)])_{\Sigma_n},$$

    \noindent where we make coincide the action of $\Sigma_n$ on $\mathcal{P}(n)$ with its action by right translation on $\Sigma(n,X)$. The group of permutations on $X$ acts on $\mathcal{P}(X)$ by left translation on $\Sigma(n,X)$. Note that, for every finite sets $X,Y$ with $n$ elements, every bijection $u:X\longrightarrow Y$ induces a morphism $u_\ast:\mathcal{P}(X)\longrightarrow\mathcal{P}(Y)$, so that $\mathcal{P}$ defines a functor from the category of finite sets to the category of dg $\mathbb{K}$-modules.\\
    
    \noindent For our needs, we apply the above construction to totally ordered finite sets. In this setting, we can shape operadic compositions on finite sets in the following way. Let $X=x_1<\cdots <x_n$ and $Y=y_1<\cdots <y_n$ be two disjoint totally ordered sets. We denote by $x:\llbracket 1,n\rrbracket\longrightarrow X$ and $y:\llbracket 1,m\rrbracket\longrightarrow Y$ the unique order preserving maps. Let $1\leq i\leq n$. We set
    $$X\sqcup_{i}Y=x_1<\cdots <x_{i-1}<y_1<\cdots <y_n<x_{i+1}<\cdots < x_n.$$
    
    \noindent Let $z:\llbracket 1,n+m-1\rrbracket\longrightarrow X\sqcup_i Y$ be the order preserving map. We define
    $$\circ_{i}:\mathcal{P}(X)\otimes\mathcal{P}(Y)\longrightarrow\mathcal{P}(X\sqcup_iY)$$

    \noindent by the following commutative diagram:
    % https://q.uiver.app/#q=WzAsNCxbMCwwLCJcXG1hdGhjYWx7UH0obilcXG90aW1lc1xcbWF0aGNhbHtQfShtKSJdLFsxLDAsIlxcbWF0aGNhbHtQfShuK20tMSkiXSxbMCwxLCJcXG1hdGhjYWx7UH0oWClcXG90aW1lc1xcbWF0aGNhbHtQfShZKSJdLFsxLDEsIlxcbWF0aGNhbHtQfShYXFxzcWN1cF97eF9pfVkpIl0sWzAsMSwiXFxjaXJjX2kiXSxbMCwyLCJ4X1xcYXN0XFxvdGltZXMgeV9cXGFzdCIsMl0sWzEsMywiel9cXGFzdCJdLFsyLDMsIlxcY2lyY197eF9pfSIsMl0sWzAsMiwiXFxzaW1lcSJdLFsxLDMsIlxcc2ltZXEiLDJdXQ==
\[\begin{tikzcd}
	{\mathcal{P}(n)\otimes\mathcal{P}(m)} & {\mathcal{P}(n+m-1)} \\
	{\mathcal{P}(X)\otimes\mathcal{P}(Y)} & {\mathcal{P}(X\sqcup_{i}Y)}
	\arrow["{\circ_i}", from=1-1, to=1-2]
	\arrow["{x_\ast\otimes y_\ast}"', from=1-1, to=2-1]
	\arrow["\simeq", from=1-1, to=2-1]
	\arrow["{z_\ast}", from=1-2, to=2-2]
	\arrow["\simeq"', from=1-2, to=2-2]
	\arrow["{\circ_{i}}"', from=2-1, to=2-2]
\end{tikzcd}.\]
\end{remarque}

\begin{defi}
    Let $\mathcal{P}$ be an operad. A {\normalfont $\mathcal{P}$-algebra} $A$ (respectively {\normalfont $\mathcal{P}$-coalgebra} $C$) is the data of a dg $\mathbb{K}$-module $A$ (resp. $C$) together with an operad morphism $\mathcal{P}\longrightarrow\text{\normalfont End}_A$ (respectively $\mathcal{P}\longrightarrow\text{\normalfont CoEnd}_C$).
\end{defi}

If $A$ is a $\mathcal{P}$-algebra with associated morphism $\phi:\mathcal{P}\longrightarrow\text{\normalfont End}_A$, for every $p\in\mathcal{P}(n)$, we set $p^A:=\phi(p)\in\text{\normalfont Hom}(A^{\otimes n},A)$.\\ Analogously, if $C$ is a $\mathcal{P}$-coalgebra with associated morphism $\phi:\mathcal{P}\longrightarrow\text{\normalfont CoEnd}_A$, we set $p^C:=\phi(p)\in\text{\normalfont Hom}(C,C^{\otimes n})$.

\begin{remarque}
    The above definition is such that the dual dg $\mathbb{K}$-module $C^\vee$ of every $\mathcal{P}$-coalgebra $C$ is endowed with the structure of a $\mathcal{P}$-algebra. We define an operad morphism $\phi^\vee:\mathcal{P}\longrightarrow\text{\normalfont End}_{C^\vee}$ as follows. Let $\phi:\mathcal{P}\longrightarrow\text{\normalfont CoEnd}_C$ be the coalgebra structure given by $\mathcal{P}$ on $C$. For every $n\geq 0$, $p\in\mathcal{P}(n)$ and $u_1,\ldots,u_n\in C^\vee$, we set
    $$\phi^\vee(p)(u_1\otimes\cdots\otimes u_n)=\phi(p)^\vee(u_1\otimes\cdots\otimes u_n).$$
\end{remarque}

In the following, we also use the notion of a complete $\mathcal{P}$-algebra. A \textit{filtered} $\mathcal{P}$-algebra is a filtered dg $\mathbb{K}$-module $A$ endowed with the structure of a $\mathcal{P}$-algebra such that, for every $n\geq 0$ and $p\in\mathcal{P}(n)$, the morphism $p^A:A^{\otimes n}\longrightarrow A$ preserves the filtrations. A \textit{complete} $\mathcal{P}$-algebra is a filtered $\mathcal{P}$-algebra which is complete with respect to its filtration.\\

We define a monoidal structure on $\Sigma \mathcal{O}p$ and $\Sigma \mathcal{O}p^c$ given by the \textit{Hadamard tensor product}.

\begin{defi}
    Let $\mathcal{P},\mathcal{Q}$ be two operads and $\mathcal{C},\mathcal{D}$ be two cooperads.
    \begin{itemize}
        \item The {\normalfont Hadamard tensor product} of $\mathcal{P}$ and $\mathcal{Q}$ is the operad $\mathcal{P}\underset{\text{\normalfont H}}{\otimes}\mathcal{Q}$ defined by
$$(\mathcal{P}\underset{\text{\normalfont H}}{\otimes}\mathcal{Q})(n)=\mathcal{P}(n)\otimes\mathcal{Q}(n)$$

\noindent and equipped with the tensor-wise operadic composition product and the diagonal action of $\Sigma _n$.

\item The {\normalfont Hadamard tensor product} of $\mathcal{C}$ and $\mathcal{D}$ is the cooperad $\mathcal{C}\underset{H}{\otimes}\mathcal{D}$ defined by
$$(\mathcal{C}\underset{\text{\normalfont H}}{\otimes}\mathcal{D})(n)=\mathcal{C}(n)\otimes\mathcal{D}(n)$$

\noindent and equipped with the tensor-wise cooperadic composition coproduct and the diagonal action of $\Sigma _n$.
    \end{itemize}
\end{defi}

We now define the notion of suspension in the category of operads and cooperads.

\begin{defi}
    Let $\mathcal{P}$ be an operad and $\mathcal{C}$ be a cooperad. We set $\Lambda^k=\text{\normalfont End}_{\Sigma ^k}$ and $\Lambda:=\Lambda^1$.

    \begin{itemize}
        \item The {\normalfont $k$ operadic suspension} of $\mathcal{P}$ is the operad $\Lambda^k\mathcal{P}$ defined by
        $$\Lambda^k\mathcal{P}=\Lambda^k\underset{\text{\normalfont H}}{\otimes}\mathcal{P}.$$

        \item The {\normalfont $k$ cooperadic suspension} of $\mathcal{C}$ is the cooperad $\Lambda^{k}\mathcal{C}$ defined by
        $$\Lambda^{k}\mathcal{C}= {(\Lambda^k)^\vee}\underset{\text{\normalfont H}}{\otimes}\mathcal{C}.$$
    \end{itemize}
\end{defi}

Accordingly, $\Lambda^k\mathcal{P}(n)\simeq\Sigma ^{k(1-n)}\mathcal{P}(n)$ and $\Lambda^k\mathcal{C}(n)\simeq\Sigma ^{k(1-n)}\mathcal{C}(n)$.\\

We have an isomorphism of operads $\Lambda^{-k}\Lambda^k\mathcal{P}\longrightarrow\mathcal{P}$ given by $(\Sigma^{-k(1-n)}(\Sigma^{k(1-n)}p))\longmapsto -(-1)^{\frac{n(n+1)}{2}k}p$ for every $p\in\mathcal{P}(n)$.\\

Note that, for every $k\in\mathbb{Z}$, the dg $\mathbb{K}$-module $\Sigma^k$ is a $\Lambda^k$-algebra. We thus have the following.

\begin{prop}
    Let $\mathcal{P}$ be an operad and $\mathcal{C}$ be a cooperad. Let $V$ be a dg $\mathbb{K}$-module.

    \begin{itemize}
        \item Giving a structure of $\mathcal{P}$-algebra on $V$ is equivalent to giving a structure of $\Lambda^k\mathcal{P}$-algebra on $\Sigma ^kV$.
        \item Giving a structure of $\mathcal{C}$-coalgebra on $V$ is equivalent to giving a structure of $\Lambda^k\mathcal{C}$-coalgebra on $\Sigma ^k V$.
    \end{itemize}
\end{prop}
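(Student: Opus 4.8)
The plan is to reduce both claims to the definitional unwinding of what it means to be an algebra over an operad, together with the fact (already recorded in the excerpt) that $\Sigma^k$ carries a canonical $\Lambda^k$-algebra structure and that $\Lambda^k$ is defined as $\operatorname{End}_{\Sigma^k}$. The conceptual engine is the identity $\Lambda^k\mathcal{P} = \Lambda^k \underset{\text{H}}{\otimes}\mathcal{P} = \operatorname{End}_{\Sigma^k}\underset{\text{H}}{\otimes}\mathcal{P}$, so I expect the theorem to follow from a single natural isomorphism of operads
\[
\operatorname{End}_{\Sigma^k V}\;\simeq\;\operatorname{End}_{\Sigma^k}\underset{\text{H}}{\otimes}\operatorname{End}_V,
\]
and a dual isomorphism for coendomorphisms. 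Granting such an isomorphism, a $\mathcal{P}$-algebra structure on $V$, i.e.\ a morphism $\mathcal{P}\to\operatorname{End}_V$, is transported to a morphism $\Lambda^k\mathcal{P}=\Lambda^k\underset{\text{H}}{\otimes}\mathcal{P}\to\Lambda^k\underset{\text{H}}{\otimes}\operatorname{End}_V\simeq\operatorname{End}_{\Sigma^kV}$, which is exactly a $\Lambda^k\mathcal{P}$-algebra structure on $\Sigma^k V$; and since $\Lambda^{-k}\Lambda^k\mathcal{P}\simeq\mathcal{P}$ by the isomorphism stated just before the proposition, the correspondence is invertible, giving the desired equivalence.

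The first step is therefore to construct the operad isomorphism $\operatorname{End}_{\Sigma^kV}\simeq \operatorname{End}_{\Sigma^k}\underset{\text{H}}{\otimes}\operatorname{End}_V$ in arity $n$. Since $\Sigma^k V=\Sigma^k\otimes V$ with $\Sigma^k$ one-dimensional, we have $(\Sigma^kV)^{\otimes n}\simeq (\Sigma^k)^{\otimes n}\otimes V^{\otimes n}$ after applying the symmetry operator to shuffle the tensor factors, and hence
\[
\operatorname{Hom}\bigl((\Sigma^kV)^{\otimes n},\Sigma^kV\bigr)\simeq \operatorname{Hom}\bigl((\Sigma^k)^{\otimes n},\Sigma^k\bigr)\otimes\operatorname{Hom}\bigl(V^{\otimes n},V\bigr),
\]
which is precisely $(\operatorname{End}_{\Sigma^k}\underset{\text{H}}{\otimes}\operatorname{End}_V)(n)$. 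The main content is to verify that this arity-wise isomorphism is compatible with the operadic composition products and the diagonal $\Sigma_n$-action. This is where the $\widetilde{\otimes}$-construction of Definition~\ref{otimesmodif} enters: the partial composition $f\circ_i g$ in $\operatorname{End}_{\Sigma^kV}$ must be matched, under the reshuffling isomorphism, with the tensor-wise composition $(\alpha\otimes f)\circ_i(\beta\otimes g) = (\alpha\circ_i\beta)\otimes(f\circ_i g)$ of the Hadamard product, and the bookkeeping of Koszul signs produced by permuting the suspension factors past the $V$-factors is exactly the content that $f\widetilde{\otimes}g$ is designed to package.

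The hard part will be precisely this sign and symmetry verification: tracking the Koszul signs incurred when the symmetry operator $\tau$ moves the degree-$k$ generators $\Sigma^k$ across homogeneous elements of $V$, and checking that these signs are identical on both sides of the claimed isomorphism so that it is genuinely an isomorphism of operads (and not merely of symmetric sequences). Once the covariant case is settled, the cooperadic statement follows by the same argument applied to $\operatorname{CoEnd}$, using the dual identity $\operatorname{CoEnd}_{\Sigma^kV}\simeq\operatorname{CoEnd}_{\Sigma^k}\underset{\text{H}}{\otimes}\operatorname{CoEnd}_V$ together with $\Lambda^k\mathcal{C}=(\Lambda^k)^\vee\underset{\text{H}}{\otimes}\mathcal{C}$; here one uses that $\operatorname{CoEnd}_{\Sigma^k}\simeq(\operatorname{End}_{\Sigma^k})^\vee=(\Lambda^k)^\vee$ in each arity, and that the extra sign $(-1)^{|\phi||\psi|}$ in the definition of the coendomorphism composition is absorbed consistently on both sides. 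I would then conclude by remarking that the whole construction is natural in $V$, so that it upgrades to an equivalence of categories, and that completeness is preserved since suspension and the Hadamard product are compatible with filtrations, which covers the filtered and complete variants used later in the memoir.
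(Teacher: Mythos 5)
Your proposal is correct and follows exactly the route the paper intends: the paper states this proposition without proof, justifying it only by the preceding remark that $\Sigma^k$ is a $\Lambda^k$-algebra, which is precisely the mechanism you elaborate via the isomorphism $\text{\normalfont End}_{\Sigma^k V}\simeq\text{\normalfont End}_{\Sigma^k}\underset{\text{\normalfont H}}{\otimes}\text{\normalfont End}_V$ (valid because $\Sigma^k$ is one-dimensional) and the inverse suspension $\Lambda^{-k}\Lambda^k\mathcal{P}\simeq\mathcal{P}$. Your treatment is, if anything, more explicit than the paper's on the sign bookkeeping and on why the correspondence is a bijection rather than merely one-directional.
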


Any operad $\mathcal{P}$ gives a monad $\mathcal{S}(\mathcal{P},-):\text{\normalfont dgMod}_{\mathbb{K}}\longrightarrow\text{\normalfont dgMod}_\mathbb{K}$ called the \textit{Schur functor} and defined by
$$\mathcal{S}(\mathcal{P},V)=\bigoplus_{n\geq 0}\mathcal{P}(n)\otimes_{\Sigma_n} V^{\otimes n},$$

\noindent where we consider the action of $\Sigma_n$ on $\mathcal{P}(n)$, and the action of $\Sigma_n$ on $V^{\otimes n}$ by permutation. The monadic structure is given by the composite
% https://q.uiver.app/#q=WzAsMyxbMCwwLCJcXG1hdGhjYWx7U30oXFxtYXRoY2Fse1B9LFxcbWF0aGNhbHtTfShcXG1hdGhjYWx7UH0sVikpIl0sWzEsMCwiXFxtYXRoY2Fse1N9KFxcbWF0aGNhbHtQfVxcY2lyY1xcbWF0aGNhbHtQfSxWKSJdLFsyLDAsIlxcbWF0aGNhbHtTfShcXG1hdGhjYWx7UH0sVikiXSxbMCwxLCJcXHNpbWVxIl0sWzEsMiwiXFxtYXRoY2Fse1N9KFxcbXUsVikiXV0=
\[\begin{tikzcd}
	{\mathcal{S}(\mathcal{P},\mathcal{S}(\mathcal{P},V))} & {\mathcal{S}(\mathcal{P}\circ\mathcal{P},V)} & {\mathcal{S}(\mathcal{P},V),}
	\arrow["\simeq", from=1-1, to=1-2]
	\arrow["{\mathcal{S}(\gamma,V)}", from=1-2, to=1-3]
\end{tikzcd}\]

\noindent where we denote by $\circ$ the composition of symmetric sequences. Note that the algebras over the monad $\mathcal{S}(\mathcal{P},-)$ are precisely the $\mathcal{P}$-algebras.\\

\noindent If $\mathcal{P}(0)=0$, we also have a monad $\Gamma(\mathcal{P},-):\text{\normalfont dgMod}_{\mathbb{K}}\longrightarrow\text{\normalfont dgMod}_\mathbb{K}$ defined by
$$\Gamma(\mathcal{P},V)=\bigoplus_{n\geq 1}\mathcal{P}(n)\otimes^{\Sigma_n} V^{\otimes n}.$$

\noindent We refer to \cite[$\mathsection$1.1.18]{fressebis} for the description of this monadic structure. We only note that we have a morphism of monads
$$Tr:\mathcal{S}(\mathcal{P},V)\longrightarrow\Gamma(\mathcal{P},V)$$

\noindent given by the trace map. This is an isomorphism as soon as $char(\mathbb{K})=0$. It is however no longer an isomorphism in general when $char(\mathbb{K})\neq 0$.

\begin{defi}
    Let $\mathcal{P}$ be an operad such that $\mathcal{P}(0)=0$. A {\normalfont $\mathcal{P}$-algebra with divided powers} is a $\Gamma(\mathcal{P},-)$-algebra.
\end{defi}

Note that every $\mathcal{P}$-algebra with divided powers is in particular a $\mathcal{P}$-algebra through the trace map.

\begin{prop}\label{gammashift}
    Let $\mathcal{P}$ be an operad such that $\mathcal{P}(0)=0$ and $V$ be a dg $\mathbb{K}$-module. Let $k\in\mathbb{Z}$. Then $V$ is a $\Gamma(\mathcal{P},-)$-algebra if and only if $\Sigma ^k V$ is a $\Gamma(\Lambda^k\mathcal{P},-)$-algebra.
\end{prop}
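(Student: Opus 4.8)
The plan is to build the equivalence out of the analogous statement for the free monad construction, namely the isomorphism of operads $\Lambda^{-k}\Lambda^k\mathcal{P}\simeq\mathcal{P}$ recalled just before Proposition~\ref{gammashift}, together with the compatibility of the divided power functor $\Gamma(-,-)$ with the suspension functor $\Sigma^k$. The key observation is that, just as the Schur functor satisfies $\mathcal{S}(\Lambda^k\mathcal{P},\Sigma^k V)\simeq\Sigma^k\mathcal{S}(\mathcal{P},V)$ (which follows from $\Lambda^k\mathcal{P}(n)\otimes_{\Sigma_n}(\Sigma^k V)^{\otimes n}\simeq\Sigma^{k(1-n)}\mathcal{P}(n)\otimes_{\Sigma_n}\Sigma^{kn}V^{\otimes n}\simeq\Sigma^k(\mathcal{P}(n)\otimes_{\Sigma_n}V^{\otimes n})$), the same computation runs verbatim with the invariant-style tensor product $\otimes^{\Sigma_n}$ replacing the coinvariant-style $\otimes_{\Sigma_n}$. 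This yields a natural isomorphism $\Theta_V:\Gamma(\Lambda^k\mathcal{P},\Sigma^k V)\overset{\simeq}{\longrightarrow}\Sigma^k\Gamma(\mathcal{P},V)$.

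First I would verify that $\Theta$ is not merely a natural isomorphism of functors $\text{dgMod}_\mathbb{K}\to\text{dgMod}_\mathbb{K}$, but an isomorphism of \emph{monads} when the target is equipped with the conjugated monad structure of $\Sigma^k\Gamma(\mathcal{P},-)$ transported along the suspension isomorphism. Concretely, the monad structure on $\Gamma(\Lambda^k\mathcal{P},-)$ is induced by the operadic composition $\gamma_{\Lambda^k\mathcal{P}}$ of the suspended operad, and the point is that $\gamma_{\Lambda^k\mathcal{P}}$ corresponds to $\gamma_\mathcal{P}$ under the Hadamard tensor decomposition $\Lambda^k\mathcal{P}=\Lambda^k\underset{\text{\normalfont H}}{\otimes}\mathcal{P}$, with the extra signs packaged into the $\Lambda^k$-factor exactly accounting for the Koszul signs produced by commuting the suspensions $\Sigma^k$ past one another in $\Theta$. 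I would check this compatibility on the unit and on the multiplication of the monad, using the description of the $\Gamma$-monad structure in \cite[$\mathsection$1.1.18]{fressebis}.

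Granting that $\Theta$ is a monad isomorphism, the conclusion is formal: an isomorphism of monads induces an isomorphism between the corresponding categories of algebras. A $\Gamma(\mathcal{P},-)$-algebra structure on $V$ is by definition a structure map $\Gamma(\mathcal{P},V)\longrightarrow V$ compatible with the monad; transporting along $\Theta$ and the suspension functor sends this to a structure map $\Gamma(\Lambda^k\mathcal{P},\Sigma^k V)\longrightarrow\Sigma^k V$, that is, a $\Gamma(\Lambda^k\mathcal{P},-)$-algebra structure on $\Sigma^k V$, and the construction is manifestly invertible (applying the same argument with $\mathcal{P}$ replaced by $\Lambda^k\mathcal{P}$, $k$ by $-k$, and using $\Lambda^{-k}\Lambda^k\mathcal{P}\simeq\mathcal{P}$). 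This gives the desired equivalence in both directions.

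The main obstacle I anticipate is purely bookkeeping: tracking the Koszul signs and verifying that the sign factor $-(-1)^{\frac{n(n+1)}{2}k}$ appearing in the operad isomorphism $\Lambda^{-k}\Lambda^k\mathcal{P}\simeq\mathcal{P}$ is consistent with the sign $(-1)^{k|f|}$ in the definition of $\Sigma^k f$ and with the symmetry operator signs entering when one distributes $(\Sigma^k V)^{\otimes n}\simeq\Sigma^{kn}V^{\otimes n}$. One must confirm that all these signs cancel coherently so that $\Theta$ genuinely respects the $\Sigma_n$-actions (hence descends to the invariants $\otimes^{\Sigma_n}$) and the monadic multiplication, rather than merely being an isomorphism of underlying graded modules. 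Since the corresponding $\mathcal{S}(\mathcal{P},-)$ statement is the already-recorded Proposition on $\mathcal{P}$- versus $\Lambda^k\mathcal{P}$-algebras, the cleanest route is to deduce the $\Gamma$-version by restricting that sign analysis through the monad morphism $Tr:\mathcal{S}(\mathcal{P},-)\longrightarrow\Gamma(\mathcal{P},-)$, which is natural in $\mathcal{P}$ and therefore commutes with the suspension comparison, so no new signs are introduced beyond those already validated at the level of the Schur functor.
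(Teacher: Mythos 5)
Your proposal is correct and follows essentially the same route as the paper: the paper also builds the $\Gamma(\Lambda^k\mathcal{P},-)$-structure on $\Sigma^kV$ by composing the structure map of $V$ with a natural isomorphism $\Gamma(\Lambda^k\mathcal{P},\Sigma^kV)\simeq\Sigma^k\Gamma(\mathcal{P},V)$, obtained from the $\Sigma_n$-equivariant identification of $\text{\normalfont End}_{\Sigma^k}(n)\otimes(\Sigma^k)^{\otimes n}$ with $\Sigma^k$ carrying the trivial action (which is exactly why your computation passes to invariants $\otimes^{\Sigma_n}$ as well as to coinvariants), and it declares the monad compatibility an immediate verification. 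One caution on your last paragraph: you cannot \emph{deduce} the sign coherence for the $\Gamma$-version by ``restricting through'' the trace map $Tr:\mathcal{S}(\mathcal{P},-)\longrightarrow\Gamma(\mathcal{P},-)$, since in positive characteristic $Tr$ is neither injective nor surjective; the correct justification is the one you already gave, namely that the identification $\Lambda^k(n)\otimes(\Sigma^k)^{\otimes n}\simeq\Sigma^k$ is itself an isomorphism of $\Sigma_n$-representations, so the sign analysis applies directly to the invariants without passing through the Schur functor.
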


\begin{proof}
    Let $V$ be a $\Gamma(\mathcal{P},-)$-algebra. We endow $\Sigma ^kV$ with the structure of a $\Gamma(\Lambda^k\mathcal{P},-)$-algebra via the composite
$$\Gamma(\Lambda^k\mathcal{P},\Sigma ^k V)\longrightarrow\Sigma ^k\Gamma(\mathcal{P},V)\longrightarrow\Sigma ^k V,$$

\noindent where the first morphism comes from the fact that $\text{\normalfont End}_{\Sigma ^k}(n)\otimes (\Sigma ^k)^{\otimes n}$ is isomorphic to $\Sigma ^k$ endowed with the trivial $\Sigma _n$ action. The fact that this endows $\Sigma ^k V$ with a $\Gamma(\Lambda^k\mathcal{P},-)$-algebra structure is an immediate verification.
\end{proof}

\subsection{On trees and the operad $\mathcal{B}race$}\label{sec:213}

In this section, we recall the notion of a tree and define the operad $\mathcal{B}race$. The notion of a brace algebra was introduced in \cite[Definition 1]{gerstenhaber2}, while an explicit construction of their governing operad $\mathcal{B}race$ is given in \cite[$\mathsection$2.1-2.2]{chapotonbis}.

% \begin{defi}
%     We call {\normalfont non labelled (rooted) tree} any simply connected graph with a special vertex called the {\normalfont root}. We put the root at the bottom by convention.\\ For all $n\geq 0$, we denote by $\mathcal{T}ree(n)$ the $\mathbb{K}$-module spanned by non labelled trees with $n$ vertices. We thus have a symmetric sequence $\mathcal{T}ree$ induced by the trivial action of $\Sigma _n$ on $\mathcal{T}ree(n)$ for all $n\geq 0$.
% \end{defi}

% \begin{center}
%         \begin{tikzpicture}[baseline={([yshift=-.5ex]current bounding box.center)},scale=0.6]
%     \node[draw,circle] (i) at (0,0) {};
%     \node[draw,circle] (1) at (-1,1) {};
%     \node[draw,circle] (1b) at (-1.5,2) {};
%     \node[draw,circle] (1bb) at (-0.5,2) {};
%     \node[draw,circle] (2) at (0,1) {};
%     \node[draw,circle] (3) at (1,1) {};
%     \node[draw,circle] (3b) at (1,2) {};
%     \draw (2) -- (i);
%     \draw (1) -- (1b);
%     \draw (1) -- (1bb);
%     \draw (i) -- (1);
%     \draw (i) -- (3);
%     \draw (3) -- (3b);
%     \end{tikzpicture}
% \end{center}

\begin{defi}
    We call (planar) {\normalfont $n$-tree} any simply connected graph endowed with a special vertex called the {\normalfont root} and a labeling of its set of vertices from $1$ to $n$. We put the root at the bottom by convention:
$$\begin{tikzpicture}[baseline={([yshift=-.5ex]current bounding box.center)},scale=0.6]
    \node[draw,circle,scale=0.6] (i) at (0,0) {$5$};
    \node[draw,circle,scale=0.6] (1) at (-1,1) {$6$};
    \node[draw,circle,scale=0.6] (1b) at (-1.5,2) {$3$};
    \node[draw,circle,scale=0.6] (1bb) at (-0.5,2) {$7$};
    \node[draw,circle,scale=0.6] (2) at (0,1) {$1$};
    \node[draw,circle,scale=0.6] (3) at (1,1) {$4$};
    \node[draw,circle,scale=0.6] (3b) at (1,2) {$2$};
    \draw (2) -- (i);
    \draw (1) -- (1b);
    \draw (1) -- (1bb);
    \draw (i) -- (1);
    \draw (i) -- (3);
    \draw (3) -- (3b);
    \end{tikzpicture}.$$
    
    We denote by $\mathcal{PRT}(n)$ the set of planar rooted trees with $n$ vertices. For every $T\in\mathcal{PRT}(n)$, we set $|T|=n$ and $r(T)$ denotes the root of the tree $T$.
\end{defi}

% \begin{remarque}
%     We have a morphism of symmetric sequences
%     $$\omega:\mathcal{B}race\longrightarrow\mathcal{T}ree$$

%     \noindent obtained by forgetting the order relation on the given trees.
% \end{remarque}

In some situation, it is more convenient to label an $n$-tree by a finite set with $n$ elements endowed with a total ordered relation. If $X$ is such a set, we denote by $\mathcal{PRT}(X)$ the set of $n$-trees labeled with elements of $X$. Note that since there is a unique order preserving bijection $\llbracket 1,n\rrbracket\longrightarrow X$, there is a canonical bijection $\mathcal{PRT}(n)\longrightarrow\mathcal{PRT}(X)$. For instance, the tree $T$ shown in the above definition can be seen in $\mathcal{PRT}(a_1<\cdots < a_7)$ as
$$\begin{tikzpicture}[baseline={([yshift=-.5ex]current bounding box.center)},scale=0.6]
    \node[draw,circle,scale=0.6] (i) at (0,0) {$a_5$};
    \node[draw,circle,scale=0.6] (1) at (-1,1) {$a_6$};
    \node[draw,circle,scale=0.6] (1b) at (-1.5,2) {$a_3$};
    \node[draw,circle,scale=0.6] (1bb) at (-0.5,2) {$a_7$};
    \node[draw,circle,scale=0.6] (2) at (0,1) {$a_1$};
    \node[draw,circle,scale=0.6] (3) at (1,1) {$a_4$};
    \node[draw,circle,scale=0.6] (3b) at (1,2) {$a_2$};
    \draw (2) -- (i);
    \draw (1) -- (1b);
    \draw (1) -- (1bb);
    \draw (i) -- (1);
    \draw (i) -- (3);
    \draw (3) -- (3b);
    \end{tikzpicture}.$$

\begin{prop}[{\cite[Proposition 2]{chapotonbis}}]
    Let $\mathcal{B}race$ be the symmetric sequence defined by $\mathcal{B}race=\mathbb{K}[\mathcal{PRT}(n)]$. Then $\mathcal{B}race$ is endowed with the structure of an operad. Its algebras are given by dg $\mathbb{K}$-modules $A$ endowed with morphisms $-\langle -,\ldots,-\rangle:A^{\otimes n+1}\longrightarrow A$ for any $n\geq 0$ such that $x\langle\rangle=x$ and
    $$x\langle y_1,\ldots,y_n\rangle\langle z_1,\ldots,z_p\rangle=\sum\pm x\langle Z_1,y_1\langle Z_2\rangle,\ldots,Z_{2n-1},y_n\langle Z_{2n}\rangle, Z_{2n+1}\rangle$$

    \noindent for every $x,y_1,\ldots,y_n,z_1,\ldots,y_p\in A$, where the sum runs over all consecutive subsets such that $Z_1\sqcup\cdots\sqcup Z_{2n+1}=(z_1,\ldots,z_p)$.
\end{prop}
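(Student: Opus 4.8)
The plan is to establish the two assertions separately: first the operad structure on $\mathbb{K}[\mathcal{PRT}(n)]$, then the identification of its algebras. Throughout I would use the labeled-set formalism of Remark~\ref{remlabel}, so that the symmetric group actions are carried by the vertex labels and the equivariance axioms reduce to compatibility with relabeling.

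For the operad structure, I would define, for $S\in\mathcal{PRT}(p)$, $T\in\mathcal{PRT}(q)$ and $1\le i\le p$, the partial composition $S\circ_i T$ by \emph{blowing up} the vertex $i$ of $S$ into $T$: one deletes the vertex $i$, attaches the root of $T$ where $i$ was attached to its parent, and then grafts the branches of $S$ that sat above $i$ onto the vertices of $T$, summing over all ways of doing so that respect the planar left-to-right order (with the remaining labels shifted accordingly). Extending $\mathbb{K}$-linearly and taking the one-vertex tree as operadic unit produces the candidate composition products. The substantive point is to verify the two associativity axioms — sequential composition at two vertices of $S$, and nested composition inside a subtree already substituted — each of which becomes an identity of double sums over graftings that one checks to agree term by term. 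This combinatorial bookkeeping is the first main obstacle; unitality and equivariance are then immediate.

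For the algebras, I would use that $\mathcal{B}race(n)$ is free on $\mathcal{PRT}(n)$, so that a morphism $\mathcal{B}race\to\text{\normalfont End}_A$ is the same as a system of operations indexed by trees, compatible with composition. The key structural fact is that $\mathcal{B}race$ is generated as an operad by the corollas $c_n\in\mathcal{B}race(n+1)$ — a root with $n$ ordered children — which evaluate to the brace operations $x\langle y_1,\ldots,y_n\rangle$. This I would prove by induction on $|T|$: denoting the root-subtrees of $T$ by $T_1,\ldots,T_k$ in planar order, one recovers $T$ by substituting the unit tree into the root slot of $c_k$ and each $T_j$ into the corresponding child slot; since the child vertices of $c_k$ are leaves, these substitutions carry no redistribution and yield $T$ as a single term. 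Hence the whole algebra structure is determined by the operations $-\langle-,\ldots,-\rangle$, the one-vertex tree forcing $x\langle\rangle=x$.

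It then remains to identify the relations. Computing the composite $c_p\circ_1 c_n$, which substitutes the $n$-corolla into the root input of the $p$-corolla, one blows up the root of $c_p$ (carrying children $z_1,\ldots,z_p$) into the $n$-corolla (root $x$ with children $y_1,\ldots,y_n$) and redistributes $z_1,\ldots,z_p$ as consecutive blocks among the vertices $x,y_1,\ldots,y_n$; reading off the resulting sum of trees reproduces exactly the right-hand side of the displayed relation, the $\pm$ being the Koszul signs incurred in $\text{\normalfont End}_A$ when the graded inputs $z_j$ are moved past the $y_k$. The final and hardest step is the converse — that these exhaust the relations — for which I would show that the free operad on the corollas modulo the brace relations has, in each arity, a $\mathbb{K}$-basis indexed by $\mathcal{PRT}(n)$. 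Concretely I would produce a normal form for iterated braces by repeatedly rewriting nested compositions through the relation into the tree-shaped expressions above, check that normal forms are in bijection with planar rooted trees, and conclude by a dimension count that the canonical surjection onto $\mathcal{B}race$ is an isomorphism. This PBW-type completeness argument, rather than the verification of the relation itself, is where I expect the real difficulty to lie.
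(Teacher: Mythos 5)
The paper gives no proof of this statement: it is quoted verbatim from \cite[Proposition 2]{chapotonbis} and used as a black box, so there is no in-paper argument to compare against. Your sketch follows the standard route of that reference (grafting composition on planar rooted trees, generation by corollas, the relation read off from $c_p\circ_1 c_n$, and a normal-form/dimension-count argument for completeness) and is correct in outline. The one place that genuinely needs care is the step you yourself flag: the brace relation only rewrites compositions into the \emph{root} input of a corolla, while compositions into leaf inputs produce single trees with no redistribution, so the normal forms are exactly the tree monomials; you should make the termination of this rewriting explicit (e.g.\ by filtering by the number of root-compositions remaining) before invoking the bijection with $\mathcal{PRT}(n)$ and the dimension count.
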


Note that every tree $T$ with $|T|\geq 2$ can be uniquely written as $T=\gamma(F_n(\begin{tikzpicture}[baseline={([yshift=-.5ex]current bounding box.center)},scale=0.6]
    \node[draw,circle,scale=0.6] (2) at (0,0) {1};\end{tikzpicture},T_1,\ldots,T_n))$ where we denote by
$$F_n=\begin{tikzpicture}[baseline={([yshift=-.5ex]current bounding box.center)},scale=0.6]
    \node[draw,circle,scale=0.6] (2) at (0,0) {1};
    \node[scale=0.6] (3) at (0,1) {$\cdots$};
    \node[draw,circle,scale=0.6] (1) at (-1,1) {$2$};
    \node[draw,circle,scale=0.4] (4) at (1,1) {$n+1$};
    \draw (2) -- (1);
    \draw (4) -- (2);
    \end{tikzpicture}$$
\noindent the \textit{corolla} with $n$ leaves.\\

In the next sections, in order to have formulas which preserve the symmetric groups actions on $\mathcal{B}race$, we pick an explicit set of representatives of $\mathcal{B}race(n)$ as a free $\Sigma _n$-set. We achieve this by setting a total order relation on the set of vertices $V_T$ which we call the \textit{canonical order}. For every $a\in\mathbb{N}^\ast$, we set $V_{\begin{tikzpicture}[baseline={([yshift=-.5ex]current bounding box.center)},scale=0.6]
    \node[draw,circle,scale=0.6] (2) at (0,0) {$a$};\end{tikzpicture}}=a$, and define by induction,
    $$V_{\gamma(F_n(\begin{tikzpicture}[baseline={([yshift=-.5ex]current bounding box.center)},scale=0.6]
    \node[draw,circle,scale=0.6] (2) at (0,0) {$a$};\end{tikzpicture},T_1,\ldots,T_n))}=a<V_{T_1}<\cdots<V_{T_n}$$

\noindent for every tree $T_1,\ldots,T_n$. For instance, if we set
\begin{equation*}
        T=\begin{tikzpicture}[baseline={([yshift=-.5ex]current bounding box.center)},scale=0.6]
    \node[draw,circle,scale=0.6] (i) at (0,0) {$5$};
    \node[draw,circle,scale=0.6] (1) at (-1,1) {$6$};
    \node[draw,circle,scale=0.6] (1b) at (-1.5,2) {$3$};
    \node[draw,circle,scale=0.6] (1bb) at (-0.5,2) {$7$};
    \node[draw,circle,scale=0.6] (2) at (0,1) {$1$};
    \node[draw,circle,scale=0.6] (3) at (1,1) {$4$};
    \node[draw,circle,scale=0.6] (3b) at (1,2) {$2$};
    \draw (2) -- (i);
    \draw (1) -- (1b);
    \draw (1) -- (1bb);
    \draw (i) -- (1);
    \draw (i) -- (3);
    \draw (3) -- (3b);
    \end{tikzpicture}
\end{equation*}

\noindent then $V_T=5<6<3<7<1<4<2$.

\begin{defi}\label{canonicaltree}
    A tree $T\in\mathcal{PRT}(a_1<\cdots <a_n)$ is {\normalfont canonical} (or in the canonical order) if
    $$V_T=a_1<\cdots <a_n.$$

    \noindent We let $\sigma _T\in\Sigma _{|T|}$ to be the unique permutation such that $\sigma _T^{-1}\cdot T$ is in the canonical order.

    % For every tree $T\in\mathcal{PRT}(a_1<\cdots <a_n)$, we denote by $\Sigma _T\in\Sigma _n$ the unique permutation such that $\Sigma _T^{-1}\cdot T$ is in the canonical order.
\end{defi}

% \begin{defi}
%     A tree $T\in\mathcal{B}race(\llbracket 1;n\rrbracket)$ is said to be in the {\normalfont canonical order} if when we enumerate each vertices in $T$ starting to the root and following all levels from the left to the right, we obtain the canonical order $1<...<n$.\\

%     For every tree $T\in\mathcal{B}race(\llbracket 1;n\rrbracket)$, we denote by $\Sigma _T\in\Sigma _n$ the unique permutation such that $\Sigma _T^{-1}.T$ is in the canonical order.
% \end{defi}

For instance, if we consider the above tree, then $\sigma _T=(5637142)$ and
\begin{equation*}
        \sigma _T^{-1}\cdot T=\begin{tikzpicture}[baseline={([yshift=-.5ex]current bounding box.center)},scale=0.6]
    \node[draw,circle,scale=0.6] (i) at (0,0) {$1$};
    \node[draw,circle,scale=0.6] (1) at (-1,1) {$2$};
    \node[draw,circle,scale=0.6] (1b) at (-1.5,2) {$3$};
    \node[draw,circle,scale=0.6] (1bb) at (-0.5,2) {$4$};
    \node[draw,circle,scale=0.6] (2) at (0,1) {$5$};
    \node[draw,circle,scale=0.6] (3) at (1,1) {$6$};
    \node[draw,circle,scale=0.6] (3b) at (1,2) {$7$};
    \draw (2) -- (i);
    \draw (1) -- (1b);
    \draw (1) -- (1bb);
    \draw (i) -- (1);
    \draw (i) -- (3);
    \draw (3) -- (3b);
    \end{tikzpicture}
\end{equation*}

\noindent is in the canonical order in $\mathcal{PRT}(1<\cdots <7)$.

% For instance, the tree
% \begin{equation*}
%         \begin{tikzpicture}[baseline={([yshift=-.5ex]current bounding box.center)},scale=0.6]
%     \node[draw,circle,scale=0.6] (i) at (0,0) {$1$};
%     \node[draw,circle,scale=0.6] (1) at (-1,1) {$2$};
%     \node[draw,circle,scale=0.6] (1b) at (-1.5,2) {$3$};
%     \node[draw,circle,scale=0.6] (1bb) at (-0.5,2) {$4$};
%     \node[draw,circle,scale=0.6] (2) at (0,1) {$5$};
%     \node[draw,circle,scale=0.6] (3) at (1,1) {$6$};
%     \node[draw,circle,scale=0.6] (3b) at (1,2) {$7$};
%     \draw (2) -- (i);
%     \draw (1) -- (1b);
%     \draw (1) -- (1bb);
%     \draw (i) -- (1);
%     \draw (i) -- (3);
%     \draw (3) -- (3b);
%     \end{tikzpicture}
%     \end{equation*}

% \noindent is in the canonical order in $\mathcal{PRT}(1<\cdots <7)$.\\
    
%     For instance, if no set of vertices is given, the previous tree will be represented by
% \begin{center}
%         \begin{tikzpicture}[baseline={([yshift=-.5ex]current bounding box.center)},scale=0.6]
%     \node[draw,circle,scale=0.6] (i) at (0,0) {$5$};
%     \node[draw,circle,scale=0.6] (1) at (-1,1) {$6$};
%     \node[draw,circle,scale=0.6] (1b) at (-1.5,2) {$3$};
%     \node[draw,circle,scale=0.6] (1bb) at (-0.5,2) {$7$};
%     \node[draw,circle,scale=0.6] (2) at (0,1) {$1$};
%     \node[draw,circle,scale=0.6] (3) at (1,1) {$4$};
%     \node[draw,circle,scale=0.6] (3b) at (1,2) {$2$};
%     \draw (2) -- (i);
%     \draw (1) -- (1b);
%     \draw (1) -- (1bb);
%     \draw (i) -- (1);
%     \draw (i) -- (3);
%     \draw (3) -- (3b);
%     \end{tikzpicture}
%     \end{center}

\begin{defi}
    Let $X$ be a totally finite ordered set and $T\in\mathcal{PRT}(X)$.
    \begin{itemize}
        \item A {\normalfont subtree $S\subset T$} of $T$ is an induced simply connected subgraph of $T$ whose set of vertices is seen as a subset $Y$ of $X$ endowed with the induced order relation. Note that $V_S\subset V_T$ as ordered sets.

        \item If $S\subset T$, we define the tree $T/S\in\mathcal{PRT}(X\setminus Y\cup\{S\})$ obtained from $T$ by contracting the tree $S$ on the root of $S$, denoted by $S$ in the labeling of $T/S$. The totally ordered set $X\setminus Y\cup\{S\}$ is obtained by changing $r(S)$ into $S$, and removing all the non-root vertices of $S$ in $X$.
    \end{itemize}

    A subtree $S\subset T$ is {\normalfont non-trivial} if neither $|S|\neq 1$ nor $|T/S|\neq 1$.
\end{defi}

\begin{remarque}
    Let $X$ be a totally finite ordered set. Let $T\in\mathcal{PRT}(X)$ and $S\subset T$. If $T$ is canonical, then so are $S$ and $T/S$.
\end{remarque}

\textit{Example:} If

$$\begin{array}{lll}
        T=\begin{tikzpicture}[baseline={([yshift=-.5ex]current bounding box.center)},scale=0.6]
    \node[draw,circle,scale=0.6] (i) at (0,0) {$5$};
    \node[draw,circle,scale=0.6] (1) at (-1,1) {$6$};
    \node[draw,circle,scale=0.6] (1b) at (-1.5,2) {$3$};
    \node[draw,circle,scale=0.6] (1bb) at (-0.5,2) {$7$};
    \node[draw,circle,scale=0.6] (2) at (0,1) {$1$};
    \node[draw,circle,scale=0.6] (3) at (1,1) {$4$};
    \node[draw,circle,scale=0.6] (3b) at (1,2) {$2$};
    \draw (2) -- (i);
    \draw (1) -- (1b);
    \draw (1) -- (1bb);
    \draw (i) -- (1);
    \draw (i) -- (3);
    \draw (3) -- (3b);
    \end{tikzpicture},
\end{array}$$

\noindent then
$$S=\begin{tikzpicture}[baseline={([yshift=-.5ex]current bounding box.center)},scale=0.6]
    \node[draw,circle,scale=0.6] (i) at (-1,0) {$5$};
    \node[draw,circle,scale=0.6] (1) at (-1,1) {$6$};
    \node[draw,circle,scale=0.6] (1b) at (-1.5,2) {$3$};
    \node[draw,circle,scale=0.6] (1bb) at (-0.5,2) {$7$};
    \draw (1) -- (1b);
    \draw (1) -- (1bb);
    \draw (i) -- (1);
    \end{tikzpicture}\in\mathcal{PRT}(3<5<6<7)$$

\noindent is a subtree of $T$ such that
\begin{center}
    \begin{equation*}
        T/S=\begin{tikzpicture}[baseline={([yshift=-.5ex]current bounding box.center)},scale=0.6]
    \node[draw,circle,scale=0.6] (i) at (0,0) {$S$};
    \node[draw,circle,scale=0.6] (2) at (0,1) {$1$};
    \node[draw,circle,scale=0.6] (3) at (1,1) {$4$};
    \node[draw,circle,scale=0.6] (3b) at (1,2) {$2$};
    \draw (2) -- (i);
    \draw (i) -- (3);
    \draw (3) -- (3b);
    \end{tikzpicture}\in\mathcal{PRT}(1<2<S<4). 
    \end{equation*}
\end{center}

% \noindent then $\Sigma _T=(5614372)$ and
% \begin{equation*}
%         \Sigma _T^{-1}.T=\begin{tikzpicture}[baseline={([yshift=-.5ex]current bounding box.center)},scale=0.6]
%     \node[draw,circle,scale=0.6] (i) at (0,0) {$1$};
%     \node[draw,circle,scale=0.6] (1) at (-1,1) {$2$};
%     \node[draw,circle,scale=0.6] (1b) at (-1.5,2) {$5$};
%     \node[draw,circle,scale=0.6] (1bb) at (-0.5,2) {$6$};
%     \node[draw,circle,scale=0.6] (2) at (0,1) {$3$};
%     \node[draw,circle,scale=0.6] (3) at (1,1) {$4$};
%     \node[draw,circle,scale=0.6] (3b) at (1,2) {$7$};
%     \draw (2) -- (i);
%     \draw (1) -- (1b);
%     \draw (1) -- (1bb);
%     \draw (i) -- (1);
%     \draw (i) -- (3);
%     \draw (3) -- (3b);
%     \end{tikzpicture}
%     \end{equation*}

% \noindent is in the canonical order.

\subsection{On the Barratt-Eccles and the surjection operads}\label{sec:214}

We devote this subsection to recollections on the Barratt-Eccles operad and the surjection operad. We will mostly follow conventions of \cite{fresseberger}.

\begin{defi}
    We let $\mathcal{E}(r)_d$ to be the $\mathbb{K}$-module spanned by $(d+1)$-tuples 
    $$(w_0,\ldots,w_d)\in (\Sigma _r)^{d+1}$$

    \noindent with the identification $(w_0,\ldots,w_d)\equiv0$ if $w_i=w_{i+1}$ for some $i$. We denote by $\mathcal{E}(r)$ the dg $\mathbb{K}$-module with $\mathcal{E}(r)_d$ as degree $d$ component. The differential on $\mathcal{E}(r)$ is defined by
    $$d(w_0,\ldots,w_d)=\sum_{i=0}^d(-1)^{i}(w_0,\ldots,\hat{w_i},\ldots,w_d).$$
\end{defi}

We also have an action of $\Sigma _r$ on $\mathcal{E}(r)$ given by the diagonal action and the left translation of $\Sigma _r$ on itself.

\begin{prop}
    The symmetric sequence $\mathcal{E}$ is an operad called the \normalfont{Barratt-Eccles operad}.
\end{prop}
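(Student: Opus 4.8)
The plan is to obtain the operad structure on $\mathcal{E}$ by transporting the block-permutation operad structure on the symmetric groups through two (symmetric) monoidal functors, so that every operad axiom is inherited rather than checked by hand. The starting observation is that $\mathcal{E}(r)$ is the normalized chain complex $N_\ast(W\Sigma_r)$ of the simplicial set $W\Sigma_r$ whose set of $d$-simplices is $(\Sigma_r)^{d+1}$, with faces given by deletion of an entry and degeneracies by repetition of an entry. Indeed, the degenerate simplices of $W\Sigma_r$ are exactly the tuples $(w_0,\ldots,w_d)$ with $w_i=w_{i+1}$ for some $i$, so passing to normalized chains produces precisely the identification $(w_0,\ldots,w_d)\equiv 0$ of the definition, while the alternating sum of faces becomes the stated differential; the diagonal left-translation action of $\Sigma_r$ on $(\Sigma_r)^{d+1}$ matches the action on $\mathcal{E}(r)$. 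It therefore suffices to exhibit $\{W\Sigma_r\}_r$ as an operad in simplicial sets and to apply a lax symmetric monoidal functor to chains.

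First I would record that $\{\Sigma_r\}_{r\geq 0}$ is an operad in the category of sets: it is the underlying set-operad of $\mathcal{A}ss$, with composition the block permutation $\gamma(\sigma;\tau_1,\ldots,\tau_n)$ that permutes the $n$ consecutive blocks of sizes $r_1,\ldots,r_n$ according to $\sigma$ and permutes the $i$-th block internally by $\tau_i$, with unit $\mathrm{id}\in\Sigma_1$ and with $\Sigma_r$ acting on $\Sigma_r$ by left translation. Associativity, unitality and equivariance are the classical block-permutation identities and hold as equalities of maps of sets; crucially, $\gamma$ need not be a group homomorphism, only a map of sets. Next, the functor $E\colon \mathbf{Set}\to\mathbf{sSet}$ with $E(S)_d=S^{d+1}$ (the nerve of the indiscrete groupoid on $S$), acting on maps by $Ef(w_0,\ldots,w_d)=(f(w_0),\ldots,f(w_d))$, preserves finite products, $E(S\times T)\cong E(S)\times E(T)$, and sends a point to a point; it is thus strong symmetric monoidal for the cartesian structures. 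Applying $E$ to the set-operad $\{\Sigma_r\}$ yields an operad $\{E\Sigma_r\}=\{W\Sigma_r\}$ in $\mathbf{sSet}$, whose composition is simply the levelwise block permutation of tuples and whose symmetric-group actions are the diagonal left translations.

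Finally I would apply the normalized chains functor $N_\ast(-;\mathbb{K})\colon\mathbf{sSet}\to\mathrm{dgMod}_\mathbb{K}$, which is lax symmetric monoidal via the Eilenberg–Zilber (shuffle) map $\nabla\colon N_\ast(X)\otimes N_\ast(Y)\to N_\ast(X\times Y)$. Since a lax symmetric monoidal functor sends operads to operads, the collection $\{N_\ast(W\Sigma_r)\}=\{\mathcal{E}(r)\}$ inherits an operad structure in $\mathrm{dgMod}_\mathbb{K}$, with composition products obtained by composing the iterated shuffle maps with the chain map induced by levelwise block permutation. The main obstacle is neither associativity nor unitality — these are formal consequences of functoriality together with the lax-monoidal coherence diagrams — but the symmetry (equivariance) axiom: the shuffle map is compatible with the symmetry only up to the Koszul signs arising when transposing tensor factors of normalized chains, and one must check that these signs coincide with those prescribed by the diagonal $\Sigma_r$-action. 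This is exactly the assertion that Eilenberg–Zilber is a \emph{symmetric} monoidal transformation, and it is the single genuine verification; I would isolate it and otherwise invoke the coherence of the Eilenberg–Zilber equivalence, following the conventions of \cite{fresseberger}.
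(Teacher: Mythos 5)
The paper does not prove this proposition itself — it merely refers to \cite{fresseberger} for the composition product — and your argument is precisely the standard construction given there: realize $\mathcal{E}(r)$ as $N_\ast(E\Sigma_r)$, transport the block-permutation operad $\{\Sigma_r\}$ through the product-preserving functor $E(-)$ to get a simplicial operad, and then push forward along normalized chains made lax symmetric monoidal by the Eilenberg--Zilber shuffle map. Your proof is correct, and you rightly isolate the symmetry of the shuffle map (as opposed to the non-symmetric Alexander--Whitney map) as the only non-formal verification needed for the equivariance axiom.
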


We refer to \cite[$\mathsection$1.1]{fresseberger} for an explicit description of the composition product. We have an operad morphism $\mathcal{E}\longrightarrow\mathcal{C}om$ obtained by sending each degree $0$ element to $1$, and sending each non-degree $0$ element to $0$. This morphism is a weak equivalence arity-wise.

\begin{remarque}
    The operad $\mathcal{E}$ has the structure of a {\normalfont Hopf operad}. Namely, we have an operad morphism $\Delta_\mathcal{E}:\mathcal{E}\longrightarrow\mathcal{E}\underset{\text{\normalfont H}}{\otimes}\mathcal{E}$ defined by
$$\Delta_\mathcal{E}(w_0,\ldots,w_d)=\sum_{k=0}^d(w_0,\ldots,w_k)\otimes (w_k,\ldots,w_d).$$
\end{remarque}

We now aim to define the surjection operad $\chi$.

\begin{defi}
    Let $r,d\geq 0$. A surjective map $u:\llbracket 1,r+d\rrbracket\longrightarrow\llbracket 1,r\rrbracket$ is {\normalfont degenerate} if $u(i)=u(i+1)$ for some $i\in\llbracket 1,r+d-1\rrbracket$. We let $\chi(r)_d$ to be the $\mathbb{K}$-module spanned by non-degenerate surjective maps from $\llbracket 1,r+d\rrbracket$ to $\llbracket 1,r\rrbracket$.
\end{defi}

In practice, we represent a surjection $u:\llbracket 1,r+d\rrbracket\longrightarrow\llbracket 1,r\rrbracket$ by a sequence of values:
$$(u(1)\cdots u(r+d)).$$

% For instance, the element $123242\in\chi(4)_2$ represents a surjection $f:\llbracket 1;6\rrbracket\longrightarrow\llbracket 1;4\rrbracket$ defined by
% $$f(1)=1;$$
% $$f(2)=f(4)=f(6)=2;$$
% $$f(3)=3;$$
% $$f(5)=4.$$

\begin{defi}
    Let $u\in\chi(r)_d$. An integer $k\in\llbracket 1,r+d\rrbracket$ is called a {\normalfont caesura} if $u(k)$ does not represent the last occurrence of its value in $u$.
\end{defi}

We sometimes represent a surjection by its \textit{table arrangement}, which is defined as follows. Let $u\in\chi(r)_d$. We cut $u$ at the caesuras, in the sense that we set
$$u=(u_0(1)\cdots u_0(r_0))\cdots(u_d(1)\cdots u_d(r_d)),$$

\noindent where $\sum_i r_i=r+d$, and where $u_0(r_0),\ldots,u_{d-1}(r_{d-1})$ represent caesuras of $u$. We then write $u$ as
$$u=\left|\begin{array}{ccc}
         u_0(1) & \cdots & u_0(r_0) \\
         \vdots &  &\vdots \\
         u_d(1) & \cdots & u_d(r_d)\\
    \end{array}\right..$$

% For instance, the table arrangement of the surjection $123242$ is
% $$\left|\begin{array}{l}
%          12\\
%          32\\
%          42\\
%     \end{array}\right..$$

%     We refer to \cite{fresseberger} for more details on the table arrangement of a surjection.\\

We have an obvious action of $\Sigma _r$ on $\chi(r)_d$ given by the pre-composition.

\begin{prop}[{see \cite[$\mathsection$1.2]{fresseberger}}]
The symmetric sequence $\chi$ is endowed with the structure of a symmetric operad and is called the {\normalfont surjection operad}.
\end{prop}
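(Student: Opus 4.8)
The statement is quoted as a recollection from \cite{fresseberger}, so the plan is to exhibit the combinatorial symmetric operad structure of Berger--Fresse and then to indicate how the axioms are verified degree-wise. First I would define the partial composition $\circ_i\colon\chi(r)\otimes\chi(s)\longrightarrow\chi(r+s-1)$ by substitution of sequences. Given $u\in\chi(r)_d$ and $v\in\chi(s)_e$, I relabel the values of $v$ as $i,i+1,\ldots,i+s-1$ and shift every value $>i$ occurring in $u$ up by $s-1$, so that the result is a map onto $\llbracket 1,r+s-1\rrbracket$. If the value $i$ occurs $k$ times in the sequence $u$, I cut the relabeled sequence $v$ into $k$ pieces, consecutive pieces sharing one boundary value (overlapping cuttings), and I insert the $j$-th piece in place of the $j$-th occurrence of $i$; the composite $u\circ_i v$ is the signed sum over all such cuttings. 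A length count confirms that the total number of entries is $(r+d)-k+\bigl((s+e)+(k-1)\bigr)=r+s-1+d+e$, so the composite indeed lies in degree $d+e$. The identification $(\ldots)\equiv 0$ built into the definition of $\chi$ automatically discards the terms that fail to be non-degenerate. The operadic unit is the single element $(1)\in\chi(1)_0$, and $\Sigma_r$ acts by precomposition as already indicated.

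Next I would record the differential, given by deleting entries: $d(u)$ is the signed sum, over positions $\ell$ such that the value $u(\ell)$ occurs at least twice in $u$, of the sequence obtained by erasing the $\ell$-th entry, again modulo degenerate terms. I would check that $d$ lowers degree by one, squares to zero, and that the partial compositions obey the Leibniz rule $d(u\circ_i v)=d(u)\circ_i v\pm u\circ_i d(v)$, so that the $\circ_i$ are morphisms of dg $\mathbb{K}$-modules. Assembling the $\circ_i$ into a single composition $\gamma$, it then remains to verify the associativity, unitality, equivariance and symmetry axioms of an operad. Equivariance is immediate from the precomposition action together with the relabeling conventions, and unitality follows from the fact that cutting a length-one sequence into $k$ overlapping pieces yields $k$ copies of it, so that $u\circ_i(1)=u$ and $(1)\circ_1 u=u$.

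The heart of the argument, and the main obstacle, is associativity together with the sign bookkeeping. For the sequential relation I must compare $(u\circ_i v)\circ_j w$ with $u\circ_i(v\circ_{j'}w)$: substituting $w$ and then the result into $u$, versus first substituting $w$ into $v$, produces iterated overlapping cuttings of the inner sequences, and I would exhibit a degree- and sign-preserving bijection between the admissible cuttings on the two sides, checking that the caesuras are matched and that the degenerate terms vanish consistently. The parallel relation, where the two substitutions occur in disjoint slots of $u$, is comparatively routine. Since this is standard material, I would either carry out this combinatorial verification in the conventions of \cite{fresseberger} or simply transport it from loc.\ cit., the statement being invoked here only as a recollection.
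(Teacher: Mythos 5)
Your proposal is correct and follows the same route as the paper, which states this proposition purely as a recollection from Berger--Fresse with no proof of its own: the substitution-by-overlapping-cuttings composition, the precomposition action of $\Sigma_r$, the entry-deletion differential, and the degree count $(r+d)-k+\bigl((s+e)+(k-1)\bigr)=r+s-1+d+e$ are all exactly the construction of \cite[$\mathsection$1.2]{fresseberger} that the paper is invoking. Nothing further is needed beyond the combinatorial verification of associativity you correctly identify as the only nontrivial point.
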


In fact, the surjection operad $\chi$ is a quotient of the Barratt-Eccles operad $\mathcal{E}$. The quotient map is called the \textit{table reduction morphism}.

\begin{prop}\label{tablered}
    There exists an operad morphism $TR:\mathcal{E}\longrightarrow\chi$ called the table reduction morphism which is surjective arity-wise.
\end{prop}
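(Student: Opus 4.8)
The plan is to construct the table reduction morphism $TR\colon\mathcal{E}\longrightarrow\chi$ explicitly on generators and then verify that it respects the operadic structure. First I would define the map on each arity and degree: given a generator $(w_0,\ldots,w_d)\in\mathcal{E}(r)_d$, I would produce a linear combination of non-degenerate surjections in $\chi(r)_d$. The natural recipe is to read off, from the sequence of permutations $w_0,\ldots,w_d$, a surjection $\llbracket 1,r+d\rrbracket\to\llbracket 1,r\rrbracket$ by recording at each step which value appears. Concretely, one scans the permutations $w_0,w_1,\ldots,w_d$ in order, at each stage selecting the next available leading entries; the resulting table arrangement has rows indexed by $0,\ldots,d$, which matches the cut-at-caesuras presentation described above. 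Since there may be ambiguity in how many entries to take from each $w_i$, the image is a sum over all admissible choices, with appropriate signs so that the degenerate surjections (those with $u(k)=u(k+1)$) are set to zero, consistent with the relation $(w_0,\ldots,w_d)\equiv 0$ when $w_i=w_{i+1}$.

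Next I would check that $TR$ is a morphism of dg $\mathbb{K}$-modules, i.e. that it is $\Sigma_r$-equivariant and commutes with the differentials. Equivariance is immediate from the definitions, since the $\Sigma_r$-action on both sides is by translation/precomposition on the underlying permutations and surjections, and the table reduction recipe is built functorially from that data. Compatibility with the differential amounts to matching the alternating-sum face maps $d(w_0,\ldots,w_d)=\sum_i(-1)^i(w_0,\ldots,\hat{w_i},\ldots,w_d)$ on the Barratt-Eccles side with the differential on $\chi$; this is a sign-bookkeeping verification comparing the effect of deleting a permutation $w_i$ with the effect of collapsing the corresponding row in the table arrangement.

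I would then verify that $TR$ is compatible with the operadic composition products, using the explicit descriptions of the composition on $\mathcal{E}$ and on $\chi$ recalled in \cite[$\mathsection$1.1-1.2]{fresseberger}. This is the step I expect to be the main obstacle: the composition products on both operads are combinatorially intricate (they involve interleaving permutations, respectively composing surjections along their table arrangements), and one must show that reducing-then-composing equals composing-then-reducing. The cleanest route is to reduce to partial compositions $\circ_i$ and to track how the table arrangement of a composite is assembled from the table arrangements of the factors.

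Finally, surjectivity in each arity is the most accessible part: given any non-degenerate surjection $u\in\chi(r)_d$ presented by its table arrangement with rows $u_0,\ldots,u_d$, I would exhibit a tuple $(w_0,\ldots,w_d)\in\mathcal{E}(r)_d$ whose table reduction has $u$ as one of its summands, by completing each partial row $u_i$ to a full permutation $w_i$ of $\llbracket 1,r\rrbracket$ in a consistent way. Since the whole construction of $TR$ and the proof of Proposition~\ref{tablered} are carried out in detail in \cite[$\mathsection$1.2]{fresseberger}, I would cite that reference for the routine verifications and concentrate the exposition on the explicit formula for $TR$ on generators, which is what is needed in the sequel of this memoir.
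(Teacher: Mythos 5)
Your proposal takes essentially the same route as the paper, which offers no argument of its own for this proposition: it only recalls the explicit formula for $TR$ on a tuple $(w_0,\ldots,w_d)$ and defers all verifications (equivariance, compatibility with the differentials and the operadic compositions, surjectivity) to \cite{fresseberger}, exactly as you do. The one place where your sketch is looser than the cited source is surjectivity --- producing a $w$ whose reduction merely \emph{contains} a given non-degenerate surjection $u$ as one summand does not by itself give surjectivity, and one needs the standard triangularity argument with respect to a suitable ordering of surjections --- but since you delegate that verification to the reference in any case, this does not affect the soundness of the proposal.
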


We refer to \cite{fresseberger} for more details on the morphism $TR$. We only recall its definition. Let $w=(w_0,\ldots,w_d)\in\mathcal{E}(r)_d$. We set
$$TR(w)=\sum_{r_0+\cdots+r_d=r+d}\left|\begin{array}{cccc}
    w_0'(1) & \cdots & w_0'(r_0-1) & w_0'(r_0)  \\
    \vdots & & \vdots & \vdots\\
    w_d'(1) & \cdots & w_d'(r_d-1) & w_d'(r_d)
\end{array}\right.$$

\noindent where each row $w_i'(1)\cdots w_i'(r_i)$ represents the first $r_i$ integers occurring in the permutation $w_i$ such that the values $w_i'(1)\cdots w_i'(r_i-1)$ do not occur in
$$\left|\begin{array}{ccc}
    w_0'(1) & \cdots & w_0'(r_0-1) \\
    \vdots & & \vdots\\
    w_{i-1}'(1) & \cdots & w_{i-1}'(r_i-1)
\end{array}\right..$$

An important example of $\chi$-algebra is given by the normalized cochain complex associated to a simplicial set.

\begin{defi}
    Let $X$ be a simplicial set and, for every $k\geq 0$, let $C_k(X)$ be the $\mathbb{K}$-module spanned by $X_k$. We define a differential on $C_*(X)$ by setting, for every $x\in X_k$,
    $$d(x)=\sum_{i=0}^k (-1)^id_i(x),$$
    
    \noindent where we denote by $d_0,\ldots,d_k:X_k\longrightarrow X_{k-1}$ the face maps. We then set
    $$N_k(X)=C_k(X)/\left(\sum_{i=0}^{k}s_i C_{k-1}(X)\right),$$

    \noindent where we denote by $s_0,\ldots,s_{k-2}:X_{k-1}\longrightarrow X_k$ the degeneracy maps. The dg $\mathbb{K}$-module $N_*(X)$ is called the {\normalfont normalized chain complex of $X$}. Its dual dg $\mathbb{K}$-module, denoted by $N^*(X)$, is called the {\normalfont normalized cochain complex of $X$.}
\end{defi}

Note that $N_*$ and $N^*$ are functors from $\text{\normalfont sSet}$ to $\text{dgMod}_\mathbb{K}$.

\begin{thm}[{\cite[$\mathsection$2]{fresseberger}}]
    Let $X\in\text{\normalfont sSet}$. Then $N_*(X)$ is a $\chi$-coalgebra, given by the interval cut operations, which is natural in $X$. As a consequence, the dg $\mathbb{K}$-module $N^*(X)$ is endowed with the structure of a $\chi$-algebra.
\end{thm}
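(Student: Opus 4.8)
The result is quoted from \cite{fresseberger}, so the plan is to recall the construction of the interval cut operations that define the $\chi$-coalgebra structure on $N_*(X)$, to indicate why they assemble into an operad morphism $\chi\longrightarrow\text{\normalfont CoEnd}_{N_*(X)}$, and then to deduce the $\chi$-algebra structure on $N^*(X)$ by dualization. First I would define, for each surjection $u\in\chi(r)_d$ written as $(u(1)\cdots u(r+d))$, an interval cut operation $u^{N_*(X)}:N_*(X)\longrightarrow N_*(X)^{\otimes r}$. On a nondegenerate $n$-simplex $\sigma$, identified with its sequence of vertices $0<1<\cdots<n$, the element $u^{N_*(X)}(\sigma)$ is a signed sum over nondecreasing cut sequences $0=a_0\leq a_1\leq\cdots\leq a_{r+d}=n$: the $s$-th block contributes the face of $\sigma$ on the vertices $a_{s-1},\ldots,a_s$, placed in the tensor factor $u(s)$, where within each factor the successive blocks are concatenated along the shared caesura vertices following the Alexander--Whitney rule, and any term producing a degenerate face is set to zero. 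The doubling of the $d$ caesura vertices is exactly what makes $u^{N_*(X)}$ raise the total degree by $d$, so that $u\longmapsto u^{N_*(X)}$ is degree preserving; the Alexander--Whitney diagonal is recovered from the surjection $(1\,2)\in\chi(2)_0$. The precise formula, together with its sign rule, is the one given in \cite[$\mathsection$2]{fresseberger}.

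Next I would verify that these operations define an operad morphism. The $\Sigma_r$-equivariance is immediate, since the action of $\Sigma_r$ on $\chi(r)$ by precomposition permutes the target values $u(s)$, hence the output tensor factors; the operadic unit $(1)\in\chi(1)_0$ acts as the identity. The two substantial points are the compatibility with operadic composition, namely that $(u\circ_i v)^{N_*(X)}$ agrees with substituting $v^{N_*(X)}$ into the $i$-th output factor of $u^{N_*(X)}$, which amounts to matching the cut-and-distribute recipe for the composite surjection with the iterated interval cuts, and the compatibility with differentials, namely that $u^{N_*(X)}$ is a chain map up to the operadic differential, so that the simplicial boundary on $N_*(X)$ corresponds to the caesura differential of $\chi$.

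This differential compatibility is the main obstacle: one must check that the boundary of an interval cut splits into the interval cut of the boundary plus the term indexed by the differential of $u$ in $\chi$, with all the faces that are killed being precisely the degenerate terms, and that every sign matches. This is the combinatorial heart of the argument, carried out in \cite[$\mathsection$2]{fresseberger}. Finally, naturality in $X$ follows from the functoriality of taking faces: a simplicial map $f:X\longrightarrow Y$ induces a chain map $N_*(f)$ commuting with the face and concatenation operations, hence a morphism of $\chi$-coalgebras. To obtain the $\chi$-algebra structure on $N^*(X)$, I would invoke the duality recorded earlier in this section: since $N^*(X)=N_*(X)^\vee$ by definition, and since the dual of any $\mathcal{P}$-coalgebra is canonically a $\mathcal{P}$-algebra via $\phi\longmapsto\phi^\vee$, applying this to $\mathcal{P}=\chi$ and $C=N_*(X)$ yields the asserted $\chi$-algebra structure, with naturality inherited from that of $N_*$.
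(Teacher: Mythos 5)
The paper gives no proof of this statement: it is imported verbatim from Berger--Fresse, with the reader referred to \cite[$\mathsection$2.2.1,$\mathsection$2.2.4]{fresseberger} for the interval cut operations. Your sketch is a faithful outline of the argument in that reference (definition of the interval cuts, equivariance, compatibility with composition and with the caesura differential) and correctly concludes by applying the paper's earlier remark that the dual of a $\mathcal{P}$-coalgebra is a $\mathcal{P}$-algebra, so there is nothing to compare against and no gap to report.
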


We refer to \cite[$\mathsection$2.2.1,$\mathsection$2.2.4]{fresseberger} for an explicit description of the interval cut operations. In particular, for every simplicial set $X$, the dg $\mathbb{K}$-module $N^*(X)$ is endowed with the structure of a $\mathcal{E}$-algebra through the table reduction morphism $TR:\mathcal{E}\longrightarrow\chi$. In this memoir, we mostly consider the case $X=\Delta^n$ for some $n\geq 0$. The elements of $N_d(\Delta^n)$ are linear combination of non-decreasing sequences $a_0<\cdots<a_d$ of integers in $\llbracket 1,n\rrbracket$, which we denote by $\underline{a_0\cdots a_d}$. The normalized chain complex of $\Delta^n$ has the following fundamental property.

\begin{prop}\label{homonstar}
Let $n\geq 0$ and $0\leq k\leq n$. Then there exists a deformation retract
    % https://q.uiver.app/#q=WzAsMixbMCwwLCJOXyooXFxEZWx0YV5uKSJdLFsxLDAsIk5fKihcXERlbHRhXjApIl0sWzAsMSwicF9uXmsiLDAseyJvZmZzZXQiOi0xfV0sWzEsMCwiaV9uXmsiLDAseyJvZmZzZXQiOi0xfV0sWzAsMCwiaF9uXmsiLDAseyJhbmdsZSI6LTkwfV1d
\[\begin{tikzcd}
	{N_*(\Delta^n)} & {N_*(\Delta^0),}
	\arrow["{h_n^k}", from=1-1, to=1-1, loop, in=145, out=215, distance=10mm]
	\arrow["{p_n}", shift left, from=1-1, to=1-2]
	\arrow["{i_n^k}", shift left, from=1-2, to=1-1]
\end{tikzcd}\]

\noindent where $i_n^k:N_*(\Delta^0)\longrightarrow N_*(\Delta^n)$ is the morphism which sends $\underline{0}$ to $\underline{k}$, and $p_n:N_*(\Delta^n)\longrightarrow N_*(\Delta^0)$ is the morphism which sends every vertex to $\underline{0}$.
\end{prop}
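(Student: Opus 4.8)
The plan is to exhibit explicit contraction data that makes the retract strong, and to verify the required identities by a direct computation with the cone operator that inserts the apex vertex $k$.

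First I would dispatch the easy part. Since $N_*(\Delta^0)\simeq\mathbb{K}$ is concentrated in degree $0$, generated by $\underline{0}$ with trivial differential, the maps $i_n^k$ and $p_n$ of the statement are morphisms of dg $\mathbb{K}$-modules: the only point to check for $p_n$ is $p_n d=0$, and indeed $p_n d(\underline{a_0 a_1})=p_n(\underline{a_1}-\underline{a_0})=\underline{0}-\underline{0}=0$, while $p_n$ vanishes in degrees $\geq 1$; the map $i_n^k$ is automatically a chain map because $d(\underline{k})=0$. One gets $p_n\circ i_n^k=\mathrm{id}_{N_*(\Delta^0)}$ at once, since $p_n(\underline{k})=\underline{0}$.

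Next I would define the homotopy $h_n^k$ as the cone operator with apex $k$. On a basis element $\underline{a_0\cdots a_d}$ (a strictly increasing sequence of vertices of $\Delta^n$), set $h_n^k(\underline{a_0\cdots a_d})=0$ if $k\in\{a_0,\ldots,a_d\}$, and otherwise
$$h_n^k(\underline{a_0\cdots a_d})=(-1)^j\,\underline{a_0\cdots a_{j-1}\,k\,a_j\cdots a_d},$$
where $j=\#\{i : a_i<k\}$ is the position at which $k$ must be inserted to keep the sequence increasing. This raises the degree by one, and the three side conditions making the retract a contraction are immediate: $h_n^k i_n^k=0$ because $h_n^k(\underline{k})$ inserts $k$ into a sequence already containing $k$; $p_n h_n^k=0$ because $h_n^k$ takes values in degrees $\geq 1$, where $p_n$ vanishes; and $h_n^k h_n^k=0$ because the second insertion of $k$ meets a sequence already containing $k$.

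The core of the argument is the chain homotopy identity $d h_n^k+h_n^k d=\mathrm{id}-i_n^k p_n$, which I would establish by evaluating both sides on a basis element $\underline{a_0\cdots a_d}$ and bookkeeping signs. When $k\notin\{a_0,\ldots,a_d\}$, the term of $d h_n^k(\underline{a_0\cdots a_d})$ obtained by deleting the inserted $k$ (at position $j$, with total sign $(-1)^{2j}=+1$) reproduces $\underline{a_0\cdots a_d}$, while every remaining term of $d h_n^k$, obtained by deleting some $a_i$ from the $k$-inserted sequence, cancels against the matching term of $h_n^k d$, obtained by inserting $k$ after deleting $a_i$; the signs agree because deleting $a_i$ shifts the recorded position of $k$ by exactly the amount tracked in the exponents. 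When $k\in\{a_0,\ldots,a_d\}$, one has $d h_n^k=0$ and $h_n^k d$ collapses to the single surviving term $\underline{a_0\cdots a_d}$. In positive degree $i_n^k p_n=0$, so both cases yield $\mathrm{id}$; in degree $0$ the same computation gives $d h_n^k(\underline{a})=\underline{a}-\underline{k}$, which matches $\mathrm{id}-i_n^k p_n$. This sign bookkeeping is the only delicate point, but it is precisely the standard verification that a simplex contracts onto any one of its vertices, so I expect no genuine obstacle beyond the careful case analysis above.
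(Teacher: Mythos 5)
Your proposal is correct and matches the paper exactly: the paper defines $h_n^k$ by the same insertion-of-$k$ formula with the same sign $(-1)^i$ and leaves the chain-homotopy identity as a routine verification, which your sign bookkeeping carries out faithfully. Nothing to add.
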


The claim is that we have the identities
\begin{center}
    $\begin{array}{rll}
         p_ni_n^k & = & id_{N_*(\Delta^0)};\\
         id_{N_*(\Delta^n)}-i_n^kp_n & = & dh_n^k+h_n^kd. 
    \end{array}$
\end{center}

\noindent We set $\varphi_n^k=i_n^kp_n$. The homotopy $h_n^k$ can be explicitly defined as follows. Let $\underline{a_0\cdots a_r}\in N_r(\Delta^n)$ be a non-zero element. If this sequence contains $k$, then we set $h_n^k(\underline{a_0\cdots a_r})=0$. Otherwise we set
$$h_n^k(\underline{a_0\cdots a_r})=(-1)^i \underline{a_0 \cdots \overset{i}{k}\cdots a_r},$$

\noindent where $i$ is the unique possible position to insert $k$ in $\underline{a_0\cdots a_r}$ so that we have a non decreasing sequence of integers.\\

\noindent By taking linear duals, we have a similar deformation retract on $N^*(\Delta^n)$. We will keep the same notation $h_n^k:N^{*}(\Delta^n)\longrightarrow N^{*-1}(\Delta^n)$ and $\varphi_n^k:N^*(\Delta^n)\longrightarrow N^*(\Delta^n)$ for the linear duals of $h_n^k:N_*(\Delta^n)\longrightarrow N_{*+1}(\Delta^n)$ and $\varphi_n^k:N_*(\Delta^n)\longrightarrow N_*(\Delta^n)$.\\

The dg $\mathbb{K}$-module $I=N^*(\Delta^1)$ can be used to model intervals. We indeed have a decomposition of the diagonal map $\Delta:\mathbb{K}\longrightarrow\mathbb{K}^2$ as

% https://q.uiver.app/#q=WzAsMyxbMCwwLCJcXG1hdGhiYntLfT1OXiooXFxEZWx0YV4wKSJdLFsxLDAsIk5eKihcXERlbHRhXjEpIl0sWzIsMCwiTl4qKFxcRGVsdGFeMClcXHRpbWVzIE5eKihcXERlbHRhXjApPVxcbWF0aGJie0t9XjIiXSxbMCwyLCJcXERlbHRhIiwwLHsiY3VydmUiOi0zfV0sWzAsMSwiXFxzaW0iLDAseyJzdHlsZSI6eyJ0YWlsIjp7Im5hbWUiOiJtb25vIn19fV0sWzEsMiwiKGRfMCxkXzEpIiwyLHsic3R5bGUiOnsiaGVhZCI6eyJuYW1lIjoiZXBpIn19fV0sWzAsMSwic18wIiwyLHsic3R5bGUiOnsidGFpbCI6eyJuYW1lIjoibW9ubyJ9fX1dXQ==
\[\begin{tikzcd}
	{\mathbb{K}=N^*(\Delta^0)} & {N^*(\Delta^1)} & {N^*(\Delta^0)\times N^*(\Delta^0)=\mathbb{K}^2}
	\arrow["\sim", tail, from=1-1, to=1-2]
	\arrow["{s_0}"', tail, from=1-1, to=1-2]
	\arrow["\Delta", curve={height=-18pt}, from=1-1, to=1-3]
	\arrow["{(d_0,d_1)}"', two heads, from=1-2, to=1-3]
\end{tikzcd}\]

\noindent where $s_0=(p_1)^\vee$ and $d_0=(i_1^0)^\vee$, $d_1=(i_1^1)^\vee$. We can lift such a diagram in the category of $\mathcal{P}\underset{\text{\normalfont H}}{\otimes}\mathcal{E}$-algebras for any operad $\mathcal{P}$ to get a construction of a path-object. Recall that a path objet for a $\mathcal{P}\underset{\text{\normalfont H}}{\otimes}\mathcal{E}$-algebra $R$ is a $\mathcal{P}\underset{\text{\normalfont H}}{\otimes}\mathcal{E}$-algebra $R^I$ such that the diagonal map $\Delta:R\longrightarrow R\times R$ can be described as a composite
% https://q.uiver.app/#q=WzAsMyxbMCwwLCJSIl0sWzEsMCwiUl5JIl0sWzIsMCwiUlxcdGltZXMgUiJdLFswLDIsIlxcRGVsdGEiLDAseyJjdXJ2ZSI6LTN9XSxbMCwxLCJcXHNpbSIsMCx7InN0eWxlIjp7InRhaWwiOnsibmFtZSI6Im1vbm8ifX19XSxbMSwyLCIoZF8wLGRfMSkiLDIseyJzdHlsZSI6eyJoZWFkIjp7Im5hbWUiOiJlcGkifX19XSxbMCwxLCJzXzAiLDIseyJzdHlsZSI6eyJ0YWlsIjp7Im5hbWUiOiJtb25vIn19fV1d
\[\begin{tikzcd}
	R & {R^I} & {R\times R}
	\arrow["\sim", tail, from=1-1, to=1-2]
	\arrow["{s_0}"', tail, from=1-1, to=1-2]
	\arrow["\Delta", curve={height=-18pt}, from=1-1, to=1-3]
	\arrow["{(d_0,d_1)}"', two heads, from=1-2, to=1-3]
\end{tikzcd}.\]

% \begin{prop} {\normalfont (see \cite[$\mathsection$3.1.9]{fresseberger})}
%     Let $R$ be a $\mathcal{E}$-algebra. Then
%     $$R^I=R\otimes N^*(\Delta^1)$$

%     \noindent is a path objet in the category of $\mathcal{E}$-algebras. The action of the Barratt-Eccles operad on $R^I$ is given by the composite
%     $$\mathcal{E}\overset{\Delta_\mathcal{E}}{\longrightarrow}\mathcal{E}\underset{\text{\normalfont H}}{\otimes}\mathcal{E}\longrightarrow End_{R\otimes N^*(\Delta^1)}.$$
% \end{prop}

\begin{prop}[{see \cite[$\mathsection$3.1.4,$\mathsection$3.1.9]{fresseberger}}]\label{pathobject} 
    Let $\mathcal{P}$ be an operad, and $R$ be a $\mathcal{P}\underset{\text{\normalfont H}}{\otimes}\mathcal{E}$-algebra. Then
    $$R^I=R\otimes N^*(\Delta^1)$$

    \noindent is a path objet in the category of $\mathcal{P}\underset{\text{\normalfont H}}{\otimes}\mathcal{E}$-algebras. The $\mathcal{P}\underset{\text{\normalfont H}}{\otimes}\mathcal{E}$-algebra structure on $R^I$ is given by the composite
    % https://q.uiver.app/#q=WzAsMyxbMCwwLCJcXG1hdGhjYWx7UH1cXHVuZGVyc2V0e1xcdGV4dHtcXG5vcm1hbGZvbnQgSH19e1xcb3RpbWVzfVxcbWF0aGNhbHtFfSJdLFsxLDAsIlxcbWF0aGNhbHtQfVxcdW5kZXJzZXR7XFx0ZXh0e1xcbm9ybWFsZm9udCBIfX17XFxvdGltZXN9XFxtYXRoY2Fse0V9XFx1bmRlcnNldHtcXHRleHR7XFxub3JtYWxmb250IEh9fXtcXG90aW1lc31cXG1hdGhjYWx7RX0iXSxbMiwwLCJcXHRleHR7XFxub3JtYWxmb250RW5kfV97Ulxcb3RpbWVzIE5eKihcXERlbHRhXjEpfSJdLFswLDEsImlkXFxvdGltZXNcXERlbHRhX1xcbWF0aGNhbHtFfSJdLFsxLDJdXQ==
\[\begin{tikzcd}
	{\mathcal{P}\underset{\text{\normalfont H}}{\otimes}\mathcal{E}} & {\mathcal{P}\underset{\text{\normalfont H}}{\otimes}\mathcal{E}\underset{\text{\normalfont H}}{\otimes}\mathcal{E}} & {\text{\normalfont End}_{R\otimes N^*(\Delta^1)}}
	\arrow["{id\otimes\Delta_\mathcal{E}}", from=1-1, to=1-2]
	\arrow[from=1-2, to=1-3]
\end{tikzcd},\]

\noindent where we use the $\mathcal{P}\underset{\text{\normalfont H}}{\otimes}\mathcal{E}$-algebra structure on $R$, and the $\mathcal{E}$-algebra structure on $N^*(\Delta^1)$.
\end{prop}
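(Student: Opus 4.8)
The plan is to first equip $R^I = R\otimes N^*(\Delta^1)$ with the asserted $\mathcal{P}\underset{\text{\normalfont H}}{\otimes}\mathcal{E}$-algebra structure, and then to recover the factorization of the diagonal by dualizing the deformation retract of Proposition~\ref{homonstar}. For the algebra structure, I would first observe that $\text{\normalfont id}\otimes\Delta_\mathcal{E}$ is a morphism of operads, since $\Delta_\mathcal{E}:\mathcal{E}\longrightarrow\mathcal{E}\underset{\text{\normalfont H}}{\otimes}\mathcal{E}$ is one (the Hopf structure on $\mathcal{E}$) and the Hadamard product is functorial. Writing $\phi_R:\mathcal{P}\underset{\text{\normalfont H}}{\otimes}\mathcal{E}\longrightarrow\text{\normalfont End}_R$ for the structure map of $R$ and $\phi_N:\mathcal{E}\longrightarrow\text{\normalfont End}_{N^*(\Delta^1)}$ for the $\mathcal{E}$-algebra structure on $N^*(\Delta^1)$, the second arrow of the composite is $\phi_R\underset{\text{\normalfont H}}{\otimes}\phi_N$ followed by the natural operad morphism $\text{\normalfont End}_R\underset{\text{\normalfont H}}{\otimes}\text{\normalfont End}_{N^*(\Delta^1)}\longrightarrow\text{\normalfont End}_{R\otimes N^*(\Delta^1)}$ sending $f\otimes g$ to $f\widetilde{\otimes}g$ in the sense of Definition~\ref{otimesmodif}. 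Each arrow is a morphism of operads, so the composite endows $R^I$ with a $\mathcal{P}\underset{\text{\normalfont H}}{\otimes}\mathcal{E}$-algebra structure.

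Next I would produce the three structure maps and check that they are morphisms of $\mathcal{P}\underset{\text{\normalfont H}}{\otimes}\mathcal{E}$-algebras. The morphisms $s_0=(p_1)^\vee$, $d_0=(i_1^0)^\vee$ and $d_1=(i_1^1)^\vee$ are induced by the simplicial maps $\Delta^1\to\Delta^0$ and $\Delta^0\to\Delta^1$; by naturality of the $\mathcal{E}$-algebra structure on normalized cochains they are morphisms of $\mathcal{E}$-algebras. The point is then that for any morphism $f$ of $\mathcal{E}$-algebras, $\text{\normalfont id}_R\otimes f$ is a morphism of $\mathcal{P}\underset{\text{\normalfont H}}{\otimes}\mathcal{E}$-algebras for the Hopf-diagonal structures: this is a direct verification from the fact that, under $\text{\normalfont id}\otimes\Delta_\mathcal{E}$, the factor $\mathcal{P}$ acts only on $R$ while the diagonalized $\mathcal{E}$ acts on both tensor factors, so $f$ commutes with the action. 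To read source and target as the expected algebras, I would use that $N^*(\Delta^0)=\mathbb{K}$ is the unit $\mathcal{E}$-algebra, whose structure map is the augmentation $\mathcal{E}\longrightarrow\mathcal{C}om$; counitality of $\Delta_\mathcal{E}$ then identifies the Hopf-diagonal structure on $R\otimes N^*(\Delta^0)$ with the original structure on $R$. Hence $s_0:R\longrightarrow R^I$ and $(d_0,d_1):R^I\longrightarrow R\times R$ are morphisms of $\mathcal{P}\underset{\text{\normalfont H}}{\otimes}\mathcal{E}$-algebras, and since $p_1i_1^0=p_1i_1^1=\text{\normalfont id}$ we obtain $(d_0,d_1)\circ s_0=\Delta$.

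Finally I would check the model-categorical properties in the Hinich structure, where weak equivalences and fibrations are created in $\text{\normalfont dgMod}_\mathbb{K}$. Dualizing Proposition~\ref{homonstar} for $n=1$, $k=0$ yields a deformation retract of $N^*(\Delta^1)$ onto $\mathbb{K}$ with inclusion $s_0$, retraction $d_0$ and homotopy $h_1^0$; tensoring the contracting-homotopy identities with $R$ over the field $\mathbb{K}$ shows that $s_0$ is a chain homotopy equivalence, hence a weak equivalence. On the other hand $(d_0,d_1):N^*(\Delta^1)\longrightarrow\mathbb{K}\times\mathbb{K}$ is degreewise surjective (it is already onto in degree $0$, sending the two vertex duals to a basis of $\mathbb{K}\times\mathbb{K}$, and the target is concentrated in degree $0$), and surjectivity is preserved by $R\otimes-$, so $(d_0,d_1)$ is a degreewise surjection and therefore a fibration. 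Together these exhibit $R^I$ as a path object.

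The one genuinely delicate step is the compatibility with the base point: identifying $R\otimes N^*(\Delta^0)$ with $R$ as $\mathcal{P}\underset{\text{\normalfont H}}{\otimes}\mathcal{E}$-algebras and verifying that $s_0$, $d_0$, $d_1$ respect the full $\mathcal{P}\underset{\text{\normalfont H}}{\otimes}\mathcal{E}$-structure and not merely the $\mathcal{E}$-structure on the second factor. As indicated above, both issues reduce to the counit axiom of the Hopf operad $\mathcal{E}$ together with the naturality of the cochain functor, so I expect no serious obstruction; the remaining verifications are the routine sign and coherence checks which I would leave to the reader, in line with the cited treatment in \cite{fresseberger}.
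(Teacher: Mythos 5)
Your argument is correct, but note that the paper does not actually prove this proposition: it is quoted verbatim from Berger--Fresse (the cited \S 3.1.4 and \S 3.1.9 of \cite{fresseberger}), so there is no internal proof to compare against. Your reconstruction is exactly the standard one from that source: the Hopf diagonal $\Delta_\mathcal{E}$ gives the algebra structure on the tensor product, naturality of the $\mathcal{E}$-structure on cochains makes $s_0=(p_1)^\vee$ and $(d_0,d_1)=((i_1^0)^\vee,(i_1^1)^\vee)$ into morphisms of $\mathcal{P}\underset{\text{\normalfont H}}{\otimes}\mathcal{E}$-algebras, and the model-categorical properties are created in $\text{\normalfont dgMod}_\mathbb{K}$, where the dual of the deformation retract of Proposition \ref{homonstar} gives the quasi-isomorphism and degreewise surjectivity gives the fibration. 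One small caveat: the diagram in the paper decorates $s_0$ as an \emph{acyclic cofibration} (tail plus $\sim$), whereas your argument, like the cited source, only establishes that $s_0$ is a weak equivalence of algebras; cofibrancy of $s_0$ in the transferred model structure on $\mathcal{P}\underset{\text{\normalfont H}}{\otimes}\mathcal{E}$-algebras is not addressed and would require a separate (and nontrivial) argument. Since the standard notion of a (good) path object only asks for a weak equivalence followed by a fibration, and that is all the paper ever uses, this does not affect the validity of your proof, but you should not claim the tail decoration without further justification.
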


\subsection{Appendix: basic results on permutations}\label{sec:211bis}

In this appendix, we recall basic definitions and notations on permutations and the symmetric groups. Our conventions will follow those given in \cite[$\mathsection$1.1.7]{fressepart1}. Let $n\geq 0$. We denote by $\Sigma_n$ the symmetric group on the elements $1,\ldots, n$. For every $m,n\geq 0$, we denote by $\llbracket m,n\rrbracket$ the set of integers $k$ such that $m\leq k\leq n$. We denote by $id$ the relevant identity permutation, and we write any permutation $\sigma\in\Sigma_n$ as its sequence of values $(\sigma(1)\cdots\sigma(n))$.\\

For every $p,q\in\mathbb{N}$ and $\sigma\in\Sigma_{p},\tau\in\Sigma_{q}$, we let $\sigma\oplus\tau\in\Sigma_{p+q}$ to be the permutation defined, for every $1\leq i\leq p+q$, by 
$$(\sigma\oplus\tau)(i)=\left\{\begin{array}{cl}
    \sigma(i) & \text{if }1\leq i\leq p  \\
     \tau(p+i) & \text{if }p+1\leq i\leq p+q
\end{array}\right..$$

\noindent The operation $\oplus$ is associative in $\bigsqcup_{n\geq 0}\Sigma_n$, so that we can generalize the definition of $\oplus$ to a direct sum of $k\geq 1$ permutations $\sigma_{1}\oplus \cdots\oplus\sigma_{k}$.\\

Let $r_{1},\ldots,r_{n}\geq 0$ and $\sigma\in\Sigma_n$. We set $\textbf{r}_i=r_1+\cdots+r_{i-1}+1<\cdots <r_1+\cdots+r_{i-1}+r_i$. We define the \text{\normalfont \textit{block permutation} induced by $\sigma$ of type $(r_1,\ldots,r_r)$} by
$$\sigma_\ast(r_1,\ldots,r_n)=\textbf{r}_{\sigma(1)}\cdots\textbf{r}_{\sigma(n)}.$$

\begin{lm}[{\cite[Proposition 1.1.8]{fressepart1}}]\label{compsymmetric}
Let $\sigma\in\Sigma_n$ and  $\tau_{1}\in\Sigma_{r_{1}},\ldots,\tau_{n}\in\Sigma_{r_{n}}$. Then
$$(\tau_1\oplus\cdots\oplus\tau_n)\cdot\sigma_{*}(r_{1},\ldots,r_{n})=\sigma_{*}(r_{1},\ldots,r_{n})\cdot(\tau_{\sigma(1)}\oplus\cdots\oplus\tau_{\sigma(n)}).$$
\end{lm}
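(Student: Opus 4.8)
The plan is to prove the identity by a direct computation, evaluating both composite permutations on an arbitrary element of $\llbracket 1,m\rrbracket$, where $m=r_1+\cdots+r_n$. Throughout I read the product $\alpha\cdot\beta$ as the composite $\alpha\circ\beta$, so that $(\alpha\cdot\beta)(x)=\alpha(\beta(x))$; a single small example (e.g.\ $n=2$, $(r_1,r_2)=(1,2)$, $\sigma$ the transposition) confirms that this is the convention under which the stated equality holds.

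First I would fix notation for the two relevant block decompositions of $\llbracket 1,m\rrbracket$. Set $b_i=r_1+\cdots+r_{i-1}$, so that the block $\textbf{r}_i$ is the interval $\{b_i+1,\ldots,b_i+r_i\}$, and set $c_k=r_{\sigma(1)}+\cdots+r_{\sigma(k-1)}$, the offset of the $k$-th block once the blocks are reordered by $\sigma$. Writing $w:=\sigma_*(r_1,\ldots,r_n)$, the definition $w=\textbf{r}_{\sigma(1)}\cdots\textbf{r}_{\sigma(n)}$ (read as a sequence of values) unwinds to the explicit formula
$$w(c_k+s)=b_{\sigma(k)}+s\qquad\text{for }1\leq k\leq n,\ 1\leq s\leq r_{\sigma(k)};$$
that is, $w$ sends the $k$-th block of positions to the $\sigma(k)$-th block of values, preserving internal order. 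As $(k,s)$ ranges over these values it parametrizes $\llbracket 1,m\rrbracket$ exactly once.

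The key bookkeeping point, which I would isolate next, is that the two direct sums in the statement carry different block structures. The permutation $\tau_1\oplus\cdots\oplus\tau_n$ is block-diagonal for the original decomposition: it fixes each $\textbf{r}_i$ setwise and acts by $b_i+s\mapsto b_i+\tau_i(s)$. By contrast $\tau_{\sigma(1)}\oplus\cdots\oplus\tau_{\sigma(n)}$ is a direct sum of permutations of sizes $r_{\sigma(1)},\ldots,r_{\sigma(n)}$, so its block offsets are \emph{exactly} the $c_k$, and it acts by $c_k+s\mapsto c_k+\tau_{\sigma(k)}(s)$. I would verify this matching of offsets carefully, since it is precisely where the index $\sigma(k)$ must line up on the two sides.

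With these three descriptions in hand the verification is immediate. On the position $c_k+s$ (with $1\leq s\leq r_{\sigma(k)}$), the left-hand side first applies $w$ to get $b_{\sigma(k)}+s$, which lies in block $\sigma(k)$ at internal slot $s$, and then $\tau_1\oplus\cdots\oplus\tau_n$ yields $b_{\sigma(k)}+\tau_{\sigma(k)}(s)$. The right-hand side first applies $\tau_{\sigma(1)}\oplus\cdots\oplus\tau_{\sigma(n)}$ to get $c_k+\tau_{\sigma(k)}(s)$, and then $w$ sends this to $b_{\sigma(k)}+\tau_{\sigma(k)}(s)$. The two agree for all $k$ and $s$, which proves the identity. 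The only genuine obstacle is the careful alignment of the two offset systems $b_i$ and $c_k$ together with the composition convention; once these are pinned down, no real computation remains. An alternative would be to induct on a reduced-word decomposition of $\sigma$ into adjacent transpositions, using a multiplicativity property of $\sigma\mapsto\sigma_*(r_1,\ldots,r_n)$, but the direct index chase is shorter and self-contained.
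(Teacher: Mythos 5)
The paper does not prove this lemma at all; it simply cites it from Fresse (\cite[Proposition 1.1.8]{fressepart1}), so there is no in-paper argument to compare against. Your direct index chase is correct and complete: the formulas $w(c_k+s)=b_{\sigma(k)}+s$, $(\tau_1\oplus\cdots\oplus\tau_n)(b_i+s)=b_i+\tau_i(s)$, and $(\tau_{\sigma(1)}\oplus\cdots\oplus\tau_{\sigma(n)})(c_k+s)=c_k+\tau_{\sigma(k)}(s)$ are exactly right, both sides evaluate to $b_{\sigma(k)}+\tau_{\sigma(k)}(s)$, and your care in pinning down the composition convention (which the example confirms, and which is indeed the only place the identity could fail) is well placed.
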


Let $\sigma\in\Sigma_{n}$ and $\tau_{1}\in\Sigma_{r_{1}},\ldots,\tau_{n}\in\Sigma_{r_{n}}$. We define the permutation $\sigma(\tau_{1},\ldots,\tau_{n})\in\Sigma_{r_{1}+\cdots+r_{n}}$ by
$$\sigma(\tau_{1},\ldots,\tau_{n})=(\tau_{1}\oplus\cdots\oplus\tau_{n})\cdot\sigma_{*}(r_{1},\ldots,r_{n}).$$

In operads theory, one needs a set of representatives of the quotient $\Sigma_m/\Sigma_{r_1}\times\cdots\times\Sigma_{r_n}$ for every $r_1,\ldots,r_n\geq 0$ such that $r_1+\cdots+r_n=m$. This leads us to the notion of \textit{shuffle permutation}. A $(r_{1},\ldots,r_{n})$-\text{\normalfont shuffle permutation} is a permutation in $\Sigma_{r_1+\cdots+r_n}$ which preserves the order on each block $\textbf{r}_1,\ldots,\textbf{r}_n$. We denote by $Sh(r_1,\ldots,r_n)$ the set of such permutations.\\ A shuffle permutation $\omega\in Sh(r_1,\ldots,r_n)$ is \textit{pointed} if it satisfies $\omega(1)<\omega(r_1+1)<\cdots<\omega(r_1+\cdots+r_{n-1}+1)$. We denote by $Sh_\ast(r_1,\ldots,r_n)$ the set of such permutations.\\

The following results allow us to write any permutations in terms of a product of a shuffle permutation with a composite of a direct sum and a block permutation.

\begin{prop}\label{shuffle}
    Let $n\geq 0$ and $r_1,\ldots,r_n\geq 0$.
    
    \begin{itemize}
        \item Every $\sigma\in\Sigma_{r_1+\cdots+r_n}$ admits a unique decomposition of the form
    $$\sigma=\omega\cdot(\tau_1\oplus\cdots\oplus\tau_n)$$
    \noindent where $\tau_i\in\Sigma_{r_i}$ and $\omega\in Sh(r_1,\ldots,r_n)$.
    \item Every $\sigma\in\Sigma_{r_1+\cdots+r_n}$ admits a unique decomposition of the form
    $$\sigma=\omega\cdot\sigma(\tau_1,\ldots,\tau_n)$$
    \noindent where $\tau_i\in\Sigma_{r_i},\sigma\in\Sigma_n$ and $\omega\in Sh_\ast(r_1,\ldots,r_n)$.
    \end{itemize}
\end{prop}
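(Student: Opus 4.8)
The plan is to prove both decomposition statements by combining an existence argument (produce the factors explicitly) with a uniqueness argument (show a coincidence forces the factors to agree). Since each $\sigma$ ranges over $\Sigma_{r_1+\cdots+r_n}$ and the right-hand factors live in the subgroup $\Sigma_{r_1}\times\cdots\times\Sigma_{r_n}$ (embedded via $\oplus$) together with a choice of coset representative, the natural strategy is to show that $Sh(r_1,\ldots,r_n)$ is a complete set of representatives for the left cosets $\Sigma_{r_1+\cdots+r_n}/(\Sigma_{r_1}\times\cdots\times\Sigma_{r_n})$, which is exactly what the first bullet asserts once phrased as existence and uniqueness of the factorization $\sigma=\omega\cdot(\tau_1\oplus\cdots\oplus\tau_n)$.

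For the first bullet, I would argue as follows. Given $\sigma$, the blocks $\mathbf{r}_1,\ldots,\mathbf{r}_n$ partition $\llbracket 1,r_1+\cdots+r_n\rrbracket$, and $\sigma$ sends each block $\mathbf{r}_i$ to some $r_i$-element subset $A_i=\sigma(\mathbf{r}_i)$; these subsets partition the target. Define $\omega$ to be the unique permutation that, on each block $\mathbf{r}_i$, maps the ordered elements of $\mathbf{r}_i$ to the ordered elements of $A_i$ in increasing order; by construction $\omega$ preserves the internal order of every block, so $\omega\in Sh(r_1,\ldots,r_n)$. Then $\tau:=\omega^{-1}\sigma$ preserves each block setwise, hence lies in $\Sigma_{r_1}\times\cdots\times\Sigma_{r_n}$, giving $\tau=\tau_1\oplus\cdots\oplus\tau_n$ with $\tau_i\in\Sigma_{r_i}$. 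This proves existence. For uniqueness, suppose $\omega\cdot(\tau_1\oplus\cdots\oplus\tau_n)=\omega'\cdot(\tau_1'\oplus\cdots\oplus\tau_n')$. Then $\omega^{-1}\omega'\in\Sigma_{r_1}\times\cdots\times\Sigma_{r_n}$, so $\omega$ and $\omega'$ lie in the same coset; since a shuffle permutation is determined within its coset by the increasing-order condition on each block, $\omega=\omega'$, whence the block factors agree too. The one point requiring care is that a coset contains exactly one shuffle permutation: this follows because the order-preserving condition pins down the image of each $\mathbf{r}_i$ once the underlying set $A_i$ (a coset invariant) is fixed.

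For the second bullet, I would reduce it to the first together with the definition $\sigma(\tau_1,\ldots,\tau_n)=(\tau_1\oplus\cdots\oplus\tau_n)\cdot\sigma_\ast(r_1,\ldots,r_n)$. The content is that the larger class $Sh(r_1,\ldots,r_n)$ factors further: every shuffle $\omega$ can be written uniquely as $\omega_\ast\cdot\sigma_\ast(r_1,\ldots,r_n)$ with $\omega_\ast$ pointed and the block permutation recording how the blocks are ordered amongst themselves. Concretely, a shuffle permutation is determined by the interleaving of the blocks; the block permutation $\sigma_\ast(r_1,\ldots,r_n)$ accounts for the relative order of the blocks' first elements, and after factoring it out the remaining shuffle is forced to satisfy the pointedness inequalities $\omega(1)<\omega(r_1+1)<\cdots$. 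Combining this refinement with the first bullet — and using Lemma~\ref{compsymmetric} to commute the block permutation past the direct sum into the form $\sigma(\tau_1,\ldots,\tau_n)$ — yields existence and uniqueness of the second factorization. I expect the main obstacle to be bookkeeping: verifying that the pointed shuffles biject with the cosets $\Sigma_n\backslash$-orbits of block orderings, and that Lemma~\ref{compsymmetric} applies with exactly the right indexing of the $\tau_i$ (note the permuted indices $\tau_{\sigma(1)},\ldots,\tau_{\sigma(n)}$ on the right-hand side there), so that the two uniqueness statements are genuinely compatible rather than merely formally analogous.
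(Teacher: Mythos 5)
Your argument for the first bullet is correct and is the standard one; note that the paper gives no proof of Proposition~\ref{shuffle} at all (it only cites Fresse), so there is nothing to compare against there. Identifying $Sh(r_1,\ldots,r_n)$ with a transversal of $\Sigma_{r_1+\cdots+r_n}/(\Sigma_{r_1}\times\cdots\times\Sigma_{r_n})$ by recording the image sets $A_i=\sigma(\mathbf{r}_i)$, and observing that each coset contains exactly one shuffle because the $A_i$ are coset invariants that determine the shuffle, is exactly right.

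The second bullet is where there is a genuine gap, and it sits precisely in the ``bookkeeping'' you defer. Your intermediate claim --- that every shuffle factors uniquely as $\omega\cdot\mu_\ast(r_1,\ldots,r_n)$ with $\omega\in Sh_\ast(r_1,\ldots,r_n)$ and $\mu\in\Sigma_n$ --- is false unless the $r_i$ are all equal, and indeed the second bullet as literally stated fails in that generality. A count already shows this: the asserted decomposition would give a bijection between $\Sigma_{r_1+\cdots+r_n}$ and $Sh_\ast(r_1,\ldots,r_n)\times\Sigma_n\times\prod_i\Sigma_{r_i}$, but for $(r_1,r_2)=(1,2)$ the left-hand side has $6$ elements while $|Sh_\ast(1,2)|=1$ (the pointedness condition $\omega(1)<\omega(2)$ eliminates two of the three shuffles in $Sh(1,2)$), so the right-hand side has only $1\cdot 2\cdot 1\cdot 2=4$ elements; concretely, the permutation with value sequence $(312)$ admits no factorization $\omega\cdot\mu(\tau_1,\tau_2)$ with $\omega\in Sh_\ast(1,2)$, under either composition convention. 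The source of the failure is that $\mu_\ast(r_1,\ldots,r_n)$ does not preserve the block type: its domain is partitioned into consecutive blocks of sizes $r_{\mu(1)},\ldots,r_{\mu(n)}$, so after factoring it out of a shuffle the remaining pointed shuffle lives in $Sh_\ast$ of the \emph{permuted} tuple of block sizes, a set that depends on $\mu$. This is in fact how the proposition is used later in the paper (e.g.\ in the proof of Theorem~\ref{thmfonda}, where the sums run over $\omega\in Sh_\ast(1,\ldots,\underset{k}{q},\ldots,1)$ with the position $k$ of the large block determined by the permutation). When $r_1=\cdots=r_n$ your argument does go through, since $\Sigma_n$ then acts freely on $Sh(r,\ldots,r)$ by right multiplication by block permutations and the pointed shuffles form a transversal for that action; in general you must either restrict to the equal-size case or prove the corrected statement in which $\omega$ ranges over the pointed shuffles of type $(r_{\mu(1)},\ldots,r_{\mu(n)})$.
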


\section{On $\mathcal{P}re\mathcal{L}ie_\infty$-algebras with divided powers}

In this section, we study the structure of $\Gamma(\mathcal{P}re\mathcal{L}ie_\infty,-)$-algebras. The operad $\mathcal{P}re\mathcal{L}ie_\infty$ and its algebras have been explicitly described in \cite{chapoton}, using the computation of the Koszul dual operad of $\mathcal{P}re\mathcal{L}ie$ given in \cite{chapotonperm}.\\

In $\mathsection$\ref{sec:221}, we recall this explicit construction of $\mathcal{P}re\mathcal{L}ie_\infty$. We also focus on the characterization of the structure of a $\mathcal{P}re\mathcal{L}ie_\infty$-algebra as an algebraic structure on the suspension which we call a $\Lambda\mathcal{PL}_\infty$-algebra.\\

In $\mathsection$\ref{sec:222}, we define the notion of a $\Gamma\Lambda\mathcal{PL}_\infty$-algebra which will be the analogue, in the divided power framework, of a $\Lambda\mathcal{PL}_\infty$-algebra, and we define a notion of $\infty$-morphism of $\Gamma\Lambda\mathcal{PL}_\infty$-algebras.\\

In $\mathsection$\ref{sec:223}, we define the symmetric weighted braces associated to a $\Gamma\Lambda\mathcal{PL}_\infty$-algebra and the notion of a Maurer-Cartan element in the complete framework. We also prove that giving the structure of a $\Gamma\Lambda\mathcal{PL}_\infty$-algebra is equivalent to giving symmetric brace operations.\\

In $\mathsection$\ref{sec:224}, we prove that giving a structure of a $\Gamma(\mathcal{P}re\mathcal{L}ie_\infty,-)$-algebra is equivalent to giving a structure of a $\Gamma\Lambda\mathcal{PL}_\infty$-algebra on the suspension.

\subsection{Recollections on pre-Lie algebras up to homotopy}\label{sec:221}

We begin this section by some recollections on the operad $\text{Perm}$, which was introduced by Chapoton in \cite{chapotonperm}. Let $\text{\normalfont Perm}(n)=\mathbb{K}^n$. We denote by $(e_i^n)_{1\leq i\leq n}$ the canonical basis of $\text{\normalfont Perm}(n)$. The group $\Sigma _n$ acts on $\text{\normalfont Perm}(n)$ by 
$$\sigma \cdot e_i^n=e_{\sigma ^{-1}(i)}^n.$$

\begin{prop}\label{compperm}
    The symmetric sequence $\text{\normalfont Perm}$ is an operad with as compositions
    $$e_i^n(e_{j_1}^{n_1},\ldots,e_{j_k}^{n_k})=e_{n_1+\cdots+n_{i-1}+j_i}^{n_1+\cdots+n_k}.$$
\end{prop}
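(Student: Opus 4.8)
The cleanest route is to recognize the proposed structure as the $\mathbb{K}$-linearization of a set-level operad, so that the verification reduces to sign-free index bookkeeping. I would define a symmetric sequence $P$ in the category of sets by $P(n)=\{1,\dots,n\}$, with $\Sigma_n$ acting through $\sigma\cdot i=\sigma^{-1}(i)$, with unit $1\in P(1)$, and with composition
$$\gamma(i;a_1,\dots,a_n)=n_1+\cdots+n_{i-1}+a_i\in P(n_1+\cdots+n_n)$$
for $a_l\in P(n_l)$. Intuitively, $i\in P(n)$ selects the $i$-th of $n$ slots and composition nests these selections with the evident offsets; this encodes exactly the position of the marked variable in a monomial $x_1\cdots x_n$. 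Because $\mathbb{K}[-]$ is a strong symmetric monoidal functor, once $P$ is shown to be a set-operad its linearization $\mathbb{K}[P]$ is automatically an operad, and unwinding the definitions recovers $\text{\normalfont Perm}(n)=\mathbb{K}^n$ with the action $\sigma\cdot e_i^n=e_{\sigma^{-1}(i)}^n$ and the stated composition $e_i^n(e_{j_1}^{n_1},\dots,e_{j_n}^{n_n})=e^{n_1+\cdots+n_n}_{n_1+\cdots+n_{i-1}+j_i}$.

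\textbf{Units and associativity.} First I would dispatch unitality: setting every $n_l=1$ and $a_l=1$ gives $\gamma(i;1,\dots,1)=(i-1)+1=i$, while $\gamma(1;a)=a$, so $1\in P(1)$ is a two-sided unit. For associativity, I would take $i\in P(n)$, blocks $b_l\in P(m_l)$, and for each pair $(l,s)$ an element $c_{l,s}\in P(p_{l,s})$, and expand both bracketings of the resulting triple composite. Each reduces, by iterating the offset rule, to the single global index
$$\Big(\sum_{l<i}\sum_{s=1}^{m_l}p_{l,s}\Big)+\Big(\sum_{t<b_i}p_{i,t}\Big)+c_{i,b_i}$$
in $P\big(\sum_{l,s}p_{l,s}\big)$, so the two agree. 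This is a routine rearrangement of a double offset sum.

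\textbf{Equivariance.} Last I would check the two equivariance axioms: that permuting the top inputs by $\sigma\in\Sigma_n$ is matched by the block permutation $\sigma_\ast(n_1,\dots,n_n)$ acting on the output, and that permuting the $l$-th input by $\tau_l\in\Sigma_{n_l}$ is matched by $\tau_1\oplus\cdots\oplus\tau_n$ acting on the output. With the convention $\sigma\cdot i=\sigma^{-1}(i)$ these become the assertions that the selected global index transforms correctly, and I would deduce them from the block-permutation formalism of the appendix, in particular Lemma \ref{compsymmetric}. \textbf{I expect this to be the main obstacle}: the inverse in the action convention and the need to track into which offset block the selected index falls after a block permutation make the sign-free statement genuinely delicate, whereas associativity and unitality are immediate. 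Passing through $P$ confines all of this combinatorics to the set level, where there are no signs to manage.
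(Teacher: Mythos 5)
Your proof is correct, but there is nothing in the paper to compare it against: Proposition \ref{compperm} is stated without proof, as a recollection of Chapoton's construction of the operad $\text{\normalfont Perm}$, so the author simply cites the literature where you supply an argument. Your argument itself is sound and arguably the cleanest available one. The set-level operad $P(n)=\{1,\dots,n\}$ with $\gamma(i;a_1,\dots,a_n)=n_1+\cdots+n_{i-1}+a_i$ is exactly the "pointed finite sets" operad, the double-offset computation you display is the correct common value of both bracketings, and unitality is as immediate as you say. The equivariance check you flag as the main obstacle does go through with the paper's conventions: for the inner action one computes directly that $\gamma(e_i^n;\tau_1\cdot e_{j_1}^{r_1},\dots)=e^{N}_{r_1+\cdots+r_{i-1}+\tau_i^{-1}(j_i)}=(\tau_1\oplus\cdots\oplus\tau_n)\cdot\gamma(e_i^n;e_{j_1}^{r_1},\dots)$, and for the outer action the block permutation $\sigma_*(r_1,\dots,r_n)$ sends the selected global index into the correct relocated block, with Lemma \ref{compsymmetric} handling the interchange of the two types of permutations; since everything happens at the level of basis elements there are indeed no signs to track. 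The only cosmetic point is that the paper's displayed formula mixes $n$ and $k$ for the number of inputs, which you have silently (and correctly) normalized to $k=n$.
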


\begin{thm}[{see \cite{chapotonperm}}] The operad $\mathcal{P}re\mathcal{L}ie$ is Koszul and its Koszul dual operad is $\mathcal{P}re\mathcal{L}ie^{!}=\text{\normalfont Perm}.$
\end{thm}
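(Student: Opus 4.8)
The plan is to separate the statement into two essentially independent tasks: identifying the quadratic dual operad, and then establishing Koszulness. Both reduce to combinatorics of trees in low arity. First I would record the quadratic presentation of $\mathcal{P}re\mathcal{L}ie$: it is generated by a single binary operation $\star$ spanning the regular representation $\mathbb{K}[\Sigma_2]$ in arity $2$, subject to the arity-$3$ relation asserting that the associator $a(x,y,z)=(x\star y)\star z-x\star(y\star z)$ is symmetric in its last two slots. Counting tree-monomials, the free operad $\mathcal{F}(E)$ on the generator $E$ satisfies $\dim\mathcal{F}(E)(3)=12$ (six left combs $(x_a\star x_b)\star x_c$ and six right combs $x_a\star(x_b\star x_c)$), while $\dim\mathcal{P}re\mathcal{L}ie(3)=3^{2}=9$, so the relation space $R$ is $3$-dimensional, spanned by the $\Sigma_3$-orbit of $a(x_1,x_2,x_3)-a(x_1,x_3,x_2)$ (one relation for each choice of distinguished first slot).

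To compute the Koszul dual I would use the invariant pairing between $\mathcal{F}(E)(3)$ and $\mathcal{F}(E^\vee)(3)$, which is diagonal on tree-monomials up to the Koszul sign, and set $\mathcal{P}re\mathcal{L}ie^{!}=\mathcal{F}(E^\vee)/(R^\perp)$ with $\dim R^\perp=12-3=9$, hence $\dim\mathcal{P}re\mathcal{L}ie^{!}(3)=12-9=3=\dim\text{\normalfont Perm}(3)$. A direct computation of $R^\perp$ then shows that the surviving relations on the dual generator $\cdot$ are exactly associativity $(x\cdot y)\cdot z=x\cdot(y\cdot z)$ together with right-permutativity $x\cdot(y\cdot z)=x\cdot(z\cdot y)$; these are precisely the defining relations of the permutative operad, and matching the induced composition against the formula of Proposition \ref{compperm} identifies $\mathcal{P}re\mathcal{L}ie^{!}$ with $\text{\normalfont Perm}$ (equivalently, its linear dual carries the cooperad structure denoted $\text{\normalfont Perm}^c$).

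For Koszulness I would invoke the self-duality of the Koszul property — a finitely generated quadratic operad is Koszul if and only if its quadratic dual is — and prove it on whichever side is more tractable, here $\text{\normalfont Perm}$, whose basis $e_i^n$ is transparently a normal-form basis. The cleanest route is the rewriting (Gröbner-basis) method in the shuffle-operad framework: choose an admissible monomial ordering, verify that the leading terms of the quadratic relations generate a confluent rewriting system, and conclude that the normal-form monomials form a basis whose generating series matches $\dim\text{\normalfont Perm}(n)=n$; confluence of a quadratic rewriting system is exactly the hypothesis that implies Koszulness.

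The main obstacle is this confluence verification: one must check that every arity-$4$ critical pair, i.e.\ every overlap of two rewriting rules, resolves to a common normal form. Equivalently, in the classical formulation one must prove acyclicity of the Koszul complex $\text{\normalfont Perm}^c\circ\mathcal{P}re\mathcal{L}ie$ in positive homological degree, which is the genuinely non-formal input of the theorem. I would emphasize that the generating-series identity coming from the dimension count is merely a necessary condition, so the homological (or confluence) argument cannot be bypassed; this arity-$4$ analysis is where all the real work lies, the rest being bookkeeping with tree-monomials.
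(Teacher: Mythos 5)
The paper offers no proof of this statement: it is recalled verbatim from Chapoton--Livernet \cite{chapotonperm} and used as a black box (its only role here is to justify the model $\mathcal{P}re\mathcal{L}ie_\infty = B^c(\Lambda^{-1}\text{\normalfont Perm}^c)$), so there is nothing in the text to compare your argument against line by line. Judged on its own terms, your computation of the quadratic dual is correct and standard: the dimension counts ($12$, $9$, $3$, $3$) are right, the relation space is indeed the $\Sigma_3$-orbit of the antisymmetrized associator, and its orthogonal complement under the natural pairing on $\mathcal{F}(E^\vee)(3)$ does recover associativity together with right-permutativity, i.e.\ the presentation of $\text{\normalfont Perm}$ compatible with the composition formula of Proposition 2.2 of the paper.

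The one place where your write-up falls short of a proof is exactly the place you flag yourself: Koszulness. Asserting that the quadratic relations of $\text{\normalfont Perm}$ admit a confluent rewriting system (equivalently a quadratic Gr\"obner basis in the shuffle-operad sense) is a genuine claim that requires exhibiting an admissible order and resolving the arity-$4$ overlaps; as written, this step is announced rather than performed, so the argument is an outline rather than a complete proof. It is worth noting that the route you propose is not the one taken in the cited source: Chapoton and Livernet prove Koszulness on the $\mathcal{P}re\mathcal{L}ie$ side, using the explicit description of free pre-Lie algebras by labelled rooted trees to show acyclicity of the Koszul complex directly, whereas the Gr\"obner-basis proof for $\text{\normalfont Perm}$ is a later, alternative argument in the style of Dotsenko--Khoroshkin. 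Both are valid; yours trades the combinatorics of rooted trees for the combinatorics of critical pairs, and either would need to be carried out in full to close the gap.
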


This theorem implies that the operad $\mathcal{P}re\mathcal{L}ie_{\infty}=B^c(\Lambda^{-1} \text{\normalfont Perm}^\vee)$ gives a model for $\mathcal{P}re\mathcal{L}ie$-algebras up to homotopy. Such algebras have been described by Chapoton and Livernet in \cite{chapoton}. We recall this description in the following paragraphs. Let $V\in\text{\normalfont gMod}_\mathbb{K}$. We set
$$\mathcal{S}(V):=\bigoplus_{n\geq 0}(V^{\otimes n})_{\Sigma _n},$$

\noindent where we consider the usual action of $\Sigma _n$ on $V^{\otimes n}$ by permutation. Note that $\mathcal{S}(V)\simeq\mathbb{K}\oplus\overline{\mathcal{S}}(V)$ with
$$\overline{\mathcal{S}}(V):=\bigoplus_{n\geq 1}(V^{\otimes n})_{\Sigma _n}.$$

\begin{defi}[{see \cite[$\mathsection$2.3]{chapoton}}]
    The {\normalfont free Perm-coalgebra generated by $V$} is the graded $\mathbb{K}$-module $\text{\normalfont Perm}^c(V)=V\otimes\mathcal{S}(V)$ endowed with the following comultiplication:\\
    $\dperm(v_0\otimes 1)=0;$\\
    $\displaystyle\dperm(v_0\otimes v_1\cdots v_n)=\sum_{\substack{0\leq k\leq n-1\\\sigma \in Sh(k,1,n-k-1)}}\pm(v_0\otimes v_{\sigma (1)}\cdots v_{\sigma (k)})\otimes (v_{\sigma (k+1)}\otimes v_{\sigma (k+2)}\cdots v_{\sigma (n)})$\\
    \noindent for every $v_0,\ldots,v_n\in V$, where the sign in the sum is produced by permutations of the factors.
\end{defi}

The coproduct $\dperm$ satisfies the following identities (see \cite[$\mathsection$2.3]{chapoton}):
$$(id\otimes\dperm)\dperm=(\dperm\otimes id)\dperm;$$
$$(id\otimes\dperm)\dperm=(id\otimes\tau)(id\otimes\dperm)\dperm.$$

\begin{remarque}
    Let $\Delta_{\mathcal{S}(V)}:\mathcal{S}(V)\longrightarrow\mathcal{S}(V)\otimes\mathcal{S}(V)$ be the coproduct defined by $\Delta_{\mathcal{S}(V)}(1)=1\otimes 1$ and
    $$\Delta_{\mathcal{S}(V)}(v_1\cdots v_n)=\sum_{k=0}^n\sum_{\sigma \in Sh(k,n-k)}\pm(v_{\sigma (1)}\cdots v_{\sigma (k)})\otimes (v_{\sigma (k+1)}\cdots v_{\sigma (n)})$$

    \noindent for every $v_1,\ldots,v_n\in V$. Then $\dperm$ is given by the composite
    % https://q.uiver.app/#q=WzAsNCxbMCwwLCJWXFxvdGltZXNcXG1hdGhjYWx7U30oVikiXSxbMSwwLCJWXFxvdGltZXNcXG1hdGhjYWx7U30oVilcXG90aW1lc1xcbWF0aGNhbHtTfShWKSJdLFszLDAsIihWXFxvdGltZXNcXG1hdGhjYWx7U30oVikpXFxvdGltZXMgKFZcXG90aW1lc1xcbWF0aGNhbHtTfShWKSkiXSxbMiwwLCJWXFxvdGltZXNcXG1hdGhjYWx7U30oVilcXG90aW1lc1xcb3ZlcmxpbmV7XFxtYXRoY2Fse1N9fShWKSJdLFswLDEsImlkXFxvdGltZXNcXERlbHRhX3tcXG1hdGhjYWx7U30oVil9Il0sWzEsMywiIiwwLHsic3R5bGUiOnsiaGVhZCI6eyJuYW1lIjoiZXBpIn19fV0sWzMsMiwiaWRcXG90aW1lcyBpZFxcb3RpbWVzIGlfViJdXQ==
\[\begin{tikzcd}
	{V\otimes\mathcal{S}(V)} & {V\otimes\mathcal{S}(V)\otimes\mathcal{S}(V)} & {V\otimes\mathcal{S}(V)\otimes\overline{\mathcal{S}}(V)} & {(V\otimes\mathcal{S}(V))\otimes (V\otimes\mathcal{S}(V))}
	\arrow["{id\otimes\Delta_{\mathcal{S}(V)}}", from=1-1, to=1-2]
	\arrow[two heads, from=1-2, to=1-3]
	\arrow["{id\otimes id\otimes i_V}", from=1-3, to=1-4]
\end{tikzcd}\]

\noindent where $i_V:\overline{\mathcal{S}}(V)\longrightarrow V\otimes\mathcal{S}(V)$ is defined by
$$v_1\cdots v_n\longmapsto \sum_{k=1}^n \pm v_k\otimes v_1\cdots\widehat{v_k}\cdots v_n$$

\noindent for every $v_1,\ldots,v_n\in V$.
\end{remarque}

Let $\pi_V:\text{\normalfont Perm}^c(V)\longrightarrow V$ be the projection on the first factor.

\begin{prop}[{\cite[$\mathsection$2.4]{chapoton}}]\label{isocoder}
    The map
    \begin{center}
        $\begin{array}{cll}
            Coder(\text{\normalfont Perm}^c(V)) & \longrightarrow & \text{\normalfont Hom}( \text{\normalfont Perm}^c(V),V)  \\
            d &\longmapsto & \pi_V\circ d
        \end{array}$
    \end{center}

    \noindent is a bijection.
\end{prop}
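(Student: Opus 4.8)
```latex
\textbf{Proof proposal.} The statement to prove is that the map sending a coderivation $d$ on $\text{\normalfont Perm}^c(V)$ to its corestriction $\pi_V\circ d:\text{\normalfont Perm}^c(V)\longrightarrow V$ is a bijection. This is the standard ``coderivations on a cofree coalgebra are determined by their corestriction'' principle, adapted to the $\text{\normalfont Perm}^c$ setting. The plan is to exhibit an explicit inverse: given an arbitrary $f\in\text{\normalfont Hom}(\text{\normalfont Perm}^c(V),V)$, I would reconstruct the unique coderivation $d_f$ whose corestriction $\pi_V\circ d_f$ equals $f$, and then check that $d\mapsto\pi_V\circ d$ and $f\mapsto d_f$ are mutually inverse.

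\emph{Injectivity.} First I would show the map is injective, i.e. a coderivation $d$ is determined by $\pi_V\circ d$. The key structural input is the cofreeness expressed through the coproduct $\dperm$ together with its coassociativity identities recalled just above. Writing $\text{\normalfont Perm}^c(V)=V\otimes\mathcal{S}(V)$ and filtering by the weight (the number $n$ of tensor factors in $V\otimes V^{\otimes n}$), I would argue by induction on weight: the coderivation property $\dperm\circ d=(d\otimes id+id\otimes d)\circ\dperm$ expresses the components of $d$ landing in lower-weight pieces in terms of $\dperm$ applied to $d$, which by coassociativity is controlled by the corestriction $\pi_V\circ d$. Concretely, iterating $\dperm$ and projecting with $\pi_V$ on each tensor factor recovers all components of $d(v_0\otimes v_1\cdots v_n)$ from $\pi_V\circ d$ evaluated on the sub-terms produced by the comultiplication, so two coderivations with the same corestriction must agree.

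\emph{Surjectivity.} For surjectivity I would, given $f\in\text{\normalfont Hom}(\text{\normalfont Perm}^c(V),V)$, define $d_f$ by the formula dictated by the reconstruction above, namely
\[
d_f=(f\otimes\pi_V)\circ(\text{iterated }\dperm)+\cdots,
\]
more precisely using the decomposition of $\dperm$ through $\Delta_{\mathcal{S}(V)}$ and $i_V$ recalled in the Remark preceding the statement. The natural candidate is to set the $V$-component of $d_f$ equal to $f$ and to propagate it along $\mathcal{S}(V)$ via the coproduct $\Delta_{\mathcal{S}(V)}$, so that $d_f(v_0\otimes v_1\cdots v_n)$ is a sum over ways of applying $f$ to a sub-coalgebra term and leaving the complementary term untouched, with the Koszul signs produced by permuting factors. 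I would then verify directly that $d_f$ is a coderivation, i.e. satisfies $\dperm\circ d_f=(d_f\otimes id+id\otimes d_f)\circ\dperm$; this reduces to the two coassociativity/cocommutativity identities satisfied by $\dperm$, together with compatibility of $\Delta_{\mathcal{S}(V)}$ with the shuffle sums. Finally $\pi_V\circ d_f=f$ holds by construction, since the only term of $d_f$ landing in the weight-zero component $V\otimes 1$ is the one applying $f$ to the full argument.

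\emph{Main obstacle.} The routine part is defining $d_f$ and checking $\pi_V\circ d_f=f$. The genuinely delicate step is verifying that $d_f$ really is a coderivation, which amounts to a bookkeeping of the shuffle sums and the signs coming from permutations of the factors in $\dperm$; the asymmetry of $\text{\normalfont Perm}^c(V)=V\otimes\mathcal{S}(V)$ (one distinguished factor $V$ plus a symmetric part $\mathcal{S}(V)$) means the comultiplication is not cocommutative in the usual sense but only satisfies the mixed identities $(id\otimes\dperm)\dperm=(\dperm\otimes id)\dperm$ and $(id\otimes\dperm)\dperm=(id\otimes\tau)(id\otimes\dperm)\dperm$ stated above. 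Handling these identities together with the sign conventions is where care is required, and I expect this to be the main technical point of the argument.
```
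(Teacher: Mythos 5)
Your overall strategy is the same as the paper's: uniqueness by iterating $\dperm$ and projecting each tensor factor to $V$, existence by writing down an explicit inverse built from the coproduct. The uniqueness half is fine as sketched. The issue is with your candidate formula for $d_f$, which as described is incomplete in a way that matters.

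The coderivation $\Psi(l)$ associated to $l\in\text{\normalfont Hom}(\text{\normalfont Perm}^c(V),V)$ is forced to be a sum of \emph{two} structurally different terms, and your description only clearly produces one of them. The first term (your ``propagate $f$ along $\mathcal{S}(V)$ via $\Delta_{\mathcal{S}(V)}$'') is the composite $(l\otimes id)\circ(id\otimes\Delta_{\mathcal{S}(V)})$: it applies $l$ to the piece containing the distinguished root factor $V$ and tensors with the identity on the remaining symmetric part. But the coderivation identity $\dperm\circ d=(d\otimes id+id\otimes d)\circ\dperm$ also forces a second summand, coming from the $id\otimes d$ term, in which $l$ is applied to a \emph{branch} $x_{(2)}\otimes Y_{(2)}$ of the output of $\dperm$ (a piece whose distinguished $V$-factor is \emph{not} the root $v_0$), and the resulting element of $V$ must then be multiplied back into the symmetric factor $\mathcal{S}(V)$ via the projection $V\otimes\mathcal{S}(V)\otimes V\longrightarrow V\otimes\mathcal{S}(V)$. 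In Sweedler notation the full answer on $x\otimes Y$ is
$l(x\otimes Y_{(1)})\otimes(x_{(2)}\otimes Y_{(2)})+l(x\otimes Y)\otimes 1\pm x\otimes\bigl(Y_{(1)}\cdot l(x_{(2)}\otimes Y_{(2)})\bigr)$,
and it is the last term that your ``sum over ways of applying $f$ to a sub-coalgebra term and leaving the complementary term untouched'' does not visibly supply: there the complementary term is \emph{not} left untouched, since the output of $f$ has to be re-inserted into its $\mathcal{S}(V)$-part. Without this summand the map you define has corestriction $f$ but fails the coderivation identity, so surjectivity would not follow. Once both summands are in place, your plan for verifying the coderivation identity from the two coassociativity relations of $\dperm$ is exactly what is needed, and you have correctly identified that this sign-and-shuffle bookkeeping is the real work.
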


\begin{proof}
    We only recall the construction of the inverse bijection $\Psi$. Let $l\in\text{\normalfont Hom}(\text{\normalfont Perm}^c(V),V)$. We define $\Psi(l)\in Coder(\text{\normalfont Perm}^c(V))$ as the sum of the composite
    % https://q.uiver.app/#q=WzAsMyxbMCwwLCJcXFBzaV8xKGwpOlZcXG90aW1lc1xcbWF0aGNhbHtTfShWKSJdLFsxLDAsIlZcXG90aW1lc1xcbWF0aGNhbHtTfShWKVxcb3RpbWVzXFxtYXRoY2Fse1N9KFYpIl0sWzIsMCwiVlxcb3RpbWVzXFxtYXRoY2Fse1N9KFYpIl0sWzEsMiwibFxcb3RpbWVzIGlkIl0sWzAsMSwiaWRcXG90aW1lc1xcRGVsdGFfe1xcbWF0aGNhbHtTfShWKX0iXV0=
\[\begin{tikzcd}
	{\Psi_1(l):V\otimes\mathcal{S}(V)} & {V\otimes\mathcal{S}(V)\otimes\mathcal{S}(V)} & {V\otimes\mathcal{S}(V)}
	\arrow["{id\otimes\Delta_{\mathcal{S}(V)}}", from=1-1, to=1-2]
	\arrow["{l\otimes id}", from=1-2, to=1-3]
\end{tikzcd}\]

\noindent and of the composite
\[\begin{tikzcd}
	{{\Psi_2}({l}):V\otimes\mathcal{S}(V)} & {(V\otimes\mathcal{S}(V))\otimes (V\otimes\mathcal{S}(V))} & {V\otimes\mathcal{S}(V)\otimes V} & {V\otimes\mathcal{S}(V)}
	\arrow["\dperm", from=1-1, to=1-2]
	\arrow["{id\otimes {l}}", from=1-2, to=1-3]
	\arrow[two heads, from=1-3, to=1-4]
\end{tikzcd}\]

\noindent where the last morphism is given by the projection from $\mathcal{S}(V)\otimes V$ to $\mathcal{S}(V)$. One can check that we retrieve the definition given in the proof of \cite[$\mathsection$2.4]{chapoton}.
\end{proof}

% \begin{defi}
%     We call {\normalfont $\mathcal{P}re\mathcal{L}ie$-algebra up to homotopy} any algebra over the operad $\mathcal{P}re\mathcal{L}ie_{\infty}$.
% \end{defi}

\begin{prop}[{\cite[$\mathsection$2.5]{chapoton}}]\label{caracprelieinf}
   Let $L\in\text{\normalfont gMod}_\mathbb{K}$. Giving a structure of pre-Lie algebra up to homotopy on $L$ is equivalent to giving a degree $-1$ morphism $l\in\text{\normalfont Hom}(\text{\normalfont Perm}^c(\Sigma L),\Sigma  L)$ such that, for every $x,y_1,\ldots,y_n\in\Sigma L$, we have
   \begin{multline*}
       \sum_{i=0}^n\sum_{\sigma \in Sh(i,n-i)}\pm l(l(x\otimes y_{\sigma(1)}\cdots y_{\sigma(i)})\otimes y_{\sigma(i+1)}\cdots y_{\sigma(n)})\\+\sum_{i=0}^n\sum_{\sigma \in Sh(1,i,n-i-1)}\pm l(x\otimes l(y_{\sigma(1)}\otimes y_{\sigma(2)} \cdots y_{\sigma(i+1)})\cdot y_{\sigma(i+2)}\cdots y_{\sigma(n)})=0
   \end{multline*}
   
    \noindent where the signs $\pm$ are produced by the permutations of the elements $y_1,\ldots,y_n$.
\end{prop}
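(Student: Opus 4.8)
The plan is to use the operadic bar–cobar dictionary to reinterpret a pre-Lie-up-to-homotopy structure as a square-zero coderivation, and then feed this coderivation through Proposition~\ref{isocoder} to obtain the stated identity. Since $\mathcal{P}re\mathcal{L}ie_\infty = B^c(\Lambda^{-1}\text{\normalfont Perm}^c)$, the general bar–cobar theory for algebras over a cobar construction identifies a $\mathcal{P}re\mathcal{L}ie_\infty$-algebra structure on $L$ with a square-zero coderivation $Q$ of degree $-1$ on the cofree conilpotent $\Lambda^{-1}\text{\normalfont Perm}^c$-coalgebra cogenerated by $L$; this identification rests only on the universal property of the cofree coalgebra and the quasi-freeness of $\Omega(-)$, and is valid in any characteristic. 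Using the suspension correspondence recalled above between $\Lambda^{-1}\text{\normalfont Perm}^c$-coalgebras on $L$ and $\text{\normalfont Perm}^c$-coalgebras on $\Sigma L$, this cofree coalgebra is identified with $\text{\normalfont Perm}^c(\Sigma L)$. Hence giving a pre-Lie-up-to-homotopy structure on $L$ amounts to giving a degree $-1$ coderivation $Q$ on $\text{\normalfont Perm}^c(\Sigma L)$ with $Q^2 = 0$.

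Next I would invoke Proposition~\ref{isocoder}: the assignment $Q \longmapsto l := \pi_{\Sigma L}\circ Q$ is a bijection between degree $-1$ coderivations on $\text{\normalfont Perm}^c(\Sigma L)$ and degree $-1$ maps $l \in \text{\normalfont Hom}(\text{\normalfont Perm}^c(\Sigma L),\Sigma L)$, with inverse $\Psi = \Psi_1 + \Psi_2$ described there. This already yields the bijection at the level of data, so the only thing left is to match the two square-zero conditions, i.e. to show that $Q^2 = 0$ corresponds exactly to the displayed identity for $l$.

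The final and most delicate step is this translation of $Q^2 = 0$. Since $Q$ is an odd coderivation, a direct check of the coderivation condition (valid in any characteristic, and not requiring any division by $2$) shows that $Q^2$ is again a coderivation; hence the injectivity contained in Proposition~\ref{isocoder} gives $Q^2 = 0$ if and only if $\pi_{\Sigma L}\circ Q^2 = 0$. As $l = \pi_{\Sigma L}\circ Q$, this equation reads $l\circ Q = l\circ\Psi_1(l) + l\circ\Psi_2(l) = 0$, and I would unfold each composite on a generator $x\otimes y_1\cdots y_n$. The term $l\circ\Psi_1(l)$, built from $\Delta_{\mathcal{S}(\Sigma L)}$ which splits the $y_i$'s into two commutative blocks, produces precisely the first sum indexed by $Sh(i,n-i)$; the term $l\circ\Psi_2(l)$, built from $\dperm$ which moreover singles out one $y_i$ as a new root, produces the second sum indexed by $Sh(1,i,n-i-1)$. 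The main obstacle is purely the sign bookkeeping: one must verify that the Koszul signs generated by permuting the $y_i$'s past one another and past the odd map $l$ coincide with the signs asserted in the statement, and that the two shuffle index sets arise exactly as claimed. Once Proposition~\ref{isocoder} and the bar–cobar description are in place, no conceptual difficulty remains beyond this computation.
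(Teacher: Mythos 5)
The paper does not actually prove this proposition—it is quoted from Chapoton–Livernet and the surrounding text (the remark after the statement identifying a $\mathcal{P}re\mathcal{L}ie_\infty$-structure with a square-zero coderivation $Q$ on $\text{\normalfont Perm}^c(\Sigma L)$, together with Proposition \ref{isocoder}) already encodes exactly the route you take. Your reconstruction is correct and follows that intended route: the only points that genuinely need checking are the ones you flag, namely that $Q^2$ is again a coderivation (so that injectivity in Proposition \ref{isocoder} reduces $Q^2=0$ to $\pi_{\Sigma L}Q^2=0$) and the Koszul-sign bookkeeping when unfolding $l\circ\Psi_1(l)+l\circ\Psi_2(l)$, both of which go through as you describe.
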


In particular, if $L$ is a $\mathcal{P}re\mathcal{L}ie$-algebra up to homotopy, then $\Sigma  L$ is endowed with a differential $d$ given by the restriction $l:\Sigma  L\longrightarrow\Sigma  L$.\\

In the following, we adopt the following notation:
$$x\llbrace y_1,\ldots,y_n\rrbrace:= l(x\otimes y_1\cdots y_n).$$

\noindent We call such operations the \textit{symmetric braces} associated to the $\mathcal{P}re\mathcal{L}ie_\infty$-algebra $L$.

\begin{remarque}\label{prelieinftoprelie}
We have an operad morphism $\mathcal{P}re\mathcal{L}ie_\infty\longrightarrow\mathcal{P}re\mathcal{L}ie$ which sends $e_1^2$ to the pre-Lie product, and the other $e_1^n$'s to $0$. Thus, every $\mathcal{P}re\mathcal{L}ie$-algebra has a canonical $\mathcal{P}re\mathcal{L}ie_\infty$-algebra structure. Beware that the symmetric braces in pre-Lie algebras have nothing to do with the symmetric braces in $\mathcal{P}re\mathcal{L}ie_\infty$-algebras.
\end{remarque}

Equivalently, Proposition \ref{caracprelieinf} asserts that giving a structure of a $\mathcal{P}re\mathcal{L}ie_\infty$-algebra on $L$ is equivalent to giving a coderivation $Q\in Coder(\text{\normalfont Perm}^c(\Sigma  L))$ such that $Q^2=0$. 

\begin{defi}
    We define the category $\Lambda\mathcal{PL}_\infty$ with as set of objects the pairs $(V,Q)$, where $V\in\text{\normalfont gMod}_\mathbb{K}$ and $Q\in Coder(\text{\normalfont Perm}^c(V))$ is a degree $-1$ element such that $Q^2=0$. A morphism $\phi:(V,Q)\longrightarrow (V',Q')$ in $\Lambda\mathcal{PL}_\infty$ is a morphism of coalgebras $\phi:\text{\normalfont Perm}^c(V)\longrightarrow \text{\normalfont Perm}^c(V')$ which preserves the coderivations $Q$ and $Q'$.
\end{defi}

Usually, a morphism in $\Lambda\mathcal{PL}_\infty$ from $(V,Q)$ to $(V',Q')$ is denoted by $\phi:V\rightsquigarrow V'$ and is called an $\infty$\textit{-morphism}.

\begin{thm}\label{caracgammaprelieinf}
    A dg $\mathbb{K}$-module $L$ is a $\mathcal{P}re\mathcal{L}ie_\infty$-algebra if and only if $\Sigma  L\in\Lambda\mathcal{PL}_\infty$. Moreover, any morphism of $\mathcal{P}re\mathcal{L}ie_\infty$-algebras $\phi:L\longrightarrow L'$ gives rise to a morphism $\Sigma \phi:\Sigma L\longrightarrow\Sigma  L'$ in $\Lambda\mathcal{PL}_\infty$ which preserves the symmetric braces.
\end{thm}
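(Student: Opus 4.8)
The plan is to obtain the first equivalence as a formal chain of bijections already assembled in the preceding results, and then to treat the functoriality statement by transporting a strict algebra morphism through the cogenerator bijection of Proposition \ref{isocoder}. By definition, $\Sigma L\in\Lambda\mathcal{PL}_\infty$ means precisely that there exists a degree $-1$ element $Q\in Coder(\text{\normalfont Perm}^c(\Sigma L))$ with $Q^2=0$. The bijection $Coder(\text{\normalfont Perm}^c(\Sigma L))\cong\text{\normalfont Hom}(\text{\normalfont Perm}^c(\Sigma L),\Sigma L)$ of Proposition \ref{isocoder}, sending $d\mapsto\pi_{\Sigma L}\circ d$ with inverse $\Psi$, is degree-preserving since $\dperm$ and the projections have degree $0$; hence such a $Q$ corresponds to a degree $-1$ map $l=\pi_{\Sigma L}\circ Q$. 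As $Q$ is odd, $Q^2=\tfrac12[Q,Q]$ is again a coderivation, so by the same bijection $Q^2=0$ is equivalent to $\pi_{\Sigma L}\circ Q^2=0$, and the Remark following Proposition \ref{caracprelieinf} identifies this last equation with the Chapoton--Livernet identities for $l$. By Proposition \ref{caracprelieinf} those identities hold if and only if $L$ carries a $\mathcal{P}re\mathcal{L}ie_\infty$-algebra structure whose symmetric braces are $x\llbrace y_1,\ldots,y_n\rrbrace=l(x\otimes y_1\cdots y_n)$. Reading this chain in both directions yields the first assertion.

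For the second assertion, let $\phi:L\longrightarrow L'$ be a strict morphism of $\mathcal{P}re\mathcal{L}ie_\infty$-algebras and set $f=\Sigma\phi:\Sigma L\longrightarrow\Sigma L'$. Since $\text{\normalfont Perm}^c(-)=(-)\otimes\mathcal{S}(-)$ is functorial and $\dperm$ is natural in $V$, the map $F:=\text{\normalfont Perm}^c(f)=f\otimes\mathcal{S}(f)$ is a morphism of $\text{\normalfont Perm}$-coalgebras, so it remains to check that $F$ intertwines the coderivations, i.e. that $D:=Q'F-FQ$ vanishes. A direct computation, using that $F$ is a coalgebra morphism while $Q,Q'$ are coderivations, gives $\dperm\circ D=(D\otimes F+F\otimes D)\circ\dperm$, so that $D$ is a coderivation along $F$. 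The argument proving Proposition \ref{isocoder} applies verbatim to this relative setting and shows that such an $F$-coderivation is determined by $\pi_{\Sigma L'}\circ D$; hence $D=0$ if and only if $\pi_{\Sigma L'}Q'F=\pi_{\Sigma L'}FQ$. Using $\pi_{\Sigma L'}Q'=l'$ and the identity $\pi_{\Sigma L'}F=f\circ\pi_{\Sigma L}$ (which holds because $F$ acts on the first tensor factor by $f$), this reduces to $l'\circ F=f\circ l$, which is exactly the statement that $f$ preserves the symmetric braces.

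It thus remains to see that a strict morphism $\phi$ does preserve the braces, and this is where the hypothesis that $\phi$ is a morphism of algebras over $\mathcal{P}re\mathcal{L}ie_\infty$ is used: $\phi$ commutes with every structure operation $p^L,p^{L'}$, and under the operadic suspension identification the generating operations of $\mathcal{P}re\mathcal{L}ie_\infty=B^c(\Lambda^{-1}\text{\normalfont Perm}^c)$ correspond on $\Sigma L$ precisely to the map $l$ cogenerating $Q$, so that $f\circ l=l'\circ F$ as required. The only genuinely delicate points I expect are the sign bookkeeping in the relative version of Proposition \ref{isocoder} and the verification that the generating operad operations match the symmetric braces after suspension; everything else is formal naturality of $\dperm$ and the cofreeness of $\text{\normalfont Perm}^c$.
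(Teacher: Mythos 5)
Your proposal is correct and follows exactly the route the paper intends: the theorem is stated there without proof as an immediate consequence of Proposition \ref{isocoder} together with Proposition \ref{caracprelieinf} (see the sentence ``Equivalently, Proposition \ref{caracprelieinf} asserts\dots''), and your chain of bijections plus the cofreeness argument for morphisms is precisely that implicit argument made explicit. The only caveat is that invoking $Q^2=\tfrac12[Q,Q]$ is not available when $char(\mathbb{K})=2$, a case the paper cares about; replace it by the direct check that $\dperm Q^2=(Q^2\otimes id+id\otimes Q^2)\dperm$, the cross terms $(Q\otimes id)(id\otimes Q)+(id\otimes Q)(Q\otimes id)$ cancelling by the Koszul sign since $|Q|$ is odd.
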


\subsection{The category $\Gamma\Lambda {\mathcal{PL}}_\infty$}\label{sec:222}

In this subsection, we aim to define an analogue of the category $\Lambda\mathcal{PL}_\infty$, denoted by $\Gamma\Lambda\mathcal{PL}_\infty$, which will characterize the $\Gamma(\mathcal{P}re\mathcal{L}ie_\infty,-)$-algebras as in Theorem \ref{caracgammaprelieinf}.\\

Let $V$ be a graded $\mathbb{K}$-module. We define $\Gamma(V)$ by
$$\Gamma(V):=\bigoplus_{n\geq 0}(V^{\otimes n})^{\Sigma _n}.$$

\noindent We have $\Gamma(V)\simeq\mathbb{K}\oplus\overline{\Gamma}(V)$ with
$$\overline{\Gamma}(V):=\bigoplus_{n\geq 1}(V^{\otimes n})^{\Sigma _n}.$$

\noindent Note that we have a morphism $Tr:\mathcal{S}(V)\longrightarrow\Gamma(V)$ called the \textit{trace map} and defined by $Tr(1)=1$ and
$$Tr(v_1\cdots v_n)=\sum_{\sigma\in\Sigma_n}\pm v_{\sigma(1)}\otimes\cdots\otimes v_{\sigma(n)}$$

\noindent for every $v_1,\ldots,v_n\in V$ and $n\geq 1$. 

% \begin{prop}
%     Giving a $\Gamma(\mathcal{P}re\mathcal{L}ie_\infty,-)$ algebra structure on $V$ is equivalent to giving a morphism
%     $$l:V\otimes\Gamma(V)\longrightarrow V$$

%     \noindent which satisfies the following property. If we denote by $l_n$ the restriction of $l$ on $V\otimes(V^{\otimes n})^{\Sigma _n}$, then, for all $x,y_1,...,y_n\in V$ and $r_1,...,r_n\geq 0$,
%     $$\sum_{p_i+q_i=r_i}l_{\sum q_j}(l_{\sum p_j}(x\otimes\mathcal{O}(y_1^{p_1} ... y_n^{ p_n}))\otimes \mathcal{O}(y_1^{q_1} ... y_n^{ q_n})))$$
%     $$+\sum_{k=1}^n\sum_{\substack{p_i+q_i=r_i,i\neq k\\ p_k+q_k=r_k-1}}l_{1+\sum_j q_j}(x\otimes l_{\sum_j p_j}(y_k\otimes y_1^{ p_1} ...y_n^{p_n})\otimes\mathcal{O}(y_1^{ q_1} ... y_n^{ q_n}))=0.$$
% \end{prop}

% \begin{proof}
%     It is a direct consequence of Theorem \ref{relationsprelieinfinite}.
% \end{proof}

\begin{defi}
    For every $V\in \text{\normalfont gMod}_\mathbb{K}$, we set
    $$\Gamma \text{\normalfont Perm}^c(V):=V\otimes\Gamma(V).$$
\end{defi}

Our goal is to construct a coproduct $\dgperm:\Gamma\text{\normalfont Perm}^c(V)\longrightarrow\Gamma\text{\normalfont Perm}^c(V)\otimes\Gamma\text{\normalfont Perm}^c(V)$ which is compatible, in some sense, with the coproduct $\dperm:\text{\normalfont Perm}^c(V)\longrightarrow\text{\normalfont Perm}^c(V)\otimes\text{\normalfont Perm}^c(V)$. For this purpose, we first consider the \textit{tensor algebra} generated by $V$:
$${T}(V):=\bigoplus_{n\geq 0}V^{\otimes n}.$$

\noindent We have a coproduct $\Delta_{T(V)}:{T}(V)\longrightarrow {T}(V)\otimes {T}(V)$ defined by $\Delta_{T(V)}(1):=1\otimes 1$ and
$$\Delta_{T(V)}(v_1\otimes\cdots\otimes v_n):=\sum_{k=0}^{n} (v_1\otimes\cdots\otimes v_k)\otimes (v_{k+1}\otimes\cdots\otimes v_n).$$

\noindent for every $v_1,\ldots,v_n\in V$. Consider 
$$\overline{T}(V):=\bigoplus_{n\geq 1}V^{\otimes n}.$$

\noindent We also have a coproduct on $\overline{T}(V)$, defined, for every $v_1,\ldots,v_n\in V$, by $\Delta_{\overline{T}(V)}(v_1):=0$ and,
$$\Delta_{\overline{T}(V)}(v_1\otimes\cdots\otimes v_n):=\sum_{k=1}^{n-1}(v_1\otimes\cdots\otimes v_k)\otimes (v_{k+1}\otimes\dots\otimes v_n).$$

\noindent We embed $V\otimes\Gamma(V)\subset \overline{T}(V)$. Note that
$$\Delta_{\overline{T}(V)}(V\otimes\Gamma(V))\subset V\otimes\Gamma(V)\otimes\overline{\Gamma}(V).$$

\noindent By applying the embedding $(V^{\otimes n})^{\Sigma _n}\subset V\otimes (V^{\otimes n-1})^{\Sigma _{n-1}}$ for each $n\geq 2$, we have the inclusion $\overline{\Gamma}(V)\subset V\otimes\Gamma(V)$. We thus have obtained a coproduct
$$\dgperm:\Gamma\text{\normalfont Perm}^c(V)\longrightarrow\Gamma\text{\normalfont Perm}^c(V)\otimes\Gamma\text{\normalfont Perm}^c(V).$$

\noindent We can also identify $\dgperm$ with the composite
% https://q.uiver.app/#q=WzAsNCxbMCwwLCJcXGRncGVybTpWXFxvdGltZXNcXEdhbW1hKFYpIl0sWzEsMCwiVlxcb3RpbWVzXFxHYW1tYShWKVxcb3RpbWVzXFxHYW1tYShWKSJdLFsyLDEsIihWXFxvdGltZXNcXEdhbW1hKFYpKVxcb3RpbWVzIChWXFxvdGltZXNcXEdhbW1hKFYpKSJdLFsxLDEsIlZcXG90aW1lc1xcR2FtbWEoVilcXG90aW1lc1xcb3ZlcmxpbmV7XFxHYW1tYX0oVikiXSxbMCwxLCJpZFxcb3RpbWVzXFxEZWx0YV97VChWKX0iXSxbMSwzLCIiLDAseyJzdHlsZSI6eyJoZWFkIjp7Im5hbWUiOiJlcGkifX19XSxbMywyLCIiLDAseyJzdHlsZSI6eyJ0YWlsIjp7Im5hbWUiOiJob29rIiwic2lkZSI6InRvcCJ9fX1dXQ==
\[\begin{tikzcd}
	{\dgperm:V\otimes\Gamma(V)} & {V\otimes\Gamma(V)\otimes\Gamma(V)} \\
	& {V\otimes\Gamma(V)\otimes\overline{\Gamma}(V)} & {(V\otimes\Gamma(V))\otimes (V\otimes\Gamma(V))}
	\arrow["{id\otimes\Delta_{T(V)}}", from=1-1, to=1-2]
	\arrow[two heads, from=1-2, to=2-2]
	\arrow[hook, from=2-2, to=2-3]
\end{tikzcd}.\]

\begin{lm}\label{formulecoprod}
    The morphism $\dgperm:\Gamma\text{\normalfont Perm}^c(V)\longrightarrow\Gamma\text{\normalfont Perm}^c(V)\otimes\Gamma\text{\normalfont Perm}^c(V)$ satisfies the identities:
    $$(id\otimes\dgperm)\dgperm=(\Delta\otimes id)\dgperm;$$
$$(id\otimes\dgperm)\dgperm=(id\otimes \tau)(id\otimes\dgperm)\dgperm.$$

Moreover, we have the following commutative diagram:
\begin{center}
    $\begin{tikzcd}
        \text{\normalfont Perm}^c(V)\arrow[r, "\dperm"]\arrow[d, "id\otimes Tr"'] & \text{\normalfont Perm}^c(V)\otimes \text{\normalfont Perm}^c(V)\arrow[d, "(id\otimes Tr)\otimes (id\otimes Tr)"]\\
        \Gamma \text{\normalfont Perm}^c(V)\arrow[r, "\dgperm"'] & \Gamma \text{\normalfont Perm}^c(V)\otimes\Gamma \text{\normalfont Perm}^c(V)
    \end{tikzcd}$
\end{center}
\end{lm}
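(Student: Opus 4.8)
The plan is to recognise that, under the stated identifications, $\dgperm$ is simply the reduced deconcatenation coproduct $\Delta_{\overline{T}(V)}$ restricted to the subspace $V\otimes\Gamma(V)\subset\overline{T}(V)$. The first thing I would check is that the reinterpretation $\overline{\Gamma}(V)\hookrightarrow V\otimes\Gamma(V)$, coming from the inclusions $(V^{\otimes n})^{\Sigma_n}\subset V\otimes(V^{\otimes n-1})^{\Sigma_{n-1}}$, is nothing but the inclusion of subspaces of $V^{\otimes n}\subset\overline{T}(V)$ and does not change the underlying tensor. Hence both $\Gamma\text{\normalfont Perm}^c(V)$ and its square embed into $\overline{T}(V)$ and $\overline{T}(V)^{\otimes 2}$, and under these embeddings $\dgperm=\Delta_{\overline{T}(V)}\big|_{V\otimes\Gamma(V)}$. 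That this lands in $V\otimes\Gamma(V)\otimes\overline{\Gamma}(V)$ follows since, for $w\in(V^{\otimes m})^{\Sigma_m}$, the $\Sigma_m$-invariance restricts to $\Sigma_{k}\times\Sigma_{m-k}$-invariance, so each deconcatenation component of $v_0\otimes w$ lies in $\big(V\otimes(V^{\otimes k-1})^{\Sigma_{k-1}}\big)\otimes(V^{\otimes m-k+1})^{\Sigma_{m-k+1}}$.

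Granting this, the coassociativity identity $(id\otimes\dgperm)\dgperm=(\dgperm\otimes id)\dgperm$ is immediate from the coassociativity of $\Delta_{\overline{T}(V)}$: both sides compute the triple reduced deconcatenation of $v_0\otimes w$ into three consecutive non-empty blocks. For the second identity, I would fix the first block (the one carrying $v_0$) and observe that, by the same invariance argument, the remaining tail of $v_0\otimes w$ is a symmetric tensor, i.e. an element of $\Gamma(V)$. The deconcatenation of a symmetric tensor is cocommutative: writing $\delta_{a,b}$ for the $(a,b)$-component of deconcatenation, one has $\tau\circ\delta_{a,b}=\delta_{b,a}$ on $\Gamma(V)$, so after summing over all splittings of the tail and reindexing the block sizes the result is $\tau$-invariant in the last two factors. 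This is exactly $(id\otimes\dgperm)\dgperm=(id\otimes\tau)(id\otimes\dgperm)\dgperm$.

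For the commutative square I would reduce to two compatibilities of the trace map. The combinatorial heart is that $Tr\colon\mathcal{S}(V)\to\Gamma(V)$ intertwines the shuffle and deconcatenation coproducts, namely $\Delta_{T(V)}\circ Tr=(Tr\otimes Tr)\circ\Delta_{\mathcal{S}(V)}$. This is where Proposition \ref{shuffle} enters: the unique decomposition $\sigma=\omega\cdot(\tau_1\oplus\tau_2)$ of a permutation into a $(k,n-k)$-shuffle and a block sum converts the double sum over shuffles and block permutations on the right-hand side into the single sum over $\Sigma_n$ produced by deconcatenating $Tr(v_1\cdots v_n)$. The second compatibility is with the reinsertion maps, $(id\otimes Tr)\circ i_V=\iota\circ Tr$ with $\iota\colon\overline{\Gamma}(V)\hookrightarrow V\otimes\Gamma(V)$; this holds because singling out the first tensor factor of $Tr(v_1\cdots v_n)$ and grouping by the chosen first entry reproduces $\sum_k\pm v_k\otimes Tr(v_1\cdots\widehat{v_k}\cdots v_n)$. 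Since $Tr$ preserves the number of tensor factors it also commutes with the projections onto the reduced parts, and chasing these facts through the composite presentations of $\dperm$ and $\dgperm$ yields the square.

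The genuinely non-formal input is the $Tr$-compatibility of the previous paragraph, resting on the shuffle decomposition; the two coassociativity-type identities are then structural consequences of the deconcatenation viewpoint. The remaining, unavoidable difficulty is purely the Koszul sign bookkeeping: each transposition of homogeneous elements, each embedding $(V^{\otimes n})^{\Sigma_n}\subset V\otimes(V^{\otimes n-1})^{\Sigma_{n-1}}$, and the trace map all generate signs, and one must verify that these are mutually consistent. They all arise from permutations of the factors in the same manner, so they agree, but this verification requires care rather than any new idea.
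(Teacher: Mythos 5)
Your proposal is correct and follows exactly the route the paper's definitions set up (the paper itself dismisses this lemma as ``straightforward computations''): $\dgperm$ is by construction the reduced deconcatenation $\Delta_{\overline{T}(V)}$ restricted to $V\otimes\Gamma(V)\subset\overline{T}(V)$, so coassociativity is inherited from $\Delta_{\overline{T}(V)}$, the permutative identity follows from cocommutativity of deconcatenation on the $\Sigma$-invariant tail, and the square reduces to the two trace compatibilities you isolate, with the shuffle decomposition of Proposition \ref{shuffle} doing the counting. The only remaining work is the Koszul sign bookkeeping, which you correctly flag as routine but necessary.
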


\begin{proof}
    The proof of this lemma comes from straightforward computations.
\end{proof}

% \begin{proof}
%     The first two identities come from straightforward computations. We prove that the diagram is commutative. Let $v_0,\ldots,v_n\in V$. We have
%     $$\dgperm(id\otimes Tr)(v_0\otimes v_1\cdots v_n)=\sum_{k=0}^{n-1}\sum_{\mu\in\Sigma _n}\pm (v_0\otimes v_{\mu^{-1}(1)}\otimes\cdots\otimes v_{\mu^{-1}(k)})\otimes (v_{\mu^{-1}(k+1)}\otimes v_{\mu^{-1}(k+2)}\otimes\cdots\otimes v_{\mu^{-1}(n)}).$$

%     \noindent We use that, for every $0\leq k\leq n-1$ and $\mu\in\Sigma _n$, there exists a unique decomposition of the form $\mu^{-1}=\sigma \cdot(\tau_1^{-1}\oplus id\oplus\tau_{2}^{-1})$ where $\sigma \in Sh(k,1,n-k-1),\tau_1\in\Sigma _k,\tau_2\in\Sigma _{n-k-1}$. The previous computation is thus given by
%     $$\sum_{k=0}^{n-1}\sum_{\substack{\tau_1\in\Sigma _k\\\tau_2\in\Sigma _{n-k-1}}}\sum_{\sigma \in Sh(k,1,n-k-1)}\pm(v_0\otimes v_{\sigma \tau_1^{-1}(1)}\otimes\cdots\otimes v_{\sigma \tau_1^{-1}(k)})\otimes (v_{\sigma (k+1)}\otimes v_{\sigma \tau_2^{-1}(k+2)}\otimes\cdots\otimes v_{\sigma \tau_2^{-1}(n)})$$

%     \noindent which is precisely $(id\otimes Tr)^{\otimes 2}\dperm(v_0\otimes v_1\cdots v_n)$.
% \end{proof}

\begin{remarque}\label{idcoprod}
    The relation $(id\otimes\dgperm)\dgperm=(id\otimes\tau)(id\otimes\dgperm)\dgperm$ implies that, for every $k\geq 1$,
    $$(\dgperm)^k(\Gamma\text{\normalfont Perm}^c(V))\subset\Gamma\text{\normalfont Perm}^c(V)\otimes(\Gamma\text{\normalfont Perm}^c(V)^{\otimes k})^{\Sigma_k}.$$

    \noindent As a consequence, since $(\Delta_{\overline{T}(V)})^k$ reduces to the identity on $V^{\otimes k+1}$, and by definition of $\dgperm$, we have the following commutative diagram:
    % https://q.uiver.app/#q=WzAsNixbMCwwLCJcXG92ZXJsaW5le1R9KFYpIl0sWzEsMCwiXFxvdmVybGluZXtUfShWKV57XFxvdGltZXMgaysxfSJdLFsyLDAsIlZcXG90aW1lcyBWXntcXG90aW1lcyBrfSJdLFswLDEsIlxcR2FtbWFcXHRleHR7XFxub3JtYWxmb250IFBlcm19XmMoVikiXSxbMSwxLCJcXEdhbW1hXFx0ZXh0e1xcbm9ybWFsZm9udCBQZXJtfV5jKFYpXFxvdGltZXMoXFxHYW1tYVxcdGV4dHtcXG5vcm1hbGZvbnQgUGVybX1eYyhWKV57XFxvdGltZXMga30pXntcXFNpZ21hX2t9Il0sWzIsMSwiVlxcb3RpbWVzIChWXntcXG90aW1lcyBrfSlee1xcU2lnbWFfa30iXSxbMywwLCIiLDIseyJzdHlsZSI6eyJ0YWlsIjp7Im5hbWUiOiJob29rIiwic2lkZSI6InRvcCJ9fX1dLFswLDEsIlxcRGVsdGFfe1xcb3ZlcmxpbmV7VH0oVil9XmsiXSxbMyw0LCJcXGRncGVybV5rIl0sWzQsMSwiIiwxLHsic3R5bGUiOnsidGFpbCI6eyJuYW1lIjoiaG9vayIsInNpZGUiOiJ0b3AifX19XSxbMSwyLCJcXHBpX1Zee1xcb3RpbWVzIGsrMX0iXSxbNSwyLCIiLDEseyJzdHlsZSI6eyJ0YWlsIjp7Im5hbWUiOiJob29rIiwic2lkZSI6InRvcCJ9fX1dLFs0LDUsIlxccGlfVl57XFxvdGltZXMgaysxfSJdLFswLDIsIlxccGlfe1Zee1xcb3RpbWVzIGsrMX19IiwwLHsiY3VydmUiOi00fV0sWzMsNSwiXFxwaV97Vl57XFxvdGltZXMgaysxfX0iLDIseyJjdXJ2ZSI6NH1dXQ==
\[\begin{tikzcd}
	{\overline{T}(V)} & {\overline{T}(V)^{\otimes k+1}} & {V\otimes V^{\otimes k}} \\
	{\Gamma\text{\normalfont Perm}^c(V)} & {\Gamma\text{\normalfont Perm}^c(V)\otimes(\Gamma\text{\normalfont Perm}^c(V)^{\otimes k})^{\Sigma_k}} & {V\otimes (V^{\otimes k})^{\Sigma_k}}
	\arrow["{(\Delta_{\overline{T}(V)})^k}", from=1-1, to=1-2]
	\arrow["{\pi_{V^{\otimes k+1}}}", curve={height=-24pt}, from=1-1, to=1-3]
	\arrow["{\pi_V^{\otimes k+1}}", from=1-2, to=1-3]
	\arrow[hook, from=2-1, to=1-1]
	\arrow["{(\dgperm)^k}", from=2-1, to=2-2]
	\arrow["{\pi_{V^{\otimes k+1}}}"', curve={height=24pt}, from=2-1, to=2-3]
	\arrow[hook, from=2-2, to=1-2]
	\arrow["{\pi_V^{\otimes k+1}}", from=2-2, to=2-3]
	\arrow[hook, from=2-3, to=1-3]
\end{tikzcd},\]

\noindent where, for every $k\geq 0$, we denote by $\pi_{V^{\otimes k}}:\overline{T}(V)\longrightarrow V^{\otimes k}$ the projection onto $V^{\otimes k}$.
\end{remarque}

\begin{defi}
    An endomorphism $d$ of $\Gamma \text{\normalfont Perm}^c(V)$ is called a {\normalfont coderivation} if it satisfies
    $$\dgperm d=(d\otimes id+ id\otimes d)\dgperm.$$

    We let $Coder(\Gamma\text{\normalfont Perm}^c(V))$ to be the $\mathbb{K}$-module spanned by coderivations.
\end{defi}

Our goal is to prove that any coderivation is characterized by its composite with $\pi_V$. We rely on the following definition.

\begin{defi}\label{defsh}
    Let $w,v_1,\ldots,v_{r}\in V$. We define $\text{\normalfont Sh}:T(V)\otimes V\longrightarrow T(V)$ by
    $$\text{\normalfont Sh}(v_1\otimes\cdots\otimes v_n;w)=\sum_{i=0}^{n}\pm v_1\otimes\cdots\otimes v_i\otimes{w}\otimes v_{i+1}\otimes\cdots\otimes v_n$$

    \noindent where the sign is given by the permutation $v_1\otimes\cdots\otimes v_n\otimes w\mapsto\pm v_1\otimes\cdots\otimes v_i\otimes{w}\otimes v_{i+1}\otimes\cdots\otimes v_n$ for every $0\leq i\leq n$. We also define analogously $\text{\normalfont Sh}:V\otimes T(V)\longrightarrow T(V)$.
\end{defi}

We immediately see that $\text{\normalfont Sh}(\Gamma(V)\otimes V)\subset\Gamma(V)$.

\begin{prop}\label{bijendo}
    The map
    \begin{center}
        $\begin{array}{cll}
            Coder(\Gamma \text{\normalfont Perm}^c(V)) & \longrightarrow & \text{\normalfont Hom}(\Gamma \text{\normalfont Perm}^c(V),V)  \\
            d &\longmapsto & \pi_V\circ d
        \end{array}$
    \end{center}

    \noindent is a bijection. We denote by $\widetilde{\Psi}$ its inverse. If we consider the inverse $\Psi$ given in the proof of Proposition \ref{isocoder}, then $\widetilde{\Psi}$ is compatible with $\Psi$ in the following sense. Let $\widetilde{l}\in\text{\normalfont Hom}(\Gamma \text{\normalfont Perm}^c(V),V)$. We define $l\in\text{\normalfont Hom}(\text{\normalfont Perm}^c(V),V)$ by the composite
    % https://q.uiver.app/#q=WzAsMyxbMCwwLCJsOlxcdGV4dHtcXG5vcm1hbGZvbnQgUGVybX1eYyhWKSJdLFsxLDAsIlxcR2FtbWFcXHRleHR7XFxub3JtYWxmb250IFBlcm19XmMoVikiXSxbMiwwLCJWIl0sWzEsMiwiXFx3aWRldGlsZGV7bH0iXSxbMCwxLCJpZFxcb3RpbWVzIFRSIl1d
\[\begin{tikzcd}
	{l:\text{\normalfont Perm}^c(V)} & {\Gamma\text{\normalfont Perm}^c(V)} & V
	\arrow["{id\otimes Tr}", from=1-1, to=1-2]
	\arrow["{\widetilde{l}}", from=1-2, to=1-3]
\end{tikzcd}.\]

\noindent Then the following diagram is commutative:
% https://q.uiver.app/#q=WzAsNCxbMCwxLCJcXHRleHR7XFxub3JtYWxmb250IFBlcm19XmMoVikiXSxbMSwxLCJcXHRleHR7XFxub3JtYWxmb250IFBlcm19XmMoVikiXSxbMCwwLCJcXEdhbW1hXFx0ZXh0e1xcbm9ybWFsZm9udCBQZXJtfV5jKFYpIl0sWzEsMCwiXFxHYW1tYVxcdGV4dHtcXG5vcm1hbGZvbnQgUGVybX1eYyhWKSJdLFswLDIsImlkXFxvdGltZXMgVFIiXSxbMCwxLCJcXFBzaShsKSIsMl0sWzEsMywiaWRcXG90aW1lcyBUUiIsMl0sWzIsMywiXFx3aWRldGlsZGV7XFxQc2l9KFxcd2lkZXRpbGRle2x9KSJdXQ==
\[\begin{tikzcd}
	{\Gamma\text{\normalfont Perm}^c(V)} & {\Gamma\text{\normalfont Perm}^c(V)} \\
	{\text{\normalfont Perm}^c(V)} & {\text{\normalfont Perm}^c(V)}
	\arrow["{\widetilde{\Psi}(\widetilde{l})}", from=1-1, to=1-2]
	\arrow["{id\otimes Tr}", from=2-1, to=1-1]
	\arrow["{\Psi(l)}"', from=2-1, to=2-2]
	\arrow["{id\otimes Tr}"', from=2-2, to=1-2]
\end{tikzcd}.\]
\end{prop}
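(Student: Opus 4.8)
The plan is to construct the inverse $\widetilde{\Psi}$ explicitly, in direct analogy with the map $\Psi$ of Proposition \ref{isocoder}, and then to check that it lands in $Coder(\Gamma\text{\normalfont Perm}^c(V))$, that it is a section of $d\mapsto\pi_V\circ d$, that this last map is injective, and finally the stated compatibility with $\Psi$. Given $\widetilde{l}\in\text{\normalfont Hom}(\Gamma\text{\normalfont Perm}^c(V),V)$, I would set $\widetilde{\Psi}(\widetilde{l})=\widetilde{\Psi}_1(\widetilde{l})+\widetilde{\Psi}_2(\widetilde{l})$, where
\[V\otimes\Gamma(V)\xrightarrow{id\otimes\Delta_{T(V)}}V\otimes\Gamma(V)\otimes\Gamma(V)\xrightarrow{\widetilde{l}\otimes id}V\otimes\Gamma(V)\]
defines $\widetilde{\Psi}_1(\widetilde{l})$, and
\[V\otimes\Gamma(V)\xrightarrow{\dgperm}(V\otimes\Gamma(V))\otimes(V\otimes\Gamma(V))\xrightarrow{id\otimes\widetilde{l}}(V\otimes\Gamma(V))\otimes V\xrightarrow{\text{\normalfont Sh}}V\otimes\Gamma(V)\]
defines $\widetilde{\Psi}_2(\widetilde{l})$, the last arrow being induced by the multiplication $\text{\normalfont Sh}\colon\Gamma(V)\otimes V\longrightarrow\Gamma(V)$ of Definition \ref{defsh}. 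These replace the coproduct $\Delta_{\mathcal{S}(V)}$, the comultiplication $\dperm$ and the product of $\mathcal{S}(V)$ appearing in $\Psi$ by their divided-power analogues $\Delta_{T(V)}$, $\dgperm$ and $\text{\normalfont Sh}$. Well-definedness follows from the stability of $\Gamma(V)$ under the deconcatenation coproduct $\Delta_{T(V)}$, from $\text{\normalfont Sh}(\Gamma(V)\otimes V)\subset\Gamma(V)$, and from the containment of the image of $\dgperm$ recorded after its construction.

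I would then verify $\pi_V\circ\widetilde{\Psi}(\widetilde{l})=\widetilde{l}$. Applying $\pi_V$, the projection onto $V\otimes\mathbb{K}\cong V$, kills $\widetilde{\Psi}_2(\widetilde{l})$ at once, since $\text{\normalfont Sh}$ sends $(V^{\otimes m})^{\Sigma_m}\otimes V$ into $(V^{\otimes m+1})^{\Sigma_{m+1}}$ and hence has no component in $\mathbb{K}\subset\Gamma(V)$; for $\widetilde{\Psi}_1(\widetilde{l})$ only the term $\xi\otimes 1$ of $\Delta_{T(V)}(\xi)$ survives the projection, returning exactly $\widetilde{l}$. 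The harder point, which I expect to be the main obstacle, is to show that $\widetilde{\Psi}(\widetilde{l})$ is genuinely a coderivation, i.e. that $\dgperm\,\widetilde{\Psi}(\widetilde{l})=(\widetilde{\Psi}(\widetilde{l})\otimes id+id\otimes\widetilde{\Psi}(\widetilde{l}))\dgperm$. This is a sign-bookkeeping computation that parallels the proof of Proposition \ref{isocoder} and rests on the two coassociativity and cocommutativity identities for $\dgperm$ established in Lemma \ref{formulecoprod}, together with the compatibility of $\text{\normalfont Sh}$ with $\Delta_{T(V)}$. Because the trace map $\mathcal{S}(V)\longrightarrow\Gamma(V)$ is not surjective in positive characteristic, the identity cannot be pulled back from the known case of $\text{\normalfont Perm}^c(V)$, so I would carry out this verification directly on $\Gamma\text{\normalfont Perm}^c(V)$.

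For injectivity it suffices to prove that a coderivation $e$ with $\pi_V\circ e=0$ vanishes. Iterating the coderivation identity and using the coassociativity of $\dgperm$ gives $(\dgperm)^k e=\sum_{j}(id^{\otimes j}\otimes e\otimes id^{\otimes k-j})(\dgperm)^k$, and composing with $\pi_V^{\otimes k+1}$ while invoking the commutative diagram of Remark \ref{idcoprod} (which gives $\pi_V^{\otimes k+1}(\dgperm)^k=\pi_{V^{\otimes k+1}}$) yields $\pi_{V^{\otimes k+1}}\circ e=\pi_V^{\otimes k+1}\sum_{j}(id^{\otimes j}\otimes e\otimes id^{\otimes k-j})(\dgperm)^k$. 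Every summand on the right factors through $\pi_V\circ e=0$, so $\pi_{V^{\otimes k+1}}\circ e=0$ for all $k\geq 0$. Since $\Gamma\text{\normalfont Perm}^c(V)\subset\overline{T}(V)$ and an element of $\overline{T}(V)$ all of whose homogeneous components vanish is zero, this forces $e=0$. Together with the previous paragraph, $d\mapsto\pi_V\circ d$ is then a bijection with inverse $\widetilde{\Psi}$.

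Finally, for the compatibility with $\Psi$, set $l=\widetilde{l}\circ(id\otimes Tr)$ and check the square summand by summand. On the $\Psi_1$ versus $\widetilde{\Psi}_1$ part I would use that $Tr$ intertwines $\Delta_{\mathcal{S}(V)}$ with $\Delta_{T(V)}$ together with $\widetilde{l}\circ(id\otimes Tr)=l$; on the $\Psi_2$ versus $\widetilde{\Psi}_2$ part I would use the coproduct compatibility square of Lemma \ref{formulecoprod} relating $\dperm$ and $\dgperm$ through $id\otimes Tr$, and the fact that $Tr$ carries the product of $\mathcal{S}(V)$ to $\text{\normalfont Sh}$. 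Adding the two contributions gives $(id\otimes Tr)\circ\Psi(l)=\widetilde{\Psi}(\widetilde{l})\circ(id\otimes Tr)$, which is the asserted commutative diagram.
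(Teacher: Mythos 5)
Your proposal is correct and follows essentially the same route as the paper: the inverse is built as the sum of the same two composites $\widetilde{\Psi}_1$ and $\widetilde{\Psi}_2$ (deconcatenation followed by $\widetilde{l}\otimes id$, and $\dgperm$ followed by $id\otimes\widetilde{l}$ and $\text{\normalfont Sh}$), the coderivation identity is checked directly on $\Gamma\text{\normalfont Perm}^c(V)$ using the two identities of Lemma \ref{formulecoprod}, uniqueness comes from Remark \ref{idcoprod} exactly as you describe, and the compatibility square splits into the same two sub-squares via the intertwining properties of $Tr$. The only difference is that you outline rather than execute the Sweedler-notation bookkeeping for the coderivation identity, which the paper writes out in full.
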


\begin{proof}
    Let $\widetilde{l}:\Gamma \text{\normalfont Perm}^c(V)\longrightarrow V$ be a morphism. We define an endomorphism $\widetilde{\Psi}(\widetilde{l})$ of $\Gamma \text{\normalfont Perm}^c(V)$ by the sum of the composite
    % https://q.uiver.app/#q=WzAsMyxbMCwwLCJcXHdpZGV0aWxkZXtcXFBzaX1fMShcXHdpZGV0aWxkZXtsfSk6Vlxcb3RpbWVzXFxHYW1tYShWKSJdLFsxLDAsIlZcXG90aW1lc1xcR2FtbWEoVilcXG90aW1lc1xcR2FtbWEoVikiXSxbMiwwLCJWXFxvdGltZXNcXEdhbW1hKFYpIl0sWzAsMSwiaWRcXG90aW1lc1xcRGVsdGFfe1QoVil9Il0sWzEsMiwiXFx3aWRldGlsZGV7bH1cXG90aW1lcyBpZCJdXQ==
\[\begin{tikzcd}
	{\widetilde{\Psi}_1(\widetilde{l}):V\otimes\Gamma(V)} & {V\otimes\Gamma(V)\otimes\Gamma(V)} & {V\otimes\Gamma(V)}
	\arrow["{id\otimes\Delta_{T(V)}}", from=1-1, to=1-2]
	\arrow["{\widetilde{l}\otimes id}", from=1-2, to=1-3]
\end{tikzcd}\]

\noindent and of the composite
% https://q.uiver.app/#q=WzAsNCxbMCwwLCJcXHdpZGV0aWxkZXtcXFBzaX0oXFx3aWRldGlsZGV7bH0pXzI6Vlxcb3RpbWVzXFxHYW1tYShWKSJdLFsxLDAsIlZcXG90aW1lc1xcR2FtbWEoVilcXG90aW1lc1xcR2FtbWEoVikiXSxbMiwwLCJWXFxvdGltZXNcXEdhbW1hKFYpXFxvdGltZXMgViJdLFs0LDAsIlZcXG90aW1lc1xcR2FtbWEoVikiXSxbMCwxLCJcXERlbHRhIl0sWzEsMiwiaWRcXG90aW1lc1xcd2lkZXRpbGRle2x9Il0sWzIsMywiaWRcXG90aW1lc1xcdGV4dHtcXG5vcm1hbGZvbnQgU2h9KC07LSkiXV0=
\[\begin{tikzcd}
	{\widetilde{\Psi}_2(\widetilde{l}):V\otimes\Gamma(V)} & {(V\otimes\Gamma(V))\otimes (V\otimes\Gamma(V))} & {V\otimes\Gamma(V)\otimes V} && {V\otimes\Gamma(V)}
	\arrow["\dgperm", from=1-1, to=1-2]
	\arrow["{id\otimes id\otimes\widetilde{l}}", from=1-2, to=1-3]
	\arrow["{id\otimes\text{\normalfont Sh}(-;-)}", from=1-3, to=1-5]
\end{tikzcd}.\]

    \noindent Let $x\in V$ and $Y\in\Gamma(V)$. We write the coproduct $\dgperm(x\otimes Y)$ by using the Sweedler notation without the sum symbol, as
    $$\dgperm(x\otimes Y)=(x\otimes Y_{(1)})\otimes (x_{(2)}\otimes Y_{(2)}),$$

    \noindent where $x_{(2)}\in V$ and $Y_{(1)},Y_{(2)}\in\Gamma(V)$. Note that we have
    $$(id\otimes\Delta_{T(V)})(x\otimes Y)=(x\otimes Y_{(1)})\otimes (x_{(2)}\otimes Y_{(2)})+(x\otimes Y)\otimes 1.$$
    
    \noindent Then, by definition of $\widetilde{\Psi}_1(\widetilde{l})$ and $\widetilde{\Psi}_2(\widetilde{l})$, we have
    $$\widetilde{\Psi}_1(\widetilde{l})(x\otimes Y)=\widetilde{l}(x\otimes Y_{(1)})\otimes (x_{(2)}\otimes Y_{(2)})+\widetilde{l}(x\otimes Y)\otimes 1$$

    \noindent and
    $$\widetilde{\Psi}_2(\widetilde{l})(x\otimes Y)=\pm x\otimes\text{\normalfont Sh}(Y_{(1)};\widetilde{l}(x_{(2)}\otimes Y_{(2)})),$$

    \noindent so that 
    $$\widetilde{\Psi}(\widetilde{l})(x\otimes Y)=\widetilde{l}(x\otimes Y_{(1)})\otimes (x_{(2)}\otimes Y_{(2)})+\widetilde{l}(x\otimes Y)\otimes 1\pm x\otimes\text{\normalfont Sh}(Y_{(1)};\widetilde{l}(x_{(2)}\otimes Y_{(2)})).$$
    
    \noindent We set
    $$\dgperm(x\otimes Y_{(1)})=(x\otimes Y_{(11)})\otimes (x_{(1)}\otimes Y_{(12)});$$
    $$\dgperm(x_{(2)}\otimes Y_{(2)})=(x_{(2)}\otimes Y_{(21)})\otimes (x_{(22)}\otimes Y_{(22)}).$$

    \noindent We then have
    \begin{center}
    $\begin{array}{lll}(\widetilde{\Psi}(\widetilde{l})\otimes id)\dgperm(x\otimes Y) & = & \widetilde{l}(x\otimes Y_{(11)})\otimes (x_{(1)}\otimes Y_{(12)})\otimes (x_{(2)}\otimes Y_{(2)})\\
    & & + \widetilde{l}(x\otimes Y_{(1)})\otimes 1\otimes (x_{(2)}\otimes Y_{(2)})\\
    & & \pm (x\otimes\text{\normalfont Sh}(Y_{(11)};\widetilde{l}(x_{(1)}\otimes Y_{(12)})))\otimes (x_{(2)}\otimes Y_{(2)});\\
    (id\otimes\widetilde{\Psi}(\widetilde{l}))\dgperm(x\otimes Y) & = & \pm(x\otimes Y_{(1)})\otimes (\widetilde{l}(x_{(2)}\otimes Y_{(21)})\otimes (x_{(22)}\otimes Y_{(22)}))\\
    & & \pm (x\otimes Y_{(1)})\otimes (x_{(2)}\otimes\text{\normalfont Sh}(Y_{(21)};\widetilde{l}(x_{(22)}\otimes Y_{(22)})))\\
    & & \pm(x\otimes Y_{(1)})\otimes \widetilde{l}(x_{(2)}\otimes Y_{(2)})\otimes 1.\end{array}$\end{center}

    \noindent We now compute $\dgperm\widetilde{\Psi}(\widetilde{l})(x\otimes Y)$. The term $\dgperm\widetilde{\Psi}_1(\widetilde{l})(x\otimes Y)$ gives
    $$\widetilde{l}(x\otimes Y_{(1)})\otimes (x_{(2)}\otimes Y_{(21)})\otimes (x_{(22)}\otimes Y_{(22)})+\widetilde{l}(x\otimes Y_{(1)})\otimes 1\otimes (x_{(2)}\otimes Y_{(2)}).$$

    \noindent By using the first identity of Lemma \ref{formulecoprod} which gives
    $$(x\otimes Y_{(11)})\otimes (x_{(1)}\otimes Y_{(12)})\otimes (x_{(2)}\otimes Y_{(2)})=(x\otimes Y_{(1)})\otimes (x_{(2)}\otimes Y_{(21)})\otimes (x_{(22)}\otimes Y_{(22)}),$$

    \noindent we have that $\dgperm\widetilde{\Psi}_1(\widetilde{l})(x\otimes Y)$ is 
    $$\widetilde{l}(x\otimes Y_{(11)})\otimes (x_{(1)}\otimes Y_{(12)})\otimes (x_{(2)}\otimes Y_{(2)})+\widetilde{l}(x\otimes Y_{(1)})\otimes 1\otimes (x_{(2)}\otimes Y_{(2)}),$$

    \noindent which is exactly the first two lines occurring in $(\widetilde{\Psi}(\widetilde{l})\otimes id)\dgperm(x\otimes Y)$. The term $\dgperm\widetilde{\Psi}_2(\widetilde{l})(x\otimes Y)$ gives
    $$\pm (x\otimes\text{\normalfont Sh}(Y_{(11)};\widetilde{l}(x_{(2)}\otimes Y_{(2)})))\otimes (x_{(1)}\otimes Y_{(12)})$$
    $$\pm (x\otimes Y_{(11)})\otimes(\widetilde{l}(x_{(2)}\otimes Y_{(2)})\otimes Y_{(12)})$$
    $$\pm (x\otimes Y_{(11)})\otimes (x_{(1)}\otimes\text{\normalfont Sh}(Y_{(12)};\widetilde{l}(x_{(2)}\otimes Y_{(2)})))$$
    $$\pm (x\otimes Y_{(1)})\otimes(\widetilde{l}(x_{(2)}\otimes Y_{(2)})\otimes 1).$$

    \noindent From the second formula given in Lemma \ref{formulecoprod}, which gives
    $$(x\otimes Y_{(11)})\otimes (x_{(1)}\otimes Y_{(12)})\otimes (x_{(2)}\otimes Y_{(2)})=\pm(x\otimes Y_{(11)})\otimes (x_{(2)}\otimes Y_{(2)})\otimes (x_{(1)}\otimes Y_{(12)}),$$

    \noindent we obtain that $\dgperm\widetilde{\Psi}_2(\widetilde{l})(x\otimes Y)$ is given by
    $$\pm (x\otimes\text{\normalfont Sh}(Y_{(11)};\widetilde{l}(x_{(1)}\otimes Y_{(12)})))\otimes (x_{(2)}\otimes Y_{(2)})$$
    $$\pm (x\otimes Y_{(11)})\otimes(\widetilde{l}(x_{(1)}\otimes Y_{(12)})\otimes Y_{(2)})$$
    $$\pm (x\otimes Y_{(11)})\otimes (x_{(2)}\otimes\text{\normalfont Sh}(Y_{(2)};\widetilde{l}(x_{(1)}\otimes Y_{(12)})))$$
    $$\pm (x\otimes Y_{(1)})\otimes(\widetilde{l}(x_{(2)}\otimes Y_{(2)})\otimes 1).$$

    \noindent The first line is the remaining term in $(\widetilde{\Psi}(\widetilde{l})\otimes id)\dgperm(x\otimes Y)$, while the remaining lines give $(id\otimes\widetilde{\Psi}(\widetilde{l}))\dgperm(x\otimes Y)$ when using again the first formula of Lemma \ref{formulecoprod}. We thus have proved that $\widetilde{\Psi}(\widetilde{l})\in Coder(\Gamma\text{\normalfont Perm}^c(V))$, and $\pi_V\circ\widetilde{\Psi}(\widetilde{l})=\widetilde{l}$. We now prove that $\widetilde{\Psi}(\widetilde{l})$ is the only coderivation $Q$ such that $\pi_V\circ Q=l$. We use Remark \ref{idcoprod}, which gives
    $$\pi_{V^{\otimes k+1}}Q=\pi_V^{\otimes k+1}\dgperm^kQ=\sum_{i=1}^{k+1}(\pi_V^{\otimes i-1}\otimes\widetilde{l}\otimes\pi_V^{\otimes k-i+1})\dgperm^k,$$
    
    \noindent which proves that $Q$ is fully determined by $\widetilde{l}$. We thus have $Q=\widetilde{\Psi}(\widetilde{l})$, and then that $\Psi$ is the desired bijection. We now prove the  commutativity of the diagram. We note that, by definition of $\Psi(l),\widetilde{\Psi}(\widetilde{l})$ and $l$, the following diagram is commutative:
    % https://q.uiver.app/#q=WzAsNixbMCwwLCJcXEdhbW1hXFx0ZXh0e1xcbm9ybWFsZm9udCBQZXJtfV5jKFYpIl0sWzIsMCwiXFxHYW1tYVxcdGV4dHtcXG5vcm1hbGZvbnQgUGVybX1eYyhWKSJdLFswLDEsIlxcdGV4dHtcXG5vcm1hbGZvbnQgUGVybX1eYyhWKSJdLFsyLDEsIlxcdGV4dHtcXG5vcm1hbGZvbnQgUGVybX1eYyhWKSJdLFsxLDAsIlxcR2FtbWFcXHRleHR7XFxub3JtYWxmb250IFBlcm19XmMoVilcXG90aW1lcyB7XFxHYW1tYX0oVikiXSxbMSwxLCJcXHRleHR7XFxub3JtYWxmb250IFBlcm19XmMoVilcXG90aW1lcyBcXG1hdGhjYWx7U30oVikiXSxbMCwxLCJcXHdpZGV0aWxkZXtcXFBzaX1fMShcXHdpZGV0aWxkZXtsfSkiLDAseyJjdXJ2ZSI6LTN9XSxbMiwzLCJcXFBzaV8xKGwpIiwyLHsiY3VydmUiOjN9XSxbMiwwLCJpZFxcb3RpbWVzIFRyIl0sWzMsMSwiaWRcXG90aW1lcyBUciIsMl0sWzAsNCwiaWRcXG90aW1lc1xcRGVsdGFfe1QoVil9IiwyXSxbMiw1LCJpZFxcb3RpbWVzXFxEZWx0YV97XFxtYXRoY2Fse1N9KFYpfSJdLFs1LDQsImlkXFxvdGltZXMgVHJcXG90aW1lcyBUciJdLFs0LDEsIlxcd2lkZXRpbGRle2x9XFxvdGltZXMgaWQiLDJdLFs1LDMsImxcXG90aW1lcyBpZCJdXQ==
\[\begin{tikzcd}
	{\Gamma\text{\normalfont Perm}^c(V)} & {\Gamma\text{\normalfont Perm}^c(V)\otimes {\Gamma}(V)} & {\Gamma\text{\normalfont Perm}^c(V)} \\
	{\text{\normalfont Perm}^c(V)} & {\text{\normalfont Perm}^c(V)\otimes \mathcal{S}(V)} & {\text{\normalfont Perm}^c(V)}
	\arrow["{id\otimes\Delta_{T(V)}}"', from=1-1, to=1-2]
	\arrow["{\widetilde{\Psi}_1(\widetilde{l})}", curve={height=-18pt}, from=1-1, to=1-3]
	\arrow["{\widetilde{l}\otimes id}"', from=1-2, to=1-3]
	\arrow["{id\otimes Tr}", from=2-1, to=1-1]
	\arrow["{id\otimes\Delta_{\mathcal{S}(V)}}", from=2-1, to=2-2]
	\arrow["{\Psi_1(l)}"', curve={height=18pt}, from=2-1, to=2-3]
	\arrow["{id\otimes Tr\otimes Tr}", from=2-2, to=1-2]
	\arrow["{l\otimes id}", from=2-2, to=2-3]
	\arrow["{id\otimes Tr}"', from=2-3, to=1-3]
\end{tikzcd}.\]

\noindent By Lemma \ref{formulecoprod}, and by the formula
$$Tr(v_1\cdots v_n\cdot w)=\text{\normalfont Sh}(Tr(v_1\cdots v_n);w),$$

\noindent we also obtain the following commutative diagram:
% https://q.uiver.app/#q=WzAsOCxbMCwwLCJcXEdhbW1hXFx0ZXh0e1xcbm9ybWFsZm9udCBQZXJtfV5jKFYpIl0sWzMsMCwiXFxHYW1tYVxcdGV4dHtcXG5vcm1hbGZvbnQgUGVybX1eYyhWKSJdLFswLDEsIlxcdGV4dHtcXG5vcm1hbGZvbnQgUGVybX1eYyhWKSJdLFszLDEsIlxcdGV4dHtcXG5vcm1hbGZvbnQgUGVybX1eYyhWKSJdLFsxLDAsIlxcR2FtbWFcXHRleHR7XFxub3JtYWxmb250IFBlcm19XmMoVilcXG90aW1lc1xcR2FtbWFcXHRleHR7XFxub3JtYWxmb250IFBlcm19XmMoVikiXSxbMSwxLCJcXHRleHR7XFxub3JtYWxmb250IFBlcm19XmMoVilcXG90aW1lc1xcdGV4dHtcXG5vcm1hbGZvbnQgUGVybX1eYyhWKSJdLFsyLDAsIlxcR2FtbWFcXHRleHR7XFxub3JtYWxmb250IFBlcm19XmMoVilcXG90aW1lcyBWIl0sWzIsMSwiXFx0ZXh0e1xcbm9ybWFsZm9udCBQZXJtfV5jKFYpXFxvdGltZXMgViJdLFswLDEsIlxcd2lkZXRpbGRle1xcUHNpfV8yKFxcd2lkZXRpbGRle2x9KSIsMCx7ImN1cnZlIjotNH1dLFsyLDMsIlxcUHNpXzIobCkiLDIseyJjdXJ2ZSI6NH1dLFsyLDAsImlkXFxvdGltZXMgVHIiXSxbMywxLCJpZFxcb3RpbWVzIFRyIiwyXSxbMCw0LCJcXGRncGVybSIsMl0sWzIsNSwiXFxkcGVybSJdLFs1LDQsImlkXFxvdGltZXMgVHJcXG90aW1lcyBpZFxcb3RpbWVzIFRyIl0sWzcsNiwiaWRcXG90aW1lcyBUclxcb3RpbWVzIGlkIiwyXSxbNCw2LCJpZFxcb3RpbWVzXFx3aWRldGlsZGV7bH0iLDJdLFs1LDcsImlkXFxvdGltZXMgbCJdLFs2LDEsImlkXFxvdGltZXMgU2goLTstKSIsMl0sWzcsMywiIiwxLHsic3R5bGUiOnsiaGVhZCI6eyJuYW1lIjoiZXBpIn19fV1d
\[\begin{tikzcd}
	{\Gamma\text{\normalfont Perm}^c(V)} & {\Gamma\text{\normalfont Perm}^c(V)\otimes\Gamma\text{\normalfont Perm}^c(V)} & {\Gamma\text{\normalfont Perm}^c(V)\otimes V} & {\Gamma\text{\normalfont Perm}^c(V)} \\
	{\text{\normalfont Perm}^c(V)} & {\text{\normalfont Perm}^c(V)\otimes\text{\normalfont Perm}^c(V)} & {\text{\normalfont Perm}^c(V)\otimes V} & {\text{\normalfont Perm}^c(V)}
	\arrow["\dgperm"', from=1-1, to=1-2]
	\arrow["{\widetilde{\Psi}_2(\widetilde{l})}", curve={height=-24pt}, from=1-1, to=1-4]
	\arrow["{id\otimes\widetilde{l}}"', from=1-2, to=1-3]
	\arrow["{id\otimes Sh(-;-)}"', from=1-3, to=1-4]
	\arrow["{id\otimes Tr}", from=2-1, to=1-1]
	\arrow["\dperm", from=2-1, to=2-2]
	\arrow["{\Psi_2(l)}"', curve={height=24pt}, from=2-1, to=2-4]
	\arrow["{id\otimes Tr\otimes id\otimes Tr}", from=2-2, to=1-2]
	\arrow["{id\otimes l}", from=2-2, to=2-3]
	\arrow["{id\otimes Tr\otimes id}"', from=2-3, to=1-3]
	\arrow[two heads, from=2-3, to=2-4]
	\arrow["{id\otimes Tr}"', from=2-4, to=1-4]
\end{tikzcd}\]

\noindent which proves the theorem.
\end{proof}

\begin{defi}
    We define the category $\Gamma\Lambda\mathcal{PL}_\infty$ with as objects the pairs $(V,Q)$ where $V$ is a graded $\mathbb{K}$-module and $Q$ a coderivation of degree $-1$ on $\Gamma\text{\normalfont Perm}^c(V)$ such that $Q^2=0$; a morphism $\phi:(V,Q)\longrightarrow (V',Q')$ is a morphism of coalgebras $\phi:\Gamma\text{\normalfont Perm}^c(V)\longrightarrow\Gamma\text{\normalfont Perm}^c(V')$ which commutes with the coderivations $Q$ and $Q'$.
\end{defi}

We usually denote a morphism $\phi:(V,Q)\longrightarrow (V',Q')$ by $\phi: V\rightsquigarrow V'$ when there is no ambiguity on $Q$ and $Q'$, and call it an $\infty$\textit{-morphism}.\\

If $\phi:\Gamma \text{\normalfont Perm}^c(V)\longrightarrow\Gamma \text{\normalfont Perm}^c(V')$ is a morphism of graded $\mathbb{K}$-modules, then we set, for all $k,n\geq 0$, 
% https://q.uiver.app/#q=WzAsNixbMCwwLCJcXHBoaV9rOlZcXG90aW1lcyAoVl57XFxvdGltZXMga30pXntcXFNpZ21hX2t9Il0sWzEsMCwiXFxHYW1tYVxcdGV4dHtcXG5vcm1hbGZvbnQgUGVybX1eYyhWKSJdLFsyLDAsIlxcR2FtbWFcXHRleHR7XFxub3JtYWxmb250IFBlcm19XmMoVuKAmSk7Il0sWzAsMSwiXFxwaGlebjpcXEdhbW1hXFx0ZXh0e1xcbm9ybWFsZm9udCBQZXJtfV5jKFYpIl0sWzIsMSwiVidcXG90aW1lcyAoVidee1xcb3RpbWVzIG59KV57XFxTaWdtYV9ufS4iXSxbMSwxLCJcXEdhbW1hXFx0ZXh0e1xcbm9ybWFsZm9udCBQZXJtfV5jKFYnKSJdLFswLDEsIiIsMCx7InN0eWxlIjp7InRhaWwiOnsibmFtZSI6Imhvb2siLCJzaWRlIjoidG9wIn19fV0sWzEsMiwiXFxwaGkiXSxbMyw1LCJcXHBoaSJdLFs1LDQsIlxccGlfe1Zee1xcb3RpbWVzIG4rMX19IiwwLHsic3R5bGUiOnsiaGVhZCI6eyJuYW1lIjoiZXBpIn19fV1d
\[\begin{tikzcd}
	{\phi_k:V\otimes (V^{\otimes k})^{\Sigma_k}} & {\Gamma\text{\normalfont Perm}^c(V)} & {\Gamma\text{\normalfont Perm}^c(V’);} \\
	{\phi^n:\Gamma\text{\normalfont Perm}^c(V)} & {\Gamma\text{\normalfont Perm}^c(V')} & {V'\otimes (V'^{\otimes n})^{\Sigma_n}.}
	\arrow[hook, from=1-1, to=1-2]
	\arrow["\phi", from=1-2, to=1-3]
	\arrow["\phi", from=2-1, to=2-2]
	\arrow["{\pi_{V^{\otimes n+1}}}", two heads, from=2-2, to=2-3]
\end{tikzcd}\]

\noindent Using these notations, a degree $-1$ coderivation $Q$ on $\Gamma\text{Perm}^c(V)$ is such that $Q^2=0$ if and only if for all $n\geq 0$,
$$\sum_{k=0}^nQ_k^0Q_n^k=0.$$

\noindent In particular, $Q_0^0$ is a differential on $V$. From now on, we endow $V\in\Gamma\Lambda\mathcal{PL}_\infty$ with the structure of a dg $\mathbb{K}$-module with differential $d=Q_0^0$. We also have that a morphism of graded $\mathbb{K}$-modules $\phi:\Gamma\text{\normalfont Perm}^c(V)\longrightarrow\Gamma\text{\normalfont Perm}^c(V')$ is a morphism of coalgebras if and only if
$$\dgperm\phi^n = \sum_{p+q=n-1}(\phi^{p}\otimes\phi^{q})\dgperm.$$

% \begin{center}
%     $\left\{\begin{array}{l}
%          \displaystyle\widetilde{\Delta}\phi^n = \sum_{p_i+q_i=n-1}(\phi^{p_i}\otimes\phi^{q_i})\widetilde{\Delta}  \\
%          \displaystyle\sum_{k=0}^n(Q')_k^0\phi^k_n = \sum_{k=0}^n\phi^0_kQ_n^k
%     \end{array}\right.,$
% \end{center}

\begin{prop}\label{morphcoprod}
    Every $\infty$-morphism $\phi:V\longrightarrow W$ in $\Gamma\Lambda\mathcal{PL}_\infty$ is fully determined by the composite $\phi^{0}=\pi_W\circ\phi$.
\end{prop}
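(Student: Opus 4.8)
The plan is to imitate the argument that establishes the bijection in Proposition \ref{bijendo}, replacing the coderivation relation there by the coalgebra-morphism relation $\dgperm\circ\phi=(\phi\otimes\phi)\circ\dgperm$. The heart of the proof is the explicit reconstruction formula
$$\phi^n=(\phi^0)^{\otimes(n+1)}\circ(\dgperm)^n,\qquad n\geq 0,$$
where $(\dgperm)^n$ is the $n$-fold iterated coproduct, which is well defined thanks to the coassociativity relation $(id\otimes\dgperm)\dgperm=(\dgperm\otimes id)\dgperm$ recorded in Lemma \ref{formulecoprod}. Once this formula is available, the proposition follows at once: since $\phi$ decomposes along the grading of its target $\Gamma\text{\normalfont Perm}^c(V')=\bigoplus_{n\geq 0}V'\otimes((V')^{\otimes n})^{\Sigma_n}$ as $\phi=\sum_{n\geq 0}\iota_n\circ\phi^n$, and each $\phi^n$ is determined by $\phi^0$, the whole morphism $\phi$ is recovered from $\phi^0=\pi_{V'}\circ\phi$.

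To obtain the displayed formula, I would first note that the coalgebra-morphism condition iterates, by an immediate induction on $n$ using the coassociativity of Lemma \ref{formulecoprod}, to
$$(\dgperm)^n\circ\phi=\phi^{\otimes(n+1)}\circ(\dgperm)^n;$$
no Koszul signs intervene in $\phi^{\otimes(n+1)}$ because $\phi$ has degree $0$. I would then compose with the projection $\pi_{(V')^{\otimes(n+1)}}$ and invoke the lower commutative triangle of Remark \ref{idcoprod}, applied to $\Gamma\text{\normalfont Perm}^c(V')$, which gives $\pi_{(V')^{\otimes(n+1)}}=\pi_{V'}^{\otimes(n+1)}\circ(\dgperm)^n$. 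Combining these with the definition $\phi^n=\pi_{(V')^{\otimes(n+1)}}\circ\phi$ yields
\begin{align*}
\phi^n &= \pi_{(V')^{\otimes(n+1)}}\circ\phi = \pi_{V'}^{\otimes(n+1)}\circ(\dgperm)^n\circ\phi\\
&= \pi_{V'}^{\otimes(n+1)}\circ\phi^{\otimes(n+1)}\circ(\dgperm)^n = (\pi_{V'}\circ\phi)^{\otimes(n+1)}\circ(\dgperm)^n,
\end{align*}
and $\pi_{V'}\circ\phi=\phi^0$ by definition.

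This argument is essentially formal, so I do not anticipate a serious obstacle; the only points requiring care are the unambiguous definition of the iterated coproduct $(\dgperm)^n$, which rests squarely on the coassociativity established in Lemma \ref{formulecoprod}, and the identification, via Remark \ref{idcoprod}, of the projection onto the $n$-th graded summand $V'\otimes((V')^{\otimes n})^{\Sigma_n}\subset(V')^{\otimes n+1}$ with $\pi_{V'}^{\otimes(n+1)}\circ(\dgperm)^n$. It is worth emphasizing that the resulting statement is the exact analogue, for $\infty$-morphisms, of the characterization of coderivations by the composite $\pi_V\circ d$ proved in Proposition \ref{bijendo}.
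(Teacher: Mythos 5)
Your proposal is correct and follows essentially the same route as the paper: iterate the coalgebra-morphism relation using coassociativity, compose with $\pi_{V'}^{\otimes(n+1)}$, and invoke Remark \ref{idcoprod} to recover $\phi^n=(\phi^0)^{\otimes(n+1)}\circ(\dgperm)^n$. Your write-up is in fact slightly more careful with the indexing than the paper's own one-line computation.
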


\begin{proof}
    Let $\phi$ be an $\infty$-morphism. We have that
    $$\phi^k=\pi_W^{\otimes k+1}\dgperm^k\phi^k=(\phi^0)^{\otimes k}\dgperm^k,$$

    \noindent which gives, for every $v\in\Gamma \text{\normalfont Perm}^c(V)$,
    $$\phi^k(v)=\phi^0(v_{(1)})\otimes \cdots\otimes\phi^0(v_{(k)})$$

    \noindent where we use the Sweedler notation in the coalgebra $\Gamma \text{\normalfont Perm}^c(V)$. We then see that $\phi$ is fully determined by $\phi^0$.
\end{proof}

\begin{remarque}
    This proposition implies that giving an $\infty$-morphism $\phi:V\rightsquigarrow W$ is equivalent to giving a morphism $\phi^0:\Gamma\text{\normalfont Perm}^c(V)\longrightarrow W$ such that the morphism $\phi:\Gamma\text{\normalfont Perm} ^c(V)\longrightarrow\Gamma\text{\normalfont Perm}^c(W)$ constructed in Proposition \ref{morphcoprod} satisfies
    $$\sum_{k=0}^n(Q')_k^0\phi^k_n = \sum_{k=0}^n\phi^0_kQ_n^k$$

    \noindent for every $n\geq 0$. In particular, $\phi_0^0:V\longrightarrow W$ is a morphism of dg $\mathbb{K}$-modules.
\end{remarque}

\begin{defi}
    An $\infty$-morphism $\phi:V\rightsquigarrow W$ is {\normalfont strict} if $\phi_k^0=0$ for all $k\geq 1$.
\end{defi}

Equivalently, a strict morphism $\phi:V\longrightarrow W$ is the data of a morphism of dg $\mathbb{K}$-modules $\phi:V\longrightarrow W$ such that
$$(Q')_n^0\phi^{\otimes n+1}=\phi Q^0_n$$

\noindent for every $n\geq 0$.

\subsection{Symmetric weighted braces and Maurer-Cartan elements in $\widehat{\Gamma\Lambda\mathcal{PL}_\infty}$}\label{sec:223}

% We first begin by the following definition.

% \begin{defi}
%     Let $(V,Q)\in\Gamma\Lambda\mathcal{PL}_\infty$. We call {\normalfont (symmetric) weighted braces} the operations $-\llbrace-,\ldots,-\rrbrace_{r_1,\ldots,r_n}$ defined by $x\llbrace\rrbrace=d(x)$ and
%     $$x\llbrace y_1,\ldots,y_n\rrbrace_{r_1,\ldots,r_n}=Q_{\sum r_i}^0(x\otimes\text{\normalfont Sh}(y_1^{\otimes r_1};\ldots;y_n^{\otimes r_n})),$$

%     \noindent for all $x,y_1,\ldots,y_n\in V$ and $r_1,\ldots,r_n\geq 0$ such that $y_1^{\otimes r_1},\ldots,y_n^{\otimes r_n}\in\Gamma(V)$.
% \end{defi}

In this subsection, we define weighted brace operations for  $\Gamma\Lambda\mathcal{PL}_\infty$-algebras, and prove that giving a structure of a $\Gamma\Lambda\mathcal{PL}_\infty$-algebra is equivalent to giving such operations. These operations will be analogue to the operations given in \cite[Theorem 2.6]{moi} for $\Gamma(\mathcal{P}re\mathcal{L}ie,-)$-algebras. We also define the notion of Maurer-Cartan element in complete $\Gamma\Lambda\mathcal{PL}_\infty$-algebras.\\

We first need an explicit basis of $\Gamma\text{\normalfont Perm}^c(V)$. We use the same arguments as in \cite[$\mathsection$2.1.1]{moi}. Let $\mathcal{B}$ be a basis of $V$ composed of homogeneous elements. For every $n\geq 0$, this gives a basis on $V^{\otimes n}$ which we denote by $\mathcal{B}^{\otimes n}$. We consider the action of $\Sigma_n$ on $\mathcal{B}^{\otimes n}$ by permutation of the factors without the Koszul sign rule. For every $\mathfrak{t}\in\mathcal{B}^{\otimes n}$, we denote by $X_\mathfrak{t}$ the orbit of $\mathfrak{t}$ under this action. We then have the unequivariant identity
$$V^{\otimes n}=\bigoplus_{\mathfrak{t}\in\mathcal{B}^{\otimes n}/\Sigma_n}\mathbb{K}[X_\mathfrak{t}].$$

\noindent For every $\mathfrak{t}\in\mathcal{B}^{\otimes n}$, we set $\mathbb{K}[X_\mathfrak{t}]^\pm=\mathbb{K}[X_\mathfrak{t}]$ with underlying action 
$$\sigma\cdot x=\varepsilon(\sigma,x)x$$

\noindent for every $\sigma\in\Sigma_n$ and $x\in X_\mathfrak{t}$, where we denote by $\varepsilon(\sigma,x)\in\mathbb{K}$ the Koszul sign which appears after the action of $\sigma$ on $x$. We then have the identification of $\Sigma_n$-representations:
$$V^{\otimes n}=\bigoplus_{\mathfrak{t}\in\mathcal{B}^{\otimes n}/\Sigma_n}\mathbb{K}[X_\mathfrak{t}]^\pm.$$

\noindent Let $(\mathcal{B}^{\otimes n})^s$ be the subset of $\mathcal{B}^{\otimes n}$ given by elements $\mathfrak{t}\in\mathcal{B}^{\otimes n}$ such that there exists $\sigma\in\text{\normalfont Stab}_{\Sigma_n}(\mathfrak{t})$ with $\varepsilon(\sigma,\mathfrak{t})\neq 1$. We set $(\mathcal{B}^{\otimes n})^r=\mathcal{B}^{\otimes n}\setminus(\mathcal{B}^{\otimes n})^s$. Note that, if $char(\mathbb{K})=2$, then $(\mathcal{B}^{\otimes n})^r=\mathcal{B}^{\otimes n}$, else, the subset $(\mathcal{B}^{\otimes n})^r$ is given by tensors of the form $x_1^{\otimes r_1}\otimes\cdots\otimes x_n^{\otimes r_n}$ with $x_1,\ldots,x_n\in\mathcal{B}$ pairwise distinct and $r_1,\ldots,r_n\geq 0$ such that if $x_i$ has an odd degree for some $i$, then $r_i=1$. We let $\mathcal{S}^r(V)$ to be given by the projections of $(\mathcal{B}^{\otimes n})^r$ on $\mathcal{S}(V)$.

\begin{prop}\label{orbit}
    The map $\mathcal{O}:\mathcal{S}^r(V)\longrightarrow\Gamma(V)$ defined by
    $$\mathcal{O}(x_1\cdots x_n)=\sum_{\sigma \in\Sigma _n/\text{\normalfont Stab}_{\Sigma _n}(x_1\otimes\cdots\otimes x_n)}\pm x_{\sigma ^{-1}(1)}\otimes\cdots\otimes x_{\sigma ^{-1}(n)}$$

    \noindent is an isomorphism.
\end{prop}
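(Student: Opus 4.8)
The plan is to reduce the statement to a representation-theoretic computation carried out orbit by orbit, using the equivariant decomposition $V^{\otimes n}=\bigoplus_{\mathfrak{t}\in\mathcal{B}^{\otimes n}/\Sigma_n}\mathbb{K}[X_\mathfrak{t}]^\pm$ recalled just above the statement. Since taking $\Sigma_n$-invariants commutes with direct sums, one gets
$$(V^{\otimes n})^{\Sigma_n}=\bigoplus_{\mathfrak{t}\in\mathcal{B}^{\otimes n}/\Sigma_n}(\mathbb{K}[X_\mathfrak{t}]^\pm)^{\Sigma_n},$$
so that $\Gamma(V)$ splits as a direct sum of pieces indexed by the orbits of $\Sigma_n$ acting on $\mathcal{B}^{\otimes n}$ (over all $n\geq 0$). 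The first step is therefore to compute $(\mathbb{K}[X_\mathfrak{t}]^\pm)^{\Sigma_n}$ for a single orbit. Writing $H=\text{\normalfont Stab}_{\Sigma_n}(\mathfrak{t})$, the cocycle relation $\varepsilon(\sigma h,\mathfrak{t})=\varepsilon(\sigma,\mathfrak{t})\varepsilon(h,\mathfrak{t})$ shows that $\sigma\mapsto\varepsilon(\sigma,\mathfrak{t})$ restricts to a character of $H$ and that $\mathbb{K}[X_\mathfrak{t}]^\pm$ is the induced module $\mathrm{Ind}_H^{\Sigma_n}(\varepsilon(-,\mathfrak{t})|_H)$; by Frobenius reciprocity its invariant space is one-dimensional when this character is trivial and zero otherwise. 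Concretely I would do this by hand: an invariant $\sum_{x\in X_\mathfrak{t}}c_x\,x$ has all its coefficients determined by a single one through the cocycle relation, and the consistency constraint is exactly $\varepsilon(h,\mathfrak{t})=1$ for all $h\in H$.

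By definition this triviality condition says precisely that $\mathfrak{t}\in(\mathcal{B}^{\otimes n})^r$. Hence $(\mathbb{K}[X_\mathfrak{t}]^\pm)^{\Sigma_n}$ is one-dimensional for $r$-type orbits and vanishes for $s$-type orbits, the latter requiring $\mathrm{char}(\mathbb{K})\neq 2$ (when $\mathrm{char}(\mathbb{K})=2$ all signs equal $1$ and $(\mathcal{B}^{\otimes n})^r=\mathcal{B}^{\otimes n}$, so no $s$-type orbits occur). Unwinding the computation shows that the one-dimensional invariant space of an $r$-type orbit is spanned exactly by the signed orbit sum $\sum_{\sigma\in\Sigma_n/H}\pm x_{\sigma^{-1}(1)}\otimes\cdots\otimes x_{\sigma^{-1}(n)}$, which is the value $\mathcal{O}(x_1\cdots x_n)$. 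Along the way I would verify that this sum is well-defined on cosets (the sign is constant on each coset $\sigma H$ precisely because $\varepsilon(h,\mathfrak{t})=1$) and genuinely $\Sigma_n$-invariant, both being immediate from the same cocycle identity.

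It remains to match this with the source $\mathcal{S}^r(V)$. The parallel computation on coinvariants shows that the projection to $\mathcal{S}(V)$ of a basis tensor $\mathfrak{t}$ is nonzero exactly when $\mathfrak{t}\in(\mathcal{B}^{\otimes n})^r$ (an $s$-type tensor satisfies $\mathfrak{t}=-\mathfrak{t}$ in $\mathcal{S}(V)$, hence vanishes when $\mathrm{char}(\mathbb{K})\neq 2$) and that tensors in one $\Sigma_n$-orbit have proportional projections. Thus the projections of the $r$-type orbit representatives form a basis of $\mathcal{S}^r(V)$ indexed by $(\mathcal{B}^{\otimes n})^r/\Sigma_n$, the very same index set as the nonzero summands of $\Gamma(V)$. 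Since $\mathcal{O}$ sends the basis vector attached to an $r$-orbit to the spanning invariant of the corresponding one-dimensional summand, it carries a basis bijectively onto a basis, and is therefore an isomorphism.

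The routine ingredients are the equivariant decomposition and the induced-module computation, which are standard. The main obstacle is the \emph{sign bookkeeping}: checking carefully, from the Koszul cocycle $\varepsilon$, that $\mathcal{O}$ is independent of the chosen tensor representative and of the ordering within a symmetric monomial, that its image is invariant, and that the $r$/$s$ dichotomy coincides exactly with the non-vanishing/vanishing of invariants and coinvariants. Keeping the characteristic-$2$ case (where no vanishing happens and all orbits are of $r$-type) separate is what makes these sign arguments go through uniformly.
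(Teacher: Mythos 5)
Your argument is correct, and it is exactly the route the paper intends: the paper's own proof is only a pointer to \cite[Proposition 2.5]{moi}, but the decomposition $V^{\otimes n}=\bigoplus_{\mathfrak{t}}\mathbb{K}[X_\mathfrak{t}]^{\pm}$ and the $r$/$s$ dichotomy set up immediately before the statement are precisely the scaffolding for the orbit-by-orbit comparison of invariants and coinvariants that you carry out. Your hands-on verification via the cocycle identity (rather than invoking Frobenius reciprocity as a black box) is the safer formulation in positive characteristic, and the sign bookkeeping you flag is indeed the only real content to check.
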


\begin{proof}
    It is the same arguments as in \cite[Proposition 2.5]{moi}.
\end{proof}

In the following, in order to handle both the cases $char(\mathbb{K})=2$ and $char(\mathbb{K})\neq 2$, when taking elements with associated weights, we will tacitly suppose that if $char(\mathbb{K})\neq 2$, then all odd degree elements will have an associated weight equal to $1$.

% In all this section, when writing $v_1^{r_1}\cdots v_n^{r_n}$ with $v_1,\ldots ,v_n\in\mathcal{B}$ and $r_1,\ldots,r_n\geq 1$, we will tacitly suppose that $v_1^{\otimes r_1}\otimes \cdots\otimes v_n^{\otimes r_n}\in(\mathcal{B}^{\otimes n})^r$. In this case, recall that the orbit map is given by
% $$\mathcal{O}(v_1^{\otimes r_1}\cdots v_n^{\otimes r_n})=\text{\normalfont Sh}(v_1^{\otimes r_1};\ldots;v_n^{\otimes r_n})$$

% \noindent and is then extended by linearity on $\mathcal{S}^r(V)$.

\begin{lm}\label{shcoprod}
    Let $x\in V,y_1,\ldots,y_n\in\mathcal{B}$ and $r_1,\ldots,r_n\geq 0$. Then
    $$\dgperm(x\otimes \mathcal{O}(y_1^{\otimes r_1}\cdots y_n^{\otimes r_n}))=\sum_{k=1}^n\sum_{\substack{p_i+q_i=r_i,i\neq k\\ p_k+q_k=r_k-1}}\pm(x\otimes\mathcal{O}(y_1^{\otimes p_1}\cdots y_n^{\otimes p_n}))\otimes (y_k\otimes\mathcal{O}(y_1^{\otimes q_1}\cdots y_n^{\otimes q_n})),$$

    \noindent where the sign is yielded by the shuffle
    $$x\otimes y_1^{\otimes r_1}\otimes\dots\otimes y_n^{\otimes r_n}\longmapsto \pm x\otimes y_1^{\otimes p_1}\otimes\dots\otimes y_n^{\otimes p_n}\otimes y_k\otimes y_1^{\otimes q_1}\otimes\cdots\otimes y_n^{\otimes q_n}.$$
\end{lm}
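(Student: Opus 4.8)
The plan is to compute $\dgperm(x\otimes Y)$ for $Y=\mathcal{O}(y_1^{\otimes r_1}\cdots y_n^{\otimes r_n})$ directly from the description of $\dgperm$ as the composite of $id\otimes\Delta_{T(V)}$ with the projection $V\otimes\Gamma(V)\otimes\Gamma(V)\twoheadrightarrow V\otimes\Gamma(V)\otimes\overline{\Gamma}(V)$ and the re-embedding $\overline{\Gamma}(V)\hookrightarrow V\otimes\Gamma(V)$. The computation factors into two combinatorial identities — one for the deconcatenation coproduct $\Delta_{T(V)}$ on $\Gamma(V)$, one for the re-embedding — together with a reconciliation of Koszul signs; the signs are where essentially all of the work lies.

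First I would establish the \emph{divided-power coproduct identity}
$$\Delta_{T(V)}(\mathcal{O}(y_1^{\otimes r_1}\cdots y_n^{\otimes r_n}))=\sum_{p_i+q_i=r_i}\pm\,\mathcal{O}(y_1^{\otimes p_1}\cdots y_n^{\otimes p_n})\otimes\mathcal{O}(y_1^{\otimes q_1}\cdots y_n^{\otimes q_n}).$$
Writing $Y$ through Proposition \ref{orbit} as the orbit sum of $y_1^{\otimes r_1}\otimes\cdots\otimes y_n^{\otimes r_n}$ over $\Sigma_m/\text{\normalfont Stab}_{\Sigma_m}$ (with $m=\sum_i r_i$), each deconcatenation of a single rearrangement is a prefix tensored with a suffix; grouping the resulting terms by the pair of multisets carried by the prefix and the suffix, one checks that every admissible split $(p_i),(q_i)$ with $p_i+q_i=r_i$ arises with total coefficient exactly $1$, since concatenation sets up a bijection between pairs (arrangement of the $p$-multiset, arrangement of the $q$-multiset) and the rearrangements restricting to that split. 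This simultaneously shows that $\Delta_{T(V)}(\Gamma(V))\subset\Gamma(V)\otimes\Gamma(V)$ — each factor is again an orbit sum, hence symmetric — and produces the displayed formula.

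It remains to feed this through the projection and the re-embedding. The projection to $\overline{\Gamma}(V)$ in the third slot discards the terms with all $q_i=0$, and the re-embedding $\overline{\Gamma}(V)\hookrightarrow V\otimes\Gamma(V)$ coming from $(V^{\otimes s})^{\Sigma_s}\subset V\otimes(V^{\otimes s-1})^{\Sigma_{s-1}}$ rewrites the right-hand factor by splitting off its first tensor coordinate, which for an orbit sum reads
$$\mathcal{O}(y_1^{\otimes q_1}\cdots y_n^{\otimes q_n})=\sum_{k\,:\,q_k\geq 1}\pm\,y_k\otimes\mathcal{O}(y_1^{\otimes q_1}\cdots y_k^{\otimes q_k-1}\cdots y_n^{\otimes q_n}).$$
Substituting this and relabelling $q_k$ as $q_k-1$ for the chosen root index $k$ turns the constraint $p_k+q_k=r_k$ into $p_k+q_k=r_k-1$ while leaving $p_i+q_i=r_i$ for $i\neq k$, which is precisely the index set of the asserted sum. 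The main obstacle is the sign: each of the three maps (the orbit sum $\mathcal{O}$, the deconcatenation $\Delta_{T(V)}$, and the root-splitting) contributes its own Koszul sign, and one must verify that their product equals the sign of the single shuffle permutation $x\otimes y_1^{\otimes r_1}\otimes\cdots\otimes y_n^{\otimes r_n}\longmapsto\pm\,x\otimes y_1^{\otimes p_1}\otimes\cdots\otimes y_n^{\otimes p_n}\otimes y_k\otimes y_1^{\otimes q_1}\otimes\cdots\otimes y_n^{\otimes q_n}$ recorded in the statement. Since all the structure maps are defined with the Koszul convention consistently, this reduces to checking that the composite permutation realizing the above rearrangement is the concatenation of the permutations introduced at each stage, so that the signs multiply to the global shuffle sign; the convention that all odd-degree elements carry weight $1$ guarantees that none of the $\mathcal{O}(\cdots)$ appearing above is forced to vanish.
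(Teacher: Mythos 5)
Your argument is correct and is exactly the computation the paper leaves implicit (its proof reads only ``straightforward computations''): you unwind $\dgperm$ as $id\otimes\Delta_{T(V)}$ followed by the projection onto $V\otimes\Gamma(V)\otimes\overline{\Gamma}(V)$ and the root-splitting embedding $\overline{\Gamma}(V)\hookrightarrow V\otimes\Gamma(V)$, and both combinatorial identities you isolate (the coefficient-one deconcatenation of an orbit sum, and the first-coordinate splitting of $\mathcal{O}(y_1^{\otimes q_1}\cdots y_n^{\otimes q_n})$) are valid in all characteristics under the paper's weight-one convention for odd elements. No gaps.
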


\begin{proof}
    Straightforward computations.
\end{proof}

\begin{thm}\label{relationsprelieinfinite}
    Let $V\in\Gamma\Lambda\mathcal{PL}_\infty$. Then $V$ comes equipped with operations, called {\normalfont weighted braces}, which have the following form.
    \begin{itemize}
        \item[-] If $char(\mathbb{K})=2$, then weighted braces are maps
        $$-\llbrace-,\ldots,-\rrbrace_{r_1,\ldots,r_n}:V^{\times n+1}\longrightarrow V,$$

        \noindent defined for any collections of integers $r_1,\ldots,r_n\geq 0$, which preserve the grading in the sense that
        $$V_k\llbrace V_{k_1},\ldots,V_{k_n}\rrbrace_{r_1,\ldots,r_n}\subset V_{k+k_1r_1+\cdots+k_nr_n}.$$

        \item[-] If $char(\mathbb{K})\neq 2$, by setting $V^{ev}=\bigoplus_{n\in\mathbb{Z}}V_{2n}$ and $V^{odd}=\bigoplus_{n\in\mathbb{Z}}V_{2n+1}$, weighted brace are maps
        $$-\llbrace\underbrace{-,\ldots,-}_{p},\underbrace{-,\ldots,-}_{q}\rrbrace_{r_{1},\ldots,r_{p},1,\ldots,1}:V\times (V^{ev})^{\times p}\times (V^{odd})^{\times q}\longrightarrow V,$$

        \noindent defined for any collection of integers $p,q,r_1,\ldots,r_n\geq 0$ which preserve the grading.
    \end{itemize}

    In addition, in both cases, the weighted brace operations satisfy the following formulas:
    \begin{enumerate}[(i)]
    \item $x\llbrace y_{\sigma (1)},\ldots,y_{\sigma (n)}\rrbrace_{r_{\sigma (1)},\ldots,r_{\sigma (n)}}=\pm x\llbrace y_{1},\ldots,y_{n}\rrbrace_{r_{1},\ldots,r_{n}},$
    \item $x\llbrace y_{1},\ldots,y_{i-1},y_{i},y_{i+1},\ldots,y_{n}\rrbrace_{r_{1},\ldots,r_{i-1},0,r_{i+1},\ldots,r_{n}}$\\\begin{flushright}$=x\llbrace y_{1},\ldots,y_{i-1},y_{i+1},\ldots,y_{n}\rrbrace_{r_{1},\ldots,r_{i-1},r_{i+1},\ldots,r_{n}},$\end{flushright}
    \item $x\llbrace y_{1},\ldots,\lambda y_{i},\ldots,y_{n}\rrbrace_{r_{1},\ldots,r_{i},\ldots,r_{n}}=\lambda^{r_{i}}x\llbrace y_{1},\ldots,y_{i},\ldots,y_{n}\rrbrace_{r_{1},\ldots,r_{i},\ldots,r_{n}},$
    \item $x\llbrace y_{1},\ldots,y_{i},y_{i},\ldots,y_{n}\rrbrace_{r_{1},\ldots,r_{i},r_{i+1},\ldots,r_{n}}$\\ \begin{flushright}
        $\displaystyle=\binom{r_{i}+r_{i+1}}{r_{i}}x\llbrace y_{1},\ldots,y_{i},\ldots,y_{n}\rrbrace_{r_{1},\ldots,r_{i-1},r_{i}+r_{i+1},r_{i+2},\ldots,r_{n}},$
    \end{flushright}
    \item $\displaystyle x\llbrace y_{1},\ldots,y_{i}+\widetilde{y_{i}},\ldots,y_{n}\rrbrace_{r_{1},\ldots,r_{i},\ldots,r_{n}}=\sum_{s=0}^{r_{i}}x\llbrace y_{1},\ldots,y_{i},\widetilde{y_{i}},\ldots,y_{n}\rrbrace_{r_{1},\ldots,s,r_{i}-s,\ldots,r_{n}},$
    \item $\displaystyle\sum_{p_i+q_i=r_i}\pm x\llbrace y_1,\ldots,y_n\rrbrace_{p_1,\ldots,p_n}\llbrace y_1,\ldots,y_n\rrbrace_{q_1,\ldots,q_n}$\\
    \begin{flushright}
        $\displaystyle+\sum_{k=1}^n\sum_{\substack{p_i+q_i=r_i,i\neq k\\ p_k+q_k=r_k-1}}\pm x\llbrace y_k\llbrace y_1,\ldots,y_n\rrbrace_{p_1,\ldots,p_n},y_1,\ldots,y_n\rrbrace_{1,q_1,\ldots,q_n}=0.$
    \end{flushright}
\end{enumerate}

In the converse direction, if a graded $\mathbb{K}$-module $V$ admits such operations, then $V\in\Gamma\Lambda\mathcal{PL}_\infty$.
\end{thm}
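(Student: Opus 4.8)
The plan is to transport the coderivation description of $\Gamma\Lambda\mathcal{PL}_\infty$-algebras (Proposition \ref{bijendo}) into the language of braces by means of the explicit basis of $\Gamma(V)$ furnished by Proposition \ref{orbit}. Given $(V,Q)\in\Gamma\Lambda\mathcal{PL}_\infty$, I set $l=\pi_V\circ Q\in\text{\normalfont Hom}(\Gamma\text{\normalfont Perm}^c(V),V)$ and define, on homogeneous basis elements $y_1,\ldots,y_n$ and weights $r_1,\ldots,r_n\geq 0$,
$$x\llbrace y_1,\ldots,y_n\rrbrace_{r_1,\ldots,r_n}:=l\big(x\otimes\mathcal{O}(y_1^{\otimes r_1}\cdots y_n^{\otimes r_n})\big).$$
Since by Proposition \ref{orbit} the elements $\mathcal{O}(y_1^{\otimes r_1}\cdots y_n^{\otimes r_n})$ form a basis of $\Gamma(V)$, this data determines $l$ completely; conversely, a system of operations is used through the same formula to prescribe $l$, and hence the candidate coderivation $Q=\widetilde{\Psi}(l)$.

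First I would dispatch the elementary identities (i)--(v), which reflect only the structure of $\Gamma(V)$ and the behaviour of $\mathcal{O}$, not the differential. Relation (i) is the symmetry of the orbit sum $\mathcal{O}$ together with the Koszul sign rule; (ii) records $\mathcal{O}(\cdots y_i^{\otimes 0}\cdots)=\mathcal{O}(\cdots\widehat{y_i}\cdots)$; (iii) is $(\lambda y_i)^{\otimes r_i}=\lambda^{r_i}y_i^{\otimes r_i}$; and (iv)--(v) are exactly the divided-power relations $\gamma_p(y)\gamma_q(y)=\binom{p+q}{p}\gamma_{p+q}(y)$ and $\gamma_r(y+\widetilde{y})=\sum_s\gamma_s(y)\gamma_{r-s}(\widetilde{y})$ read off from the definition of $\mathcal{O}$ on $\mathcal{S}^r(V)$. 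In the converse direction these same identities are precisely what guarantees that the prescribed operations assemble into a well-defined linear map $l$ on the basis of Proposition \ref{orbit}: independence of the chosen orbit representative, compatibility with the $\Sigma_n$-action, and with zero or repeated weights, together with the even/odd splitting that organizes the basis $(\mathcal{B}^{\otimes n})^r$ when $char(\mathbb{K})\neq 2$.

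The heart of the argument is relation (vi), which must be shown equivalent to $Q^2=0$. Using the criterion recalled before Proposition \ref{morphcoprod}, $Q^2=0$ holds if and only if $\sum_{k=0}^nQ_k^0Q_n^k=0$, that is $\pi_V\circ Q^2=0$. I would compute $\pi_V Q^2$ by inserting the decomposition $Q=\widetilde{\Psi}_1(l)+\widetilde{\Psi}_2(l)$ from the proof of Proposition \ref{bijendo}. The $\widetilde{\Psi}_1$-part reattaches the output of $l$ at the root, so applying $l$ a second time there produces the iterated term $x\llbrace y_1,\ldots,y_n\rrbrace_{p_1,\ldots,p_n}\llbrace y_1,\ldots,y_n\rrbrace_{q_1,\ldots,q_n}$; the $\widetilde{\Psi}_2$-part applies $l$ to a list slot and shuffles the result back via $\text{\normalfont Sh}(-;-)$, which after a further application of $l$ at the root yields $x\llbrace y_k\llbrace y_1,\ldots,y_n\rrbrace_{p_1,\ldots,p_n},y_1,\ldots,y_n\rrbrace_{1,q_1,\ldots,q_n}$. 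The weight splittings $p_i+q_i=r_i$ (and $p_k+q_k=r_k-1$ in the second family) are governed exactly by the coproduct formula of Lemma \ref{shcoprod}. Comparing, $\pi_V Q^2\big(x\otimes\mathcal{O}(y_1^{\otimes r_1}\cdots y_n^{\otimes r_n})\big)$ coincides with the left-hand side of (vi), so (vi) holds if and only if $Q^2=0$; this settles both implications once well-definedness is in hand.

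I expect the main obstacle to be the precise matching of Koszul signs in the $\pi_V Q^2$ computation. The two families in (vi) arise from the two summands $\widetilde{\Psi}_1,\widetilde{\Psi}_2$, and one must check that the signs produced by the shuffles of Lemma \ref{shcoprod}, by the insertion operator $\text{\normalfont Sh}(-;-)$ of Definition \ref{defsh}, and by the reindexing of the orbit sums $\mathcal{O}$ combine to give exactly the signs $\pm$ appearing in (vi). A secondary technical point, for the converse, is to verify that the prescription for $l$ respects the defining relations of $\Gamma(V)$ in both characteristics, for which identities (i)--(v) are tailored.
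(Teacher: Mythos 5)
Your proposal is correct and follows essentially the same route as the paper: define the weighted braces by evaluating $Q^0$ on $x\otimes\mathcal{O}(y_1^{\otimes r_1}\cdots y_n^{\otimes r_n})$, obtain (i)--(v) from the structure of the orbit map, and derive (vi) from $Q^0Q=0$ via the decomposition $\widetilde{\Psi}_1+\widetilde{\Psi}_2$ of Proposition \ref{bijendo} together with the coproduct formula of Lemma \ref{shcoprod}, with the converse obtained by reversing this computation after checking well-definedness. The only cosmetic difference is that the paper introduces formal generators $Y_1,\ldots,Y_n$ and a coalgebra map $\psi:\Gamma(E)\to\Gamma(V)$ so as to define the braces on arbitrary homogeneous elements at once, whereas you define them on basis elements and invoke (iii)--(v) for independence of the basis — which is exactly what the paper does in the converse direction as well.
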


In particular, the operation $d(x):=x\llbrace\rrbrace$ is a differential. We usually endow $V\in\Gamma\Lambda\mathcal{PL}_\infty$ with the structure of a dg $\mathbb{K}$-module with differential $d$.\\

\begin{proof}
    Let $V\in\Gamma\Lambda\mathcal{PL}_\infty$. The strategy is the same as in \cite[Theorem 2.6]{moi} or \cite[Proposition 5.10]{cesaro}. Let $x,y_1,\ldots,y_n\in V$ be homogeneous elements, and $e_1,\ldots,e_n$ be formal elements with the same degrees as $y_1,\ldots,y_n$. We let $E$ to be graded $\mathbb{K}$-module spanned by $Y_1,\ldots,Y_n$. Let $\psi:\Gamma(E)\longrightarrow\Gamma(V)$ be the morphism which sends the $Y_i$'s to the $y_i$'s. We immediately see that $\psi$ is a morphism of coalgebras. We set
    $$x\llbrace y_1,\ldots,y_n\rrbrace_{r_1,\ldots,r_n}:=Q_{\sum_i r_i}^0(x\otimes\psi\mathcal{O}(Y_1^{\otimes r_1}\cdots Y_n^{\otimes r_n})).$$
    
    \noindent Formulas $(i)-(v)$ are consequences of straightforward computations. We prove formula $(vi)$. Since $\psi$ is a morphism of coalgebras, Lemma \ref{shcoprod} gives
    \begin{multline*}
        Q(x\otimes\psi\mathcal{O}(Y_1^{r_1}\cdots Y_n^{r_n}))=\sum_{p_i+q_i=r_i}\pm Q^0(x\otimes\psi\mathcal{O}(Y_1^{ p_1}\cdots Y_n^{ p_n}))\otimes\psi(\mathcal{O}(Y_1^{ q_1}\cdots Y_n^{ q_n})))\\+\sum_{k=1}^n\sum_{\substack{p_i+q_i=r_i,i\neq k\\ p_k+q_k=r_k-1}}\pm x\otimes\text{\normalfont Sh}(Q^0(y_k\otimes\psi\mathcal{O}(Y_1^{ p_1}\cdots Y_n^{ p_n}));\psi\mathcal{O}(Y_1^{ q_1} \cdots Y_n^{ q_n})).
    \end{multline*}

    \noindent For fixed $p_i$'s and $q_i$'s, we have
    $$Q^0(Q^0(x\otimes\psi\mathcal{O}(Y_1^{ p_1}\cdots Y_n^{ p_n}))\otimes\psi\mathcal{O}(Y_1^{ q_1}\cdots Y_n^{ q_n}))=x\llbrace y_1,\ldots,y_n\rrbrace_{p_1,\ldots,p_n}\llbrace y_1,\ldots,y_n\rrbrace_{q_1,\ldots,q_n},$$

    \noindent by definition of weighted brace operations. Concerning the second line, for fixed $p_i$'s, $q_i$'s and $k$, let $Z$ be a formal element with the same degree as $Q^0(y_k\otimes\psi\mathcal{O}(Y_1^{ p_1},\ldots, Y_n^{p_n}))$. We extend $\psi$ to $\psi:\Gamma(E\oplus\mathbb{K}Z)\longrightarrow\Gamma(V)$ by sending $Z$ to $Q^0(y_k\otimes\psi\mathcal{O}(Y_1^{ p_1},\ldots, Y_n^{ p_n}))$. We then have
    $$\text{\normalfont Sh}(Q^0(y_k\otimes\psi\mathcal{O}(Y_1^{ p_1}\cdots Y_n^{ p_n}));\psi\mathcal{O}(Y_1^{ q_1}\cdots Y_n^{ q_n}))=\psi\mathcal{O}(Z\cdot Y_1^{q_1}\cdots Y_n^{q_n}).$$

    \noindent Taking the image under $Q^0$ thus gives
    \begin{multline*}
        Q^0(x\otimes\text{\normalfont Sh}(Q^0(y_k\otimes\psi\mathcal{O}(Y_1^{ p_1}\cdots Y_n^{ p_n}));\psi\mathcal{O}(Y_1^{ q_1}\cdots Y_n^{ q_n})))\\=x\llbrace y_k\llbrace y_1,\ldots,y_n\rrbrace_{p_1,\ldots,p_n},y_1,\ldots,y_n\rrbrace_{1,q_1,\ldots,q_n}.
    \end{multline*}

    \noindent Since $Q^0Q=0$, formula $(vi)$ follows. We now prove the converse direction. Suppose that $V$ is a dg $\mathbb{K}$-module equipped with operations $-\llbrace-,\ldots,-\rrbrace_{r_1,\ldots,r_n}$ for all $r_1,\ldots,r_n\geq 0$ which satisfy the formulas given in the theorem. We pick a basis $\mathcal{B}$ of $V$ composed of homogeneous elements. Let $x,y_1,\ldots,y_n\in\mathcal{B}$. For all $r_1,\ldots,r_n\geq 0$, we set
    $$Q^0(x\otimes\mathcal{O}(y_1^{r_1}\cdots y_n^{r_n}))=x\llbrace y_1,\ldots,y_n\rrbrace_{r_1,\ldots,r_n}$$

    \noindent where we consider the orbit map $\mathcal{O}$ associated to the basis $\mathcal{B}$. By formulas $(iii)-(v)$ and the same computations as in \cite[Lemma 5.15]{cesaro}, this definition does not depend on the choice of $\mathcal{B}$. Let $Q=\widetilde{\Psi}(Q^0)$ be the coderivation associated to $Q^0\in\text{\normalfont Hom}(\Gamma\text{\normalfont Perm}^c(V),V)$ given by Proposition \ref{bijendo}. We need to prove that $Q^2=0$, which is equivalent to prove that $Q^0Q=0$. By Lemma \ref{shcoprod}, we have
    \begin{multline*}
        Q(x\otimes\mathcal{O}(y_1^{r_1}\cdots y_n^{r_n}))=\sum_{p_i+q_i=r_i}\pm Q^0(x\otimes\mathcal{O}(y_1^{ p_1}\cdots y_n^{ p_n}))\otimes\mathcal{O}(y_1^{ q_1}\cdots y_n^{ q_n})\\+\sum_{k=1}^n\sum_{\substack{p_i+q_i=r_i,i\neq k\\ p_k+q_k=r_k-1}}\pm x\otimes\text{\normalfont Sh}(Q^0(y_k\otimes\mathcal{O}(y_1^{ p_1}\cdots y_n^{ p_n}));\mathcal{O}(y_1^{ q_1}\cdots y_n^{ q_n})).
    \end{multline*}

    \noindent Applying $Q^0$ to this identity gives $Q^0Q=0$.
\end{proof}

\begin{remarque}
    A strict morphism $\phi:V\longrightarrow W$ preserves the braces in the sense that
    $$\phi(x\llbrace y_1,\ldots,y_n\rrbrace_{r_1,\ldots,r_n})=\pm\phi(x)\llbrace\phi(y_1),\ldots,\phi(y_n)\rrbrace_{r_1,\ldots,r_n},$$

    \noindent where $\pm$ is produced by the commutation of $\phi$ with $x$ and the $y_i^{\otimes r_i}$'s.
    % $$\phi(l(x\otimes\mathcal{O}(y_1^{r_1}...y_n^{r_n})))=\pm l'(\phi(x)\otimes\mathcal{O}(\phi(y_1)^{r_1}...\phi(y_n)^{r_n})).$$
    % A strict $\infty$-morphism is then a morphism of monads from $\Gamma(\mathcal{P}re\mathcal{L}ie_\infty,V)$ to $\Gamma(\mathcal{P}re\mathcal{L}ie_\infty,V')$.
\end{remarque}

We aim to define the notion of a Maurer-Cartan element. To achieve this, we define the notion of a complete $\Gamma\Lambda\mathcal{PL}_\infty$-algebra.

\begin{defi}
    A {\normalfont filtered $\Gamma\Lambda\mathcal{PL}_\infty$-algebra} is a $\Gamma\Lambda\mathcal{PL}_\infty$-algebra $V$ endowed with a filtration $(F_nV)_{n\geq 1}$ such that
    $$F_m V\llbrace F_{p_1}V,\ldots,F_{p_n}V\rrbrace_{r_1,\ldots,r_n}\subset F_{m+p_1 r_1+\cdots+p_n r_n}V,$$

    \noindent for all $m,p_1,\ldots,p_n\geq 1$ and $r_1,\ldots,r_n\geq 0$. An $\infty$-morphism $\phi:V\rightsquigarrow V'$ between two filtered $\Gamma\Lambda\mathcal{PL}_\infty$-algebras is an $\infty$-morphism such that
    $$\phi_n^0(F_{k}(V^{\otimes n+1})\cap\Gamma\text{\normalfont Perm}^c(V))\subset F_k(V').$$
    
    \noindent for every $k\geq 1$, where we consider the filtration associated to a tensor product (see the end of $\mathsection$\ref{sec:211}). A filtered $\Gamma\Lambda\mathcal{PL}_\infty$-algebra is {\normalfont complete} if the map $V\longrightarrow\lim_{n\geq 1}V/F_nV$ is an isomorphism.
\end{defi}

We denote by $\widehat{\Gamma\Lambda\mathcal{PL}_\infty}$ the category formed by complete filtered $\Gamma\Lambda\mathcal{PL}_\infty$-algebras with as morphisms the $\infty$-morphisms which preserve the filtrations.

\begin{remarque}
    If $V$ is a filtered $\Gamma\Lambda\mathcal{PL}_\infty$-algebra, then its completion $\widehat{V}$ admits the structure of a complete filtered $\Gamma\Lambda\mathcal{PL}_\infty$-algebra.
\end{remarque}

\begin{defi}
    Let $V\in\widehat{\Gamma\Lambda\mathcal{PL}_\infty}$. A {\normalfont Maurer-Cartan element} is an element $x\in V_0$ such that
    $$d(x)+\sum_{n\geq 1}x\llbrace x\rrbrace_n=0.$$

    \noindent We denote by $\mathcal{MC}(V)$ the set composed of Maurer-Cartan elements.
\end{defi}

% \begin{lm}
%     Let $V\in\Gamma\mathcal{PL}_\infty$ and $x\in V_0$. Then
%     $$\widetilde{\Delta}(x\otimes x^{\otimes n})=\sum_{p+q=n-1}(x\otimes x^{\otimes p})\otimes (x\otimes x^{\otimes q}).$$
% \end{lm}

% \begin{proof}
%     We decompose $x$ in the basis $\mathcal{B}_V$: $x=\sum_{k=1}^l\lambda_k v_k$. This gives the decomposition
%     $$x\otimes x^{\otimes n}=\sum_{k=1}^l\sum_{r_1+...+r_p= n}\lambda_k\left(\prod_{j=1}^l\lambda_j^{r_j}\right)v_k\otimes\mathcal{O}(v_1^{r_1}...v_l^{r_l}).$$

%     We thus have that
%     $$\widetilde{\Delta}(x\otimes x^{\otimes n})=\sum_{k,k'=1}^l\sum_{r_1+...+r_l=n}\sum_{\substack{s_i+t_i=r_i,\forall i\neq k'\\ s_{k'}+t_{k'}=r_{k'}-1}}\lambda_k\left(\prod_{j=1}^l\lambda_j^{r_j}\right)(v_k\otimes\mathcal{O}(v_1^{s_1}...v_l^{s_l}))\otimes (v_{k'}\otimes\mathcal{O}(v_1^{t_1}...v_l^{t_l}))$$
%     $$=\sum_{p+q=n-1}\sum_{k,k'=1}^l\sum_{s_1+...+s_l=p}\sum_{t_1+...+t_l=q}\lambda_k\left(\prod_{j=1}^l\lambda_j^{r_j}\right)(v_k\otimes\mathcal{O}(v_1^{s_1}...v_l^{s_l}))\otimes (v_{k'}\otimes\mathcal{O}(v_1^{t_1}...v_l^{t_l})),$$

%     \noindent which finally gives
%     $$\widetilde{\Delta}(x\otimes x^{\otimes n})=\sum_{p+q=n-1}(x\otimes x^{\otimes p})\otimes (x\otimes x^{\otimes q}).$$
% \end{proof}

% If $V\in\Gamma\Lambda\mathcal{PL}_\infty$, a \textit{Maurer-Cartan element} is an element $x\in V_{-1}$ such that $e\otimes x\in\mathcal{MC}(\Sigma ^{1}V)$. We also denote by $\mathcal{MC}(V)$ the set composed of all Maurer-Cartan elements.

\begin{prop}
    Let $V,V'\in\widehat{\Gamma\Lambda\mathcal{PL}_\infty}$ and $\phi:V\rightsquigarrow V'$. Then $\phi$ induces a map
    \begin{center}
        $\begin{array}{rcll}
            \mathcal{MC}(\phi): & \mathcal{MC}(V) & \longrightarrow & \mathcal{MC}(V')  \\
             & x & \longmapsto & \displaystyle\sum_{n\geq 0}\phi_n^0(x\otimes x^{\otimes n})
        \end{array}$
    \end{center}

    \noindent such that $\mathcal{MC}(-):\widehat{\Gamma\Lambda\mathcal{PL}_\infty}\longrightarrow\text{\normalfont Set}$ is a functor. Moreover, if $\phi_0^0$ is an isomorphism, then $\mathcal{MC}(\phi)$ is a bijection.
\end{prop}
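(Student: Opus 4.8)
The plan is to reformulate the Maurer--Cartan condition through the ``grouplike'' element attached to a degree-$0$ element and then transport it along $\phi$. For $\tau\in V_0$ set $\widehat\tau=\sum_{n\geq 0}\tau\otimes\tau^{\otimes n}\in\widehat{\Gamma\text{Perm}^c}(V)$; this converges since $\tau\in F_1V$ forces $\tau\otimes\tau^{\otimes n}\in F_{n+1}$, and since $\tau$ is even all Koszul signs vanish and $\mathcal{O}(\tau^{\otimes n})=\tau^{\otimes n}$ is $\Sigma_n$-invariant, so $\widehat\tau$ is well defined. First I would record two facts. Applying Lemma \ref{shcoprod} to a single repeated letter gives $\dgperm(\tau\otimes\tau^{\otimes r})=\sum_{p+q=r-1}(\tau\otimes\tau^{\otimes p})\otimes(\tau\otimes\tau^{\otimes q})$, whence $\dgperm(\widehat\tau)=\widehat\tau\otimes\widehat\tau$; that is, $\widehat\tau$ is grouplike. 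Moreover $\pi_V(\widehat\tau)=\tau$ and $Q^0(\widehat\tau)=d(\tau)+\sum_{n\geq1}\tau\llbrace\tau\rrbrace_n$, so $\tau\in\mathcal{MC}(V)$ if and only if $Q^0(\widehat\tau)=0$. Feeding the grouplike identity into the reconstruction formula of Remark \ref{idcoprod}, namely $\pi_{V^{\otimes k+1}}Q=\sum_{i=1}^{k+1}(\pi_V^{\otimes i-1}\otimes Q^0\otimes\pi_V^{\otimes k-i+1})\dgperm^k$, shows that on the grouplike $\widehat\tau$ the condition $Q^0(\widehat\tau)=0$ is in fact equivalent to $Q(\widehat\tau)=0$.

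Next I would establish the transport formula $\phi(\widehat\tau)=\widehat{\phi^0(\widehat\tau)}$. By Proposition \ref{morphcoprod} the morphism $\phi$ is the unique coalgebra morphism with corestriction $\phi^0$, and applying its reconstruction to the grouplike $\widehat\tau$ (for which $\dgperm^k\widehat\tau=\widehat\tau^{\otimes k+1}$) yields $\phi(\widehat\tau)=\widehat\sigma$ with $\sigma:=\phi^0(\widehat\tau)=\sum_{n\geq0}\phi_n^0(\tau\otimes\tau^{\otimes n})=\mathcal{MC}(\phi)(\tau)$; here the filtration condition on $\phi$ guarantees convergence, and $\sigma\in V'_0$ because $\phi^0$ preserves degrees. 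Well-definedness of $\mathcal{MC}(\phi)$ follows at once: since $\phi$ is an $\infty$-morphism, $Q'\phi=\phi Q$, so $Q'(\widehat\sigma)=Q'\phi(\widehat\tau)=\phi Q(\widehat\tau)=0$ using $Q(\widehat\tau)=0$ from the first paragraph, and applying $\pi_{V'}$ gives $Q'^0(\widehat\sigma)=0$, i.e. $\sigma\in\mathcal{MC}(V')$.

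Functoriality is then formal. The identity $\infty$-morphism is strict, so $\mathcal{MC}(\mathrm{id})=\mathrm{id}$; and for $\phi:V\rightsquigarrow V'$, $\psi:V'\rightsquigarrow V''$ the transport formula gives $\mathcal{MC}(\psi)(\mathcal{MC}(\phi)(\tau))=\psi^0(\widehat{\mathcal{MC}(\phi)(\tau)})=\psi^0(\phi(\widehat\tau))=(\psi\circ\phi)^0(\widehat\tau)=\mathcal{MC}(\psi\circ\phi)(\tau)$, where I use $(\psi\circ\phi)^0=\psi^0\circ\phi$ and $\phi(\widehat\tau)=\widehat{\mathcal{MC}(\phi)(\tau)}$.

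Finally, for the bijectivity statement when $\phi_0^0$ is an isomorphism, the plan is to invert $\phi$ inside $\widehat{\Gamma\Lambda\mathcal{PL}_\infty}$ and invoke functoriality. I would construct an $\infty$-morphism $\phi^{-1}:V'\rightsquigarrow V$ by building its corestriction $\psi^0=\sum_k\psi_k^0$ inductively on the weight: set $\psi_0^0=(\phi_0^0)^{-1}$, and solve the weight-$(n+1)$ component of the equation $\psi\circ\phi=\mathrm{id}$ for $\psi_n^0$, which is possible because its leading term is $\psi_n^0$ composed with the isomorphism induced by $(\phi_0^0)^{\otimes n+1}$ while all remaining terms involve only $\psi_k^0$ with $k<n$. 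The resulting coalgebra isomorphism automatically commutes with the coderivations, since $\phi^{-1}Q'=\phi^{-1}Q'\phi\phi^{-1}=\phi^{-1}\phi Q\phi^{-1}=Q\phi^{-1}$, and because $\phi_0^0$ is a filtered isomorphism and the $\phi_k^0$ preserve filtrations, $\phi^{-1}$ preserves filtrations, hence lies in $\widehat{\Gamma\Lambda\mathcal{PL}_\infty}$. Then $\mathcal{MC}(\phi^{-1})$ is a two-sided inverse of $\mathcal{MC}(\phi)$ by functoriality, proving bijectivity. I expect this last step---the order-by-order inversion together with the verification that the inverse preserves the filtration and converges in the complete setting---to be the main technical obstacle, whereas well-definedness and functoriality reduce, via the grouplike element $\widehat\tau$, to the coalgebra and coderivation formalism already in place.
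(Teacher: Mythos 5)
Your argument is correct in substance but follows a genuinely different route from the paper's. The paper proves well-definedness by a direct component-wise computation: it expands $\sum_m (Q')^0_m(y^{\otimes m+1})$ using the reconstruction $\phi^k=(\phi^0)^{\otimes\bullet}\dgperm^k$ and $Q=\widetilde{\Psi}(Q^0)$, and reduces everything to the Maurer--Cartan equation for $x$; it then proves bijectivity directly, getting surjectivity from a fixed-point iteration $x_{k+1}=(\phi_0^0)^{-1}\bigl(y-\sum_{n\geq 1}\phi_n^0(x_k^{\otimes n+1})\bigr)$ whose limit is shown to be Maurer--Cartan by the standard ``if $\mathcal{R}(x)\in F_kV$ then $\mathcal{R}(x)\in F_{k+1}V$'' argument, and injectivity from an analogous filtration argument. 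You instead package the whole computation into the grouplike element $\widehat\tau$, which makes well-definedness and functoriality formal consequences of $Q'\phi=\phi Q$ and $\phi(\widehat\tau)=\widehat{\phi^0(\widehat\tau)}$ (a bonus: the paper asserts functoriality but never actually proves it), and you obtain bijectivity by inverting $\phi$ order by order as an $\infty$-morphism. Two points deserve more care in your version. First, $\widehat\tau$ lives only in a completion of $\Gamma\text{\normalfont Perm}^c(V)$ (which is a direct sum, not a product), so you must extend $\dgperm$, $Q$ and $\phi$ to that completion and check the identities there; the paper sidesteps this by never leaving the component-wise setting. Second, your inverse $\psi$ must land in $\widehat{\Gamma\Lambda\mathcal{PL}_\infty}$, i.e.\ preserve filtrations, which forces you to read the hypothesis as ``$\phi_0^0$ is an isomorphism of filtered objects''; this is a fair reading, since the paper's own Cauchy-sequence argument tacitly needs $(\phi_0^0)^{-1}$ to preserve the filtration as well, but you should state it explicitly. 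With those caveats addressed, your proof is complete and arguably cleaner, at the cost of the extra machinery of completing the coalgebra and constructing the inverse $\infty$-morphism.
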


\begin{proof}
    Let $x\in\mathcal{MC}(V)$. We first prove that $y=\sum_{n\geq 0}\phi_n^0(x^{\otimes n+1})\in\mathcal{MC}(V')$. We have
    $$\sum_{m\geq 0}(Q')^0_m(y^{\otimes m+1})=\sum_{m\geq 0}\sum_{k\geq m}(Q')^0_m\left(\sum_{p_0+\cdots +p_{m}=k-m}\phi_{p_0}^0(x^{\otimes p_0+1})\otimes\cdots\otimes\phi_{p_{m}}^0(x^{\otimes p_{m}+1})\right).$$

    \noindent By using the proof of Proposition \ref{morphcoprod}, we have
    $$\begin{array}{lll}
            \displaystyle\sum_{m\geq 0}(Q')^0_m(y^{\otimes m+1}) & = & \displaystyle\sum_{k\geq 0}\sum_{m=0}^k(Q')^0_m\phi^m_{k}(x^{\otimes k+1}) \\
            & = & \displaystyle\sum_{k\geq 0}\sum_{m=0}^k\phi^0_mQ^m_k(x^{\otimes k+1})\\
            & = & \displaystyle\phi^0\left(\sum_{k\geq 0}Q_k(x^{\otimes k+1})\right).
        \end{array}$$

    \noindent By using that $Q=\widetilde{\Psi}(Q^0)$ (see the proof of Proposition \ref{bijendo}), we obtain
$$\sum_{k\geq 0}Q_k(x^{\otimes k+1})= \sum_{q\geq 0}\left(\sum_{p\geq 0}Q^0_{p}(x^{\otimes p+1})\right)\otimes x^{\otimes q}\pm \sum_{p\geq 0}x\otimes \text{\normalfont Sh}\left(x^{\otimes p};\sum_{q\geq 0}Q^0_q(x^{\otimes q+1})\right)=0$$

\noindent since $x\in\mathcal{MC}(V)$. The map $\mathcal{MC}(\phi)$ is thus well defined. Suppose now that $\phi_0^0$ is an isomorphism, and let $y\in\mathcal{MC}(V')$. We search $x\in\mathcal{MC}(V)$ such that
    $$\sum_{n\geq 0}\phi_n^0(x^{\otimes n+1})=y,$$

    \noindent which is equivalent to
    $$x=(\phi_0^0)^{-1}\left(y-\sum_{n\geq 1}\phi_n^0(x^{\otimes n+1})\right).$$

    \noindent We set $x_0=(\phi_0^0)^{-1}(y)$. We define a Cauchy sequence $(x_k)_k$ by induction by
    $$x_{k+1}=(\phi_0^0)^{-1}\left(y-\sum_{n\geq 1}\phi_n^0(x_k^{\otimes n+1})\right).$$

    \noindent We denote by $x$ its limit. We show that $x\in\mathcal{MC}(V)$. For every $W\in\Gamma\Lambda\mathcal{PL}_\infty$, we set
    $$\mathcal{R}(w)=d(w)+\sum_{n\geq 1}w\llbrace w\rrbrace_n$$

    \noindent for every $w\in W_0$. We apply $\mathcal{R}$ on the identity $\sum_{n\geq 0}\phi_n^0(x^{\otimes n+1})=y$, and use that $y\in\mathcal{MC}(V')$:
    $$\sum_{p\geq 0}(Q')_p^0\left(\left(\sum_{n\geq 0}\phi_n^0(x^{\otimes n+1})\right)^{\otimes p+1}\right)=0.$$

    \noindent This can be written as
    $$\sum_{n\geq 0}\sum_{p=0}^n (Q')_p^0\phi_n^p(x^{\otimes n+1})=0.$$

    \noindent Using that $\phi$ is a morphism in $\widehat{\Gamma\Lambda\mathcal{PL}_\infty}$, we obtain that
    $$\sum_{n\geq 0}\sum_{p=0}^n\phi_p^0Q_n^p(x^{\otimes n+1})=0$$

    \noindent which gives
    $$\mathcal{R}(x)=-(\phi_0^0)^{-1}\left(\sum_{n\geq 1}\sum_{p=1}^n\phi_p^0Q_n^p(x^{\otimes n+1})\right).$$

    \noindent We use the computation of $Q$ as $Q=\widetilde{\Psi}(Q^0)$:
    $$\mathcal{R}(x)=-(\phi_0^0)^{-1}\left(\sum_{n\geq 1}\sum_{p=1}^n\phi_p^0(Q^0_{n-p}(x^{\otimes n-p+1})\otimes x^{\otimes p}+x\otimes\text{\normalfont Sh}(x^{\otimes p-1};Q^0_{n-p}(x^{\otimes n-p+1})))\right).$$
\noindent We finally obtain
$$\mathcal{R}(x)=-(\phi_0^0)^{-1}\left(\sum_{p\geq 1}\phi_p^0(\mathcal{R}(x)\otimes x^{\otimes p}+x\otimes\text{\normalfont Sh}(x^{\otimes p-1};\mathcal{R}(x)))\right).$$

    \noindent From this identity, and because $\phi$ preserves the filtrations on $V$ and $V'$, we have that if $\mathcal{R}(x)\in F_kV$ for some $k\geq 1$, then $\mathcal{R}(x)\in F_{k+1}V$. Since $\mathcal{R}(x)\in F_1V$, it follows that $\mathcal{R}(x)\in\bigcap_{k\geq 1}F_kV=0$ so that $x\in\mathcal{MC}(V)$, and $\mathcal{MC}(\phi)(x)=y$ by construction. The map $\mathcal{MC}(\phi)$ is then surjective. We now prove that it is injective. Suppose that there exists $x_1,x_2\in \mathcal{MC}(V)$ such that $\mathcal{MC}(\phi)(x_1)=\mathcal{MC}(\phi)(x_2)$. Then
    $$x_1-x_2=(\phi_0^0)^{-1}\left(\sum_{n\geq 1}(x_2^{\otimes n+1}-x_1^{\otimes n+1})\right).$$

    \noindent Suppose that $x_1-x_2\in F_kV$ for some $k\geq 1$. Then there exists $\alpha_k\in F_kV$ such that $x_1=x_2+\alpha_k$. By definition of the filtration on tensor products, and because $x_2\in F_1V$, for every $n\geq 0$, we have $x_1^{\otimes n+1}=x_2^{\otimes n+1}+\alpha_k'$ where $\alpha'_k\in F_{k+1}V$ so that $x_2^{\otimes n+1}-x_1^{\otimes n+1}\in F_{k+1}V$. Since that $\phi$ preserves the filtrations, this implies $x_1-x_2\in F_{k+1}V$. We thus have $x_1=x_2$, so that $\mathcal{MC}(\phi)$ is injective.
\end{proof}

\subsection{Pre-Lie algebras up to homotopy with divided powers and $\Gamma\Lambda\mathcal{PL}_\infty$}\label{sec:224}

In this subsection, we show that giving a structure of a $\Gamma(\mathcal{P}re\mathcal{L}ie_\infty,-)$-algebra is equivalent to giving the structure of a $\Gamma\Lambda\mathcal{PL}_\infty$-algebra up to a shift.\\

Let $L$ be a dg $\mathbb{K}$-module. We make explicit a choice of a basis for $\Gamma(\mathcal{P}re\mathcal{L}ie_\infty,L)$ so that we can apply \cite[Lemma 2.3]{moi}. Let $\mathcal{B}$ be a basis of $L$. As a basis for $B^c(\Lambda^{-1}\text{\normalfont Perm}^\vee)(n)$, we consider tree monomials in $\mathcal{F}(\Sigma ^{-1}\Lambda^{-1}\overline{\normalfont \text{Perm}^\vee})(n)$ with as vertices elements of the form $\Sigma^{j-2} e_i^j$ where $j\geq 2$ and $1\leq i\leq j$ (see \cite[$\mathsection$3.1]{dotgrobner} for a definition of these trees, or also Definition \ref{treemon}). We denote by $\mathcal{TM}(n)$ the set of tree monomials with $n$ inputs. This gives a basis of $\mathcal{P}re\mathcal{L}ie_\infty(n)\otimes L^{\otimes n}$ which we denote by $\mathcal{TM}(n)\otimes\mathcal{B}^{\otimes n}$. We consider the action of $\Sigma_n$ on $\mathcal{TM}(n)$ given by the action of $\Sigma_n$ on $B^c(\Lambda^{-1}\text{\normalfont Perm}^\vee)$ where we omit the Koszul sign rule obtained after using the equivariance axioms for trees in $B^c(\Lambda^{-1}\text{\normalfont Perm}^\vee)$ in order to obtain a tree monomial. We also consider the action of $\Sigma_n$ on $\mathcal{B}^{\otimes n}$ by permutations. We deduce an action of $\Sigma_n$ on $\mathcal{TM}(n)\otimes\mathcal{B}^{\otimes n}$ defined as the diagonal action which uses the two previous actions of $\Sigma_n$ on $\mathcal{TM}(n)$ and $\mathcal{B}^{\otimes n}$. Given such an action, we can write
$$\mathcal{P}re\mathcal{L}ie_\infty(n)\otimes L^{\otimes n}=\bigoplus_{\mathfrak{t}\in (\mathcal{TM}(n)\otimes\mathcal{B}^{\otimes n})/\Sigma _n}\mathbb{K}[X_\mathfrak{t}]$$

\noindent where we denote by $X_\mathfrak{t}$ the orbit of the element $\mathfrak{t}\in\mathcal{TM}(n)\otimes\mathcal{B}^{\otimes n}$ under the above action. Now, for every $\mathfrak{t}\in\mathcal{TM}(n)\otimes\mathcal{B}^{\otimes n},\sigma \in\Sigma _n$ and $x\in X_\mathfrak{t}$, we denote by $\varepsilon(\sigma ,x)\in\mathbb{K}$ the Koszul sign which appears after the action of $\sigma $ on $x$, using the usual actions of $\Sigma_n$ on $\mathcal{TM}(n)$ and $\mathcal{B}^{\otimes n}$. We define the $\Sigma _n$-representation $\mathbb{K}[X_\mathfrak{t}]^\pm$ as $\mathbb{K}[X_\mathfrak{t}]$ endowed with the $\Sigma _n$-action given by
$$\sigma \cdot x^\pm=\varepsilon(\sigma ,x)(\sigma \cdot x)^\pm.$$

\noindent We obtain the following identification of $\Sigma _n$-representations:
$$\mathcal{P}re\mathcal{L}ie_\infty(n)\otimes L^{\otimes n}=\bigoplus_{\mathfrak{t}\in(\mathcal{TM}(n)\otimes\mathcal{B}^{\otimes n})/\Sigma _n}\mathbb{K}[X_\mathfrak{t}]^\pm.$$

\begin{lm}
    For every $n\geq 0$, let $(\mathcal{TM}(n)\otimes\mathcal{B}^{\otimes n})^r$ be the subset of $\mathcal{TM}(n)\otimes\mathcal{B}^{\otimes n}$ formed by elements $x$ such that, if $\sigma \cdot x=x$ for some $\sigma \in\Sigma _n$, then $\varepsilon(\sigma ,x)=1$. Let $\mathcal{S}^r(\mathcal{P}re\mathcal{L}ie_\infty,L)$ be the subspace of $\mathcal{S}(\mathcal{P}re\mathcal{L}ie_\infty,L)$ given by these elements. Then we have an isomorphism
    $$\mathcal{O}:\mathcal{S}^r(\mathcal{P}re\mathcal{L}ie_\infty,L)\longrightarrow\Gamma(\mathcal{P}re\mathcal{L}ie_\infty,L).$$
\end{lm}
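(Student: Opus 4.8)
The plan is to argue exactly as in the proof of Proposition \ref{orbit}, which itself rests on \cite[Proposition 2.5]{moi} and on the general statement \cite[Lemma 2.3]{moi}, applied arity by arity to the explicit $\Sigma_n$-equivariant decomposition
$$\mathcal{P}re\mathcal{L}ie_\infty(n)\otimes L^{\otimes n}=\bigoplus_{\mathfrak{t}\in(\mathcal{TM}(n)\otimes\mathcal{B}^{\otimes n})/\Sigma_n}\mathbb{K}[X_\mathfrak{t}]^\pm$$
established just above. First I would define $\mathcal{O}$ on generators: for a regular element $\mathfrak{t}\in(\mathcal{TM}(n)\otimes\mathcal{B}^{\otimes n})^r$, I send the class of $\mathfrak{t}$ in $\mathcal{S}(\mathcal{P}re\mathcal{L}ie_\infty,L)$ to the signed orbit sum $\sum_{\sigma\in\Sigma_n/\text{\normalfont Stab}_{\Sigma_n}(\mathfrak{t})}\varepsilon(\sigma,\mathfrak{t})\,\sigma\cdot\mathfrak{t}$ inside $\mathcal{P}re\mathcal{L}ie_\infty(n)\otimes L^{\otimes n}$. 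Regularity guarantees that $\varepsilon(\sigma,\mathfrak{t})$ depends only on the coset $\sigma\,\text{\normalfont Stab}_{\Sigma_n}(\mathfrak{t})$, so this sum is well defined and lies in the $\Sigma_n$-invariants; summing over all $n$ yields a morphism into $\Gamma(\mathcal{P}re\mathcal{L}ie_\infty,L)$.

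Next I would analyze the coinvariants and invariants of each summand $\mathbb{K}[X_\mathfrak{t}]^\pm$ separately, since both $\mathcal{S}(\mathcal{P}re\mathcal{L}ie_\infty,-)$ and $\Gamma(\mathcal{P}re\mathcal{L}ie_\infty,-)$ commute with the direct sum. For a regular orbit, both the $\Sigma_n$-coinvariants and the $\Sigma_n$-invariants of $\mathbb{K}[X_\mathfrak{t}]^\pm$ are one-dimensional, and $\mathcal{O}$ sends the generator of the former to the generator of the latter; this is the content transported from \cite[Lemma 2.3]{moi}. For a non-regular orbit there exists $\sigma\in\text{\normalfont Stab}_{\Sigma_n}(\mathfrak{t})$ with $\varepsilon(\sigma,\mathfrak{t})=-1$, which forces $c_\mathfrak{t}=\varepsilon(\sigma,\mathfrak{t})c_\mathfrak{t}=-c_\mathfrak{t}$ on the coefficient of $\mathfrak{t}$ in any invariant vector, hence $c_\mathfrak{t}=0$ and, by transitivity of the $\Sigma_n$-action on $X_\mathfrak{t}$, the whole invariant subspace vanishes; the case $char(\mathbb{K})=2$ cannot arise here, since then every element is regular. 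Thus the non-regular summands contribute nothing to $\Gamma(\mathcal{P}re\mathcal{L}ie_\infty,L)$, while the regular summands are matched bijectively with the regular part $\mathcal{S}^r(\mathcal{P}re\mathcal{L}ie_\infty,L)$ of the coinvariants via $\mathcal{O}$. Assembling these orbit-wise identifications over all $n$ shows that $\mathcal{O}$ is an isomorphism.

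The only delicate point, and the step I expect to require the most care, is the characteristic-sensitive computation of the invariants and coinvariants of the twisted permutation modules $\mathbb{K}[X_\mathfrak{t}]^\pm$: one must check that passing to the signed orbit sum, rather than to the full trace $Tr$ which carries the multiplicity factor $|\text{\normalfont Stab}_{\Sigma_n}(\mathfrak{t})|$ that may vanish in positive characteristic, is precisely what repairs the failure of $Tr$ to be an isomorphism. This is exactly the phenomenon already isolated in \cite[Lemma 2.3]{moi}, so once the basis $\mathcal{TM}(n)\otimes\mathcal{B}^{\otimes n}$ and its diagonal $\Sigma_n$-action are in place, that lemma applies verbatim and the argument reduces to the bookkeeping above.
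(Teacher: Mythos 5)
Your proposal is correct and follows essentially the same route as the paper: the paper's own proof simply invokes the decomposition $\mathcal{P}re\mathcal{L}ie_\infty(n)\otimes L^{\otimes n}=\bigoplus_{\mathfrak{t}}\mathbb{K}[X_\mathfrak{t}]^\pm$ established just before the lemma together with \cite[Lemma 2.3]{moi} and the proof of \cite[Proposition 2.5]{moi}, which is exactly the orbit-by-orbit analysis of invariants and coinvariants of the twisted permutation modules that you spell out. Your write-up just makes explicit the details the paper delegates to those citations.
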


\begin{proof}
    This comes from the previous analysis and \cite[Lemma 2.3]{moi}. See also the proof of \cite[Proposition 2.5]{moi}.
\end{proof}

\begin{lm}\label{compoprelie}
    Let $L$ be a dg $\mathbb{K}$-module. Denote by $\mu:\mathcal{S}(B^c(\text{\normalfont Perm}^\vee),\mathcal{S}(B^c(\text{\normalfont Perm}^\vee),L))\longrightarrow\mathcal{S}(B^c(\text{\normalfont Perm}^c),L)$ and $\widetilde{\mu}:\Gamma(B^c(\text{\normalfont Perm}^\vee),\Gamma(B^c(\text{\normalfont Perm}^\vee),L))\longrightarrow\Gamma(B^c(\text{\normalfont Perm}^\vee),L)$ the monadic compositions. Let $x\in L$ and $B_1,\ldots,B_n\in\mathcal{S}^r(B^c(\text{\normalfont Perm}^
    \vee),L)$ be basis elements. Then
    $$\widetilde{\mu}(\mathcal{O}\Sigma^{-1} e_1^{n+1}(x,\mathcal{O}B_1,\ldots,\mathcal{O}B_n))=\mathcal{O}(\mu(\Sigma^{-1} e_1^{n+1}(x,B_1,\ldots,B_n))).$$
\end{lm}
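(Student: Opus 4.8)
The plan is to compute both sides explicitly inside $\Gamma(B^c(\text{\normalfont Perm}^c),L)$ and to match them orbit by orbit. Write $\mathcal{P}=B^c(\text{\normalfont Perm}^c)$; as a graded operad this is the free operad $\mathcal{F}(\Sigma^{-1}\overline{\text{\normalfont Perm}^c})$, and the cobar differential plays no role here since both $\mu$ and $\widetilde{\mu}$ are induced by operadic grafting alone. The element $\Sigma^{-1}e_1^{n+1}$ is the corolla with $n+1$ inputs whose unique vertex carries $e_1^{n+1}\in\text{\normalfont Perm}^c(n+1)$; its first input is distinguished, being the linear slot singled out by the index $1$ of $e_1^{n+1}$, while the remaining $n$ inputs are the symmetric slots. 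On the right-hand side, $\mu(\Sigma^{-1}e_1^{n+1}(x,B_1,\ldots,B_n))$ is obtained by grafting the tree monomials $B_1,\ldots,B_n$ onto these $n$ symmetric inputs while the distinguished input receives $x$; this is a single basis element $C$, which again lies in $\mathcal{S}^r(\mathcal{P},L)$, and the right-hand side is its orbit sum $\mathcal{O}(C)$.

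First I would unfold the left-hand side. Each $\mathcal{O}B_i\in\Gamma(\mathcal{P},L)$ is the orbit sum of $B_i$; forming $\Sigma^{-1}e_1^{n+1}(x,\mathcal{O}B_1,\ldots,\mathcal{O}B_n)$ and applying the outer orbit map $\mathcal{O}$ symmetrizes the symmetric arguments $\mathcal{O}B_1,\ldots,\mathcal{O}B_n$ among themselves (with Koszul signs), the linear argument $x$ staying in the distinguished slot because the generators $e_i^{n+1}$ form a single permutation orbit whose marked index coherently tracks that slot. Then $\widetilde{\mu}$ performs the $\Gamma$-monadic composition, for which I would use the explicit description of \cite[$\mathsection$1.1.18]{fressebis}: on classes represented by invariant orbit tensors it is again given by grafting followed by the symmetrization that defines the target invariants. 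Hence the left-hand side is, up to the bookkeeping of signs and multiplicities, a symmetrization of the tree monomial underlying $C$ performed in stages (inside each $B_i$, across the symmetric slots of the corolla, and finally merged by $\widetilde{\mu}$).

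The heart of the argument, and the step I expect to be the main obstacle, is the integral orbit count proving that this staged symmetrization reproduces exactly the single orbit sum $\mathcal{O}(C)$, each orbit representative occurring once and with the correct Koszul sign. This is where the distinguished linear slot holding $x$ and the regularity of the $B_i$ (so that their stabilizers act by the sign $+1$) are essential: they guarantee that the stabilizer of $C$ factors compatibly through the stabilizers governing the inner orbit sums $\mathcal{O}B_i$ and the outer orbit sum, so that the staged count assembles into the full orbit of $C$ without ever creating an integer multiplicity — a multiplicity that could vanish in positive characteristic. This is precisely why the statement is phrased with $\mathcal{O}$ rather than with the trace map $Tr$, and the combinatorics of the count run exactly as in \cite[Lemma 2.3]{moi}. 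As a consistency check, over a field of characteristic $0$ the identity follows at once from $Tr$ being a morphism of monads together with $Tr$ being proportional to $\mathcal{O}$ on regular elements; the content of the lemma is that it survives integrally.
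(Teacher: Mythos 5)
Your proposal is correct and follows essentially the same route as the paper, which gives no argument of its own and simply defers to \cite[Theorem 1.5.1, Lemma 1.5.2]{cesaro}: the content there is exactly the staged orbit count you describe, with the regularity of the $B_i$ and the distinguished linear slot for $x$ ensuring the stabilizer of the composite tree monomial factors through the inner and outer stabilizers without producing integer multiplicities. Your sketch asserts rather than carries out that stabilizer factorization, but since that is precisely what the cited results of Cesaro (and the analogous \cite[Lemma 2.3]{moi}) establish, the outline matches the paper's intended proof.
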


\begin{proof}
    The proof is identical to the proofs given in \cite[Theorem 1.5.1, Lemma 1.5.2]{cesaro}.
\end{proof}

\begin{thm}\label{thmfonda}
    A dg $\mathbb{K}$-module $(L,d)$ is a $\Gamma(\mathcal{P}re\mathcal{L}ie_\infty,-)$-algebra if and only if $\Sigma L\in\Gamma\Lambda\mathcal{PL}_\infty$ with $Q_0^0=\Sigma d$. Moreover, every morphism of $\Gamma(\mathcal{P}re\mathcal{L}ie_\infty,-)$-algebras $\phi:L\longrightarrow L'$ gives rise to a strict morphism $\Sigma \phi:\Sigma  L\longrightarrow\Sigma  L'$ in $\Gamma\Lambda\mathcal{PL}_\infty$.
\end{thm}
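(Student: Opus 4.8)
The plan is to transport the non-divided equivalence of Theorem~\ref{caracgammaprelieinf} to the divided-power setting, using the orbit isomorphism $\mathcal{O}$ and its compatibility with the monad composition to reduce every integrability check to an identity already known on the Schur functor. Throughout I work with $W:=\Sigma L$ and the coalgebra $\Gamma\text{\normalfont Perm}^c(W)=W\otimes\Gamma(W)$. Recall that, as a graded operad, $\mathcal{P}re\mathcal{L}ie_\infty=B^c(\Lambda^{-1}\text{\normalfont Perm}^c)$ is freely generated by $\Sigma^{-1}\Lambda^{-1}\overline{\text{\normalfont Perm}^c}$, so that $\Gamma(\mathcal{P}re\mathcal{L}ie_\infty,L)$ has the tree-monomial basis fixed above and the orbit map $\mathcal{O}\colon\mathcal{S}^r(\mathcal{P}re\mathcal{L}ie_\infty,L)\to\Gamma(\mathcal{P}re\mathcal{L}ie_\infty,L)$ is an isomorphism. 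Since $\text{\normalfont Perm}^c(V)=\mathcal{S}(\text{\normalfont Perm}^c,V)$ and $\Gamma\text{\normalfont Perm}^c(V)=\Gamma(\text{\normalfont Perm}^c,V)$ as graded modules, the single-vertex (generator) part of $\Gamma(\mathcal{P}re\mathcal{L}ie_\infty,L)$ is, after the shift, exactly $\Gamma\text{\normalfont Perm}^c(W)$.

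For the forward direction, suppose $L$ carries a $\Gamma(\mathcal{P}re\mathcal{L}ie_\infty,-)$-algebra structure, i.e. a monad-algebra map $\gamma\colon\Gamma(\mathcal{P}re\mathcal{L}ie_\infty,L)\to L$. Restricting $\gamma$ to the single-vertex summand and shifting, I obtain a degree $-1$ map $\widetilde l=Q^0\colon\Gamma\text{\normalfont Perm}^c(W)\to W$; the normalization $Q_0^0=\Sigma d$ simply records that its arity-one (weight-zero) part is the suspended differential of $L$. I then set $Q:=\widetilde\Psi(\widetilde l)$, the coderivation attached to $\widetilde l$ by Proposition~\ref{bijendo}. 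Conversely, given $(W,Q)\in\Gamma\Lambda\mathcal{PL}_\infty$, I would reconstruct $\gamma$ from the weighted brace operations produced by Theorem~\ref{relationsprelieinfinite}: the action of a generator is $Q^0=\pi_W\circ Q$, and one extends to every tree monomial by iterated operadic composition, the orbit isomorphism guaranteeing that this is well defined on $\Gamma(\mathcal{P}re\mathcal{L}ie_\infty,L)$.

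The heart of the argument is to identify the two integrability conditions. On the algebraic side, $\gamma$ must be a chain map that is associative for the monad composition $\widetilde\mu$; unwinding the cobar differential of $\mathcal{P}re\mathcal{L}ie_\infty$, which is dual to the decomposition coproduct of $\text{\normalfont Perm}^c$, this requirement becomes the $\Gamma$-analogue of the Chapoton--Livernet relation of Proposition~\ref{caracprelieinf}. On the coderivation side, $Q^2=0$ is equivalent to $\sum_{k=0}^nQ_k^0Q_n^k=0$ for all $n$, which via the corestriction formula $\pi_{W^{\otimes k+1}}Q=\sum_i(\pi_W^{\otimes i-1}\otimes\widetilde l\otimes\pi_W^{\otimes k-i+1})\dgperm^k$ of Proposition~\ref{bijendo} and the coproduct formula of Lemma~\ref{shcoprod} expands into exactly the same relation. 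The bridge between the two is Lemma~\ref{compoprelie} (together with the suspension bookkeeping of Proposition~\ref{gammashift} matching $\mathcal{P}re\mathcal{L}ie_\infty$ with the operad appearing there): it shows that applying a top generator and then $\widetilde\mu$ commutes with $\mathcal{O}$, so each term of the $\Gamma$-relation is the $\mathcal{O}$-image of the corresponding term of the Schur relation, which holds by Theorem~\ref{caracgammaprelieinf}. The compatibility diagrams of Proposition~\ref{bijendo} and Lemma~\ref{formulecoprod}, relating $\widetilde\Psi$ and $\dgperm$ to $\Psi$ and $\dperm$ through the trace map, ensure that no term is lost in the passage from $\mathcal{S}$ to $\Gamma$; this yields both implications of the equivalence.

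Finally, for the functoriality statement, a morphism $\phi\colon L\to L'$ of $\Gamma(\mathcal{P}re\mathcal{L}ie_\infty,-)$-algebras is a morphism of dg $\mathbb{K}$-modules commuting with the two structure maps, hence with each generator operation; I would check directly that $\Sigma\phi$ satisfies $(Q')_n^0(\Sigma\phi)^{\otimes n+1}=(\Sigma\phi)\,Q_n^0$ for all $n$, which is precisely the condition for $\Sigma\phi$ to be a \emph{strict} $\infty$-morphism. The main obstacle I anticipate is the sign and combinatorial bookkeeping at the heart of the argument: reconciling the Koszul signs on the orbit representatives, the shuffle signs in $\dgperm$ (Lemma~\ref{shcoprod}), and the operadic suspension signs, and confirming that the terms of $Q^2$ pair off exactly with the terms of the monad-associativity relation. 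This is routine in spirit but delicate, and it is exactly here that Lemmas~\ref{compoprelie} and~\ref{shcoprod} do the real work of reducing the problem to the already-established non-divided identity.
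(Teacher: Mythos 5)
Your overall architecture (shift via Proposition \ref{gammashift}, read off $Q^0$ from the generators, reconstruct $Q=\widetilde{\Psi}(Q^0)$ via Proposition \ref{bijendo}, reduce everything to relation $(vi)$ of Theorem \ref{relationsprelieinfinite}, and handle the monad composition with Lemma \ref{compoprelie}) matches the paper's. However, your ``bridge'' step contains a genuine gap. You claim that each term of the $\Gamma$-relation is the $\mathcal{O}$-image of the corresponding term of the Schur relation, ``which holds by Theorem \ref{caracgammaprelieinf}.'' This reduction to the non-divided identity cannot work in positive characteristic: the relation of Proposition \ref{caracprelieinf}, evaluated on the repeated tuple $\underbrace{y_1,\ldots,y_1}_{r_1},\ldots,\underbrace{y_n,\ldots,y_n}_{r_n}$, differs from relation $(vi)$ by factors of $\prod_i r_i!$ coming from the stabilizers of the orbits, and these factors may vanish in $\mathbb{K}$. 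Knowing that the shuffle-indexed sum vanishes in $\mathcal{S}(\mathcal{P}re\mathcal{L}ie_\infty,L)$ therefore gives no information about the orbit-indexed sum in $\Gamma(\mathcal{P}re\mathcal{L}ie_\infty,L)$; the divided relation carries strictly more information and must be verified on its own.

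Concretely, the missing content is the assertion that the cobar differential sends orbit elements to sums of orbit elements \emph{with unit coefficients}. The paper proves this by computing
$$\partial(\Sigma^{-1}e_1^{r+1})=\sum_{p+q=r+2}\Bigl(\sum_{\omega\in Sh_\ast(q,1,\ldots,1)}\omega\cdot T_{p,q}^{1,1}+\sum_{k,i}\sum_{\omega\in Sh_\ast(1,\ldots,\underset{k}{q},\ldots,1)}\omega\cdot T_{p,q}^{k,i}\Bigr)$$
and then establishing, by an explicit bijection between pairs (shuffle in $Sh(1,r_1,\ldots,r_n)$, pointed shuffle) and terms in the expansion of the orbit maps, the identities $S_{p,q}^1=\sum\pm\mathcal{O}(T_{p,q}^{1,1}\otimes\cdots)$ and $S_{p,q}^2=\sum\pm\mathcal{O}(T_{p,q}^{2,1}\otimes\cdots)$. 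This enumeration is the heart of the proof and is not supplied by Lemma \ref{compoprelie}, which only controls the monadic composition $\widetilde{\mu}$, nor by Lemma \ref{shcoprod}, which computes $\dgperm$ on the target side. Your proposal defers exactly this step to ``routine bookkeeping,'' but it is not routine: it is where one checks that no orbit term is hit with a multiplicity that could be divisible by the characteristic. To repair the argument you must replace the appeal to Theorem \ref{caracgammaprelieinf} by a direct verification that $d\bigl(l(\mathcal{O}(\Sigma^{-1}e_1^{r+1}\otimes x\otimes y_1^{\otimes r_1}\otimes\cdots\otimes y_n^{\otimes r_n}))\bigr)$ expands into the orbit sums appearing in relation $(vi)$, as the paper does. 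The functoriality statement in your last paragraph is fine as written.
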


\begin{proof}
    Let $L$ be a $\Gamma(\mathcal{P}re\mathcal{L}ie_\infty,-)$-algebra. Then $\Sigma  L$ is a $\Gamma(\Lambda\mathcal{P}re\mathcal{L}ie_\infty,-)$-algebra by Proposition \ref{gammashift}. Since $\Lambda\mathcal{P}re\mathcal{L}ie_\infty\simeq B^c(\text{\normalfont Perm}^\vee)$, we have a morphism
    $$l:\Gamma(\Sigma ^{-1}\overline{\text{\normalfont Perm}}^c,\Sigma L)\longrightarrow \Sigma L.$$

    \noindent We then set, for homogeneous elements $x,y_1,\ldots,y_n\in\Sigma L$, and $r_1,\ldots,r_n\geq 0$,
    $$x\llbrace y_1,\ldots,y_n\rrbrace_{r_1,\ldots,r_n}:=l(\mathcal{O}(\Sigma^{-1}  e_1^{r+1}\otimes x\otimes y_1^{\otimes r_1}\otimes\cdots\otimes y_n^{\otimes r_n})),$$

    \noindent where $r=r_1+\cdots+r_n$ and where the considered orbit map is using a basis which includes $x,y_1,\ldots,y_n$. We check all formulas given in Theorem \ref{relationsprelieinfinite}. Formulas $(i)-(v)$ come from straightforward computations. We prove formula $(vi)$. We compute
    $$d( x\llbrace  y_1,\ldots,y_n\rrbrace_{r_1,\ldots,r_n})=ld(\mathcal{O}(\Sigma^{-1}  e_1^{r+1}\otimes x\otimes y_1^{\otimes r_1}\otimes\cdots\otimes y_n^{\otimes r_n})).$$

    \noindent We have
    $$\mathcal{O}(\Sigma^{-1} e_1^{r+1}\otimes x\otimes y_1^{\otimes r_1}\otimes \cdots\otimes y_n^{\otimes r_n})=\sum_{\sigma \in Sh(1,r_1,\ldots,r_n)}\sigma \cdot(\Sigma^{-1} e_1^{r+1}\otimes x\otimes y_1^{\otimes r_1}\otimes\cdots\otimes y_n^{\otimes r_n}).$$

    \noindent Let $\partial$ be the differential of $B^c(\text{\normalfont Perm}^\vee)$. Then
    \begin{multline*}
        d(x\llbrace  y_1,\ldots, y_n\rrbrace_{r_1,\ldots,r_n})= \displaystyle l\left(\sum_{\sigma \in Sh(1,r_1,\ldots,r_n)}\sigma \cdot(\partial(\Sigma^{-1} e_1^{r+1})\otimes  x\otimes y_1^{\otimes r_1}\otimes\cdots\otimes y_n^{\otimes r_n})\right)  \\
        + d(x)\llbrace  y_1,\ldots, y_n\rrbrace_{r_1,\ldots,r_n}+\sum_{k=1}^n\pm  x\llbrace  y_1,\ldots, y_k, d( y_k),\ldots, y_n\rrbrace_{r_1,\ldots, r_k-1,1,\ldots,r_n}.
    \end{multline*}

    \noindent We compute the first sum. Recall from the operadic composition in $\text{\normalfont Perm}$ (see Proposition \ref{compperm}) and from the definition of the differential in the cobar construction of a coaugmented cooperad that we have
    $$\partial(\Sigma^{-1} e_1^{r+1})=\sum_{\substack{p+q=r+2\\ p,q\geq 2}}\left(\sum_{\omega\in Sh_\ast(q,1,\ldots,1)}\omega \cdot T_{p,q}^{1,1}+\sum_{k=2}^{p}\sum_{i=1}^q\sum_{\omega\in Sh_\ast(1,\ldots,\underset{k}{q},\ldots,1)}\omega\cdot T_{p,q}^{k,i}\right),$$

    \noindent where we have set
    $$T_{p,q}^{k,i}=\begin{tikzpicture}[baseline={([yshift=-.5ex]current bounding box.center)},scale=1]
    \node[scale=1] (i) at (0,0) {$\Sigma^{-1} e_1^p$};
    \node[scale=0.8] (1) at (-3,1) {$1$};
    \node[scale=1] (1b) at (-2,1) {$\cdots$};
    \node[scale=0.8] (1bb) at (-1,1) {$k-1$};
    \node[scale=1] (2) at (0,1) {$\Sigma^{-1} e_i^q$};
    \node[scale=0.8] (2b) at (-1,2) {$k$};
    \node[scale=1] (2bb) at (0,2) {$\cdots$};
    \node[scale=0.8] (2bbb) at (1,2) {$k+q-1$};
    \node[scale=0.8] (3b) at (1,1) {$k+q$};
    \node[scale=1] (3bb) at (2,1) {$\cdots$};
    \node[scale=0.8] (3bbb) at (3,1) {$p+q-1$};
    \draw (i) -- (1);
    \draw (i) -- (1bb);
    \draw (i) -- (3);
    \draw (i) -- (2);
    \draw (2) -- (2b);
    \draw (2) -- (2bbb);
    \draw (i) -- (3b);
    \draw (i) -- (3bbb);
    \end{tikzpicture}.$$

    \noindent Let $z_1,\ldots,z_{r+1}=x,\underbrace{y_1,\ldots,y_1}_{r_1},\ldots,\underbrace{y_n,\ldots,y_n}_{r_n}$. For given $p,q\geq 2$ such that $p+q=r+2$, we need to compute the sums,
    $$S_{p,q}^1:=\sum_{\sigma \in Sh(1,r_1,\ldots,r_n)}\sum_{\omega\in Sh_\ast(q,1,\ldots,1)}\pm\sigma \omega\cdot(T_{p,q}^{1,1}\otimes z_{\omega(1)}\otimes\cdots\otimes z_{\omega(r+1)});$$
    $$S_{p,q}^2:=\sum_{k=2}^{p}\sum_{i=1}^q\sum_{\sigma \in Sh(1,r_1,\ldots,r_n)}\sum_{\omega\in Sh_\ast(1,\ldots,\underset{k}{q},\ldots,1)}\pm\sigma \omega\cdot(T_{p,q}^{k,i}\otimes z_{\omega(1)}\otimes\cdots\otimes z_{\omega(r+1)}).$$

    We first compute $S_{p,q}^1$. We claim that
    $$S_{p,q}^1=\sum_{\substack{p_i+q_i=r_i\\p_1+\cdots+p_n=p-1\\q_1+\cdots+q_n=q-1}}\pm\mathcal{O}(T_{p,q}^{1,1}\otimes x\otimes y_1^{\otimes q_1}\otimes\cdots \otimes y_n^{\otimes q_n}\otimes y_1^{\otimes p_1}\otimes \cdots\otimes y_n^{\otimes p_n}).$$

    \noindent Let $\sigma \in Sh(r_1,\ldots,r_n)$ and $\omega\in Sh_\ast(q,1,\ldots,1)$. In particular we have $\omega\in Sh(1,q-1,p-1)$ with $\omega(1)=1$. Then there exist $p_1,\ldots,p_n,q_1,\ldots,q_n$ with $p_i+q_i=r_i, p_1+\cdots+p_n=p-1$ and $q_1+\cdots+q_n=q-1$ such that
    $$z_{\omega(1)}\otimes\cdots\otimes z_{\omega(r+1)}=\pm x\otimes y_1^{\otimes q_1}\otimes\cdots\otimes y_n^{\otimes q_n}\otimes y_1^{\otimes p_1}\otimes\cdots\otimes y_n^{\otimes p_n}.$$

    \noindent Thus, every term in the left hand-side sum is part of the sum in the right hand-side. We now consider an element which occurs in the expansion of $\mathcal{O}(T_{p,q}^{1,1}\otimes x\otimes y_1^{\otimes q_1}\otimes\cdots\otimes y_n^{\otimes q_n}\otimes y_1^{\otimes p_1}\otimes\cdots\otimes y_n^{\otimes p_n})$ for some $p_i$'s,$q_i$'s as above. Let $\beta\in\Sigma _{r+1}$ be a blocks permutation which sends $x\otimes y_1^{\otimes q_1}\otimes\cdots\otimes y_n^{\otimes q_n}\otimes y_1^{\otimes p_1}\otimes\cdots\otimes y_n^{\otimes p_n}$ to $\pm x\otimes y_1^{\otimes r_1}\otimes\cdots\otimes y_n^{\otimes r_n}$. Then
    $$\mathcal{O}(T_{p,q}^{1,1}\otimes x\otimes y_1^{\otimes q_1}\otimes\cdots\otimes y_n^{\otimes q_n}\otimes y_1^{\otimes p_1}\otimes\cdots\otimes y_n^{\otimes p_n})=\pm\mathcal{O}(\beta\cdot T_{p,q}^{1,1}\otimes x\otimes y_1^{\otimes r_1}\otimes\cdots\otimes y_n^{\otimes r_n}).$$

    \noindent Let $\tau\in\Sigma _{r+1}$. We write $\tau=\sigma \eta$ where $\sigma \in Sh(1,r_1,\ldots,r_n)$ and $\eta\in\Sigma _1\times\Sigma _{r_1}\times\cdots\times\Sigma _{r_n}$. Then
    $$\tau\cdot(\mu\cdot T_{p,q}^{1,1}\otimes x\otimes y_1^{\otimes r_1}\otimes\cdots\otimes y_n^{\otimes r_n})=\sigma \cdot(\eta\beta\cdot T_{p,q}^{1,1}\otimes x\otimes y_1^{\otimes r_1}\otimes\cdots\otimes y_n^{\otimes r_n}).$$

    \noindent We write $\eta\beta=\omega\nu$ where $\omega\in Sh(1,q-1,p-1)$ and $\nu\in\Sigma _1\times\Sigma _{q-1}\times\Sigma _{p-1}$. Since $\eta\beta(1)=1$, we have $\omega(1)=1$ so that $\omega\in Sh_\ast(q,1,\ldots,1)$. We finally have
    $$\tau\cdot(\mu\cdot T_{p,q}^{1,1}\otimes x\otimes y_1^{\otimes r_1}\otimes \cdots\otimes y_n^{\otimes r_n})=\sigma \cdot(\omega\cdot T_{p,q}^{1,1}\otimes x\otimes y_1^{\otimes r_1}\otimes\cdots\otimes y_n^{\otimes r_n})$$

    \noindent so that every term in the right hand-side is part of $S_{p,q}^1$. We thus have proved the first identity.\\

    We now compute $S_{p,q}^2$. We claim that
    $$S_{p,q}^2=\sum_{j=1}^n\sum_{\substack{p_i+q_i=r_i,i\neq j\\ p_j+q_j=r_j-1\\ p_1+\cdots+p_n=p-1\\ q_1+\cdots+q_n=q-1}}\pm\mathcal{O}(T_{p,q}^{2,1}\otimes x\otimes y_j\otimes y_1^{\otimes q_1}\otimes\cdots\otimes y_n^{\otimes q_n}\otimes y_1^{\otimes p_1}\otimes\cdots\otimes y_n^{\otimes p_n}).$$
    
    \noindent Since there exists $\nu\in\Sigma _{r+1}$ such that $T_{p,q}^{k,i}=\nu\cdot T_{p,q}^{2,1}$, we can apply the same arguments as before to show that every term which occurs in $S_{p,q}^2$ is part of the right-hand side sum. Now consider some term $T_{p,q}^{2,1}\otimes x\otimes y_j\otimes y_1^{\otimes q_1}\otimes\cdots\otimes y_n^{\otimes q_n}\otimes y_1^{\otimes p_1}\otimes \cdots\otimes y_n^{\otimes p_n}$. Let $\beta\in\Sigma _{r+1}$ be a blocks permutation which sends $x\otimes y_j\otimes y_1^{\otimes q_1}\otimes\cdots\otimes y_n^{\otimes q_n}\otimes y_1^{\otimes p_1}\otimes\cdots\otimes y_n^{\otimes p_n}$ to $\pm x\otimes y_1^{\otimes r_1}\otimes\cdots\otimes y_n^{\otimes r_n}$. Then
    $$\mathcal{O}(T_{p,q}^{2,1}\otimes x\otimes y_j\otimes y_1^{\otimes q_1}\otimes\cdots\otimes y_n^{\otimes q_n}\otimes y_1^{\otimes p_1}\otimes\cdots\otimes y_n^{\otimes p_n})=\pm\mathcal{O}(\beta\cdot T_{p,q}^{2,1}\otimes x\otimes y_1^{\otimes r_1}\otimes\cdots\otimes y_n^{\otimes r_n}).$$

    \noindent Now let $\tau\in\Sigma _{r+1}$. We write $\tau=\sigma\eta$ where $\sigma \in Sh(1,r_1,\ldots, r_n)$ and $\eta\in\Sigma _1\times\Sigma _{r_1}\times\cdots\times\Sigma _{r_n}$. Then
    $$\tau\cdot(\beta\cdot T_{p,q}^{2,1}\otimes x\otimes y_1^{\otimes r_1}\otimes\cdots\otimes y_n^{\otimes r_n})=\sigma \cdot(\eta\beta\cdot T_{p,q}^{2,1}\otimes x\otimes y_1^{\otimes r_1}\otimes\cdots\otimes y_n^{\otimes r_n}).$$

    \noindent We finally write $\eta\beta=\omega\cdot \nu_\ast(1,\gamma,1,\ldots,1)$ where $\nu\in\Sigma _{p},\gamma\in\Sigma _{q},\omega\in Sh_\ast(1,\ldots,\underset{k}{q},\ldots,1)$ and $k=\nu(2)$. Since $\eta\beta(1)=1$, we have $\nu(1)=1$. We thus obtain
    $$\tau\cdot(\beta\cdot T_{p,q}^{2,1}\otimes x\otimes y_1^{\otimes r_1}\otimes\cdots\otimes y_n^{\otimes r_n})=\sigma \cdot(\omega\cdot T_{p,q}^{k,\gamma(1)}\otimes x\otimes y_1^{\otimes r_1}\otimes\cdots\otimes y_n^{\otimes r_n})$$

    \noindent so that every term in the right-hand side is part of $S_{p,q}^2$.\\

    \noindent From Lemma \ref{compoprelie}, we deduce that formula $(vi)$ of Theorem \ref{relationsprelieinfinite} is satisfied so that $\Sigma  L\in\Gamma\Lambda\mathcal{PL}_\infty$. Suppose now that $L$ is such that $\Sigma  L\in\Gamma\Lambda\mathcal{PL}_\infty$. We prove that $L$ is a $\Gamma(\mathcal{P}re\mathcal{L}ie_\infty,-)$-algebra, or equivalently, that $\Sigma  L$ is a $\Gamma(B^c(\text{\normalfont Perm}^\vee),-)$-algebra (see Proposition \ref{gammashift}). We first define
    $$l:\bigoplus_{n\geq 0}(\Sigma ^{-1}\overline{\text{\normalfont Perm}^\vee}(n)\otimes(\Sigma L)^{\otimes n})^{\Sigma _n}\longrightarrow\Sigma  L$$

    \noindent by setting, for every basis elements $x,y_1,\ldots,y_n\in\Sigma L$,
     $$l(\mathcal{O}(\Sigma^{-1}  e_1^{\sum_ir_i+1}\otimes x\otimes y_1^{\otimes r_1}\otimes \cdots\otimes y_n^{\otimes r_n})):=x\llbrace y_1,\ldots, y_n\rrbrace_{r_1,\ldots,r_n}.$$

     \noindent We then extend $l:\Gamma(B^c(\text{\normalfont Perm}^\vee),\Sigma  L)\longrightarrow\Sigma  L$ by Lemma \ref{compoprelie}. By the same identities as before, we can show that $l$ preserves the differentials, giving a structure of a $\Gamma(\Lambda\mathcal{P}re\mathcal{L}ie_\infty,-)$-algebra on $\Sigma  L$.
\end{proof}

\begin{remarque}
    This theorem implies that the category of the $\Gamma(\Lambda\mathcal{P}re\mathcal{L}ie_\infty,-)$-algebras is a full subcategory of $\Gamma\Lambda\mathcal{PL}_\infty$. However, a morphism in $\Gamma\Lambda\mathcal{PL}_\infty$ does not necessarily preserves the monadic structure of $\Gamma(\Lambda\mathcal{P}re\mathcal{L}ie_\infty,-)$.
\end{remarque}

% \begin{remarque}
%     If $L$ is a $\Gamma(\mathcal{P}re\mathcal{L}ie,-)$-algebra, then $L$ has a canonical $\Gamma(\mathcal{P}re\mathcal{L}ie_\infty,-)$-algebra structure given by the morphism $\mathcal{P}re\mathcal{L}ie_\infty\longrightarrow\mathcal{P}re\mathcal{L}ie$. Using these structures, we see that we have an obvious and natural bijection between the Maurer-Cartan set of the $\Gamma(\mathcal{P}re\mathcal{L}ie,-)$-algebra $L$ and the Maurer-Cartan set of the object $\Sigma L$ in $\Gamma\Lambda\mathcal{PL}_\infty$.
% \end{remarque}

\begin{cor}
    For every complete $\Gamma(\mathcal{P}re\mathcal{L}ie,-)$-algebra $L$, the dg $\mathbb{K}$-module $\Sigma L$ is endowed with the structure of a $\widehat{\Gamma\Lambda\mathcal{P}\mathcal{L}}_\infty$-algebra such that $\mathcal{MC}(L)$ is in bijective correspondence with $\mathcal{MC}(\Sigma L)$. 
\end{cor}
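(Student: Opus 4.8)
The plan is to obtain the $\widehat{\Gamma\Lambda\mathcal{PL}_\infty}$-structure on $\Sigma L$ by transporting the pre-Lie structure through the operad morphism of Remark \ref{prelieinftoprelie}, and then to exploit the fact that this morphism annihilates all higher generators, so that the resulting Maurer--Cartan equation collapses to the classical pre-Lie one.

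First I would use the operad morphism $\mathcal{P}re\mathcal{L}ie_\infty\to\mathcal{P}re\mathcal{L}ie$ from Remark \ref{prelieinftoprelie}. Since $\mathcal{P}re\mathcal{L}ie_\infty(0)=\mathcal{P}re\mathcal{L}ie(0)=0$, it induces a morphism of monads $\Gamma(\mathcal{P}re\mathcal{L}ie_\infty,-)\to\Gamma(\mathcal{P}re\mathcal{L}ie,-)$, so that pulling back the $\Gamma(\mathcal{P}re\mathcal{L}ie,-)$-algebra structure endows $L$ with a $\Gamma(\mathcal{P}re\mathcal{L}ie_\infty,-)$-algebra structure. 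By Theorem \ref{thmfonda}, $\Sigma L\in\Gamma\Lambda\mathcal{PL}_\infty$ with $Q_0^0=\Sigma d$. It remains to promote this to a complete filtered structure: transporting the filtration of $L$ by $F_n(\Sigma L)=\Sigma(F_nL)$, I would check the filtration axiom of a filtered $\Gamma\Lambda\mathcal{PL}_\infty$-algebra, which reduces (see the next paragraph) to the fact that $d$ and the pre-Lie product preserve the filtration of $L$; completeness of $\Sigma L$ is immediate from that of $L$ since suspension is merely a degree shift. This gives $\Sigma L\in\widehat{\Gamma\Lambda\mathcal{PL}_\infty}$.

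The crucial computation is the explicit form of the weighted braces on $\Sigma L$. By construction in the proof of Theorem \ref{thmfonda}, $x\llbrace y_1,\ldots,y_n\rrbrace_{r_1,\ldots,r_n}$ is the image under the structure map $l$ of the single generator $\Sigma^{-1}e_1^{r+1}$ (with $r=\sum_i r_i$) applied to $x,y_1^{\otimes r_1},\ldots,y_n^{\otimes r_n}$. Since this structure map factors through $\Gamma(\mathcal{P}re\mathcal{L}ie,\Sigma L)$ and the morphism of Remark \ref{prelieinftoprelie} sends $e_1^{r+1}$ to $0$ for $r+1\geq 3$ and to the pre-Lie product for $r+1=2$, every weighted brace with $\sum_i r_i\geq 2$ vanishes, while (using formula (ii) of Theorem \ref{relationsprelieinfinite} to discard zero weights) the only nonzero operations are $x\llbrace\rrbrace=d(x)$ and $x\llbrace y\rrbrace_1=\pm\Sigma(a\star b)$ for $x=\Sigma a$, $y=\Sigma b$. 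In particular the filtration axiom follows from $d(F_mL)\subset F_mL$ and $F_mL\star F_pL\subset F_{m+p}L$, completing the previous paragraph.

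Finally I would read off the Maurer--Cartan equation. For $x\in(\Sigma L)_0$ the defining equation $d(x)+\sum_{n\geq 1}x\llbrace x\rrbrace_n=0$ has, by the previous paragraph, only its $n=1$ term surviving, hence reads $d(x)+x\llbrace x\rrbrace_1=0$. Writing $x=\Sigma\tau$ with $\tau\in L_{-1}$ and using $Q_0^0=\Sigma d$ together with $x\llbrace x\rrbrace_1=\pm\Sigma(\tau\star\tau)$, this is equivalent to $d(\tau)+\tau\star\tau=0$, i.e. to $\tau\in\mathcal{MC}(L)$ in the sense of the $\Gamma(\mathcal{P}re\mathcal{L}ie,-)$-deformation theory (the divided powers do not alter this quadratic equation). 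The isomorphism $L_{-1}\xrightarrow{\sim}(\Sigma L)_0$, $\tau\mapsto\Sigma\tau$, therefore restricts to the announced bijection $\mathcal{MC}(L)\xrightarrow{\sim}\mathcal{MC}(\Sigma L)$. The main obstacle is bookkeeping the suspension and operadic-suspension signs in the identities $Q_0^0=\Sigma d$ and $x\llbrace x\rrbrace_1=\pm\Sigma(\tau\star\tau)$, so as to confirm that the two surviving terms carry the same relative sign and thus combine into $d(\tau)+\tau\star\tau=0$ rather than a difference; this is a routine but careful check with the conventions of \S\ref{sec:211}.
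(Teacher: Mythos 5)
Your proposal is correct and follows essentially the same route as the paper: pull back the structure along the operad morphism of Remark \ref{prelieinftoprelie}, apply Theorem \ref{thmfonda}, observe that the weighted braces vanish for total weight $\geq 2$ and reduce to the differential and the pre-Lie product in weights $0$ and $1$, and conclude that the Maurer--Cartan equation collapses to $d(\tau)+\tau\star\tau=0$. The only point to finish is the sign check you already flag; note that the paper's own computation records the weight-$0$ brace as $-\Sigma d(x)$ and the weight-$1$ brace as $(-1)^{|x|}\Sigma x\{y_1\}_1$, so your signs should be reconciled with those conventions, but this does not affect the bijection.
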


\begin{proof}
    The operad morphism $\mathcal{P}re\mathcal{L}ie_\infty\longrightarrow\mathcal{P}re\mathcal{L}ie$ given in Remark \ref{prelieinftoprelie} gives rise to a monad morphism $\Gamma(\mathcal{P}re\mathcal{L}ie_\infty,-)\longrightarrow\Gamma(\mathcal{P}re\mathcal{L}ie,-)$. Then, every $\Gamma(\mathcal{P}re\mathcal{L}ie,-)$-algebra $L$ is endowed with the structure of a $\Gamma(\mathcal{P}re\mathcal{L}ie_\infty,-)$-algebra. By Theorem \ref{thmfonda}, the dg $\mathbb{K}$-module $\Sigma L$ is a $\Gamma\Lambda\mathcal{PL}_\infty$-algebra. If we denote by $-\{-,\ldots,-\}_{r_1,\ldots,r_n}$ the weighted brace operations given by the $\Gamma(\mathcal{P}re\mathcal{L}ie,-)$-algebra structure on $L$, then, by definition of the weighted brace operations given in the proof of Theorem \ref{thmfonda}, we have
    $$\Sigma x\llbrace \Sigma  y_1,\ldots,\Sigma y_n\rrbrace_{r_1,\ldots,r_n}=\left\{\begin{array}{cl}
        -\Sigma d(x) & \text{if }r_1+\cdots+ r_n=0 \\
        (-1)^{|x|}\Sigma x\{y_1\}_1 & \text{if }r_1=1, r_2=\ldots=r_n=0\\
        0 & \text{if } r_1+\cdots+r_n\geq 2
    \end{array}\right.$$

    \noindent for every $x,y_1,\ldots,y_n\in L$. Since the operations $-\{-,\ldots,-\}_{r_1,\ldots,r_n}$ preserve the filtration on $L$, the operations $-\llbrace -,\ldots,-\rrbrace_{r_1,\ldots,r_n}$ also preserve the filtration on $\Sigma L$ so that $\Sigma L\in\widehat{\Gamma\Lambda\mathcal{PL}}_\infty$. Moreover, the previous computation of the braces shows that $x\in\mathcal{MC}(L)$ if and only if $\Sigma x\in\mathcal{MC}(\Sigma L)$, which proves the corollary.
\end{proof}

\begin{cor}
    Suppose that $char(\mathbb{K})=0$. Then every $\Gamma\Lambda\mathcal{PL}_\infty$-algebra $V$ is endowed with the structure of a $\Lambda\mathcal{PL}_\infty$-algebra such that
    $$x\llbrace y_1,\ldots,y_n\rrbrace_{r_1,\ldots,r_n}=\frac{1}{\prod_i r_i!}x\llbrace\underbrace{y_1,\ldots,y_1}_{r_1},\ldots,\underbrace{y_n,\ldots,y_n}_{r_n}\rrbrace$$

    \noindent for every $x\in V$ and $y_1,\ldots,y_n\in V$ with associated weights $r_1,\ldots,r_n\geq 0$, where we consider the symmetric braces $-\llbrace -,\ldots,-\rrbrace$ defined in Proposition \ref{caracprelieinf}.
\end{cor}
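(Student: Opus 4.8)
The plan is to transport the coalgebra-with-coderivation data along the trace map, using that in characteristic $0$ the trace $Tr\colon\mathcal{S}(V)\longrightarrow\Gamma(V)$ is an isomorphism, so that $id\otimes Tr\colon\text{\normalfont Perm}^c(V)\longrightarrow\Gamma\text{\normalfont Perm}^c(V)$ is an isomorphism of graded $\mathbb{K}$-modules. The commutative square in Lemma \ref{formulecoprod} says precisely that $id\otimes Tr$ intertwines $\dperm$ and $\dgperm$, hence it is an isomorphism of coalgebras. This is the structural input that turns the whole argument into a transport of structure.

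First I would build the coderivation on $\text{\normalfont Perm}^c(V)$. Writing $Q$ for the square-zero degree $-1$ coderivation of the given $\Gamma\Lambda\mathcal{PL}_\infty$-structure and $Q^0=\pi_V\circ Q$, I set $l=Q^0\circ(id\otimes Tr)\colon\text{\normalfont Perm}^c(V)\longrightarrow V$ and let $Q'=\Psi(l)\in Coder(\text{\normalfont Perm}^c(V))$ be the associated coderivation from Proposition \ref{isocoder}; it is of degree $-1$. The compatibility diagram in Proposition \ref{bijendo}, applied with $\widetilde{l}=Q^0$, then reads $(id\otimes Tr)\circ Q'=Q\circ(id\otimes Tr)$, i.e. $Q'=(id\otimes Tr)^{-1}\circ Q\circ(id\otimes Tr)$, whence $(Q')^2=(id\otimes Tr)^{-1}\circ Q^2\circ(id\otimes Tr)=0$. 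Thus $(V,Q')\in\Lambda\mathcal{PL}_\infty$ equips $V$ with a $\Lambda\mathcal{PL}_\infty$-algebra structure. Its symmetric braces are read off from $\pi_V\circ Q'=\pi_V\circ\Psi(l)=l$ (Proposition \ref{isocoder}), so that for $y_1,\ldots,y_m\in V$ one gets $x\llbrace y_1,\ldots,y_m\rrbrace=l(x\otimes y_1\cdots y_m)=Q^0\bigl(x\otimes Tr(y_1\cdots y_m)\bigr)$.

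It then remains to match the two families of operations, which reduces to computing $Tr$ on a product with repetitions. Applying the identity just obtained to $y_1^{r_1}\cdots y_n^{r_n}\in\mathcal{S}(V)$ gives
$$x\llbrace\underbrace{y_1,\ldots,y_1}_{r_1},\ldots,\underbrace{y_n,\ldots,y_n}_{r_n}\rrbrace=Q^0\bigl(x\otimes Tr(y_1^{r_1}\cdots y_n^{r_n})\bigr).$$
Under the running convention that, in characteristic $\neq 2$, only even-degree elements carry a weight $\geq 2$, the stabiliser $\Sigma_{r_1}\times\cdots\times\Sigma_{r_n}$ of $y_1^{\otimes r_1}\otimes\cdots\otimes y_n^{\otimes r_n}$ acts with trivial Koszul sign; hence each tensor in the orbit is produced exactly $\prod_i r_i!$ times in the full symmetric sum, giving $Tr(y_1^{r_1}\cdots y_n^{r_n})=\bigl(\prod_i r_i!\bigr)\,\mathcal{O}(y_1^{\otimes r_1}\cdots y_n^{\otimes r_n})$. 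Inserting this and recalling $x\llbrace y_1,\ldots,y_n\rrbrace_{r_1,\ldots,r_n}=Q^0\bigl(x\otimes\mathcal{O}(y_1^{\otimes r_1}\cdots y_n^{\otimes r_n})\bigr)$ from the proof of Theorem \ref{relationsprelieinfinite} produces the desired formula.

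The hard part will be the sign bookkeeping in this last step: one must invoke the weight convention on odd-degree elements to guarantee that the passage from the normalised orbit sum $\mathcal{O}$ to the full trace $Tr$ contributes the clean factorial $\prod_i r_i!$ and no spurious signs. Everything else is formal once the trace map is recognised as an isomorphism of coalgebras, the remaining verifications (the degree of $Q'$ and the identity $\pi_V\circ\Psi(l)=l$) being immediate from the earlier propositions.
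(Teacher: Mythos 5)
Your proof is correct, and it takes a slightly different (though closely related) route from the paper's. The paper's argument passes through the operadic monads: by Theorem \ref{thmfonda} the desuspension $\Sigma^{-1}V$ is a $\Gamma(\mathcal{P}re\mathcal{L}ie_\infty,-)$-algebra, the monad morphism $Tr:\mathcal{S}(\mathcal{P}re\mathcal{L}ie_\infty,-)\longrightarrow\Gamma(\mathcal{P}re\mathcal{L}ie_\infty,-)$ then makes it a plain $\mathcal{P}re\mathcal{L}ie_\infty$-algebra, and Theorem \ref{caracgammaprelieinf} converts this back into a $\Lambda\mathcal{PL}_\infty$-structure on $V$; the factorial relation is then read off from the definition of the weighted braces. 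You instead stay entirely inside the cofree-coalgebra formalism: you observe that in characteristic $0$ the map $id\otimes Tr:\text{\normalfont Perm}^c(V)\longrightarrow\Gamma\text{\normalfont Perm}^c(V)$ is an isomorphism of coalgebras (Lemma \ref{formulecoprod}), conjugate the coderivation $Q$ by it using the compatibility square of Proposition \ref{bijendo}, and compute $Tr(y_1^{r_1}\cdots y_n^{r_n})=\bigl(\prod_i r_i!\bigr)\mathcal{O}(y_1^{\otimes r_1}\otimes\cdots\otimes y_n^{\otimes r_n})$ directly. The two proofs hinge on the same fact — invertibility of the trace in characteristic $0$ — but yours avoids invoking the heavy Theorem \ref{thmfonda} and makes the origin of the factor $\frac{1}{\prod_i r_i!}$ explicit (orbit sum versus full symmetrization, with the weight convention on odd elements guaranteeing a sign-free stabilizer), which the paper leaves to the reader. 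The only point worth stating a touch more carefully is that the weighted braces in Theorem \ref{relationsprelieinfinite} are defined via formal variables $Y_i$ and the extension $\psi$, but since formulas $(iii)$--$(v)$ make the definition independent of the chosen basis, your identification $x\llbrace y_1,\ldots,y_n\rrbrace_{r_1,\ldots,r_n}=Q^0(x\otimes\mathcal{O}(y_1^{\otimes r_1}\otimes\cdots\otimes y_n^{\otimes r_n}))$ is legitimate.
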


\begin{proof}
    Let $V$ be a $\Gamma\Lambda\mathcal{PL}_\infty$-algebra. By Theorem \ref{thmfonda}, we have that $\Sigma^{-1}V$ is a $\Gamma(\mathcal{P}re\mathcal{L}ie_\infty,-)$-algebra. Since we have a morphism of monads given by the trace map
    $$Tr:\mathcal{S}(\mathcal{P}re\mathcal{L}ie_\infty,-)\longrightarrow\Gamma(\mathcal{P}re\mathcal{L}ie_\infty,-),$$

    \noindent $\Sigma^{-1}V$ is endowed with a $\mathcal{P}re\mathcal{L}ie_\infty$-algebra structure, so that $V$ is a $\Lambda\mathcal{PL}_\infty$-algebra. By using the definition of weighted brace operations, we obtain the desired relation.
\end{proof}

\section{A morphism from $\mathcal{P}re\mathcal{L}ie_\infty$ to $\mathcal{B}race\underset{\text{\normalfont H}}{\otimes}\mathcal{E}$}

In this section, we construct an operad morphism from $\mathcal{P}re\mathcal{L}ie_\infty$ to $\mathcal{B}race\underset{\text{\normalfont H}}{\otimes}\mathcal{E}$. We will define this morphism as a composite of the form $\mathcal{P}re\mathcal{L}ie_\infty\longrightarrow \mathcal{B}race\underset{\text{\normalfont H}}{\otimes}B^c(\Lambda^{-1}\mathcal{B}race^\vee)\longrightarrow\mathcal{B}race\underset{\text{\normalfont H}}{\otimes}\mathcal{E}$.\\

In $\mathsection$\ref{sec:231}, we construct an operad morphism $B^c(\Lambda^{-1}\mathcal{B}race^\vee)\longrightarrow\mathcal{E}$, which will be given as a lift of some diagram.\\

In $\mathsection$\ref{sec:232}, we give a computation of the twisted coderivation on the free brace coalgebra $\mathcal{B}race^c(\Sigma^{-1} N_\ast(\Delta^n))$ induced by the morphism constructed in $\mathsection$\ref{sec:231} and the $\mathcal{E}$-coalgebra structure on $N_*(\Delta^n)$ by induction on $n\geq 0$.\\

In $\mathsection$\ref{sec:233}, we construct the morphism $\mathcal{P}re\mathcal{L}ie_\infty\longrightarrow \mathcal{B}race\underset{\text{\normalfont H}}{\otimes}B^c(\Lambda^{-1}\mathcal{B}race^\vee)$ and deduce Theorem \ref{theoremD}.

\subsection{A morphism from $B^c(\Lambda^{-1}\mathcal{B}race^\vee)$ to $\mathcal{E}$}\label{sec:231}

Let $\mathcal{C}om$ be the commutative operad. Recall that if we consider the model structure on the category of symmetric operads $\mathcal{P}$ such that $\mathcal{P}(0)=0$ (see \cite[$\mathsection$3.3]{hinicherratum}), then we have an acyclic fibration $\mathcal{E}\overset{\sim}{\longtwoheadrightarrow}\mathcal{C}om$. We thus have there exists a lift of the following diagram:
% https://q.uiver.app/#q=WzAsMyxbMSwwLCJcXG1hdGhjYWx7RX0iXSxbMSwxLCJcXG1hdGhjYWx7Q31vbSJdLFswLDEsIkJeYyhcXExhbWJkYV57LTF9XFxtYXRoY2Fse0J9cmFjZV5cXHZlZSkiXSxbMCwxLCJcXHNpbSIsMCx7InN0eWxlIjp7ImhlYWQiOnsibmFtZSI6ImVwaSJ9fX1dLFsyLDFdLFsyLDAsIlxcZXhpc3RzIiwwLHsic3R5bGUiOnsiYm9keSI6eyJuYW1lIjoiZGFzaGVkIn19fV1d
\[\begin{tikzcd}
	& {\mathcal{E}} \\
	{B^c(\Lambda^{-1}\mathcal{B}race^\vee)} & {\mathcal{C}om}
	\arrow["\sim", two heads, from=1-2, to=2-2]
	\arrow["\exists", dashed, from=2-1, to=1-2]
	\arrow[from=2-1, to=2-2]
\end{tikzcd}.\]

\noindent The goal of this subsection is to give an explicit choice of such a lift. Equivalently, we are searching for elements $\mu_T\in\mathcal{E}(|T|)_{|T|-2}$ for every $T\in\mathcal{PRT}$ such that
$$d(\Lambda\mu_T)+\sum_{S\subset T}\Lambda\mu_{T/S}\circ_S\Lambda\mu_{S}=0.$$

For every $\sigma \in\Sigma _r$, let $h_\mathcal{E}^\sigma :\mathcal{E}(r)_d\longrightarrow\mathcal{E}(r)_{d+1}$ be the morphism such that, for every $w_0,\ldots,w_d\in\Sigma _r$, 
$$h_{\mathcal{E}}^{\sigma }(w_0,\ldots,w_r)=(\sigma ,w_0,\ldots,w_r).$$

\noindent We can check that this morphism is a homotopy between the identity map of $\mathcal{E}(r)_d$ and the morphism $\varphi^\sigma_\mathcal{E} :\mathcal{E}(r)_d\longrightarrow\mathcal{E}(r)_d$ defined by
$$\varphi_{\mathcal{E}}^\sigma (w_0,\ldots,w_d)=\left\{\begin{array}{ll}
    \sigma  & \text{if }d=0 \\
    0 & \text{else}
\end{array}\right..$$

\noindent Accordingly,
$$d h_\mathcal{E}^\sigma +h_\mathcal{E}^\sigma  d=id_{\mathcal{E}}-\varphi^\sigma_\mathcal{E} .$$

By functoriality, we have that $\Lambda h_\mathcal{E}^\sigma $ is a homotopy between the identity map and $\Lambda\varphi^\sigma_\mathcal{E} $.

\begin{cons}\label{consmu}
    We set $\mu_{\begin{tikzpicture}[baseline={([yshift=-.5ex]current bounding box.center)},scale=0.3]
    \node[draw,circle,scale=0.3] (i) at (0,0) {$1$};
    \end{tikzpicture}}=0$, $\mu_{\begin{tikzpicture}[baseline={([yshift=-.5ex]current bounding box.center)},scale=0.3]
    \node[draw,circle,scale=0.3] (i) at (0,0) {$1$};
    \node[draw,circle,scale=0.3] (1) at (0,1) {$2$};
    \draw (i) -- (1);
    \end{tikzpicture}}=(12)$ and $\mu_{\begin{tikzpicture}[baseline={([yshift=-.5ex]current bounding box.center)},scale=0.3]
    \node[draw,circle,scale=0.3] (i) at (0,0) {$2$};
    \node[draw,circle,scale=0.3] (1) at (0,1) {$1$};
    \draw (i) -- (1);
    \end{tikzpicture}}=(21)$. For every $T\in\mathcal{PRT}$, we define $\mu_T$ by induction on $|T|$ by
    $$\Lambda\mu_T=-\Lambda h_{\mathcal{E}}^{\sigma _T}\left(\sum_{S\subset T}\Lambda\mu_{T/S}\circ_S\Lambda\mu_{S}\right),$$

    \noindent where we take $\mu_{T/S}\in\mathcal{E}(V_{T/S})$ and $\mu_{S}\in\mathcal{E}(V_S)$ (see Remark \ref{remlabel}).
\end{cons}

\begin{example}\label{exmu} Let us make explicit the $\mu_T$'s for $T$ a tree with $3$ vertices. 
\begin{itemize}
    \item If $T=\begin{tikzpicture}[baseline={([yshift=-.5ex]current bounding box.center)},scale=0.6]
    \node[draw,circle,scale=0.6] (i) at (0,0) {$1$};
    \node[draw,circle,scale=0.6] (1) at (-0.5,1) {$2$};
    \node[draw,circle,scale=0.6] (3) at (0.5,1) {$3$};
    \draw (i) -- (1);
    \draw (i) -- (3);
    \end{tikzpicture}$, then we have two non trivial trees $S_1=\begin{tikzpicture}[baseline={([yshift=-.5ex]current bounding box.center)},scale=0.6]
    \node[draw,circle,scale=0.6] (i) at (0,0) {$1$};
    \node[draw,circle,scale=0.6] (1) at (0,1) {$2$};
    \draw (i) -- (1);
    \end{tikzpicture}$ and $S_2=\begin{tikzpicture}[baseline={([yshift=-.5ex]current bounding box.center)},scale=0.6]
    \node[draw,circle,scale=0.6] (i) at (0,0) {$1$};
    \node[draw,circle,scale=0.6] (1) at (0,1) {$3$};
    \draw (i) -- (1);
    \end{tikzpicture}$, which are such that $T/S_1=\begin{tikzpicture}[baseline={([yshift=-.5ex]current bounding box.center)},scale=0.6]
    \node[draw,circle,scale=0.6] (i) at (0,0) {$S_1$};
    \node[draw,circle,scale=0.6] (1) at (0,1) {$3$};
    \draw (i) -- (1);
    \end{tikzpicture}$ and $T/S_2=\begin{tikzpicture}[baseline={([yshift=-.5ex]current bounding box.center)},scale=0.6]
    \node[draw,circle,scale=0.6] (i) at (0,0) {$S_2$};
    \node[draw,circle,scale=0.6] (1) at (0,1) {$2$};
    \draw (i) -- (1);
    \end{tikzpicture}$. We thus have
    \begin{center}
    $\begin{array}{lll}
        \displaystyle\sum_{S\subset T}\Lambda\mu_{T/S}\circ_S\Lambda\mu_S & = & \Sigma^{-1} (S_13)\circ_{S_1}\Sigma^{-1} (12)+\Sigma^{-1} (S_2 2)\circ_{S_2}\Sigma^{-1} (13)\\
        & = & \Sigma^{-1} (123)-\Sigma^{-1} (132).\\
    \end{array}$
    \end{center}

    \noindent We then deduce $\Lambda\mu_T=\Sigma^{-1} (123,132)$.
    
    \item If $T=\begin{tikzpicture}[baseline={([yshift=-.5ex]current bounding box.center)},scale=0.6]
    \node[draw,circle,scale=0.6] (i) at (0,0) {$1$};
    \node[draw,circle,scale=0.6] (1) at (0,1) {$2$};
    \node[draw,circle,scale=0.6] (3) at (0,2) {$3$};
    \draw (i) -- (1);
    \draw (1) -- (3);
    \end{tikzpicture}$, then we have two non trivial trees $S_1=\begin{tikzpicture}[baseline={([yshift=-.5ex]current bounding box.center)},scale=0.6]
    \node[draw,circle,scale=0.6] (i) at (0,0) {$1$};
    \node[draw,circle,scale=0.6] (1) at (0,1) {$2$};
    \draw (i) -- (1);
    \end{tikzpicture}$ and $S_2=\begin{tikzpicture}[baseline={([yshift=-.5ex]current bounding box.center)},scale=0.6]
    \node[draw,circle,scale=0.6] (i) at (0,0) {$2$};
    \node[draw,circle,scale=0.6] (1) at (0,1) {$3$};
    \draw (i) -- (1);
    \end{tikzpicture}$, which are such that $T/S_1=\begin{tikzpicture}[baseline={([yshift=-.5ex]current bounding box.center)},scale=0.6]
    \node[draw,circle,scale=0.6] (i) at (0,0) {$S_1$};
    \node[draw,circle,scale=0.6] (1) at (0,1) {$3$};
    \draw (i) -- (1);
    \end{tikzpicture}$ and $T/S_2=\begin{tikzpicture}[baseline={([yshift=-.5ex]current bounding box.center)},scale=0.6]
    \node[draw,circle,scale=0.6] (i) at (0,0) {$1$};
    \node[draw,circle,scale=0.6] (1) at (0,1) {$S_2$};
    \draw (i) -- (1);
    \end{tikzpicture}$. We thus have
    \begin{center}
    $\begin{array}{lll}
        \displaystyle\sum_{S\subset T}\Lambda\mu_{T/S}\circ_S\Lambda\mu_S & = & \Sigma^{-1} (S_13)\circ_{S_1}\Sigma^{-1} (12)+\Sigma^{-1} (1 S_2)\circ_{S_2}\Sigma^{-1} (23)\\
        & = & \Sigma^{-1} (123)-\Sigma^{-1} (123)\\
        & = & 0.
    \end{array}$
    \end{center}

    \noindent We then deduce $\Lambda\mu_T=0$.
\end{itemize}

\end{example}

\begin{thm}\label{mu}
    For every tree $T\in\mathcal{PRT}$, we have
    $$d(\Lambda\mu_T)+\sum_{S\subset T}\Lambda\mu_{T/S}\circ_S\Lambda\mu_{S}=0$$

    \noindent where we consider the elements $\mu_T$'s defined in Construction \ref{consmu}. Then we have an explicit lift
    % https://q.uiver.app/#q=WzAsMyxbMSwwLCJcXG1hdGhjYWx7RX0iXSxbMSwxLCJcXG1hdGhjYWx7Q31vbSJdLFswLDEsIkJeYyhcXExhbWJkYV57LTF9XFxtYXRoY2Fse0J9cmFjZV5cXHZlZSkiXSxbMCwxLCJcXHNpbSIsMCx7InN0eWxlIjp7ImhlYWQiOnsibmFtZSI6ImVwaSJ9fX1dLFsyLDFdLFsyLDAsIlxcZXhpc3RzIiwwLHsic3R5bGUiOnsiYm9keSI6eyJuYW1lIjoiZGFzaGVkIn19fV1d
\[\begin{tikzcd}
	& {\mathcal{E}} \\
	{B^c(\Lambda^{-1}\mathcal{B}race^\vee)} & {\mathcal{C}om}
	\arrow["\sim", two heads, from=1-2, to=2-2]
	\arrow[dashed, from=2-1, to=1-2]
	\arrow[from=2-1, to=2-2]
\end{tikzcd}\]

\noindent given by the morphism
\begin{center}
    $\begin{array}{ccc}
        B^c(\Lambda^{-1}\mathcal{B}race^\vee) & \longrightarrow & \mathcal{E} \\
        \Sigma^{-1}\Lambda^{-1}T^\vee & \longmapsto & \mu_T
    \end{array}.$
\end{center}
\end{thm}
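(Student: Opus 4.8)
The plan is to establish the displayed identity by induction on the number of vertices $n=|T|$, and then to read off the lift. Throughout write $\Omega_T:=\sum_{S\subset T}\Lambda\mu_{T/S}\circ_S\Lambda\mu_S$ for the sum over non-trivial subtrees, so that the assertion is $d(\Lambda\mu_T)+\Omega_T=0$ and, by Construction \ref{consmu}, $\Lambda\mu_T=-\Lambda h_{\mathcal{E}}^{\sigma_T}(\Omega_T)$ for $n\geq 3$. The cases $n=1$ (where $\mu_T=0$ and there is no non-trivial subtree) and $n=2$ (where $\mu_T$ is a single permutation, hence a degree $0$ cycle, and again $\Omega_T=0$) are immediate. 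For the inductive step I would apply $d$ to the defining formula and use the homotopy relation $d\Lambda h_{\mathcal{E}}^{\sigma_T}+\Lambda h_{\mathcal{E}}^{\sigma_T}d=\mathrm{id}-\Lambda\varphi_{\mathcal{E}}^{\sigma_T}$, which yields
\begin{equation*}
d(\Lambda\mu_T)+\Omega_T=\Lambda\varphi_{\mathcal{E}}^{\sigma_T}(\Omega_T)+\Lambda h_{\mathcal{E}}^{\sigma_T}(d\Omega_T).
\end{equation*}
Thus it suffices to prove the two claims $d\Omega_T=0$ and $\Lambda\varphi_{\mathcal{E}}^{\sigma_T}(\Omega_T)=0$.

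For the first claim I would expand $d\Omega_T$ by the Leibniz rule and substitute the inductive hypothesis for $d(\Lambda\mu_{T/S})$ and $d(\Lambda\mu_S)$, both of which are available since a non-trivial $S$ forces $T/S$ and $S$ to have fewer than $n$ vertices. Conceptually, $\Omega_T$ is the value, on the generator $\Sigma^{-1}\Lambda^{-1}T^\vee$, of the operad morphism $f$ applied to the cobar differential $\partial$ of $B^c(\Lambda^{-1}\mathcal{B}race^\vee)$; since $f$ is multiplicative for partial compositions and is a chain map on all strictly smaller trees by induction, one obtains $d\Omega_T=f(\partial^2(\Sigma^{-1}\Lambda^{-1}T^\vee))=0$, the vanishing being the coassociativity identity $\partial^2=0$ in the cobar construction. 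Concretely, the double sum is indexed by nested pairs of subtrees, and operadic associativity matches each term with the one obtained by performing the two contractions in the opposite order, with opposite cobar sign, so the terms cancel in pairs.

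For the second claim I would argue by degree. Since $\mu_{T/S}\in\mathcal{E}(|T/S|)_{|T/S|-2}$ and $|T/S|+|S|=n+1$, the element $\Omega_T$ lies in $\mathcal{E}(n)_{n-3}$. When $n\geq 4$ this degree is positive, so $\varphi_{\mathcal{E}}^{\sigma_T}$ vanishes on it by definition. When $n=3$ the element $\Omega_T$ has degree $0$; writing it as $\sum_i c_i(w_i)$ with $w_i\in\Sigma_3$, the definition of $\varphi_{\mathcal{E}}^{\sigma_T}$ gives $\varphi_{\mathcal{E}}^{\sigma_T}(\Omega_T)=\big(\sum_i c_i\big)\sigma_T$, where $\sum_i c_i$ is the image of $\Omega_T$ under the augmentation $\mathcal{E}\to\mathcal{C}om$. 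This image vanishes because the bottom map $g\colon B^c(\Lambda^{-1}\mathcal{B}race^\vee)\to\mathcal{C}om$ is a morphism of dg operads into an operad with trivial differential, whence $g\circ\partial=0$ and $\sum_i c_i=g(\partial(\Sigma^{-1}\Lambda^{-1}T^\vee))=0$; alternatively one checks directly that the two non-trivial subtrees of a $3$-vertex tree contribute with opposite cobar signs, exactly as in Example \ref{exmu}. This closes the induction. Finally, the identity says precisely that the morphism of operads $f\colon B^c(\Lambda^{-1}\mathcal{B}race^\vee)\to\mathcal{E}$ determined on generators by $\Sigma^{-1}\Lambda^{-1}T^\vee\mapsto\mu_T$ commutes with the differentials; that it lifts $g$ is checked on generators, where $\mu_T$ has positive degree for $|T|\geq 3$ (so maps to $0$ in $\mathcal{C}om$) and is a permutation for $|T|=2$ (so maps to $1$). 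The main obstacle is the first claim: organizing the Leibniz expansion and the nested-subtree bookkeeping so that the signs genuinely reproduce $\partial^2=0$, the degree and augmentation arguments of the second claim being comparatively routine.
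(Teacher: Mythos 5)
Your argument is correct and follows essentially the same route as the paper: induction on $|T|$, the homotopy relation $d\Lambda h_{\mathcal{E}}^{\sigma_T}+\Lambda h_{\mathcal{E}}^{\sigma_T}d=\mathrm{id}-\Lambda\varphi_{\mathcal{E}}^{\sigma_T}$, the degree count $|\mu_{T/S}\circ_S\mu_S|=|T|-3$ to kill the $\varphi$-term, and the Leibniz/induction/nested-subtree cancellation (equivalently $\partial^2=0$ in the cobar construction) to get $d\Omega_T=0$. The only organizational difference is that the paper disposes of $|T|=3$ as an explicit base case via Example \ref{exmu}, whereas you absorb it into the inductive step by an augmentation argument for $\varphi_{\mathcal{E}}^{\sigma_T}(\Omega_T)$; both are valid.
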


\begin{proof}
    The theorem obviously holds if $|T|=1,2$, and also if $|T|=3$ by Example \ref{exmu}. We now suppose that $n:=|T|\geq 4$. We use the identity $d\Lambda h_{\mathcal{E}}^{\sigma_T} +\Lambda h_{\mathcal{E}}^{\sigma_T}  d=id_{\Lambda\mathcal{E}}-\Lambda\varphi_\mathcal{E}^{\sigma_T} $:
    \begin{multline*}
        d(\Lambda\mu_T)=\Lambda h_{\mathcal{E}}^{\sigma _T}d\left(\sum_{S\subset T}\Lambda\mu_{T/S}\circ_S\Lambda\mu_{S}\right)+\Lambda\varphi_\mathcal{E}^{\sigma _T}\left(\sum_{S\subset T}\Lambda\mu_{T/S}\circ_S\Lambda\mu_{S}\right)-\sum_{S\subset T}\Lambda\mu_{T/S}\circ_S\Lambda\mu_{S}.
    \end{multline*}

    \noindent By an immediate induction, for every non trivial tree $T$, we have that $\Lambda\mu_T\in\Lambda\mathcal{E}(n)_{-1}$, or equivalently $\mu_T\in\mathcal{E}(n)_{n-2}$. This then gives $|\mu_{T/S}\circ_S\mu_S|=n-3>0$. Thus, by definition of $\varphi_\mathcal{E}^{\sigma _T}$, we have that 
    $$\Lambda\varphi_\mathcal{E}^{\sigma _T}\left(\sum_{S\subset T}\Lambda\mu_{T/S}\circ_S\Lambda\mu_{S}\right)=0.$$

    \noindent We now prove that
    $$d\left(\sum_{S\subset T}\Lambda\mu_{T/S}\circ_S\Lambda\mu_{S}\right)=0.$$

    \noindent We compute:
    $$d\left(\sum_{S\subset T}\Lambda\mu_{T/S}\circ_S\Lambda\mu_{S}\right)=\sum_{S\subset T}d(\Lambda\mu_{T/S})\circ_S\Lambda\mu_{S}-\sum_{S\subset T}\Lambda\mu_{T/S}\circ_Sd(\Lambda\mu_{S}),$$

    \noindent because the differential of the Barratt-Eccles operad is compatible with its operad structure. Now, because $\mu_{S}=0$ if $S$ has only one vertex, we can consider subtrees of $T$ with at most $n-1$ vertices. We can then use the induction hypothesis. 
\begin{multline*}
    d\left(\sum_{S\subset T}\Lambda\mu_{T/S}\circ_S\Lambda\mu_{S}\right) = -\sum_{S\subset T}\sum_{U\subset T/S}(\Lambda\mu_{(T/S)/U}\circ_U\Lambda\mu_{U})\circ_S\Lambda\mu_{S}\\+\sum_{S\subset T}\sum_{U\subset S}\Lambda\mu_{T/S}\circ_S(\Lambda\mu_{S/U}\circ_U\Lambda\mu_{U})
\end{multline*}

\noindent We have two types of subtrees $U$ of $T/S$: either $U$ does not contain the vertex $S$, so that $U$ can be canonicaly seen as a subtree of $T$ such that $V_U\cap V_S=\emptyset$, or $U$ contains the vertex $S$, so that $U$ can be seen as a subtree of $T$ such that $S\subset U$. We thus have
\begin{multline*}
    d\left(\sum_{S\subset T}\Lambda\mu_{T/S}\circ_S\Lambda\mu_{S}\right) =-\sum_{S\subset T}\sum_{S\subset U\subset T}(\Lambda\mu_{T/U}\circ_{U/S}\Lambda\mu_{U/S})\circ_S\Lambda\mu_{S}\\ -\sum_{\substack{S,U\subset T\\ V_S\cap V_U=\emptyset}}(\Lambda\mu_{(T/S)/U}\circ_U\Lambda\mu_{U})\circ_S\Lambda\mu_{S}\\+\sum_{S\subset T}\sum_{U\subset S}\Lambda\mu_{T/S}\circ_S(\Lambda\mu_{S/U}\circ_U\Lambda\mu_{U}).
\end{multline*}

\noindent In the second line, by exchanging the roles of $S$ and $U$, we have a sum of terms of the form
$$(\Lambda\mu_{(T/S)/U}\circ_U\Lambda\mu_{U})\circ_S\Lambda\mu_{S}+(\Lambda\mu_{(T/U)/S}\circ_S\Lambda\mu_{S})\circ_U\Lambda\mu_{U}$$

\noindent which is $0$, because of the operadic axioms and because $|\Lambda\mu_{U}|=|\Lambda\mu_{S}|=-1$. We thus obtain

\begin{center}
$\begin{array}{lll}
    \displaystyle d\left(\sum_{S\subset T}\Lambda\mu_{T/S}\circ_S\Lambda\mu_{S}\right) & = & \displaystyle-\sum_{U\subset T}\sum_{S\subset U}(\Lambda\mu_{(T/U)}\circ_{U/S}\Lambda\mu_{(U/S)})\circ_S\Lambda\mu_{S}\\
    & &\displaystyle+\sum_{S\subset T}\sum_{U\subset S}\Lambda\mu_{T/S}\circ_S(\Lambda\mu_{(S/U)}\circ_U\Lambda\mu_{U})\\
    & = & \displaystyle-\sum_{S\subset T}\sum_{U\subset S}(\Lambda\mu_{T/S}\circ_{S/U}\Lambda\mu_{(S/U)})\circ_U\Lambda\mu_{U}\\
    & & \displaystyle+\sum_{S\subset T}\sum_{U\subset S}\Lambda\mu_{T/S}\circ_S(\Lambda\mu_{(S/U)}\circ_U\Lambda\mu_{U})\\
    & = & 0,
    \end{array}$
    \end{center}

    \noindent using again the associativity of the operadic composition.
\end{proof}

\subsection{On the twisted coderivation of ${\mathcal{B}race}^c(\Sigma N^\ast(\Delta^n))$}\label{sec:232}

Every $\mathcal{E}$-algebra $E$ inherits a $B^c(\Lambda^{-1}\mathcal{B}race^\vee)$-algebra structure induced by the morphism $B^c(\Lambda^{-1}\mathcal{B}race^\vee)\longrightarrow\mathcal{E}$ given by Theorem \ref{mu}. This algebraic structure is equivalent to giving a twisted coderivation on the free brace coalgebra $\mathcal{B}race^c(\Sigma E)$ generated by $\Sigma E$. Recall that 
    $$\mathcal{B}race^c(\Sigma E)=\bigoplus_{k\geq 1}\bigoplus_{T\in\mathcal{PRT}(k)}(T^\vee\otimes(\Sigma E)^{\otimes k})^{\Sigma_{k}},$$

    \noindent where we equalize the action of $\Sigma_{k}$ on $T^\vee$ with the action of $\Sigma_k$ by permutation on $(\Sigma E)^{\otimes k}$. In the following, we identify the $k=1$ component with $\Sigma E$.\\ If $E$ is finite dimensional, giving such a coderivation is equivalent to giving a twisting morphism $\partial^E$ on the free complete brace algebra 
    $$\widehat{\mathcal{B}race}(\Sigma^{-1}E^\vee)=\prod_{k\geq 1}\bigoplus_{T\in\mathcal{PRT}(k)}(T^\vee\otimes(\Sigma^{-1}E^\vee)^{\otimes k})_{\Sigma_{k}}.$$
    
    \noindent This completion is obtained from the free brace algebra $\mathcal{B}race(\Sigma^{-1} E^\vee)$ endowed with the filtration
    $$F_p\mathcal{B}race(\Sigma^{-1} E^\vee)=\bigoplus_{k\geq p+1}\bigoplus_{T\in\mathcal{PRT}(k)}(T^\vee\otimes(\Sigma^{-1}E^\vee)^{\otimes k})_{\Sigma_{k}}.$$

    \noindent One can check that the brace algebra structure of $\mathcal{B}race(\Sigma^{-1}E^\vee)$ preserves this filtration so that the completion $\widehat{\mathcal{B}race}(\Sigma^{-1}E^\vee)$ is endowed with a brace algebra structure. The derivation $\partial^E$ is thus given by a generating function
    $$\partial^E=\sum_{\substack{T\in\mathcal{PRT}\\T\ \text{\normalfont canonical}}}T\otimes\Lambda\mu_T^{E^\vee}$$

    \noindent where $\mu_T^{E^\vee}:E^\vee\longrightarrow (E^\vee)^{\otimes |T|}$ is the map induced by $\mu_T\in\mathcal{E}(|T|)$ given by the $\mathcal{E}$-algebra structure of $E$. Note also that the definition of $\partial^E$ is natural with respect to $E$. Namely, if $F$ is an other finite dimensional $\mathcal{E}$-algebra and $f:F\longrightarrow E$ a morphism of $\mathcal{E}$-algebras, then we have a commutative diagram
    % https://q.uiver.app/#q=WzAsNCxbMCwwLCJcXHdpZGVoYXR7XFxtYXRoY2Fse0J9cmFjZX0oXFxTaWdtYV57LTF9RV5cXHZlZSkiXSxbMCwxLCJcXHdpZGVoYXR7XFxtYXRoY2Fse0J9cmFjZX0oXFxTaWdtYV57LTF9RV5cXHZlZSkiXSxbMiwwLCJcXHdpZGVoYXR7XFxtYXRoY2Fse0J9cmFjZX0oXFxTaWdtYV57LTF9Rl5cXHZlZSkiXSxbMiwxLCJcXHdpZGVoYXR7XFxtYXRoY2Fse0J9cmFjZX0oXFxTaWdtYV57LTF9Rl5cXHZlZSkiXSxbMCwyLCJcXHdpZGVoYXR7XFxtYXRoY2Fse0J9cmFjZX0oXFxTaWdtYV57LTF9Zl5cXHZlZSkiXSxbMSwzLCJcXHdpZGVoYXR7XFxtYXRoY2Fse0J9cmFjZX0oXFxTaWdtYV57LTF9Zl5cXHZlZSkiLDJdLFswLDEsIlxccGFydGlhbF5FIiwyXSxbMiwzLCJcXHBhcnRpYWxeRiJdXQ==
\[\begin{tikzcd}
	{\widehat{\mathcal{B}race}(\Sigma^{-1}E^\vee)} && {\widehat{\mathcal{B}race}(\Sigma^{-1}F^\vee)} \\
	{\widehat{\mathcal{B}race}(\Sigma^{-1}E^\vee)} && {\widehat{\mathcal{B}race}(\Sigma^{-1}F^\vee)}
	\arrow["{\widehat{\mathcal{B}race}(\Sigma^{-1}f^\vee)}", from=1-1, to=1-3]
	\arrow["{\partial^E}"', from=1-1, to=2-1]
	\arrow["{\partial^F}", from=1-3, to=2-3]
	\arrow["{\widehat{\mathcal{B}race}(\Sigma^{-1}f^\vee)}"', from=2-1, to=2-3]
\end{tikzcd}.\]

    The goal of this subsection is to give a computation of the differentials $\partial^n:=\partial^{N^\ast(\Delta^n)}$ by induction on $n\geq 0$. To achieve this, we use Construction \ref{consmu} which defines the $\Lambda\mu_T$'s, and analyze the coaction of $TR(\Lambda\mu_T)$ on $\Sigma^{-1} \underline{0\cdots n}\in\Sigma^{-1}N_*(\Delta^n)$, where $TR$ is the table reduction morphism (see Proposition \ref{tablered}).\\

Let $r,d\geq 0$ and $1\leq k\leq r$. We denote by
$$\pi_k:\chi(r)_d\longrightarrow\chi(\llbracket k,r\rrbracket)_d$$

\noindent the morphism obtained by forgetting $1, \ldots, k-1$. If the degree does not match, we send the surjection on $0$. Note that $\pi_1=id$.

\begin{lm}\label{indmu}
    Let $w\in\mathcal{E}(r)_d$. Then
    $$TR(h_{\mathcal{E}}^{id}(w))=\sum_{k=1}^r1\cdots k\cdot\pi_k(TR(w)),$$

    \noindent where we consider the concatenation of a surjection in $\chi(k)$ with a surjection of $\chi(\llbracket k,r\rrbracket)$, giving a surjection in $\chi(r)$.
\end{lm}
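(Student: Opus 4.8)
The plan is to unwind the explicit table reduction formula recalled after Proposition \ref{tablered}, applied to $h_{\mathcal{E}}^{id}(w) = (id, w_0, \ldots, w_d) \in \mathcal{E}(r)_{d+1}$, and to compare the resulting sum with the right-hand side term by term. First I would record the two structural features of table reduction that drive the argument: in a table contributing to $TR(v_0, \ldots, v_m)$, the $i$-th row is a prefix $v_i(1)\cdots v_i(r_i)$ of the permutation $v_i$, whose interior entries $v_i(1), \ldots, v_i(r_i-1)$ avoid the interior entries of all preceding rows; and, since the cuts of the associated surjection occur exactly at the caesuras, every interior entry is forced to be the last occurrence of its value. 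In particular each interior entry occurs exactly once in the resulting surjection, whereas the boundary entries $v_i(r_i)$ (for $i<m$) are the caesuras and reoccur later. Only compositions for which the prefix constraints hold and the resulting surjection is non-degenerate with genuine caesuras contribute.

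Next I would specialise $v_0 = id$. Since $id$ is increasing, its prefixes are exactly the sequences $1\,2\cdots k$ with $k = r_0 \in \llbracket 1, r\rrbracket$, so the zeroth row of any table in $TR(id, w_0, \ldots, w_d)$ has the form $1\,2\cdots k$, with interior entries $\{1,\ldots,k-1\}$ and caesura $k$. By the observation above, the values $1, \ldots, k-1$ occur exactly once in the whole surjection, hence only in the zeroth row; consequently the entries contributed by the rows read off $w_0, \ldots, w_d$ all lie in $\llbracket k, r\rrbracket$ and assemble into a surjection $v$ onto $\llbracket k, r\rrbracket$. The boundary value $k$ is shared: it is the caesura of the zeroth row and reoccurs in $v$, so the whole term is precisely the concatenation $1\cdots k \cdot v$ of the statement, degenerate concatenations (those in which $v$ begins with $k$) being discarded on both sides.

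It then remains to identify, for each fixed $k$, the surjections $v$ obtained this way with $\pi_k(TR(w))$. I would do this by the recursive "peel off the zeroth row" description of table reduction: after removing the row $1\cdots k$ and adjoining its interior $\{1,\ldots,k-1\}$ to the forbidden set, the remaining rows read off $w_0, \ldots, w_d$ exactly as in the table reduction of $(w_0, \ldots, w_d)$ performed after forgetting the values $1, \ldots, k-1$. Thus the admissible $v$'s with $r_0 = k$ are the surjections of $TR(\pi_k w_0, \ldots, \pi_k w_d)$, where $\pi_k w_i$ denotes the permutation of $\llbracket k, r\rrbracket$ induced by $w_i$. Finally I would invoke the naturality of table reduction with respect to forgetting the smallest values, namely $TR(\pi_k w_0, \ldots, \pi_k w_d) = \pi_k(TR(w_0, \ldots, w_d))$: a surjection $u$ in $TR(w)$ survives $\pi_k$ exactly when $1, \ldots, k-1$ each occur once in $u$, in which case these occurrences are interior entries whose deletion leaves the remaining prefix constraints and caesuras untouched. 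Summing over $k \in \llbracket 1, r\rrbracket$ then yields the asserted identity.

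The main obstacle is the bookkeeping in the last paragraph: one must check carefully that enlarging the forbidden set by $\{1,\ldots,k-1\}$ in the peeled table reduction matches, term for term, the effect of the forgetting operator $\pi_k$ on $TR(w)$ — in particular that interior entries remain interior and boundary entries remain boundary under deletion, and that the degree-matching condition built into $\pi_k$ corresponds exactly to the requirement that $1, \ldots, k-1$ each occur once. Once this compatibility of the prefix-reading process with the forgetting maps is established, the two sums coincide term by term.
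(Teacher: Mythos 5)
Your proof is correct and takes essentially the same route as the paper's: expand $TR(id,w_0,\ldots,w_d)$ by the explicit table-reduction formula, observe that the zeroth row is forced to be $1\cdots k$ so that $1,\ldots,k-1$ become forbidden in all later rows, and match the remaining rows, composition by composition, with $\pi_k(TR(w))$. The only slip is in your opening recollection of table reduction --- the $i$-th row is not a literal prefix of $v_i$ but the subsequence of $v_i$ obtained by skipping the interior entries of the preceding rows --- yet you invoke the prefix property only for the zeroth row (where it is valid, there being no preceding rows) and your peeling step uses the correct skip-forbidden-values description, so the argument is unaffected.
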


\begin{proof}
    Let $w=(w_0,\ldots,w_d)\in\mathcal{E}(r)_d$. On one hand, we have
    $$TR(h_\mathcal{E}^{id}(w))=\sum_{k=1}^r\sum_{\substack{r_0+\cdots+r_d=r+d-k+1\\ 1\leq r_i\leq r}} \left|\begin{array}{ccc}
         1 & \cdots & k \\
         w_0'(1) & \cdots & w_0'(r_0)\\
         \vdots & & \vdots\\
         w_d'(1) & \cdots & w_d'(r_d)\\
    \end{array}\right.$$

    \noindent where each $w_k'(1),\ldots, w_k'(r_k)$ are obtained from $w_k(1),\ldots,w_k(r)$ by taking the first $r_k$ terms which are not among
    \begin{center}
        $\begin{array}{ccc}
             1 & \cdots & k-1  \\
             w_0'(1) & \cdots & w_0'(r_0-1)\\
             \vdots & &\vdots \\
             w_{k-1}'(1) & \cdots & w_{k-1}'(r_{k-1}-1)
        \end{array}.$
    \end{center}

    \noindent On the other hand, if we write
    $$TR(w)=\sum_{\substack{r_0'+\cdots+r_d'=r+d\\1\leq r_i\leq r}}\left|\begin{array}{ccc}
             w_0''(1) & \ldots & w_0''(r_0')\\
             \vdots & & \vdots\\
             w_{d}''(1) & \ldots & w_{d}''(r_{d}')
        \end{array}\right.$$

    \noindent where each $w_k''(1),\ldots,w_k''(r_k')$ are obtained from $w_k(1),\ldots,w_k(r)$ by taking the first $r_k'$ terms which are not among
    \begin{center}
        $\begin{array}{ccc}
             w_0''(1) & \cdots & w_0''(r_0'-1)\\
             \vdots & &\vdots \\
             w_{k-1}''(1) & \cdots & w_{k-1}''(r_{k-1}'-1)
        \end{array},$
    \end{center}
\noindent then
    \begin{center}
        $\displaystyle \pi_k(TR(w))=\sum_{\substack{r_0+\cdots+r_d=r+d-k+1\\1\leq r_i\leq r}}\left|\begin{array}{ccc}
             w_0'(1) & \ldots & w_0'(r_0)\\
             \vdots & & \vdots\\
             w_{d}'(1) & \ldots & w_{d}'(r_{d})
        \end{array}\right.$
    \end{center}

    \noindent where, as above, each $w_k'(1),\ldots,w_k'(r_k)$ are obtained from $w_k(1),\ldots,w_k(r)$ by taking the first $r_k$ terms which are not among
    \begin{center}
        $\begin{array}{ccc}
             1 & \cdots & k-1  \\
             w_0'(1) & \cdots & w_0'(r_0-1)\\
             \vdots & &\vdots \\
             w_{k-1}'(1) & \cdots & w_{k-1}'(r_{k-1}-1)
        \end{array}.$
    \end{center}
    
    \noindent We then deduce
    $$\sum_{k=1}^r1\cdots k\cdot\pi_k(TR(w))=\sum_{k=1}^r\sum_{\substack{r_0+\cdots+r_d=r+d-k+1\\ 1\leq r_i\leq r}} \left|\begin{array}{ccc}
         1 & \cdots & k \\
         w_0'(1) & \cdots & w_0'(r_0)\\
         \vdots & & \vdots\\
         w_d'(1) & \cdots & w_d'(r_d)\\
    \end{array}\right.$$

    \noindent which proves the lemma.
\end{proof}

\begin{lm}\label{surj}
    Let $X$ be a totally ordered finite set and $T\in\mathcal{PRT}(X)$ with $|T|\geq 3$. We let $b_T$ to be the number of vertices in the first branch of $T$ (without the root).
    \begin{itemize}
        \item If $b_T\geq 2$, then $TR(\mu_T)=0$.
        \item If $b_T=1$, then, if we denote by $r$ the root of $T$ and $s$ the second element of $V_T$, then there exists ${u}_{T}\in\chi(X\setminus\{r\})$ such that $TR(\mu_T)=rs\cdot {u}_{T}$.
    \end{itemize}
\end{lm}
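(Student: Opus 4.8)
The plan is to argue by induction on $|T|$. Since $TR$ is a morphism of symmetric operads (Proposition \ref{tablered}) and the elements $\mu_T$ of Construction \ref{consmu} are natural with respect to relabelings of the vertex set, I would first apply the permutation $\sigma_T$ to reduce to the case where $T$ is canonical; then $\sigma_T=id$, the root $r$ is the minimal vertex, and $s$ is the second element of $V_T$. The base case $|T|=3$ is read off Example \ref{exmu}: for the path one has $\mu_T=0$, which settles the subcase $b_T=2$, while for the corolla a direct table reduction of $(123,132)$ collapses, after imposing the caesura constraints, to the single surjection $1232=rs\cdot u_T$ with $u_T=232\in\chi(\{2,3\})$.

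For $|T|=n\geq 4$ I would apply $TR$ to the defining identity of Construction \ref{consmu}, suppressing the operadic suspension $\Lambda$, which only contributes signs. Since $T$ is canonical we have $\sigma_T=id$, so $TR$ commutes with the partial compositions and Lemma \ref{indmu} applies directly, giving
\[ TR(\mu_T)=-\sum_{k=1}^{n}1\cdots k\cdot\pi_k(TR(v)),\qquad TR(v)=\sum_{S\subset T}TR(\mu_{T/S})\circ_S TR(\mu_S). \]
The first key step is to show that every surjection occurring in $TR(v)$ begins with the root value $1$. Indeed, whichever factor supplies the first slot of a composite $TR(\mu_{T/S})\circ_S TR(\mu_S)$ --- namely $TR(\mu_S)$ when $r\in S$ (so that $S$ is the root of $T/S$), and $TR(\mu_{T/S})$ otherwise --- begins with its own root, which equals $1$ by the induction hypothesis, with $\mu_{\bullet}=0$ and $\mu_{\mathrm{edge}}$ the transposition serving as the degenerate base inputs. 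Consequently the $k=1$ term $1\cdot TR(v)$ is everywhere degenerate and vanishes, while in every surviving term with $k\geq 2$ the value $1$ occurs exactly once (this is forced by $\pi_k\neq 0$) and in the first slot, so that the surjection begins with $1,2=r,s$.

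This already disposes of the case $b_T=1$: the expression $TR(\mu_T)$ is supported on non-degenerate surjections whose first value is $r$, occurring exactly once, and whose second value is $s$; deleting the leading $r$ produces an element $u_T\in\chi(X\setminus\{r\})$ with $TR(\mu_T)=rs\cdot u_T$ (possibly zero).

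The substantive point is the vanishing when $b_T\geq 2$, which I expect to be the main obstacle. Here I would analyze, through the induction hypothesis, the second value $a$ of each composite: when the subtree $S$ (respectively the contraction $T/S$) still carries $s$ as the leftmost child of the root one finds $a=s=2$, and such terms are annihilated by the degeneracy $1,2,2,\dots$ produced by the $k=2$ prepend; the remaining terms are precisely those in which $S$ displaces $s$ from the leftmost-child position, and these must be shown to cancel in pairs. The delicate part is that this pairing has to respect both the operadic associativity relations --- reminiscent of the $S\leftrightarrow U$ exchange of nested and disjoint subtrees used in the proof of Theorem \ref{mu}, where the odd degrees $|\Lambda\mu_S|=|\Lambda\mu_U|=-1$ furnish opposite signs --- and the degeneracy relations of $\chi$, so that surviving surjections cancel termwise. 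A convenient reorganization that I would attempt is to reindex the surviving sum, after dropping the $k=1$ contribution, as \emph{prepend $r$ to the analogous homotopy construction carried out on the branch forest} $\{2,\dots,n\}$; from this viewpoint the hypothesis $b_T\geq 2$, i.e.\ that $s$ has a descendant, forces a degeneracy at the very first step of the reduced construction and annihilates the whole expression.
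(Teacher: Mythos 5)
Your skeleton (reduction to canonical trees, induction on $|T|$, base case from Example \ref{exmu}, application of Lemma \ref{indmu}, and the observation that the $k=1$ term is degenerate because every surjection in $TR(\mu_{T/S})\circ_S TR(\mu_S)$ begins with the root value $1$) matches the paper. But both substantive cases have gaps.

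For $b_T=1$, your claim that the result "already" follows because every surviving term begins with $1,2$ is not enough. A term $1\,2\,3\cdots k\cdot\pi_k(\cdots)$ with $k\geq 3$ does begin with $r,s$, but in it the value $2$ occurs \emph{exactly once} (forced by $\pi_k\neq 0$), whereas a surjection of the form $rs\cdot u_T$ with $u_T\in\chi(X\setminus\{r\})$ must contain $s=2$ at least twice, once as the second entry and once inside the surjection $u_T$, which is onto $X\setminus\{r\}$. So the $k\geq 3$ terms are not of the required shape and must be shown to vanish; this is exactly the content of the paper's three-way case analysis ($|S|,|T/S|\neq 2$; $|S|=2$; $|T/S|=2$), which you omit entirely. (Your base-case computation also betrays this confusion: $TR(123,132)=\pm 1232$ gives $u_T=32$, not $232$; with $u_T=232$ the word $rs\cdot u_T=12232$ is degenerate and of the wrong length.)

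For $b_T\geq 2$ you propose a pairwise cancellation of the surviving terms, modelled on the $S\leftrightarrow U$ exchange in Theorem \ref{mu}, and then a reindexing over the "branch forest"; you flag this as the main obstacle and do not carry it out. This is not how the argument goes, and I see no reason the proposed pairing exists at the level of table-reduced surjections. The paper's proof is term-by-term: for each subtree $S$ and each $k\geq 2$, the summand $1\cdots k\cdot\pi_k(TR(\mu_{T/S})\circ_S TR(\mu_S))$ vanishes individually. Either $b_S\geq 2$ or $b_{T/S}\geq 2$ and the induction hypothesis kills the factor outright; or $b_S=b_{T/S}=1$ and one checks three positions for $r(S)$: if $r(S)$ lies on the first branch then $S$ is the whole first branch and the first permutation of $\mu_{T/S}\circ_S\mu_S$ is the identity, so $h_{\mathcal E}^{id}$ annihilates it; if $r(S)=1$ then the composite is $12\cdot u_S\cdot 3\cdot u_{T/S}$ with $2$ recurring inside $u_S$, so $\pi_k=0$ for $k\geq 3$ and the $k=2$ term produces the degenerate prefix $122$; the third position is incompatible with $b_T\geq 2$. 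No cancellation between distinct summands is needed, so the signs and associativity bookkeeping you worry about never enter.
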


\begin{proof}
   It is sufficient to prove the lemma for a canonical tree $T\in\chi(1<\cdots<|T|)$. We prove the lemma by induction on $|T|$. If $|T|=3$, then Example \ref{exmu} implies that that the assertion of the lemma is true. We now suppose that $|T|\geq 4$. Suppose first that $b_T\geq 2$.  By Lemma \ref{indmu},
    $$TR(\mu_T)=-\sum_{k=2}^{|T|}\sum_{S\subset T}\pm1\cdots k\cdot\pi_k(TR(\mu_{T/S})\circ_STR(\mu_{S})).$$

    \noindent Note that the sum begins at $k=2$, since that, by an immediate induction, the elements $TR(\mu_T)$'s begins at $1$. Let $S\subset T$. Our goal is to prove that, for every $2\leq k\leq|T|$,
    $$1\cdots k\cdot \pi_k(TR(\mu_{T/S})\circ_S TR(\mu_{S}))=0$$

    \noindent We distinguish several cases. If either $b_S\geq 2$ or $b_{T/S}\geq 2$, then the identity holds by induction hypothesis. Suppose now that $b_S=b_{T/S}=1$. Since $b_T\geq 2$, we have these different cases.
    
    \begin{itemize}
        \item If $r(S)$ is one of the vertex of the first branch of $T$, then, because $b_{T/S}=1$, the tree $S$ is the full first branch of $T$. In this situation, the first permutation occurring in $\mu_{T/S}\circ_S\mu_{S}$ is the identity permutation, so that taking the image of this element under $h_\mathcal{E}^{id}$ gives $0$.
        \item If $r(S)=1$, since $b_S=b_{T/S}=1$, we have $b_T=2$, and the second vertex of $S$ is $2$. Then, by induction hypothesis,
        $$TR(\mu_{S})=12\cdot u_{S};$$
        $$TR(\mu_{T/S})=S3\cdot u_{T/S},$$

        \noindent where $u_{S}\in\chi(V_S\setminus\{1\})$ and $u_{T/S}\in\chi(V_{T/S}\setminus\{S\})$. We thus have
        $$TR(\mu_{T/S})\circ_S TR(\mu_{S})=12\cdot u_{S}\cdot 3\cdot u_{T/S}.$$

        \noindent Since $2$ occurs in the surjection $u_{S}$, taking the image of such an element under $\pi_k$ will gives $0$ for every $k\geq 3$. If $k=2$, then the corresponding term is
        $$12\cdot\pi_2(12\cdot u_{S}\cdot 3\cdot u_{T/S})=122\cdot u_{S}\cdot 3\cdot u_{T/S}=0.$$

        \item If $r(S)$ is neither $1$ nor an element of the first branch of $T$, then we cannot have $b_{T/S}=1$, since we have supposed $b_T\geq 2$.
    \end{itemize}
    
    \noindent This concludes the case $b_T\geq 2$. We now suppose that $b_T=1$. We use again the identity up to signs
    $$TR(\mu_T)=-\sum_{k=2}^{|T|}\sum_{S\subset T}\pm 1\cdots k\cdot\pi_k(TR(\mu_{T/S})\circ_S TR(\mu_{S})),$$

    \noindent given by Lemma \ref{indmu}. Our goal is to prove that the terms of this sum with $k\geq 3$ are $0$. Let $S\subset T$ be such that $b_S=b_{T/S}=1$. We have three cases.

\begin{itemize}
    \item If $|S|,|T/S|\neq 2$, then, since $b_T=1$, we cannot have $r(S)=2$ so that $2$ is the second occurrence of either the surjection $TR(\mu_{S})$ or the surjection $TR(\mu_{T/S})$. By induction hypothesis, in any case, taking the image of $\pi_k$ for every $k\geq 3$ of the corresponding element gives $0$.

    \item Suppose now that $|S|=2$ and $|T/S|\neq 2$. If $S$ does not contain $2$, then the above argument gives a resulting element equal to $0$ in the sum, for $k\geq 3$. Else, we have $r(S)=1$ so that $TR(\mu_{S})=12$. We thus have, by induction hypothesis, that $TR(\mu_{T/S})\circ_STR(\mu_{S})$ is of the form $123 \cdot {u}_{T/S}$ where $u_{T/S}=0$ or ${u}_{T/S}\in\chi(V_{T/S}\setminus\{S\})$. The image of this element under $\pi_k$ is $0$ for every $k\geq 4$. If $k=3$, then we have $123\cdot\pi_3(123\cdot u_{T/S})=1233\cdot\pi_3(u_{T/S})=0$.

    \item Suppose now that $|T/S|=2$ and $|S|\neq 2$. Because $b_T=1$, we have $r(S)=1$. We then obtain that $TR(\mu_{T/S})=S\alpha$ for some $1\leq\alpha\leq |T|$. Therefore, by induction hypothesis, the composite $TR(\mu_{T/S})\circ_S TR(\mu_{S})$ is given by $1\Sigma\cdot u_{S}\cdot\alpha$, where $\Sigma$ is the second vertex of $S$. If $\alpha\neq 2$, then $\Sigma=2$ and $u_{S}$ contains an occurrence of $2$. Taking the image of such element under $\pi_k$ for every $k\geq 3$ gives $0$. If $\alpha=2$, then $\Sigma=3$. The image under $\pi_k$ of the resulting composite gives $0$ for every $k\geq 4$. If $k=3$, then the resulting term in the sum is $123\cdot\pi_3(13\cdot{u}_{S}\cdot2)=1233\cdot\pi_3({u}_{S})=0$.
\end{itemize}

    The lemma is proved.
\end{proof}

This lemma allows us to compute $\partial^0$.

\begin{lm}\label{firstdiff}
    We have the following identity:
    $$\partial^0(\Sigma^{-1} \underline{0})=
    \begin{tikzpicture}[baseline={([yshift=-.5ex]current bounding box.center)},scale=1]
    \node[scale=1] (i) at (0,0) {$\Sigma^{-1} \underline{0}$};
    \node[scale=1] (2) at (0,1) 
    {$\Sigma^{-1} \underline{0}$};
    \draw (i) -- (2);
    \end{tikzpicture}.$$
\end{lm}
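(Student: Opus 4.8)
The plan is to specialize the generating function defining $\partial^E$ to the case $E=N^*(\Delta^0)=\mathbb{K}$ and to observe that all but one term vanishes for degree reasons. First I would record that $E^\vee=N_*(\Delta^0)=\mathbb{K}\,\underline{0}$ is concentrated in degree $0$, so that every tensor power $N_*(\Delta^0)^{\otimes |T|}$ is concentrated in degree $0$ as well. Since $\partial^0$ is a derivation of $\widehat{\mathcal{B}race}(\Sigma^{-1}N_*(\Delta^0))$, it is determined by its restriction to the generator $\Sigma^{-1}\underline{0}$, which is read off from the generating function $\partial^0=\sum_{T\ \mathrm{canonical}}T\otimes\Lambda\mu_T^{N_*(\Delta^0)}$.

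Next I would run the degree count. Recall from the proof of Theorem \ref{mu} that $\mu_T\in\mathcal{E}(|T|)_{|T|-2}$ whenever $|T|\geq 2$, while $\mu_T=0$ when $|T|=1$ by Construction \ref{consmu}. Hence, for $|T|\geq 2$, the induced operation $\mu_T^{N_*(\Delta^0)}\colon N_*(\Delta^0)^{\otimes |T|}\longrightarrow N_*(\Delta^0)$ has degree $|T|-2$; as its source sits in degree $0$ and its target is concentrated in degree $0$, it must vanish unless $|T|-2=0$. Combined with $\mu_T=0$ for $|T|=1$, this shows that only the canonical trees with exactly two vertices can contribute to $\partial^0(\Sigma^{-1}\underline{0})$.

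I would then identify the surviving contribution. There is a single canonical tree $T_2\in\mathcal{PRT}(2)$, namely the one consisting of one edge with the root below, and by Construction \ref{consmu} (cf. Example \ref{exmu}) we have $\mu_{T_2}=(12)$, the identity permutation regarded as a degree-$0$ generator of $\mathcal{E}(2)$. Under the morphism $\mathcal{E}\longrightarrow\mathcal{C}om$ this generator maps to the commutative product, so that the operation $\mu_{T_2}^{N_*(\Delta^0)}$ is simply multiplication on the ground field, sending $\underline{0}\otimes\underline{0}$ to $\underline{0}$. Substituting this single term into the generating function yields precisely the claimed element: the $2$-vertex tree whose root and unique leaf are both labeled $\Sigma^{-1}\underline{0}$.

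The only delicate point is the sign bookkeeping coming from the desuspension $\Sigma^{-1}$ and the operadic suspension $\Lambda$, which I would unwind to confirm that the surviving coefficient is exactly $+1$ rather than a sign; this is the main (though routine) obstacle, and it reduces to comparing degrees in $(T_2^\vee\otimes(\Sigma^{-1}N_*(\Delta^0))^{\otimes 2})_{\Sigma_2}$ together with the isomorphism $\Lambda^{-k}\Lambda^{k}\mathcal{P}\simeq\mathcal{P}$ applied to the degree-$0$ element $\underline{0}$.
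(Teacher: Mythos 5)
Your degree argument is correct and gives a valid, arguably more elementary proof than the paper's. You eliminate all trees with $|T|\geq 3$ purely by observing that $\mu_T\in\mathcal{E}(|T|)_{|T|-2}$ (which is indeed established by induction in the proof of Theorem \ref{mu}) induces a map of nonzero degree between complexes concentrated in degree $0$, hence zero. The paper instead invokes the combinatorial Lemma \ref{surj}: for a canonical tree with $|T|\geq 3$ the table reduction $TR(\mu_T)$ is either zero or a surjection in which the value $2$ occurs at least twice, so its interval-cut coevaluation on the $0$-simplex $\underline{0}$ vanishes by non-degeneracy. For this particular lemma your route is shorter and avoids the table-reduction machinery entirely; the reason the paper routes through Lemma \ref{surj} is that the same structural analysis is what drives the computations of $\partial^n$ for $n\geq 1$ (Theorem \ref{rec}, Corollaries \ref{diff1} and \ref{calculdiff}), where $N_*(\Delta^n)$ is no longer concentrated in a single degree and the degree count alone no longer suffices. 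Two small points of hygiene: the relevant structure on $N_*(\Delta^0)$ is the $\chi$-\emph{coalgebra} structure, so the operation attached to $\mu_T$ is a coaction $N_*(\Delta^0)\to N_*(\Delta^0)^{\otimes |T|}$ rather than the map in the direction you wrote — this does not affect your degree count, but it is worth stating in the correct variance; and the identification of the surviving $|T|=2$ term as the Alexander–Whitney diagonal $\underline{0}\mapsto\underline{0}\otimes\underline{0}$ with coefficient $+1$ is immediate here precisely because everything sits in degree $0$, so the sign bookkeeping you defer is genuinely trivial.
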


\begin{proof}
    For every canonical $n$-tree $T$ with $n\geq 3$, we have that $\Lambda\mu_T(\Sigma^{-1} \underline{0})=0$. Indeed, by Lemma \ref{surj}, in this case, the number $2$ occurs at least two times in the surjection $TR(\mu_T)$, so that its coevaluation on $\underline{0}$ gives $0$ by definition of the interval cuts operations. There only remains the case $n=2$. The associated canonical tree is $T=\begin{tikzpicture}[baseline={([yshift=-.5ex]current bounding box.center)},scale=0.5]
    \node[draw,circle,scale=0.5] (i) at (0,0) {$1$};
    \node[draw,circle,scale=0.5] (1) at (0,1) {$2$};
    \draw (i) -- (1);
    \end{tikzpicture}$, which gives $TR(\mu_T)=(12)$ by definition. By definition of the interval cuts operations, the coevaluation of the surjection $(12)$ on $\underline{0}$ is $\underline{0}\otimes\underline{0}$, which gives the result.
\end{proof}

For every $n,k\geq 0$, let $\partial_{(k)}^n$ be the composite of $\partial^n$ with the projection on trees with $k+1$ vertices. Our goal is to compute $\partial_{(k)}^n$ by induction on $n,k\geq 0$. Recall, from after Proposition \ref{homonstar}, the two morphisms $\varphi^0_n:N_*(\Delta^n)\longrightarrow N_*(\Delta^n)$ and $h_n^0:N_*(\Delta^n)\longrightarrow N_{*+1}(\Delta^n)$, which satisfy
$$d h_n^0+h_n^0d=id_ {N_*(\Delta^n)}-\varphi_n^0.$$

\noindent We keep the notations $\varphi_n^0$ and $h_n^0$ for the two induced morphisms on $\Sigma ^{-1}N_*(\Delta^n)$, which also satisfy the same homotopy relation. We extend such morphisms on the tensor algebra of $\Sigma ^{-1}N_*(\Delta^n)$ by
$$(\varphi_n^0)(x_1\otimes\cdots\otimes x_p)=\varphi_n^0(x_1)\otimes\cdots\otimes\varphi_n^0(x_p);$$
$$(h_n^0)(x_1\otimes\cdots\otimes x_p)=\sum_{i=1}^p\pm \varphi_n^0(x_1)\otimes \cdots\otimes \varphi_n^0(x_{i-1})\otimes h_n^0(x_i)\otimes x_{i+1}\otimes \cdots\otimes x_p,$$

\noindent for every $x_1,\ldots,x_p\in\Sigma ^{-1}N_*(\Delta^n)$. We extend $\varphi_n^0$ and $h_n^0$ on $\mathcal{B}race(\Sigma ^{-1}N_*(\Delta^n))$ by setting, for every canonical tree $T$ and $X\in(\Sigma ^{-1}N_*(\Delta^n))^{\otimes |T|}$,
$$\Phi_n^0(T\otimes X)=T\otimes\varphi_n^0(X);$$
$$H_n^0(T\otimes X)=T\otimes h_n^0(X).$$

\noindent These definitions are extended to any tree $T$ by symmetry. We then obtain the homotopy relation:
\medskip
$$\partial_{(0)}^n H_n^0+H_n^0\partial_{(0)}^n=id-\Phi_n^0.$$
\medskip

\begin{thm}\label{rec}
    Let $n,k\geq 1$. Then
    \medskip
    $$\partial_{(k)}^n(\Sigma^{-1} \underline{0\cdots n})=-H_n^0\left(\sum_{\substack{p+q=k\\ p\neq 0}}\partial_{(p)}^n\partial_{(q)}^n(\Sigma^{-1} \underline{0\cdots n})\right).$$
    \medskip

    In particular, we have the induction relation:
    \medskip
    $$\partial_{(k)}^n(\Sigma^{-1} \underline{0\cdots n})=H_n^0d^0\partial_{(k)}^{n-1}(\Sigma^{-1} \underline{0\cdots (n-1)})-H_n^0\left(\sum_{\substack{p+q=k\\ p,q\neq 0}}\partial_{(p)}^n\partial_{(q)}^n(\Sigma^{-1} \underline{0\cdots n})\right).$$
    \medskip
\end{thm}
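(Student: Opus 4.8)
The plan is to read the recursion off three facts: the square-zero relation $(\partial^n)^2=0$, which is the translation into the free brace algebra $\widehat{\mathcal{B}race}(\Sigma^{-1}N_*(\Delta^n))$ of the Maurer--Cartan identity of Theorem \ref{mu}; the contracting homotopy $\partial_{(0)}^nH_n^0+H_n^0\partial_{(0)}^n=\mathrm{id}-\Phi_n^0$ recorded just above; and the interval-cut description of the coaction on the top cell furnished by Lemma \ref{surj}. First I would decompose $\partial^n=\sum_{k\ge 0}\partial_{(k)}^n$ by the number of vertices created, observing that $\partial_{(0)}^n$ is precisely the internal differential of $\Sigma^{-1}N_*(\Delta^n)$ extended as a derivation (the vertex-preserving part of the twisting terms vanishes because $\mu=0$ on one-vertex trees, by Construction \ref{consmu}), while $\partial_{(p)}^n$ raises the vertex count by $p$. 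Since $(\partial^n)^2=0$ preserves this grading, collecting the contributions that raise the vertex count by $k$ gives $\sum_{p+q=k}\partial_{(p)}^n\partial_{(q)}^n=0$, and isolating the term $(p,q)=(0,k)$ yields
$$\partial_{(0)}^n\partial_{(k)}^n=-\sum_{\substack{p+q=k\\ p\neq 0}}\partial_{(p)}^n\partial_{(q)}^n.$$

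Write $\xi=\partial_{(k)}^n(\Sigma^{-1}\underline{0\cdots n})$. The core of the argument is the identity $\xi=H_n^0\partial_{(0)}^n\xi$, from which the first displayed formula follows at once via the equation above. Applying the homotopy relation to $\xi$ reduces this identity to the two vanishings $\Phi_n^0\xi=0$ and $\partial_{(0)}^nH_n^0\xi=0$. The first is a degree count: $\Phi_n^0$ applies $\varphi_n^0$ to each tensor factor, and $\varphi_n^0$ kills every chain of positive degree, so $\Phi_n^0\xi$ could be nonzero only if all $k+1$ factors were vertices, forcing the total degree $-(k+1)$ to equal $\deg\xi=n-2$; this is impossible when $n,k\ge 1$. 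For the second I would use Lemma \ref{surj}: only trees with $b_T=1$ contribute, with $TR(\mu_T)=rs\cdot u_T$ beginning at the root, and the interval-cut coaction of such a surjection on $\underline{0\cdots n}$ produces tensors in which the leading factors all retain the base vertex $0$ (consecutive blocks created at a caesura share their boundary vertex). Since $h_n^0$ vanishes on any chain containing $0$ and $\varphi_n^0$ vanishes on any positive-degree chain, every summand of $H_n^0\xi$ is annihilated; thus $H_n^0\xi=0$, and a fortiori $\partial_{(0)}^nH_n^0\xi=0$.

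To obtain the induction relation I would separate the summand $q=0$ in the first formula. That summand is $-H_n^0\partial_{(k)}^n\partial_{(0)}^n(\Sigma^{-1}\underline{0\cdots n})$, and since $\partial_{(0)}^n(\Sigma^{-1}\underline{0\cdots n})$ is, up to sign, $\Sigma^{-1}d(\underline{0\cdots n})$, which expands as an alternating sum over the faces of $\underline{0\cdots n}$, naturality of $\partial^n$ with respect to the face inclusions $N_*(\Delta^{n-1})\to N_*(\Delta^n)$ rewrites $\partial_{(k)}^n$ on each face as the pushforward of $\partial_{(k)}^{n-1}$. After applying $H_n^0$ only the face $\underline{0\cdots(n-1)}$, the one still containing the base vertex $0$, survives---the others being annihilated exactly as in the previous paragraph---and reindexing to $\Delta^{n-1}$ produces the term $H_n^0 d^0\partial_{(k)}^{n-1}(\Sigma^{-1}\underline{0\cdots(n-1)})$, while the summands with $p,q\neq 0$ are carried over unchanged.

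I expect the main obstacle to be the vanishing $\partial_{(0)}^nH_n^0\xi=0$ (equivalently $H_n^0\xi=0$): this is the one step where one must control precisely how the interval-cut operations distribute the base vertex $0$ among all the tensor factors, uniformly in $n$ and $k$ rather than by the case analysis used for small trees in Lemma \ref{surj}. The analogous bookkeeping in the induction relation---deciding which faces of $\underline{0\cdots n}$ are killed by $H_n^0$---is of the same nature but lighter, since it concerns only the single application of $\partial_{(0)}^n$ to the top cell.
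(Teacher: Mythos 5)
Your architecture coincides with the paper's: both start from $\sum_{p+q=k}\partial_{(p)}^n\partial_{(q)}^n=0$, isolate the $(0,k)$ term, apply $H_n^0$ together with the homotopy relation, and reduce everything to the two vanishings $\Phi_n^0\partial_{(k)}^n(\Sigma^{-1}\underline{0\cdots n})=0$ (your degree count is a fine substitute for the paper's appeal to the simplicial compatibility of $\partial^n$) and $H_n^0\partial_{(k)}^n(\Sigma^{-1}\underline{0\cdots n})=0$. The gap sits in the second vanishing, exactly where you anticipated trouble. Your mechanism --- ``the leading factors all retain the base vertex $0$'' --- correctly kills the homotopy terms $i=1$ and $i=2$ (the first factor is the interval $[0,a_1]$, and if it equals the vertex $\underline{0}$ then the second factor begins with $[0,a_2]$), but for $i\geq 3$ it is neither justified nor true as stated: the first interval assigned to the value $i$ may lie beyond intervals assigned to values larger than $i$, so knowing that factors $1,\ldots,i-1$ are all $\underline{0}$ does not force factor $i$ to contain $0$. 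What actually annihilates every term with $i\geq 3$ is that the second tensor factor can never be a vertex: since $TR(\mu_T)=12\cdot u_T$ with $u_T$ surjecting onto $\llbracket 2,|T|\rrbracket$, the value $2$ occurs at least twice, so the second factor is a concatenation of at least two intervals and hence has length at least $2$, and $\varphi_n^0$ kills it. This is precisely the content of the phrase ``$\underline{x}$ has a length which is at least $2$'' in the paper's proof, and it is the ingredient missing from your sketch.

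The second problem is in the induction relation, where you identify the surviving face backwards. The faces of $\underline{0\cdots n}$ that still contain the vertex $0$ --- including $\underline{0\cdots(n-1)}$ --- are exactly the ones that die: for $i\geq 1$ the coface $d^i$ fixes the basepoint, so $H_n^0d^i=d^iH_{n-1}^0$, and $\partial_{(k)}^{n-1}(\Sigma^{-1}\underline{0\cdots(n-1)})$ lies in the image of $H_{n-1}^0$ with $(H_{n-1}^0)^2=0$ (equivalently, these faces start at $0$ and die by the same interval-cut argument as the top cell). The unique survivor is $\underline{1\cdots n}=d^0(\underline{0\cdots(n-1)})$, the one face \emph{not} containing $0$, because $d^0$ moves the basepoint and fails to intertwine the homotopies. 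You do write the correct term $H_n^0d^0\partial_{(k)}^{n-1}(\Sigma^{-1}\underline{0\cdots(n-1)})$, but the reason you give would instead select $H_n^0d^n\partial_{(k)}^{n-1}(\Sigma^{-1}\underline{0\cdots(n-1)})$, which vanishes.
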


Starting from Lemma \ref{firstdiff}, this theorem allows us to compute the elements $\partial_{(k)}^n(\Sigma^{-1}\underline{0\cdots n})$ by induction on $n,k\geq 0$.\\

\begin{proof}
    We have
    \medskip
    $$\partial_{(0)}^n\partial_{(k)}^n(\Sigma^{-1} \underline{0\cdots n})=-\sum_{\substack{p+q=k\\p\neq 0}}\partial_{(p)}^n\partial_{(q)}^n(\Sigma^{-1} \underline{0\cdots n}).$$
    \medskip

\noindent Applying $H_n^0$ on this equality gives
\begin{multline*}
    \partial_{(k)}^n(\Sigma^{-1} \underline{0\cdots n})=-H_n^0\left(\sum_{\substack{p+q=k\\ p\neq 0}}\partial_{(p)}^n\partial_{(q)}^n(\Sigma^{-1} \underline{0\cdots n})\right)\\+\Phi_n^0\left(\sum_{\substack{p+q=k\\ p\neq 0}}\partial_{(p)}^n\partial_{(q)}^n(\Sigma^{-1} \underline{0\cdots n})\right)-\partial_{(0)}^n H_n^0\partial_{(k)}^n(\Sigma^{-1} \underline{0\cdots n})
\end{multline*}

\noindent The second term on the right hand-side vanishes, since the differentials $\partial^n$ are compatible with the simplicial structure of $\widehat{\mathcal{B}race}(\Sigma ^{-1}N_*(\Delta^\bullet))$ defined tensor-wise and $\Phi_n^0(\Sigma^{-1} \underline{0\cdots n})=0$ since $n\geq 1$. We now deal with the last term. Let $T$ be a canonical tree with $|T|=k+1$ and $b_T=1$. By Lemma \ref{surj}, there exists ${u}_{T}\in\chi(\llbracket 2,k+1\rrbracket)$ such that
$$TR(\mu_T)=12\cdot{u}_{T}.$$

\noindent Then, up to a sign, the elements occurring in each terms of $TR(\mu_T)(\underline{0\cdots n})$ are on the form
$$\underline{0\cdots k}\otimes \underline{x}\otimes X,$$

\noindent where $\underline{x}\in N_*(\Delta^n)$ has a length which is at least $2$, and $X$ is a tensor product of elements in $N_*(\Delta^n)$. The image of such elements by $h_n^0$ is $0$. We thus deduce that $H_n^0\partial_{(k)}^n(\Sigma^{-1} \underline{0\cdots n})=0$, which proves the first formula.\\

 We now prove the induction relation. For every $n,k\geq 1$, we have
$$\partial_{(k)}^n(\Sigma^{-1} \underline{0\cdots n})=-H_n^0\partial_{(k)}^n\partial_{(0)}^n(\Sigma^{-1} \underline{0\cdots n})-H_n^0\left(\sum_{\substack{p+q=k\\ p,q\neq 0}}\partial_{(p)}^n\partial_{(q)}^n(\Sigma^{-1} \underline{0\cdots n})\right).$$

\noindent We only need to compute the first term. We have
$$\partial_{(k)}^n\partial_{(0)}^n(\Sigma^{-1} \underline{0\cdots n})=-\sum_{i=0}^n(-1)^id^i\partial_{(k)}^{n-1}(\Sigma^{-1} \underline{0\cdots (n-1)}).$$

\noindent For every $1\leq i\leq n$ we have $H_n^0 d^i=d^i H_{n-1}^0$. From the first formula that we have proved, we deduce that the element $\partial_{(k)}^{n-1}(\Sigma^{-1}\underline{0\cdots (n-1)})$ is in the image of $H_{n-1}^0$. Since $H_{n-1}^0H_{n-1}^0=0$, we obtain at the end
$$H_n^0\partial_{(k)}^n\partial_{(0)}^n(\Sigma^{-1} \underline{0\cdots n})=H_n^0d^0\partial_{(k)}^{n-1}(\Sigma^{-1}\underline{0\cdots (n-1)}),$$

\noindent which proves the theorem.
\end{proof}

% \begin{remarque}
%     By the proof of the second assertion of Lemma \ref{surj}, we can see that instead of applying $H_n^0$, it is sufficient to apply
%     $$h_n^0\otimes id\otimes ...\otimes id + \varphi_n^0\otimes h_n^0\otimes id \otimes ...\otimes id,$$

%     \noindent as the other terms will vanish.
% \end{remarque}

\begin{cor}\label{diff1}
    Let $n\geq 0$. Then
    $$\partial^n_{(1)}(\Sigma^{-1} \underline{0\cdots n})=
    \sum_{k=0}^n(-1)^k\begin{tikzpicture}[baseline={([yshift=-.5ex]current bounding box.center)},scale=1]
    \node[scale=1] (i) at (0,0) {$\Sigma^{-1} \underline{0\cdots k}$};
    \node[scale=1] (2) at (0,1) 
    {$\Sigma^{-1} \underline{k\cdots n}$};
    \draw (i) -- (2);
    \end{tikzpicture}.$$
\end{cor}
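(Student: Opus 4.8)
The plan is to argue by induction on $n$, specializing to $k=1$ throughout the machinery of Theorem \ref{rec}. The base case $n=0$ is exactly Lemma \ref{firstdiff}: for $n=0$ the right-hand side of the claimed formula collapses to the single two-vertex tree carrying $\Sigma^{-1}\underline{0}$ at the root and $\Sigma^{-1}\underline{0}$ at the leaf. For the inductive step I would invoke the induction relation of Theorem \ref{rec} with $k=1$; the correction term, indexed by pairs $p+q=1$ with $p,q\neq 0$, is an empty sum and so drops out, leaving the clean recursion
$$\partial^n_{(1)}(\Sigma^{-1}\underline{0\cdots n})=H_n^0 d^0\,\partial^{n-1}_{(1)}(\Sigma^{-1}\underline{0\cdots(n-1)}).$$
Everything then reduces to evaluating $H_n^0 d^0$ on the inductive expression for $\partial^{n-1}_{(1)}$.

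I would carry out this evaluation termwise. The coface $d^0$, induced by $\delta^0\colon[n-1]\hookrightarrow[n]$, shifts every label upward by one, so it sends the generic summand, the two-vertex tree with root $\Sigma^{-1}\underline{0\cdots k}$ and leaf $\Sigma^{-1}\underline{k\cdots(n-1)}$, to the tree with root $\Sigma^{-1}\underline{1\cdots(k+1)}$ and leaf $\Sigma^{-1}\underline{(k+1)\cdots n}$. Next I would use the explicit description of $H_n^0$: on a two-vertex generator it applies $h_n^0$ to the length-two tensor of labels, which expands as $h_n^0(x_1)\otimes x_2$ plus a term $\pm\,\varphi_n^0(x_1)\otimes h_n^0(x_2)$. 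For the first summand, since the root label $\underline{1\cdots(k+1)}$ does not contain the vertex $0$, the homotopy $h_n^0$ inserts $0$ at the front and yields $\underline{0\cdots(k+1)}$; after reindexing $j=k+1$ this produces precisely the target terms indexed by $j=1,\dots,n$. For the second summand, $\varphi_n^0$ of the root label is nonzero only when that label is a single vertex, i.e. when $k=0$, in which case $\varphi_n^0(\underline{1})=\underline{0}$ while $h_n^0$ turns the leaf $\underline{1\cdots n}$ into $\underline{0\cdots n}$; this contributes exactly the boundary term $j=0$. Assembling both summands recovers the full alternating sum $\sum_{k=0}^n(-1)^k$ of two-vertex trees.

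The main obstacle will be the sign bookkeeping. Tracking the Koszul signs coming from the suspensions $\Sigma^{-1}$, from the $\pm$ in the Leibniz-type expansion of $h_n^0$ on tensors, and from the insertion rule for $h_n^0$, one must verify that the reindexed first summand acquires sign $(-1)^j$ rather than $(-1)^{j-1}$, and that the $k=0$ contribution of the second summand carries a $+$ sign, so that the two families of terms fit together into the stated alternating sum. I expect this to be the only delicate point; once the recursion has been reduced to $H_n^0 d^0$ through the vanishing of the quadratic correction at $k=1$, the remainder is a direct substitution.
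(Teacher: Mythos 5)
Your proposal is correct and follows essentially the same route as the paper: the base case is Lemma \ref{firstdiff}, the inductive step applies the induction relation of Theorem \ref{rec} with $k=1$ (where the quadratic correction sum is empty), and the remaining evaluation of $H_n^0 d^0$ termwise — with the $h_n^0(x_1)\otimes x_2$ part producing the terms $j\geq 1$ and the $\varphi_n^0(x_1)\otimes h_n^0(x_2)$ part surviving only when the shifted root label is the single vertex $\underline{1}$, yielding the $j=0$ boundary term — is exactly the computation the paper performs, only spelled out in slightly more detail.
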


\begin{proof}
    We prove the corollary by induction on $n\geq 0$. Lemma \ref{firstdiff} proves the case $n=0$. We now suppose that $n\geq 1$. Theorem \ref{rec} gives
    $$\partial_{(1)}^n(\Sigma^{-1}\underline{0\cdots n})=H_n^0d^0\partial_{(1)}^{n-1}(\Sigma^{-1}\underline{0\cdots (n-1)}).$$

    \noindent By induction hypothesis on $n$, we have
    $$d^0\partial^{n-1}_{(1)}(\Sigma^{-1} \underline{0\cdots (n-1)})=
    -\sum_{k=1}^{n}(-1)^k\begin{tikzpicture}[baseline={([yshift=-.5ex]current bounding box.center)},scale=1]
    \node[scale=1] (i) at (0,0) {$\Sigma^{-1} \underline{1\cdots k}$};
    \node[scale=1] (2) at (0,1) 
    {$\Sigma^{-1} \underline{k\cdots n}$};
    \draw (i) -- (2);
    \end{tikzpicture}.$$

    \noindent This gives
    $$\partial^{n}_{(1)}(\Sigma^{-1} \underline{0\cdots n})=
    \sum_{k=1}^{n}(-1)^k\begin{tikzpicture}[baseline={([yshift=-.5ex]current bounding box.center)},scale=1]
    \node[scale=1] (i) at (0,0) {$\Sigma^{-1} \underline{0\cdots k}$};
    \node[scale=1] (2) at (0,1) 
    {$\Sigma^{-1} \underline{k\cdots n}$};
    \draw (i) -- (2);
    \end{tikzpicture}+\begin{tikzpicture}[baseline={([yshift=-.5ex]current bounding box.center)},scale=1]
    \node[scale=1] (i) at (0,0) {$\Sigma^{-1} \underline{0}$};
    \node[scale=1] (2) at (0,1) 
    {$\Sigma^{-1} \underline{0\cdots n}$};
    \draw (i) -- (2);
    \end{tikzpicture},$$

    \noindent which gives the result.
\end{proof}

We can also compute the differentials $\partial^1,\partial^2$ and $\partial^3$. In the following corollary, we set
$$\Sigma^{-1}\underline{x}\overline\circledcirc\Sigma^{-1}\underline{y}=\Sigma^{-1}\underline{x}+\Sigma^{-1}\underline{y}+\sum_{k\geq 1}\begin{tikzpicture}[baseline={([yshift=-.5ex]current bounding box.center)},scale=1]
    \node (ix) at (0,0) {$\Sigma^{-1} \underline{x}$};
    \node (1x) at (-1,1) {$\Sigma^{-1} \underline{y}$};
    \node (2x) at (0,1) {$\overset{k}{\cdots}$};
    \node (3x) at (1,1) {$\Sigma^{-1} \underline{y}$};
    \draw (ix) -- (1x);
    \draw (ix) -- (3x);
    \end{tikzpicture},$$

\noindent for every degree $1$ elements $\underline{x},\underline{y}\in N_*(\Delta^n)$. Note that the operation $\overline\circledcirc$ corresponds to the circular product $\circledcirc$ defined in \cite[Remark 2.20]{moi} in the brace algebra $\widehat{\mathcal{B}race}(\Sigma^{-1}N_*(\Delta^n))$. In particular, the product $\overline\circledcirc$ is associative. This operation is reviewed in details in the beginning of $\mathsection$\ref{sec:242}. In order to write shorter formulas, we also put a weight on the arrows of our trees. We precisely set, for every $\underline{x}\in N_*(\Delta^n)$, for every tree $T_1,\ldots,T_k\in\widehat{\mathcal{B}race}(\Sigma^{-1}N_*(\Delta^n))$ and for every integers $r_1,\ldots,r_k\geq 1$,
$$\begin{tikzpicture}[baseline={([yshift=-.5ex]current bounding box.center)},scale=1]
    \node (i) at (0,0) {$\Sigma^{-1} \underline{x}$};
    \node (1) at (-1,1) {$T_1$};
    \node (2) at (0,1) {${\cdots}$};
    \node (3) at (1,1) {$T_k$};
    \draw (i) -- node[midway, draw, circle, fill=white, inner sep=1pt, scale=0.5] {$r_1$} (1);
    \draw (i) -- node[midway, draw, circle, fill=white, inner sep=1pt, scale=0.5] {$r_k$} (3);
    \end{tikzpicture}=\begin{tikzpicture}[baseline={([yshift=-.5ex]current bounding box.center)},scale=1]
    \node (i) at (0,0) {$\Sigma^{-1} \underline{x}$};
    \node (1) at (-3,1) {$T_1$};
    \node (1b) at (-2,1) {$\overset{r_1}{\cdots}$};
    \node (1bb) at (-1,1) {$T_1$};
    \node (2) at (0,1) {${\cdots}$};
    \node (3) at (1,1) {$T_k$};
    \node (3b) at (2,1) {$\overset{r_k}{\cdots}$};
    \node (3bb) at (3,1) {$T_k$};
    \draw (i) -- (1);
    \draw (i) -- (1bb);
    \draw (i) -- (3);
    \draw (i) -- (3bb);
    \end{tikzpicture}.$$

\noindent If $r_i=1$ for some $i$, we remove the weight from the arrow.\\

In the following corollary, we drop the desuspension $\Sigma^{-1}$ on basis elements of $\Sigma^{-1}N_*(\Delta^n)$.

\begin{cor}\label{calculdiff}
    We have the following formulas in $\widehat{\mathcal{B}race}(\Sigma ^{-1}N_*(\Delta^n))$:
    \begin{flalign*}
    &\indent\bullet\ \partial^1( \underline{01})= \underline{0}- \underline{1}-
    \begin{tikzpicture}[baseline={([yshift=-.5ex]current bounding box.center)},scale=1]
    \node[scale=1] (i) at (0,0) {$ \underline{01}$};
    \node[scale=1] (2) at (0,1) {$ \underline{1}$};
    \draw (i) -- (2);
    \end{tikzpicture}+\sum_{k\geq 1}\begin{tikzpicture}[baseline={([yshift=-.5ex]current bounding box.center)},scale=1]
    \node[scale=1] (i) at (0,0) {$ \underline{0}$};
    \node[scale=1] (1) at (0,1) {$ \underline{01}$};
    \draw (i) -- node[midway, draw, circle, fill=white, inner sep=1pt, scale=0.5] {$k$} (1);
    \end{tikzpicture};&
\end{flalign*}
\begin{flalign*}
    &\indent\bullet\ \partial^2( \underline{012})= \underline{02}- \underline{01}- \underline{12}+
    \begin{tikzpicture}[baseline={([yshift=-.5ex]current bounding box.center)},scale=1]
    \node[scale=1] (i) at (0,0) {$ \underline{012}$};
    \node[scale=1] (2) at (0,1) {$ \underline{2}$};
    \draw (i) -- (2);
    \end{tikzpicture}-\sum_{k\geq 1}\begin{tikzpicture}[baseline={([yshift=-.5ex]current bounding box.center)},scale=1]
    \node[scale=1] (i) at (0,0) {$ \underline{01}$};
    \node[scale=1] (1) at (0,1) {$ \underline{12}$};
    \draw (i) -- node[midway, draw, circle, fill=white, inner sep=1pt, scale=0.5] {$k$} (1);
    \end{tikzpicture}+\sum_{i,j\geq 0}\begin{tikzpicture}[baseline={([yshift=-.5ex]current bounding box.center)},scale=1]
    \node (i) at (0,0) {$ \underline{0}$};
    \node (1) at (-1.5,1) {$ \underline{02}$};
    \node (2) at (0,1) {$ \underline{012}$};
    \node (3) at (2,1) {$\underline{01}\overline\circledcirc\underline{12}$};
    \draw (i) -- node[midway, draw, circle, fill=white, inner sep=1pt, scale=0.5] {$i$} (1);
    \draw (i) -- (2);
    \draw (i) -- node[midway, draw, circle, fill=white, inner sep=1pt, scale=0.5] {$j$} (3);
    \end{tikzpicture};
\end{flalign*}
\medskip
\begin{multline*}
    \indent\bullet\ \partial^3( \underline{0123})= \underline{023}- \underline{123}+ \underline{013}-\underline{012}-
    \begin{tikzpicture}[baseline={([yshift=-.5ex]current bounding box.center)},scale=1]
    \node[scale=1] (i) at (0,0) {$ \underline{0123}$};
    \node[scale=1] (2) at (0,1) {$ \underline{3}$};
    \draw (i) -- (2);
    \end{tikzpicture}\\+\sum_{k\geq 1}\begin{tikzpicture}[baseline={([yshift=-.5ex]current bounding box.center)},scale=1]
    \node[scale=1] (i) at (0,0) {$ \underline{012}$};
    \node[scale=1] (1) at (0,1) {$ \underline{23}$};
    \draw (i) -- node[midway, draw, circle, fill=white, inner sep=1pt, scale=0.5] {$k$} (1);
    \end{tikzpicture}-\sum_{i,j\geq 0}\begin{tikzpicture}[baseline={([yshift=-.5ex]current bounding box.center)},scale=1]
    \node (i) at (0,0) {$ \underline{01}$};
    \node (1) at (-1.5,1) {$ \underline{13}$};
    \node (2) at (0,1) {$ \underline{123}$};
    \node (3) at (2,1) {$\underline{12}\overline\circledcirc\underline{23}$};
    \draw (i) -- node[midway, draw, circle, fill=white, inner sep=1pt, scale=0.5] {$i$} (1);
    \draw (i) -- (2);
    \draw (i) -- node[midway, draw, circle, fill=white, inner sep=1pt, scale=0.5] {$j$} (3);
    \end{tikzpicture}\\-\sum_{i,j,k\geq 0}\begin{tikzpicture}[baseline={([yshift=-.5ex]current bounding box.center)},scale=1]
    \node (i) at (0,0) {$ \underline{0}$};
    \node (1) at (-3.5,1.5) {$ \underline{03}$};
    \node (2) at (-2,1.5) {$ \underline{023}$};
    \node (3) at (0,1.5) {$\underline{02}\overline\circledcirc\underline{23}$};
    \node (4) at (3,1.5) {$\underline{012}\circledcirc(1+\underline{23})$};
    \node (5) at (6.5,1.5) {$\underline{01}\overline\circledcirc\underline{12}\overline\circledcirc\underline{23}$};
    \draw (i) -- node[midway, draw, circle, fill=white, inner sep=1pt, scale=0.5] {$i$} (1);
    \draw (i) -- (2);
    \draw (i) -- node[midway, draw, circle, fill=white, inner sep=1pt, scale=0.5] {$j$} (3);
    \draw (i) -- (4);
    \draw (i) -- node[midway, draw, circle, fill=white, inner sep=1pt, scale=0.5] {$k$} (5);
    \end{tikzpicture}\\+\sum_{i,j,k,l,m\geq 0}\begin{tikzpicture}[baseline={([yshift=-.5ex]current bounding box.center)},scale=1]
    \node (i) at (0,0) {$ \underline{0}$};
    \node (1) at (-3.5,1.5) {$ \underline{03}$};
    \node (2) at (-2,1.5) {$ \underline{013}$};
    \node (3) at (0,1.5) {$\underline{01}\overline\circledcirc\underline{13}$};
    \node (4) at (2,1.5) {$\underline{01}$};
    \node (4b) at (0.5,2.5) {$\underline{13}$};
    \node (4bb) at (2,2.5) {$\underline{123}$};
    \node (4bbb) at (4,2.5) {$\underline{12}\overline\circledcirc\underline{23}$};
    \node (5) at (4.5,1.5) {$\underline{01}\overline\circledcirc\underline{12}\overline\circledcirc\underline{23}$};
    \draw (i) -- node[midway, draw, circle, fill=white, inner sep=1pt, scale=0.5] {$i$} (1);
    \draw (i) -- (2);
    \draw (i) -- node[midway, draw, circle, fill=white, inner sep=1pt, scale=0.5] {$j$} (3);
    \draw (i) -- (4);
    \draw (4) -- (4bb);
    \draw (i) -- node[midway, draw, circle, fill=white, inner sep=1pt, scale=0.5] {$k$} (5);
    \draw (4) -- node[midway, draw, circle, fill=white, inner sep=1pt, scale=0.5] {$l$} (4b);
    \draw (4) -- node[midway, draw, circle, fill=white, inner sep=1pt, scale=0.5] {$m$} (4bbb);
    \end{tikzpicture}\\+\sum_{i,j,k\geq 0}\begin{tikzpicture}[baseline={([yshift=-.5ex]current bounding box.center)},scale=1]
    \node (i) at (0,0) {$ \underline{0}$};
    \node (1) at (-3.5,1.5) {$ \underline{03}$};
    \node (2) at (-2,1.5) {$ \underline{013}$};
    \node (3) at (0,1.5) {$\underline{01}\overline\circledcirc\underline{13}$};
    \node (4) at (2,1.5) {$\underline{123}$};
    \node (5) at (4.5,1.5) {$\underline{01}\overline\circledcirc\underline{12}\overline\circledcirc\underline{23}$};
    \draw (i) -- node[midway, draw, circle, fill=white, inner sep=1pt, scale=0.5] {$i$} (1);
    \draw (i) -- (2);
    \draw (i) -- node[midway, draw, circle, fill=white, inner sep=1pt, scale=0.5] {$j$} (3);
    \draw (i) -- (4);
    \draw (i) -- node[midway, draw, circle, fill=white, inner sep=1pt, scale=0.5] {$k$} (5);
    \end{tikzpicture}\\+\sum_{i,j\geq 0}\begin{tikzpicture}[baseline={([yshift=-.5ex]current bounding box.center)},scale=1]
    \node (i) at (0,0) {$ \underline{0}$};
    \node (1) at (-1.5,1) {$ \underline{03}$};
    \node (2) at (0,1) {$ \underline{0123}$};
    \node (3) at (2.5,1) {$\underline{01}\overline\circledcirc\underline{12}\overline\circledcirc\underline{23}$};
    \draw (i) -- node[midway, draw, circle, fill=white, inner sep=1pt, scale=0.5] {$i$} (1);
    \draw (i) -- (2);
    \draw (i) -- node[midway, draw, circle, fill=white, inner sep=1pt, scale=0.5] {$j$} (3);
    \end{tikzpicture}.
\end{multline*}
\medskip
\end{cor}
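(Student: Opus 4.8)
The plan is to compute, for each $n\in\{1,2,3\}$, the full differential $\partial^n(\Sigma^{-1}\underline{0\cdots n})=\sum_{k\geq 0}\partial^n_{(k)}(\Sigma^{-1}\underline{0\cdots n})$ by assembling its arity components from three inputs. First, the arity-$0$ part: by the homotopy relation $\partial^n_{(0)}H_n^0+H_n^0\partial^n_{(0)}=id-\Phi_n^0$, the operator $\partial^n_{(0)}$ is exactly the derivation extension of the internal simplicial differential, so that $\partial^n_{(0)}(\Sigma^{-1}\underline{0\cdots n})=\Sigma^{-1}\sum_{i=0}^n(-1)^i\underline{0\cdots\widehat{i}\cdots n}$ up to the desuspension sign; this supplies the linear terms $\underline{0}-\underline{1}$, $\underline{02}-\underline{01}-\underline{12}$, $\underline{023}-\underline{123}+\underline{013}-\underline{012}$. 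Second, the arity-$1$ part is given in closed form by Corollary \ref{diff1}, which produces the two-vertex trees. Third, for $k\geq 2$ I would feed the induction relation of Theorem \ref{rec}, namely $\partial^n_{(k)}=H_n^0d^0\partial^{n-1}_{(k)}-H_n^0\bigl(\sum_{p+q=k,\,p,q\neq 0}\partial^n_{(p)}\partial^n_{(q)}\bigr)$, iterating over $k$.

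For $n=1$ I would first observe that $\partial^0=\partial^0_{(1)}$ consists solely of the single corolla of Lemma \ref{firstdiff}, hence $\partial^0_{(k)}=0$ for $k\geq 2$; the induction relation then kills the term $H_1^0d^0\partial^0_{(k)}$ and leaves the purely quadratic recursion $\partial^1_{(k)}(\Sigma^{-1}\underline{01})=-H_1^0\bigl(\sum_{p+q=k}\partial^1_{(p)}\partial^1_{(q)}\bigr)$. Starting from $\partial^1_{(1)}$ and iterating, the homotopy $H_1^0$ acts by grafting a fresh $\underline{0}$-root carrying copies of $\underline{01}$, and I would prove by induction on $k$ that the outcome is exactly the corolla with root $\underline{0}$ and $k$ legs $\underline{01}$, i.e.\ the divided-power term with arrow-weight $k$. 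Summing over $k\geq 1$ yields the geometric family in the displayed formula for $\partial^1(\underline{01})$.

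For $n=2$ and $n=3$ the same machine runs, but now $H_n^0d^0\partial^{n-1}_{(k)}$ reinjects the already-computed lower differential (the $\partial^1$-formula when $n=2$, the $\partial^2$-formula when $n=3$), while the quadratic term builds the nested corollas. The crucial recognition step is that the infinite families emitted by repeated application of $H_n^0$ close up into the circular products $\overline\circledcirc$: since $H_n^0$ inserts a $0$ at the root and $\overline\circledcirc$ is associative (it is the product $\circledcirc$ of \cite[Remark 2.20]{moi} on $\widehat{\mathcal{B}race}(\Sigma^{-1}N_*(\Delta^n))$), the legs generated by the recursion reorganize into blocks of the form $\underline{01}\overline\circledcirc\underline{12}\overline\circledcirc\underline{23}$ and their partial truncations, each weighted by divided-power arrows. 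I would carry out the $k=2,3,\dots$ steps until the pattern stabilizes and identify the limit sum with the displayed expression.

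The main obstacle will be precisely this combinatorial-and-sign bookkeeping: matching the trees produced by the iterated quadratic corrections against the compact $\overline\circledcirc$-expressions, and fixing every Koszul sign coming from the desuspensions, from $H_n^0$, and from the symmetry of the divided-power legs. I expect the cleanest route is to establish an auxiliary closed form for each arity component $\partial^n_{(k)}$ first, so that the $\overline\circledcirc$-blocks emerge termwise, and only then resum over $k$, rather than manipulating the full generating series at once.
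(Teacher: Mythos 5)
Your proposal follows essentially the same route as the paper's proof: the linear terms come from the one-vertex component $\partial^n_{(0)}$, the two-vertex trees from Corollary \ref{diff1}, and the higher components from iterating the induction relation of Theorem \ref{rec}, identifying at each stage which trees survive $H_n^0$ and resumming the resulting families into $\overline\circledcirc$-blocks. One small caution: in the quadratic recursion you write for $n=1$ the sum must be restricted to $p,q\neq 0$ exactly as in the general relation you quote (the $q=0$ term does vanish there since $\partial^0_{(k)}=0$ for $k\geq 2$, but the $p=0$ term $\partial^1_{(0)}\partial^1_{(k)}$ does not and is excluded by Theorem \ref{rec}).
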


\begin{proof}
    We first compute $\partial^1$. Lemma \ref{firstdiff} and Corollary \ref{diff1} give
    $$\partial^1( \underline{0})=
    \begin{tikzpicture}[baseline={([yshift=-.5ex]current bounding box.center)},scale=1]
    \node[scale=1] (i) at (0,0) {$ \underline{0}$};
    \node[scale=1] (2) at (0,1) 
    {$ \underline{0}$};
    \draw (i) -- (2);
    \end{tikzpicture}\ ;\ \partial^1( \underline{1})=
    \begin{tikzpicture}[baseline={([yshift=-.5ex]current bounding box.center)},scale=1]
    \node[scale=1] (i) at (0,0) {$ \underline{1}$};
    \node[scale=1] (2) at (0,1) 
    {$ \underline{1}$};
    \draw (i) -- (2);
    \end{tikzpicture}\ ;\ \partial^1_{(1)}( \underline{01})=
    \begin{tikzpicture}[baseline={([yshift=-.5ex]current bounding box.center)},scale=1]
    \node[scale=1] (i) at (0,0) {$ \underline{0}$};
    \node[scale=1] (2) at (0,1) 
    {$ \underline{01}$};
    \draw (i) -- (2);
    \end{tikzpicture}-\begin{tikzpicture}[baseline={([yshift=-.5ex]current bounding box.center)},scale=1]
    \node[scale=1] (i) at (0,0) {$ \underline{01}$};
    \node[scale=1] (2) at (0,1) 
    {$ \underline{1}$};
    \draw (i) -- (2);
    \end{tikzpicture}.$$

    \noindent We compute $\partial_{(k)}^1( \underline{01})$ by induction on $k\geq 1$. We give the details for the case $k=2$. By Theorem \ref{rec}, we have
    $$\partial_{(2)}^1( \underline{01})=-H_1^0\partial_{(2)}^1\partial_{(0)}^1( \underline{01})-H_1^0\partial_{(1)}^1\partial_{(1)}^1( \underline{01}).$$

    \noindent Because the differentials $\partial^n$'s preserve the face maps, we have
    \begin{center}
        $\begin{array}{lll}
        \partial_{(2)}^1\partial_{(0)}^1( \underline{01}) & = & d^0\partial_{(2)}^0( \underline{0})-d^1\partial_{(2)}^0( \underline{0})\\
        & = & 0,
        \end{array}$
    \end{center}

    \noindent since $\partial_{(2)}^0=0$. Now, by the Leibniz rule in $\mathcal{B}race(\Sigma^{-1}N_*(\Delta^1))$,
    $$\partial_{(1)}^1\partial_{(1)}^1( \underline{01})=\begin{tikzpicture}[baseline={([yshift=-.5ex]current bounding box.center)},scale=1]
    \node[scale=1] (i) at (0,0) {$\partial_{(1)}^1( \underline{0})$};
    \node[scale=1] (2) at (0,1) 
    {$ \underline{01}$};
    \draw (i) -- (2);
    \end{tikzpicture}-\begin{tikzpicture}[baseline={([yshift=-.5ex]current bounding box.center)},scale=1]
    \node[scale=1] (i) at (0,0) {$ \underline{0}$};
    \node[scale=1] (2) at (0,1) 
    {$\partial_{(2)}^1( \underline{01})$};
    \draw (i) -- (2);
    \end{tikzpicture}-\begin{tikzpicture}[baseline={([yshift=-.5ex]current bounding box.center)},scale=1]
    \node[scale=1] (i) at (0,0) {$\partial_{(2)}^1( \underline{01})$};
    \node[scale=1] (2) at (0,1) 
    {$ \underline{1}$};
    \draw (i) -- (2);
    \end{tikzpicture}-\begin{tikzpicture}[baseline={([yshift=-.5ex]current bounding box.center)},scale=1]
    \node[scale=1] (i) at (0,0) {$ \underline{01}$};
    \node[scale=1] (2) at (0,1) 
    {$\partial_{(2)}^1( \underline{1})$};
    \draw (i) -- (2);
    \end{tikzpicture}.$$

    \noindent These terms give
    \begin{center}
        $\begin{array}{lll}
        \begin{tikzpicture}[baseline={([yshift=-.5ex]current bounding box.center)},scale=1]
    \node[scale=1] (i) at (0,0) {$\partial_{(1)}^1( \underline{0})$};
    \node[scale=1] (2) at (0,1) 
    {$ \underline{01}$};
    \draw (i) -- (2);
    \end{tikzpicture} & = & \begin{tikzpicture}[baseline={([yshift=-4ex]current bounding box.center)},scale=1]
    \node[scale=1] (i) at (0,0) {$ \underline{0}$};
    \node[scale=1] (2) at (0,1) 
    {$ \underline{0}$};
    \node (3) at (0,2) {$ \underline{01}$};
    \draw (i) -- (2);
    \draw (2) -- (3);
    \end{tikzpicture}+\begin{tikzpicture}[baseline={([yshift=-.5ex]current bounding box.center)},scale=1]
    \node[scale=1] (i) at (0,0) {$ \underline{0}$};
    \node[scale=1] (1) at (-1,1) {$ \underline{0}$};
    \node[scale=1] (3) at (1,1) {$ \underline{01}$};
    \draw (i) -- (1);
    \draw (i) -- (3);
    \end{tikzpicture}+\begin{tikzpicture}[baseline={([yshift=-.5ex]current bounding box.center)},scale=1]
    \node[scale=1] (i) at (0,0) {$ \underline{0}$};
    \node[scale=1] (1) at (-1,1) {$ \underline{01}$};
    \node[scale=1] (3) at (1,1) {$ \underline{0}$};
    \draw (i) -- (1);
    \draw (i) -- (3);
    \end{tikzpicture};\\
    -\begin{tikzpicture}[baseline={([yshift=-.5ex]current bounding box.center)},scale=1]
    \node[scale=1] (i) at (0,0) {$ \underline{0}$};
    \node[scale=1] (2) at (0,1) 
    {$\partial_{(2)}^1( \underline{01})$};
    \draw (i) -- (2);
    \end{tikzpicture} & = & -\begin{tikzpicture}[baseline={([yshift=-4ex]current bounding box.center)},scale=1]
    \node[scale=1] (i) at (0,0) {$ \underline{0}$};
    \node[scale=1] (2) at (0,1) 
    {$ \underline{0}$};
    \node (3) at (0,2) {$ \underline{01}$};
    \draw (i) -- (2);
    \draw (2) -- (3);
    \end{tikzpicture}+\begin{tikzpicture}[baseline={([yshift=-4ex]current bounding box.center)},scale=1]
    \node[scale=1] (i) at (0,0) {$ \underline{0}$};
    \node[scale=1] (2) at (0,1) 
    {$ \underline{01}$};
    \node (3) at (0,2) {$ \underline{1}$};
    \draw (i) -- (2);
    \draw (2) -- (3);
    \end{tikzpicture};\\
    -\begin{tikzpicture}[baseline={([yshift=-.5ex]current bounding box.center)},scale=1]
    \node[scale=1] (i) at (0,0) {$\partial_{(2)}^1( \underline{01})$};
    \node[scale=1] (2) at (0,1) 
    {$ \underline{1}$};
    \draw (i) -- (2);
    \end{tikzpicture} & = & -\begin{tikzpicture}[baseline={([yshift=-4ex]current bounding box.center)},scale=1]
    \node[scale=1] (i) at (0,0) {$ \underline{0}$};
    \node[scale=1] (2) at (0,1) 
    {$ \underline{01}$};
    \node (3) at (0,2) {$ \underline{1}$};
    \draw (i) -- (2);
    \draw (2) -- (3);
    \end{tikzpicture}+\begin{tikzpicture}[baseline={([yshift=-4ex]current bounding box.center)},scale=1]
    \node[scale=1] (i) at (0,0) {$ \underline{01}$};
    \node[scale=1] (2) at (0,1) 
    {$ \underline{1}$};
    \node (3) at (0,2) {$ \underline{1}$};
    \draw (i) -- (2);
    \draw (2) -- (3);
    \end{tikzpicture}\boxed{-\begin{tikzpicture}[baseline={([yshift=-.5ex]current bounding box.center)},scale=1]
    \node[scale=1] (i) at (0,0) {$ \underline{0}$};
    \node[scale=1] (1) at (-1,1) {$ \underline{1}$};
    \node[scale=1] (3) at (1,1) {$ \underline{01}$};
    \draw (i) -- (1);
    \draw (i) -- (3);
    \end{tikzpicture}}-\begin{tikzpicture}[baseline={([yshift=-.5ex]current bounding box.center)},scale=1]
    \node[scale=1] (i) at (0,0) {$ \underline{0}$};
    \node[scale=1] (1) at (-1,1) {$ \underline{01}$};
    \node[scale=1] (3) at (1,1) {$ \underline{1}$};
    \draw (i) -- (1);
    \draw (i) -- (3);
    \end{tikzpicture};\\
    & & +\begin{tikzpicture}[baseline={([yshift=-.5ex]current bounding box.center)},scale=1]
    \node[scale=1] (i) at (0,0) {$ \underline{01}$};
    \node[scale=1] (1) at (-1,1) {$ \underline{1}$};
    \node[scale=1] (3) at (1,1) {$ \underline{1}$};
    \draw (i) -- (1);
    \draw (i) -- (3);
    \end{tikzpicture}-\begin{tikzpicture}[baseline={([yshift=-.5ex]current bounding box.center)},scale=1]
    \node[scale=1] (i) at (0,0) {$ \underline{01}$};
    \node[scale=1] (1) at (-1,1) {$ \underline{1}$};
    \node[scale=1] (3) at (1,1) {$ \underline{1}$};
    \draw (i) -- (1);
    \draw (i) -- (3);
    \end{tikzpicture};\\
    -\begin{tikzpicture}[baseline={([yshift=-.5ex]current bounding box.center)},scale=1]
    \node[scale=1] (i) at (0,0) {$ \underline{01}$};
    \node[scale=1] (2) at (0,1) 
    {$\partial_{(2)}^1( \underline{1})$};
    \draw (i) -- (2);
    \end{tikzpicture} & = & -\begin{tikzpicture}[baseline={([yshift=-4ex]current bounding box.center)},scale=1]
    \node[scale=1] (i) at (0,0) {$ \underline{01}$};
    \node[scale=1] (2) at (0,1) 
    {$ \underline{1}$};
    \node (3) at (0,2) {$ \underline{1}$};
    \draw (i) -- (2);
    \draw (2) -- (3);
    \end{tikzpicture}.
        \end{array}$
    \end{center}
    
    \noindent The boxed tree is the only tree which gives a non-zero element when applying $H_1^0$. This gives
    $$\partial_{(2)}^1( \underline{01})=\begin{tikzpicture}[baseline={([yshift=-.5ex]current bounding box.center)},scale=1]
    \node[scale=1] (i) at (0,0) {$ \underline{0}$};
    \node[scale=1] (1) at (0,1) {$ \underline{01}$};
    \draw (i) -- node[midway, draw, circle, fill=white, inner sep=1pt, scale=0.5] {$2$} (1);
    \end{tikzpicture}.$$

    \noindent We now suppose that $k\geq 3$. By definition, and since $\partial_{(k-1)}^1\partial_{(0)}^1(\underline{01})=0$ because $\partial_{(k-1)}^0=0$, we have
    $$\partial_{(k)}^1( \underline{01})=-H_1^0\partial_{(k-1)}^1\partial_{(1)}^1( \underline{01})-H_1^0\partial_{(1)}^1\partial_{(k-1)}^1(\underline{01})-\sum_{\substack{p+q=k\\ p,q\neq 0,1}}H_1^0\partial_{(p)}^1\partial_{(q)}^1(\underline{01}).$$

    \noindent By induction hypothesis, for every $p,q\neq 0,1$ such that $p+q=k$, the term $\partial_{(p)}^1\partial_{(q)}^1(\underline{01})$ will only give trees with as vertices $\underline{0}$ or $\underline{01}$, so that $H_1^0\partial_{(p)}^1\partial_{(q)}^1(\underline{01})=0$. We now look at the remaining terms. Since $k-1\geq 2$, we have by induction hypothesis and by the Leibniz rule
    \begin{multline*}
        \partial_{(1)}^1\partial_{(k-1)}^1(\underline{01})=\sum_{p+q+r=k-1}\begin{tikzpicture}[baseline={([yshift=-.5ex]current bounding box.center)},scale=1]
    \node[scale=1] (i) at (0,0) {$ \underline{0}$};
    \node[scale=1] (1bb) at (-1,1) {$\underline{01}$};
    \node[scale=1] (2) at (0,1) {$ \underline{0}$};
    \node[scale=1] (2bb) at (0,2) {$\underline{01}$};
    \node[scale=1] (3b) at (1,1) {$\underline{01}$};
    \draw (i) -- node[midway, draw, circle, fill=white, inner sep=1pt, scale=0.5] {$p$} (1bb);
    \draw (i) -- (2);
    \draw (2) -- node[midway, draw, circle, fill=white, inner sep=1pt, scale=0.5] {$q$} (2bb);
    \draw (i) -- node[midway, draw, circle, fill=white, inner sep=1pt, scale=0.5] {$r$} (3b);
    \end{tikzpicture}\\
    +\sum_{p+q=k-1}\begin{tikzpicture}[baseline={([yshift=-.5ex]current bounding box.center)},scale=1]
    \node[scale=1] (i) at (0,0) {$ \underline{0}$};
    \node[scale=1] (1bb) at (-1,1) {$\underline{01}$};
    \node[scale=1] (2) at (0,1) {$ \underline{0}$};
    \node[scale=1] (2bb) at (0,2) {$\underline{01}$};
    \node[scale=1] (3b) at (1,1) {$\underline{01}$};
    \draw (i) -- node[midway, draw, circle, fill=white, inner sep=1pt, scale=0.5] {$p$} (1bb);
    \draw (2) -- node[midway, draw, circle, fill=white, inner sep=1pt, scale=0.5] {$q$} (2bb);
    \draw (i) -- node[midway, draw, circle, fill=white, inner sep=1pt, scale=0.5] {$r$} (3b);
    \draw (i) -- (2);
    \end{tikzpicture}-\sum_{p+q=k-1}\begin{tikzpicture}[baseline={([yshift=-.5ex]current bounding box.center)},scale=1]
    \node[scale=1] (i) at (0,0) {$ \underline{0}$};
    \node[scale=1] (1bb) at (-1,1) {$\underline{01}$};
    \node[scale=1] (2) at (0,1) {$ \underline{01}$};
    \node[scale=1] (2bb) at (0,2) {$\underline{1}$};
    \node[scale=1] (3b) at (1,1) {$\underline{01}$};
    \draw (i) -- node[midway, draw, circle, fill=white, inner sep=1pt, scale=0.5] {$p$} (1bb);
    \draw (i) -- (2);
    \draw (2) -- (2bb);
    \draw (i) -- node[midway, draw, circle, fill=white, inner sep=1pt, scale=0.5] {$q$} (3b);
    \end{tikzpicture},
    \end{multline*}

    \noindent which gives $H_1^0\partial_{(1)}^1\partial_{(k-1)}^1(\underline{01})=0$. We finally have
    \begin{multline*}
        \partial_{(k-1)}^1\partial_{(1)}^1(\underline{01})=-\begin{tikzpicture}[baseline={([yshift=-.5ex]current bounding box.center)},scale=1]
    \node[scale=1] (i) at (0,0) {$ \underline{0}$};
    \node (2) at (0,1) {$\underline{0}$};
    \node[scale=1] (2bb) at (0,2.5) {$\underline{01}$};
    \draw (i) -- (2);
    \draw (2) -- node[midway, draw, circle, fill=white, inner sep=1pt, scale=0.4] {$k-1$} (2bb);
    \end{tikzpicture}-\sum_{p+q=k-2}\begin{tikzpicture}[baseline={([yshift=-.5ex]current bounding box.center)},scale=1]
    \node[scale=1] (i) at (0,0) {$ \underline{0}$};
    \node[scale=1] (1bb) at (-1,1) {$\underline{01}$};
    \node[scale=1] (2) at (0,1) {$ \underline{01}$};
    \node[scale=1] (2bb) at (0,2) {$\underline{1}$};
    \node[scale=1] (3b) at (1,1) {$\underline{01}$};
    \draw (i) -- node[midway, draw, circle, fill=white, inner sep=1pt, scale=0.5] {$p$} (1bb);
    \draw (i) -- (2);
    \draw (2) -- (2bb);
    \draw (i) -- node[midway, draw, circle, fill=white, inner sep=1pt, scale=0.5] {$q$} (3b);
    \end{tikzpicture}-\sum_{p+q=k-1}\begin{tikzpicture}[baseline={([yshift=-.5ex]current bounding box.center)},scale=1]
    \node[scale=1] (i) at (0,0) {$ \underline{0}$};
    \node[scale=1] (1bb) at (-1,1) {$\underline{01}$};
    \node[scale=1] (2) at (0,1) {$ \underline{1}$};
    \node[scale=1] (3b) at (1,1) {$\underline{01}$};
    \draw (i) -- node[midway, draw, circle, fill=white, inner sep=1pt, scale=0.5] {$p$} (1bb);
    \draw (i) -- (2);
    \draw (i) -- node[midway, draw, circle, fill=white, inner sep=1pt, scale=0.5] {$q$} (3b);
    \end{tikzpicture}.
    \end{multline*}

    \noindent All the terms occurring in the right hand-side give elements in the kernel of $H_1^0$, except for the last sum with $p=0$ and $q=k-1$, which gives
    $$\partial_{(k)}^1( \underline{01})=\begin{tikzpicture}[baseline={([yshift=-.5ex]current bounding box.center)},scale=1]
    \node[scale=1] (i) at (0,0) {$ \underline{0}$};
    \node[scale=1] (1) at (0,1) {$ \underline{01}$};
    \draw (i) -- node[midway, draw, circle, fill=white, inner sep=1pt, scale=0.5] {$k$} (1);
    \end{tikzpicture}.$$

    \noindent The computation of $\partial^1$ is proved. We now compute $\partial^2$. By Lemma \ref{firstdiff}, we have
    $$\partial^2( \underline{0})=
    \begin{tikzpicture}[baseline={([yshift=-.5ex]current bounding box.center)},scale=1]
    \node[scale=1] (i) at (0,0) {$ \underline{0}$};
    \node[scale=1] (2) at (0,1) 
    {$ \underline{0}$};
    \draw (i) -- (2);
    \end{tikzpicture}\ ;\ \partial^2( \underline{1})=
    \begin{tikzpicture}[baseline={([yshift=-.5ex]current bounding box.center)},scale=1]
    \node[scale=1] (i) at (0,0) {$ \underline{1}$};
    \node[scale=1] (2) at (0,1) 
    {$ \underline{1}$};
    \draw (i) -- (2);
    \end{tikzpicture}\ ;\ \partial^2( \underline{2})=
    \begin{tikzpicture}[baseline={([yshift=-.5ex]current bounding box.center)},scale=1]
    \node[scale=1] (i) at (0,0) {$ \underline{2}$};
    \node[scale=1] (2) at (0,1) 
    {$ \underline{2}$};
    \draw (i) -- (2);
    \end{tikzpicture}.$$

    \noindent By Corollary \ref{diff1}, we have
    $$\partial^2_{(1)}( \underline{01})=
    \begin{tikzpicture}[baseline={([yshift=-.5ex]current bounding box.center)},scale=1]
    \node[scale=1] (i) at (0,0) {$ \underline{0}$};
    \node[scale=1] (2) at (0,1) 
    {$ \underline{01}$};
    \draw (i) -- (2);
    \end{tikzpicture}-\begin{tikzpicture}[baseline={([yshift=-.5ex]current bounding box.center)},scale=1]
    \node[scale=1] (i) at (0,0) {$ \underline{01}$};
    \node[scale=1] (2) at (0,1) 
    {$ \underline{1}$};
    \draw (i) -- (2);
    \end{tikzpicture}\ ;\ \partial^2_{(1)}( \underline{02})=
    \begin{tikzpicture}[baseline={([yshift=-.5ex]current bounding box.center)},scale=1]
    \node[scale=1] (i) at (0,0) {$ \underline{0}$};
    \node[scale=1] (2) at (0,1) 
    {$ \underline{02}$};
    \draw (i) -- (2);
    \end{tikzpicture}-\begin{tikzpicture}[baseline={([yshift=-.5ex]current bounding box.center)},scale=1]
    \node[scale=1] (i) at (0,0) {$ \underline{02}$};
    \node[scale=1] (2) at (0,1) 
    {$ \underline{2}$};
    \draw (i) -- (2);
    \end{tikzpicture}\ ;\ \partial^2_{(1)}( \underline{12})=
    \begin{tikzpicture}[baseline={([yshift=-.5ex]current bounding box.center)},scale=1]
    \node[scale=1] (i) at (0,0) {$ \underline{1}$};
    \node[scale=1] (2) at (0,1) 
    {$ \underline{12}$};
    \draw (i) -- (2);
    \end{tikzpicture}-\begin{tikzpicture}[baseline={([yshift=-.5ex]current bounding box.center)},scale=1]
    \node[scale=1] (i) at (0,0) {$ \underline{12}$};
    \node[scale=1] (2) at (0,1) 
    {$ \underline{2}$};
    \draw (i) -- (2);
    \end{tikzpicture},$$

    \noindent and
    $$\partial^2_{(1)}( \underline{012})=
    \begin{tikzpicture}[baseline={([yshift=-.5ex]current bounding box.center)},scale=1]
    \node[scale=1] (i) at (0,0) {$ \underline{0}$};
    \node[scale=1] (2) at (0,1) 
    {$ \underline{012}$};
    \draw (i) -- (2);
    \end{tikzpicture}-\begin{tikzpicture}[baseline={([yshift=-.5ex]current bounding box.center)},scale=1]
    \node[scale=1] (i) at (0,0) {$ \underline{01}$};
    \node[scale=1] (2) at (0,1) 
    {$ \underline{12}$};
    \draw (i) -- (2);
    \end{tikzpicture}+\begin{tikzpicture}[baseline={([yshift=-.5ex]current bounding box.center)},scale=1]
    \node[scale=1] (i) at (0,0) {$ \underline{012}$};
    \node[scale=1] (2) at (0,1) 
    {$ \underline{2}$};
    \draw (i) -- (2);
    \end{tikzpicture}.$$

    \noindent As before, we can compute by hand $\partial_{(2)}^2( \underline{012})$ by using Theorem \ref{rec}. We obtain
    $$\partial_{(2)}^1( \underline{012})=-\begin{tikzpicture}[baseline={([yshift=-.5ex]current bounding box.center)},scale=1]
    \node[scale=1] (i) at (0,0) {$ \underline{01}$};
    \node[scale=1] (1) at (-1,1) {$ \underline{12}$};
    \node[scale=1] (3) at (1,1) {$ \underline{12}$};
    \draw (i) -- (1);
    \draw (i) -- (3);
    \end{tikzpicture}+\begin{tikzpicture}[baseline={([yshift=-.5ex]current bounding box.center)},scale=1]
    \node[scale=1] (i) at (0,0) {$ \underline{0}$};
    \node[scale=1] (1) at (-1,1) {$ \underline{012}$};
    \node[scale=1] (3) at (1,1) {$( \underline{01}+ \underline{12})$};
    \draw (i) -- (1);
    \draw (i) -- (3);
    \end{tikzpicture}+\begin{tikzpicture}[baseline={([yshift=-.5ex]current bounding box.center)},scale=1]
    \node[scale=1] (i) at (0,0) {$ \underline{0}$};
    \node[scale=1] (1) at (-1,1) {$ \underline{02}$};
    \node[scale=1] (3) at (1,1) {$ \underline{012}$};
    \draw (i) -- (1);
    \draw (i) -- (3);
    \end{tikzpicture}.$$

    \noindent We now compute $\partial_{(k)}^2( \underline{012})$ for every $k\geq 3$. We use that
    $$\partial_{(k)}^2( \underline{012})=-H_2^0\partial_{(k)}^2\partial_{(0)}^2( \underline{012})-H_2^0\partial_{(k-1)}^2\partial_{(1)}^2( \underline{012})-\sum_{\substack{p+q=k\\ q\neq 0,1\\p\neq 0}}H_2^0\partial_{(p)}^2\partial_{(q)}^2( \underline{012}).$$

    \noindent We have
    $$-H_2^0\partial_{(k)}^2\partial_{(0)}^2( \underline{012})=-\begin{tikzpicture}[baseline={([yshift=-.5ex]current bounding box.center)},scale=1]
    \node[scale=1] (i) at (0,0) {$ \underline{01}$};
    \node[scale=1] (1) at (0,1) {$ \underline{12}$};
    \draw (i) -- node[midway, draw, circle, fill=white, inner sep=1pt, scale=0.5] {$k$} (1);
    \end{tikzpicture}+\begin{tikzpicture}[baseline={([yshift=-.5ex]current bounding box.center)},scale=1]
    \node[scale=1] (i) at (-0,0) {$ \underline{0}$};
    \node[scale=1] (1) at (-0.75,1.25) {$ \underline{012}$};
    \node[scale=1] (2) at (0.75,1.25) {$\underline{12}$};
    \draw (i) -- (1);
    \draw (i) -- node[midway, draw, circle, fill=white, inner sep=1pt, scale=0.4] {$k-1$} (2);
    \end{tikzpicture}$$

    \noindent by induction hypothesis. We now compute $H_2^0\partial_{(k-1)}^2\partial_{(1)}^2( \underline{012})$. We have
    \begin{center}
        $\begin{array}{ccc}
        -H_2^0\partial_{(k-1)}^2\left(\begin{tikzpicture}[baseline={([yshift=-.5ex]current bounding box.center)},scale=1]
    \node[scale=1] (i) at (0,0) {$ \underline{0}$};
    \node[scale=1] (2) at (0,1) 
    {$ \underline{012}$};
    \draw (i) -- (2);
    \end{tikzpicture}\right) & = & 0,\\
    $$H_2^0\partial_{(k-1)}^2\left(\begin{tikzpicture}[baseline={([yshift=-.5ex]current bounding box.center)},scale=1]
    \node[scale=1] (i) at (0,0) {$ \underline{01}$};
    \node[scale=1] (2) at (0,1) 
    {$ \underline{12}$};
    \draw (i) -- (2);
    \end{tikzpicture}\right) & = & \begin{tikzpicture}[baseline={([yshift=-.5ex]current bounding box.center)},scale=1]
    \node[scale=1] (i) at (-0,0) {$ \underline{0}$};
    \node[scale=1] (1) at (-0.75,1.25) {$ \underline{012}$};
    \node[scale=1] (2) at (0.75,1.25) {$\underline{01}$};
    \draw (i) -- (1);
    \draw (i) -- node[midway, draw, circle, fill=white, inner sep=1pt, scale=0.4] {$k-1$} (2);
    \end{tikzpicture},
        \end{array}$
    \end{center}

    \noindent and
    \begin{multline*}
        -H_2^0\partial_{(k-1)}^2\left(\begin{tikzpicture}[baseline={([yshift=-.5ex]current bounding box.center)},scale=1]
    \node[scale=1] (i) at (0,0) {$ \underline{012}$};
    \node[scale=1] (2) at (0,1) 
    {$ \underline{2}$};
    \draw (i) -- (2);
    \end{tikzpicture}\right)=\\
    \sum\begin{tikzpicture}[baseline={([yshift=-.5ex]current bounding box.center)},scale=1]
    \node (i) at (0,0) {$ \underline{0}$};
    \node (1) at (-2,1.5) {$ \underline{02}$};
    \node (1bis) at (-3.25,1.5) {${\cdots}$};
    \node (1bbis) at (-4.5,1.5) {$ \underline{02}$};
    \node (2) at (0,1.5) {$ \underline{012}$};
    \node[scale=0.5] (3) at (3,1.5) {{\biduledroitebis}};
    \node (3b) at (5.5,1.5) {${\cdots}$};
    \node[scale=0.5] (3bb) at (8,1.5) {{\biduledroitebis}};
    \draw (i) -- (1);
    \draw (i) -- (1bbis);
    \draw (i) -- (2);
    \draw (i) -- (3);
    \draw (i) -- (3bb);
    \end{tikzpicture}
    \end{multline*}

    \noindent where we sum over all such trees with $k+1$ vertices which contain at least one element $ \underline{02}$. Finally, for every $p,q$ such that $p+q=k,q\neq 0,1$ and $p\neq 0$, we have
    $$-H_2^0\partial_{(p)}^2\partial_{(q)}^2( \underline{012})=\sum \begin{tikzpicture}[baseline={([yshift=-.5ex]current bounding box.center)},scale=1]
    \node (i) at (0,0) {$ \underline{0}$};
    \node (2) at (-3,1.5) {$ \underline{012}$};
    \node[scale=0.5] (3) at (0,1.5) {{\biduledroitebis}};
    \node (3b) at (2.5,1.5) {${\cdots}$};
    \node[scale=0.5] (3bb) at (5,1.5) {{\biduledroitebis}};
    \draw (i) -- (2);
    \draw (i) -- (3);
    \draw (i) -- (3bb);
    \end{tikzpicture}$$

    \noindent where we sum over all such trees with $p$ elements $ \underline{01}$ and $q-1$ elements $ \underline{12}$. This concludes the proof of the computation of $\partial^2$.\\

    We only give the ideas for the computation of $\partial^3$. By induction on $k\geq 0$, we first two lines are given by $\partial_{(k)}^3\partial_{(0)}^3(\underline{0123})$. The tree
    $$\begin{tikzpicture}[baseline={([yshift=-.5ex]current bounding box.center)},scale=1]
    \node[scale=1] (i) at (0,0) {$ \underline{0123}$};
    \node[scale=1] (2) at (0,1) {$ \underline{3}$};
    \draw (i) -- (2);
    \end{tikzpicture}$$

    \noindent will give trees of $\partial_{(k-1)}^3(\underline{0123})$ with $\underline{0}$ as root on which we add a branch linked to the root with $\underline{03}$ as non-root vertex. We thus can focus to trees with no elements $\underline{03}$. The trees with no element $\underline{03}$ of the third line are obtained from the differentiation of the trees of the form
    $$\begin{tikzpicture}[baseline={([yshift=-.5ex]current bounding box.center)},scale=1]
    \node[scale=1] (i) at (0,0) {$ \underline{012}$};
    \node[scale=1] (1) at (0,1) {$ \underline{23}$};
    \draw (i) -- node[midway, draw, circle, fill=white, inner sep=1pt, scale=0.5] {$i$} (1);
    \end{tikzpicture}$$

    \noindent for some $1\leq i\leq k-1$. The trees of the three last lines with no element $\underline{03}$ is obtained by the differentiation of trees of the form
    $$\begin{tikzpicture}[baseline={([yshift=-.5ex]current bounding box.center)},scale=1]
    \node (i) at (0,0) {$ \underline{01}$};
    \node (1) at (-1.5,1) {$ \underline{13}$};
    \node (2) at (0,1) {$ \underline{123}$};
    \node (3) at (2,1) {$\underline{12}\overline\circledcirc\underline{23}$};
    \draw (i) -- node[midway, draw, circle, fill=white, inner sep=1pt, scale=0.5] {$i$} (1);
    \draw (i) -- (2);
    \draw (i) -- node[midway, draw, circle, fill=white, inner sep=1pt, scale=0.5] {$j$} (3);
    \end{tikzpicture}$$

    \noindent for some $i,j\geq 0$ such that $1\leq i+j\leq k-2$.\\

    The lemma is proved.
\end{proof}

\subsection{A morphism from $\mathcal{P}re\mathcal{L}ie_\infty$ to $\mathcal{B}race\underset{\text{\normalfont H}}{\otimes}B^c(\Lambda^{-1}\mathcal{B}race^\vee)$}\label{sec:233}

In this subsection, we construct a morphism $\mathcal{P}re\mathcal{L}ie_\infty\longrightarrow\mathcal{B}race\underset{\text{\normalfont H}}{\otimes}B^c(\Lambda^{-1}\mathcal{B}race^\vee)$ which will give, together with Theorem \ref{mu}, a morphism of operads $\mathcal{P}re\mathcal{L}ie_\infty\longrightarrow\mathcal{B}race\underset{\text{\normalfont H}}{\otimes}\mathcal{E}$.\\

Recall that, for every operad $\mathcal{P}$ such that $\mathcal{P}(n)$ is finite dimensional for every $n\geq 0$, we have a morphism of operads
\begin{center}
    $\begin{array}{ccc}
        \mathcal{L}ie_\infty & \longrightarrow & \mathcal{P}\otimes B^c(\Lambda^{-1}\mathcal{P}^\vee)   \\
        \Sigma^{n-1} & \longmapsto & \sum_{x\in\mathcal{B}(n)} x\otimes \Sigma^{n-1}x^\vee  
    \end{array}$
\end{center}

\noindent where $\mathcal{B}(n)$ denotes a basis of $\mathcal{P}(n)$, and where $\mathcal{L}ie_\infty=B^c(\Lambda^{-1}\mathcal{C}om^\vee)$ is the operad which governs Lie algebras up to homotopy. Recall also that we have a morphism of operads $\mathcal{L}ie_\infty\longrightarrow\mathcal{P}re\mathcal{L}ie_\infty$ given by the morphism of symmetric sequences
\begin{center}
    $\begin{array}{ccc}
        \text{\normalfont Perm} & \longrightarrow & \mathcal{C}om  \\
        e_1^n & \longmapsto & 1 
    \end{array}$.
\end{center}

\begin{lm}\label{morph2}
    There exists an explicit lift of the diagram
    % https://q.uiver.app/#q=WzAsMyxbMCwwLCJcXG1hdGhjYWx7TH1pZV9cXGluZnR5Il0sWzEsMCwiXFxtYXRoY2Fse0J9cmFjZVxcdW5kZXJzZXR7XFx0ZXh0e1xcbm9ybWFsZm9udCBIfX17XFxvdGltZXN9Ql5jKFxcTGFtYmRhXnstMX1cXG1hdGhjYWx7Qn1yYWNlXlxcdmVlKSJdLFswLDEsIlxcbWF0aGNhbHtQfXJlXFxtYXRoY2Fse0x9aWVfXFxpbmZ0eSJdLFswLDJdLFswLDFdLFsyLDEsIlxcZXhpc3RzIiwyLHsic3R5bGUiOnsiYm9keSI6eyJuYW1lIjoiZGFzaGVkIn19fV1d
\[\begin{tikzcd}
	{\mathcal{L}ie_\infty} & {\mathcal{B}race\underset{\text{\normalfont H}}{\otimes}B^c(\Lambda^{-1}\mathcal{B}race^\vee)} \\
	{\mathcal{P}re\mathcal{L}ie_\infty}
	\arrow[from=1-1, to=1-2]
	\arrow[from=1-1, to=2-1]
	\arrow["\exists"', dashed, from=2-1, to=1-2]
\end{tikzcd}.\]
\end{lm}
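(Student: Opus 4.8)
The plan is to exploit that $\mathcal{P}re\mathcal{L}ie_\infty=B^c(\Lambda^{-1}\text{\normalfont Perm}^c)$ is quasi-free. Thus a morphism of dg operads $\phi\colon\mathcal{P}re\mathcal{L}ie_\infty\longrightarrow\mathcal{R}$, where $\mathcal{R}:=\mathcal{B}race\underset{\text{\normalfont H}}{\otimes}B^c(\Lambda^{-1}\mathcal{B}race^\vee)$, is the same datum as a map of graded symmetric sequences on the cobar generators $\Sigma^{-1}\Lambda^{-1}\overline{\text{\normalfont Perm}^c}\longrightarrow\mathcal{R}$ that intertwines the cobar differential with the internal differential of $\mathcal{R}$ (equivalently, a twisting morphism). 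The generators of $\text{\normalfont Perm}^c$ in arity $n$ are the duals $(e_i^n)^\vee$ for $1\le i\le n$, of degree $n-2$. I would therefore reduce the lemma to producing, for every $n\ge 2$, a $\Sigma_n$-equivariant family $f(i,n)\in\mathcal{R}(n)$ of degree $n-2$ such that $\phi$ is a chain map and the triangle commutes. Since the cooperad map $\mathcal{C}om^\vee\to\text{\normalfont Perm}^c$ dual to $e_i^n\mapsto 1$ sends the $\mathcal{L}ie_\infty$-generator of arity $n$ to $\sum_{i}(e_i^n)^\vee$, and the top arrow assigns to it the element $\Theta_n:=\sum_{T\in\mathcal{PRT}(n)}T\otimes\Sigma^{n-1}T^\vee$, the commutation constraint is exactly $\sum_{i=1}^n f(i,n)=\Theta_n$.

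The naive guess, namely splitting $\Theta_n$ according to the root label of each tree into $\Theta_n^{(i)}:=\sum_{\mathrm{rt}(T)=i}T\otimes\Sigma^{n-1}T^\vee$, is not itself a chain map: in the Hadamard factor $\mathcal{B}race$ the operadic composition is a sum over graftings of planar trees, whereas in $B^c(\Lambda^{-1}\mathcal{B}race^\vee)$ composition produces a single two-level tree monomial whose differential contracts a subtree $S\subset T$ to $\Sigma(T/S)^\vee\circ_S\Sigma S^\vee$. Matching $d_\mathcal{R}\Theta_n^{(i)}$ against $O_i:=\phi\bigl(\partial(e_i^n)^\vee\bigr)$, whose source side is governed by the two-level $\text{\normalfont Perm}$-decomposition of Proposition \ref{compperm} carrying the marked index $i$ in the distinguished slot, forces correction terms. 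I would hence construct $f(i,n)$ by induction on $n$ in the spirit of Construction \ref{consmu} and Theorem \ref{mu}: set $f(i,n)=\Theta_n^{(i)}+H(D_i)$ where $D_i:=O_i-d_\mathcal{R}\Theta_n^{(i)}$ is the defect of the root-split and $H$ is a chosen equivariant contracting homotopy on the relevant (reduced) part of the target. The standard argument shows each $D_i$ is $d_\mathcal{R}$-closed, using $\partial^2=0$ and the inductive hypothesis that $\phi$ is a chain map in lower arities.

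The decisive point that makes this an honest lift, rather than an arbitrary extension, is that the two side conditions are maintained automatically. First, summing the per-index obstruction equations over $i$ gives $\sum_i O_i=\phi\bigl(\partial(\sum_i(e_i^n)^\vee)\bigr)=d_\mathcal{R}\Theta_n$, because $\Theta_\bullet$ already defines the operad morphism from $\mathcal{L}ie_\infty$; consequently $\sum_i D_i=d_\mathcal{R}\Theta_n-d_\mathcal{R}\Theta_n=0$. By linearity of $H$ one then gets $\sum_i f(i,n)=\Theta_n+H\bigl(\sum_i D_i\bigr)=\Theta_n$, so the triangle commutes with no need to divide by $n$ (which is essential in positive characteristic). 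Second, the $\Sigma_n$-equivariance of the family follows from the equivariance of $\Theta_n^{(i)}$, the identity $\sigma\cdot e_i^n=e_{\sigma^{-1}(i)}^n$ that transports the marked slot exactly as relabelling moves the root of a tree, and the equivariance of $H$; here I would use the finite-set labelling formalism of Remark \ref{remlabel} to organise the arity-$n$ components. The chain-map identity $d_\mathcal{R} f(i,n)=O_i$ is then immediate from $d_\mathcal{R} H(D_i)=D_i$.

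The main obstacle is precisely the construction and verification of the homotopy $H$ annihilating the defects $D_i$, together with the combinatorial matching it encodes. This requires showing that $D_i$ lies in an acyclic subcomplex of $\mathcal{R}(n)$ admitting an explicit equivariant contraction, which I would build arity-by-arity from the cobar structure of $B^c(\Lambda^{-1}\mathcal{B}race^\vee)$, in direct analogy with the role played by $\Lambda h_{\mathcal{E}}^{\sigma_T}$ in Theorem \ref{mu}. The delicate bookkeeping is the simultaneous tracking of the planar grafting data of $\mathcal{B}race$, the free two-level grafting and subtree-contraction differential of $B^c(\Lambda^{-1}\mathcal{B}race^\vee)$, and the marked-vertex data of $\text{\normalfont Perm}$; it is this matching, carried out term by term over pairs $(S,T/S)$ and refined per index $i$, that must be checked to see that the homotopy indeed produces the required correction. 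Once $\phi$ is established as a dg operad morphism compatible with the arrow from $\mathcal{L}ie_\infty$ and with $\mathcal{P}re\mathcal{L}ie_\infty\to\mathcal{P}re\mathcal{L}ie$ (Remark \ref{prelieinftoprelie}), composing with the morphism $B^c(\Lambda^{-1}\mathcal{B}race^\vee)\to\mathcal{E}$ of Theorem \ref{mu} yields the map and commutative square of Theorem \ref{theoremD}.
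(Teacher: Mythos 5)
Your setup is right as far as it goes: reducing the lift to a twisting morphism on the cobar generators, observing that commutativity of the triangle forces $\sum_i f(i,n)=\Theta_n$, and proposing the root-split $\Theta_n^{(i)}=\sum_{r(T)=i}T\otimes\Sigma^{-1}T^\vee$ as the starting point is exactly what the paper does (with $i=1$ and extension by equivariance, phrased as a Maurer--Cartan element $f$ of the pre-Lie algebra $\text{\normalfont Hom}_{\Sigma\text{\normalfont Seq}_\mathbb{K}}(\overline{\text{\normalfont Perm}}^\vee,\mathcal{B}race\underset{\text{\normalfont H}}{\otimes}B^c(\mathcal{B}race^\vee))$). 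The gap is in the next step: you assert that the root-split is \emph{not} a chain map and must be corrected by a homotopy, $f(i,n)=\Theta_n^{(i)}+H(D_i)$. In fact the defect $D_i$ vanishes identically, and proving this is the entire content of the lemma. The paper computes $d(f)(e_1^n)=-\sum_{r(T)=1}\sum_{S\subset T}T\otimes(\Sigma^{-1}(T/S)^\vee\circ_S\Sigma^{-1}S^\vee)$ and then shows, by an explicit bijection between, on one side, the planar trees $T$ occurring in the expansion of a grafting $\omega\cdot(U\circ_kV)$ in $\mathcal{B}race$ together with the marked-slot data coming from $\Delta_k(e_1^n)$ in $\text{\normalfont Perm}^\vee$, and, on the other side, the subtrees $S\subset T$ with $T/S$ identified with $U$, that $(f\star f)(e_1^n)$ equals exactly the negative of this sum. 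The asymmetry you point to between a sum over graftings on the $\mathcal{B}race$ side and a single subtree-contraction term on the cobar side is precisely what this bijection reconciles; it does not produce a residual defect.

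Beyond being unnecessary, the homotopy step is also unsupported as written. The complex $\mathcal{B}race(n)\otimes B^c(\Lambda^{-1}\mathcal{B}race^\vee)(n)$ is not acyclic, so you cannot contract an arbitrary closed defect; you would have to exhibit a specific acyclic equivariant subcomplex containing $D_i$ together with an explicit contraction $H$ of it, and you give no candidate. The analogy with $\Lambda h_{\mathcal{E}}^{\sigma_T}$ in Theorem \ref{mu} does not transfer: there the target $\mathcal{E}$ is arity-wise contractible onto $\mathcal{C}om$, which is what makes the inductive homotopy method work, whereas $B^c(\Lambda^{-1}\mathcal{B}race^\vee)$ admits no such contraction. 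So the proof as proposed would stall exactly where the real work lies; the fix is to drop the correction term and carry out the term-by-term matching showing $d(f)(e_1^n)+(f\star f)(e_1^n)=0$ directly.
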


\begin{proof}
    Giving a morphism $\mathcal{P}re\mathcal{L}ie_\infty\longrightarrow\mathcal{B}race\underset{\text{\normalfont H}}{\otimes} B^c(\Lambda^{-1}\mathcal{B}race^\vee)$ is equivalent to giving a Maurer-Cartan element $f$ in the pre-Lie algebra $\text{\normalfont Hom}_{\Sigma\text{\normalfont Seq}_\mathbb{K}}(\overline{\text{\normalfont Perm}}^\vee,\mathcal{B}race\underset{\text{\normalfont H}}{\otimes} B^c(\mathcal{B}race^\vee))$ (see for instance \cite[Theorem 6.5.10]{loday}). By symmetry, it is sufficient to give the image of $e_1^n$ for every $n\geq 1$. We set
    $$f(e_1^n)=-\sum_{\substack{T\in\mathcal{PRT}(n)\\ r(T)=1}}T\otimes \Sigma^{-1} T^\vee.$$

    \noindent We check that $d(f)(e_1^n)+(f\star f)(e_1^n)=0$, where $\star$ denotes the pre-Lie product of $\text{\normalfont Hom}_{\Sigma\text{\normalfont Seq}_\mathbb{K}}(\overline{\text{\normalfont Perm}}^\vee,\mathcal{B}race\underset{\text{\normalfont H}}{\otimes} B^c(\mathcal{B}race^\vee))$. We first have
    $$d(f)(e_1^n)=d(f(e_1^n))=-\sum_{\substack{T\in \mathcal{PRT}(n)\\ r(T)=1}}\sum_{S\subset T}T\otimes(\Sigma^{-1} (T/S)^\vee\circ_S \Sigma^{-1} S^\vee).$$

    \noindent We now compute $(f\star f)(e_1^n)$. Recall that
    $$\Delta_1(e_1^n)=\sum_{\substack{p+q=n+1\\p,q\geq 2}}e_1^p\otimes e_1^q;$$
    $$\forall k\neq 1, \Delta_k(e_1^n)=\sum_{\substack{p+q=n+1\\p,q\geq 2}}\sum_{i=1}^q e_1^p\otimes e_i^q.$$

    \noindent The $\Delta_1$ part gives
    $$\sum_{\substack{p+q=n+1\\ p,q\geq 2}}\sum_{\omega\in Sh_\ast(q,1,\ldots,1)}\sum_{\substack{U\in \mathcal{PRT}(p)\\ r(U)=1}}\sum_{\substack{V\in \mathcal{PRT}(q)\\ r(V)=1}}\omega\cdot(U\circ_1 V)\otimes\omega\cdot(\Sigma^{-1} U^\vee\circ_1 \Sigma^{-1} V^\vee),$$

    \noindent and the $\Delta_k$'s part for $k\neq 1$ gives
    $$\sum_{\substack{p+q=n+1\\ p,q\geq 2}}\sum_{k=2}^p\sum_{\omega\in Sh_\ast(1,\ldots,\underset{k}{q},\ldots,1)}\sum_{\substack{U\in \mathcal{PRT}(p)\\r(U)=1}}\sum_{V\in \mathcal{PRT}(q)}\omega\cdot(U\circ_{k} V)\otimes\omega\cdot(\Sigma^{-1} U^\vee\circ_{k}\Sigma^{-1} V^\vee).$$

    \noindent Let $T\in\mathcal{PRT}(n)$ be such that $r(T)=1$. Consider a term $\omega\cdot(U\circ_k V)$ occurring in one of the two previous sums, with $k\geq 1$, $\omega\in Sh_\ast(1,\ldots,\underset{k}{q},\ldots,1), U\in\mathcal{PRT}(p),V\in\mathcal{PRT}(q)$, such that $T$ occurs in the expansion of $\omega\cdot(U\circ_k V)$. We see $U$ and $V$ as $U\in\mathcal{PRT}(1<\cdots<k-1<V<k+q<\cdots<p)$ and $V\in\mathcal{PRT}(k<\cdots<k+q-1)$. Because $\omega\in Sh_\ast(1,\ldots,\underset{k}{q},\ldots,1)$, the composite $\omega\cdot(U\circ_k V)$ is equal to the composite $(\omega\cdot U)\circ_V (\omega\cdot V)$ where $\omega\cdot U$ is $U$ seen in $\mathcal{PRT}(1<\cdots<k-1<V<\omega(k+q)<\cdots<\omega(p))$ and $\omega\cdot V$ is $V$ seen in $\mathcal{PRT}(k=\omega(k)<\cdots<\omega(k+q-1))$. Thus, by definition of the operadic composition in $\mathcal{B}race$, the tree $\omega\cdot V$ can be seen as a subtree $S\subset T$ such that $T/S=\omega\cdot U$.\\

    \noindent In the converse direction, let $S\subset T$. Let $k=min(V_S)$ and $q=|S|$. Let $\omega_S:\llbracket k,k+q-1\rrbracket\longrightarrow V_S$ and $\omega_{T/S}:\llbracket 1,n\rrbracket\setminus\llbracket k+1,k+q-1\rrbracket\longrightarrow V_{T/S}$ be the unique order preserving maps between the two considered finite sets. Then, by definition, $\omega=\omega_{T/S}\circ_k\omega_S\in Sh_\ast(1,\ldots,\underset{k}{q},\ldots,1)$. We finally set $U=\omega^{-1}\cdot(T/S)$ and $V=\omega^{-1}\cdot S$. Because $T/S\circ_S S$ obviously contains the tree $T$, we have that $T$ occurs in the composite $\omega\cdot(U\circ_k V)$.\\ 

    \noindent We thus have proved that
    $$(f\star f)(e_1^n)=\sum_{\substack{T\in\mathcal{PRT}(n)\\ r(T)=1}}T\otimes\left(\sum_{S\subset T}\Sigma^{-1} (T/S)^\vee\circ_S\Sigma^{-1} S^\vee\right).$$

    \noindent The identity $d(f)(e_1^n)+(f\star f)(e_1^n)=0$ follows.
\end{proof}

We now prove Theorem \ref{theoremD}.

\begin{thm}\label{morph}
    There exists an operad morphism $\mathcal{P}re\mathcal{L}ie_\infty\longrightarrow \mathcal{B}race\underset{\text{\normalfont H}}{\otimes} \mathcal{E}$ which fits in a commutative square
    % https://q.uiver.app/#q=WzAsNCxbMCwwLCJcXG1hdGhjYWx7UH1yZVxcbWF0aGNhbHtMfWllX1xcaW5mdHkiXSxbMSwwLCJcXG1hdGhjYWx7Qn1yYWNlXFx1bmRlcnNldHtcXHRleHR7XFxub3JtYWxmb250IEh9fXtcXG90aW1lc31cXG1hdGhjYWx7RX0iXSxbMSwxLCJcXG1hdGhjYWx7Qn1yYWNlIl0sWzAsMSwiXFxtYXRoY2Fse1B9cmVcXG1hdGhjYWx7TH1pZSJdLFswLDFdLFsxLDJdLFswLDNdLFszLDJdXQ==
\[\begin{tikzcd}
	{\mathcal{P}re\mathcal{L}ie_\infty} & {\mathcal{B}race\underset{\text{\normalfont H}}{\otimes}\mathcal{E}} \\
	{\mathcal{P}re\mathcal{L}ie} & {\mathcal{B}race}
	\arrow[from=1-1, to=1-2]
	\arrow[from=1-1, to=2-1]
	\arrow[from=1-2, to=2-2]
	\arrow[from=2-1, to=2-2]
\end{tikzcd}.\]
\end{thm}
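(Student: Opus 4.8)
The plan is to realise the morphism of Theorem \ref{theoremD} as the composite
$$\mathcal{P}re\mathcal{L}ie_\infty\longrightarrow\mathcal{B}race\underset{\text{\normalfont H}}{\otimes}B^c(\Lambda^{-1}\mathcal{B}race^\vee)\longrightarrow\mathcal{B}race\underset{\text{\normalfont H}}{\otimes}\mathcal{E},$$
where the first arrow is the operad morphism produced in Lemma \ref{morph2} and the second arrow is $id_{\mathcal{B}race}\underset{\text{\normalfont H}}{\otimes}\phi$, with $\phi:B^c(\Lambda^{-1}\mathcal{B}race^\vee)\longrightarrow\mathcal{E}$ the explicit lift of Theorem \ref{mu}. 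Since the Hadamard tensor product is functorial, $id_{\mathcal{B}race}\underset{\text{\normalfont H}}{\otimes}\phi$ is again an operad morphism, hence so is the composite. Chasing through the two definitions, and using that $\phi$ sends the generator $\Sigma^{-1}T^\vee$ to $\mu_T$, this composite sends the generator $e_1^n$ of $\mathcal{P}re\mathcal{L}ie_\infty$ to $-\sum_{T\in\mathcal{PRT}(n),\,r(T)=1}T\otimes\mu_T$.

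It then remains to verify the commutativity of the square. Both routes around the square are operad morphisms $\mathcal{P}re\mathcal{L}ie_\infty\longrightarrow\mathcal{B}race$ out of the free operad underlying $\mathcal{P}re\mathcal{L}ie_\infty=B^c(\Lambda^{-1}\text{\normalfont Perm}^c)$, so they coincide as soon as they agree on a generating family; by equivariance it is enough to test them on the elements $e_1^n$ for $n\geq 2$. Along the top-then-right path, $e_1^n$ maps to $-\sum_{T}T\otimes\mu_T$, to which one applies the right vertical map, namely $id_{\mathcal{B}race}$ tensored with the augmentation $\mathcal{E}\longrightarrow\mathcal{C}om$ under the identification $\mathcal{B}race\underset{\text{\normalfont H}}{\otimes}\mathcal{C}om\simeq\mathcal{B}race$. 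The decisive point is that $\mu_T\in\mathcal{E}(|T|)_{|T|-2}$ has positive degree whenever $|T|\geq 3$, so it is annihilated by the augmentation; only the trees with $|T|=2$, that is the case $n=2$, contribute, and there $\mu_T$ is the degree-$0$ permutation $(12)$, which maps to $1\in\mathcal{C}om$.

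Along the left-then-bottom path, the left vertical map of Remark \ref{prelieinftoprelie} sends $e_1^2$ to the pre-Lie product and all other $e_1^n$ to $0$, while the bottom map $\mathcal{P}re\mathcal{L}ie\longrightarrow\mathcal{B}race$ identifies the pre-Lie product with the brace operation $x\langle y\rangle$, i.e. the unique two-vertex tree. Both paths therefore vanish for $n\geq 3$ and reduce, for $n=2$, to a scalar multiple of the two-vertex tree, which yields the commutativity once the scalars are checked to coincide.

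The substantive content has already been secured: Lemma \ref{morph2} gives the well-definedness of the first arrow (the Maurer-Cartan equation in the convolution pre-Lie algebra), and Theorem \ref{mu} guarantees that $\phi$ respects the cobar differential through the relation $d(\Lambda\mu_T)+\sum_{S\subset T}\Lambda\mu_{T/S}\circ_S\Lambda\mu_S=0$. The only genuinely new step here is the commutativity check, which is essentially a degree count. The main obstacle I anticipate is thus purely a matter of sign bookkeeping: one must track the signs introduced by the desuspensions $\Sigma^{-1}$ and by the operadic suspension isomorphism $\Lambda^{-1}\Lambda\simeq id$ across both constituent morphisms, so as to confirm that the image of $e_1^2$ agrees on the nose with the brace operation $x\langle y\rangle$.
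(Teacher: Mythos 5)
Your proposal is correct and follows exactly the paper's route: the morphism is defined as the composite of the map from Lemma \ref{morph2} with $id_{\mathcal{B}race}\underset{\text{\normalfont H}}{\otimes}\phi$, where $\phi$ is the lift of Theorem \ref{mu}, and the square is then checked on generators. The paper dismisses the commutativity as an "immediate check," whereas you spell out the degree argument (only $|T|=2$ survives the augmentation $\mathcal{E}\to\mathcal{C}om$), which is precisely the verification the paper leaves implicit.
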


\begin{proof}
    The morphism $\mathcal{P}re\mathcal{L}ie_\infty\longrightarrow\mathcal{B}race\underset{\text{\normalfont H}}{\otimes}\mathcal{E}$ is given by the composite of the morphism $\mathcal{P}re\mathcal{L}ie_\infty\longrightarrow\mathcal{B}race\underset{\text{\normalfont H}}{\otimes}B^c(\Lambda^{-1}\mathcal{B}race^\vee)$ given by Lemma \ref{morph2} with the morphism $\mathcal{B}race\underset{\text{\normalfont H}}{\otimes}B^c(\Lambda^{-1}\mathcal{B}race^\vee)\longrightarrow\mathcal{B}race\underset{\text{\normalfont H}}{\otimes}\mathcal{E}$ given by applying the morphism $B^c(\Lambda^{-1}\mathcal{B}race^\vee)\longrightarrow\mathcal{E}$ defined in Theorem \ref{mu} on the second tensor. The commutative diagram is an immediate check.
\end{proof}

\begin{cor}\label{bracee}
    Every $\mathcal{B}race\underset{\text{\normalfont H}}{\otimes} \mathcal{E}$-algebra $L$ admits the structure of a $\Gamma(\mathcal{P}re\mathcal{L}ie_\infty,-)$-algebra.
\end{cor}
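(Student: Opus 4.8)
The plan is to realise the composite sketched in the introduction right after Theorem \ref{theoremD}, namely
$$\Gamma(\mathcal{P}re\mathcal{L}ie_\infty,L)\longrightarrow\Gamma(\mathcal{B}race\underset{\text{\normalfont H}}{\otimes}\mathcal{E},L)\xleftarrow{\ \simeq\ }\mathcal{S}(\mathcal{B}race\underset{\text{\normalfont H}}{\otimes}\mathcal{E},L)\longrightarrow L,$$
and to check that the resulting map equips $L$ with a $\Gamma(\mathcal{P}re\mathcal{L}ie_\infty,-)$-algebra structure. The first arrow comes from Theorem \ref{morph}: the operad morphism $\mathcal{P}re\mathcal{L}ie_\infty\longrightarrow\mathcal{B}race\underset{\text{\normalfont H}}{\otimes}\mathcal{E}$ induces, by functoriality of $\Gamma(-,V)$ in the operad variable (both operads vanish in arity $0$, so the monad $\Gamma$ is defined), a morphism of monads $\Gamma(\mathcal{P}re\mathcal{L}ie_\infty,-)\longrightarrow\Gamma(\mathcal{B}race\underset{\text{\normalfont H}}{\otimes}\mathcal{E},-)$.

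The key step is the middle isomorphism. I would prove that the trace map $Tr\colon\mathcal{S}(\mathcal{B}race\underset{\text{\normalfont H}}{\otimes}\mathcal{E},-)\longrightarrow\Gamma(\mathcal{B}race\underset{\text{\normalfont H}}{\otimes}\mathcal{E},-)$ is an isomorphism of monads. Since $(\mathcal{B}race\underset{\text{\normalfont H}}{\otimes}\mathcal{E})(n)=\mathcal{B}race(n)\otimes\mathcal{E}(n)$ carries the diagonal $\Sigma_n$-action, it suffices to check that this action is free, for then the canonical map from coinvariants to invariants is an isomorphism in each arity and hence $Tr$ is an isomorphism. Freeness reduces to freeness of the $\Sigma_n$-action on $\mathcal{E}(n)$: on the spanning set of nondegenerate tuples $(w_0,\ldots,w_d)$, a permutation $\sigma$ fixing such a tuple satisfies $\sigma w_0=w_0$, whence $\sigma=id$ by freeness of left translation. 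Thus each $\mathcal{E}(n)_d$ is a free $\mathbb{K}[\Sigma_n]$-module, and tensoring with $\mathcal{B}race(n)$ under the diagonal action keeps it free.

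Having established that $Tr$ is a monad isomorphism, I would transport structures along it. The $\mathcal{B}race\underset{\text{\normalfont H}}{\otimes}\mathcal{E}$-algebra structure on $L$ is the same datum as an $\mathcal{S}(\mathcal{B}race\underset{\text{\normalfont H}}{\otimes}\mathcal{E},-)$-algebra structure, i.e.\ a structure map $\mathcal{S}(\mathcal{B}race\underset{\text{\normalfont H}}{\otimes}\mathcal{E},L)\longrightarrow L$; precomposing with $Tr^{-1}$ yields a $\Gamma(\mathcal{B}race\underset{\text{\normalfont H}}{\otimes}\mathcal{E},-)$-algebra structure, the monad axioms being preserved because $Tr$ is a map of monads. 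Composing this with the monad morphism of the first step produces the sought map $\Gamma(\mathcal{P}re\mathcal{L}ie_\infty,L)\longrightarrow L$, and its unit and associativity axioms hold formally since every arrow involved is a morphism of monads or is obtained by transport along one.

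The main obstacle is precisely the freeness/trace-isomorphism step: everything else is functorial bookkeeping, but the passage from $\mathcal{S}$ to $\Gamma$ (the divided-power enrichment) genuinely relies on the cofibrancy of the Barratt--Eccles operad, encoded here as freeness of the symmetric group action. Once that is in hand, the corollary follows immediately from Theorem \ref{morph}.
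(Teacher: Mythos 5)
Your proposal is correct and follows essentially the same route as the paper: the paper's proof consists precisely of the composite $\Gamma(\mathcal{P}re\mathcal{L}ie_\infty,L)\to\Gamma(\mathcal{B}race\underset{\text{\normalfont H}}{\otimes}\mathcal{E},L)\xleftarrow{\simeq}\mathcal{S}(\mathcal{B}race\underset{\text{\normalfont H}}{\otimes}\mathcal{E},L)\to L$, citing the freeness of the $\Sigma_n$-action on $(\mathcal{B}race\underset{\text{\normalfont H}}{\otimes}\mathcal{E})(n)$ and Theorem \ref{morph}. You merely supply the (correct) details of the freeness argument that the paper leaves implicit.
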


\begin{proof}
    Using that the action of $\Sigma _n$ on $(\mathcal{B}race\underset{\text{\normalfont H}}{\otimes} \mathcal{E})(n)$ is free and the previous theorem, we define, for every $\mathcal{B}race\underset{\text{\normalfont H}}{\otimes} \mathcal{E}$-algebras $L$, the composite
    \begin{center}
\begin{tikzcd}
\Gamma(\mathcal{P}re\mathcal{L}ie_\infty,L)\arrow[r] & \Gamma(\mathcal{B}race\underset{\text{\normalfont H}}{\otimes} \mathcal{E},L) & \mathcal{S}(\mathcal{B}race\underset{\text{\normalfont H}}{\otimes} \mathcal{E},L)\arrow[l, "\simeq"']\arrow[r] & L.
\end{tikzcd}
\end{center}

\noindent This gives a $\Gamma(\mathcal{P}re\mathcal{L}ie_\infty,-)$-algebra structure on $L$.
\end{proof}

In particular, if $L=A\otimes \Sigma E$ where $A$ is a brace algebra and $E$ a $\mathcal{E}$-algebra, then $A\otimes\Sigma E$ is a $\Gamma\Lambda\mathcal{PL}_\infty$-algebra. We can compute the weighted braces of $A\otimes\Sigma E$ as follows. Let $l:\mathcal{S}(\mathcal{B}race,A)\longrightarrow A$ be the brace algebra structure on $A$, and let $\partial^E$ be the twisting morphism on $\mathcal{B}race^c(\Sigma E)$ induced by the $B^c(\Lambda^{-1}\mathcal{B}race^\vee)$-algebra structure of $E$ (see Theorem \ref{mu}). Then, for every $a,b_1,\ldots,b_n\in A, x,y_1,\ldots,y_n\in\Sigma E$ and $r_1,\ldots,r_n\geq 0$, we have
\begin{multline*}
    a\otimes x\llbrace b_1\otimes  y_1,\ldots,b_n\otimes  y_n\rrbrace_{r_1,\ldots,r_n}\\
    =\sum_{\sigma\in Sh(r_1,\ldots,r_n)}\sum_{\substack{T\in\mathcal{PRT}(r+1)\\T\text{ canonical}}}\pm l(T\otimes a\otimes c_{\sigma(1)}\otimes\cdots\otimes c_{\sigma(r)})\otimes\partial^E(T^\vee\otimes x\otimes  {z_{\sigma(1)}}\otimes\cdots\otimes {z_{\sigma(r)}}),
\end{multline*}

\noindent where we have set $r=r_1+\cdots+r_n$, $c_1,\ldots,c_r=\underbrace{b_1,\ldots,b_1}_{r_1},\ldots,\underbrace{b_n,\dots,b_n}_{r_n}$ and ${z_1},\ldots,{z_r}=\underbrace{{x_1},\ldots,{x_1}}_{r_1},\ldots,\underbrace{{x_n},\ldots,{x_n}}_{r_n}$. The sign is given by the permutation of the $c_i$'s with $ {x}$ and the $ {z_i}$'s, and the permutation of $\partial^E$ with $a$ and the $c_i$'s.

\section{The simplicial Maurer-Cartan set of a complete brace algebra}

The goal of this section is to define the notion of a simplicial Maurer-Cartan set $\mathcal{MC}_\bullet(A)$ associated to a brace algebra $A$, and to study the homotopy type of this simplicial set. Explicitly, the $n$-component $\mathcal{MC}_n(A)$ will be defined as the Maurer-Cartan set of $A\otimes\Sigma N^*(\Delta^n)$ for the $\widehat{\Gamma\Lambda\mathcal{PL}_\infty}$-algebra structure given by Corollary \ref{bracee}.\\

In $\mathsection$\ref{sec:241}, we define the simplicial set $\mathcal{MC}_\bullet(A)$ and prove the first part of Theorem \ref{theoremE} which asserts that it is a Kan complex.\\

In $\mathsection$\ref{sec:242}, we prove the remaining part of Theorem \ref{theoremE}, which gives a computation of the connected components and the homotopy groups of $\mathcal{MC}_\bullet(A)$. More precisely, we first compute the connected components, whose computation is similar to \cite[Theorem 3.6]{moi}, before computing the $\pi_1,\pi_2$ and then the $\pi_n$ for $n\geq 3$.\\

In $\mathsection$\ref{sec:243}, we prove Theorem \ref{theoremF}, which is a higher version of the Goldman-Millson theorem (see \cite[$\mathsection$2.4]{goldman}). Our proof basically follows the proof found in \cite[$\mathsection$6]{rogers}, which will be adapted to our context.\\

In $\mathsection$\ref{sec:244}, we compare our simplicial notion of Maurer-Cartan set defined for complete brace algebras to the notion of simplicial Maurer-Cartan set associated to a complete Lie algebra, and prove that in fact, these two simplicial sets are weakly equivalent.

\subsection{The simplicial set $\mathcal{MC}_\bullet(A)$}\label{sec:241}

Let $A$ be a complete brace algebra. By Corollary \ref{bracee}, and using that $N^*(\Delta^n)$ is a $\mathcal{E}$-algebra, we obtain that $ A\widehat{\otimes}\Sigma N^*(\Delta^n)=A\otimes\Sigma N^*(\Delta^n)$ is a $\widehat{\Gamma\Lambda\mathcal{PL}_\infty}$-algebra with the filtration 
$$F_k(A\otimes\Sigma N^*(\Delta^n))=F_kA\otimes \Sigma N^*(\Delta^n).$$

\noindent where we denote by $(F_kA)_{k\geq 1}$ the filtration on $A$.

\begin{defi}
    Let $A$ be a complete brace algebra. Its {\normalfont simplicial Maurer-Cartan set} is the simplicial set $\mathcal{MC}_\bullet(A)$ such that
    $$\mathcal{MC}_\bullet(A)=\mathcal{MC}(A\otimes\Sigma N^*(\Delta^\bullet)).$$
\end{defi}

\begin{prop}
    The previous definition defines a functor $\mathcal{MC}_\bullet$ from the category of complete brace algebras to the category of sets.
\end{prop}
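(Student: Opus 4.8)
The plan is to verify the two functoriality conditions: that $\mathcal{MC}_\bullet$ assigns to each complete brace algebra a simplicial set, and that it sends morphisms of complete brace algebras to morphisms of simplicial sets. The key observation is that $\mathcal{MC}_\bullet(A)$ is defined as the composite of three functorial constructions, so the result should follow by assembling naturality statements already established earlier in the excerpt.

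First I would check that $\mathcal{MC}_\bullet(A)$ is genuinely a simplicial set. The assignment $[n]\mapsto N^*(\Delta^n)$ is a cosimplicial object in $\text{dgMod}_\mathbb{K}$ valued in $\mathcal{E}$-algebras, hence $[n]\mapsto A\otimes\Sigma N^*(\Delta^n)$ is a cosimplicial object in $\widehat{\Gamma\Lambda\mathcal{PL}_\infty}$ (using that $N^*$ is a functor $\text{sSet}\to\text{dgMod}_\mathbb{K}$ and that the $\widehat{\Gamma\Lambda\mathcal{PL}_\infty}$-structure on $A\otimes\Sigma N^*(\Delta^n)$ is natural in $N^*(\Delta^n)$, which follows from the naturality of the construction in Corollary \ref{bracee}). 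Each face and degeneracy map $N^*(\Delta^n)\to N^*(\Delta^{n\pm 1})$ is a morphism of $\mathcal{E}$-algebras, so it induces a strict $\infty$-morphism $A\otimes\Sigma N^*(\Delta^n)\rightsquigarrow A\otimes\Sigma N^*(\Delta^{n\pm 1})$ in $\widehat{\Gamma\Lambda\mathcal{PL}_\infty}$. Applying the functor $\mathcal{MC}:\widehat{\Gamma\Lambda\mathcal{PL}_\infty}\to\text{Set}$ then produces the required simplicial face and degeneracy maps, and the simplicial identities are inherited from the cosimplicial identities of $N^*(\Delta^\bullet)$ by functoriality of $\mathcal{MC}$.

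Next I would verify functoriality in $A$. Given a morphism $\Theta:A\to B$ of complete brace algebras, the induced map $\Theta\otimes\mathrm{id}:A\otimes\Sigma N^*(\Delta^n)\to B\otimes\Sigma N^*(\Delta^n)$ is a strict morphism in $\widehat{\Gamma\Lambda\mathcal{PL}_\infty}$ for each $n$, and this family is natural in $[n]$ since tensoring with the face and degeneracy maps commutes with $\Theta\otimes\mathrm{id}$. Applying $\mathcal{MC}$ degreewise and invoking its functoriality yields a morphism of simplicial sets $\mathcal{MC}_\bullet(\Theta):\mathcal{MC}_\bullet(A)\to\mathcal{MC}_\bullet(B)$, and the assignment respects composition and identities because $\mathcal{MC}$ does.

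The main obstacle, and the point deserving the most care, is establishing that the $\widehat{\Gamma\Lambda\mathcal{PL}_\infty}$-structure on $A\otimes\Sigma N^*(\Delta^n)$ coming from Corollary \ref{bracee} is strictly natural in both variables simultaneously; in particular that a morphism of $\mathcal{E}$-algebras $N^*(\Delta^n)\to N^*(\Delta^m)$ induces a \emph{strict} $\infty$-morphism rather than merely a general $\infty$-morphism, so that the maps genuinely satisfy the cosimplicial identities on the nose. This reduces to the naturality of the morphism $\mathcal{P}re\mathcal{L}ie_\infty\to\mathcal{B}race\underset{\text{\normalfont H}}{\otimes}\mathcal{E}$ of Theorem \ref{theoremD} with respect to morphisms of $\mathcal{E}$-algebras, together with the functoriality of the Schur functor construction in Corollary \ref{bracee}; once this is in place, everything else is a formal consequence of the functoriality of $\mathcal{MC}$ proved in the previous subsection.
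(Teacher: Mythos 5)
Your proposal is correct and follows essentially the same route as the paper's (one-sentence) proof: a filtration-preserving brace algebra morphism $\Theta$ induces a morphism of $\mathcal{B}race\underset{\text{\normalfont H}}{\otimes}\mathcal{E}$-algebras $\Theta\otimes id$, hence a strict morphism in $\widehat{\Gamma\Lambda\mathcal{PL}_\infty}$, and one then applies the functoriality of $\mathcal{MC}$. Your additional verification of the simplicial identities via the naturality of the $\mathcal{E}$-algebra structure on $N^*(\Delta^\bullet)$ is a point the paper leaves implicit in the definition, and your concern about strictness is resolved exactly as you suggest, by the naturality of Corollary \ref{bracee} in the $\mathcal{B}race\underset{\text{\normalfont H}}{\otimes}\mathcal{E}$-algebra.
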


\begin{proof}
    This follows directly from the fact that every brace algebra morphism $f:A\longrightarrow B$ which preserves the filtrations gives rise to a morphism of $\mathcal{B}race\underset{\text{\normalfont H}}{\otimes}\Lambda\mathcal{E}$-algebras $ f\otimes id:A\otimes\Sigma N^*(\Delta^n)\longrightarrow B\otimes\Sigma N^*(\Delta^n)$ which preserves the filtrations, and then to a strict morphism of $\widehat{\Gamma\Lambda\mathcal{PL}_\infty}$-algebras.
\end{proof}

We aim to prove that $\mathcal{MC}_\bullet(A)$ is a Kan complex. We will basically follow the proof of the analogous theorem in \cite{felix}. Recall from Proposition \ref{homonstar} that we have morphisms $\varphi_n^i:N^*(\Delta^n)\longrightarrow N^*(\Delta^n)$ and $h_n^i:N^*(\Delta^n)\longrightarrow N^{*-1}(\Delta^n)$ which satisfy
$$d h_n^i+h_n^id=id-\varphi_n^i.$$

\noindent These relations can be carried to $A\otimes\Sigma N^*(\Delta^n)$ by setting $H_n^i=id\otimes\Sigma h_n^i$ and $\Phi_n^i=id\otimes \Sigma\varphi_n^i$. Accordingly, we have

$$d H_n^i+H_n^id=id-\Phi_n^i.$$

\noindent Note that if $x\in\mathcal{MC}(A\otimes\Sigma N^*(\Delta^n))$, then the identity

$$d(x)+\sum_{k\geq 1}x\llbrace x\rrbrace_k=0$$

\noindent gives, after composing by $H_n^i$,

$$x=\Phi_n^i(x)+d H_n^i(x)-\sum_{k\geq 1}H_n^i(x\llbrace x\rrbrace_k).$$

We give the following lemma, which is the analogue of \cite[Lemma 6.5]{felix}. For our needs, we need an analogue valid for every complete $\widehat{\Gamma\Lambda\mathcal{PL}_\infty}$-algebra structure on $A\otimes\Sigma N^*(\Delta^n)$.

\begin{lm}\label{découpe}
    Let $n\geq 0$ and $0\leq i\leq n$. Let $A\in\text{\normalfont \dgMod}$. Consider any complete $\Gamma\Lambda\mathcal{PL}_\infty$-algebra structure on $A\otimes\Sigma N^*(\Delta^n)$ such that $\Phi_n^i:A\otimes\Sigma N^*(\Delta^n)\longrightarrow A\otimes\Sigma N^*(\Delta^n)$ is a strict morphism. Then the map

    \begin{center}
        $\begin{array}{clc}
            \mathcal{MC}(A\otimes\Sigma N^*(\Delta^n)) & \longrightarrow & (\mathcal{MC}(A\otimes\Sigma N^*(\Delta^n))\cap Im(\Phi_n^i))\times Im(d H_n^i) \\
            x & \longmapsto & (\Phi_n^i(x),d H_n^i(x)) 
        \end{array}$
    \end{center}

    \noindent is a bijection.
\end{lm}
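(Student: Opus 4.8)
The plan is to realize the source and target as pieces of a Hodge-type splitting coming from the deformation retract of Proposition \ref{homonstar}, and then to reconstruct a Maurer-Cartan element from its two components by a convergent recursion. First I would record the side conditions of the contraction. Writing $P=id\otimes\Sigma p_n$ and $I=id\otimes\Sigma i_n^i$, so that $\Phi_n^i=IP$, the explicit formula for $h_n^i$ given after Proposition \ref{homonstar} shows that $h_n^i$ inserts the index $i$ and vanishes on chains already containing $i$; from this one reads off $h_n^i\circ h_n^i=0$, $p_n\circ h_n^i=0$ and $h_n^i\circ i_n^i=0$, and by dualizing and tensoring with $id_A$ these give $(H_n^i)^2=0$, $PH_n^i=0$ and $H_n^iI=0$. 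Together with the homotopy relation $dH_n^i+H_n^id=id-\Phi_n^i$ and the fact that $\Phi_n^i$ commutes with the differential, a direct check then shows that $e_1:=\Phi_n^i$, $e_2:=dH_n^i$ and $e_3:=H_n^id$ are three mutually orthogonal idempotents with $e_1+e_2+e_3=id$, hence a direct sum splitting $Im(e_1)\oplus Im(e_2)\oplus Im(e_3)$ of $A\otimes\Sigma N^*(\Delta^n)$. The map of the lemma is $x\mapsto(e_1x,e_2x)$, and it lands in the asserted target because $\Phi_n^i$ is a strict morphism, so it preserves Maurer-Cartan elements and $\Phi_n^i(x)\in\mathcal{MC}\cap Im(\Phi_n^i)$.

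For injectivity I would use that, for $x\in\mathcal{MC}$, the Maurer-Cartan equation rewrites the third component as $e_3x=H_n^id(x)=-\sum_{k\geq1}H_n^i(x\llbrace x\rrbrace_k)$, which is exactly the formula displayed just before the lemma. Thus $x=e_1x+e_2x+e_3x$ is determined by $(e_1x,e_2x)$ up to a term strictly deeper in the filtration: if $x,x'$ are Maurer-Cartan elements with the same image, then $x-x'=H_n^i(d(x)-d(x'))$, and expanding $x\llbrace x\rrbrace_k-x'\llbrace x'\rrbrace_k$ via the additivity formula $(v)$ of Theorem \ref{relationsprelieinfinite} together with the estimate $F_mV\llbrace F_pV\rrbrace_{k}\subset F_{m+pk}V$ shows that $x-x'\in F_jV$ forces $x-x'\in F_{j+1}V$; completeness ($\bigcap_jF_jV=0$) yields $x=x'$.

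For surjectivity, given $(a,b)$ in the target I would solve $x=a+b-\sum_{k\geq1}H_n^i(x\llbrace x\rrbrace_k)$ by the iteration $x_0=a+b$, $x_{m+1}=a+b-\sum_{k\geq1}H_n^i(x_m\llbrace x_m\rrbrace_k)$, which is Cauchy by the same filtration estimate and converges to some $x$. Applying $e_1$ and $e_2$ to the fixed-point equation and using the side conditions and orthogonality ($\Phi_n^iH_n^i=0$, $(H_n^i)^2=0$, $e_1a=a$, $e_1b=0$, $e_2a=0$, $e_2b=b$) gives $\Phi_n^i(x)=a$ and $dH_n^i(x)=b$; comparing with $e_3x=H_n^id(x)$ then yields $H_n^i\mathcal{R}(x)=0$, where $\mathcal{R}(x):=d(x)+\sum_{k\geq1}x\llbrace x\rrbrace_k$.

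The main obstacle is to upgrade $H_n^i\mathcal{R}(x)=0$ to $\mathcal{R}(x)=0$. Here I would split $\mathcal{R}(x)=e_1\mathcal{R}(x)+e_2\mathcal{R}(x)+e_3\mathcal{R}(x)$: the second summand is $dH_n^i\mathcal{R}(x)=0$, and the first is $\Phi_n^i\mathcal{R}(x)=\mathcal{R}(\Phi_n^ix)=\mathcal{R}(a)=0$ since $\Phi_n^i$ is strict of degree $0$ and $a\in\mathcal{MC}$, so $\mathcal{R}(x)=H_n^id\mathcal{R}(x)$. I would then invoke the Bianchi-type identity obtained exactly as in the proof of the functoriality of $\mathcal{MC}(-)$: writing $Q=\widetilde\Psi(Q^0)$ (Proposition \ref{bijendo}) and using $Q^0Q=0$ on the group-like element $\sum_{k\geq0}x\otimes\mathcal{O}(x^{\otimes k})$ (with $\mathcal{O}$ as in Proposition \ref{orbit}) produces $d\mathcal{R}(x)=-\sum_{k\geq1}\mathcal{R}(x)\llbrace x\rrbrace_k\mp\sum_{k\geq0}x\llbrace\mathcal{R}(x),x\rrbrace_{1,k}$. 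Every term on the right carries one factor of $\mathcal{R}(x)$ with the remaining factors in $F_1V$, so $\mathcal{R}(x)\in F_jV$ forces $d\mathcal{R}(x)\in F_{j+1}V$ and hence $\mathcal{R}(x)=H_n^id\mathcal{R}(x)\in F_{j+1}V$; completeness gives $\mathcal{R}(x)=0$, so $x\in\mathcal{MC}$ and maps to $(a,b)$, which finishes the proof.
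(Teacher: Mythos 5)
Your proof is correct and follows essentially the same route as the paper's: the fixed-point iteration $x_{m+1}=a+b-\sum_{k\geq 1}H_n^i(x_m\llbrace x_m\rrbrace_k)$ in the complete filtration, the homotopy relation $dH_n^i+H_n^id=id-\Phi_n^i$, and the brace identity $(vi)$ of Theorem \ref{relationsprelieinfinite} to push $\mathcal{R}(x)$ into every stage of the filtration. Your explicit orthogonal idempotent splitting $id=\Phi_n^i+dH_n^i+H_n^id$ and the uniqueness-of-fixed-point argument for injectivity are a cleaner bookkeeping of what the paper leaves implicit when it asserts that the recursive construction is a two-sided inverse.
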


\begin{proof}
    % We first note that this map is well defined. Indeed, if $x\in\mathcal{MC}(A\otimes N^*(\Delta^n))$, then $\Phi_n^i(x)$ is an element of $\mathcal{MC}(A\otimes N^*(\Delta^0))$, since the $\Gamma\mathcal{PL}_\infty$ structure on $A\otimes N^*(\Delta^n)$ preserves the simplicial structure of $A\otimes N^*(\Delta^\bullet)$.\\
    We first note that the above map is well defined since $\Phi_n^i$ is a strict morphism by hypothesis. Now, let $e\in\mathcal{MC}(A\otimes\Sigma N^*(\Delta^n))\cap Im(\Phi_n^i)$ and $r\in Im(dH_n^i)$. We set:

    \begin{center}
        $\left\{\begin{array}{rll}
            \alpha_0 & = & e+r \\
             \forall k\geq 0, \alpha_{k+1} & = & \displaystyle e+r-\sum_{l\geq 1}H_n^i(\alpha_k\llbrace\alpha_k\rrbrace_l)
        \end{array}\right..$
    \end{center}

    \noindent This defines a Cauchy sequence $(\alpha_k)_k$. Let $\alpha$ be its limit. We then have
    $$\alpha=e+r-\sum_{l\geq 1}H_n^i(\alpha\llbrace\alpha\rrbrace_l).$$
    From this identity, we deduce $\Phi_n^i\alpha=e$ and $d H_n^i\alpha=r$. We just need to check that $\alpha\in\mathcal{MC}(A\otimes\Sigma N^*(\Delta^n))$. Using the relation
    $$d H_n^i+H_n^id=id-\Phi_n^i,$$

    \noindent we obtain
    $$d(\alpha)=d(e)+\sum_{l\geq 1}H_n^i(d(\alpha\llbrace\alpha\rrbrace_l))-\sum_{l\geq 1}\alpha\llbrace\alpha\rrbrace_l+\Phi_n^i\left(\sum_{l\geq 1}\alpha\llbrace\alpha\rrbrace_l\right),$$

    \noindent and then, because $\Phi_n^i$ is a strict morphism and that $e\in\mathcal{MC}(A\otimes\Sigma N^*(\Delta^n))$,
    $$d(\alpha)+\sum_{l\geq 1}\alpha\llbrace\alpha\rrbrace_l=\sum_{l\geq 1}H_n^i(d(\alpha\llbrace\alpha\rrbrace_l)).$$

    \noindent Let $\mathcal{R}(\alpha)=d(\alpha)+\sum_{l\geq 1}\alpha\llbrace\alpha\rrbrace_l$. We use the identity
    $$\sum_{p+q=l}\alpha\llbrace\alpha\rrbrace_p\llbrace\alpha\rrbrace_q+\sum_{p+q=l-1}\alpha\llbrace\alpha\llbrace\alpha\rrbrace_p,\alpha\rrbrace_{1,q}=0$$

    \noindent for every $l\geq 1$. We thus have
    $$d(\alpha\llbrace\alpha\rrbrace_l)=-\sum_{\substack{p+q=l\\ q\neq 0}}\alpha\llbrace\alpha\rrbrace_p\llbrace\alpha\rrbrace_q-\sum_{p+q=l-1}\alpha\llbrace\alpha\llbrace\alpha\rrbrace_p,\alpha\rrbrace_{1,q}$$
    \noindent so that
    $$\sum_{l\geq 1}d(\alpha\llbrace\alpha\rrbrace_l)=-\sum_{q\geq 1}\mathcal{R}(\alpha)\llbrace\alpha\rrbrace_q-\sum_{q\geq 0}\alpha\llbrace\mathcal{R}(\alpha),\alpha\rrbrace_q.$$

    \noindent This leads finally to the identity

    $$\mathcal{R}(\alpha)=-\sum_{q\geq 1}H_n^i(\mathcal{R}(\alpha)\llbrace\alpha\rrbrace_q)-\sum_{q\geq 0}H_n^i(\alpha\llbrace\mathcal{R}(\alpha),\alpha\rrbrace_{1,q}).$$

    \noindent It follows from this identity that if $\mathcal{R}(\alpha)\in F_k(A)\otimes\Sigma N^*(\Delta^n)$ for some $k\geq 1$, then $\mathcal{R}(\alpha)\in F_{k+1}(A)\otimes\Sigma N^*(\Delta^n)$. We thus have $\mathcal{R}(\alpha)=0$ so that $\alpha\in\mathcal{MC}(A\otimes\Sigma N^*(\Delta^n))$. We then have a bijection which has as inverse this previous construction.
\end{proof}

\begin{defi}
    A simplicial $\widehat{\Gamma\Lambda\mathcal{PL}_\infty}$-algebra is a simplicial object in the category $\widehat{\Gamma\Lambda\mathcal{PL}_\infty}$. A simplicial $\widehat{\Gamma\Lambda\mathcal{PL}_\infty}$-algebra $A$ is {\normalfont strict} if the face and degeneracy maps of $A$ are strict morphisms of $\widehat{\Gamma\Lambda\mathcal{PL}_\infty}$-algebras. A morphism of simplicial $\widehat{\Gamma\Lambda\mathcal{PL}_\infty}$-algebras $\phi:A\longrightarrow B$ is {\normalfont strict} if, for every $n\geq 0$, the map $\phi_n:A_n\longrightarrow B_n$ is a strict morphism of $\widehat{\Gamma\Lambda\mathcal{PL}_\infty}$-algebras.
\end{defi}

\begin{thm}\label{kan}
    Let $A,B\in\text{\normalfont\dgMod}$ be such that $A\otimes\Sigma N^*(\Delta^\bullet)$ and $B\otimes\Sigma N^*(\Delta^\bullet)$ are endowed with the structure of simplicial $\widehat{\Gamma\Lambda\mathcal{PL}_\infty}$-algebras. Let $f:A\longrightarrow B$ be a surjective morphism in \normalfont{\dgMod} such that $f\otimes id:A\otimes\Sigma N^*(\Delta^\bullet)\longrightarrow B\otimes\Sigma N^*(\Delta^\bullet)$ is a strict morphism.\\ Then $\mathcal{MC}(f\otimes id):\mathcal{MC}(A\otimes\Sigma N^*(\Delta^\bullet))\longrightarrow\mathcal{MC}(B\otimes\Sigma N^*(\Delta^\bullet))$ is a Kan fibration.
\end{thm}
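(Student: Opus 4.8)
The plan is to verify that $p:=\mathcal{MC}(f\otimes\mathrm{id})$ has the right lifting property against every horn inclusion $\Lambda^n_k\hookrightarrow\Delta^n$ ($n\geq 1$, $0\leq k\leq n$), writing $\mathcal{A}_\bullet:=A\otimes\Sigma N^*(\Delta^\bullet)$ and $\mathcal{B}_\bullet:=B\otimes\Sigma N^*(\Delta^\bullet)$. A horn together with a filler downstairs amounts to Maurer--Cartan elements $\alpha_j\in\mathcal{MC}(\mathcal{A}_{n-1})$ for $j\neq k$ satisfying the simplicial matching identities, together with $\beta\in\mathcal{MC}(\mathcal{B}_n)$ such that $(f\otimes\mathrm{id})(\alpha_j)=d_j\beta$; I must produce $\alpha\in\mathcal{MC}(\mathcal{A}_n)$ with $d_j\alpha=\alpha_j$ for $j\neq k$ and $(f\otimes\mathrm{id})(\alpha)=\beta$. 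First I would record the inputs attached to the vertex $k$. The operators $\Phi_n^k=\mathrm{id}\otimes\Sigma\varphi_n^k$ and $H_n^k=\mathrm{id}\otimes\Sigma h_n^k$ coming from Proposition \ref{homonstar} satisfy $dH_n^k+H_n^kd=\mathrm{id}-\Phi_n^k$. I would note that $\Phi_n^k$ is strict as a $\widehat{\Gamma\Lambda\mathcal{PL}_\infty}$-morphism, since it is induced by the constant simplicial self-map of $\Delta^n$ at the vertex $k$, hence by $\mathcal{E}$-algebra maps on the $N^*(\Delta^m)$ (this is exactly what is required to invoke Lemma \ref{découpe}, and it holds in particular for the structure of Corollary \ref{bracee}). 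A direct check on the insertion homotopy gives the side conditions $(h_n^k)^2=0$ and $\varphi_n^k h_n^k=0=h_n^k\varphi_n^k$, so that $dH_n^k$ is idempotent and restricts to the identity on its image. Finally, the vertex-$k$ contraction is natural for the faces $d_j$ with $j\neq k$ (all of which contain the vertex $k$), so that $d_jH_n^k=H_{n-1}^{k'}d_j$ and $d_j\Phi_n^k=\Phi_{n-1}^{k'}d_j$, where $k'$ is the index of $k$ inside the $j$-th face; this is the same compatibility $H_n^0 d^i=d^iH_{n-1}^0$ already used in Theorem \ref{rec}.

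Using Lemma \ref{découpe} at index $k$ I would parametrise $\alpha$ by the pair $(\Phi_n^k\alpha,\,dH_n^k\alpha)$ and translate the two required identities into conditions on this pair. Applying Lemma \ref{découpe} one dimension down at index $k'$, together with the compatibilities above, the condition $d_j\alpha=\alpha_j$ splits into a vertex part $d_j\Phi_n^k\alpha=\Phi_{n-1}^{k'}\alpha_j$ and a reduced part $d_j(dH_n^k\alpha)=dH_{n-1}^{k'}\alpha_j=:\gamma_j$; likewise $(f\otimes\mathrm{id})\alpha=\beta$ splits into a vertex part and the reduced part $(f\otimes\mathrm{id})(dH_n^k\alpha)=dH_n^k\beta=:\delta$. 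All the vertex parts are matched by the single element $\tau_0\in\mathcal{MC}(A)$ given by the common $k$-th vertex of the horn (consistent because $p\sigma=\tau$ forces $f(\tau_0)$ to be the $k$-th vertex of $\beta$), and this $\tau_0$ determines $e:=\Phi_n^k\alpha$ uniquely in $\mathcal{MC}(\mathcal{A}_n)\cap\mathrm{Im}(\Phi_n^k)$. Everything therefore reduces to producing a reduced element $r\in\mathrm{Im}(dH_n^k)$ with $d_jr=\gamma_j$ for $j\neq k$ and $(f\otimes\mathrm{id})r=\delta$, after which $\alpha:=\widetilde{\phantom{x}}^{-1}(e,r)$ (the inverse bijection of Lemma \ref{découpe}) solves the lifting problem by the characterisation just described.

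It remains to solve this reduced problem, which is now entirely linear. The data $\{\gamma_j\}_{j\neq k}$ and $\delta$ satisfy the simplicial matching identities and $(f\otimes\mathrm{id})\gamma_j=d_j\delta$, so they constitute precisely a horn together with a filler for the map $F:=f\otimes\mathrm{id}:\mathcal{A}_\bullet\to\mathcal{B}_\bullet$, regarded as a morphism of simplicial abelian groups, relative to $\Lambda^n_k\hookrightarrow\Delta^n$. Since $\mathbb{K}$ is a field, $F$ is surjective in each simplicial degree, and it is classical (Moore) that a degreewise surjection of simplicial abelian groups is a Kan fibration; this yields $\rho\in\mathcal{A}_n$ with $d_j\rho=\gamma_j$ and $F\rho=\delta$. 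Setting $r:=dH_n^k\rho\in\mathrm{Im}(dH_n^k)$ and using the idempotency identities $dH_{n-1}^{k'}\gamma_j=\gamma_j$, $dH_n^k\delta=\delta$ together with $d_j\,dH_n^k=dH_{n-1}^{k'}d_j$ and $F\,dH_n^k=dH_n^k\,F$, one checks that $r$ still satisfies $d_jr=\gamma_j$ and $Fr=\delta$, as required.

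The main obstacle is the bookkeeping of the middle step: one must be certain that the vertex-$k$ contraction $(\Phi_n^k,H_n^k)$ is genuinely natural with respect to the faces $d_j$ for $j\neq k$ and that $\Phi_n^k$ is a strict $\widehat{\Gamma\Lambda\mathcal{PL}_\infty}$-morphism, for it is exactly these two facts that allow Lemma \ref{découpe} to dismantle the nonlinear Maurer--Cartan equation into the linear horn-filling problem that Moore's theorem then solves.
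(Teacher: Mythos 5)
Your proof is correct and follows the same strategy the paper intends: the paper's own proof is a one-line appeal to \cite[Proposition 6.6]{felix}, for which it has prepared Lemma \ref{découpe}, and your argument --- splitting a Maurer--Cartan element via the vertex-$k$ contraction $(\Phi_n^k,\,dH_n^k)$, matching the $\Phi$-components by the common $k$-th vertex of the horn, and reducing the remaining horn-filling problem to a degreewise surjection of simplicial $\mathbb{K}$-vector spaces (hence a Kan fibration by Moore) --- is exactly that reference's argument written out in the present setting. The one point to state cleanly is that the strictness of $\Phi_n^k$, which Lemma \ref{découpe} requires, comes from the (implicitly assumed) strictness of the simplicial structure maps of $A\otimes\Sigma N^*(\Delta^\bullet)$, of which $\Phi_n^k$ is a composite of faces and degeneracies; your justification via the $\mathcal{E}$-algebra structure covers the case of Corollary \ref{bracee} but the general statement needs this compatibility as a hypothesis, just as the paper's proof acknowledges.
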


\begin{proof}
    Since the $\Gamma\Lambda\mathcal{PL}_\infty$-algebra structures are compatible with the simplicial structures on $A\otimes\Sigma N^*(\Delta^\bullet)$ and $B\otimes\Sigma N^*(\Delta^\bullet)$, we can follow the same proof as in \cite[Proposition 6.6]{felix} to obtain the result.
\end{proof}

Applying this result to $B=0$ thus gives the following corollary.

\begin{cor}
    For every $A\in\text{\normalfont \dgMod}$ such that $A\otimes\Sigma N^*(\Delta^\bullet)$ is a strict simplicial $\widehat{\Gamma\Lambda\mathcal{PL}_\infty}$-algebra, the simplicial set $\mathcal{MC}(A\otimes\Sigma N^*(\Delta^\bullet))$ is a Kan complex.\\ In particular, for every complete brace algebra $A$, the simplicial set $\mathcal{MC}_\bullet(A)$ is a Kan complex.
\end{cor}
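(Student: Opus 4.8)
The plan is to deduce this corollary directly from Theorem \ref{kan} by specializing to the trivial target. First I would set $B=0$ in Theorem \ref{kan}. The zero dg $\mathbb{K}$-module is tautologically an object of \dgMod, and $0\otimes\Sigma N^*(\Delta^\bullet)=0$ carries the unique (trivial) structure of a simplicial $\widehat{\Gamma\Lambda\mathcal{PL}_\infty}$-algebra, so the hypotheses of Theorem \ref{kan} on the target are met. The unique map $f\colon A\longrightarrow 0$ is surjective, and $f\otimes id\colon A\otimes\Sigma N^*(\Delta^\bullet)\longrightarrow 0$ is strict for the trivial reason that its codomain vanishes in every degree. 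Theorem \ref{kan} then asserts that $\mathcal{MC}(f\otimes id)$ is a Kan fibration.

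Next I would identify the codomain of this fibration. Since $0\otimes\Sigma N^*(\Delta^n)=0$ and the only Maurer-Cartan element of the zero algebra is $0$, the simplicial set $\mathcal{MC}(0\otimes\Sigma N^*(\Delta^\bullet))$ is the constant simplicial set on a single point, i.e. the terminal object of $\text{\normalfont sSet}$. Recalling that a simplicial set $X$ is a Kan complex precisely when the canonical map $X\longrightarrow\ast$ is a Kan fibration, the conclusion that $\mathcal{MC}(A\otimes\Sigma N^*(\Delta^\bullet))$ is a Kan complex is immediate.

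For the final assertion, I would check that every complete brace algebra $A$ does satisfy the hypothesis that $A\otimes\Sigma N^*(\Delta^\bullet)$ is a strict simplicial $\widehat{\Gamma\Lambda\mathcal{PL}_\infty}$-algebra. For each fixed $n$, the dg $\mathbb{K}$-module $A\otimes\Sigma N^*(\Delta^n)$ is a complete $\widehat{\Gamma\Lambda\mathcal{PL}_\infty}$-algebra: this is exactly the structure recorded at the beginning of this subsection, coming from Corollary \ref{bracee} applied to the $\mathcal{B}race\underset{\text{\normalfont H}}{\otimes}\mathcal{E}$-algebra $A\otimes N^*(\Delta^n)$ together with Theorem \ref{thmfonda}. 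It remains to see that the cosimplicial operators are strict. Each order-preserving map $\theta\colon[m]\longrightarrow[n]$ induces, by functoriality of $N^*$, a morphism of $\mathcal{E}$-algebras $\theta^*\colon N^*(\Delta^n)\longrightarrow N^*(\Delta^m)$; tensoring with $id_A$ gives a morphism of $\mathcal{B}race\underset{\text{\normalfont H}}{\otimes}\mathcal{E}$-algebras, hence a morphism of $\Gamma(\mathcal{P}re\mathcal{L}ie_\infty,-)$-algebras. By the second assertion of Theorem \ref{thmfonda}, its suspension is a strict morphism in $\Gamma\Lambda\mathcal{PL}_\infty$, and this suspension is precisely the structure map $id_A\otimes\Sigma\theta^*$ of $A\otimes\Sigma N^*(\Delta^\bullet)$. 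Applying this to the faces and degeneracies shows that $A\otimes\Sigma N^*(\Delta^\bullet)$ is a strict simplicial $\widehat{\Gamma\Lambda\mathcal{PL}_\infty}$-algebra, so the first part applies.

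The only genuine content lies in Theorem \ref{kan}, so I do not expect a real obstacle here; the one point requiring care is the verification of strictness of the simplicial structure maps, which rests on the naturality in the $\mathcal{E}$-algebra factor of the $\Gamma(\mathcal{P}re\mathcal{L}ie_\infty,-)$-structure produced in Corollary \ref{bracee} and on the functoriality statement of Theorem \ref{thmfonda}.
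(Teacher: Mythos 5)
Your proposal is correct and matches the paper's own argument, which simply applies Theorem \ref{kan} with $B=0$ and notes that a Kan fibration over a point is a Kan complex. The extra verification you give of strictness of the simplicial operators for a complete brace algebra is consistent with what the paper establishes implicitly via the naturality of the $\mathcal{E}$-algebra structure on $N^*(\Delta^\bullet)$.
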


\subsection{Connected components and homotopy groups of $\mathcal{MC}_\bullet(A)$}\label{sec:242}

We are now able to compute the connected components and the homotopy groups of $\mathcal{MC}_\bullet(A)$ for a given complete brace algebra $A$. For this purpose, recall from \cite[Theorem 2.15]{moi} that any brace algebra $A$ is endowed with the structure of a $\Gamma(\mathcal{P}re\mathcal{L}ie,-)$-algebra via the composite
\begin{center}
\begin{tikzcd}
\Gamma(\mathcal{P}re\mathcal{L}ie,A)\arrow[r] & \Gamma(\mathcal{B}race,A) & \mathcal{S}(\mathcal{B}race,A)\arrow[l, "\simeq"']\arrow[r] & A.
\end{tikzcd}
\end{center}

\noindent In this setting, we recall from \cite[Definition 2.19]{moi} the operation $\circledcirc$ defined by
$$x\circledcirc(1+y):=\sum_{n\geq 0}x\langle\underbrace{y,\ldots,y}_n\rangle$$

\noindent for every $x\in A$ and $y\in A_0$. By \cite[Theorem 2.24]{moi}, we have that this operation induces a group structure on the set $G=1+A_0$ with the product
$$(1+x)\circledcirc (1+y)=1+x+y+\sum_{n\geq 1}x\langle \underbrace{y,\ldots,y}_n\rangle.$$

\noindent This group is called the \textit{gauge group} associated to the brace algebra $A$. In the following, we use the operation $\overline\circledcirc$ defined by
$$x\overline\circledcirc y:=x+y+\sum_{n\geq 1}x\langle\underbrace{y,\ldots,y}_n\rangle,$$

\noindent for every $x\in A$ and $y\in A_0$. Note that the group $(1+A_0,\circledcirc,1)$ is isomorphic to the group $(A_0,\overline{\circledcirc},0)$. Using this identification and \cite[Theorem 2.29]{moi}, we have an action of $(A_0,\overline\circledcirc,0)$ on $\mathcal{MC}(A)$ by
$$x\cdot \tau=(\tau+x\langle \tau\rangle-d(x))\overline{\circledcirc} x^{\overline{\circledcirc} -1}$$

\noindent for every $x\in A_0$ and $\tau\in\mathcal{MC}(A)$.\\

By Corollary \ref{calculdiff}, we have an obvious identification $\mathcal{MC}_0(A)=\mathcal{MC}(A)$, using the Maurer-Cartan set of a $\Gamma(\mathcal{P}re\mathcal{L}ie,-)$-algebra (see \cite[Definition 2.17]{moi}). This identification is given by sending $\tau\in\mathcal{MC}(A)$ to $- \tau\otimes\Sigma\underline{0}^\vee\in\mathcal{MC}_0(A)$.\\

In this subsection, in order to write easier formulas, for every $\underline{x}\in N_*(\Delta^n)$, we drop the desuspension $\Sigma^{-1}$ on the element $\Sigma^{-1}\underline{x}\in\Sigma^{-1} N_*(\Delta^n)$. Analogously, we drop the suspension $\Sigma$ on elements of $\Sigma N^*(\Delta^n)$. 

\subsubsection{Connected components}

We first compute the $\pi_0$. We begin by the following lemma.

\begin{lm}\label{mc1}
    Let $\tau_0,\tau_1\in\mathcal{MC}(A)$. Then every element $\alpha\in\mathcal{MC}_1(A)$ such that $d_0\alpha=\tau_0$ and $d_1\alpha=\tau_1$ are written
    $$\alpha=-\tau_1\otimes  \underline{0}^\vee-\tau_0\otimes  \underline{1}^\vee-h\otimes  \underline{01}^\vee$$
    \noindent where $h\in A_0$ is such that
    $$d(h)=\tau_0+h\langle\tau_0\rangle-\tau_1\circledcirc(1+h).$$
\end{lm}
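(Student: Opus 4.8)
The plan is to use the general cut-out bijection from Lemma \ref{découpe}, applied with $n=1$ and $i=0$, to reduce the description of $1$-simplices to an explicit parametrization. First I would observe that since $\alpha\in\mathcal{MC}_1(A)=\mathcal{MC}(A\otimes\Sigma N^*(\Delta^1))$, the map $\Phi_1^0=id\otimes\Sigma\varphi_1^0$ is strict (it comes from the deformation retract of Proposition \ref{homonstar}), so by Lemma \ref{découpe} the element $\alpha$ is uniquely determined by the pair $(\Phi_1^0(\alpha),dH_1^0(\alpha))$. The key computation is to unwind what the faces $d_0\alpha$ and $d_1\alpha$ encode. Using the identification $\mathcal{MC}_0(A)=\mathcal{MC}(A)$ via $\tau\mapsto -\tau\otimes\Sigma\underline{0}^\vee$, the face maps $d_0=(i_1^0)^\vee$ and $d_1=(i_1^1)^\vee$ on $N^*(\Delta^1)$ dictate that the coefficient of $\underline{0}^\vee$ in $\alpha$ must be $-\tau_1$ and the coefficient of $\underline{1}^\vee$ must be $-\tau_0$ (the apparent crossing of indices is forced by the conventions for $d_0,d_1$). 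Writing the remaining degree-$0$ part as $-h\otimes\underline{01}^\vee$ for some $h\in A_0$ then gives the stated form
$$\alpha=-\tau_1\otimes\underline{0}^\vee-\tau_0\otimes\underline{1}^\vee-h\otimes\underline{01}^\vee.$$

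Next I would impose the Maurer-Cartan equation $d(\alpha)+\sum_{k\geq 1}\alpha\llbrace\alpha\rrbrace_k=0$ and extract the component on $\underline{01}^\vee$. This is where the explicit computation of the differentials from $\mathsection$\ref{sec:232} enters decisively. The term $d(\alpha)$ contributes $d(h)\otimes\underline{01}^\vee$ together with the internal differential of $N^*(\Delta^1)$ acting on the basis duals, and the weighted braces $\alpha\llbrace\alpha\rrbrace_k$ are computed through the formula at the end of $\mathsection$\ref{sec:233} relating the braces of $A\otimes\Sigma N^*(\Delta^n)$ to the brace structure $l$ on $A$ and the twisting morphism $\partial^1$ on $\mathcal{B}race^\vee(\Sigma N^*(\Delta^1))$. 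Here I would invoke Corollary \ref{calculdiff}, which gives $\partial^1(\underline{01})$ explicitly, to read off precisely which trees contribute a $\underline{01}^\vee$ factor. The trees of the shape corolla with a single $\underline{01}$ leaf and multiple $\underline{1}$ leaves produce the term $h\langle\tau_0\rangle$, while the tower-type trees with root $\underline{0}$ and weighted $\underline{01}$-arrows assemble into the circular product $\tau_1\circledcirc(1+h)$; the constant parts give $\tau_0$ and the $d(h)$ term.

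Assembling these contributions, the vanishing of the $\underline{01}^\vee$-component of the Maurer-Cartan equation becomes exactly
$$d(h)=\tau_0+h\langle\tau_0\rangle-\tau_1\circledcirc(1+h),$$
which is the asserted relation. I would also check that the components of the Maurer-Cartan equation on $\underline{0}^\vee$ and $\underline{1}^\vee$ reduce to the statements $\tau_0,\tau_1\in\mathcal{MC}(A)$, confirming consistency with the hypotheses rather than producing new constraints; this uses that the interval-cut coevaluation of the relevant surjections collapses onto the vertices, as recorded in Lemma \ref{firstdiff}. The main obstacle I expect is the bookkeeping of the signs and of the weighted-brace expansion in the formula of $\mathsection$\ref{sec:233}: one must track the Koszul signs coming from commuting the brace operation $\partial^E$ past the algebra elements and the desuspensions, and verify that the infinite sum over $k$ in $\sum_{k\geq 1}\alpha\llbrace\alpha\rrbrace_k$ reorganizes into the closed form $\tau_1\circledcirc(1+h)$ using the associativity of $\overline\circledcirc$ noted after Corollary \ref{diff1}. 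The convergence of this sum is guaranteed by completeness of $A$, but matching it term-by-term with the defining series $x\circledcirc(1+y)=\sum_{n\geq 0}x\langle y,\ldots,y\rangle$ is the delicate point.
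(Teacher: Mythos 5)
Your proposal is correct and follows essentially the same route as the paper: write $\alpha$ in the basis $\underline{0}^\vee,\underline{1}^\vee,\underline{01}^\vee$ with the face conditions fixing the vertex coefficients, impose the Maurer--Cartan equation, and use the explicit formula for $\partial^1(\underline{01})$ from Corollary \ref{calculdiff} to identify the $\underline{01}^\vee$-component as $d(h)-\tau_0-h\langle\tau_0\rangle+\tau_1\circledcirc(1+h)$. Two small remarks: the detour through Lemma \ref{découpe} is unnecessary (the parametrization of $\alpha$ follows directly from the basis decomposition and the degree constraint), and the term $h\langle\tau_0\rangle$ arises from the single tree in $\partial^1(\underline{01})$ with root $\underline{01}$ and one leaf $\underline{1}$, not from corollas with multiple $\underline{1}$-leaves.
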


\begin{proof}
    Let $\alpha\in\mathcal{MC}_1(A)$ be such that $d_0\alpha=\tau_0$ and $d_1\alpha=\tau_1$. We write 

    $$\alpha=-\tau_1\otimes  \underline{0}^\vee-\tau_0\otimes  \underline{1}^\vee-h\otimes  \underline{01}^\vee$$
    
    \noindent for some $h\in A_0$. We make explicit the Maurer-Cartan condition on $\alpha$. We have
    $$d(\alpha)=-d(\tau_1)\otimes  \underline{0}^\vee-d(\tau_0)\otimes  \underline{1}^\vee+(-d(h)+\tau_0-\tau_1)\otimes  \underline{01}^\vee.$$

    \noindent Let $p\geq 1$. By formula $(v)$ of Theorem \ref{relationsprelieinfinite}, we have that
    \begin{multline*}
        \alpha\llbrace \alpha\rrbrace_p = \sum_{p_1+p_2+p_3=p} -\tau_1\otimes  \underline{0}^\vee\llbrace -\tau_1\otimes  \underline{0}^\vee, -\tau_0\otimes  \underline{1}^\vee, -h\otimes  \underline{01}^\vee\rrbrace_{p_1,p_2,p_3}\\
    -\sum_{p_1+p_2+p_3=p}\tau_0\otimes  \underline{1}^\vee\llbrace -\tau_1\otimes  \underline{0}^\vee, -\tau_0\otimes  \underline{1}^\vee, -h\otimes  \underline{01}^\vee\rrbrace_{p_1,p_2,p_3}\\
    -\sum_{p_1+p_2+p_3=p}h\otimes  \underline{01}^\vee\llbrace -\tau_1\otimes  \underline{0}^\vee, -\tau_0\otimes  \underline{1}^\vee, -h\otimes  \underline{01}^\vee\rrbrace_{p_1,p_2,p_3}.
    \end{multline*}

    \noindent By the computation of $\partial^1$ in Corollary \ref{calculdiff}, the first sum gives non-zero elements only for the case $p_1=1$ and $p_2=p_3=0$, and the case $p_1=p_2=0$. This then gives
    $$-\tau_1\langle\tau_1\rangle\otimes  \underline{0}^\vee-\sum_{n\geq 1}\tau_1\langle\underbrace{h,\ldots,h}_n\rangle\otimes  \underline{01}^\vee.$$

    \noindent The second sum gives non-zero elements only for the case $p_2=1$ and $p_1=p_3=0$. We obtain the term
    $$-\tau_0\langle\tau_0\rangle\otimes  \underline{1}^\vee.$$

    \noindent Finally, the third sum gives non-zero elements only for the case $p_2=1$ and $p_1=p_3=0$. We have the term
    $$h\langle\tau_0\rangle\otimes  \underline{01}^\vee.$$

    \noindent At the end, we have
    $$\sum_{p\geq 0}\alpha\llbrace \alpha\rrbrace_p=\left(-d(h)+\tau_0+h\langle\tau_0\rangle-\sum_{n\geq 0}\tau_1\langle\underbrace{h,\ldots,h}_n\rangle\right)\otimes  \underline{01}^\vee.$$

    \noindent Then, the Maurer-Cartan condition on $\alpha$ is equivalent to the equation
    $$d(h)=\tau_0+h\langle\tau_0\rangle-\tau_1\circledcirc(1+h)$$

    \noindent which proves the lemma.
\end{proof}

Recall that the Deligne groupoid associated to $A$ is the category formed by Maurer-Cartan elements with as morphisms the elements of the gauge group (see \cite[Proposition-Definition 2.22]{moi}).

\begin{thm}\label{pi0}
    Let $A$ be a complete brace algebra. We have a bijection
    $$\pi_0(\mathcal{MC}_\bullet(A))\simeq\pi_0\text{\normalfont Deligne}(A),$$

    \noindent where we denote by $\pi_0\text{\normalfont Deligne}(A)$ the set of objects in $\text{\normalfont Deligne}(A)$ up to isomorphisms.
\end{thm}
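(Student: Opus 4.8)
The plan is to establish the bijection $\pi_0(\mathcal{MC}_\bullet(A))\simeq\pi_0\text{\normalfont Deligne}(A)$ by unwinding both sides to the same combinatorial data. Since $\mathcal{MC}_\bullet(A)$ is a Kan complex, $\pi_0(\mathcal{MC}_\bullet(A))$ is the quotient of $\mathcal{MC}_0(A)$ by the relation of being connected by a $1$-simplex. Using the identification $\mathcal{MC}_0(A)=\mathcal{MC}(A)$ (sending $\tau$ to $-\tau\otimes\Sigma\underline{0}^\vee$), the task reduces to comparing this homotopy relation with the relation on $\mathcal{MC}(A)$ given by the action of the gauge group, whose orbits are exactly the isomorphism classes $\pi_0\text{\normalfont Deligne}(A)$.

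First I would use Lemma \ref{mc1}: it says precisely that a $1$-simplex $\alpha$ with faces $d_0\alpha=\tau_0$ and $d_1\alpha=\tau_1$ exists if and only if there is an element $h\in A_0$ satisfying $d(h)=\tau_0+h\langle\tau_0\rangle-\tau_1\circledcirc(1+h)$. Thus $\tau_0$ and $\tau_1$ lie in the same connected component exactly when such an $h$ exists. The key step is then to recognize this equation as the equation expressing that $\tau_1$ and $\tau_0$ are related by the gauge action. Concretely, recalling from \cite[Theorem 2.29]{moi} the formula $h\cdot\tau=(\tau+h\langle\tau\rangle-d(h))\,\overline\circledcirc\, h^{\overline\circledcirc-1}$ for the action of $(A_0,\overline\circledcirc,0)$ on $\mathcal{MC}(A)$, I would show by a direct manipulation that the displayed equation for $h$ is equivalent to $h\cdot\tau_0=\tau_1$ (equivalently, writing $g=1+h$ in the multiplicative group, $g\circledcirc\tau_0=\tau_1$). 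This translation between the additive form with $\overline\circledcirc$ and the condition appearing in Lemma \ref{mc1} is where the main care is needed: one must rearrange $d(h)=\tau_0+h\langle\tau_0\rangle-\tau_1\circledcirc(1+h)$ into the gauge form, using that $(1+A_0,\circledcirc,1)\cong(A_0,\overline\circledcirc,0)$ and the explicit action formula.

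Granting this equivalence, the relation ``$\tau_0$ and $\tau_1$ are joined by a $1$-simplex'' coincides with ``$\tau_0$ and $\tau_1$ are in the same gauge orbit,'' and the latter relation is automatically an equivalence relation since it comes from a group action. Hence the quotient $\mathcal{MC}_0(A)/\!\sim$ agrees with the set of gauge orbits, which is by definition $\pi_0\text{\normalfont Deligne}(A)$. To complete the argument I would check that the relation coming from $1$-simplices is genuinely the homotopy relation on $\pi_0$ of a Kan complex, namely that it is reflexive (the degenerate $1$-simplex $s_0\tau$ gives $h=0$, and indeed $h=0$ solves the equation since $\tau$ is Maurer-Cartan) and symmetric and transitive; rather than verifying these directly, I would invoke the fact that in a Kan complex the $1$-simplex relation is automatically an equivalence relation, so no separate verification is required beyond matching it with the group-orbit relation.

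The main obstacle I anticipate is the bookkeeping in the second step: carefully matching the equation $d(h)=\tau_0+h\langle\tau_0\rangle-\tau_1\circledcirc(1+h)$ with the gauge-action formula, keeping track of the passage between the circular product $\circledcirc$ on $1+A_0$ and the operation $\overline\circledcirc$ on $A_0$, and confirming that the roles of $\tau_0$ and $\tau_1$ (source and target) are assigned consistently with the conventions of \cite{moi}. Everything else is a formal consequence of Lemma \ref{mc1}, the Kan property established in the previous subsection, and the definition of the Deligne groupoid.
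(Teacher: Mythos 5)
Your proposal is correct and follows essentially the same route as the paper: the paper's proof likewise reduces $\pi_0(\mathcal{MC}_\bullet(A))$ to $\mathcal{MC}_0(A)$ modulo the $1$-simplex relation, invokes Lemma \ref{mc1} to identify that relation with the existence of $h\in A_0$ satisfying $d(h)=\tau_0+h\langle\tau_0\rangle-\tau_1\circledcirc(1+h)$, and recognizes this as the gauge condition $h\cdot\tau_0=\tau_1$ defining $\pi_0\text{\normalfont Deligne}(A)$. The translation between that equation and the gauge-action formula, which you correctly single out as the only delicate point, is exactly the step the paper also treats as immediate.
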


\begin{proof}
    Recall that
    $$\pi_0(\mathcal{MC}_\bullet(A))=\mathcal{MC}_0(A)/\sim,$$

    \noindent where $\sim$ denotes the homotopy relation in $\text{\normalfont sSet}$. Consider the projection $f$ of $\mathcal{MC}_0(A)$ on $\mathcal{MC}(A)/G$, where $G$ denotes the gauge group of the $\Gamma(\mathcal{P}re\mathcal{L}ie,-)$-algebra $A$. Let $\tau_0,\tau_1\in\mathcal{MC}(A)$. By Lemma \ref{mc1}, the elements $- \tau_1\otimes\underline{0}^\vee$ and $-\tau_0\otimes\underline{1}^\vee$ are homotopic in $\mathcal{MC}_0(A)$ if and only if there exists $h\in A_0$ such that $h\cdot\tau_0=\tau_1$, which proves that $f$ induces a bijection $\overline{f}:\pi_0(\mathcal{MC}_\bullet(A))\longrightarrow\pi_0\text{\normalfont Deligne}(A)$.
\end{proof}

\subsubsection{The group $\pi_1(\mathcal{MC}_\bullet(A),\tau)$}

We now compute $\pi_1(\mathcal{MC}_\bullet(A),\tau)$ for a given $\tau\in\mathcal{MC}(A)$. Let $\text{\normalfont Aut}_{\text{\normalfont Deligne}(A)}(\tau)=\{h\in A_0\ |\ d(h)=\tau+h\langle\tau\rangle-\tau\circledcirc (1+h)\}$. We have the following lemma.

\begin{lm}
    Let $A$ be a complete brace algebra and $\tau\in\mathcal{MC}(A)$. For every $h,h'\in\text{\normalfont Aut}_{\text{\normalfont Deligne}(A)}(\tau)$, we write $h\sim_\tau h'$ if there exists $\psi\in A_{1}$ such that
    
    $$h-h'=d(\psi)+\psi\langle\tau\rangle+\sum_{p,q\geq 0}\tau\langle\underbrace{h,\ldots,h}_{p},\psi,\underbrace{h',\ldots,h'}_q\rangle.$$

% \sum\begin{tikzpicture}[baseline={([yshift=-.5ex]current bounding box.center)},scale=1]
    % \node (i) at (0,0) {$\tau$};
    % \node (1) at (-0.5,1) {$f$};
    % \node (1bis) at (-1,1) {$...$};
    % \node (1bbis) at (-1.5,1) {$f$};
    % \node (2) at (0,1) {$\psi$};
    % \node[scale=1] (3) at (0.5,1) {$g$};
    % \node (3b) at (1,1) {$...$};
    % \node[scale=1] (3bb) at (1.5,1) {$g$};
    % \draw (i) -- (1);
    % \draw (i) -- (1bbis);
    % \draw (i) -- (2);
    % \draw (i) -- (3);
    % \draw (i) -- (3bb);
    % \end{tikzpicture}.

\noindent Then $\sim_\tau$ is an equivalence relation on the set $\text{\normalfont Aut}_{\text{\normalfont Deligne}(A)}(\tau)$. Moreover, the circular product $\circledcirc$ is compatible with $\sim_\tau$, so that the triple $(\text{\normalfont Aut}_{\text{\normalfont Deligne}(A)}(\tau)/\sim_\tau,\overline{\circledcirc},0)$ is a group.
\end{lm}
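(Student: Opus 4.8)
The plan is to identify the data in the statement with the low-dimensional cells of the Kan complex $\mathcal{MC}_\bullet(A)$ and to read off the equivalence relation and the group law from the simplicial fundamental group. Specializing Lemma~\ref{mc1} to $\tau_0=\tau_1=\tau$, the $1$-simplices $\alpha\in\mathcal{MC}_1(A)$ with $d_0\alpha=d_1\alpha=\tau$ are exactly the elements $\alpha=-\tau\otimes\underline{0}^\vee-\tau\otimes\underline{1}^\vee-h\otimes\underline{01}^\vee$ for which $d(h)=\tau+h\langle\tau\rangle-\tau\circledcirc(1+h)$, that is, for which $h\in\text{\normalfont Aut}_{\text{\normalfont Deligne}(A)}(\tau)$; this gives a bijection between $\text{\normalfont Aut}_{\text{\normalfont Deligne}(A)}(\tau)$ and the loops at $\tau$. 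First I would show that $h\sim_\tau h'$ holds precisely when the two associated loops are simplicially homotopic relative to $\partial\Delta^1$. Such a homotopy is a $2$-simplex $\beta\in\mathcal{MC}_2(A)$ with prescribed faces, and expanding the Maurer--Cartan condition $d(\beta)+\sum_{p\geq 1}\beta\llbrace\beta\rrbrace_p=0$ by means of the explicit differential $\partial^2$ of Corollary~\ref{calculdiff} and the weighted brace formulas of Theorem~\ref{relationsprelieinfinite} isolates $\psi\in A_1$ as the coefficient of $\underline{012}^\vee$ and produces exactly the displayed equation for $h-h'$.

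Reflexivity is immediate from this description: taking $\psi=0$ annihilates every summand $\tau\langle h^p,\psi,h^q\rangle$, each being linear in the vanishing middle slot, so $h\sim_\tau h$. For symmetry and transitivity I would not manipulate the brace identities by hand but instead invoke the Kan property: since $\mathcal{MC}_\bullet(A)$ is a Kan complex, the relation ``homotopic relative to $\partial\Delta^1$'' on loops at $\tau$ is automatically an equivalence relation, and under the identification above it coincides with $\sim_\tau$. Hence $\sim_\tau$ is an equivalence relation on $\text{\normalfont Aut}_{\text{\normalfont Deligne}(A)}(\tau)$ and the quotient is canonically in bijection with the underlying set of $\pi_1(\mathcal{MC}_\bullet(A),\tau)$.

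For the group structure I would first recall that $\text{\normalfont Aut}_{\text{\normalfont Deligne}(A)}(\tau)$ is the automorphism group of the object $\tau$ in $\text{\normalfont Deligne}(A)$, hence a subgroup of the gauge group $(A_0,\overline\circledcirc,0)$ (see \cite{moi}); in particular $0$ is its unit and it is closed under $\overline\circledcirc$ and under $\overline\circledcirc$-inverses. It then remains to verify that $\overline\circledcirc$ descends to the quotient and represents the concatenation product on $\pi_1$. To see the latter I would fill the horn $\Lambda^2_1$ whose two outer faces are the loops attached to $h$ and $h'$ by a $2$-simplex $\beta\in\mathcal{MC}_2(A)$, using Theorem~\ref{kan} together with the cutting procedure of Lemma~\ref{découpe}, and compute that its remaining face is the loop attached to $h\,\overline\circledcirc\,h'$; here the circular product enters precisely through the terms $\underline{01}\,\overline\circledcirc\,\underline{12}$ appearing in $\partial^2$ in Corollary~\ref{calculdiff}. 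Compatibility of $\overline\circledcirc$ with $\sim_\tau$ and associativity are then inherited from the fact that concatenation is well defined and associative on $\pi_1$ of a Kan complex, while the unit $0$ and the inverses come from the subgroup structure noted above; this yields that $(\text{\normalfont Aut}_{\text{\normalfont Deligne}(A)}(\tau)/\sim_\tau,\overline\circledcirc,0)$ is a group.

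The main obstacle is the bookkeeping in the two $\mathcal{MC}_2(A)$ computations: matching the coefficients of the cells $\underline{0}^\vee,\ldots,\underline{012}^\vee$ in the Maurer--Cartan equation against the long expression for $\partial^2$, and controlling the Koszul signs produced by the suspensions and by permuting $\psi$ past the powers of $h$ and $h'$. I expect this to be where all the effort lies, and it is also what makes the simplicial route preferable to a direct algebraic argument: symmetry of $\sim_\tau$ is not visible at the level of the defining equation, since negating $\psi$ interchanges the roles of $h$ and $h'$ inside the sum $\sum_{p,q}\tau\langle h^p,\psi,(h')^q\rangle$, so proving it by hand would require nontrivial brace identities and the automorphism equations for $h,h'$. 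Once one trusts that the coefficient of $\underline{012}^\vee$ reproduces the equation defining $\sim_\tau$ and that the middle face of a filled $(2,1)$-horn is governed by $\overline\circledcirc$, the equivalence-relation and group axioms follow formally from the Kan structure.
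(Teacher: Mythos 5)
Your proposal is correct in substance but takes a genuinely different route from the paper. The paper proves this lemma by direct algebraic manipulation: reflexivity is immediate, transitivity is obtained by exhibiting the explicit witness $\psi''=\psi+\psi'+\sum_{p,q,r\geq 0}\tau\langle h,\ldots,h,\psi,h',\ldots,h',\psi',h'',\ldots,h''\rangle$ and verifying the defining equation through a long brace computation, symmetry is obtained by solving a fixed-point equation for $\psi'$ in the complete algebra, and compatibility with $\circledcirc$ on each side is checked by hand using the identities of \cite[Lemma 2.28]{moi}. You instead derive everything from the simplicial structure: you identify $\text{\normalfont Aut}_{\text{\normalfont Deligne}(A)}(\tau)$ with the loops at $\tau$ via Lemma \ref{mc1}, identify $\sim_\tau$ with homotopy relative to $\partial\Delta^1$ by the $\mathcal{MC}_2$ computation, and then let the Kan property (already available from Theorem \ref{kan}) supply the equivalence-relation and group axioms. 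This is logically sound and non-circular, since the Kan property, Lemma \ref{mc1} and Corollary \ref{calculdiff} are all established before this lemma; in effect you prove the lemma and the subsequent theorem $\pi_1(\mathcal{MC}_\bullet(A),\tau)\simeq\text{\normalfont Aut}_{\text{\normalfont Deligne}(A)}(\tau)/\sim_\tau$ simultaneously, which is where the paper performs exactly the two $\mathcal{MC}_2$ computations you defer. What your route buys is the elimination of the heaviest sign-chasing (symmetry and transitivity come for free from the Kan structure, and the explicit witnesses $\psi''$, $\psi'$ never need to be written down); what the paper's route buys is a self-contained algebraic statement with explicit homotopies, independent of the simplicial apparatus.

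One small imprecision to fix: for the group law you should not argue by ``filling the horn $\Lambda^2_1$ and computing the remaining face,'' since a Kan filler is not unique and its remaining face is only determined up to homotopy. The correct move, which is what the paper does, is to exhibit the explicit $2$-simplex $-\tau\otimes(\underline{0}^\vee+\underline{1}^\vee+\underline{2}^\vee)-h'\otimes\underline{12}^\vee-(h\,\overline{\circledcirc}\,h')\otimes\underline{02}^\vee-h\otimes\underline{01}^\vee$ and verify via Corollary \ref{calculdiff} that it is Maurer--Cartan; this shows directly that $[h]\cdot[h']$ is represented by $h\,\overline{\circledcirc}\,h'$, after which compatibility of $\overline{\circledcirc}$ with $\sim_\tau$, associativity, the unit $0$ (the degenerate loop) and inverses all descend from the group structure on $\pi_1$ of a Kan complex.
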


\begin{proof}
    The relation $\sim_\tau$ is reflexive (just take $\psi=0$ so that $h\sim_\tau h$ for all $h\in \text{\normalfont Aut}_{\text{\normalfont Deligne}(A)}(\tau)$). We prove that this relation is transitive. Let $h,h',h''\in A_0$ be such that $h\sim_\tau h'$ and $h'\sim_\tau h''$. Then there exist $\psi,\psi'\in A_1$ such that
    \begin{equation}h-h'=d(\psi)+\psi\langle\tau\rangle+\sum_{p,q\geq 0}\tau\langle\underbrace{h,\ldots,h}_q,\psi,\underbrace{h',\ldots,h'}_q\rangle;\end{equation}
    \begin{equation}h'-h''=d(\psi')+\psi'\langle\tau\rangle+\sum_{p,q\geq 0}\tau\langle\underbrace{h',\ldots,h'}_q,\psi',\underbrace{h'',\ldots,h''}_q\rangle.\end{equation}
    
    \noindent We set $\psi'':=\psi+\psi'+\sum_{p,q,r\geq 0}\tau\langle\underbrace{h,\ldots,h}_p,\psi,\underbrace{h',\ldots,h'}_q,\psi',\underbrace{h'',\ldots,h''}_r\rangle$, and prove that

$$h-h''=d(\psi'')+\psi''\langle\tau\rangle+\sum_{p,q\geq 0}\tau\langle\underbrace{h,\ldots,h}_p,\psi'',\underbrace{h'',\ldots,h''}_q\rangle.$$

\noindent Let us analyze the right hand-side. We analyze the terms given by $d(\psi'')$ and compare it with the others given either by $\psi''\langle\tau\rangle$ or by the terms of the form $\tau\langle h,\ldots,h,\psi'',h'',\ldots,h''\rangle$. We first have
$$d(\psi)+d(\psi')=h-h''-\psi\langle\tau\rangle-\psi'\langle\tau\rangle-\sum_{p,q\geq 0}\tau\langle\underbrace{h,\ldots,h}_p,\psi,\underbrace{h',\ldots,h'}_q\rangle-\sum_{q,r\geq 0}\tau\langle\underbrace{h',\ldots,h'}_q,\psi',\underbrace{h'',\ldots,h''}_r\rangle.$$

\noindent We now differentiate the sum which occurs in the definition of $\psi''$. By the Leibniz rule in the brace algebra $A$, and by applying the differential on $\tau\in\mathcal{MC}(A)$, we, in particular, obtain the sum
$$-\sum_{p,q,r\geq 0}\tau\langle\tau\rangle\langle \underbrace{h,\ldots,h}_p,\psi,\underbrace{h',\ldots,h'}_q,\psi',\underbrace{h'',\ldots,h''}_r\rangle.$$

\noindent This can be computed by using the brace algebra structure of $A$:
\begin{multline*}
    -\sum_{p,q,r\geq 0}\tau\langle\tau\rangle\langle \underbrace{h,\ldots,h}_p,\psi,\underbrace{h',\ldots,h'}_q,\psi',\underbrace{h'',\ldots,h''}_r\rangle=\\
    -\sum_{p_1,p_2,q,r\geq 0}\tau\langle\underbrace{h,\ldots,h}_{p_1},\tau\circledcirc(1+h),\underbrace{h,\ldots,h}_{p_2},\psi,\underbrace{h',\ldots,h'}_q,\psi',\underbrace{h'',\ldots,h''}_r\rangle\\
    -\sum_{p,q,r,s,t\geq 0}\tau\langle\underbrace{h,\ldots,h}_{p},\tau\langle\underbrace{h,\ldots,h}_{s},\psi,\underbrace{h',\ldots,h'}_{t}\rangle,\underbrace{h',\ldots,h'}_{q},\psi',\underbrace{h'',\ldots,h''}_r\rangle\\
    +\sum_{p,q_1,q_2,r\geq 0}\tau\langle\underbrace{h,\ldots,h}_p,\psi,\underbrace{h',\ldots,h'}_{q_1},\tau\circledcirc(1+h'),\underbrace{h',\ldots,h'}_{q_2},\psi',\underbrace{h'',\ldots,h''}_r\rangle\\
    +\sum_{p,q,r,s,t\geq 0}\tau\langle\underbrace{h,\ldots,h}_p,\psi,\underbrace{h',\ldots,h'}_{q},\tau\langle\underbrace{h',\ldots,h'}_{s},\psi',\underbrace{h'',\ldots,h''}_{t}\rangle,\underbrace{h'',\ldots,h''}_{r}\rangle\\
    -\sum_{p,q,r_1,r_2\geq 0}\tau\langle\underbrace{h,\ldots,h}_p,\psi,\underbrace{h',\ldots,h'}_r,\psi',\underbrace{h'',\ldots,h''}_{r_1},\tau\circledcirc(1+h''),\underbrace{h'',\ldots,h''}_{r_2}\rangle\\
    -\sum_{p,q,r,s,t\geq 0}\tau\langle\underbrace{h,\ldots,h}_{p},\tau\langle\underbrace{h,\ldots,h}_{q},\psi,\underbrace{h',\ldots,h'}_s,\psi',\underbrace{h'',\ldots,h''}_{t}\rangle,\underbrace{h'',\ldots,h''}_{r}\rangle.
\end{multline*}

\noindent The remaining terms obtained by the Leibniz rule in the sum occurring in the definition of $\psi''$ are
\medskip
\begin{center}
$\begin{array}{ll}
- & \displaystyle\sum_{p_1,p_2,q,r\geq 0}\tau\langle\underbrace{h,\ldots,h}_{p_1},d(h),\underbrace{h,\ldots,h}_{p_2},\psi,\underbrace{h',\ldots,h'}_q,\psi',\underbrace{h'',\ldots,h''}_r\rangle\\
- & \displaystyle\sum_{p,q,r\geq 0}\tau\langle\underbrace{h,\ldots,h}_{p},d(\psi),\underbrace{h',\ldots,h'}_{q},\psi',\underbrace{h'',\ldots,h''}_r\rangle\\
+ & \displaystyle\sum_{p,q_1,q_2,r\geq 0}\tau\langle\underbrace{h,\ldots,h}_p,\psi,\underbrace{h',\ldots,h'}_{q_1},d(h'),\underbrace{h',\ldots,h'}_{q_2},\psi',\underbrace{h'',\ldots,h''}_r\rangle\\
+ & \displaystyle\sum_{p,q,r\geq 0}\tau\langle\underbrace{h,\ldots,h}_p,\psi,\underbrace{h',\ldots,h'}_{q},d(\psi'),\underbrace{h'',\ldots,h''}_{r}\rangle\\
- & \displaystyle\sum_{p,q,r_1,r_2\geq 0}\tau\langle\underbrace{h,\ldots,h}_p,\psi,\underbrace{h',\ldots,h'}_r,\psi',\underbrace{h'',\ldots,h''}_{r_1},d(h''),\underbrace{h'',\ldots,h''}_{r_2}\rangle.
\end{array}$
\end{center}
\medskip

\noindent By using equations (1) and (2), the definition of $\psi''$ and that $h,h',h''\in\text{\normalfont Aut}_{\text{\normalfont Deligne}(A)}(\tau)$, we obtain
\medskip
\begin{multline*}
    d(\psi'')=h-h''-\psi'\langle\tau\rangle-\psi'\langle\tau\rangle-\sum_{p,q\geq 0}\tau\langle\underbrace{h,\ldots,h}_p,\psi,\underbrace{h',\ldots,h'}_q\rangle\\
    -\sum_{q,r\geq 0}\tau\langle\underbrace{h',\ldots,h'}_q,\psi',\underbrace{h'',\ldots,h''}_r\rangle\\
    -\sum_{p_1,p_2,q,r\geq 0}\tau\langle\underbrace{h,\ldots,h}_{p_1},\tau+h\langle\tau\rangle,\underbrace{h,\ldots,h}_{p_2},\psi,\underbrace{h',\ldots,h'}_q,\psi',\underbrace{h'',\ldots,h''}_r\rangle\\
    -\sum_{p,q,r\geq 0}\tau\langle\underbrace{h,\ldots,h}_{p},h-h'-\psi\langle\tau\rangle,\underbrace{h',\ldots,h'}_q,\psi',\underbrace{h'',\ldots,h''}_r\rangle\\
    +\sum_{p,q_1,q_2,r\geq 0}\tau\langle\underbrace{h,\ldots,h}_p,\psi,\underbrace{h',\ldots,h'}_{q_1},\tau+h'\langle\tau\rangle,\underbrace{h',\ldots,h'}_{q_2},\psi',\underbrace{h'',\ldots,h''}_r\rangle\\
    +\sum_{p,q,r\geq 0}\tau\langle\underbrace{h,\ldots,h}_p,\psi,\underbrace{h',\ldots,h'}_{q},h'-h''-\psi'\langle\tau\rangle,\underbrace{h'',\ldots,h''}_{r}\rangle\\
    -\sum_{p,q,r_1,r_2\geq 0}\tau\langle\underbrace{h,\ldots,h}_p,\psi,\underbrace{h',\ldots,h'}_r,\psi',\underbrace{h'',\ldots,h''}_{r_1},\tau+h''\langle\tau\rangle,\underbrace{h'',\ldots,h''}_{r_2}\rangle\\
    -\sum_{p,r\geq 0}\tau\langle\underbrace{h,\ldots,h}_{p},\psi''-\psi-\psi',\underbrace{h'',\ldots,h''}_{r}\rangle.
    \end{multline*}
    \medskip

\noindent Now, by some variable substitutions, note that we have the identities
\medskip
\begin{multline*}
    \sum_{p,q,r\geq 0}\tau\langle\underbrace{h,\ldots,h}_{p},h-h',\underbrace{h',\ldots,h'}_q,\psi',\underbrace{h'',\ldots,h''}_r\rangle\\=\sum_{p,r\geq 0}\tau\langle\underbrace{h,\ldots,h}_p,\psi',\underbrace{h'',\ldots,h''}_r\rangle-\sum_{q,r\geq 0}\tau\langle\underbrace{h',\ldots,h'}_q,\psi',\underbrace{h'',\ldots,h''}_r\rangle;
\end{multline*}
\medskip
\begin{multline*}
    \sum_{p,q,r\geq 0}\tau\langle\underbrace{h,\ldots,h}_p,\psi,\underbrace{h',\ldots,h'}_{q},h'-h'',\underbrace{h'',\ldots,h''}_{r}\rangle\\=\sum_{p,q\geq 0}\tau\langle\underbrace{h,\ldots,h}_p,\psi,\underbrace{h',\ldots,h'}_q\rangle-\sum_{p,r\geq 0}\tau\langle\underbrace{h,\ldots,h}_p,\psi,\underbrace{h'',\ldots,h''}_r\rangle.
\end{multline*}
\medskip

\noindent This finally gives
\medskip
\begin{multline*}
    d(\psi'')=h-h''-\psi'\langle\tau\rangle-\psi'\langle\tau\rangle\\
    -\sum_{p_1,p_2,q,r\geq 0}\tau\langle\underbrace{h,\ldots,h}_{p_1},\tau+h\langle\tau\rangle,\underbrace{h,\ldots,h}_{p_2},\psi,\underbrace{h',\ldots,h'}_q,\psi',\underbrace{h'',\ldots,h''}_r\rangle\\
    +\sum_{p,q,r\geq 0}\tau\langle\underbrace{h,\ldots,h}_{p},\psi\langle\tau\rangle,\underbrace{h',\ldots,h'}_q,\psi',\underbrace{h'',\ldots,h''}_r\rangle\\
    +\sum_{p,q_1,q_2,r\geq 0}\tau\langle\underbrace{h,\ldots,h}_p,\psi,\underbrace{h',\ldots,h'}_{q_1},\tau+h'\langle\tau\rangle,\underbrace{h',\ldots,h'}_{q_2},\psi',\underbrace{h'',\ldots,h''}_r\rangle\\
    -\sum_{p,q,r\geq 0}\tau\langle\underbrace{h,\ldots,h}_p,\psi,\underbrace{h',\ldots,h'}_{q},\psi'\langle\tau\rangle,\underbrace{h'',\ldots,h''}_{r}\rangle\\
    -\sum_{p,q,r_1,r_2\geq 0}\tau\langle\underbrace{h,\ldots,h}_p,\psi,\underbrace{h',\ldots,h'}_r,\psi',\underbrace{h'',\ldots,h''}_{r_1},\tau+h''\langle\tau\rangle,\underbrace{h'',\ldots,h''}_{r_2}\rangle\\
    -\sum_{p,r\geq 0}\tau\langle\underbrace{h,\ldots,h}_{p},\psi'',\underbrace{h'',\ldots,h''}_{r}\rangle.
    \end{multline*}
    \medskip

\noindent We also have
\begin{multline*}
    \psi''\langle\tau\rangle=\psi\langle\tau\rangle+\psi'\langle\tau\rangle+\sum_{p_1,p_2,q,r\geq 0}\tau\langle\underbrace{h,\ldots,h}_{p_1},\tau+h\langle\tau\rangle,\underbrace{h,\ldots,h}_{p_2},\psi,\underbrace{h',\ldots,h'}_q,\psi',\underbrace{h'',\ldots,h''}_r\rangle\\
    -\sum_{p,q,r\geq 0}\tau\langle\underbrace{h,\ldots,h}_p,\psi\langle\tau\rangle,\underbrace{h',\ldots,h'}_q,\psi',\underbrace{h'',\ldots,h''}_r\rangle\\
    -\sum_{p,q_1,q_2,r\geq 0}\tau\langle\underbrace{h,\ldots,h}_p,\psi,\underbrace{h',\ldots,h'}_{q_1},\tau+h'\langle\tau\rangle,\underbrace{h',\ldots,h'}_{q_2},\psi',\underbrace{h'',\ldots,h''}_r\rangle\\
    +\sum_{p,q,r\geq 0}\tau\langle\underbrace{h,\ldots,h}_p,\psi,\underbrace{h',\ldots,h'}_q,\psi'\langle\tau\rangle,\underbrace{h'',\ldots,h''}_r\rangle\\
    +\sum_{p,q,r_1,r_2\geq 0}\tau\langle\underbrace{h,\ldots,h}_p,\psi,\underbrace{h',\ldots,h'}_q,\psi',\underbrace{h'',\ldots,h''}_{r_1},\tau+h''\langle\tau\rangle,\underbrace{h'',\ldots,h''}_{r_2}\rangle.
\end{multline*}

\noindent At the end, we obtain
$$d(\psi'')+\psi''\langle\tau\rangle =h-h''-\sum_{p,q\geq 0}\tau\langle\underbrace{h,\ldots,h}_p,\psi'',\underbrace{h'',\ldots,h''}_q\rangle$$

\noindent which proves that $h\sim_\tau h''$.\\

We now prove that if $h\sim_{\tau} h'$, then $h'\sim_\tau h$. We use the previous construction. More precisely, let $\psi\in A_{1}$ be such that
$$h-h'=d(\psi)+\psi\langle\tau\rangle+\sum_{p,q\geq 0}\tau\langle\underbrace{h,\ldots,h}_{p},\psi,\underbrace{h',\ldots,h'}_q\rangle.$$

\noindent We search some element $\psi'$ such that the associated $\psi''$ previously constructed for the transitivity is $0$. We set $\psi_0'=-\psi$ and, for all $n\geq 0$, 

$$\psi_{n+1}'=-\psi-\sum_{p,q,r\geq 0}\tau\langle\underbrace{h,\ldots,h}_q,\psi,\underbrace{h',\ldots,h'}_q,\psi_n',\underbrace{h,\ldots,h}_r\rangle.$$

% \begin{equation*}
%     \psi_{n+1}'=-\psi-\sum\begin{tikzpicture}[baseline={([yshift=-.5ex]current bounding box.center)},scale=1]
%     \node (i) at (0,0) {$\tau$};
%     \node (1) at (-2.5,1) {$f$};
%     \node (1bis) at (-2,1) {$...$};
%     \node (1bbis) at (-1.5,1) {$f$};
%     \node (2) at (-1,1) {$\psi$};
%     \node[scale=1] (3) at (-0.5,1) {$g$};
%     \node (3b) at (0,1) {$...$};
%     \node[scale=1] (3bb) at (0.5,1) {$g$};
%     \node[scale=1] (3bbb) at (1,1) {$\psi_n'$};
%     \node[scale=1] (3bbbb) at (1.5,1) {$h$};
%     \node[scale=1] (3bbbbb) at (2,1) {$...$};
%     \node[scale=1] (3bbbbbb) at (2.5,1) {$h$};
%     \draw (i) -- (1);
%     \draw (i) -- (1bbis);
%     \draw (i) -- (2);
%     \draw (i) -- (3);
%     \draw (i) -- (3bb);
%     \draw (i) -- (3bbb);
%     \draw (i) -- (3bbbb);
%     \draw (i) -- (3bbbbbb);
%     \end{tikzpicture}.
% \end{equation*}

\noindent We obtain a Cauchy sequence $(\psi_n')_n$. Let $\psi'$ be its limit, which satisfies

$$\psi'=-\psi-\sum_{p,q,r\geq 0}\tau\langle\underbrace{h,\ldots,h}_p,\psi,\underbrace{h',\ldots,h'}_q,\psi',\underbrace{h,\ldots,h}_r\rangle.$$

% \begin{equation*}
%     \psi'=-\psi-\sum\begin{tikzpicture}[baseline={([yshift=-.5ex]current bounding box.center)},scale=1]
%     \node (i) at (0,0) {$\tau$};
%     \node (1) at (-2.5,1) {$f$};
%     \node (1bis) at (-2,1) {$...$};
%     \node (1bbis) at (-1.5,1) {$f$};
%     \node (2) at (-1,1) {$\psi$};
%     \node[scale=1] (3) at (-0.5,1) {$g$};
%     \node (3b) at (0,1) {$...$};
%     \node[scale=1] (3bb) at (0.5,1) {$g$};
%     \node[scale=1] (3bbb) at (1,1) {$\psi'$};
%     \node[scale=1] (3bbbb) at (1.5,1) {$h$};
%     \node[scale=1] (3bbbbb) at (2,1) {$...$};
%     \node[scale=1] (3bbbbbb) at (2.5,1) {$h$};
%     \draw (i) -- (1);
%     \draw (i) -- (1bbis);
%     \draw (i) -- (2);
%     \draw (i) -- (3);
%     \draw (i) -- (3bb);
%     \draw (i) -- (3bbb);
%     \draw (i) -- (3bbbb);
%     \draw (i) -- (3bbbbbb);
%     \end{tikzpicture}
% \end{equation*}

\noindent By the same computations as for the proof of the transitivity, we can check that $\psi'$ satisfies the equation

% \begin{equation*}
%     \delta(\psi')=g-f+\psi'\{\tau\}_{1}+\sum\begin{tikzpicture}[baseline={([yshift=-.5ex]current bounding box.center)},scale=1]
%     \node (i) at (0,0) {$\tau$};
%     \node (1) at (-0.5,1) {$f$};
%     \node (1bis) at (-1,1) {$...$};
%     \node (1bbis) at (-1.5,1) {$f$};
%     \node (2) at (0,1) {$\psi'$};
%     \node[scale=1] (3) at (0.5,1) {$g$};
%     \node (3b) at (1,1) {$...$};
%     \node[scale=1] (3bb) at (1.5,1) {$g$};
%     \draw (i) -- (1);
%     \draw (i) -- (1bbis);
%     \draw (i) -- (2);
%     \draw (i) -- (3);
%     \draw (i) -- (3bb);
%     \end{tikzpicture}
% \end{equation*}

$$h'-h=d(\psi')+\psi'\langle\tau\rangle+\sum_{p,q\geq 0}\tau\langle\underbrace{h',\ldots,h'}_p,\psi',\underbrace{h,\ldots,h}_q\rangle,$$

\noindent which proves that $h'\sim_\tau h$.\\

We thus have proved that $\sim_\tau$ is an equivalence relation. We now prove that the circular product $\circledcirc$ is compatible with $\sim_\tau$. Let $h,h_1,h_2\in A_0$ be such that $h_1\sim_\tau h_2$. Let $\psi\in A_{1}$ be such that
\begin{equation}
    h_1-h_2=d(\psi)+\psi\langle\tau\rangle+\sum_{p,q\geq 0}\tau\langle\underbrace{h_1,\ldots,h_1}_p,\psi,\underbrace{h_2,\ldots,h_2}_q\rangle.
\end{equation}

We prove first that $h_1\overline{\circledcirc}{h}\sim_\tau h_2\overline{\circledcirc}{h}$. Let $\psi':=\psi\circledcirc(1+{h})$. We need to show that
$$h_1\overline\circledcirc{h}-h_2\overline\circledcirc{h}=d(\psi')+\psi'\langle\tau\rangle+\sum_{p,q\geq 0}\tau\langle\underbrace{h_1\overline\circledcirc{h},\ldots,h_1\overline\circledcirc{h}}_p,\psi',\underbrace{h_2\overline\circledcirc{h},\ldots,h_2\overline\circledcirc{h}}_q\rangle.$$

\noindent We first compute $d(\psi')$. From \cite[Lemma 2.28]{moi}, we have
$$d(\psi')=d(\psi)\circledcirc(1+{h})-\psi\circledcirc(1+{h};d({h})),$$

\noindent where we have set, following \cite[Definition 2.27]{moi} in the case of a complete brace algebra,
$$a\circledcirc(1+b;c):=\sum_{n\geq 0}\sum_{k=0}^na\langle b,\ldots,b,\underset{k}{c},b,\ldots,b\rangle.$$

\noindent for every $a\in A$ and $b,c\in A_0$. We have
$$d(\psi)\circledcirc(1+{h})=h_1\overline\circledcirc{h}-h_2\overline\circledcirc{h}-\psi\langle\tau\rangle\circledcirc(1+{h})-\sum_{p,q\geq 0}\tau\langle\underbrace{h_1,\ldots,h_1}_p,\psi,\underbrace{h_2,\ldots,h_2}_q\rangle\circledcirc(1+{h}).$$

\noindent By the second formula of \cite[Lemma 2.28]{moi}, we have
$$\psi\langle\tau\rangle\circledcirc(1+{h})=\psi\circledcirc(1+{h};\tau\circledcirc(1+{h})).$$

\noindent Finally, we have
\medskip
$$\sum_{p,q\geq 0}\tau\langle\underbrace{h_1,\ldots,h_1}_p,\psi,\underbrace{h_2,\ldots,h_2}_q\rangle\circledcirc(1+{h})=\sum_{p,q\geq 0}\tau\langle \underbrace{h_1\overline\circledcirc{h},\ldots,h_1\overline\circledcirc{h}}_p,\psi\circledcirc(1+{h}),\underbrace{h_2\overline\circledcirc{h},\ldots,h_2\overline\circledcirc{h}}_q\rangle.$$
\medskip

\noindent We thus have

\medskip
\begin{multline*}
    d(\psi')=h_1\overline\circledcirc{h}-h_2\overline\circledcirc{h}-\psi\circledcirc(1+{h};\tau\circledcirc(1+{h}))\\-\sum_{p,q\geq 0}\tau\langle \underbrace{h_1\overline\circledcirc{h},\ldots,h_1\overline\circledcirc{h}}_p,\psi',\underbrace{h_2\overline\circledcirc{h},\ldots,h_2\overline\circledcirc{h}}_q\rangle-\psi\circledcirc(1+{h};d({h})).
\end{multline*}
\medskip

\noindent Since we have, by \cite[Lemma 2.28]{moi}, that
$$\psi'\langle\tau\rangle=\psi\circledcirc(1+{h};\tau+{h}\langle\tau\rangle).$$
\medskip
\noindent we obtain at the end
\medskip
$$d(\psi')+\psi'\langle\tau\rangle+\sum_{p,q\geq 0}\tau\langle\underbrace{h_1\overline\circledcirc{h},\ldots,h_1\overline\circledcirc{h}}_p,\psi',\underbrace{h_2\overline\circledcirc{h},\ldots,h_2\overline\circledcirc{h}}_q\rangle=h_1\overline\circledcirc{h}-h_2\overline{\circledcirc}{h}$$
\medskip

\noindent which proves that $h_1\overline\circledcirc{h}\sim_\tau h_2\overline\circledcirc{h}$.\\

We now prove that $h\overline\circledcirc{h}_1\sim_\tau h\overline\circledcirc{h}_2$. Let $\psi'=\psi+\sum_{p,q\geq 0}h\langle\underbrace{{h}_1,\ldots,{h}_1}_p,\psi,\underbrace{{h}_2,\ldots,{h}_2}_q\rangle$. We show that
\medskip
$$h\overline\circledcirc{h}_1-h\overline\circledcirc{h}_2=d(\psi')+\psi'\langle\tau\rangle+\sum_{p,q\geq 0}\tau\langle\underbrace{h\overline\circledcirc{h}_1,\ldots,h\overline\circledcirc{h}_1}_p,\psi',\underbrace{h\overline\circledcirc{h}_2,\ldots,h\overline\circledcirc{h}_2}_q\rangle.$$
\medskip

\noindent We first compute the sum $\sum_{p,q\geq 0}d(h)\langle\underbrace{{h}_1,\ldots,{h}_1}_p,\psi,\underbrace{{h}_2,\ldots,{h}_2}_q\rangle$. We use that $d(h)=\tau+h\langle\tau\rangle-\tau\circledcirc(1+h)$ to get
\medskip
\begin{multline*}\sum_{p,q\geq 0}d(h)\langle\underbrace{{h}_1,\ldots,{h}_1}_p,\psi,\underbrace{{h}_2,\ldots,{h}_2}_q\rangle=\sum_{p,q\geq 0}\tau\langle\underbrace{{h}_1,\ldots,{h}_1}_p,\psi,\underbrace{{h}_2,\ldots,{h}_2}_q\rangle\\
+\sum_{p_1,p_2,q\geq 0}h\langle\underbrace{{h}_1,\ldots,{h}_1}_{p_1},\tau\circledcirc(1+ h_1),\underbrace{h_1,\ldots,h_1}_{p_2},\psi,\underbrace{{h}_2,\ldots,{h}_2}_q\rangle\\
+\sum_{p,q,s,t\geq 0}h\langle\underbrace{h_1,\ldots,h_1}_p,\tau\langle\underbrace{h_1,\ldots,h_1}_s,\psi,\underbrace{h_2,\ldots,h_2}_t\rangle,\underbrace{h_2,\ldots,h_2}_q\rangle\\
-\sum_{p,q_1,q_2\geq 0}h\langle\underbrace{{h}_1,\ldots,{h}_1}_p,\psi,\underbrace{{h}_2,\ldots,{h}_2}_{q_1},\tau\circledcirc(1+ h_2),\underbrace{h_2,\ldots,h_2}_{q_1}\rangle\\
-\sum_{p,q,s,t\geq 0}\tau\langle\underbrace{h\overline\circledcirc{h}_1,\ldots,h\overline\circledcirc{h}_1}_p,\psi+h\langle\underbrace{h_1,\ldots,h_1}_s,\psi,\underbrace{h_2,\ldots,h_2}_t\rangle,\underbrace{h\overline\circledcirc{h}_2,\ldots,h\overline\circledcirc{h}_2}_q\rangle.
\end{multline*}
\medskip

\noindent Using the Leibniz rule, we obtain
\medskip
\begin{multline*}
d(\psi')=d(\psi)+\sum_{p,q\geq 0}\tau\langle\underbrace{{h}_1,\ldots,{h}_1}_p,\psi,\underbrace{{h}_2,\ldots,{h}_2}_q\rangle\\
+\sum_{p_1,p_2,q\geq 0}h\langle\underbrace{{h}_1,\ldots,{h}_1}_{p_1},\tau\circledcirc(1+ h_1),\underbrace{h_1,\ldots,h_1}_{p_2},\psi,\underbrace{{h}_2,\ldots,{h}_2}_q\rangle\\
+\sum_{p,q,s,t\geq 0}h\langle\underbrace{h_1,\ldots,h_1}_p,\tau\langle\underbrace{h_1,\ldots,h_1}_s,\psi,\underbrace{h_2,\ldots,h_2}_t\rangle,\underbrace{h_2,\ldots,h_2}_q\rangle\\
-\sum_{p,q_1,q_2\geq 0}h\langle\underbrace{{h}_1,\ldots,{h}_1}_p,\psi,\underbrace{{h}_2,\ldots,{h}_2}_{q_1},\tau\circledcirc(1+ h_2),\underbrace{h_2,\ldots,h_2}_{q_1}\rangle\\
-\sum_{p,q,s,t\geq 0}\tau\langle\underbrace{h\overline\circledcirc{h}_1,\ldots,h\overline\circledcirc{h}_1}_p,\psi+h\langle\underbrace{h_1,\ldots,h_1}_s,\psi,\underbrace{h_2,\ldots,h_2}_t\rangle,\underbrace{h\overline\circledcirc{h}_2,\ldots,h\overline\circledcirc{h}_2}_q\rangle\\
+\sum_{p_1,p_2,q\geq 0}h\langle\underbrace{{h}_1,\ldots,{h}_1}_{p_1},d(h_1),\underbrace{h_1,\ldots,h_1}_{p_2},\psi,\underbrace{{h}_2,\ldots,{h}_2}_q\rangle\\
+\sum_{p,q\geq 0}h\langle\underbrace{h_1,\ldots,h_1}_p,d(\psi),\underbrace{h_2,\ldots,h_2}_q\rangle\\
-\sum_{p,q_1,q_2\geq 0}h\langle\underbrace{{h}_1,\ldots,{h}_1}_p,\psi,\underbrace{{h}_2,\ldots,{h}_2}_{q_1},d(h_2),\underbrace{h_2,\ldots,h_2}_{q_1}\rangle.
\end{multline*}
\medskip

\noindent Using equation $(3)$ and that $h_1,h_2\in\text{\normalfont Aut}_{\text{\normalfont Deligne}(A)}(\tau)$, we deduce
\medskip
\begin{multline*}
d(\psi')=h_1-h_2-\psi\langle\tau\rangle+\sum_{p_1,p_2,q\geq 0}h\langle\underbrace{{h}_1,\ldots,{h}_1}_{p_1},\tau+h_1\langle\tau\rangle,\underbrace{h_1,\ldots,h_1}_{p_2},\psi,\underbrace{{h}_2,\ldots,{h}_2}_q\rangle\\
-\sum_{p,q_1,q_2\geq 0}h\langle\underbrace{{h}_1,\ldots,{h}_1}_p,\psi,\underbrace{{h}_2,\ldots,{h}_2}_{q_1},\tau+h_2\langle\tau\rangle,\underbrace{h_2,\ldots,h_2}_{q_1}\rangle\\
+\sum_{p,q\geq 0}h\langle\underbrace{h_1,\ldots,h_1}_p,h_1-h_2-\psi\langle\tau\rangle,\underbrace{h_2,\ldots,h_2}_q\rangle\\
-\sum_{p,q,\geq 0}\tau\langle\underbrace{h\overline\circledcirc{h}_1,\ldots,h\overline\circledcirc{h}_1}_p,\psi',\underbrace{h\overline\circledcirc{h}_2,\ldots,h\overline\circledcirc{h}_2}_q\rangle.
\end{multline*}
\medskip

\noindent Since we have
\medskip
$$\sum_{p,q\geq 0}h\langle\underbrace{h_1,\ldots,h_1}_p,h_1-h_2,\underbrace{h_2,\ldots,h_2}_q\rangle=\sum_{p\geq 0}h\langle\underbrace{h_1,\ldots,h_1}_p\rangle-\sum_{r\geq 0}h\langle \underbrace{h_2,\ldots,h_2}_r\rangle,$$
\medskip

\noindent we finally obtain
\medskip
\begin{multline*}
d(\psi')=h\overline\circledcirc h_1-h\overline\circledcirc h_2-\psi\langle\tau\rangle+\sum_{p_1,p_2,q\geq 0}h\langle\underbrace{{h}_1,\ldots,{h}_1}_{p_1},\tau+h_1\langle\tau\rangle,\underbrace{h_1,\ldots,h_1}_{p_2},\psi,\underbrace{{h}_2,\ldots,{h}_2}_q\rangle\\
-\sum_{p,q_1,q_2\geq 0}h\langle\underbrace{{h}_1,\ldots,{h}_1}_p,\psi,\underbrace{{h}_2,\ldots,{h}_2}_{q_1},\tau+h_2\langle\tau\rangle,\underbrace{h_2,\ldots,h_2}_{q_1}\rangle\\
-\sum_{p,q\geq 0}h\langle\underbrace{h_1,\ldots,h_1}_p,\psi\langle\tau\rangle,\underbrace{h_2,\ldots,h_2}_q\rangle\\
-\sum_{p,q,\geq 0}\tau\langle\underbrace{h\overline\circledcirc{h}_1,\ldots,h\overline\circledcirc{h}_1}_p,\psi',\underbrace{h\overline\circledcirc{h}_2,\ldots,h\overline\circledcirc{h}_2}_q\rangle.
\end{multline*}
\medskip

\noindent We also have
\medskip
\begin{multline*}
    \psi'\langle\tau\rangle=\psi\langle\tau\rangle-\sum_{p_1,p_2,q\geq 0}h\langle\underbrace{h_1,\ldots,h_1}_{p_1},\tau+h_1\langle\tau\rangle,\underbrace{h_1,\ldots,h_1}_{p_2},\psi,\underbrace{h_2,\ldots,h_2}_q\rangle\\
    +\sum_{p,q\geq 0}h\langle\underbrace{h_1,\ldots,h_1}_p,\psi\langle\tau\rangle,\underbrace{h_2,\ldots,h_2}_q\rangle\\
    +\sum_{p,q_1,q_2\geq 0}h\langle\underbrace{h_1,\ldots,h_1}_p,\psi,\underbrace{h_2,\ldots,h_2}_{q_1},\tau+h_2\langle\tau\rangle,\underbrace{h_2,\ldots,h_2}_{q_2}\rangle.
\end{multline*}
\medskip

\noindent At the end, we obtain
$$d(\psi')+\psi'\langle\tau\rangle=h\overline\circledcirc h_1-h\overline\circledcirc h_2-\sum_{p,q,\geq 0}\tau\langle\underbrace{h\overline\circledcirc{h}_1,\ldots,h\overline\circledcirc{h}_1}_p,\psi',\underbrace{h\overline\circledcirc{h}_2,\ldots,h\overline\circledcirc{h}_2}_q\rangle$$

\noindent which proves that $h\overline\circledcirc{h}_1\sim_\tau h\overline\circledcirc{h}_2$. The lemma is proved.
\end{proof}

\begin{thm}
    Let $A$ be a complete brace algebra and $\tau\in\mathcal{MC}(A)$. Then
    $$\pi_1(\mathcal{MC}_\bullet(A),\tau)\simeq\text{\normalfont Aut}_{\text{\normalfont Deligne}(A)}(\tau)/\sim_\tau.$$
\end{thm}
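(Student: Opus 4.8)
The plan is to compute $\pi_1(\mathcal{MC}_\bullet(A),\tau)$ directly from its simplicial description, using the Kan complex structure established in the previous corollary. Recall that for a Kan complex $K$ with basepoint $\tau$, the group $\pi_1(K,\tau)$ is the set of $1$-simplices $\alpha$ with $d_0\alpha = d_1\alpha = \tau$, modulo the homotopy relation realized by $2$-simplices. So first I would identify the loops at $\tau$: by Lemma \ref{mc1}, a $1$-simplex $\alpha$ with $d_0\alpha = d_1\alpha = \tau$ is of the form $\alpha = -\tau\otimes\underline{0}^\vee - \tau\otimes\underline{1}^\vee - h\otimes\underline{01}^\vee$ where $h\in A_0$ satisfies exactly $d(h) = \tau + h\langle\tau\rangle - \tau\circledcirc(1+h)$, i.e. $h\in\text{\normalfont Aut}_{\text{\normalfont Deligne}(A)}(\tau)$. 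This gives a bijection between loops at $\tau$ and $\text{\normalfont Aut}_{\text{\normalfont Deligne}(A)}(\tau)$.

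The next step is to unwind the homotopy relation on loops in terms of the elements $h$. Two loops with parameters $h,h'$ are homotopic (rel endpoints) precisely when there is a $2$-simplex $\beta\in\mathcal{MC}_2(A)$ whose boundary is the appropriate horn: concretely $\beta$ should have $d_0\beta$ and $d_2\beta$ equal to the two loops (and $d_1\beta$ degenerate, or matching via the standard convention for $\pi_1$). I would write a general element of $\mathcal{MC}_2(A)$ in the basis of $\Sigma N^*(\Delta^2)$ dual to $\underline{0},\underline{1},\underline{2},\underline{01},\underline{02},\underline{12},\underline{012}$, impose the boundary conditions forcing the vertex and edge components to be determined by $\tau$, $h$, $h'$, and then expand the Maurer-Cartan equation $d(\beta)+\sum_{p\geq 1}\beta\llbrace\beta\rrbrace_p = 0$ using the explicit formula for $\partial^2$ given in Corollary \ref{calculdiff}. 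The coefficient of $\underline{012}^\vee$ in this equation, after the free parameter $\psi\in A_1$ (the $\underline{012}^\vee$-component of $\beta$) is extracted, should yield exactly the relation
$$h-h'=d(\psi)+\psi\langle\tau\rangle+\sum_{p,q\geq 0}\tau\langle\underbrace{h,\ldots,h}_{p},\psi,\underbrace{h',\ldots,h'}_q\rangle,$$
which is the defining relation of $\sim_\tau$. Thus homotopy classes of loops correspond bijectively to $\text{\normalfont Aut}_{\text{\normalfont Deligne}(A)}(\tau)/\sim_\tau$.

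Finally I would verify that this bijection is a group isomorphism. The group law on $\pi_1$ of a Kan complex is given by concatenation of loops, realized via a $2$-simplex whose $d_1$ face is the composite. I would produce the composing $2$-simplex explicitly and read off, from the edge components and the $\underline{012}^\vee$-component of its Maurer-Cartan equation, that the composite loop corresponds to the parameter $h\overline\circledcirc h'$ (or its opposite, depending on orientation conventions); the previous lemma already guarantees that $\overline\circledcirc$ descends to a well-defined group structure on $\text{\normalfont Aut}_{\text{\normalfont Deligne}(A)}(\tau)/\sim_\tau$, so it remains only to match the concatenation with $\overline\circledcirc$. The main obstacle will be the bookkeeping in the Maurer-Cartan expansion for the $2$-simplex: the formula for $\partial^2$ involves the infinite sums with the circular product $\overline\circledcirc$ and weighted branches, so extracting the $\underline{012}^\vee$-coefficient cleanly and recognizing it as the $\sim_\tau$-relation (and the concatenation as $\overline\circledcirc$) requires careful resummation of these series, exactly paralleling the kind of computation carried out in the proof of the preceding lemma. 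Everything else is either a direct application of Lemma \ref{mc1} or standard Kan-complex homotopy theory.
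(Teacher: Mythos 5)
Your proposal is correct and follows essentially the same route as the paper: loops at $\tau$ are identified with $\text{\normalfont Aut}_{\text{\normalfont Deligne}(A)}(\tau)$ via Lemma \ref{mc1}, the homotopy relation is extracted from the $\underline{012}^\vee$-component of the Maurer-Cartan equation for a $2$-simplex with one degenerate face (using Corollary \ref{calculdiff}), and the group law is matched with $\overline\circledcirc$ by exhibiting the composing $2$-simplex explicitly. The only cosmetic difference is the choice of which face is degenerate, which you already flag as a convention issue.
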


\begin{proof}
    Recall that $\text{\normalfont Aut}_{\text{\normalfont Deligne(A)}}(\tau)=\{h\in A_0\ |\ d(h)=\tau+h\langle\tau\rangle-\tau\circledcirc(1+h)\}$. Let $h\in A_0$. By Lemma \ref{mc1}, we have that $h\in\text{\normalfont Aut}_{\text{\normalfont Deligne(A)}}(\tau)$ if and only if
    $$-\tau\otimes( \underline{0}^\vee+ \underline{1}^\vee)-h\otimes  \underline{01}^\vee\in\mathcal{MC}(A\otimes \Sigma N^*(\Delta^1)).$$

    \noindent We thus have a bijection
    $$f:\text{\normalfont Aut}_{\text{\normalfont Deligne(A)}}(\tau)\longrightarrow\mathcal{MC}_1(A)_\tau$$

    \noindent where we denote by $\mathcal{MC}_1(A)_\tau$ the subset of $\mathcal{MC}_1(A)$ given by elements whose $0$ and $1$ vertices are given by $ \tau$. Consider now $h,h'\in A_0$ such that
    $$d(h)=\tau+h\langle\tau\rangle-\tau\circledcirc(1+h);$$
    $$d(h')=\tau+h'\langle\tau\rangle-\tau\circledcirc(1+h').$$

    \noindent Let $\xi\in\mathcal{MC}_2(A)$ be such that $d_1\xi=f(h)$ and $d_2\xi=f(h')$. We write $\xi$ as
    $$\xi=-\tau\otimes ( \underline{0}^\vee+ \underline{1}^\vee+ \underline{2}^\vee)-h'\otimes  \underline{01}^\vee-h\otimes  \underline{02}^\vee+\psi\otimes  \underline{012}^\vee$$

    \noindent for some $\psi\in A_1$. We make precise the Maurer-Cartan condition on $\xi$. We first have
    $$d(\xi)=-d(\tau)\otimes ( \underline{0}^\vee+ \underline{1}^\vee+ \underline{2}^\vee)-d(h')\otimes  \underline{01}^\vee-d(h)\otimes  \underline{02}^\vee+(d(\psi)-h+h')\otimes  \underline{012}^\vee.$$

    \noindent By Lemma \ref{diff1}, we have
    \begin{multline*}
    \xi\llbrace\xi\rrbrace_1=-\tau\langle\tau\rangle\otimes ( \underline{0}^\vee+ \underline{1}^\vee+ \underline{2}^\vee)-(\tau\langle h'\rangle-h'\langle\tau\rangle)\otimes  \underline{01}^\vee\\
    -(\tau\langle h\rangle+h\langle\tau\rangle)\otimes  \underline{02}^\vee+(\tau\langle\psi\rangle+\psi\langle\tau\rangle)\otimes  \underline{012}^\vee.
    \end{multline*}

    \noindent Let $r\geq 2$. From the computations of $\partial^1$ and $\partial^2$ in Corollary \ref{calculdiff}, we deduce
    $$\xi\llbrace\xi\rrbrace_r=-\tau\langle\underbrace{h,\ldots,h}_r\rangle\otimes  \underline{01}^\vee-\tau\langle\underbrace{h',\ldots,h'}_r\rangle\otimes  \underline{02}^\vee+\sum_{p+q=r-1}\tau\langle\underbrace{h,\ldots,h}_p,\psi,\underbrace{h',\ldots,h'}_q\rangle\otimes  \underline{012}^\vee.$$
    
    \noindent We thus have proved that $\xi$ is a Maurer-Cartan element if and only if
    $$h-h'=d(\psi)+\psi\langle\tau\rangle+\sum_{p,q\geq 0}\tau\langle\underbrace{h,\ldots,h}_{p},\psi,\underbrace{h',\ldots,h'}_q\rangle.$$

    \noindent Equivalently, we have that $[f(h)]=[f(h')]$ if and only if $h\sim_\tau h'$. We thus have a well defined bijection
    $$\overline{f}:\text{\normalfont Aut}_{\text{\normalfont Deligne(A)}}(\tau)/\sim_\tau\longrightarrow\pi_1(\mathcal{MC}_\bullet(A),\tau),$$

    \noindent We now check that $\overline{f}$ is compatible with the group structures. Let $h,h'\in A_0$ be such that $\alpha=-\tau\otimes( \underline{0}^\vee+ \underline{1}^\vee)-h\otimes  \underline{01}^\vee$ and $\alpha'=-\tau\otimes( \underline{0}^\vee+ \underline{1}^\vee)-h'\otimes  \underline{01}^\vee$ are Maurer-Cartan elements in $\mathcal{MC}(A\otimes \Sigma N^*(\Delta^1))$. As we have seen before, by Lemma \ref{mc1}, it is equivalent to ask
    $$d(\mu)=\tau+h\langle\tau\rangle-\tau\circledcirc(1+h);$$
    $$d(h')=\tau+h'\langle\tau\rangle-\tau\circledcirc(1+h').$$
    
    \noindent By Corollary \ref{calculdiff}, we see that
    $$-\tau\otimes( \underline{0}^\vee+ \underline{1}^\vee+ \underline{2}^\vee)-h'\otimes  \underline{12}^\vee-(h\overline\circledcirc h')\otimes  \underline{02}^\vee-h\otimes  \underline{01}^\vee\in\mathcal{MC}(A\otimes \Sigma N^*(\Delta^2)).$$

    \noindent We then have
    $$[\alpha]\cdot[\alpha']=[-\tau\otimes( \underline{0}^\vee+ \underline{1}^\vee)-(h\overline\circledcirc h')\otimes  \underline{01}^\vee]$$

    \noindent in $\pi_1(\mathcal{MC}_\bullet(A),\tau)$, which gives
    $$\overline{f}([\alpha]\cdot[\alpha'])=h\overline\circledcirc h'=\overline{f}([\alpha])\circledcirc\overline{f}([\alpha']),$$

    \noindent showing that $\overline{f}$ is an isomorphism of groups.
\end{proof}

\subsubsection{The group $\pi_2(\mathcal{MC}_\bullet(A),\tau)$}

We now compute the group $\pi_2(\mathcal{MC}_\bullet(A),\tau)$. We begin by general lemmas that will also be useful for the computations of $\pi_n(\mathcal{MC}_\bullet(A),\tau)$ for $n\geq 3$.

\begin{lm}\label{degree}
    Let $T$ be a canonical tree with $|T|\geq 3$, and $n\geq 1$. If the first branch of $T$ has only one vertex, then there is no element of the form $\underline{0},\ldots,\underline{n}\in \Sigma^{-1}N_*(\Delta^n)$ among the non-root vertices in the tensor products produced by $T\otimes\Lambda\mu_T(\underline{0\cdots n})\in\mathcal{B}race^c(\Sigma ^{-1} N_*(\Delta^n))$.
\end{lm}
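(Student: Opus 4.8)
The plan is to read off the terms of $\Lambda\mu_T(\underline{0\cdots n})$ from the interval cut operations that define the coaction of $\mathcal{E}$ on $N_*(\Delta^n)$, and then to show that any tensor with a single-vertex non-root factor is degenerate, hence zero. Since $T$ is canonical, its root carries the label $1$ and its second vertex the label $2$, and a nonzero term of $TR(\mu_T)(\underline{0\cdots n})$ is produced by choosing cut points $0=t_0\le t_1\le\cdots\le t_N=n$ (with $N$ the number of letters of the surjection) and assigning to the $j$-th tensor factor the subsimplex spanned by the union of the segments $[t_{l-1},t_l]$ over the positions $l$ at which the surjection takes the value $j$, the root factor being the one indexed by $1$. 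In this dictionary a non-root factor $\underline{x_j}$ is a single vertex exactly when the value $j\ge 2$ occurs only in segments that all collapse to a single point $v_0$.

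First I would invoke Lemma~\ref{surj}: since the first branch of $T$ has one vertex, every surjection occurring in $TR(\mu_T)$ has the form $12\cdot u_T$ with $u_T\in\chi(\{2,\ldots,|T|\})$, so the root value $1$ occurs exactly once, and, as already used in the proof of Lemma~\ref{firstdiff}, the value $2$ occurs at least twice because $|T|\ge 3$. This disposes of every value $j\ge 2$ that occurs at least twice: in a nonzero term its segments (non-consecutive, since the surjection is non-degenerate) cannot all collapse to one point $v_0$, for then an intermediate factor would acquire $v_0$ on both sides and the corresponding simplex would be degenerate. Hence only non-root values occurring exactly once remain to be treated, and these are values $j\ge 3$.

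For such a $j$, occurring at a unique position $l$, the single-vertex condition forces $t_{l-1}=t_l=v_0$, i.e. the segment $I_l$ collapses. The heart of the argument is to show that $I_l$ is bracketed by a repeated value, so that this value's factor receives the shared vertex $v_0$ twice and the term vanishes. I plan to prove this by induction on $|T|$: using the recursive description $TR(\mu_T)=12\cdot u_T$ of Lemma~\ref{surj} together with Lemma~\ref{indmu}, one peels off the root and reduces a collapsed once-occurring segment for $T$ to one for a strictly smaller tree, where the inductive hypothesis applies. The base case $|T|=3$ is the cherry, for which $TR(\mu_T)=\pm(1232)$: collapsing the factor indexed by $3$ forces the value $2$ (which occurs at positions $2$ and $4$) to repeat the vertex $v_0$, exactly the mechanism met in the proof of Lemma~\ref{firstdiff}.

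The step I expect to be the main obstacle is this bracketing statement, namely ruling out a local pattern $\cdots a\,j\,b\cdots$ with $a,b,j$ pairwise distinct and $I_l$ collapsed, which would produce a genuine single-vertex non-root factor in a nonzero term. The content of the lemma is that the surjections coming from Construction~\ref{consmu} never display such an un-bracketed once-occurring non-root value; morally this reflects the nested, planar-tree-traversal structure of $\mu_T$, in which a vertex visited a single time lies inside the repeated visits of the vertex immediately below it. Tracking this nesting through the induction, and its compatibility with the caesura (row-break) structure of the surjections, is where the real work lies.
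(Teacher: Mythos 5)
Your reduction is the right one and is essentially the paper's: the interval-cut dictionary shows that a nonzero term with a single-vertex factor at a non-root position $k$ can only arise from a surjection in which $k$ occurs exactly once and in which deleting the letter $k$ does not create two equal adjacent values; multiply-occurring values are disposed of exactly as you say. The paper packages both cases into a single statement: writing $TR(\mu_T)=12\cdot u_T$ with $u_T=\sum_i\lambda_i^Tu_T^i$ in the basis of non-degenerate surjections (Lemma \ref{surj}), it proves that $\pi_{\{k\}}(2\cdot u_T^i)=0$ for every $2\le k\le|T|$, where $\pi_{\{k\}}$ deletes the letter $k$ and is declared to be zero on surjections containing $k$ more than once. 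Your ``bracketing statement'' is precisely the once-occurring case of this assertion.

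The gap is that this assertion --- which you correctly flag as the main obstacle --- is left as a plan, and it is the entire technical content of the lemma; Lemma \ref{surj} alone does not exclude the pattern $\cdots a\,j\,b\cdots$ with $a\neq b$ around a once-occurring $j\ge 3$. The paper closes it by induction on $|T|$: Construction \ref{consmu} and Lemma \ref{indmu} give $TR(\mu_T)=-\sum_{S\subset T}\pm\,12\cdot\pi_2\bigl(TR(\mu_{T/S})\circ_S TR(\mu_S)\bigr)$, and the multiplicativity $\pi_{\{k\}}(u\cdot v)=\pi_{\{k\}}(u)\cdot\pi_{\{k\}}(v)$ for concatenations of surjections on disjoint alphabets lets the induction hypothesis (applied to $S$ if $k\in V_S$, to $T/S$ otherwise) propagate through each composite. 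One still needs a case analysis on $|S|$ and $|T/S|$ (both different from $2$; $|S|=2$; $|T/S|=2$), the small cases being settled by inspecting the extra letters contributed by the two-vertex factors, with base case the cherry $TR(\mu_T)=\pm(1232)$ as you note. So your inductive strategy is the correct one, but without carrying out this recursion (or an equivalent structural argument substantiating the ``nesting'' you invoke) the proof is not complete.
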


\begin{proof}
    For every finite set $E$ and $k\in E$, we denote by $\pi_{\{k\}}:\chi(E)\longrightarrow\chi(E\setminus\{k\})$ the morphism which forgets the element $k$. If a surjection has multiple occurrences of the element $k$, then its image by $\pi_{\{k\}}$ is $0$ by convention. Note that if $A$ and $B$ are disjoint finite sets, then for every $u\in\chi(A),v\in\chi(B)$, we have $\pi_{\{k\}}(u\cdot v)=\pi_{\{k\}}(u)\cdot\pi_{\{k\}}(v)$ in $\chi(A\sqcup B)$.\\

    Let $T$ be a canonical tree with $|T|\geq 3$. By Lemma \ref{surj}, there exists $u_{T}\in\chi(V_T\setminus\{1\})$ such that
    $$TR(\mu_T)=12\cdot u_{T}.$$

    \noindent We write uniquely $u_{T}$ as
    $$u_{T}=\sum_{i=1}^{m_T}\lambda_i^T u_{T}^i,$$

    \noindent \noindent where $\lambda_1^T,\ldots,\lambda_{m_T}^T\in\mathbb{K}$ and $u_{T}^1,\ldots,u_{T}^{m_T}$ are non degenerate surjections. We prove that, for every $1\leq i\leq m_T$ and $2\leq k\leq|T|$, 
    $$\pi_{\{k\}}(2\cdot u_{T}^i)=0.$$
    
    \noindent It is true for $k=2$, since $u_{T}^i\in\chi(2<\cdots <|T|)$ so that there are at least two occurences of $2$ in the surjection $2\cdot u_{T}^i$. Suppose now that $k\geq 3$. We prove the statement by induction on $|T|$. If $|T|=3$, the first tree of Example \ref{exmu} gives $TR(\mu_T)=\pm1232$ and $\pi_{\{3\}}(232)=22=0$. We now suppose that $|T|\geq 4$. By the proof of Lemma \ref{surj}, we have
    $$Tr(\mu_T)=-\sum_{S\subset T}\pm12\cdot\pi_2( TR(\mu_{T/S})\circ_STR(\mu_{S})).$$

    \noindent Let $S\subset T$ be such that $b_S=b_{T/S}=1$. Suppose that $|S|,|T/S|\neq 2$. By Lemma \ref{surj}, there exist $u_{S}\in\chi(V_S\setminus\{r(S)\})$ and $u_{T/S}\in\chi(V_{T/S}\setminus\{r(T/S)\})$ such that
    \begin{center}
        $\begin{array}{rll}
             TR(\mu_{S}) & = & r(S)p\cdot u_{S};\\
             TR(u_{T/S}) & = & r(T/S)q\cdot u_{T/S} 
        \end{array}$
    \end{center}

    \noindent where $p\in V_S$ and $q\in V_{T/S}$ are the second element of their respective totally ordered set. If $r(S)\neq 1$, then $r(T/S)=1$ and $q=2$, since $b_T=1$, so that
    $$TR(\mu_{T/S})\circ_S TR(\mu_{S})=12\cdot (u_{T/S}\circ_S (r(S)p\cdot u_{S}))$$

    \noindent whose associated term in the sum is $0$. Suppose now that $r(S)=1$. Then $r(T/S)=S$, so that
    $$TR(\mu_{T/S})\circ_S TR(\mu_{S})=1p\cdot u_{S}\cdot q\cdot u_{T/S}.$$
    
    \noindent We write $u_{S}$ and $u_{T/S}$ in the basis given by non degenerated surjections:
    $$TR(\mu_{T/S})\circ_S TR(\mu_{S})=\sum_{i=1}^{m_S}\sum_{j=1}^{m_{T/S}}\lambda_{i}^S\lambda_j^{T/S}1p\cdot u_{S}^i\cdot q\cdot u_{T/S}^j.$$
    
    \noindent Since $k\neq 1,2$, we have, for every $1\leq i\leq m_S$ and $1\leq j\leq m_{T/S}$,
    $$\pi_{\{k\}}(12\cdot\pi_2(1p\cdot u_{S}^i\cdot q\cdot u_{T/S}^j))=12\cdot \pi_{\{k\}}(p\cdot u_{S}^i)\cdot\pi_{\{k\}}(q\cdot u_{T/S}^j).$$
    
    \noindent By induction hypothesis (on $S$ if $k\in V_S$, on $T/S$ else), we obtain $0$. Suppose now $|S|=2$ and $|T/S|\neq 2$. By the same argument as before, we can restrict to the case $r(S)=1$ (which implies that $r(T/S)=S$), so that
    $$TR(\mu_{T/S})\circ_S TR(\mu_{S})=1pq\cdot u_{T/S}.$$

    \noindent where $p\in V_S$ and $q\in V_{T/S}$ are the second element of their respective totally ordered set. We have
    $$12\cdot\pi_2(TR(\mu_{T/S})\circ_S TR(\mu_{S}))=12pq\cdot u_{T/S}=\sum_{i=1}^{m_{T/S}}\lambda_i^{T/S}12pq\cdot u_{T/S}^i.$$

    \noindent Let $1\leq i\leq m_{T/S}$. If $p=2$, then the corresponding term in the sum is $0$. If $p\neq 2$, then $q=2$ so that we need to compute
    $$1\cdot \pi_{\{k\}}(2p2\cdot u_{T/S}^i).$$

    \noindent If $k=p$, then $\pi_{\{p\}}(2p2\cdot u_{T/S}^i)=22\cdot u_{T/S}^i=0$. If $k\neq p$, then $\pi_{\{k\}}(2p2\cdot u_{T/S}^i)=2p2\cdot\pi_{\{k\}}(u_{T/S}^i)$, which is $0$ by induction hypothesis on $T/S$. Suppose now that $|T/S|=2$ and $|S|\neq 2$. As before, we can suppose that $r(S)=1$ and $2\notin V_S$. We then have
    $$TR(\mu_{T/S})\circ_S TR(\mu_{S})=13\cdot u_{S}\cdot 2,$$

    \noindent which gives
    $$12\cdot\pi_2(TR(\mu_{T/S})\circ_S TR(\mu_{S}))=123\cdot u_{S}\cdot 2=\sum_{i=1}^{m_S}\lambda_i^S123\cdot u_{S}^i\cdot 2.$$

    \noindent Let $1\leq i\leq m_S$. Then 
    $$\pi_{\{k\}}(23\cdot u_{S}^i\cdot 2)=2\cdot\pi_{\{k\}}(3\cdot u_{S}^i)\cdot 2=0,$$

    \noindent by induction hypothesis on $S$. The case $|S|=|T/S|=2$ gives $|T|=3$ which has already be proved in the beginning of the proof.\\

    We thus have proved that $\pi_{\{k\}}(2\cdot u_{T}^i)=0$ for every canonical tree $T$ such that $|T|\geq 3$ and $2\leq k\leq |T|,1\leq i\leq m_T$. We now prove the lemma. Let $2\leq k\leq |T|$. By definition of the interval cut operations (see \cite[$\mathsection$2.2.1]{fresseberger}), the tensors with a factor of the form $\underline{0},\ldots,\underline{n}$ at position $k$ occurring in the expansion of $(T\otimes\Lambda\mu_T)( \underline{0\cdots n})$ are precisely produced by the surjections $12\cdot u_{T}^1,\ldots, 12\cdot u_{T}^{m_T}$ which contain only one occurrence of $k$. Let $1\leq i\leq m_T$ be such that $u_{T}^i$ contains only one occurrence of $k$. The tensors produced by $2\cdot u_{T}^i$ with a degree $-1$ element at position $k$ are given by the insertion of the appropriate degree $0$ vertex at position $k$ of the tensors produced by the surjection $\pi_{\{k\}}(2\cdot u_{T}^i)$. Since this surjection is $0$, the lemma is proved.
\end{proof}

\begin{lm}\label{bracedegree}
    Let $n\geq 2$. Let $a,b_1,\ldots,b_m\in A$, let $\underline{x},\underline{y_1},\ldots,\underline{y_m}\in N_*(\Delta^n)$ be basis elements and $r_1,\ldots,r_m\geq 0$. Suppose that $$|\underline{x}|+r_1|\underline{y_1}|+\cdots+r_m|\underline{y_m}|> n-2.$$

    \noindent Then $a\otimes  \underline{x}^\vee\llbrace b_1\otimes  \underline{y_1}^\vee,\ldots,b_m\otimes  \underline{y_m}^\vee\rrbrace_{r_1,\ldots,r_m}=0.$
\end{lm}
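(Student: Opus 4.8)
The plan is to reduce the statement to the vanishing of the $\Sigma N^*(\Delta^n)$-component of the brace and to analyze it through the explicit description of the weighted braces in $A\otimes\Sigma N^*(\Delta^n)$ recalled at the end of $\mathsection$\ref{sec:233}. First I would write
$$a\otimes\underline x^\vee\llbrace b_1\otimes\underline{y_1}^\vee,\ldots,b_m\otimes\underline{y_m}^\vee\rrbrace_{r_1,\ldots,r_m}$$
as a sum, indexed by shuffles $\sigma$ and canonical trees $T\in\mathcal{PRT}(r+1)$ with $r=r_1+\cdots+r_m$, of terms $l(T\otimes a\otimes\cdots)\otimes\partial^{N^*(\Delta^n)}(T^\vee\otimes\underline x^\vee\otimes\cdots)$, where the right-hand tensor factor lies in $\Sigma N^*(\Delta^n)$ and is governed by the $\mathcal{E}$-operation $\mu_T$ acting on $N^*(\Delta^n)$. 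Since $l$ and the $A$-factor do not involve $n$, it suffices to show that this $\Sigma N^*(\Delta^n)$-factor vanishes under the degree hypothesis.

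By Lemma \ref{surj} only canonical trees with $b_T=1$ give a nonzero $TR(\mu_T)$, so only those trees contribute. For such a tree $T$ with $|T|=r+1$, the operation $\mu_T$ lies in $\mathcal{E}(r+1)_{r-1}$, so the resulting cochain has a degree that I would compute from $r-1$ together with the degrees of the inputs; the point is to read off exactly which simplices $\underline v$ can occur in its expansion. Dually this is the statement that $T$, with root labelled $\underline x$ and non-root vertices labelled by the $\underline{y_i}$ (with multiplicities $r_i$), occurs in $\partial^n(\Sigma^{-1}\underline v)$, and the total-degree identity for $\partial^n$ pins down $|\underline v|$ in terms of $|\underline x|$, the $|\underline{y_i}|$ and $r$.

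The crucial input is then Lemma \ref{degree}: using naturality of $\partial^\bullet$ to replace a general basis element $\underline v$ by the top simplex $\underline{0\cdots|\underline v|}$ of the corresponding face, Lemma \ref{degree} forbids degree-$0$ labels at the non-root vertices of the contributing trees and, more generally, controls the degrees the non-root vertices can carry. Feeding these constraints into the degree identity, together with the bound $|\underline v|\le n$ coming from $\underline v\in N_*(\Delta^n)$, I would obtain that a nonzero contribution forces $|\underline x|+\sum_i r_i|\underline{y_i}|\le n-2$, which is the contrapositive of the claim.

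I expect the main obstacle to be precisely this last bookkeeping step: upgrading the qualitative conclusion of Lemma \ref{degree} (no degree-$0$ vertices away from the root) into the sharp quantitative inequality $n-2$, while keeping control of the number $r$ of non-root vertices, which is not bounded a priori. This is likely to require an induction on the structure of $T$ in the spirit of the proofs of Lemma \ref{surj}, Lemma \ref{degree} and Theorem \ref{rec}, tracking at each contraction how the surjection $TR(\mu_T)=12\cdot u_T$ distributes the simplex degree among the slots.
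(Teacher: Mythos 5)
Your proposal has a genuine gap at exactly the point you flag yourself: the ``bookkeeping step'' that is supposed to convert the qualitative information from Lemmas \ref{surj} and \ref{degree} into the sharp inequality involving $n-2$ is never carried out, and the route you sketch for it (an induction on the tree structure tracking how the surjection $TR(\mu_T)=12\cdot u_T$ distributes simplex degrees among the slots) is not only missing but also misdirected. Lemma \ref{degree} is about the absence of degree-$0$ factors at non-root positions in the coaction on $\underline{0\cdots n}$; it is the right tool for the $\pi_n$ computations (Lemma \ref{mcn} and after), but it does not by itself produce any bound of the form $|\underline{x}|+\sum_i r_i|\underline{y_i}|\le n-2$, and trying to extract one from the combinatorics of $u_T$ would essentially amount to re-proving the lemma by hand for each tree. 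As written, the proof does not close.

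The actual argument in the paper needs none of this tree or surjection analysis: it is a two-line degree count. For $r=r_1+\cdots+r_m$, the relevant operation is $\Lambda\mu_T\in\Sigma^{-r}\mathcal{E}(r+1)_{r-1}$, which has total degree $-1$; evaluating it on $\underline{x}^\vee\otimes\underline{z_1}^\vee\otimes\cdots\otimes\underline{z_r}^\vee$ (with the $\underline{z_i}$ the $\underline{y_j}$ repeated with multiplicities $r_j$, up to a shuffle) therefore produces an element of degree $-1-|\underline{x}|-\sum_i|\underline{z_i}|$. The hypothesis $|\underline{x}|+\sum_j r_j|\underline{y_j}|>n-2$ makes this degree strictly less than $1-n$, which is the bottom degree of $\Sigma N^*(\Delta^n)$ (the degree of $\Sigma\,\underline{0\cdots n}^\vee$), so the result must vanish for every tree $T$, and hence the whole weighted brace vanishes. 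Your first reduction step (separating off the $\Sigma N^*(\Delta^n)$-factor via the formula for the braces on $A\otimes\Sigma E$ from $\mathsection$\ref{sec:233}) is correct and is also how the paper starts; the fix is simply to replace everything after that by this degree comparison.
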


\begin{proof}
    Let $r=r_1+\cdots+r_m$. We more generally show that for every $\mu\in\Sigma^{-r}\mathcal{E}(r+1)_{r-1}$ and $\underline{z_1},\ldots,\underline{z_r}\in N_*(\Delta^n)$ such that $|\underline{x}|+|\underline{z_1}|+\cdots+|\underline{z_r}|>n-2$, the evaluation of $\mu$ on the tensor $ \underline{x}^\vee\otimes  \underline{z_1}^\vee\otimes\cdots\otimes  \underline{z_r}^\vee$ when using the $\mathcal{E}$-algebra structure of $N^*(\Delta^n)$ is $0$. One one hand, the evaluation of $\mu$ on the tensor $ \underline{x}^\vee\otimes  \underline{z_1}^\vee\otimes\cdots\otimes  \underline{z_r}^\vee$ is an element with degree $-1-|\underline{x}|-|\underline{z_1}|-\cdots-|\underline{z_r}|<1-n$. On the other hand, since the result is an element of $\Sigma N^*(\Delta^n)$, its degree is equal or greater than $| \underline{0\cdots n}^\vee|=1-n$. The evaluation of $\mu$ on the tensor $ \underline{x}^\vee\otimes  \underline{z_1}^\vee\otimes\cdots\otimes  \underline{z_r}^\vee$ must then be $0$.\\

    To obtain the lemma, we apply this result to $\mu=\Lambda\mu_T$ where $T\in\mathcal{PRT}(r+1)$ is a canonical tree, and $\underline{z_1},\ldots,\underline{z_r}=\underbrace{\underline{y_1},\ldots,\underline{y_1}}_{r_1},\ldots,\underbrace{\underline{y_m},\ldots,\underline{y_m}}_{r_m}$ up to a shuffle permutation in $Sh(r_1,\ldots,r_m)$.
\end{proof}

Before stating the next lemma, recall that if $A$ is a brace algebra and $\tau\in\mathcal{MC}(A)$, then we have a differential defined by
$$d_\tau(x)=d(x)+\tau\langle x\rangle-(-1)^{|x|} x\langle\tau\rangle.$$

\noindent We denote by $A^\tau$ the underlying dg $\mathbb{K}$-module.

\begin{lm}\label{mcn}
    Let $\tau\in\mathcal{MC}(A)$ and $n\geq 1$. We denote by $\mathcal{MC}_{n+1}(A)_\tau$ the set given by elements $\xi\in\mathcal{MC}_{n+1}(A)$ with faces given by $\tau$. Then we have a bijection
    $$f:Z_n(A^\tau)\longrightarrow\mathcal{MC}_{n+1}(A)_\tau$$

    \noindent given by
    $$f(h)=-\tau\otimes\left(\sum_{k=0}^{n+1} \underline{k}^\vee\right)-h\otimes  \underline{0\cdots (n+1)}^\vee.$$
\end{lm}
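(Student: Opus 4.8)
The plan is to verify that $f$ is well defined and bijective by analyzing the Maurer-Cartan equation of $f(h)$ cell by cell, reducing everything to the top cell $\underline{0\cdots(n+1)}^\vee$ by a degree count.

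First I would expand a general degree-$0$ element of $A\otimes\Sigma N^*(\Delta^{n+1})$ as $\xi=\sum_{\underline{F}}a_{\underline{F}}\otimes\underline{F}^\vee$, where $\underline{F}$ runs over the strictly increasing sequences in $\{0,\ldots,n+1\}$ and $a_{\underline{F}}\in A$ has degree $\dim\underline{F}-1$; in particular every vertex carries a coefficient in $A_{-1}$ and the top cell a coefficient in $A_n$. Since each coface $\delta^i\colon\Delta^n\to\Delta^{n+1}$ omits the vertex $i$, its dual kills $\underline{0\cdots(n+1)}^\vee$, so the faces of $f(h)$ are all equal to the constant simplex $-\tau\otimes\sum_{k=0}^n\underline{k}^\vee$ on $\tau$; this shows $f(h)\in\mathcal{MC}_{n+1}(A)_\tau$ as soon as $f(h)$ is Maurer-Cartan. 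Conversely, imposing that all iterated faces of an element $\xi\in\mathcal{MC}_{n+1}(A)_\tau$ equal $\tau$ forces, by downward induction on the dimension of the cells (restricting to each face along the corresponding coface), that $a_{\underline{k}}=-\tau$ on every vertex and $a_{\underline{F}}=0$ on every proper face of positive dimension, leaving only the top coefficient $-h$ free. Hence $\xi=f(h)$ for a unique $h\in A_n$, so $f$ is a bijection onto $\mathcal{MC}_{n+1}(A)_\tau$ at the level of underlying degree-$0$ elements, and it remains to identify the Maurer-Cartan condition with $d_\tau(h)=0$.

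For this I would plug $f(h)$ into $d(\xi)+\sum_{m\geq 1}\xi\llbrace\xi\rrbrace_m=0$ and split the contributions by cell. Writing $d=d_A\otimes\mathrm{id}+\mathrm{id}\otimes d_{\Sigma N^*}$ and using the factorized formula for the weighted braces of $A\otimes\Sigma N^*(\Delta^{n+1})$ given at the end of $\mathsection$\ref{sec:233} (an $A$-brace $l$ tensored with the twisting morphism $\partial^{n+1}$ on the cochain side), every term is an $A$-operation tensored with a cochain produced by $\partial^{n+1}$. The decisive observation is a degree count: the component of the Maurer-Cartan equation on a cell $\underline{F}^\vee$ lives in $A$-degree $\dim\underline{F}-2$, so on the top cell it lives in $A_{n-1}$. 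Since $|\tau|=-1$ and $|h|=n$, only $d_A(h)$, $\tau\langle h\rangle$ and $h\langle\tau\rangle$ have the correct degree $n-1$: every term quadratic or higher in $h$, every purely $\tau$-term, and every mixed term carrying an extra $\tau$ lands in the wrong degree and so does not contribute (here $n\geq 1$ is used crucially; compare the appearance of the full circular product in the case $n=0$ treated in Lemma~\ref{mc1}). The same count shows that on each intermediate cell all candidate terms have the wrong degree, so those components read $0=0$, while on each vertex the surviving terms reassemble, through the idempotent cup square $\underline{k}^\vee\cup\underline{k}^\vee=\underline{k}^\vee$, into the Maurer-Cartan equation of $\tau$ in the $\Gamma(\mathcal{P}re\mathcal{L}ie,-)$-algebra $A$, which holds because $\tau\in\mathcal{MC}(A)$; naturality in the vertex inclusions identifies this with the $\Delta^0$-equation under $\mathcal{MC}_0(A)=\mathcal{MC}(A)$.

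It then remains to pin down the three surviving coefficients on the top cell. Using Corollary~\ref{diff1}, the weight-one cup-type brace $\underline{0\cdots k}^\vee\llbrace\underline{k\cdots(n+1)}^\vee\rrbrace_1$ contributes $(-1)^k\underline{0\cdots(n+1)}^\vee$; among these, only $k=0$ and $k=n+1$ involve a vertex, hence a factor of $\tau$, and survive the vanishing of the intermediate coefficients, producing $\tau\langle h\rangle$ with sign $+1$ and $h\langle\tau\rangle$ with sign $(-1)^{n+1}$. Lemmas~\ref{degree} and~\ref{bracedegree} guarantee that no other cochain assembly of vertices and the top cell reaches $\underline{0\cdots(n+1)}^\vee$. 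Together with the $-d_A(h)$ coming from $d_A\otimes\mathrm{id}$, the top-cell component of the Maurer-Cartan equation becomes $d_A(h)+\tau\langle h\rangle-(-1)^n h\langle\tau\rangle=0$, that is $d_\tau(h)=0$, so $\xi=f(h)$ is Maurer-Cartan exactly when $h\in Z_n(A^\tau)$. I expect the main obstacle to be the Koszul-sign bookkeeping in the factorized brace formula and in the interval-cut coevaluations, together with making the cell-by-cell vanishing fully rigorous via Lemmas~\ref{degree} and~\ref{bracedegree}; the degree count itself is robust and carries the conceptual content of the argument.
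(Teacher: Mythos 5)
Your overall strategy is the one the paper uses: reduce to the normal form $\xi=-\tau\otimes\bigl(\sum_k\underline{k}^\vee\bigr)-h\otimes\underline{0\cdots(n+1)}^\vee$ (your first paragraph, which the paper leaves implicit), then read off the Maurer-Cartan equation cell by cell using Corollary~\ref{diff1} for the weight-one part and Lemmas~\ref{degree} and~\ref{bracedegree} for the higher weights. Your sign bookkeeping on the top cell also agrees with the paper's, giving $d(h)+\tau\langle h\rangle-(-1)^nh\langle\tau\rangle=0$.

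However, the step you single out as ``the decisive observation'' — the $A$-degree count — does not actually close the argument. A term of the expansion containing $b$ copies of $h$ and $a$ copies of $\tau$ has $A$-degree $bn-a$ (the $A$-side brace operations have degree $0$), and the equation $bn-a=n-1$ has solutions with $b\geq 2$, namely $a=(b-1)n+1$. For instance, for any $n\geq 1$ the weight-$(n+2)$ term with root $\tau$ and arguments consisting of two copies of $h$ and $n$ copies of $\tau$ lands exactly in $A_{n-1}$, so it is \emph{not} excluded by your count, contrary to the claim that ``every term quadratic or higher in $h$ lands in the wrong degree.'' What kills these terms is the cochain side: Lemma~\ref{degree} shows that for the relevant trees no vertex cell can occur among the non-root arguments of an assembly targeting $\underline{0\cdots(n+1)}^\vee$, which reduces the potentially contributing terms to those whose arguments are all equal to the top cell, and Lemma~\ref{bracedegree} then kills any such term of weight $p\geq 2$ since $p(n+1)>n-1$. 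This is precisely how the paper argues; in your write-up this mechanism appears only as a parenthetical guarantee, while the argument you foreground is insufficient. The proof is repairable by promoting the appeal to Lemmas~\ref{degree} and~\ref{bracedegree} from a side remark to the actual vanishing argument, but as written the central step has a gap.
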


\begin{proof}
    Let $\xi\in\mathcal{MC}_{n+1}(A)_\tau$. Then there exists $h\in A_n$ such that
    $$\xi=-\tau\otimes \left(\sum_{k=0}^{n+1} \underline{k}^\vee\right)-h\otimes  \underline{0\cdots(n+1)}^\vee.$$

    \noindent We make precise the Maurer-Cartan condition on $\xi$. Let $p\geq 2$. By Lemma \ref{degree}, we have
    \begin{multline*}
        \xi\llbrace\xi\rrbrace_p=\sum_{k=0}^{n+1}-(-1)^{p}\tau\otimes  \underline{k}^\vee\llbrace h\otimes  \underline{0\cdots (n+1)}^\vee\rrbrace_p\\+(-1)^{p+1}h\otimes  \underline{0\cdots (n+1)}^\vee\llbrace h\otimes  \underline{0\cdots (n+1)}^\vee\rrbrace_p.
    \end{multline*}

    \noindent By Lemma \ref{bracedegree}, and since we have $n+np>-1+np>n-1$ because $p\geq 2$, we deduce that $\xi\llbrace\xi\rrbrace_p=0$. If $p=1$, then, by Corollary \ref{diff1},
    $$\xi\llbrace\xi\rrbrace_1=-\tau\langle\tau\rangle\otimes\left(\sum_{k=0}^{n+1} \underline{k}^\vee\right)-(\tau\langle h\rangle-(-1)^nh\langle\tau\rangle)\otimes  \underline{0\cdots (n+1)}^\vee.$$

    \noindent We also have
    $$d(\xi)=-d(\tau)\otimes\left(\sum_{k=0}^{n+1} \underline{k}^\vee\right)-d(h)\otimes  \underline{0\cdots (n+1)}^\vee.$$

    \noindent The Maurer-Cartan condition on $\xi$ is then equivalent to
    $$d(h)+\tau\langle h\rangle-(-1)^nh\langle\tau\rangle=0.$$
    
    \noindent which gives our desired bijection
    $$f:Z_n(A^\tau)\longrightarrow\mathcal{MC}(A\otimes \Sigma N^*(\Delta^{n+1}))_\tau.$$
\end{proof}

We now consider $n=2$. The computation of $\pi_2$ will emphasize a group structure on $H_1(A^\tau)$ given by the following lemma.

\begin{lm}\label{equivrel}
    Let $A$ be a complete brace algebra and $\tau\in\mathcal{MC}(A)$. Then $(H_1(A^\tau),\ast_\tau,0)$ is an abelian group with the product $\ast_\tau$ defined by
    $$[\mu]\ast_\tau[\mu']=[\mu+\mu'+\tau\langle \mu,\mu'\rangle].$$
\end{lm}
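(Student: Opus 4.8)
The plan is to reduce every group axiom to a brace-algebra identity modulo $d_\tau$-boundaries, the crucial input being a \emph{twisted Leibniz rule} for the binary operation $B(a,b):=\tau\langle a,b\rangle$. First I would record that $d_\tau^2=0$ (a consequence of the brace relations together with the Maurer--Cartan equation $d(\tau)=-\tau\langle\tau\rangle$), so that $H_\bullet(A^\tau)$ is defined. Then I would prove the identity
$$d_\tau\big(\tau\langle a,b\rangle\big)=-\,\tau\langle d_\tau a,\,b\rangle+(-1)^{|a|-1}\tau\langle a,\,d_\tau b\rangle,$$
valid for all homogeneous $a,b$. This is obtained by expanding $d_\tau(\tau\langle a,b\rangle)$ via the derivation property of $d$, substituting $d(a)=d_\tau(a)-\tau\langle a\rangle+(-1)^{|a|}a\langle\tau\rangle$ (and the analogues for $b$ and for $\tau$), and then expanding the nested braces $\tau\langle\tau\rangle\langle a,b\rangle$ and $\tau\langle a,b\rangle\langle\tau\rangle$ by the brace relation. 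Every term not containing a $d_\tau$ appears exactly twice, with opposite signs, and cancels; the point is that, because the root of the brace is precisely the Maurer--Cartan element $\tau$, the map $B$ is an honest chain map $A^\tau\otimes A^\tau\to A^\tau$, with no curvature correction.

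From this identity the easy axioms follow at once. \emph{Closure}: for cycles $\mu,\mu'$ one gets $d_\tau(\mu+\mu'+\tau\langle\mu,\mu'\rangle)=d_\tau(\tau\langle\mu,\mu'\rangle)=0$, so $\mu\ast_\tau\mu'\in Z_1(A^\tau)$. \emph{Well-definedness}: replacing $\mu$ by $\mu+d_\tau\alpha$ changes the product by $d_\tau\alpha+\tau\langle d_\tau\alpha,\mu'\rangle=d_\tau\big(\alpha\pm\tau\langle\alpha,\mu'\rangle\big)$, using $d_\tau\mu'=0$ and the Leibniz rule, hence by a boundary; the case of $\mu'$ is symmetric. \emph{Unit}: $\tau\langle 0,\mu\rangle=\tau\langle\mu,0\rangle=0$ by multilinearity, so $[0]$ is a two-sided identity. \emph{Inverses}: one solves $\mu+\nu+\tau\langle\mu,\nu\rangle=0$ by the iteration $\nu_0=-\mu$, $\nu_{k+1}=-\mu-\tau\langle\mu,\nu_k\rangle$, which is Cauchy because each substitution into a brace slot raises the filtration; the limit $\nu$ satisfies $\mu\ast_\tau\nu=0$, and the twisted Leibniz rule then forces $d_\tau\nu=-\tau\langle\mu,d_\tau\nu\rangle$, whence $d_\tau\nu\in\bigcap_k F_kA=0$, so $[\nu]$ is an inverse.

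For \emph{commutativity} I would invoke the pre-Lie relation underlying $\mathcal{B}race$, namely $\tau\langle\mu,\mu'\rangle+(-1)^{|\mu||\mu'|}\tau\langle\mu',\mu\rangle=\tau\langle\mu\rangle\langle\mu'\rangle-\tau\langle\mu\langle\mu'\rangle\rangle$, which for $|\mu|=|\mu'|=1$ reads $\tau\langle\mu,\mu'\rangle-\tau\langle\mu',\mu\rangle=\tau\langle\mu\rangle\langle\mu'\rangle-\tau\langle\mu\langle\mu'\rangle\rangle$. Substituting $d(\mu)=-\tau\langle\mu\rangle-\mu\langle\tau\rangle$ and $d(\mu')=-\tau\langle\mu'\rangle-\mu'\langle\tau\rangle$ (valid since $\mu,\mu'$ are $d_\tau$-cycles) and expanding the double braces, a direct computation identifies the right-hand side with $-d_\tau(\mu\langle\mu'\rangle)$; thus $\tau\langle\mu,\mu'\rangle$ and $\tau\langle\mu',\mu\rangle$ are cohomologous and $[\mu]\ast_\tau[\mu']=[\mu']\ast_\tau[\mu]$.

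The remaining and hardest axiom is \emph{associativity}. After cancelling the symmetric quadratic terms, the associator reduces to $\tau\langle\tau\langle\mu,\mu'\rangle,\mu''\rangle-\tau\langle\mu,\tau\langle\mu',\mu''\rangle\rangle$, a degree-$1$ cycle which I must exhibit as a $d_\tau$-boundary for cycles $\mu,\mu',\mu''$. My plan is to expand both nested braces by the brace relation and match the outcome against $d_\tau$ of an explicit degree-$2$ combination of the monomials $\tau\langle\mu\langle\mu'\rangle,\mu''\rangle$, $\tau\langle\mu,\mu'\langle\mu''\rangle\rangle$, $\mu\langle\tau\langle\mu',\mu''\rangle\rangle$ and $\tau\langle\mu,\mu'\rangle\langle\mu''\rangle$ (note that the clean three-argument brace $\tau\langle\mu,\mu',\mu''\rangle$ is itself a $d_\tau$-cycle, so it cannot serve as a primitive). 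The main obstacle is exactly this last step: determining the correct primitive and verifying, with the full sign and shuffle bookkeeping, that its differential equals the associator modulo terms proportional to $d_\tau\mu$, $d_\tau\mu'$, $d_\tau\mu''$. With associativity established, the five axioms together yield the abelian group $(H_1(A^\tau),\ast_\tau,0)$.
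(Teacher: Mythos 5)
Your overall architecture is sound and most of it matches the paper: closure, well-definedness and the unit via a Leibniz-type rule, inverses by the completeness iteration $\nu_{k+1}=-\mu-\tau\langle\mu,\nu_k\rangle$, and commutativity via the primitive $\mu\langle\mu'\rangle$ and the brace relation are all exactly the paper's computations (your explicit check that the limit $\nu$ is a $d_\tau$-cycle is a point the paper glosses over, and your packaging of the binary case as a twisted Leibniz rule for $\tau\langle -,-\rangle$ is a clean way to organize those verifications). But there is a genuine error in the associativity step, which you yourself flag as the hardest part and leave unfinished. You assert that the ternary brace $\tau\langle\mu,\mu',\mu''\rangle$ is a $d_\tau$-cycle and therefore cannot serve as a primitive. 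This is false, and it is false for exactly the reason the binary Leibniz rule works: in computing $d_\tau(\tau\langle a,b,c\rangle)$ one must expand $d(\tau)\langle a,b,c\rangle=-\tau\langle\tau\rangle\langle a,b,c\rangle$ by the brace relation, and besides the unary insertions $\tau\langle\tau\langle a\rangle,b,c\rangle$, etc.\ (which assemble into $\tau\langle d_\tau a,b,c\rangle$ and its analogues) and the term $\tau\langle\tau\langle a,b,c\rangle\rangle$ (which cancels against the $\tau\langle-\rangle$ part of $d_\tau$), one also gets the consecutive-subset terms $\tau\langle\tau\langle a,b\rangle,c\rangle$ and $\tau\langle a,\tau\langle b,c\rangle\rangle$, which have no counterpart in $\tau\langle a,b,c\rangle\langle\tau\rangle$. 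Hence for cycles $\mu,\mu',\mu''$ one finds
$$d_\tau\bigl(\tau\langle\mu,\mu',\mu''\rangle\bigr)=\pm\bigl(\tau\langle\mu,\tau\langle\mu',\mu''\rangle\rangle-\tau\langle\tau\langle\mu,\mu'\rangle,\mu''\rangle\bigr),$$
which is precisely the associator. The paper's proof of associativity is nothing more than this computation with $\psi=\tau\langle\mu,\mu',\mu''\rangle$ as the primitive.

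So the primitive you discarded is the correct one, and your proposed search among $\tau\langle\mu\langle\mu'\rangle,\mu''\rangle$, $\tau\langle\mu,\mu'\langle\mu''\rangle\rangle$, $\mu\langle\tau\langle\mu',\mu''\rangle\rangle$ and $\tau\langle\mu,\mu'\rangle\langle\mu''\rangle$ is both unnecessary and unresolved in your write-up. The conceptual point you are missing is that the higher braces applied to $\tau$ are not chain maps for $d_\tau$: only the binary one is, and the failure of the ternary one to be a chain map is exactly the homotopy witnessing associativity of $\ast_\tau$. Once you replace your false claim by the computation of $d_\tau(\tau\langle\mu,\mu',\mu''\rangle)$ sketched above, your proof closes and coincides with the paper's.
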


\begin{proof}
    We first prove that if $\mu,\mu'\in Z_1(A^\tau)$ then $\mu'':=\mu+\mu'+\tau\langle \mu,\mu'\rangle\in Z_1(A^\tau)$. We have
    \begin{multline*}
        d(\mu'')=d(\mu)+d(\mu')-\tau\langle\tau,\mu,\mu'\rangle+\tau\langle\mu,\tau,\mu'\rangle-\tau\langle\mu,\mu',\tau\rangle\\
        -\tau\langle\tau\langle\mu\rangle,\mu'\rangle+\tau\langle\mu,\tau\langle\mu'\rangle\rangle-\tau\langle\tau\langle\mu,\mu'\rangle\rangle-\tau\langle d(\mu),\mu'\rangle+\tau\langle\mu,d(\mu')\rangle.
    \end{multline*}

    \noindent We also have
    \begin{multline*}
        \mu''\langle\tau\rangle=\mu\langle\tau\rangle+\mu'\langle\tau\rangle+\tau\langle\tau,\mu,\mu'\rangle-\tau\langle\mu,\tau,\mu'\rangle+\tau\langle\mu,\mu'\langle\tau\rangle\rangle-\tau\langle\mu\langle\tau\rangle,\mu'\rangle+\tau\langle\mu,\mu'\langle\tau\rangle\rangle
    \end{multline*}

    \noindent and

    $$\tau\langle\mu''\rangle=\tau\langle\mu\rangle+\tau\langle\mu'\rangle+\tau\langle\tau\langle\mu,\mu'\rangle\rangle.$$

    \noindent At the end, we obtain that $d_\tau(\mu'')=0$, which proves that $\mu''\in Z_1(A^\tau)$. \\
    
    We now show that the product $\ast_\tau$ is well defined on $H_1(A^\tau)$. Let $\mu,\mu_1,\mu_2\in Z_1(A^\tau)$ and $\psi\in A_2$ be such that
    $$\mu_1-\mu_2=d(\psi)+\tau\langle\psi\rangle-\psi\langle \tau\rangle.$$

    \noindent Let $\psi':=\psi+\tau\langle \mu,\psi\rangle$. We show that
    $$\mu_1-\mu_2+\tau\langle \mu,\mu_1-\mu_2\rangle=d(\psi')+\tau\langle\psi'\rangle-\psi'\langle\tau\rangle.$$

    \noindent We first compute $d(\psi')$. We have
    \begin{multline*}
        d(\psi')=d(\psi)-\tau\langle\tau,\mu,\psi\rangle+\tau\langle\mu,\tau,\psi\rangle+\tau\langle\mu,\psi,\tau\rangle-\tau\langle\tau\langle\mu\rangle,\psi\rangle+\tau\langle\mu,\tau\langle\psi\rangle\rangle\\
        -\tau\langle\tau\langle\mu,\psi\rangle\rangle-\tau\langle d(\mu),\psi\rangle+\tau\langle\mu,d(\psi)\rangle.
    \end{multline*}

    \noindent We also have
    $$\psi'\langle\tau\rangle=\psi\langle\tau\rangle-\tau\langle\tau,\mu,\psi\rangle+\tau\langle\mu,\tau,\psi\rangle+\tau\langle\mu,\psi,\tau\rangle+\tau\langle\mu\langle\tau\rangle,\psi\rangle+\tau\langle\mu,\psi\langle\tau\rangle\rangle.$$

    \noindent At the end, we obtain
    $$d(\psi')+\tau\langle\psi'\rangle-\psi'\langle\tau\rangle=\mu_1-\mu_2+\tau\langle\mu,\mu_1-\mu_2\rangle$$

    \noindent so that
    $$[\mu+\mu_1+\tau\langle\mu,\mu_1\rangle]=[\mu+\mu_2+\tau\langle\mu,\mu_2\rangle].$$

    \noindent By the same computations with $\psi':=\psi-\tau\langle\psi,\mu\rangle$, we can show that
    $$[\mu+\mu_1+\tau\langle\mu_1,\mu\rangle]=[\mu+\mu_2+\tau\langle\mu_2,\mu\rangle].$$

    \noindent The product $\ast_\tau$ is thus well defined on $H_1(A^\tau)$. We now prove that it endows $H_1(A^\tau)$ with an abelian group structure. We prove the associativity of the operation $\ast_\tau$. We have
    \begin{center}
        $\begin{array}{rll}
             ([\mu]\ast_\tau[\mu'])\ast_\tau[\mu''] & = & [\mu+\mu'+\mu''+\tau\langle \mu,\mu'\rangle+\tau\langle \mu+\mu'+\tau\langle \mu,\mu'\rangle,\mu''\rangle];\\
             {}[\mu]\ast_\tau([\mu']\ast_\tau[\mu'']) & = & [\mu+\mu'+\mu''+\tau\langle \mu',\mu''\rangle+\tau\langle \mu,\mu'+\mu''+\tau\langle \mu',\mu''\rangle\rangle].\\
        \end{array}$
    \end{center}

    \noindent The difference between the two representatives is
    $$\tau\langle \mu,\tau\langle \mu',\mu''\rangle\rangle-\tau\langle\tau\langle \mu,\mu'\rangle,\mu''\rangle.$$

    \noindent We show that this element is the image of $\psi:=\tau\langle \mu,\mu',\mu''\rangle\in A_2$ under $d_\tau$. First, using that $d(\tau)=-\tau\langle\tau\rangle$ and the brace algebra structure on $A$, we have
    \begin{multline*}
        d(\tau)\langle\mu,\mu',\mu''\rangle=-\tau\langle\tau,\mu,\mu',\mu''\rangle+\tau\langle\mu,\tau,\mu',\mu''\rangle-\tau\langle\mu,\mu',\tau,\mu''\rangle+\tau\langle\mu,\mu',\mu'',\tau\rangle\\
        -\tau\langle\tau\langle\mu\rangle,\mu',\mu''\rangle+\tau\langle\mu,\tau\langle\mu'\rangle,\mu''\rangle-\tau\langle\mu,\mu',\tau\langle\mu''\rangle\rangle\\
        -\tau\langle\tau\langle\mu,\mu'\rangle,\mu''\rangle+\tau\langle\mu,\tau\langle\mu',\mu''\rangle\rangle-\tau\langle\tau\langle\mu,\mu',\mu''\rangle\rangle.
    \end{multline*}

    \noindent This gives
    \begin{multline*}
        d(\psi)=-\tau\langle\tau,\mu,\mu',\mu''\rangle+\tau\langle\mu,\tau,\mu',\mu''\rangle-\tau\langle\mu,\mu',\tau,\mu''\rangle+\tau\langle\mu,\mu',\mu'',\tau\rangle\\
        -\tau\langle\tau\langle\mu\rangle,\mu',\mu''\rangle+\tau\langle\mu,\tau\langle\mu'\rangle,\mu''\rangle-\tau\langle\mu,\mu',\tau\langle\mu''\rangle\rangle\\
        -\tau\langle\tau\langle\mu,\mu'\rangle,\mu''\rangle+\tau\langle\mu,\tau\langle\mu',\mu''\rangle\rangle-\tau\langle\tau\langle\mu,\mu',\mu''\rangle\rangle\\
        -\tau\langle d(\mu),\mu',\mu''\rangle+\tau\langle\mu,d(\mu'),\mu''\rangle-\tau\langle\mu,\mu',d(\mu'')\rangle
    \end{multline*}

    \noindent Using that $\mu,\mu',\mu''\in Z_1(A^\tau)$, we obtain
    \begin{multline*}
        d(\psi)=-\tau\langle\tau,\mu,\mu',\mu''\rangle+\tau\langle\mu,\tau,\mu',\mu''\rangle-\tau\langle\mu,\mu',\tau,\mu''\rangle+\tau\langle\mu,\mu',\mu'',\tau\rangle\\
        +\tau\langle\mu\langle\tau\rangle,\mu',\mu''\rangle-\tau\langle\mu,\mu'\langle\tau\rangle,\mu''\rangle+\tau\langle\mu,\mu',\mu''\langle\tau\rangle\rangle\\
        -\tau\langle\tau\langle\mu,\mu'\rangle,\mu''\rangle+\tau\langle\mu,\tau\langle\mu',\mu''\rangle\rangle-\tau\langle\tau\langle\mu,\mu',\mu''\rangle\rangle.
    \end{multline*}

    \noindent We also have
    \begin{multline*}
        \psi\langle\tau\rangle=-\tau\langle\tau,\mu,\mu',\mu''\rangle+\tau\langle\mu,\tau,\mu',\mu''\rangle-\tau\langle\mu,\mu',\tau,\mu''\rangle+\tau\langle\mu,\mu',\mu'',\tau\rangle\\
        +\tau\langle\mu\langle\tau\rangle,\mu',\mu''\rangle-\tau\langle\mu,\mu'\langle\tau\rangle,\mu''\rangle+\tau\langle\mu,\mu',\mu''\langle\tau\rangle\rangle.
    \end{multline*}

    \noindent which finally gives
    $$d(\psi)+\tau\langle\psi\rangle-\psi\langle\tau\rangle=\tau\langle\mu,\tau\langle\mu',\mu''\rangle\rangle-\tau\langle\tau\langle\mu,\mu'\rangle,\mu''\rangle$$
    
    \noindent so that we have the associativity.\\

    We now prove that every element $[\mu]$ has an inverse under $\ast_\tau$. We set $\mu'_0=-\mu$ and, for every $n\geq 0$,
    $$\mu'_{n+1}=-\mu-\tau\langle \mu,\mu'_n\rangle.$$

    \noindent We obtain a Cauchy sequence in $A$. Because $A$ is complete, this sequence has a limit denoted by $\mu'$ which satisfies
    $$\mu+\mu'+\tau\langle \mu,\mu'\rangle=0$$

    \noindent so that $[\mu']$ is the inverse of $[\mu]$ under $\ast_\tau$. We thus have proved that $\ast_\tau$ endows $H_1(A^\tau)$ with a group structure.\\
    
    We now prove that $\ast_\tau$ is abelian. Let $\mu,\mu'\in A_1$. We set $\psi:=\mu\langle\mu'\rangle$, and prove that
    $$d(\psi)+\tau\langle\psi\rangle-\psi\langle\tau\rangle=\tau\langle \mu',\mu\rangle-\tau\langle \mu,\mu'\rangle.$$

    \noindent We have
    \begin{center}
        $\begin{array}{lll}
            d(\psi) & = & d(\mu)\langle\mu'\rangle-\mu\langle d(\mu')\rangle \\
            & = & -\tau\langle\mu\rangle\langle\mu'\rangle-\mu\langle\tau\rangle\langle\mu'\rangle+\mu\langle\tau\langle\mu'\rangle\rangle+\mu\langle\mu'\langle\tau\rangle\rangle\\
            & = & -\tau\langle\mu\langle\mu'\rangle\rangle-\tau\langle\mu,\mu'\rangle+\tau\langle\mu',\mu\rangle-\mu\langle\tau\langle\mu'\rangle\rangle\\
            & & -\mu\langle\tau,\mu'\rangle+\mu\langle\mu',\tau\rangle+\mu\langle\tau\langle\mu'\rangle\rangle+\mu\langle\mu'\langle\tau\rangle\rangle\\
            & = & -\tau\langle\mu\langle\mu'\rangle\rangle-\tau\langle\mu,\mu'\rangle+\tau\langle\mu',\mu\rangle\\
            & & -\mu\langle\tau,\mu'\rangle+\mu\langle\mu',\tau\rangle+\mu\langle\mu'\langle\tau\rangle\rangle\\
        \end{array}$
    \end{center}

    \noindent and
    $$\psi\langle\tau\rangle=\mu\langle\mu',\tau\rangle-\mu\langle\tau,\mu'\rangle+\mu\langle\mu'\langle\tau\rangle\rangle.$$

    \noindent We then have
    $$d(\psi)+\tau\langle\psi\rangle-\psi\langle\tau\rangle=\tau\langle\mu',\mu\rangle-\tau\langle\mu,\mu'\rangle$$

    \noindent which proves that
    $$[\mu+\mu'+\tau\langle\mu,\mu'\rangle]=[\mu'+\mu+\tau\langle\mu',\mu\rangle].$$

    \noindent The operation $\ast_\tau$ is then commutative.\\

    The lemma is proved.
\end{proof}

\begin{thm}
    Let $A$ be a complete brace algebra and $\tau\in\mathcal{MC}(A)$. Then
    $$\pi_2(\mathcal{MC}_\bullet(A),\tau)\simeq (H_1(A^\tau),\ast_\tau,0).$$
\end{thm}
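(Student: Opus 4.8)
The plan is to construct an explicit isomorphism of groups between $\pi_2(\mathcal{MC}_\bullet(A),\tau)$ and $(H_1(A^\tau),\ast_\tau,0)$, following the same strategy used for $\pi_0$ and $\pi_1$. Recall that $\pi_2(\mathcal{MC}_\bullet(A),\tau)$ is computed from the $3$-simplices whose boundary faces are all the constant vertex $\tau$, so I would first invoke Lemma \ref{mcn} with $n=2$ to obtain a bijection
$$f:Z_1(A^\tau)\longrightarrow\mathcal{MC}_3(A)_\tau,\quad f(\mu)=-\tau\otimes\Big(\sum_{k=0}^{3}\underline{k}^\vee\Big)-\mu\otimes\underline{0123}^\vee.$$
Since the elements $f(\mu)$ are exactly the Maurer-Cartan elements in $\mathcal{MC}_3(A)$ with all faces equal to $\tau$, they represent the spherical $2$-simplices, and $\pi_2(\mathcal{MC}_\bullet(A),\tau)$ is the quotient of $Z_1(A^\tau)$ by the relation induced by homotopy in the Kan complex $\mathcal{MC}_\bullet(A)$.

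Next I would identify this homotopy relation. Two spheres $f(\mu)$ and $f(\mu')$ are homotopic precisely when there is a $4$-simplex $\xi\in\mathcal{MC}_4(A)$ whose faces realize the homotopy. Writing a general element of $\mathcal{MC}_4(A)$ with the appropriate faces, one obtains, after imposing the Maurer-Cartan condition via the formulas of Theorem \ref{relationsprelieinfinite} and the computed differentials $\partial^n$ of Corollary \ref{calculdiff}, a relation of the form $\mu-\mu'=d_\tau(\psi)$ for some $\psi\in A_2$. The key computational input is that the brace operations $\partial^n$ on $N_*(\Delta^n)$ contribute only low-complexity trees in the relevant degrees: by the degree vanishing of Lemma \ref{bracedegree}, most weighted braces $\xi\llbrace\xi\rrbrace_p$ vanish for degree reasons in the range we consider, leaving only the linear term $\xi\llbrace\xi\rrbrace_1$ computed via $\partial^n_{(1)}$ in Corollary \ref{diff1}. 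This shows $f$ descends to a bijection $\overline{f}:H_1(A^\tau)\longrightarrow\pi_2(\mathcal{MC}_\bullet(A),\tau)$.

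Finally I would verify that $\overline{f}$ is a group homomorphism, where the group law on the source is $\ast_\tau$ established in Lemma \ref{equivrel}. The group operation on $\pi_2$ is computed, as is standard in a Kan complex, by filling a horn with the two given spheres and reading off the remaining face. Concretely, given $\mu,\mu'\in Z_1(A^\tau)$, I would exhibit an explicit $\mathcal{MC}_3(A)$-element (a $2$-simplex in the Maurer-Cartan space, i.e.\ an element of $\mathcal{MC}(A\otimes\Sigma N^*(\Delta^3))$) whose two of its three spherical faces are $f(\mu)$ and $f(\mu')$ and whose third face is $f(\mu+\mu'+\tau\langle\mu,\mu'\rangle)$; the appearance of the term $\tau\langle\mu,\mu'\rangle$ comes directly from the quadratic tree in $\partial^3_{(1)}$ carrying two branches $\underline{0k}$ and $\underline{kn}$, exactly as in the $\partial^2$-term responsible for the $\overline\circledcirc$-product in the $\pi_1$ computation. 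Matching coefficients then yields
$$\overline{f}([\mu]\ast_\tau[\mu'])=\overline{f}([\mu])\cdot\overline{f}([\mu']).$$

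The main obstacle I anticipate is the explicit filling computation identifying the group operation: one must write down a candidate element of $\mathcal{MC}(A\otimes\Sigma N^*(\Delta^3))$, check it satisfies the Maurer-Cartan equation, and read off its faces, and this requires carefully tracking the trees and signs produced by $\partial^3$ in Corollary \ref{calculdiff}. The degree vanishing lemmas (Lemma \ref{degree} and Lemma \ref{bracedegree}) are what make this tractable, collapsing the otherwise unbounded sums of weighted braces to a handful of terms; the bookkeeping of the product structure is nonetheless where the real work lies, whereas the bijectivity of $\overline{f}$ is a comparatively direct consequence of Lemma \ref{mcn} and the $\pi_1$-style analysis of the homotopy relation.
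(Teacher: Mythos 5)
Your overall strategy matches the paper's --- represent classes via Lemma \ref{mcn}, identify the homotopy relation with $d_\tau$-exactness, and read off the group law from an explicit simplex filling --- but the dimensional bookkeeping in your setup is off by one, and as written the central bijection does not exist. Classes in $\pi_2(\mathcal{MC}_\bullet(A),\tau)$ are represented by $2$-simplices with boundary $\tau$, i.e.\ by $\mathcal{MC}_2(A)_\tau$, which is Lemma \ref{mcn} with $n=1$: $f(\mu)=-\tau\otimes(\underline{0}^\vee+\underline{1}^\vee+\underline{2}^\vee)-\mu\otimes\underline{012}^\vee$ with $\mu\in Z_1(A^\tau)$. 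The map you write, $Z_1(A^\tau)\to\mathcal{MC}_3(A)_\tau$, $\mu\mapsto -\tau\otimes(\sum_k\underline{k}^\vee)-\mu\otimes\underline{0123}^\vee$, is not what Lemma \ref{mcn} gives for $n=2$ (that would be $Z_2(A^\tau)$, which is relevant to $\pi_3$, not $\pi_2$), and it does not even typecheck: $\Sigma\,\underline{0123}^\vee$ has degree $-2$, so the coefficient must lie in $A_2$, whereas $Z_1(A^\tau)\subset A_1$. Consequently the homotopy relation is witnessed by $3$-simplices of the form $-\tau\otimes(\underline{0}^\vee+\underline{1}^\vee+\underline{2}^\vee+\underline{3}^\vee)-\mu\otimes\underline{123}^\vee-\mu'\otimes\underline{023}^\vee+\psi\otimes\underline{0123}^\vee$, not by $4$-simplices. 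Your later group-law step quietly reverts to the correct convention (a $3$-simplex whose faces are the $f(\mu)$'s), which makes the proposal internally inconsistent: a $3$-simplex cannot have $3$-simplices among its faces.

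Second, the claim that Lemma \ref{bracedegree} leaves ``only the linear term $\xi\llbrace\xi\rrbrace_1$'' fails precisely where it matters. In the group-law computation one gets $\omega\llbrace\omega\rrbrace_p=0$ only for $p>2$; the weight-two bracket survives and equals $\tau\langle\mu,\mu'\rangle\otimes\underline{0123}^\vee$, and this is the entire source of the correction term in $\ast_\tau$. It arises from the three-vertex corolla (root $\underline{0}$ with branches $\underline{013}$ and $\underline{123}$) appearing in $\partial^3_{(2)}$, not from the two-vertex trees of $\partial^3_{(1)}$ computed in Corollary \ref{diff1} as you state. With the dimensions shifted down by one and the weight-two contribution accounted for, the argument goes through exactly as in the paper; you would still need to check that $\overline{f}$ respects $\ast_\tau$ by exhibiting the explicit $3$-simplex with faces $f(\mu')$, $f(\mu'')$, $f(\mu)$, $\tau$ and $\mu''=\mu+\mu'+\tau\langle\mu,\mu'\rangle$, injectivity then following from the $\mu'=0$ case of the homotopy relation.
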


\begin{proof}
    By Lemma \ref{mcn}, we have a bijection
    $$f:Z_1(A^\tau)\longrightarrow\mathcal{MC}_2(A)_\tau.$$ 
    \noindent We consider its composite $\widetilde{f}:Z_1(A^{\tau})\longrightarrow\pi_{2}(\mathcal{MC}_\bullet(A),\tau)$ with the projection of $\mathcal{MC}_2(A)_\tau$ onto $\pi_2(\mathcal{MC}_\bullet(A),\tau)$. We show that $\widetilde{f}$ is compatible with the equivalence relation on $H_1(A^\tau)$ given by Lemma \ref{equivrel}. Let $\mu,\mu'\in Z_1(A^\tau)$ be such that there exists $\psi\in A_2$ with $\mu-\mu'=d_\tau(\psi)$. Namely,

    $$d(\psi)+\tau\langle\psi\rangle-\psi\langle\tau\rangle=\mu-\mu'.$$

    \noindent By Corollary \ref{diff1}, Corollary \ref{calculdiff} and Lemma \ref{bracedegree}, we have
    $$-\tau\otimes\left(\underline{0}^\vee+\underline{1}^\vee+\underline{2}^\vee\right)-\mu\otimes\underline{123}^\vee-\mu'\otimes \underline{023}^\vee+\psi\otimes\underline{0123}^\vee\in\mathcal{MC}(A\otimes \Sigma N^*(\Delta^{3})),$$

    \noindent which shows that $\widetilde{f}(\mu)=\widetilde{f}(\mu')$. We thus have a well defined map
    $$\overline{f}:H_1(A^\tau)\longrightarrow\pi_2(\mathcal{MC}_\bullet(A),\tau).$$
    
    \noindent We prove that $\overline{f}$ preserves the group structures. Let $\mu,\mu'\in Z_1(A^\tau)$. Recall that
    $$f(\mu)=-\tau\otimes( \underline{0}^\vee+ \underline{1}^\vee+ \underline{2}^\vee)-\mu\otimes  \underline{012}^\vee;$$
    $$f(\mu')=-\tau\otimes( \underline{0}^\vee+ \underline{1}^\vee+ \underline{2}^\vee)-\mu'\otimes  \underline{012}^\vee.$$

    \noindent We search for $\mu''\in A_1$ and $\psi\in A_2$ such that
    \begin{multline*}
        \omega:=-\tau\otimes( \underline{0}^\vee+ \underline{1}^\vee+ \underline{2}^\vee+ \underline{3}^\vee)-\mu'\otimes  \underline{123}^\vee\\-\mu''\otimes  \underline{023}^\vee-\mu\otimes  \underline{013}^\vee+\psi\otimes  \underline{0123}^\vee\in\mathcal{MC}(A\otimes\Sigma N^*(\Delta^3)).
    \end{multline*}

    \noindent We have
    $$d(\omega)=-d(\tau)\otimes( \underline{0}^\vee+ \underline{1}^\vee+ \underline{2}^\vee+ \underline{3}^\vee)-d(\mu')\otimes  \underline{123}^\vee-d(\mu'')\otimes  \underline{023}^\vee-d(\mu)\otimes  \underline{013}^\vee$$
    $$+(d(\psi)-\mu''+\mu+\mu')\otimes  \underline{0123}^\vee.$$

    \noindent By Corollary \ref{diff1}, we also have
    $$\omega\llbrace\omega\rrbrace_1=-\tau\langle\tau\rangle\otimes ( \underline{0}^\vee+ \underline{1}^\vee+ \underline{2}^\vee+ \underline{3}^\vee)-(\tau\langle\mu'\rangle+\mu'\langle\tau\rangle)\otimes  \underline{123}^\vee-(\tau\langle \mu''\rangle+\mu''\langle\tau\rangle)\otimes  \underline{023}^\vee$$
    $$-(\tau\langle\mu\rangle+\mu\langle\tau\rangle)\otimes  \underline{013}^\vee+(\tau\langle\psi\rangle-\psi\langle\tau\rangle)\otimes  \underline{0123}^\vee.$$

    \noindent By Corollary \ref{calculdiff}, we have
    $$\omega\llbrace\omega\rrbrace_2=\tau\langle\mu,\mu'\rangle\otimes  \underline{0123}^\vee.$$

    \noindent Finally, for every $p>2$ and by Lemma \ref{degree},
    \begin{center}
        $\begin{array}{lll}
            \omega\llbrace\omega\rrbrace_p & = & -\tau\otimes ( \underline{0}^\vee+ \underline{1}^\vee+ \underline{2}^\vee+ \underline{3}^\vee)\llbrace-\mu'\otimes  \underline{123}^\vee-\mu''\otimes  \underline{023}^\vee-\mu\otimes  \underline{013}^\vee+\psi\otimes  \underline{0123}^\vee\rrbrace_p\\
            & = & \displaystyle\sum_{s+t=p}\tau\otimes ( \underline{0}^\vee+ \underline{1}^\vee+ \underline{2}^\vee+ \underline{3}^\vee)\llbrace-\mu'\otimes  \underline{123}^\vee\\
            & & \ \ \ \ \ \ \ \ \ \ \ \ \ \ \  \ \ \ \ \ \ \ \ \ \ \ \ \ \ \ \ \ \ \ \ \ \ \ \ \ \ \ \ \ \ -\mu''\otimes  \underline{023}^\vee-\mu\otimes  \underline{013}^\vee,\psi\otimes  \underline{0123}^\vee\rrbrace_{s,t}.
        \end{array}$
    \end{center}

    \noindent Since $p>2$, for every $s,t\geq 0$ such that $s+t=p$, we have $2s+3t>p+2$. From Lemma \ref{bracedegree}, we deduce $\omega\llbrace\omega\rrbrace_p=0$. We then see that $\omega$ is a Maurer-Cartan element if and only if
    $$d_\tau(\psi)-\mu''+\mu+\mu'+\tau\langle \mu,\mu'\rangle=0.$$

    \noindent If we set $\psi=0$ and $\mu''=\mu+\mu'+\tau\langle \mu,\mu'\rangle$, this shows that
    $$[f(\mu)]\cdot [f(\mu')]=[-\tau\otimes( \underline{0}^\vee+ \underline{1}^\vee+ \underline{2}^\vee)-(\mu+\mu'+\tau\langle \mu,\mu'\rangle)\otimes  \underline{012}^\vee]$$

    \noindent in $\pi_2(\mathcal{MC}_\bullet( A),\tau)$. We thus have proved
    $$\overline{f}([\mu])\cdot\overline{f}([\mu'])=\overline{f}([\mu]\ast_\tau[\mu']).$$

    \noindent The morphism $\overline{f}$ is surjective, since $\widetilde{f}$ is bijective. It is also injective. Indeed, the equation $\overline{f}([\mu])=0$ is equivalent to $[\mu]=0$, according to the beginning of the proof of this theorem with $\mu'=0$. The map $\overline{f}$ is thus an isomorphism, which proves the theorem.
\end{proof}

\subsubsection{Computation of $\pi_n(\mathcal{MC}_\bullet(A),\tau)$ for $n\geq 3$}

We finally compute the groups $\pi_n(\mathcal{MC}_\bullet(A),\tau)$ for every $n\geq 3$.

\begin{thm}
    Let $A$ be a complete brace algebra and $\tau\in\mathcal{MC}(A)$. Then, for all $n\geq 3$, we have an isomorphism of groups
    $$\pi_{n+1}(\mathcal{MC}_\bullet(A),\tau)\simeq H_{n}(A^\tau).$$
\end{thm}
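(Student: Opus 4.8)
The plan is to extend the bijections established for $\pi_1$ and $\pi_2$ to all higher dimensions, building on the key structural lemmas \ref{degree}, \ref{bracedegree} and \ref{mcn}. The crucial observation is that Lemma \ref{mcn} already provides, for each $n\geq 1$, a bijection $f:Z_n(A^\tau)\longrightarrow\mathcal{MC}_{n+1}(A)_\tau$ sending a cocycle $h$ to the Maurer-Cartan element $-\tau\otimes(\sum_{k=0}^{n+1}\underline{k}^\vee)-h\otimes\underline{0\cdots(n+1)}^\vee$. The essential point is that the degree constraints force the Maurer-Cartan equation to collapse onto the single differential condition $d_\tau(h)=0$: by Lemma \ref{bracedegree}, all weighted braces $\xi\llbrace\xi\rrbrace_p$ with $p\geq 2$ vanish for degree reasons once $n\geq 2$, and Corollary \ref{diff1} handles the linear part. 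This is exactly the same mechanism used in the $\pi_2$ proof, but now uniformly valid for $n\geq 3$.

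First I would set up the bijection $f:Z_n(A^\tau)\longrightarrow\mathcal{MC}_{n+1}(A)_\tau$ from Lemma \ref{mcn}, compose it with the projection onto $\pi_{n+1}(\mathcal{MC}_\bullet(A),\tau)$ to obtain a map $\widetilde f:Z_n(A^\tau)\longrightarrow\pi_{n+1}(\mathcal{MC}_\bullet(A),\tau)$, and then verify that $\widetilde f$ descends to a well-defined map $\overline f:H_n(A^\tau)\longrightarrow\pi_{n+1}(\mathcal{MC}_\bullet(A),\tau)$. For the descent, given $\mu,\mu'\in Z_n(A^\tau)$ with $\mu-\mu'=d_\tau(\psi)$ for some $\psi\in A_{n+1}$, I would exhibit an explicit element of $\mathcal{MC}_{n+2}(A)$ witnessing the homotopy, of the form $-\tau\otimes(\sum_k\underline{k}^\vee)-\mu\otimes\underline{1\cdots(n+2)}^\vee-\mu'\otimes\underline{0\,2\cdots(n+2)}^\vee+\psi\otimes\underline{0\cdots(n+2)}^\vee$, checking via Corollary \ref{diff1}, Corollary \ref{calculdiff} and Lemma \ref{bracedegree} that the Maurer-Cartan condition reduces precisely to $\mu-\mu'=d_\tau(\psi)$. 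This parallels the boxed computation in the $\pi_2$ proof.

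Next I would establish that $\overline f$ is a group homomorphism. For $n\geq 3$ the expected simplification is that the quadratic correction term $\tau\langle\mu,\mu'\rangle$, which appears in the $\pi_2$ case and produces the twisted product $\ast_\tau$, now vanishes: the relevant brace $\omega\llbrace\omega\rrbrace_2$ involves two factors of cohomological degree $n\geq 3$, so by Lemma \ref{bracedegree} (since $2n>n$, equivalently the total internal degree exceeds the bound $n$ in the appropriate simplex) the correction is forced to zero. Thus the group operation on $\pi_{n+1}$ corresponds simply to addition of cocycles, matching the abelian group structure on $H_n(A^\tau)$. Concretely I would produce, for cocycles $\mu,\mu'\in Z_n(A^\tau)$, an element of $\mathcal{MC}_{n+2}(A)$ realizing the product $[f(\mu)]\cdot[f(\mu')]$ and read off that it equals $[f(\mu+\mu')]$. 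Finally, surjectivity of $\overline f$ follows from bijectivity of $\widetilde f$, and injectivity follows from the descent computation applied with $\mu'=0$, so $\overline f$ is an isomorphism of groups.

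The main obstacle I anticipate is the careful bookkeeping of the face identifications and the degree estimates in the higher simplices $\Delta^{n+1}$ and $\Delta^{n+2}$: I must confirm that Lemma \ref{bracedegree} genuinely kills every higher weighted brace in the Maurer-Cartan and homotopy computations, and that the only surviving contributions are the linear term governed by Corollary \ref{diff1} and—crucially absent for $n\geq 3$—the quadratic term. The delicate comparison is precisely where the argument diverges from $\pi_2$: one must check that the degree hypothesis $|\underline{x}|+\sum_i r_i|\underline{y_i}|>n-2$ of Lemma \ref{bracedegree} is satisfied by the relevant tensors in $\Sigma N^*(\Delta^{n+2})$ whenever two or more vertices of cohomological degree $\geq n$ are braced together, which is where the strict inequality $n\geq 3$ enters. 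Once these vanishing statements are verified, the remaining verifications are the same straightforward differential computations carried out in the $\pi_1$ and $\pi_2$ cases, only with simpler (purely additive) outcomes.
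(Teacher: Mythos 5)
Your proposal follows essentially the same route as the paper: the bijection $f:Z_n(A^\tau)\to\mathcal{MC}_{n+1}(A)_\tau$ of Lemma \ref{mcn}, the vanishing of all weighted braces $\xi\llbrace\xi\rrbrace_p$ for $p\geq 2$ via Lemmas \ref{degree} and \ref{bracedegree} (which is exactly where $n\geq 3$ enters and why the group law degenerates to plain addition, with no $\tau\langle\mu,\mu'\rangle$ correction), and the identification of the kernel of $\widetilde f$ with $d_\tau(A_{n+1})$ by exhibiting the explicit witness in $\mathcal{MC}(A\otimes\Sigma N^*(\Delta^{n+2}))$. The only cosmetic differences are the order of the verifications and your phrase ``bijectivity of $\widetilde f$'' (you mean bijectivity of $f$, whence surjectivity of $\widetilde f$); neither affects correctness.
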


\begin{proof} 
By Lemma \ref{mcn}, we have a bijection $f:Z_n(A^\tau)\longrightarrow\mathcal{MC}_{n+1}(A)_\tau$. Consider its composite $\widetilde{f}:Z_n(A^{\tau})\longrightarrow\pi_{n+1}(\mathcal{MC}_\bullet(A),\tau)$ with the projection of $\mathcal{MC}_{n+1}(A)_\tau$ onto $\pi_{n+1}(\mathcal{MC}_\bullet(A),\tau)$. We show that $\widetilde{f}$ is a morphism of groups. Let $\mu,\mu'\in Z_n(A^\tau)$. We set
    $$\omega=-\tau\otimes\left(\sum_{k=0}^n \underline{k}^\vee\right)-\mu\otimes  \underline{0\cdots(n+1)}^\vee;$$
    $$\omega'=-\tau\otimes\left(\sum_{k=0}^n \underline{k}^\vee\right)-\mu'\otimes  \underline{0\cdots(n+1)}^\vee.$$

    \noindent We compute $[\omega]+[\omega']$ in $\pi_{n+1}(\mathcal{MC}_\bullet(A),\tau)$. This is equivalent to searching $\mu''\in Z_{n}(A^\tau)$ and $\psi\in A_{n+1}$ such that
    \begin{multline*}\xi:=-\tau\otimes\left(\sum_{k=0}^{n+2} \underline{k}^\vee\right)-\mu\otimes  \underline{1\cdots (n+2)}^\vee-\mu''\otimes  \underline{02\cdots(n+2)}^\vee\\
    -\mu'\otimes  \underline{013\cdots(n+2)}^\vee+\psi\otimes  \underline{0\cdots(n+2)}^\vee\in\mathcal{MC}(A\otimes \Sigma N^*(\Delta^{n+2})).\end{multline*}

    \noindent We make precise the Maurer-Cartan condition on $\xi$. We first compute $d(\xi)$. Note that we have, for every $0\leq k\leq n+2$,
    $$d(\underline{k}^\vee)=\sum_{k<j\leq n+2}\underline{kj}^\vee-\sum_{0\leq i<k}\underline{ik}^\vee,$$

    \noindent which implies
    $$\sum_{k=0}^{n+2}d(\underline{k}^\vee)=0$$

    \noindent by a variable substitution. We then have
    \begin{multline*}
        d(\xi)=-d(\tau)\otimes\left(\sum_{k=0}^{n+2} \underline{k}^\vee\right)-d(\mu)\otimes  \underline{1\cdots (n+2)}^\vee-d(\mu'')\otimes  \underline{02\cdots (n+2)}^\vee\\-d(\mu')\otimes  \underline{013\cdots (n+2)}^\vee
        +(d(\psi)+\mu''-\mu-\mu')\otimes  \underline{0\cdots (n+2)}^\vee.
    \end{multline*}

    \noindent We now compute $\xi\llbrace\xi\rrbrace_1$. By Corollary \ref{diff1}, we have
    \begin{multline*}
        \xi\llbrace\xi\rrbrace_1=-\tau\langle\tau\rangle\otimes\left(\sum_{k=0}^{n+2} \underline{k}^\vee\right)-(\tau\langle\mu\rangle-(-1)^n\mu\langle\tau\rangle)\otimes  \underline{1\cdots (n+2)}^\vee\\-(\tau\langle \mu''\rangle-(-1)^n\mu''\langle\tau\rangle)\otimes  \underline{02\cdots (n+2)}^\vee\\
        -(\tau\langle\mu'\rangle-(-1)^n\mu'\langle\tau\rangle)\otimes  \underline{013\cdots (n+2)}^\vee\\+(\tau\langle\psi\rangle-(-1)^{n+1}\psi\langle\tau\rangle)\otimes  \underline{0\cdots (n+2)}^\vee.
    \end{multline*}

    \noindent We now show that $\xi\llbrace\xi\rrbrace_p=0$ for every $p\geq 2$. By Lemma \ref{degree}, we have
    \begin{multline*}
        \xi\llbrace\xi\rrbrace_p=-\sum_{s+t=p}\tau\otimes\left(\sum_{k=0}^{n+2} \underline{k}^\vee\right)\llbrace\mu\otimes  \underline{1\cdots (n+2)}^\vee-\mu''\otimes  \underline{02\cdots (n+2)}^\vee\\
    -\mu'\otimes  \underline{013\cdots (n+2)}^\vee,\psi\otimes  \underline{0\cdots (n+2)}^\vee\rrbrace_{s,t}\\
        +\sum_{s+t=p}(\mu\otimes  \underline{1\cdots (n+2)}^\vee-\mu''\otimes  \underline{02\cdots (n+2)}^\vee-\mu'\otimes  \underline{013\cdots (n+2)}^\vee)\llbrace \mu\otimes  \underline{1\cdots (n+2)}^\vee\\
        -\mu''\otimes  \underline{02\cdots (n+2)}^\vee-\mu'\otimes  \underline{013\cdots (n+2)}^\vee,\psi\otimes  \underline{0\cdots (n+2)}^\vee\rrbrace_{s,t}\\
        +\sum_{s+t=p}\psi\otimes  \underline{0\cdots (n+2)}^\vee\llbrace\mu\otimes  \underline{1\cdots (n+2)}^\vee-\mu''\otimes  \underline{02\cdots (n+2)}^\vee\\
        -\mu'\otimes  \underline{013\cdots (n+2)}^\vee,\psi\otimes  \underline{0\cdots (n+2)}^\vee\rrbrace_{s,t}
    \end{multline*}
    
    \noindent Since $n\geq 3$ and $p\geq 2$, we can apply Lemma \ref{bracedegree} to obtain $\xi\llbrace\xi\rrbrace_p=0$. At the end, since $\mu,\mu',h\in Z_n(A^\tau)$, we have
    $$d(\xi)+\sum_{p\geq 1}\xi\llbrace\xi\rrbrace_p=(d(\psi)+\tau\langle\psi\rangle-(-1)^{n+1}\psi\langle\tau\rangle-\mu-\mu'+\mu'')\otimes  \underline{0\cdots (n+2)}^\vee.$$

    \noindent If we set $\mu'':=\mu+\mu'\in Z_n(A^\tau)$ and $\psi:=0$, we then obtain that $\xi\in\mathcal{MC}(A\otimes\Sigma N^*(\Delta^{n+2}))$. We thus have proved that
    $$[\omega]+[\omega']=\left[-\tau\otimes\left(\sum_{k=1}^{n+1} \underline{k}^\vee\right)-(\mu+\mu')\otimes  \underline{0\cdots(n+1)}^\vee\right],$$

    \noindent which gives $\widetilde{f}(\mu+\mu')=\widetilde{f}(\mu)+\widetilde{f}(\mu')$. Now, because $f$ is a bijection, we only need to prove that the kernel of $\widetilde{f}$ is exactly given by $d_\tau(A_{n+1})$. Let $\mu\in Z_n(A^\tau)$ and $\psi\in A_{n+1}$. By the previous computations, we see that the equation
    $$d(\psi)+\tau\langle\psi\rangle-(-1)^{n+1}\psi\langle\tau\rangle=\mu$$

    \noindent is equivalent to the assumption
    $$-\tau\otimes\left(\sum_{k=0}^{n+2}\underline{k}^\vee\right)-\mu\otimes\underline{1\cdots (n+2)}^\vee+\psi\otimes\underline{0\cdots(n+2)}^\vee\in\mathcal{MC}(A\otimes \Sigma N^*(\Delta^{n+2})),$$

    \noindent which shows that $\widetilde{f}(\mu)=0$ if and only if $\mu=d_\tau(\psi)$ for some $\psi\in A_{n+1}$. We thus have an isomorphism
    $$\overline{f}:H_n(A^{\tau})\overset{\simeq}{\longrightarrow}\pi_{n+1}(\mathcal{MC}_\bullet(A),\tau).$$
\end{proof}

\subsection{Remarks: interpretation of the low dimensional twisting coderivations}\label{sec:242b}

In this subsection, we give an interpretation of the differentials $\partial^0,\partial^1,\partial^2$ and $\partial^3$ computed in Lemma \ref{firstdiff} and Corollary \ref{calculdiff}. This interpretation will be obtained by the study of the first simplices associated to the Maurer-Cartan simplicial set of $\text{\normalfont Hom}(\Lambda^{-1}\mathcal{A}s^\vee,\text{\normalfont End}_A)$ for some $A\in\text{\normalfont dgMod}_\mathbb{K}$.\\

Recall that for every non-symmetric cooperad $\mathcal{C}$ and non-symmetric operad $\mathcal{P}$ such that $\mathcal{C}(0)=\mathcal{P}(0)=0$, the sequence $\text{\normalfont Hom}(\mathcal{C},\mathcal{P})$ is endowed with the structure of an operad such that, for every $f\in\text{\normalfont Hom}(\mathcal{C},\mathcal{P})(k),g_1\in\text{\normalfont Hom}(\mathcal{C},\mathcal{P})(i_1),\ldots,g_k\in\text{\normalfont Hom}(\mathcal{C},\mathcal{P})(i_k)$ with $n=i_1+\cdots+i_k$, the composition $\gamma(f\otimes g_1\otimes\cdots\otimes g_k)$ is given by the composite
% https://q.uiver.app/#q=WzAsNixbMCwwLCJcXG1hdGhjYWx7Q30obikiXSxbMSwwLCJcXG1hdGhjYWx7Q31cXGNpcmNcXG1hdGhjYWx7Q30obikiXSxbMiwwLCJcXG1hdGhjYWx7Q30oaylcXG90aW1lc1xcbWF0aGNhbHtDfShpX3sxfSlcXG90aW1lcyAuLi5cXG90aW1lc1xcbWF0aGNhbHtDfShpX3trfSlcXG90aW1lc1xcbWF0aGJie0t9W2lkXSJdLFsyLDEsIlxcbWF0aGNhbHtQfShrKVxcb3RpbWVzIFxcbWF0aGNhbHtQfShpX3sxfSlcXG90aW1lcyAuLi5cXG90aW1lcyBcXG1hdGhjYWx7UH0oaV97a30pXFxvdGltZXNcXG1hdGhiYntLfVtpZF0iXSxbMywxLCJcXG1hdGhjYWx7UH1cXGNpcmNcXG1hdGhjYWx7UH0obikiXSxbNCwxLCJcXG1hdGhjYWx7UH0obikiXSxbMCwxLCJcXERlbHRhIl0sWzMsNCwiIiwwLHsic3R5bGUiOnsidGFpbCI6eyJuYW1lIjoiaG9vayIsInNpZGUiOiJ0b3AifX19XSxbMiwzLCJmXFxvdGltZXMgZ197MX1cXG90aW1lcyAuLi5cXG90aW1lcyBnX3trfVxcb3RpbWVzIGlkIiwyXSxbMSwyLCIiLDAseyJzdHlsZSI6eyJoZWFkIjp7Im5hbWUiOiJlcGkifX19XSxbNCw1LCJcXGdhbW1hIl1d
\[\begin{tikzcd}
	{\mathcal{C}(n)} & {\mathcal{C}\circ\mathcal{C}(n)} & {\mathcal{C}(k)\otimes\mathcal{C}(i_{1})\otimes \cdots\otimes\mathcal{C}(i_{k})} \\
	&& {\mathcal{P}(k)\otimes \mathcal{P}(i_{1})\otimes \cdots\otimes \mathcal{P}(i_{k})} & {\mathcal{P}\circ\mathcal{P}(n)} & {\mathcal{P}(n)}
	\arrow["\Delta", from=1-1, to=1-2]
	\arrow[two heads, from=1-2, to=1-3]
	\arrow["{f\otimes g_{1}\otimes\cdots\otimes g_{k}}", from=1-3, to=2-3]
	\arrow[hook, from=2-3, to=2-4]
	\arrow["\gamma", from=2-4, to=2-5]
\end{tikzcd}.\]

\noindent From \cite[Proposition 1]{gerstenhaber2}, we deduce that $\bigoplus_{n\geq 1}\text{\normalfont Hom}(\mathcal{C}(n),\mathcal{P}(n))$ is endowed with the structure of a brace algebra. The braces are given by
$$f\langle g_1,\ldots,g_n\rangle=\sum_{1\leq i_1<\cdots <i_n\leq r}\gamma(f\otimes id\otimes\cdots\otimes \underset{i_1}{g_1}\otimes\cdots\otimes\underset{i_n}{g_n}\otimes\cdots\otimes id)$$

\noindent where $f\in\text{\normalfont Hom}(\mathcal{C}(r),\mathcal{P}(r)),g_1\in\text{\normalfont Hom}(\mathcal{C}(m_1),\mathcal{P}(m_1)),\ldots,g_n\in\text{\normalfont Hom}(\mathcal{C}(m_n),\mathcal{P}(m_n))$. We immediatly see that $\bigoplus_{n\geq 2}\text{\normalfont Hom}(\mathcal{C}(n),\mathcal{P}(n))$ is a sub brace algebra of $\bigoplus_{n\geq 1}\text{\normalfont Hom}(\mathcal{C}(n),\mathcal{P}(n))$. Since $\prod_{n\geq 2}\text{\normalfont Hom}(\mathcal{C}(n),\mathcal{P}(n))$ is the completion of $\bigoplus_{n\geq 2}\text{\normalfont Hom}(\mathcal{C}(n),\mathcal{P}(n))$ under the filtration defined by
$$F_p(\bigoplus_{n\geq 2}\text{\normalfont Hom}(\mathcal{C}(n),\mathcal{P}(n))):=\bigoplus_{n\geq p+1}\text{\normalfont Hom}(\mathcal{C}(n),\mathcal{P}(n)),$$

\noindent we have that the above brace algebra structure on $\bigoplus_{n\geq 1}\text{\normalfont Hom}(\mathcal{C}(n),\mathcal{P}(n))$ induces a complete brace algebra structure on $\prod_{n\geq 2}\text{\normalfont Hom}(\mathcal{C}(n),\mathcal{P}(n))$.\\

We now consider the non-symmetric operad $\mathcal{A}s$ such that $\mathcal{A}s(0)=0$ and $\mathcal{A}s(n)=\mathbb{K}$ for every $n\geq 1$ with trivial operadic compositions. Since $\mathcal{A}s$ is self-dual for Koszul duality (see for instance \cite[Proposition 9.1.9]{loday}), the operad $\mathcal{A}s_\infty=B^c(\Lambda^{-1}\mathcal{A}s^\vee)$ encodes associative algebras up to homotopy. We apply the above analysis with $\mathcal{C}=\Lambda^{-1}\mathcal{A}s^\vee$ and $\mathcal{P}=\text{\normalfont End}_A$ for some $A\in\text{\normalfont dgMod}_\mathbb{K}$ in order to study morphisms from $\mathcal{A}s_\infty$ to $\text{\normalfont End}_A$, or equivalently associative up to homotopy algebra structures on $A$. Note that we have an isomorphism of operads
$$\text{\normalfont Hom}(\Lambda^{-1}\mathcal{A}s^\vee,\text{\normalfont End}_A)\simeq\text{\normalfont End}_{\Sigma A}.$$

\noindent We set
$$\overline B(A)=\bigoplus_{n\geq 1}(\Sigma A)^{\otimes n}\ ;\ B_{\geq 2}(A)=\bigoplus_{n\geq 2}(\Sigma A)^{\otimes n}$$

\noindent so that $\overline B(A)=\Sigma A\oplus B_{\geq 2}(A)$. Let $d$ be the differential of $\overline B(A)$ obtained from the internal differential of $A$ by the Leibniz rule. Recall that $\overline B(A)$ is a coalgebra with as coproduct
$$\Delta(a_1\otimes\cdots\otimes a_n)=\sum_{k=1}^{n-1} (a_1\otimes\cdots\otimes a_k)\otimes (a_{k+1}\otimes\cdots\otimes a_n)$$

\noindent for every $n\geq 2$ and $a_1,\ldots,a_n\in\Sigma A$. The above isomorphism of operads provides a complete brace algebra structure on $\text{\normalfont Hom}(B_{\geq 2}(A),\Sigma A)\simeq\prod_{n\geq 2}\text{\normalfont End}_{\Sigma A}(n)$. Note that we have the isomorphism $\text{\normalfont Hom}(\overline B(A),\Sigma A)\simeq\text{\normalfont Hom}(\Sigma A,\Sigma A)\oplus\text{\normalfont Hom}(B_{\geq 2}(A),\Sigma A)$. In the following, we denote by $1\in\text{\normalfont Hom}(\Sigma A,\Sigma A)$ the identity morphism so that we have a natural inclusion $\mathbb{K}1\oplus\text{\normalfont Hom}(B_{\geq 2}(A),\Sigma A)\subset\text{\normalfont Hom}(\overline B(A),\Sigma A)$.

\begin{prop}
    Giving a Maurer-Cartan element $\phi\in\mathcal{MC}(\text{\normalfont Hom}(B_{\geq 2}(A),\Sigma A))$ is equivalent to giving a coderivation of coalgebra of the form $d+\partial_\phi$ on $\overline B(A)$, where $\partial_\phi$ is the morphism obtained from $\phi$ by the Leibniz rule in the coalgebra $\overline B(A)$.
\end{prop}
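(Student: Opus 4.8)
The plan is to unpack both sides of the claimed equivalence and match them via the brace-algebra-to-Maurer-Cartan dictionary established earlier in the paper. First I would recall that a Maurer-Cartan element $\phi\in\mathcal{MC}(\text{\normalfont Hom}(B_{\geq 2}(A),\Sigma A))$ is, by the definition of the complete brace algebra structure on $\text{\normalfont Hom}(B_{\geq 2}(A),\Sigma A)\simeq\prod_{n\geq 2}\text{\normalfont End}_{\Sigma A}(n)$ together with Corollary \ref{bracee}, a degree $0$ element satisfying the equation $d(\phi)+\sum_{n\geq 1}\phi\llbrace\phi\rrbrace_n=0$ of the induced $\widehat{\Gamma\Lambda\mathcal{PL}_\infty}$-algebra. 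So I would begin by identifying the summand $\text{\normalfont End}_{\Sigma A}(n)$ in which $\phi$ lives and interpreting $\phi$ as a collection of maps $\phi_n:(\Sigma A)^{\otimes n}\longrightarrow\Sigma A$ for $n\geq 2$.

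The central computation is to show that, under the isomorphism of operads $\text{\normalfont Hom}(\Lambda^{-1}\mathcal{A}s^\vee,\text{\normalfont End}_A)\simeq\text{\normalfont End}_{\Sigma A}$, the weighted brace operations $-\llbrace-,\ldots,-\rrbrace_{r_1,\ldots,r_n}$ on $\text{\normalfont Hom}(B_{\geq 2}(A),\Sigma A)$ coincide with the operadic brace structure coming from \cite[Proposition 1]{gerstenhaber2}, and that these in turn assemble into the coderivation $\partial_\phi$ extended by the coproduct $\Delta$ of $\overline{B}(A)$. Concretely, I would verify that for $\phi$ of degree $0$ the sum $\sum_{n\geq 1}\phi\llbrace\phi\rrbrace_n$ reproduces the composite of $\partial_\phi$ with itself along $\Delta$, so that the Maurer-Cartan equation $d(\phi)+\sum_{n\geq 1}\phi\llbrace\phi\rrbrace_n=0$ becomes exactly the condition $(d+\partial_\phi)^2=0$. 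Here the key inputs are Theorem \ref{thmfonda}, which translates the $\Gamma\Lambda\mathcal{PL}_\infty$ structure into the weighted braces, and the explicit formula for the weighted braces of a tensor product $A\otimes\Sigma E$ given at the end of $\mathsection$\ref{sec:233}, specialized to $E=N^*(\Delta^0)=\mathbb{K}$ (so that the $\mathcal{E}$-algebra twist is trivial and $\partial^0$ of Lemma \ref{firstdiff} governs the linear part).

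Next I would check the coderivation property: the claim is that $d+\partial_\phi$ is a coderivation of the coalgebra $(\overline{B}(A),\Delta)$, where $\partial_\phi$ is obtained from $\phi$ by the co-Leibniz rule. Since $d$ is already a coderivation by construction and the space of coderivations is linear, it suffices to check that $\partial_\phi$ is a coderivation; this follows from the standard fact that on the cofree-type coalgebra $\overline{B}(A)$ a coderivation is uniquely determined by its corestriction $\pi_{\Sigma A}\circ\partial_\phi=\phi$, which is precisely the content needed for the bijection between maps $B_{\geq 2}(A)\longrightarrow\Sigma A$ and coderivations. I would cast this as the $\mathcal{A}s$-analogue of Proposition \ref{isocoder} and Proposition \ref{bijendo}, namely that corestriction $\partial\longmapsto\pi_{\Sigma A}\circ\partial$ is a bijection between coderivations of $\overline{B}(A)$ vanishing on $\Sigma A$ and elements of $\text{\normalfont Hom}(B_{\geq 2}(A),\Sigma A)$.

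The main obstacle I anticipate is bookkeeping the signs and the precise matching of the weighted braces with the operadic braces under the operadic suspension $\Lambda^{-1}$ and the desuspension identifying $\text{\normalfont Hom}(\Lambda^{-1}\mathcal{A}s^\vee,\text{\normalfont End}_A)$ with $\text{\normalfont End}_{\Sigma A}$; in particular, one must confirm that the higher weighted braces $\phi\llbrace\phi\rrbrace_n$ for $n\geq 2$ contribute correctly and that the Koszul signs produced by the suspension agree with those in the co-Leibniz extension defining $\partial_\phi$. Once the dictionary $\sum_{n\geq 1}\phi\llbrace\phi\rrbrace_n\leftrightarrow\tfrac{1}{2}[\partial_\phi,\partial_\phi]$ (in the convolution sense) is pinned down, the equivalence is immediate: a Maurer-Cartan element is exactly a square-zero perturbation $d+\partial_\phi$, which is the data of an $\mathcal{A}s_\infty$-structure on $A$ as desired.
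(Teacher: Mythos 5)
Your proposal routes the statement through the full $\widehat{\Gamma\Lambda\mathcal{PL}_\infty}$ machinery (Corollary \ref{bracee}, Theorem \ref{thmfonda}, the tensor-product brace formula with $E=N^*(\Delta^0)$), whereas the paper's proof is a two-line direct computation, and in the translation you introduce errors that would derail the argument. First, the element $\phi\in\mathcal{MC}(\text{\normalfont Hom}(B_{\geq 2}(A),\Sigma A))$ is a degree $-1$ morphism satisfying the Maurer--Cartan equation of the underlying $\Gamma(\mathcal{P}re\mathcal{L}ie,-)$-algebra of the complete brace algebra, namely $d(\phi)+\phi\langle\phi\rangle=0$ with the single Gerstenhaber brace --- not a degree $0$ element satisfying $d(\phi)+\sum_{n\geq 1}\phi\llbrace\phi\rrbrace_n=0$. (The two notions do agree after suspension, by the corollary following Theorem \ref{thmfonda}, because $\Sigma x\llbrace\Sigma y\rrbrace_{r}=0$ for $r\geq 2$; but you neither invoke this collapse nor account for it, and instead plan to match the full sum of weighted braces against $\tfrac{1}{2}[\partial_\phi,\partial_\phi]$.)

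That final dictionary is the genuine gap: the identity $\sum_{n\geq 1}\phi\llbrace\phi\rrbrace_n\leftrightarrow\tfrac{1}{2}[\partial_\phi,\partial_\phi]$ is not what is needed and the factor $\tfrac{1}{2}$ is not even defined over the fields of characteristic $2$ that this paper is designed to handle. The correct and much simpler observation --- which is the entire content of the paper's proof and is missing from your plan --- is that since $\overline{B}(A)$ is cofree-type, $(d+\partial_\phi)^2=0$ if and only if its corestriction $\pi_{\Sigma A}(d+\partial_\phi)^2=d(\phi)+\phi\circ\partial_\phi$ vanishes, and the explicit formula
$$\partial_\phi(a_1\otimes\cdots\otimes a_n)=\sum_{i,j}\pm a_1\otimes\cdots\otimes a_{i-1}\otimes\phi(a_i\otimes\cdots\otimes a_{i+j})\otimes a_{i+j+1}\otimes\cdots\otimes a_n$$
shows directly that $\phi\circ\partial_\phi=\phi\langle\phi\rangle$, the Gerstenhaber brace of $\mathsection$\ref{sec:242b}. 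No suspension bookkeeping, no higher weighted braces, and no division by $2$ enter. Your remarks on the coderivation property (that $\partial_\phi$ is determined by, and recovered from, its corestriction $\phi$) are correct and consistent with the paper, but the central computation needs to be replaced by the identity $\phi\circ\partial_\phi=\phi\langle\phi\rangle$.
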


\begin{proof}
    Let $\phi\in\text{\normalfont Hom}(B_{\geq 2}(A),\Sigma A)$ be a degree $-1$ morphism. Then $(d+\partial_\phi)^2=0$ if and only if $d(\phi)+\phi\partial_\phi=0$. By definition of $\partial_\phi$, we have, for every $a_1,\ldots,a_n\in \Sigma A$, 
    $$\partial_\phi(a_1\otimes\cdots\otimes a_n)=\sum_{i=1}^n\sum_{j=1}^{n-i}\pm a_1\otimes\cdots\otimes a_{i-1}\otimes\phi(a_i\otimes\cdots\otimes a_{i+j})\otimes a_{i+j+1}\otimes\cdots\otimes a_n,$$
    \noindent which gives $\phi\partial_\phi=\phi\langle\phi\rangle$. We thus have obtained that $d+\partial_\phi$ is a derivation of coalgebra if and only if $\phi\in\mathcal{MC}(\text{\normalfont Hom}(\overline B(A),\Sigma A))$.
\end{proof}

Since giving a morphism of operads $\mathcal{A}s_\infty\longrightarrow\text{\normalfont End}_A$ is equivalent to giving a Maurer-Cartan element in $\text{\normalfont Hom}(B_{\geq 2}(A),\Sigma A)$, we have the following classical definition.

\begin{defi}
    An {\normalfont associative algebra up to homotopy} is a pair $(A,\phi)$ where $A$ is a dg $\mathbb{K}$-module and $\phi\in\mathcal{MC}(\text{\normalfont Hom}(B_{\geq 2}(A),\Sigma A))$.
\end{defi}

For every $\phi\in\mathcal{MC}(\text{\normalfont Hom}(B_{\geq 2}(A),\Sigma A))$, we denote by $\overline B(A,\phi)$ the dg $\mathbb{K}$-module $\overline B(A)$ endowed with the coderivation $d+\partial_\phi$.

\begin{defi}
    Let $(A_1,\phi_1)$ and $(A_2,\phi_2)$ be two associative algebras up to homotopy. An $\infty$-morphism $f:(A_1,\phi_1)\rightarrow (A_2,\phi_2)$ is a morphism of coalgebras $f:\overline B(A,\phi_1)\longrightarrow \overline B(A,\phi_2)$ which commutes with the coderivations.
\end{defi}

In the following, we consider the category of associative algebras up to homotopy with set morphisms the $\infty$-morphisms.

\begin{remarque}
    Note that since $B(A_2)$ is cofree, giving a morphism of coalgebras $\overline B(A_1)\longrightarrow \overline B(A_2)$ is equivalent to giving a morphism $\overline B(A_1)\longrightarrow\Sigma A_2$. 
\end{remarque}

\begin{prop}\label{mc1ass}
    Let $\phi_0,\phi_1\in\mathcal{MC}(\text{\normalfont Hom}(\overline{ \overline B}(A),\Sigma A))$. Then giving
    $$-\phi_0\otimes\underline{0}^\vee-\phi_1\otimes\underline{1}^\vee-\phi_{01}\otimes\underline{01}^\vee\in\mathcal{MC}_1(\text{\normalfont Hom}(B_{\geq 2}(A),\Sigma A))$$

    \noindent is equivalent to giving a morphism of coalgebras
    $$\Phi_{01}:\overline B(A,\phi_1)\longrightarrow \overline B(A,\phi_0)$$

    \noindent which is the identity on $\Sigma A\subset \overline B(A)$.
\end{prop}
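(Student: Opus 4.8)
The plan is to unwind both sides of the claimed equivalence into explicit equations and match them term by term, using the computation of $\partial^1$ from Corollary \ref{calculdiff} as the computational engine. First I would set up notation: write a $1$-simplex of the Maurer-Cartan simplicial set as
$$\alpha=-\phi_0\otimes\underline{0}^\vee-\phi_1\otimes\underline{1}^\vee-\phi_{01}\otimes\underline{01}^\vee,$$
where $\phi_0,\phi_1$ are degree $-1$ and $\phi_{01}$ is degree $0$ in $\text{\normalfont Hom}(B_{\geq 2}(A),\Sigma A)$. The condition $\alpha\in\mathcal{MC}_1$ is the equation $d(\alpha)+\sum_{k\geq 1}\alpha\llbrace\alpha\rrbrace_k=0$, and the faces $d_0\alpha=\phi_0$, $d_1\alpha=\phi_1$ are exactly the Maurer-Cartan conditions on $\phi_0$ and $\phi_1$ as associative-up-to-homotopy structures. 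So the content is in the $\underline{01}^\vee$-component of the equation.

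Next I would extract that $\underline{01}^\vee$-component using the explicit formula for $\partial^1$ in Corollary \ref{calculdiff}, precisely as in the proof of Lemma \ref{mc1}. The brace-algebra weighted operations $\alpha\llbrace\alpha\rrbrace_k$ pick out only the terms whose tree part evaluates nontrivially on $\underline{01}$: the corolla contributions $\phi_1\langle\underbrace{\phi_{01},\dots,\phi_{01}}_k\rangle$ from the $\sum_{k\geq1}\begin{tikzpicture}[baseline={([yshift=-.5ex]current bounding box.center)},scale=0.6]\node (i) at (0,0) {$\underline{0}$};\node (1) at (0,1) {$\underline{01}$};\draw (i) -- (1);\end{tikzpicture}$ part of $\partial^1$, together with the $\phi_{01}\langle\phi_0\rangle$ term coming from $-\begin{tikzpicture}[baseline={([yshift=-.5ex]current bounding box.center)},scale=0.6]\node (i) at (0,0) {$\underline{01}$};\node (1) at (0,1) {$\underline{1}$};\draw (i) -- (1);\end{tikzpicture}$. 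Collecting these, the $\underline{01}^\vee$-component of the Maurer-Cartan equation becomes
$$d(\phi_{01})=\phi_0+\phi_{01}\langle\phi_0\rangle-\sum_{k\geq0}\phi_1\langle\underbrace{\phi_{01},\dots,\phi_{01}}_k\rangle=\phi_0+\phi_{01}\langle\phi_0\rangle-\phi_1\circledcirc(1+\phi_{01}),$$
exactly the gauge equation that appears in Lemma \ref{mc1}, now interpreted in the complete brace algebra $\text{\normalfont Hom}(B_{\geq 2}(A),\Sigma A)$.

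Then I would translate this gauge equation into a statement about coderivations on $\overline B(A)$. I would define $\Phi_{01}:\overline B(A,\phi_1)\longrightarrow\overline B(A,\phi_0)$ as the unique morphism of (cofree) coalgebras that is the identity on $\Sigma A\subset\overline B(A)$ and whose corestriction $B_{\geq 2}(A)\longrightarrow\Sigma A$ equals $\phi_{01}$; such a morphism exists and is determined by its corestriction since $\overline B(A)$ is cofree, as recorded in the Remark preceding the proposition. The key computation is that, unwinding the coalgebra-morphism and coderivation formulas via the Leibniz rule (as in the proof of Proposition \ref{mc1ass}'s companion statement), the condition that $\Phi_{01}$ commutes with the coderivations $d+\partial_{\phi_1}$ and $d+\partial_{\phi_0}$ is $\Phi_{01}\circ(d+\partial_{\phi_1})=(d+\partial_{\phi_0})\circ\Phi_{01}$, and corestricting this identity onto $\Sigma A$ yields precisely the displayed gauge equation for $\phi_{01}$. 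The main obstacle, and the step I would spend the most care on, is verifying that the combinatorics of the circular product $\circledcirc$ in the brace algebra match the combinatorics of composing the coalgebra morphism $\Phi_{01}$ with the coderivation: one must check that the iterated braces $\phi_1\langle\phi_{01},\dots,\phi_{01}\rangle$ assemble, under the cofree identification, into the corestriction of $(d+\partial_{\phi_0})\circ\Phi_{01}$, tracking the Koszul signs introduced by the suspension $\Sigma A$ throughout. Once this identification of the two equations is established, the equivalence is a bijection between the set of valid $\phi_{01}$ and the set of such $\Phi_{01}$, and the proposition follows.
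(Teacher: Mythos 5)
Your overall strategy is exactly the paper's: reduce the Maurer--Cartan condition to its $\underline{01}^\vee$-component via Lemma \ref{mc1}, define $\Phi_{01}$ as the unique coalgebra morphism with corestriction $1+\phi_{01}$ (using cofreeness of $\overline B(A)$), and match the resulting gauge equation with the corestriction to $\Sigma A$ of the commutation relation $(d+\partial_{\phi_0})\Phi_{01}=\Phi_{01}(d+\partial_{\phi_1})$, identifying $\phi_{01}\partial_{\phi_1}=\phi_{01}\langle\phi_1\rangle$ and $\phi_0\Phi_{01}=\phi_0\circledcirc(1+\phi_{01})$. The paper does not even recompute the component from $\partial^1$; it simply invokes Lemma \ref{mc1}.

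However, there is a concrete error in your extraction of the gauge equation that would make your final matching step fail. In the element $-\phi_0\otimes\underline{0}^\vee-\phi_1\otimes\underline{1}^\vee-\phi_{01}\otimes\underline{01}^\vee$, the coefficient of $\underline{0}^\vee$ is $-\phi_0$ and that of $\underline{1}^\vee$ is $-\phi_1$; applying Lemma \ref{mc1} with $\tau_1=\phi_0$, $\tau_0=\phi_1$, $h=\phi_{01}$, the corolla rooted at $\underline{0}$ contributes $\phi_0\langle\phi_{01},\ldots,\phi_{01}\rangle$ and the tree with root $\underline{01}$ and leaf $\underline{1}$ contributes $\phi_{01}\langle\phi_1\rangle$, so the correct equation is
$$d(\phi_{01})=\phi_1+\phi_{01}\langle\phi_1\rangle-\phi_0\circledcirc(1+\phi_{01}),$$
not the one you display, which swaps $\phi_0$ and $\phi_1$ throughout. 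This matters: corestricting $(d+\partial_{\phi_0})\Phi_{01}=\Phi_{01}(d+\partial_{\phi_1})$ for $\Phi_{01}:\overline B(A,\phi_1)\to\overline B(A,\phi_0)$ yields precisely the correct equation above (the $\phi_{01}\langle\phi_1\rangle$ term comes from \emph{pre}composition with the source coderivation $\partial_{\phi_1}$, and $\phi_0\circledcirc(1+\phi_{01})$ from \emph{post}composition with the target coderivation $\partial_{\phi_0}$). With your swapped equation the term-by-term comparison would instead characterize a coalgebra morphism in the opposite direction, $\overline B(A,\phi_0)\to\overline B(A,\phi_1)$, contradicting the statement. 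The fix is only to restore the correct indices; the rest of your argument then goes through as in the paper.
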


\begin{proof}
    Let $\omega:=-\phi_0\otimes\underline{0}^\vee-\phi_1\otimes\underline{1}^\vee-\phi_{01}\otimes\underline{01}^\vee\in\text{\normalfont Hom}(B_{\geq 2}(A),\Sigma A)\otimes\Sigma N^*(\Delta^1)$. Let $\Phi_{01}:\overline B(A)\longrightarrow \overline B(A)$ be the unique morphism of coalgebras such that its composite with the projection $\pi_{\Sigma A}:\overline B(A)\longrightarrow\Sigma A$ is $1+\phi_{01}$. We characterize the equation
    $$(d+\partial_{\phi_0})\Phi_{01}=\Phi_{01}(d+\partial_{\phi_1}).$$

    \noindent Since $\overline B(A)$ is cofree, this identity is equivalent to 
    $$\pi_{\Sigma A}(d+\partial_{\phi_0})\Phi_{01}=\pi_{\Sigma A}\Phi_{01}(d+\partial_{\phi_1}),$$

    \noindent and then to
    $$d(\phi_{01})=\phi_{1}+\phi_{01}\partial_{\phi_1}-\phi_0\Phi_{01}.$$

    \noindent We precisely have $\phi_{01}\partial_{\phi_1}=\phi_{01}\langle\phi_1\rangle$ and $\phi_0\Phi_{01}=\phi_0\circledcirc(1+\phi_{01})$ by definition of $\partial_{\phi_1}$ and $\Phi_{01}$. We thus have obtained that $\Phi_{01}$ commutes with the differentials if and only if 
    $$d(\phi_{01})=\phi_1+\phi_{01}\langle\phi_1\rangle-\phi_0\circledcirc(1+\phi_{01}).$$

    \noindent By Lemma \ref{mc1}, this identity is equivalent to ask $\omega\in\mathcal{MC}_1(\text{\normalfont Hom}(B_{\geq 2}(A),\Sigma A))$, which proves the proposition.
\end{proof}

% \begin{remarque}
%     This result can also be viewed as a consequence of \cite[$\mathsection$5.2.1]{fressecyl}. Indeed, 
% \end{remarque}

We now characterize elements of $\mathcal{MC}_2(\text{\normalfont Hom}(B_{\geq 2}(A),\Sigma A))$. First, note that for every associative algebra $E$, every $\phi\in\mathcal{MC}(\text{\normalfont Hom}(B_{\geq 2}(A),\Sigma A))$ induces an element in $\mathcal{MC}(\text{\normalfont Hom}(B_{\geq 2}(A\otimes E),\Sigma(A\otimes E)))$, which we still denote by $\phi$, and which is defined by applying $\phi$ on the left, and the algebra structure of $E$ on the right. In particular, for every morphism of associative algebras $f:E\longrightarrow E'$, we have an $\infty$-morphism $id\otimes f:(A\otimes E,\phi)\longrightarrow (A\otimes E',\phi)$.\\

\noindent Next, recall that, for every $n\geq 0$, the dg $\mathbb{K}$-module $N^*(\Delta^n)$ is endowed with the structure of an associative algebra. This associative algebra structure is obtained by the dualization of the coassociative coalgebra structure on $N_*(\Delta^n)$ given by the Alexander-Whitney diagonal $AW:N_*(\Delta^n)\longrightarrow N_*(\Delta^n)\otimes N_*(\Delta^n)$ which is the operation given by the permutation $(12)\in\mathcal{E}(2)_0$. Explicitly, we have
$$AW(\underline{a_0\cdots a_d})=\sum_{k=0}^d\underline{a_0\cdots a_k}\otimes\underline{a_k\cdots a_d},$$

\noindent for every $0\leq a_0<\cdots<a_d\leq n$.

\begin{prop}\label{mcass2}
    Giving a Maurer-Cartan element in $\mathcal{MC}_2(\text{\normalfont Hom}(B_{\geq 2}(A),\Sigma A))$ is equivalent to giving Maurer-Cartan elements $\phi_0,\phi_1,\phi_2\in\mathcal{MC}(\text{\normalfont Hom}(B_{\geq 2}(A),\Sigma A))$ and a diagram of the form
% https://q.uiver.app/#q=WzAsMyxbMCwxLCIoQSxcXHBoaV8yKSJdLFsyLDEsIihBLFxccGhpXzApIl0sWzEsMCwiKEEsXFxwaGlfMSkiXSxbMCwyLCJcXFBoaV97MTJ9Il0sWzIsMSwiXFxQaGlfezAxfSJdLFswLDEsIlxcUGhpX3swMn0iLDJdLFsyLDUsIlxcUGhpX3swMTJ9IiwyLHsic2hvcnRlbiI6eyJ0YXJnZXQiOjIwfX1dXQ==
\[\begin{tikzcd}
	& {(A,\phi_1)} \\
	{(A,\phi_2)} && {(A,\phi_0)}
	\arrow["{\Phi_{01}}", from=1-2, to=2-3]
	\arrow["{\Phi_{12}}", from=2-1, to=1-2]
	\arrow[""{name=0, anchor=center, inner sep=0}, "{\Phi_{02}}"', from=2-1, to=2-3]
	\arrow["{\Phi_{012}}"', shorten >=3pt, Rightarrow, from=1-2, to=0]
\end{tikzcd}\]

\noindent in the category of $A_\infty$-algebras, where $\Phi_{012}:(A,\phi_2)\longrightarrow (A\otimes N^*(\Delta^1),\phi_{0})$ is a homotopy 
from $\Phi_{01}\Phi_{12}$ to $\Phi_{02}$.
\end{prop}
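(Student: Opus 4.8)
The plan is to expand a general $2$-simplex along the standard basis of $N^*(\Delta^2)$, read off the boundary data from the three faces, and interpret the single remaining top-dimensional equation as the homotopy condition. Write an arbitrary element of $\mathcal{MC}_2(\text{\normalfont Hom}(B_{\geq 2}(A),\Sigma A))$ as
$$\xi=-\sum_{i=0}^2\phi_i\otimes \underline{i}^\vee-\sum_{0\leq i<j\leq 2}\phi_{ij}\otimes \underline{ij}^\vee+\phi_{012}\otimes \underline{012}^\vee,$$
with $\phi_i$ of degree $-1$, $\phi_{ij}$ of degree $0$ and $\phi_{012}$ of degree $+1$ in $\text{\normalfont Hom}(B_{\geq 2}(A),\Sigma A)$. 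First I would apply the three face maps: $d_2\xi$, $d_1\xi$ and $d_0\xi$ are the $1$-simplices supported respectively on the edges $\underline{01}$, $\underline{02}$ and $\underline{12}$, so by the already-established Proposition~\ref{mc1ass} they are Maurer--Cartan elements of $\mathcal{MC}_1$ precisely when $\phi_0,\phi_1,\phi_2\in\mathcal{MC}(\text{\normalfont Hom}(B_{\geq 2}(A),\Sigma A))$ and the components $\phi_{01},\phi_{02},\phi_{12}$ encode coalgebra morphisms $\Phi_{01}\colon(A,\phi_1)\to(A,\phi_0)$, $\Phi_{02}\colon(A,\phi_2)\to(A,\phi_0)$ and $\Phi_{12}\colon(A,\phi_2)\to(A,\phi_1)$, each the identity on $\Sigma A$. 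This fixes all the boundary data of the diagram, so only the top component $\phi_{012}$ remains to be interpreted.

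Second, I would decompose the Maurer--Cartan equation $d(\xi)+\sum_{k\geq 1}\xi\llbrace\xi\rrbrace_k=0$ into its components along the basis $\underline{i}^\vee,\underline{ij}^\vee,\underline{012}^\vee$. The grading estimate of Lemma~\ref{bracedegree} bounds which weighted braces can land in each component, so that the $\underline{i}^\vee$- and $\underline{ij}^\vee$-components merely reproduce the vertex and edge equations already accounted for above. The only new relation is the $\underline{012}^\vee$-component, which I would compute using the explicit formula for $\partial^2$ from Corollary~\ref{calculdiff} (together with $\partial^1$ and Corollary~\ref{diff1}); the resulting identity expresses $d(\phi_{012})$ in terms of a bracket $\phi_{012}\langle\phi_2\rangle$, a term $\phi_0\Phi_{012}$, and a term governed by the circular product $\underline{01}\,\overline\circledcirc\,\underline{12}$.

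Third, I would interpret this last identity through the path object. By Proposition~\ref{pathobject}, $A\otimes N^*(\Delta^1)$ carries the $A_\infty$-structure induced by $\phi_0$ and by the Alexander--Whitney algebra structure on $N^*(\Delta^1)$, and is a path object for $(A,\phi_0)$ with endpoint maps $(i_1^0)^\vee$ and $(i_1^1)^\vee$. Since $\overline B$ is cofree, an $\infty$-morphism $\Phi_{012}\colon(A,\phi_2)\to(A\otimes N^*(\Delta^1),\phi_0)$ is determined by a single degree-$0$ map $\overline B(A,\phi_2)\to\Sigma(A\otimes N^*(\Delta^1))$, which I would build from $\phi_{01},\phi_{12}$ and $\phi_{012}$. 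The claim to verify is then twofold: that the $\underline{012}^\vee$-component of the Maurer--Cartan equation is exactly the condition that this $\Phi_{012}$ commute with the codifferentials, i.e. be an $\infty$-morphism into the path object; and that its two endpoint restrictions along $(i_1^0)^\vee$ and $(i_1^1)^\vee$ produce $\Phi_{01}\Phi_{12}$ and $\Phi_{02}$ respectively, the first because the circular product $\underline{01}\,\overline\circledcirc\,\underline{12}$ realizes the composition of $\infty$-morphisms. Conversely, any such homotopy reassembles into a Maurer--Cartan $2$-simplex, which yields the asserted equivalence.

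The main obstacle is precisely this last matching. One must recognize that the circular-product terms $\underline{01}\,\overline\circledcirc\,\underline{12}$ occurring in $\partial^2$ encode the \emph{composition} $\Phi_{01}\circ\Phi_{12}$ of $\infty$-morphisms rather than independent contributions; here the associativity of $\overline\circledcirc$ and its reading as substitution (Remark~2.20 of \cite{moi}) are what convert the brace expansion into a genuine composite. Setting up the dictionary between the Alexander--Whitney coproduct on $N^*(\Delta^1)$ (the operation of $(12)\in\mathcal{E}(2)_0$) and the weighted-brace expansion of $\xi\llbrace\xi\rrbrace_k$, with the correct Koszul signs, is the delicate step; once it is in place, the two endpoint computations and the equivalence follow by direct translation.
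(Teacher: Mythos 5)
Your proposal is correct and follows essentially the same route as the paper: expand the $2$-simplex along the basis of $N^*(\Delta^2)$, use the face maps together with Proposition \ref{mc1ass} to identify the boundary data, isolate the $\underline{012}^\vee$-component of the Maurer--Cartan equation via Corollary \ref{calculdiff}, and match it with the condition that the cofreely induced coalgebra map into $\overline B(A\otimes N^*(\Delta^1))$ commute with the codifferentials, with the circular products $\overline\circledcirc$ realizing the composites $\Phi_{01}\Phi_{12}$ at the endpoint $\underline{1}^\vee$. The one delicate point you flag --- translating the $\overline\circledcirc$-terms of $\partial^2$ into composition of $\infty$-morphisms --- is exactly the matching the paper carries out.
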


\begin{proof}
    We consider
    $$\omega:=-\phi_0\otimes\underline{0}^\vee-\phi_1\otimes\underline{1}^\vee-\phi_2\otimes\underline{2}^\vee-\phi_{01}\otimes\underline{01}^\vee-\phi_{02}\otimes\underline{02}^\vee-\phi_{12}\otimes\underline{02}^\vee-\phi_{012}\otimes\underline{012}^\vee.$$

    \noindent We characterize the Maurer-Cartan condition on $\omega$. By definition of the $\Gamma\Lambda\mathcal{PL}_\infty$-algebra structure on $\Sigma\text{\normalfont Hom}(B_{\geq 2}(A),\Sigma A)\otimes N^*(\Delta^2)$, and by Corollary \ref{calculdiff}, looking at the vertices of $d(\omega)+\sum_{n\geq 1}\omega\llbrace\omega\rrbrace_n$ gives the Maurer-Cartan condition on $\phi_0,\phi_1,\phi_2\in\text{\normalfont Hom}(B_{\geq 2}(A),\Sigma A)$. Looking at the components given by $\underline{01}^\vee,\underline{02}^\vee$ and $\underline{12}^\vee$ also give the Maurer-Cartan condition on the elements
    $$-\phi_0\otimes\underline{0}^\vee-\phi_1\otimes\underline{1}^\vee-\phi_{01}\otimes\underline{01}^\vee,$$
    $$-\phi_0\otimes\underline{0}^\vee-\phi_2\otimes\underline{1}^\vee-\phi_{02}\otimes\underline{01}^\vee,$$
    $$-\phi_1\otimes\underline{0}^\vee-\phi_2\otimes\underline{1}^\vee-\phi_{12}\otimes\underline{01}^\vee.$$

    \noindent In particular, by Proposition \ref{mc1ass}, such datas are equivalent to giving three $\infty$-morphisms $\Phi_{01}:(A,\phi_1)\longrightarrow (A,\phi_0),\Phi_{02}:(A,\phi_2)\longrightarrow  (A,\phi_0)$ and $\Phi_{12}:(A,\phi_2)\longrightarrow  (A,\phi_1)$ which reduce to the identity on $\Sigma A$. We now analyze the $\underline{012}^\vee$ component of $d(\omega)+\sum_{n\geq 1}\omega\llbrace\omega\rrbrace_n$. By Corollary \ref{calculdiff}, the Maurer-Cartan condition on $\omega$ gives, when looking at the $\underline{012}^\vee$ component,
    $$d(\phi_{012})-\phi_{01}\overline\circledcirc\phi_{12}+\phi_{02}+\phi_{012}\langle\phi_2\rangle+\sum_{i,j\geq 0}\phi_0\langle\underbrace{\phi_{02},\ldots,\phi_{02}}_i,\phi_{012},\underbrace{\phi_{01}\overline\circledcirc\phi_{12},\ldots,\phi_{01}\overline\circledcirc\phi_{12}}_j\rangle=0.$$

    \noindent Now let $\Phi_{012}:\overline B(A)\longrightarrow \overline B(A\otimes N^*(\Delta^1))$ be the unique morphism of coalgebras such that its composite with the projection on $\Sigma A\otimes N^*(\Delta^1)$ is
    $$(1+\phi_{02})\otimes\underline{0}^\vee+\phi_{012}\otimes\underline{01}^\vee+(1+\phi_{01}\overline\circledcirc\phi_{12})\otimes\underline{1}^\vee.$$

    \noindent We characterize the equation
    $$\pi_{\Sigma A\otimes N^*(\Delta^1)}(d+\partial_{\phi_0})\Phi_{012}=\pi_{\Sigma A\otimes N^*(\Delta^1)}\Phi_{012}(d+\partial_{\phi_2}).$$

    \noindent On one hand, we have
    \begin{multline*}\pi_{\Sigma A\otimes N^*(\Delta^1)}(d+\partial_{\phi_0})\Phi_{012}=(d_{\Sigma A}+d\phi_{02}+\phi_0\circledcirc(1+\phi_{02}))\otimes\underline{0}^\vee\\+(d_{\Sigma A}+d\phi_{01}\overline\circledcirc\phi_{12}+\phi_0\circledcirc(1+\phi_{01}\overline\circledcirc\phi_{12}))\otimes\underline{1}^\vee\\ +(d\phi_{012}-\phi_{01}\overline\circledcirc\phi_{12}+\phi_{02}+\sum_{i,j\geq 0}\phi_0\langle\underbrace{\phi_{02},\ldots,\phi_{02}}_i,\phi_{012},\underbrace{\phi_{01}\overline\circledcirc\phi_{12},\ldots,\phi_{01}\overline\circledcirc\phi_{12}}_j\rangle)\otimes\underline{01}^\vee.\end{multline*}

    \noindent On the other hand, we have
    \begin{multline*}
        \pi_{\Sigma A\otimes N^*(\Delta^1)}\Phi_{012}(d+\partial_{\phi_2})=(d_{\Sigma A}+\phi_{02}d+\phi_2+\phi_{02}\langle\phi_2\rangle)\otimes\underline{0}^\vee\\
        +(d_{\Sigma A}+\phi_{01}\overline\circledcirc\phi_{12}d+(\phi_{01}\overline\circledcirc\phi_{12})\langle\phi_2\rangle)\otimes\underline{1}^\vee\\
        -(\phi_{012}d+\phi_{012}\langle\phi_2\rangle)\otimes\underline{01}^\vee,
    \end{multline*}

\noindent which proves the proposition.
\end{proof}

We now characterize $\mathcal{MC}_3(\text{\normalfont Hom}(B_{\geq 2}(A),\Sigma A))$. We first show how to compose homotopies from $(A,\phi)$ to $(A\otimes N^*(\Delta^1),\phi')$ for some Maurer-Cartan elements $\phi,\phi'\in\mathcal{MC}(\text{\normalfont Hom}(B_{\geq 2}(A),\Sigma A))$. Let $f,g,h:(A,\phi)\longrightarrow (A,\phi')$. Let $H_1:(A,\phi)\longrightarrow(A\otimes N^*(\Delta^1),\phi')$ be a homotopy from $f$ to $g$, and $H_2:(A,\phi)\longrightarrow(A\otimes N^*(\Delta^1),\phi')$ be a homotopy from $g$ to $h$. We consider the pullback
% https://q.uiver.app/#q=WzAsNCxbMiwyLCJcXG1hdGhiYntLfSJdLFswLDIsIk5eKihcXERlbHRhXjEpIl0sWzIsMCwiTl4qKFxcRGVsdGFeMSkiXSxbMCwwLCJOXiooXFxEZWx0YV4xKVxcdW5kZXJzZXR7XFxtYXRoYmJ7S319e1xcdGltZXN9Tl4qKFxcRGVsdGFeMSkiXSxbMiwwLCJkXzAiXSxbMSwwLCJkXzEiLDJdLFszLDIsIlxccGlfMSIsMCx7InN0eWxlIjp7ImJvZHkiOnsibmFtZSI6ImRhc2hlZCJ9fX1dLFszLDEsIlxccGlfMiIsMix7InN0eWxlIjp7ImJvZHkiOnsibmFtZSI6ImRhc2hlZCJ9fX1dLFszLDUsIiIsMCx7ImxldmVsIjoxLCJzdHlsZSI6eyJuYW1lIjoiY29ybmVyIn19XV0=
\[\begin{tikzcd}
	{N^*(\Delta^1)\underset{\mathbb{K}}{\times}N^*(\Delta^1)} && {N^*(\Delta^1)} \\
	\\
	{N^*(\Delta^1)} && {\mathbb{K}}
	\arrow["{\pi_1}", dashed, from=1-1, to=1-3]
	\arrow["{\pi_2}"', dashed, from=1-1, to=3-1]
	\arrow["{d_0}", from=1-3, to=3-3]
	\arrow[""{name=0, anchor=center, inner sep=0}, "{d_1}"', from=3-1, to=3-3]
	\arrow["\lrcorner"{anchor=center, pos=0.125}, draw=none, from=1-1, to=0]
\end{tikzcd},\]

\noindent where we identify $N^*(\Delta^0)$ with $\mathbb{K}$. Explicitly, we have $N^*(\Delta^1)\underset{\mathbb{K}}{\times}N^*(\Delta^1)=(N^*(\Delta^1)\times N^*(\Delta^1))/((\underline{1}^\vee,0)\sim (0,\underline{0}^\vee))$. One can see that the algebra structure of $N^*(\Delta^1)\times N^*(\Delta^1)$ preserves the equivalence relation $\sim$ so that $A\otimes(N^*(\Delta^1)\underset{\mathbb{K}}{\times}N^*(\Delta^1))$ is a path object for $A$ in the category of $A_\infty$-algebras. We thus obtain a homotopy $H:=H_2\times H_1:(A,\phi)\rightarrow (A\otimes (N^*(\Delta^1)\underset{\mathbb{K}}{\times}N^*(\Delta^1)),\phi')$ from $f$ to $h$.\\

Now let $G_1,G_2:(A,\phi)\rightarrow (A\otimes(N^*(\Delta^1)\underset{\mathbb{K}}{\times}N^*(\Delta^1)),\phi')$ be two homotopies from $f$ to $h$ obtained as above. In the next proposition, we use a particular way to compose $G_1$ with $G_2$. This composition is defined as follows. Let $N_\square^*(\square^2)=N^*(\Delta^1)\otimes N^*(\Delta^1)$ and $N_\square^*(\partial\square^2)=N_\square^*(\square^2)/(\mathbb{K}\cdot \underline{01}^\vee\otimes\underline{01}^\vee)$. We consider the morphisms of algebras $\Gamma_1,\Gamma_2:N_\square^*(\partial\square^2)\longrightarrow N^*(\Delta^1)\underset{\mathbb{K}}{\times}N^*(\Delta^1)$ defined by

$$\begin{array}{ccc}
    \begin{array}{cccc}
        \Gamma_1: &  N_\square^*(\partial\square^2) & \longrightarrow &  N^*(\Delta^1)\underset{\mathbb{K}}{\times}N^*(\Delta^1)\\
        & \underline{0}^\vee\otimes\underline{0}^\vee & \longmapsto & (\underline{0}^\vee,0)\\
        & \underline{0}^\vee\otimes\underline{1}^\vee & \longmapsto & (\underline{1}^\vee,0)\\
       & \underline{1}^\vee\otimes\underline{1}^\vee  & \longmapsto & (0,\underline{1}^\vee)\\
       & \underline{0}^\vee\otimes\underline{01}^\vee & \longmapsto & (\underline{01}^\vee,0)\\
       & \underline{01}^\vee\otimes\underline{1}^\vee  & \longmapsto & (0,\underline{01}^\vee)
    \end{array} & \text{\normalfont and} & \begin{array}{cccc}
        \Gamma_2: & N_\square^*(\partial\square^2) & \longrightarrow &  N^*(\Delta^1)\underset{\mathbb{K}}{\times}N^*(\Delta^1)\\
        & \underline{0}^\vee\otimes\underline{0}^\vee & \longmapsto & (\underline{0}^\vee,0)\\
        & \underline{1}^\vee\otimes\underline{0}^\vee & \longmapsto & (\underline{1}^\vee,0)\\
       &  \underline{1}^\vee\otimes\underline{1}^\vee & \longmapsto & (0,\underline{1}^\vee)\\
        & \underline{01}^\vee\otimes\underline{0}^\vee & \longmapsto & (\underline{01}^\vee,0)\\
       &  \underline{1}^\vee\otimes\underline{01}^\vee & \longmapsto & (0,\underline{01}^\vee)
    \end{array}
\end{array}.$$

\noindent From a geometrical point of view, the morphism $\Gamma_1$ allows us to see the product $N^*(\Delta^1)\underset{\mathbb{K}}{\times}N^*(\Delta^1)$ as the top left corner of $N_\square^*(\square^2)$, while $\Gamma_2$ allows us to see it at the bottom right corner of $N_\square^*(\square^2)$. In particular, one can check that $N_\square^*(\partial\square^2)$ is the pullback of the diagram
% https://q.uiver.app/#q=WzAsNCxbMCwxLCJOXiooXFxEZWx0YV4xKVxcdW5kZXJzZXR7XFxtYXRoYmJ7S319e1xcdGltZXN9Tl4qKFxcRGVsdGFeMSkiXSxbMSwwLCJOXiooXFxEZWx0YV4xKVxcdW5kZXJzZXR7XFxtYXRoYmJ7S319e1xcdGltZXN9Tl4qKFxcRGVsdGFeMSkiXSxbMSwxLCJcXG1hdGhiYntLfVxcY2RvdChcXHVuZGVybGluZXswfV5cXHZlZSwwKVxcb3BsdXNcXG1hdGhiYntLfSgwLFxcdW5kZXJsaW5lezF9XlxcdmVlKSJdLFswLDAsIk5eKihcXHBhcnRpYWxcXHNxdWFyZV4yKSJdLFsxLDIsIiIsMSx7InN0eWxlIjp7ImhlYWQiOnsibmFtZSI6ImVwaSJ9fX1dLFswLDIsIiIsMSx7InN0eWxlIjp7ImhlYWQiOnsibmFtZSI6ImVwaSJ9fX1dLFszLDEsIlxcR2FtbWFfMSIsMCx7InN0eWxlIjp7ImJvZHkiOnsibmFtZSI6ImRhc2hlZCJ9fX1dLFszLDAsIlxcR2FtbWFfMiIsMix7InN0eWxlIjp7ImJvZHkiOnsibmFtZSI6ImRhc2hlZCJ9fX1dLFszLDUsIiIsMSx7ImxldmVsIjoxLCJzdHlsZSI6eyJuYW1lIjoiY29ybmVyIn19XV0=
\[\begin{tikzcd}
	{N_\square^*(\partial\square^2)} & {N^*(\Delta^1)\underset{\mathbb{K}}{\times}N^*(\Delta^1)} \\
	{N^*(\Delta^1)\underset{\mathbb{K}}{\times}N^*(\Delta^1)} & {\mathbb{K}\cdot(\underline{0}^\vee,0)\oplus\mathbb{K}\cdot(0,\underline{1}^\vee)}
	\arrow["{\Gamma_1}", dashed, from=1-1, to=1-2]
	\arrow["{\Gamma_2}"', dashed, from=1-1, to=2-1]
	\arrow[two heads, from=1-2, to=2-2]
	\arrow[""{name=0, anchor=center, inner sep=0}, two heads, from=2-1, to=2-2]
	\arrow["\lrcorner"{anchor=center, pos=0.125}, draw=none, from=1-1, to=0]
\end{tikzcd}.\]

\noindent Since $G_1$ and $G_2$ are homotopies from $f$ to $h$, their projection on $\Sigma A\otimes\mathbb{K}\cdot (\underline{0}^\vee,0)$ (respectively $\Sigma A\otimes\mathbb{K}\cdot (0,\underline{1}^\vee)$) agree and are given by $h$ (respectively $f$). Therefore, the morphisms $G_1$ and $G_2$ induce an $\infty$-morphism $G_1\square G_2:(A,\phi)\rightarrow(A\otimes N_\square^*(\partial\square^2),\phi')$ given by the following pullback square diagram:
% https://q.uiver.app/#q=WzAsNSxbMSwxLCIoQVxcb3RpbWVzIChOXiooXFxEZWx0YV4xKVxcdW5kZXJzZXR7XFxtYXRoYmJ7S319e1xcdGltZXN9Tl4qKFxcRGVsdGFeMSkpLFxccGhpJykiXSxbMiwwLCIoQVxcb3RpbWVzIChOXiooXFxEZWx0YV4xKVxcdW5kZXJzZXR7XFxtYXRoYmJ7S319e1xcdGltZXN9Tl4qKFxcRGVsdGFeMSkpLFxccGhpJykiXSxbMiwxLCIoQVxcb3RpbWVzIChcXG1hdGhiYntLfVxcY2RvdChcXHVuZGVybGluZXswfV5cXHZlZSwwKVxcb3BsdXNcXG1hdGhiYntLfVxcY2RvdCgwLFxcdW5kZXJsaW5lezF9XlxcdmVlKSksXFxwaGknKSJdLFsxLDAsIihBXFxvdGltZXMgTl4qKFxccGFydGlhbFxcc3F1YXJlXjIpLFxccGhpJykiXSxbMCwwLCIoQSxcXHBoaSkiXSxbMSwyLCIiLDEseyJzdHlsZSI6eyJoZWFkIjp7Im5hbWUiOiJlcGkifX19XSxbMCwyLCIiLDEseyJzdHlsZSI6eyJoZWFkIjp7Im5hbWUiOiJlcGkifX19XSxbMywxLCJpZFxcb3RpbWVzXFxHYW1tYV8xIl0sWzMsMCwiaWRcXG90aW1lc1xcR2FtbWFfMiIsMl0sWzQsMywiR18xXFxzcXVhcmUgR18yIiwwLHsic3R5bGUiOnsiYm9keSI6eyJuYW1lIjoiZGFzaGVkIn19fV0sWzQsMCwiR18yIiwyXSxbNCwxLCJHXzEiLDAseyJjdXJ2ZSI6LTV9XSxbMyw2LCIiLDEseyJsZXZlbCI6MSwic3R5bGUiOnsibmFtZSI6ImNvcm5lciJ9fV1d
\[\begin{tikzcd}
	{(A,\phi)} & {(A\otimes N_\square^*(\partial\square^2),\phi')} & {(A\otimes (N^*(\Delta^1)\underset{\mathbb{K}}{\times}N^*(\Delta^1)),\phi')} \\
	& {(A\otimes (N^*(\Delta^1)\underset{\mathbb{K}}{\times}N^*(\Delta^1)),\phi')} & {(A\otimes (\mathbb{K}\cdot(\underline{0}^\vee,0)\oplus\mathbb{K}\cdot(0,\underline{1}^\vee)),\phi')}
	\arrow["{G_1\square G_2}", dashed, from=1-1, to=1-2]
	\arrow["{G_1}", curve={height=-30pt}, from=1-1, to=1-3]
	\arrow["{G_2}"', from=1-1, to=2-2]
	\arrow["{id\otimes\Gamma_1}", from=1-2, to=1-3]
	\arrow["{id\otimes\Gamma_2}"', from=1-2, to=2-2]
	\arrow[two heads, from=1-3, to=2-3]
	\arrow[""{name=0, anchor=center, inner sep=0}, two heads, from=2-2, to=2-3]
	\arrow["\lrcorner"{anchor=center, pos=0.125}, draw=none, from=1-2, to=0]
\end{tikzcd}.\]

\begin{prop}
    Giving a Maurer-Cartan element in $\text{\normalfont Hom}(B_{\geq 2}(A),\Sigma A)\otimes\Sigma N^*(\Delta^3)$ is equivalent to giving $\phi_0,\phi_1,\phi_2,\phi_3\in\mathcal{MC}(\text{\normalfont Hom}(B_{\geq 2}(A),\Sigma A))$, two homotopy diagrams of the form
    \medskip
    % https://q.uiver.app/#q=WzAsOCxbMCwyLCIoQSxcXHBoaV8zKSJdLFswLDAsIihBLFxccGhpXzIpIl0sWzIsMCwiKEEsXFxwaGlfMSkiXSxbMiwyLCIoQSxcXHBoaV8wKSJdLFszLDAsIihBLFxccGhpXzIpIl0sWzMsMiwiKEEsXFxwaGlfMykiXSxbNSwwLCIoQSxcXHBoaV8xKSJdLFs1LDIsIihBLFxccGhpXzApIl0sWzAsMSwiXFxQaGlfezIzfSJdLFsxLDIsIlxcUGhpX3sxMn0iXSxbMiwzLCJcXFBoaV97MDF9Il0sWzAsMywiXFxQaGlfezAzfSIsMl0sWzAsMiwiXFxQaGlfezEzfSIsMV0sWzUsNCwiXFxQaGlfezIzfSJdLFs0LDYsIlxcUGhpX3sxMn0iXSxbNiw3LCJcXFBoaV97MDF9Il0sWzUsNywiXFxQaGlfezAzfSIsMl0sWzQsNywiXFxQaGlfezAyfSIsMV0sWzEsMTIsIlxcUGhpX3sxMjN9IiwwLHsic2hvcnRlbiI6eyJ0YXJnZXQiOjIwfX1dLFsyLDExLCJcXFBoaV97MDEzfSIsMCx7InNob3J0ZW4iOnsidGFyZ2V0IjoyMH19XSxbNiwxNywiXFxQaGlfezAxMn0iLDIseyJzaG9ydGVuIjp7InRhcmdldCI6MjB9fV0sWzQsMTYsIlxcUGhpX3swMjN9IiwyLHsic2hvcnRlbiI6eyJ0YXJnZXQiOjIwfX1dXQ==
\[\begin{tikzcd}
	{(A,\phi_2)} && {(A,\phi_1)} & {(A,\phi_2)} && {(A,\phi_1)} \\
	\\
	{(A,\phi_3)} && {(A,\phi_0)} & {(A,\phi_3)} && {(A,\phi_0)}
	\arrow["{\Phi_{12}}", from=1-1, to=1-3]
	\arrow["{\Phi_{01}}", from=1-3, to=3-3]
	\arrow["{\Phi_{12}}", from=1-4, to=1-6]
	\arrow[""{name=0, anchor=center, inner sep=0}, "{\Phi_{02}}"{description}, from=1-4, to=3-6]
	\arrow["{\Phi_{01}}", from=1-6, to=3-6]
	\arrow["{\Phi_{23}}", from=3-1, to=1-1]
	\arrow[""{name=1, anchor=center, inner sep=0}, "{\Phi_{13}}"{description}, from=3-1, to=1-3]
	\arrow[""{name=2, anchor=center, inner sep=0}, "{\Phi_{03}}"', from=3-1, to=3-3]
	\arrow["{\Phi_{23}}", from=3-4, to=1-4]
	\arrow[""{name=3, anchor=center, inner sep=0}, "{\Phi_{03}}"', from=3-4, to=3-6]
	\arrow["{\Phi_{123}}", shorten >=6pt, Rightarrow, from=1-1, to=1]
	\arrow["{\Phi_{013}}", shorten >=9pt, Rightarrow, from=1-3, to=2]
	\arrow["{\Phi_{023}}"', shorten >=9pt, Rightarrow, from=1-4, to=3]
	\arrow["{\Phi_{012}}"', shorten >=6pt, Rightarrow, from=1-6, to=0]
\end{tikzcd}\]
\medskip

\noindent and a lifting diagram
\medskip
% https://q.uiver.app/#q=WzAsMyxbMCwxLCIoQSxcXHBoaV8zKSJdLFsxLDEsIihBXFxvdGltZXMgTl4qKFxccGFydGlhbFxcc3F1YXJlXjIpLFxccGhpXzApIl0sWzEsMCwiKEFcXG90aW1lcyBOXiooXFxzcXVhcmVeMiksXFxwaGlfMCkiXSxbMCwxLCJIXzFcXHNxdWFyZSBIXzIiLDJdLFsyLDEsIiIsMCx7InN0eWxlIjp7ImhlYWQiOnsibmFtZSI6ImVwaSJ9fX1dLFswLDIsIlxcZXhpc3RzXFxQaGlfezAxMjN9IiwwLHsic3R5bGUiOnsiYm9keSI6eyJuYW1lIjoiZGFzaGVkIn19fV1d
\[\begin{tikzcd}
	& {(A\otimes N_\square^*(\square^2),\phi_0)} \\
	{(A,\phi_3)} & {(A\otimes N_\square^*(\partial\square^2),\phi_0)}
	\arrow[two heads, from=1-2, to=2-2]
	\arrow["{\exists\Phi_{0123}}", dashed, from=2-1, to=1-2]
	\arrow["{H_1\square H_2}"', from=2-1, to=2-2]
\end{tikzcd},\]
\medskip

\noindent where we denote by $H_1,H_2:(A,\phi_3)\longrightarrow  (A\otimes (N^*(\Delta^1)\underset{\mathbb{K}}{\times}N^*(\Delta^1)),\phi_0)$ the homotopies from $\Phi_{01}\Phi_{12}\Phi_{23}$ to $\Phi_{03}$ given by the homotopy diagrams.
\end{prop}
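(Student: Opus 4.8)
The plan is to follow exactly the strategy of the proofs of Proposition \ref{mc1ass} and Proposition \ref{mcass2}, organizing the data of a Maurer-Cartan element according to the faces of $\Delta^3$. First I would write a general degree $-1$ element of $\text{\normalfont Hom}(B_{\geq 2}(A),\Sigma A)\otimes\Sigma N^*(\Delta^3)$ as
$$\omega=-\sum_{0\leq i\leq 3}\phi_i\otimes\underline{i}^\vee-\sum_{0\leq i<j\leq 3}\phi_{ij}\otimes\underline{ij}^\vee-\sum_{0\leq i<j<k\leq 3}\phi_{ijk}\otimes\underline{ijk}^\vee+\phi_{0123}\otimes\underline{0123}^\vee,$$
with the sign conventions of the previous propositions, where $\phi_S$ has degree $|S|-2$, and then impose $d(\omega)+\sum_{n\geq 1}\omega\llbrace\omega\rrbrace_n=0$, decomposing the result along the basis $\{\underline{S}^\vee\}$ of $N^*(\Delta^3)$. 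The components indexed by the vertices $\underline{i}^\vee$ reproduce the Maurer-Cartan condition for each $\phi_i$, so $\phi_0,\phi_1,\phi_2,\phi_3\in\mathcal{MC}(\text{\normalfont Hom}(B_{\geq 2}(A),\Sigma A))$. Since the differentials $\partial^n$ are compatible with the faces of $N^*(\Delta^\bullet)$, restricting $\omega$ along each inclusion $\Delta^1\hookrightarrow\Delta^3$ and $\Delta^2\hookrightarrow\Delta^3$ shows that the edge components $\underline{ij}^\vee$ and the triangular components $\underline{ijk}^\vee$ are governed respectively by Proposition \ref{mc1ass} and Proposition \ref{mcass2}. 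This produces the six $\infty$-morphisms $\Phi_{ij}$ reducing to the identity on $\Sigma A$, and the four homotopies $\Phi_{ijk}$; these assemble precisely into the two $2$-cell pasting diagrams of the statement.

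Next I would explain how each pasting diagram yields a homotopy from $\Phi_{01}\Phi_{12}\Phi_{23}$ to $\Phi_{03}$. Using the composition of homotopies through the path object $A\otimes(N^*(\Delta^1)\underset{\mathbb{K}}{\times}N^*(\Delta^1))$ recalled before the proposition, the cells $\Phi_{123}$ and $\Phi_{013}$ compose to $H_1$ and the cells $\Phi_{023}$ and $\Phi_{012}$ compose to $H_2$. The two morphisms $H_1,H_2$ then determine $H_1\square H_2:(A,\phi_3)\longrightarrow(A\otimes N_\square^*(\partial\square^2),\phi_0)$ via the pullback description of $N_\square^*(\partial\square^2)$ given above, so all the boundary data of the lifting square is already fixed by the faces of lower dimension.

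The crux is the component of the Maurer-Cartan equation indexed by $\underline{0123}^\vee$. Here I would substitute the explicit expansion of $\partial^3$ from Corollary \ref{calculdiff}: each of its seven families of trees, evaluated on $\omega$, contributes a family of brace terms in the $\phi_S$'s organized by the decorations $\circledcirc$ and $\overline\circledcirc$, while $d(\omega)$ contributes the internal differential of $\phi_{0123}$ together with the boundary terms $-\phi_{012}+\phi_{013}-\phi_{023}+\phi_{123}$. I would then introduce the unique coalgebra morphism $\Phi_{0123}:\overline B(A)\longrightarrow\overline B(A\otimes N_\square^*(\square^2))$ whose corestriction onto $\Sigma A\otimes N_\square^*(\square^2)$ packages $\phi_{0123}$ with the boundary data, and characterize, exactly as in Proposition \ref{mcass2} using cofreeness of $\overline B(A\otimes N_\square^*(\square^2))$, the equation $\pi(d+\partial_{\phi_0})\Phi_{0123}=\pi\Phi_{0123}(d+\partial_{\phi_3})$ by projecting onto the cogenerators. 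Matching the two sides term by term against the $\underline{0123}^\vee$ component of the Maurer-Cartan equation shows that $\Phi_{0123}$ is a morphism of $A_\infty$-algebras whose restriction along $N_\square^*(\partial\square^2)\hookleftarrow N_\square^*(\square^2)$ agrees with $H_1\square H_2$, i.e. the required lift.

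I expect the main obstacle to be precisely this last matching: the $\partial^3$ formula is long, and one must correctly pair its $\circledcirc$- and $\overline\circledcirc$-decorated trees with the cubical coordinates of $N_\square^*(\square^2)$, so that the interior of the $3$-simplex is identified geometrically with the filling of the square $\square^2$ sitting over its boundary. I would handle the signs and the reorganization of the sums over weights by the same substitution-of-variables bookkeeping used in Lemma \ref{equivrel} and Proposition \ref{mcass2}, and verify that the lift restricts to $H_1\square H_2$ on $\partial\square^2$ by construction, which forces the two composite homotopies to coincide on the boundary and completes the equivalence.
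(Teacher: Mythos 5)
Your proposal follows essentially the same route as the paper: reduce the boundary data to the lower-dimensional Propositions \ref{mc1ass} and \ref{mcass2} via compatibility with the simplicial structure, assemble the composite homotopies $H_1,H_2$ through the path object, and then convert the $\underline{0123}^\vee$ component of the Maurer-Cartan equation into the statement that a cofreely determined coalgebra morphism $\Phi_{0123}:\overline B(A)\to\overline B(A\otimes N_\square^*(\square^2))$ commutes with the twisted differentials, matching the $\circledcirc$/$\overline\circledcirc$-decorated trees of $\partial^3$ from Corollary \ref{calculdiff} against the three factorizations of $\underline{01}^\vee\otimes\underline{01}^\vee$ in $N_\square^*(\square^2)$. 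The only difference is one of detail: the paper writes out the explicit corestriction of $\Phi_{0123}$ on each cubical basis element, which you leave implicit, but your identification of where the work lies is correct.
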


\begin{proof}
    Let 
    \medskip
    \begin{multline*}
        \omega:=-\phi_0\otimes\underline{0}^\vee-\phi_1\otimes\underline{1}^\vee-\phi_2\otimes\underline{2}^\vee-\phi_3\otimes\underline{3}^\vee\\
        -\phi_{01}\otimes\underline{01}^\vee-\phi_{02}\otimes\underline{02}^\vee-\phi_{12}\otimes\underline{12}^\vee-\phi_{03}\otimes\underline{03}^\vee-\phi_{13}\otimes\underline{13}^\vee-\phi_{23}\otimes\underline{23}^\vee\\
        -\phi_{012}\otimes\underline{012}^\vee-\phi_{013}\otimes\underline{013}^\vee-\phi_{023}\otimes\underline{023}^\vee-\phi_{123}\otimes\underline{123}^\vee-\phi_{0123}\otimes\underline{0123}^\vee
    \end{multline*}
    \medskip
    
    \noindent be an element of $\text{\normalfont Hom}(B_{\geq 2}(A),\Sigma A)\otimes\Sigma N^*(\Delta^3)$. By Proposition \ref{mcass2}, the Maurer-Cartan condition on the four faces of $\omega$ is precisely equivalent to giving the first two diagrams given in the assertion of the proposition, since the $\Gamma\Lambda\mathcal{PL}_\infty$-algebra structure of $\text{\normalfont Hom}(B_{\geq 2}(A),\Sigma A)\otimes N^*(\Delta^3)$ is compatible with the simplicial structures. From now on, we suppose that $d_0\omega,d_1\omega,d_2\omega$ and $d_3\omega$ are elements of $\mathcal{MC}_2(\text{\normalfont Hom}(B_{\geq 2}(A),\Sigma A))$. Then the only possibly non-zero component of $d(\omega)+\sum_{n\geq 1}\omega\llbrace\omega\rrbrace_n$ is the $\underline{0123}^\vee$ component. By Corollary \ref{calculdiff}, this component is $0$ if and only if we have the identity
    \begin{multline*}
        d(\phi_{0123})+\phi_{023}-\phi_{123}+\phi_{013}-\phi_{012}-\phi_{0123}\langle\phi_3\rangle\\
        +\sum_{k\geq 1}\phi_{012}\langle\underbrace{\phi_{23},\ldots,\phi_{23}}_k\rangle-\sum_{i,j\geq 0}\phi_{01}\langle\underbrace{\phi_{13},\ldots,\phi_{13}}_i,\phi_{123},\underbrace{\phi_{12}\overline\circledcirc\phi_{23},\ldots,\phi_{12}\overline\circledcirc\phi_{23}}_j\rangle\\
        +\sum_{i,j,k\geq 0}\phi_0\langle\underbrace{\phi_{03},\ldots,\phi_{03}}_i,\phi_{023},\underbrace{\phi_{02}\overline\circledcirc\phi_{23}}_j,\phi_{012}\circledcirc(1+\phi_{23}),\underbrace{\phi_{01}\overline\circledcirc\phi_{12}\overline\circledcirc\phi_{23},\ldots,\phi_{01}\overline\circledcirc\phi_{12}\overline\circledcirc\phi_{23}}_m\rangle\\
        +\sum_{i,j,m\geq 0}\phi_0\langle\underbrace{\phi_{03},\ldots,\phi_{03}}_i,\phi_{013},\underbrace{\phi_{01}\overline\circledcirc\phi_{13},\ldots,\phi_{01}\overline\circledcirc\phi_{13}}_j,\phi_{123},\underbrace{\phi_{01}\overline\circledcirc\phi_{12}\overline\circledcirc\phi_{23},\ldots,\phi_{01}\overline\circledcirc\phi_{12}\overline\circledcirc\phi_{23}}_k\rangle\\
        +\sum_{i,j,k,l,m\geq 0}\phi_0\langle\underbrace{\phi_{03},\ldots,\phi_{03}}_i,\phi_{013},\underbrace{\phi_{01}\overline\circledcirc\phi_{13},\ldots,\phi_{01}\overline\circledcirc\phi_{13}}_j,\\\phi_{01}\langle\underbrace{\phi_{12},\ldots,\phi_{12}}_k,\phi_{123},\underbrace{\phi_{12}\overline\circledcirc\phi_{23},\ldots,\phi_{12}\overline\circledcirc\phi_{23}}_l\rangle,\underbrace{\phi_{01}\overline\circledcirc\phi_{12}\overline\circledcirc\phi_{23},\ldots,\phi_{01}\overline\circledcirc\phi_{12}\overline\circledcirc\phi_{23}}_m\rangle\\
        +\sum_{i,j\geq 0}\phi_0\langle\underbrace{\phi_{03},\ldots,\phi_{03}}_i,\phi_{0123},\underbrace{\phi_{01}\overline\circledcirc\phi_{12}\overline\circledcirc\phi_{23},\ldots,\phi_{01}\overline\circledcirc\phi_{12}\overline\circledcirc\phi_{23}}_j\rangle=0.
    \end{multline*}

\noindent We let $\Phi_{0123}:\overline B(A)\longrightarrow\overline B(A\otimes N_\square^*(\square^2))$ to be the morphism of coalgebras whose composite with the projection on $\Sigma A\otimes N_\square^*(\square^2)$ is
\begin{multline*}
    (1+\phi_{03})\otimes\underline{0}^\vee\otimes\underline{0}^\vee\\+(1+\phi_{01})\circledcirc (1+\phi_{13})\otimes\underline{0}^\vee\otimes\underline{1}^\vee\\+(1+\phi_{02})\circledcirc (1+\phi_{23})\otimes\underline{1}^\vee\otimes\underline{0}^\vee\\
    (1+\phi_{01})\circledcirc (1+\phi_{12})\circledcirc (1+\phi_{23})\otimes\underline{1}^\vee\otimes\underline{1}^\vee\\+\phi_{023}\otimes\underline{01}^\vee\otimes\underline{0}^\vee\\+\phi_{013}\otimes\underline{0}^\vee\otimes\underline{01}^\vee\\
    +\phi_{012}\circledcirc(1+\phi_{23})\otimes\underline{1}^\vee\otimes\underline{01}^\vee\\+\left(\phi_{123}+\sum_{i,j\geq 0}\phi_{01}\langle\underbrace{\phi_{12},\ldots,\phi_{12}}_i,\phi_{123},\underbrace{\phi_{12}\overline\circledcirc\phi_{23},\ldots,\phi_{12}\overline\circledcirc\phi_{23}}_j\rangle\right)\otimes\underline{01}^\vee\otimes\underline{1}^\vee\\+\phi_{0123}\otimes\underline{01}^\vee\otimes\underline{01}^\vee.
\end{multline*}

\noindent We check that $\omega$ is a Maurer-Cartan element if and only if $\Phi_{0123}$ commutes with the differentials. The latter condition is expressed by the identity
$$\pi_{\Sigma A\otimes N_\square^*(\square^2)}\Phi_{0123}(d+\partial_{\phi_3})-\pi_{\Sigma A\otimes N_\square^*(\square^2)}(d+\partial_{\phi_0})\Phi_{0123})=0.$$

\noindent Since the morphisms $\Phi_{03},\Phi_{01}\Phi_{13},\Phi_{02}\Phi_{23}$ and $\Phi_{01}\Phi_{12}\Phi_{23}$ commute with the differentials, the components given by $\underline{0}^\vee\otimes\underline{0}^\vee,\underline{0}^\vee\otimes\underline{1}^\vee,\underline{1}^\vee\otimes\underline{0}^\vee$ and $\underline{1}^\vee\otimes\underline{1}^\vee$ are indeed $0$. Since $\Phi_{023}$ and $\Phi_{013}$ commute with the differentials, the components given by $\underline{01}^\vee\otimes\underline{0}^\vee$ and $\underline{0}^\vee\otimes\underline{01}^\vee$ are also $0$. We now look at the component given by $\underline{1}^\vee\otimes\underline{01}^\vee$. Since the algebra structure of $N_\square^*(\square^2)$ is compatible with its underlying simplicial structure, it is equivalent to check that the element
$$(1+\phi_{02}\overline\circledcirc\phi_{23})\otimes\underline{0}^\vee+\phi_{012}\circledcirc(1+\phi_{23})\otimes\underline{01}^\vee+(1+\phi_{01}\overline\circledcirc\phi_{12}\overline\circledcirc\phi_{23})\otimes\underline{1}^\vee$$

\noindent is a Maurer-Cartan element in $\text{\normalfont Hom}(B_{\geq 2}(A),A)\otimes N^*(\Delta^1)$. From Proposition \ref{mcass2}, one can see that it is equivalent to check that the composite $\Phi_{012}\Phi_{23}:(\overline B(A),\phi_3)\longrightarrow (\overline B(A\otimes N^*(\Delta^1),\phi_0)$ commutes with the differentials, which is the case since $\Phi_{012}$ and $\Phi_{23}$ commute with the differentials. Analogously, we $\underline{01}^\vee\otimes\underline{1}^\vee$ is also $0$, since the composite $\Phi_{01}\Phi_{123}$ commutes with the differentials.\\

\noindent We now look at the $\underline{01}^\vee\otimes\underline{01}^\vee$ component. The composite $\pi_{\Sigma A\otimes N_\square^*(\square^2)}\Phi_{0123}(d+\partial_{\phi_3})$ gives 
$$(\phi_{0123}d+\phi_{0123}\langle\phi_3\rangle)\otimes\underline{01}^\vee\otimes\underline{01}^\vee$$
\noindent as $\underline{01}^\vee\otimes\underline{01}^\vee$ component. We now compute the $\underline{01}^\vee\otimes\underline{01}^\vee$ component given by the composite $\pi_{\Sigma A\otimes N_\square^*(\square^2)}(d+\partial_{\phi_0})\Phi_{0123}$. Computing $\pi_{\Sigma A\otimes N_\square^*(\square^2)}d\Phi_{0123}$ gives the terms
\begin{multline*}
    (d\phi_{0123}-\phi_{013}+\phi_{023}+\phi_{012}\circledcirc(1+\phi_{23})\\-\phi_{123}-\sum_{i,j\geq 0}\phi_{01}\langle\underbrace{\phi_{12},\ldots,\phi_{12}}_i,\phi_{123},\underbrace{\phi_{12}\overline\circledcirc\phi_{23},\ldots,\phi_{12}\overline\circledcirc\phi_{23}}_j\rangle)\otimes\underline{01}^\vee\otimes\underline{01}^\vee.
\end{multline*}
\noindent We now compute $\pi_{\Sigma A\otimes N_\square^*(\square^2)}\partial_{\phi_0}\Phi_{0123}$. Note that the only way to write $\underline{01}^\vee\otimes\underline{01}^\vee$ as a product in $N^*(\Delta^1)\otimes N^*(\Delta^1)$ are given by one of the three following products:
$$(\underline{0}^\vee\otimes\underline{0}^\vee)\overset{i}{\cdots}(\underline{0}^\vee\otimes\underline{0}^\vee)\cdot(\underline{01}^\vee\otimes\underline{0}^\vee)\cdot(\underline{1}^\vee\otimes\underline{0}^\vee)\overset{j}{\cdots}(\underline{1}^\vee\otimes\underline{0}^\vee)\cdot(\underline{1}^\vee\otimes\underline{01}^\vee)\cdot(\underline{1}^\vee\otimes\underline{1}^\vee)\overset{k}{\cdots}(\underline{1}^\vee\otimes\underline{1}^\vee);$$
$$(\underline{0}^\vee\otimes\underline{0}^\vee)\overset{i}{\cdots}(\underline{0}^\vee\otimes\underline{0}^\vee)\cdot(\underline{0}^\vee\otimes\underline{01}^\vee)\cdot(\underline{0}^\vee\otimes\underline{1}^\vee)\overset{j}{\cdots}(\underline{0}^\vee\otimes\underline{1}^\vee)\cdot(\underline{01}^\vee\otimes\underline{1}^\vee)\cdot(\underline{1}^\vee\otimes\underline{1}^\vee)\overset{k}{\cdots}(\underline{1}^\vee\otimes\underline{1}^\vee);$$
$$(\underline{0}^\vee\otimes\underline{0}^\vee)\overset{i}{\cdots}(\underline{0}^\vee\otimes\underline{0}^\vee)\cdot(\underline{01}^\vee\otimes\underline{01}^\vee)\cdot(\underline{1}^\vee\otimes\underline{1}^\vee)\overset{j}{\cdots}(\underline{1}^\vee\otimes\underline{1}^\vee).$$

\noindent for every $i,j,k\geq 0$. These type of products give respectively\\
$\displaystyle-\sum_{i,j,k\geq 0}\phi_0\langle\underbrace{\phi_{03},\ldots,\phi_{03}}_i,\phi_{023},\underbrace{\phi_{02}\overline\circledcirc\phi_{23},\ldots,\phi_{02}\overline\circledcirc\phi_{23}}_j,\phi_{012}\circledcirc(1+\phi_{23}),$\\
\begin{flushright}
$\underbrace{\phi_{01}\overline\circledcirc\phi_{12}\overline\circledcirc\phi_{23},\ldots,\phi_{01}\overline\circledcirc\phi_{12}\overline\circledcirc\phi_{23}}_k\rangle\otimes\underline{01}^\vee\otimes\underline{01}^\vee;$
\end{flushright}
\begin{multline*}
    \sum_{i,j,k,l,m\geq 0}\phi_0\langle\underbrace{\phi_{03},\ldots,\phi_{03}}_i,\phi_{013},\underbrace{\phi_{01}\overline\circledcirc\phi_{13},\ldots,\phi_{01}\overline\circledcirc\phi_{13}}_j,\\\phi_{123}+\phi_{01}\langle\underbrace{\phi_{12},\ldots,\phi_{12}}_k,\phi_{123},\underbrace{\phi_{12}\overline\circledcirc\phi_{23},\ldots,\phi_{12}\overline\circledcirc\phi_{23}}_l\rangle,\underbrace{\phi_{01}\overline\circledcirc\phi_{12}\overline\circledcirc\phi_{23},\ldots,\phi_{01}\overline\circledcirc\phi_{12}\overline\circledcirc\phi_{23}}_m\rangle\otimes\underline{01}^\vee\otimes\underline{01}^\vee;\\
\end{multline*}
$$\sum_{i,j\geq 0}\phi_0\langle\underbrace{\phi_{03},\ldots,\phi_{03}}_i,\phi_{0123},\underbrace{\phi_{01}\overline\circledcirc\phi_{12}\overline\circledcirc\phi_{23},\ldots,\phi_{01}\overline\circledcirc\phi_{12}\overline\circledcirc\phi_{23}}_j\rangle\otimes\underline{01}^\vee\otimes\underline{01}^\vee,$$

\noindent as $\underline{01}^\vee\otimes\underline{01}^\vee$. We thus have obtained that $\Phi_{0123}$ commutes with the differentials if and only if $\omega$ is a Maurer-Cartan element, which proves the lemma.
\end{proof}

\subsection{A Goldman-Millson theorem}\label{sec:243}

Our next goal is to prove an extension of the classical Goldman-Millson theorem for Lie-algebras (see \cite[$\mathsection$2.4]{goldman}). The proof of our analogue will be adapted from the proof given in \cite[$\mathsection$6]{rogers} in the setting of associative algebras up to homotopy.\\

We first prove that the category $\Gamma\Lambda\mathcal{PL}_\infty$ admits finite products.

\begin{lm}
    Let $(V_1,Q_{V_1}),(V_2,Q_{V_2})\in\Gamma\Lambda\mathcal{PL}_\infty$. Then there exists a $\Gamma\Lambda\mathcal{PL}_\infty$-algebra structure on $V_1\times V_2$ such that the morphisms $\pi_{V_1}:V_1\times V_2\longrightarrow V_1$ and $\pi_{V_2}:V_1\times V_2\longrightarrow V_2$ are strict morphisms of $\Gamma\Lambda\mathcal{PL}_\infty$-algebras.\\ Moreover, for every $\phi_1: W\rightsquigarrow V_1$ and $\phi_2:W\rightsquigarrow V_2$, there exists a unique $\infty$-morphism denoted by $\phi_1\times\phi_2:W\rightsquigarrow V_1\times V_2$ such that $\pi_{V_1}(\phi_1\times\phi_2)=\phi_1$ and $\pi_{V_2}(\phi_1\times\phi_2)=\phi_2$.
\end{lm}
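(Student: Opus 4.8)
The plan is to construct the product $\Gamma\Lambda\mathcal{PL}_\infty$-algebra structure on $V_1 \times V_2$ directly, using the characterization of coderivations via their projection onto the generators (Proposition \ref{bijendo}) and the characterization of $\infty$-morphisms via $\phi^0 = \pi_V \circ \phi$ (Proposition \ref{morphcoprod}). First I would define the coderivation $Q = Q_{V_1 \times V_2}$ on $\Gamma\text{\normalfont Perm}^c(V_1 \times V_2)$ by specifying its composite $Q^0 = \pi_{V_1\times V_2} \circ Q$ with the projection onto the generators. The natural choice is the map
$$Q^0 = (Q_{V_1}^0 \circ \Gamma\text{\normalfont Perm}^c(\pi_{V_1})) \times (Q_{V_2}^0 \circ \Gamma\text{\normalfont Perm}^c(\pi_{V_2})),$$
where $\Gamma\text{\normalfont Perm}^c(\pi_{V_i})$ denotes the coalgebra map induced by the linear projection $\pi_{V_i}:V_1\times V_2\to V_i$. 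By Proposition \ref{bijendo}, this determines a unique coderivation $Q = \widetilde{\Psi}(Q^0)$, which is of degree $-1$.

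Next I would verify that $Q^2 = 0$, which by the criterion recorded after Proposition \ref{bijendo} is equivalent to $Q^0 Q = 0$, and that the projections $\pi_{V_i}$ are strict morphisms. Both reduce to the corresponding facts for $Q_{V_1}$ and $Q_{V_2}$: since $\pi_{V_i}$ commutes with the coproducts $\dgperm$ on the nose, and $Q^0$ was built componentwise from $Q_{V_1}^0$ and $Q_{V_2}^0$, the coderivation $Q$ restricts to $Q_{V_i}$ on each factor, so $Q_n^0 = (Q_{V_1})_n^0 \times (Q_{V_2})_n^0$ componentwise. The identity $\sum_{k=0}^n Q_k^0 Q_n^k = 0$ then follows coordinate by coordinate from the same identities for $(V_1,Q_{V_1})$ and $(V_2,Q_{V_2})$, and strictness of $\pi_{V_i}$ is immediate because the defining equation $(Q_{V_i})_n^0 \pi_{V_i}^{\otimes n+1} = \pi_{V_i} Q_n^0$ holds by construction. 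This establishes that $(V_1\times V_2, Q)$ is an object of $\Gamma\Lambda\mathcal{PL}_\infty$ with the required strict projections.

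For the universal property, given $\phi_1: W \rightsquigarrow V_1$ and $\phi_2: W \rightsquigarrow V_2$, I would invoke Proposition \ref{morphcoprod}, which says an $\infty$-morphism is fully determined by its composite with the projection onto generators. I define $\phi_1 \times \phi_2$ to be the unique $\infty$-morphism whose projection is $(\phi_1 \times \phi_2)^0 = \phi_1^0 \times \phi_2^0 : \Gamma\text{\normalfont Perm}^c(W) \to V_1 \times V_2$, and let $\phi_1\times\phi_2:\Gamma\text{\normalfont Perm}^c(W)\to\Gamma\text{\normalfont Perm}^c(V_1\times V_2)$ be the associated coalgebra map built in Proposition \ref{morphcoprod}. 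One checks it is an $\infty$-morphism using the compatibility criterion $\sum_k Q'^0_k \phi^k_n = \sum_k \phi^0_k Q^k_n$: projecting this equation onto each factor $V_i$ and using that $\pi_{V_i}$ is strict recovers exactly the corresponding criterion for $\phi_i$, which holds by hypothesis. The equalities $\pi_{V_i}(\phi_1\times\phi_2) = \phi_i$ hold on the level of generator projections by construction, hence hold as $\infty$-morphisms by Proposition \ref{morphcoprod}; uniqueness follows from the same proposition, since any $\psi$ with $\pi_{V_i}\psi = \phi_i$ has $\psi^0 = \phi_1^0 \times \phi_2^0$.

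The main obstacle I anticipate is purely bookkeeping: confirming that the coderivation $Q$ produced by $\widetilde{\Psi}$ from a componentwise-defined $Q^0$ genuinely decomposes componentwise at all arities, i.e. that $Q_n^0 = (Q_{V_1})_n^0 \times (Q_{V_2})_n^0$. This requires tracing through the explicit formula for $\widetilde{\Psi}$ from the proof of Proposition \ref{bijendo} and checking that the shuffle term $\text{\normalfont Sh}$ and the coproduct $\dgperm$ are compatible with the linear projections $\pi_{V_i}$. Since $\pi_{V_i}$ strictly commutes with $\dgperm$ (the coproduct is defined functorially in $V$) and with $\text{\normalfont Sh}$, this compatibility holds and the verification, while notationally heavy, is routine. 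No genuine conceptual difficulty arises beyond this.
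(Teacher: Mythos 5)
Your proposal is correct and follows essentially the same route as the paper: you define $Q^0_{V_1\times V_2}$ as the componentwise map through the projection $\Gamma\text{\normalfont Perm}^c(V_1\times V_2)\twoheadrightarrow\Gamma\text{\normalfont Perm}^c(V_1)\times\Gamma\text{\normalfont Perm}^c(V_2)$, verify $Q^2=0$ by checking that $\widetilde{\Psi}$ applied to this componentwise $Q^0$ is intertwined with $Q_{V_1}\times Q_{V_2}$ via the naturality of $\dgperm$ and $\text{\normalfont Sh}$, and obtain the universal property by setting $(\phi_1\times\phi_2)^0=\phi_1^0\times\phi_2^0$ and invoking Proposition \ref{morphcoprod}. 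The "bookkeeping" step you flag at the end is precisely what the paper's commutative diagrams for $\widetilde{\Psi}_1$ and $\widetilde{\Psi}_2$ carry out.
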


\begin{proof}
We let $Q_{V_1\times V_2}$ be the coderivation produced by the morphism
    % https://q.uiver.app/#q=WzAsMyxbMCwwLCJRXjA6XFxHYW1tYVxcdGV4dHtcXG5vcm1hbGZvbnQgUGVybX1eYyhWXFx0aW1lcyBWJykiXSxbMSwwLCJcXEdhbW1hXFx0ZXh0e1xcbm9ybWFsZm9udCBQZXJtfV5jKFYpXFx0aW1lc1xcR2FtbWFcXHRleHR7XFxub3JtYWxmb250IFBlcm19XmMoVicpIl0sWzIsMCwiVlxcdGltZXMgViciXSxbMCwxLCIiLDAseyJzdHlsZSI6eyJoZWFkIjp7Im5hbWUiOiJlcGkifX19XSxbMSwyLCJRX1ZeMFxcdGltZXMgUV97Vid9XjAiXV0=
\[\begin{tikzcd}
	{Q^0_{V_1\times V_2}:\Gamma\text{\normalfont Perm}^c(V_1\times V_2)} & {\Gamma\text{\normalfont Perm}^c(V_1)\times\Gamma\text{\normalfont Perm}^c(V_2)} & {V_1\times V_2}
	\arrow[two heads, from=1-1, to=1-2]
	\arrow["{Q_{V_1}^0\times Q_{V_2}^0}", from=1-2, to=1-3]
\end{tikzcd}.\]
    \noindent Recall that the coderivation $Q_{V_1\times V_2}$ is obtained from $Q^0_{V_1\times V_2}$ by $Q_{V_1\times V_2}=\widetilde{\Psi}_1(Q_{V_1\times V_2}^0)+\widetilde{\Psi}_2(Q_{V_1\times V_2}^0)$ (see the proof of Proposition \ref{bijendo}). We check that $Q_{V_1\times V_2}Q_{V_1\times V_2}=0$. By definition of $\widetilde{\Psi}_1$, we have the following commutative diagram:
    % https://q.uiver.app/#q=WzAsNSxbMCwwLCIoVlxcdGltZXMgVicpXFxvdGltZXNcXEdhbW1hKFZcXHRpbWVzIFYnKSJdLFsxLDAsIihWXFx0aW1lcyBWJylcXG90aW1lc1xcR2FtbWEoVlxcdGltZXMgVicpIl0sWzIsMCwiVlxcdGltZXMgViciXSxbMCwxLCIoVlxcb3RpbWVzXFxHYW1tYShWKSlcXHRpbWVzIChWJ1xcb3RpbWVzXFxHYW1tYShWJykpIl0sWzIsMSwiKFZcXG90aW1lc1xcR2FtbWEoVikpXFx0aW1lcyAoVidcXG90aW1lc1xcR2FtbWEoVicpKSJdLFswLDEsIlxcd2lkZXRpbGRle1xcUHNpfV8xKFFeMF97VlxcdGltZXMgVid9KSJdLFsxLDIsIlFeMF97VlxcdGltZXMgVid9Il0sWzAsMywiIiwyLHsic3R5bGUiOnsiaGVhZCI6eyJuYW1lIjoiZXBpIn19fV0sWzQsMiwiUV4wX1ZcXHRpbWVzIFFeMF97Vid9IiwyXSxbMyw0LCJcXHdpZGV0aWxkZXtcXFBzaX1fMShRXjBfVilcXHRpbWVzXFx3aWRldGlsZGV7XFxQc2l9XzEoUV4wX3tWJ30pIiwyXV0=
\[\begin{tikzcd}
	{(V_1\times V_2)\otimes\Gamma(V_1\times V_2)} & {(V_1\times V_2)\otimes\Gamma(V_1\times V_2)} & {V_1\times V_2} \\
	{(V_1\otimes\Gamma(V_1))\times (V_2\otimes\Gamma(V_2))} && {(V_1\otimes\Gamma(V_1))\times (V_2\otimes\Gamma(V_2))}
	\arrow["{\widetilde{\Psi}_1(Q^0_{V_1\times V_2})}", from=1-1, to=1-2]
	\arrow[two heads, from=1-1, to=2-1]
	\arrow["{Q^0_{V_1\times V_2}}", from=1-2, to=1-3]
	\arrow["{\widetilde{\Psi}_1(Q^0_{V_1})\times\widetilde{\Psi}_1(Q^0_{V_2})}"', from=2-1, to=2-3]
	\arrow["{Q^0_{V_1}\times Q^0_{V_2}}"', from=2-3, to=1-3]
\end{tikzcd}.\]

\noindent We also have the commutative diagram, by definition of $\widetilde{\Psi}_2$:
% https://q.uiver.app/#q=WzAsNSxbMCwwLCIoVlxcdGltZXMgVicpXFxvdGltZXNcXEdhbW1hKFZcXHRpbWVzIFYnKSJdLFsyLDAsIihWXFx0aW1lcyBWJylcXG90aW1lc1xcR2FtbWEoVlxcdGltZXMgVicpIl0sWzMsMCwiVlxcdGltZXMgViciXSxbMCwxLCIoVlxcb3RpbWVzXFxHYW1tYShWKSlcXHRpbWVzIChWJ1xcb3RpbWVzXFxHYW1tYShWJykpIl0sWzMsMSwiKFZcXG90aW1lc1xcR2FtbWEoVikpXFx0aW1lcyAoVidcXG90aW1lc1xcR2FtbWEoVicpKSJdLFswLDEsIlxcd2lkZXRpbGRle1xcUHNpfV8yKFFfe1ZcXHRpbWVzIFYnfV4wKSJdLFsxLDIsIlFeMF97VlxcdGltZXMgVid9Il0sWzAsMywiIiwyLHsic3R5bGUiOnsiaGVhZCI6eyJuYW1lIjoiZXBpIn19fV0sWzQsMiwiUV4wX1ZcXHRpbWVzIFFeMF97Vid9IiwyXSxbMyw0LCJcXHdpZGV0aWxkZXtcXFBzaX1fMihRX1ZeMClcXHRpbWVzXFx3aWRldGlsZGV7XFxQc2l9XzIoUV97Vid9XjApIiwyXV0=
\[\begin{tikzcd}
	{(V_1\times V_2)\otimes\Gamma(V_1\times V_2)} && {(V_1\times V_2)\otimes\Gamma(V_1\times V_2)} & {V_1\times V_2} \\
	{(V_1\otimes\Gamma(V_1))\times (V_2\otimes\Gamma(V_2))} &&& {(V_1\otimes\Gamma(V_1))\times (V_2\otimes\Gamma(V_2))}
	\arrow["{\widetilde{\Psi}_2(Q_{V_1\times V_2}^0)}", from=1-1, to=1-3]
	\arrow[two heads, from=1-1, to=2-1]
	\arrow["{Q^0_{V_1\times V_2}}", from=1-3, to=1-4]
	\arrow["{\widetilde{\Psi}_2(Q_{V_1}^0)\times\widetilde{\Psi}_2(Q_{V_2}^0)}"', from=2-1, to=2-4]
	\arrow["{Q^0_{V_1}\times Q^0_{V_2}}"', from=2-4, to=1-4]
\end{tikzcd}.\]

\noindent Finally, we have proved that $Q_{V_1\times V_2}^0\widetilde{\Psi}(Q_{V_1\times V_2}^0)$ fits in the following commutative diagram
% https://q.uiver.app/#q=WzAsNSxbMCwwLCIoVlxcdGltZXMgVicpXFxvdGltZXNcXEdhbW1hKFZcXHRpbWVzIFYnKSJdLFsyLDAsIihWXFx0aW1lcyBWJylcXG90aW1lc1xcR2FtbWEoVlxcdGltZXMgVicpIl0sWzMsMCwiVlxcdGltZXMgViciXSxbMCwxLCIoVlxcb3RpbWVzXFxHYW1tYShWKSlcXHRpbWVzIChWJ1xcb3RpbWVzXFxHYW1tYShWJykpIl0sWzMsMSwiKFZcXG90aW1lc1xcR2FtbWEoVikpXFx0aW1lcyAoVidcXG90aW1lc1xcR2FtbWEoVicpKSJdLFswLDEsIlxcd2lkZXRpbGRle1xcUHNpfShRX3tWXFx0aW1lcyBWJ31eMCkiXSxbMSwyLCJRXjBfe1ZcXHRpbWVzIFYnfSJdLFswLDMsIiIsMix7InN0eWxlIjp7ImhlYWQiOnsibmFtZSI6ImVwaSJ9fX1dLFs0LDIsIlFeMF9WXFx0aW1lcyBRXjBfe1YnfSIsMl0sWzMsMiwiMCIsMV0sWzMsNCwiXFx3aWRldGlsZGV7XFxQc2l9KFFfVl4wKVxcdGltZXNcXHdpZGV0aWxkZXtcXFBzaX0oUV97Vid9XjApIiwyXV0=
\[\begin{tikzcd}
	{(V_1\times V_2)\otimes\Gamma(V_1\times V_2)} && {(V_1\times V_2)\otimes\Gamma(V_1\times V_2)} & {V_1\times V_2} \\
	{(V_1\otimes\Gamma(V_1))\times (V_2\otimes\Gamma(V_2))} &&& {(V_1\otimes\Gamma(V_1))\times (V_2\otimes\Gamma(V_2))}
	\arrow["{\widetilde{\Psi}(Q_{V_1\times V_2}^0)}", from=1-1, to=1-3]
	\arrow[two heads, from=1-1, to=2-1]
	\arrow["{Q^0_{V_1\times V_2}}", from=1-3, to=1-4]
	\arrow["0"{description}, from=2-1, to=1-4]
	\arrow["{\widetilde{\Psi}(Q_{V_1}^0)\times\widetilde{\Psi}(Q_{V_2}^0)}"', from=2-1, to=2-4]
	\arrow["{Q^0_{V_1}\times Q^0_{V_2}}"', from=2-4, to=1-4]
\end{tikzcd}\]

\noindent which proves that $(V_1\times V_2,Q_{V_1\times V_2})\in\Gamma\Lambda\mathcal{PL}_\infty$. Now let $\phi_1:W\rightsquigarrow V_1$ and $\phi_2:W\rightsquigarrow V_2$ be two $\infty$-morphisms. We define $\phi:W\rightsquigarrow V_1\times V_2$ by $\phi^0=\phi_1^0\times\phi_2^0$. We prove that this gives an $\infty$-morphism i.e. $\phi^0Q=(Q_{V_1}^0\times Q_{V_2}^0)\phi.$ This can be proved with the following commutative diagram:
% https://q.uiver.app/#q=WzAsNSxbMCwwLCJcXEdhbW1hXFx0ZXh0e1xcbm9ybWFsZm9udCBQZXJtfV5jKFcpIl0sWzAsMSwiXFxHYW1tYVxcdGV4dHtcXG5vcm1hbGZvbnQgUGVybX1eYyhWXFx0aW1lcyBW4oCZKSJdLFsyLDAsIlxcR2FtbWFcXHRleHR7XFxub3JtYWxmb250IFBlcm19XmMoVykiXSxbMSwxLCJcXEdhbW1hXFx0ZXh0e1xcbm9ybWFsZm9udCBQZXJtfV5jKFYpXFx0aW1lcyBcXEdhbW1hXFx0ZXh0e1xcbm9ybWFsZm9udCBQZXJtfV5jKFbigJkpIl0sWzIsMSwiVlxcdGltZXMgVuKAmSJdLFswLDEsIlxccGhpJyciLDJdLFsxLDMsIlxccGlfVlxcdGltZXNcXHBpX3tWJ30iLDAseyJzdHlsZSI6eyJoZWFkIjp7Im5hbWUiOiJlcGkifX19XSxbMyw0LCJRX1ZeMFxcdGltZXMgUV97Vid9XjAiXSxbMSw0LCJRXjBfe1ZcXHRpbWVzIFYnfSIsMix7ImN1cnZlIjozfV0sWzIsNCwiXFxwaGleMFxcdGltZXMoXFxwaGknKV4wIl0sWzAsMiwiUSJdXQ==
\[\begin{tikzcd}
	{\Gamma\text{\normalfont Perm}^c(W)} && {\Gamma\text{\normalfont Perm}^c(W)} \\
	{\Gamma\text{\normalfont Perm}^c(V_1\times V_2)} & {\Gamma\text{\normalfont Perm}^c(V_1)\times \Gamma\text{\normalfont Perm}^c(V_2)} & {V_1\times V_2}
	\arrow["Q", from=1-1, to=1-3]
	\arrow["{\phi}"', from=1-1, to=2-1]
	\arrow["{\phi_1^0\times\phi_2^0}", from=1-3, to=2-3]
	\arrow["{\pi_{V_1}\times\pi_{V_2}}", two heads, from=2-1, to=2-2]
	\arrow["{Q^0_{V_1\times V_2}}"', curve={height=18pt}, from=2-1, to=2-3]
	\arrow["{Q_{V_1}^0\times Q_{V_2}^0}", from=2-2, to=2-3]
\end{tikzcd}.\]

\noindent The identities $\pi_{V_1}(\phi_1\times\phi_2)=\phi_1$ and $\pi_{V_2}(\phi_1\times\phi_2)=\phi_2$ follow by immediate computations.
\end{proof}

\begin{remarque}
    By an immediate check, the above definitions extend to the category $\widehat{\Gamma\Lambda\mathcal{PL}_\infty}$ of complete $\Gamma\Lambda\mathcal{PL}_\infty$-algebras. Explicitly, if $(V_1,Q_{V_1})$ and $(V_2,Q_{V_2})$ are complete with respect to some filtrations, then $(V_1\times V_2,Q_{V_1\times V_2})$ is also complete with the filtration
    $$F_n(V_1\times V_2)=F_nV_1\times F_nV_2.$$

    \noindent In this setting, we deduce immediately from the definition of $Q^0_{V_1\times V_2}$ that we have a bijection
    $$\mathcal{MC}(V_1\times V_2)\simeq\mathcal{MC}(V_1)\times\mathcal{MC}(V_2).$$
\end{remarque}

We give an analogue of \cite[Proposition 5.2]{rogers}.

\begin{lm}\label{decoupage}
    Let $A,B\in\text{\normalfont \dgMod}$ be such that $A\otimes\Sigma N^*(\Delta^\bullet)$ and $B\otimes\Sigma N^*(\Delta^\bullet)$ are endowed with the structure of simplicial $\widehat{\Gamma\Lambda\mathcal{PL}_\infty}$-algebras. Let $\Theta:A\longrightarrow B$ be a morphism in {\normalfont\dgMod} such that $\Theta\otimes id$ is a strict morphism of $\widehat{\Gamma\Lambda\mathcal{PL}_\infty}$-algebras for every $n\geq 0$. Suppose that $\Theta$ is an acyclic fibration of dg $\mathbb{K}$-modules.\\
    Then the map
    $$\Theta\otimes id:\mathcal{MC}(A\otimes\Sigma N^*(\Delta^\bullet))\longrightarrow \mathcal{MC}(B\otimes\Sigma N^*(\Delta^\bullet))$$

    \noindent is a weak equivalence of simplicial sets.
\end{lm}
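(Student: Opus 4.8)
The plan is to prove that $\Theta\otimes id$ is a weak equivalence of Kan complexes by combining the fibration result (Theorem \ref{kan}) with a fiber-wise analysis coming from the acyclicity of $\Theta$. Since $\Theta$ is a fibration of dg $\mathbb{K}$-modules, it is in particular surjective, so by Theorem \ref{kan} the induced map $\mathcal{MC}(\Theta\otimes id)$ is a Kan fibration. It therefore suffices to show that this fibration is also a weak equivalence, which for a fibration between Kan complexes amounts to showing that its fibers are weakly contractible and that it is surjective on connected components. The key geometric input is that $\Theta$, being an acyclic fibration in $\text{\normalfont dgMod}_\mathbb{K}$, splits: we may choose a section $s:B\longrightarrow A$ together with a contracting homotopy $K$ on $A$ witnessing that $\ker\Theta$ is acyclic, i.e. $dK+Kd=id-s\Theta$ with $\Theta s=id$.

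First I would reduce the problem to a statement about the fibers. Fix a vertex $\tau\in\mathcal{MC}(B\otimes\Sigma N^*(\Delta^0))$ and consider the fiber $F$ of $\mathcal{MC}(\Theta\otimes id)$ over the corresponding constant simplicial point. Using that $\Theta\otimes id$ is a strict morphism of $\widehat{\Gamma\Lambda\mathcal{PL}_\infty}$-algebras and that products exist in $\widehat{\Gamma\Lambda\mathcal{PL}_\infty}$ (as established in the preceding lemma and its following remark, which give $\mathcal{MC}(V_1\times V_2)\simeq\mathcal{MC}(V_1)\times\mathcal{MC}(V_2)$), I would identify this fiber with the simplicial Maurer--Cartan set of the kernel, namely $\mathcal{MC}((\ker\Theta)\otimes\Sigma N^*(\Delta^\bullet))$, where $\ker\Theta$ inherits a complete $\widehat{\Gamma\Lambda\mathcal{PL}_\infty}$-structure because $\Theta\otimes id$ is strict degree-wise. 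The surjectivity on $\pi_0$ and the identification of fibers both rely on the splitting $s$ and the Kan fibration property to lift simplices.

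Next I would show that this fiber is contractible, which is where the contracting homotopy $K$ enters. The strategy mirrors the proof of the Kan property via Lemma \ref{découpe}: I would extend $K$ to $\mathcal{K}=K\otimes\Sigma\, id$ on $(\ker\Theta)\otimes\Sigma N^*(\Delta^n)$ and combine it with the simplicial homotopies $H_n^i$. Because $\ker\Theta$ is acyclic and the filtration is complete, a Cauchy-sequence argument of exactly the type used in Lemma \ref{découpe} and in the proof that $\mathcal{MC}(\phi)$ is bijective shows that every Maurer--Cartan element over the base point in $(\ker\Theta)\otimes\Sigma N^*(\Delta^n)$ is homotopic to the trivial one, and more precisely that the simplicial set $\mathcal{MC}((\ker\Theta)\otimes\Sigma N^*(\Delta^\bullet))$ has vanishing homotopy groups. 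Here the homotopy-group computations of Theorem \ref{theoremE} (or rather their analogues for the $\Gamma\Lambda\mathcal{PL}_\infty$-structure on the acyclic module $\ker\Theta$) give $\pi_{k}(F,\ast)\simeq H_{k-1}((\ker\Theta)^0)=0$ for all $k$, since $\ker\Theta$ is acyclic; one checks that the twisted differential $d_\tau$ on $\ker\Theta$ is still acyclic using the completeness of the filtration and a perturbation argument on $K$.

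The main obstacle I expect is the fiber identification together with the control of the twisted differential: the $\widehat{\Gamma\Lambda\mathcal{PL}_\infty}$-structure mixes the module $\ker\Theta$ with $N^*(\Delta^n)$ through the full system of weighted braces, so one must verify that restricting to the fiber over a nontrivial $\tau$ preserves acyclicity of the relevant complex $(\ker\Theta)^\tau$. This is handled by a standard perturbation-lemma argument: the contracting homotopy $K$ for $d$ can be perturbed to a contracting homotopy for $d_\tau=d+[\tau,-]$ precisely because $\tau$ raises filtration degree and the filtration is complete, so the perturbation series converges. Once the fibers are shown to be weakly contractible and $\mathcal{MC}(\Theta\otimes id)$ is surjective on $\pi_0$ (again via the section $s$ and the Kan lifting property), the long exact sequence of homotopy groups associated to the Kan fibration $\mathcal{MC}(\Theta\otimes id)$ forces $\Theta\otimes id$ to be a weak equivalence, completing the proof.
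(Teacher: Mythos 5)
Your overall strategy (reduce everything to the acyclicity of $\ker\Theta$, using Theorem \ref{kan} for the Kan property and a contracting homotopy $dK+Kd=id-s\Theta$) points in the right direction, but the central step is missing, and the route you choose to replace it does not go through as stated. You claim to identify the fiber of the Kan fibration $\mathcal{MC}(\Theta\otimes id)$ over a point $\tau$ with $\mathcal{MC}((\ker\Theta)\otimes\Sigma N^*(\Delta^\bullet))$ "using that $\Theta\otimes id$ is strict and that products exist". This is not an identification you get for free: the fiber consists of Maurer--Cartan elements of $A\otimes\Sigma N^*(\Delta^n)$ lying over the constant simplex at $\tau$, and after writing such an element as $s(\tau)+y$ the equation for $y$ involves the weighted braces of $A$ evaluated on mixtures of $s(\tau)$ and $y$ --- i.e.\ a \emph{twisted} $\widehat{\Gamma\Lambda\mathcal{PL}_\infty}$-structure on $\ker\Theta$, not the restricted or the trivial one. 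Establishing that such a twisted divided-power structure exists, and then transferring contractibility along $K$ by "a standard perturbation-lemma argument", is exactly the kind of characteristic-zero-flavoured homotopy transfer that this paper is built to avoid; none of it is developed in the text, and Theorem \ref{theoremE} cannot be invoked for the fibers because in this lemma $A$ carries an \emph{arbitrary} simplicial $\widehat{\Gamma\Lambda\mathcal{PL}_\infty}$-structure, not one induced by a brace algebra. Your surjectivity on $\pi_0$ has the same problem: $s(\tau)$ need not be a Maurer--Cartan element, so "via the section $s$" hides a genuine correction argument.

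The paper's proof avoids all of this by a global, not fiber-wise, splitting. It equips $\ker\Theta$ with the \emph{trivial} brace structure (all braces zero), and uses the homotopy $h$ to build an explicit $\infty$-morphism $\Psi_\bullet:A\otimes\Sigma N^*(\Delta^\bullet)\rightsquigarrow\ker\Theta\otimes\Sigma N^*(\Delta^\bullet)$ with linear part $(id-\tau\Theta)\otimes id$ and higher components $(\Psi_n)^0_k=(\Psi_n)^0_0(h\otimes id)Q^0_k$. Then $g_\bullet=(\Theta\otimes id)\times\Psi_\bullet$ has invertible linear part, so by the earlier proposition on $\mathcal{MC}(\phi)$ it induces an isomorphism of simplicial sets onto $\mathcal{MC}((B\times\ker\Theta)\otimes\Sigma N^*(\Delta^\bullet))$, and one only has to observe that the projection away from the factor $\mathcal{MC}_\bullet(\ker\Theta)$ (contractible by the computations of $\mathsection$4.2 for the trivial, acyclic brace algebra $\ker\Theta$) is a weak equivalence. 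The construction of $\Psi_\bullet$ is the content of the lemma; without it, or without a worked-out twisting theory for complete $\Gamma\Lambda\mathcal{PL}_\infty$-algebras in positive characteristic, your argument has a gap at its key step.
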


\begin{proof}
    Since the two simplicial sets $\mathcal{MC}(A\otimes\Sigma N^*(\Delta^\bullet))$ and $\mathcal{MC}(B\otimes\Sigma N^*(\Delta^\bullet))$ are Kan complexes by Theorem \ref{kan}, it suffices to show that $\Theta\otimes id$ induces a bijection on the sets of connected components and an isomorphism on every homotopy groups. Let $\tau:B\longrightarrow A$ and $h:A\longrightarrow A$ be such that
    $$\Theta\tau=id\ ;\ id-\tau\Theta=dh+hd.$$

    \noindent We endow $\text{\normalfont Ker}(\Theta)$ with the brace algebra structure defined by $a\langle b_1,\ldots,b_r\rangle=0$ for every $r\geq 1$ and $a,b_1,\ldots,b_r\in\text{\normalfont Ker}(\Theta)$. Our first goal is to define a morphism $\Psi_\bullet:A\otimes\Sigma N^*(\Delta^\bullet)\rightsquigarrow  \text{\normalfont Ker}(\Theta)\otimes\Sigma N^*(\Delta^\bullet)$ of simplicial $\widehat{\Gamma\Lambda\mathcal{PL}_\infty}$-algebras. For every $n\geq 0$, let $(\Psi_n)_0^0=(id-\tau\Theta)\otimes id$. We set, for every $k\neq 0$,
    $$(\Psi_n)_k^0=(\Psi_n)_0^0 (h\otimes id)Q_k^0,$$

    \noindent where we denote by $Q$ the coderivation on $\Gamma\text{\normalfont Perm}^c(A\otimes\Sigma N^\ast(\Delta^n))$ given by the $\Gamma\Lambda\mathcal{PL}_\infty$-algebra structure on $A\otimes\Sigma N^*(\Delta^n)$. We check that $\Psi_n:A\otimes\Sigma N^*(\Delta^n)\rightsquigarrow \text{\normalfont Ker}(\Theta)\otimes\Sigma N^*(\Delta^n)$:
    \begin{center}
        $\begin{array}{lll}
            d(\Psi_n)_k^0 & = & \displaystyle d(\Psi_n)_0^0 (h\otimes id) Q_k^0\\
            & = & \displaystyle(\Psi_n)_0^0 d (h\otimes id) Q_k^0\\
            & = & \displaystyle(\Psi_n)^0_0 Q_k^0-(\Psi_n)_0^0 (h\otimes id) d Q_k^0\\
            & = & \displaystyle(\Psi_n)^0_0 Q_k^0+\sum_{i=1}^k (\Psi_n)^0_0 (h\otimes id) Q_i^0 Q_k^i\\
            & = & \displaystyle(\Psi_n)^0_0 Q_k^0+\sum_{i=1}^k (\Psi_n)^0_i Q_k^i\\
            & = & \displaystyle\sum_{i=0}^k(\Psi_n)^0_iQ^i_k,
        \end{array}$
    \end{center}

\noindent which proves that $\Psi_n:A\otimes\Sigma N^*(\Delta^n)\rightsquigarrow  \text{\normalfont Ker}(\Theta)\otimes\Sigma N^*(\Delta^n)$. Since $\Psi_\bullet$ is defined in terms of morphisms which are compatible with the simplicial structure, the morphism $\Psi_\bullet$ is a morphism of simplicial $\widehat{\Gamma\Lambda\mathcal{PL}_\infty}$-algebras. Consider now, for every $n\geq 0$, the $\widehat{\Gamma\Lambda\mathcal{PL}_\infty}$-algebra structure on $(B\times\text{\normalfont Ker}(\Theta))\otimes\Sigma N^*(\Delta^\bullet)$ given by the isomorphism
$$(B\times\text{\normalfont Ker}(\Theta))\otimes\Sigma N^*(\Delta^\bullet)\simeq (B\otimes\Sigma N^*(\Delta^\bullet))\times (\text{\normalfont Ker}(\Theta)\otimes\Sigma N^*(\Delta^\bullet)).$$

\noindent Let $g_n=(\Theta\otimes id)\times \Psi_n:A\otimes\Sigma N^*(\Delta^n)\rightsquigarrow (B\times\text{\normalfont Ker}(\Theta))\otimes \Sigma N^*(\Delta^n)$. Then $g_\bullet$ is a morphism of simplicial $\widehat{\Gamma\Lambda\mathcal{PL}_\infty}$-algebras. We also have that $\mathcal{MC}(g_\bullet)$ is an isomorphism of simplicial sets. Indeed, we have that $(g_\bullet)_0^0$ is an isomorphism of simplicial sets, with as inverse $(b,k)\otimes\underline{x}\longmapsto (\tau(b)+k)\otimes \underline{x}$, for every $b\in B, k\in\text{\normalfont Ker}(\Theta)$ and $\underline{x}\in\Sigma N^*(\Delta^\bullet)$. By looking at the Maurer-Cartan spaces degree wise, we obtain the following commutative diagram:
\[\begin{tikzcd}
	{\mathcal{MC}(A\otimes\Sigma N^*(\Delta^\bullet))} && {\mathcal{MC}((B\times\text{\normalfont Ker}(\Theta))\otimes\Sigma N^*(\Delta^\bullet))} \\
	\\
	{\mathcal{MC}(B\otimes\Sigma N^*(\Delta^\bullet))} && {\mathcal{MC}(B\otimes\Sigma N^*(\Delta^\bullet))\times\mathcal{MC}(\text{\normalfont Ker}(\Theta)\otimes\Sigma N^*(\Delta^\bullet))}
	\arrow["\mathcal{MC}(g_\bullet)", "\simeq"', from=1-1, to=1-3]
	\arrow["\mathcal{MC}(\Theta\otimes id)"', from=1-1, to=3-1]
	\arrow["\simeq", from=1-3, to=3-3]
	\arrow[two heads, from=3-3, to=3-1]
\end{tikzcd}\]

\noindent It is then sufficient to prove that the projection $\mathcal{MC}(B\otimes\Sigma N^*(\Delta^\bullet))\times\mathcal{MC}(\text{\normalfont Ker}(\Theta)\otimes\Sigma N^*(\Delta^\bullet))\longrightarrow\mathcal{MC}(B\otimes\Sigma N^*(\Delta^\bullet))$ is a weak equivalence of simplicial sets, which is true because 
$\mathcal{MC}(\text{\normalfont Ker}(\Theta)\otimes\Sigma N^*(\Delta^\bullet))=\mathcal{MC}_\bullet(\text{\normalfont Ker}(\Theta))$, and this simplicial set has trivial $\pi_0$ and homotopy groups according to the computations of the connected components and the homotopy groups made in $\mathsection$\ref{sec:242}. We then have the result.
\end{proof}

The next lemma is an analogue of \cite[Proposition 5.5]{rogers}.

\begin{lm}\label{pback}
    Let $A,B,C$ be $\widehat{\Gamma\Lambda\mathcal{PL}_\infty}$-algebras. Let $\Theta: A\rightsquigarrow C$ and $\Phi:B\rightsquigarrow C$ be two $\infty$-morphisms of $\widehat{\Gamma\Lambda\mathcal{PL}_\infty}$-algebras. We suppose that $\Phi$ is strict, and that $\Phi_0^0$ is surjective. Then there exists a $\widehat{\Gamma\Lambda\mathcal{PL}_\infty}$-algebra structure on $A\times\text{\normalfont Ker}(\Phi_0^0)$ and $H:A\times \text{\normalfont Ker}(\Phi_0^0)\rightsquigarrow A\times B$ such that the following diagram is a pullback square diagram in $\widehat{\Gamma\Lambda\mathcal{PL}_\infty}$:
    % https://q.uiver.app/#q=WzAsNCxbMCwwLCJBXFx0aW1lcyBLZXIoXFxQaGleMF8wKSJdLFsyLDAsIkIiXSxbMCwyLCJBIl0sWzIsMiwiQyJdLFswLDEsIlxccGlfQkgiLDAseyJzdHlsZSI6eyJib2R5Ijp7Im5hbWUiOiJkYXNoZWQifX19XSxbMCwyLCJcXHBpX3tBfSIsMix7InN0eWxlIjp7ImJvZHkiOnsibmFtZSI6ImRhc2hlZCJ9fX1dLFsyLDMsIlxcUHNpIiwyLHsic3R5bGUiOnsiYm9keSI6eyJuYW1lIjoiZGFzaGVkIn19fV0sWzEsMywiXFxQaGkiLDAseyJzdHlsZSI6eyJib2R5Ijp7Im5hbWUiOiJkYXNoZWQifX19XSxbMCwzLCIiLDEseyJzdHlsZSI6eyJuYW1lIjoiY29ybmVyIn19XV0=
\[\begin{tikzcd}
	{A\times\text{\normalfont Ker}(\Phi^0_0)} && B \\
	\\
	A && C
	\arrow["{\pi_BH}", dashed, from=1-1, to=1-3]
	\arrow["{\pi_{A}}"', dashed, from=1-1, to=3-1]
	\arrow["\lrcorner"{anchor=center, pos=0.125}, draw=none, from=1-1, to=3-3]
	\arrow["\Phi", dashed, from=1-3, to=3-3]
	\arrow["\Theta"', dashed, from=3-1, to=3-3]
\end{tikzcd}.\]
\end{lm}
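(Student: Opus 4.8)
The plan is to realise the pullback as a $\Theta$-twist of the product $\widehat{\Gamma\Lambda\mathcal{PL}_\infty}$-algebra carried by the graded module $A\times\mathrm{Ker}(\Phi_0^0)$. Write $K=\mathrm{Ker}(\Phi_0^0)$. Since $\Phi$ is strict, $\Phi_0^0\colon B\to C$ commutes with all the brace operations, so $K$ is a sub-$\widehat{\Gamma\Lambda\mathcal{PL}_\infty}$-algebra of $B$. As $\mathbb{K}$ is a field and $\Phi_0^0$ is surjective, I would first fix a graded section $s\colon C\to B$ with $\Phi_0^0 s=\mathrm{id}_C$, compatible with the filtrations (possible after refining, using completeness), so that $B\simeq s(C)\oplus K$ as filtered graded modules, with associated projection $\rho_K\colon B\to K$.

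Next I would build the comparison morphism $H\colon A\times K\rightsquigarrow A\times B$, where $A\times B$ carries the product structure of the finite-product construction established above. Its leading term is $H_0^0(a,k)=(a,\,k+s\Theta_0^0(a))$, which is split injective with retraction $r(a,b)=(a,\rho_K b)$ and whose image is exactly the leading-order pullback $P=\{(a,b)\mid \Theta_0^0 a=\Phi_0^0 b\}$. The higher components are dictated by the requirements $\pi_A\circ H=\pi_A$ (strict) and $\Phi\circ(\pi_B\circ H)=\Theta\circ\pi_A$; concretely $(\pi_B H)_n^0=s\,\Theta_n^0\,\pi_A^{\otimes n+1}$ for $n\ge1$, assembling the full data of the $\infty$-morphism $\Theta$ through the section $s$. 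I would then transport the structure: because $H_0^0$ is split monic, there is a coderivation $Q_{A\times K}$ on $\Gamma\mathrm{Perm}^c(A\times K)$ making $H$ an $\infty$-morphism, its leading component being recovered through $r$ by Proposition \ref{bijendo}; one then checks $Q_{A\times K}^2=0$ from $Q_{A\times B}^2=0$ and the injectivity of the induced coalgebra map $\widehat H$. Completeness of $A\times K$ with $F_n(A\times K)=F_nA\times F_nK$ is inherited.

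With this structure in place, commutativity of the square is immediate from Proposition \ref{morphcoprod}: the $\infty$-morphisms $\Theta\circ\pi_A$ and $\Phi\circ(\pi_B H)$ have the same leading-component data $\Theta_n^0\circ\pi_A^{\otimes n+1}$ and therefore coincide. For the universal property, given a test algebra $T$ with $\infty$-morphisms $f\colon T\rightsquigarrow A$ and $g\colon T\rightsquigarrow B$ satisfying $\Theta f=\Phi g$, I would form the product morphism $f\times g\colon T\rightsquigarrow A\times B$ from the finite-product construction. The leading-order identity $\Theta_0^0 f_0^0=\Phi_0^0 g_0^0$ places $(f\times g)_0^0$ inside $P=\mathrm{im}(H_0^0)$, and the \emph{full} equality $\Theta f=\Phi g$ is what forces $f\times g$ to factor through $H$ at every order; the factor $u$ (a formal ``$H^{-1}$'' applied to $f\times g$) is constructed recursively via the retraction $r$ and is unique because $H$ is monic. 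Setting $p_A=\pi_A\circ H$ and $p_B=\pi_B\circ H$ then exhibits $A\times K$ as the pullback.

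The main obstacle I expect is the transport step: showing that the subcoalgebra $\mathrm{im}(\widehat H)\subset\Gamma\mathrm{Perm}^c(A\times B)$ is stable under $Q_{A\times B}$, equivalently that $Q_{A\times K}$ is a genuine square-zero coderivation and not merely a coalgebra morphism. This stability is not formal, and it \emph{fails} for the naive restriction to $P$, since $\Theta_0^0$ alone does not commute with the braces; it is precisely here that one must use that the higher terms $\Theta_n^0$ assemble into a bona fide $\infty$-morphism together with the splitting $s$ of the strict fibration $\Phi$. Verifying the recursive factorisation in the universal property at all orders is the same mechanism seen from the other side, and should follow once the transport is established.
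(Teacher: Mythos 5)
Your formulas for the section $\sigma$ and for the leading and higher components of $H$ are exactly those used in the paper, and your identification of the image of $H_0^0$ with the naive pullback $P$ is correct. But the argument is not complete: you explicitly flag the transport step --- producing a square-zero coderivation on $\Gamma\text{\normalfont Perm}^c(A\times\text{\normalfont Ker}(\Phi_0^0))$ that makes $H$ an $\infty$-morphism --- as ``the main obstacle,'' observe that it ``is not formal,'' and then leave it open. Deducing $Q_{A\times K}^2=0$ from $Q_{A\times B}^2=0$ and the injectivity of $\widehat H$ presupposes that $Q_{A\times B}$ preserves the image of $\widehat H$, which is precisely the stability you have not established; a split monomorphism of coalgebras does not by itself transport a coderivation along its retraction.

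The paper closes this gap by not treating $H$ as a map out of $A\times\text{\normalfont Ker}(\Phi_0^0)$ at all, but as an $\infty$-\emph{endomorphism} of $A\times B$: one sets $H_0^0(a,b)=(a,\sigma\Theta_0^0(a)+b)$ and $J_0^0(a,b)=(a,b-\sigma\Theta_0^0(a))$, with higher components $H_n^0=(0,\sigma\Theta_n^0\pi_A)$ and $J_n^0=(0,-\sigma\Theta_n^0\pi_A)$, and checks $HJ=JH=\mathrm{id}$. The conjugate $\widetilde Q=JQH$ of the product coderivation is then \emph{automatically} a square-zero coderivation on all of $\Gamma\text{\normalfont Perm}^c(A\times B)$, and the only remaining verification is the concrete one that $\widetilde Q$ preserves the subcoalgebra $\Gamma\text{\normalfont Perm}^c(A\times\text{\normalfont Ker}(\Phi_0^0))$ and the filtrations. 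The same device gives the universal property in closed form: a test cone $(\Psi_1,\Psi_2)$ with $\Theta\Psi_1=\Phi\Psi_2$ factors as $J(\Psi_1\times\Psi_2)$, with no recursive inversion of $H$ via the retraction $r$. If you extend your $H$ to such an invertible pair $(H,J)$, the rest of your argument goes through.
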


\begin{proof}
    We follow the proof of \cite[Proposition 5.5]{rogers}. Let $\sigma : C\longrightarrow B$ be a morphism of $\mathbb{K}$-modules such that $\Phi_0^0\sigma=id$. We define two morphisms $J_0^0,H_0^0:A\times B\longrightarrow A\times B$ by $J_0^0(a,b)=(a,b-\sigma \Theta_0^0(a))$ and $H_0^0(a,b)=(a,\sigma \Theta_0^0(a)+b)$. We set
    $$H_{n}^0=(0,\sigma \Theta_{n}^0\pi_{A});$$
    $$J_n^0=(0,-\sigma \Theta_n^0\pi_{A}),$$
    % $$H_{\sum r_i}^0(a'\otimes\mathcal{O}(b_1'^{r_1}\cdots b_n'^{r_n}),a\otimes\mathcal{O}(b_1^{r_1}\cdots b_n^{r_n}))=(0,-\Sigma \Theta_{\sum r_i}^0(a'\otimes\mathcal{O}(b_1'^{r_1}\cdots b_n'^{r_n})));$$
    % $$J_{\sum r_i}^0(a'\otimes\mathcal{O}(b_1'^{r_1}\cdots b_n'^{r_n}),a\otimes\mathcal{O}(b_1^{r_1}\cdots b_n^{r_n}))=(0,\Sigma \Theta_{\sum r_i}^0(a'\otimes\mathcal{O}(b_1'^{r_1}\cdots b_n'^{r_n}))).$$ 

    \noindent An immediate computation gives $HJ=JH=id$. Therefore, if we denote by $Q$ the $\widehat{\Gamma\Lambda\mathcal{PL}_\infty}$-algebra structure on $A\times B$, then $\widetilde{Q}=JQH$ is a degree $-1$ coderivation on $\Gamma\text{\normalfont Perm}^c(A\times B)$. We note that $\widetilde{Q}$ preserves $\Gamma\text{\normalfont Perm}^c(A\times \text{\normalfont Ker}(\Phi_0^0))$ and the filtrations, so that $A\times\text{\normalfont Ker}(\Phi_0^0)$ is a $\widehat{\Gamma\Lambda\mathcal{PL}_\infty}$-algebra such that $H:A\times \text{\normalfont Ker}(\Phi_0^0)\rightsquigarrow A\times B$. Consider now a diagram of $\infty$-morphisms:
    % https://q.uiver.app/#q=WzAsNCxbMCwwLCJEIl0sWzIsMCwiQiJdLFswLDIsIkEiXSxbMiwyLCJDIl0sWzAsMSwiXFxQc2lfMiJdLFswLDIsIlxcUHNpXzEiLDJdLFsxLDMsIlxcUGhpIl0sWzIsMywiXFxUaGV0YSIsMl1d
\[\begin{tikzcd}
	D && B \\
	\\
	A && C
	\arrow["{\Psi_2}", from=1-1, to=1-3]
	\arrow["{\Psi_1}"', from=1-1, to=3-1]
	\arrow["\Phi", from=1-3, to=3-3]
	\arrow["\Theta"', from=3-1, to=3-3]
\end{tikzcd}.\]

\noindent Then, we have the commutative diagram 
% https://q.uiver.app/#q=WzAsNSxbMSwxLCJBXFx0aW1lcyBcXHRleHR7XFxub3JtYWxmb250IEtlcn0oXFxQaGlfMF4wKSJdLFszLDEsIkIiXSxbMSwzLCJBIl0sWzMsMywiQyJdLFswLDAsIkQiXSxbMCwxLCJcXHBpX0JIIl0sWzAsMiwiXFxwaV97QX0iLDJdLFsxLDMsIlxcUGhpIl0sWzIsMywiXFxUaGV0YSIsMl0sWzQsMCwiSihcXFBzaV8xXFx0aW1lc1xcUHNpXzIpIl0sWzQsMSwiXFxQc2lfMiIsMCx7ImN1cnZlIjotNH1dLFs0LDIsIlxcUHNpXzEiLDIseyJjdXJ2ZSI6NH1dXQ==
\[\begin{tikzcd}
	D \\
	& {A\times \text{\normalfont Ker}(\Phi_0^0)} && B \\
	\\
	& A && C
	\arrow["{J(\Psi_1\times\Psi_2)}", from=1-1, to=2-2]
	\arrow["{\Psi_2}", curve={height=-24pt}, from=1-1, to=2-4]
	\arrow["{\Psi_1}"', curve={height=24pt}, from=1-1, to=4-2]
	\arrow["{\pi_BH}", from=2-2, to=2-4]
	\arrow["{\pi_{A}}"', from=2-2, to=4-2]
	\arrow["\Phi", from=2-4, to=4-4]
	\arrow["\Theta"', from=4-2, to=4-4]
\end{tikzcd},\]

\noindent which proves the result.
\end{proof}

We now prove Theorem \ref{theoremF}.

\begin{thm}\label{gm}
    Let $\Theta:A\longrightarrow B$ be a morphism of complete brace algebras such that $\Theta$ is a weak equivalence in $\text{\normalfont dgMod}_\mathbb{K}$. Then $\mathcal{MC}_\bullet(\Theta):\mathcal{MC}_\bullet(A)\longrightarrow\mathcal{MC}_\bullet(B)$ is a weak equivalence.
\end{thm}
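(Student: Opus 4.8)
The plan is to adapt the mapping-path-space argument of \cite[$\mathsection$6]{rogers}, reducing the general quasi-isomorphism $\Theta$ to the acyclic-fibration case already treated in Lemma \ref{decoupage}, and then concluding by a two-out-of-three argument for weak equivalences of Kan complexes.

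First I would construct a path object for $B$ usable in our framework. In positive characteristic there is no strictly commutative interval, so one cannot form a path object inside complete brace algebras directly; instead I use the Barratt--Eccles machinery. Set $B^I = B\otimes N^*(\Delta^1)$. Since $N^*(\Delta^1)\otimes N^*(\Delta^n)$ is an $\mathcal{E}$-algebra via the Hopf structure $\Delta_\mathcal{E}$ (as in Proposition \ref{pathobject}), Corollary \ref{bracee} endows $B^I\otimes\Sigma N^*(\Delta^\bullet)=B\otimes N^*(\Delta^1)\otimes\Sigma N^*(\Delta^\bullet)$ with the structure of a simplicial $\widehat{\Gamma\Lambda\mathcal{PL}_\infty}$-algebra. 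The maps $d_0,d_1,s_0$ of Proposition \ref{homonstar}, tensored with the identity, induce morphisms $B^I\rightleftarrows B$ whose tensor with $\Sigma N^*(\Delta^\bullet)$ are \emph{strict} $\infty$-morphisms, since they are induced by dg-module maps on the $N^*(\Delta^1)$-factor and leave the brace part untouched; at the level of underlying dg $\mathbb{K}$-modules $d_0,d_1$ are surjective quasi-isomorphisms and $s_0$ a quasi-isomorphism, with $d_0 s_0 = d_1 s_0 = \mathrm{id}$.

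Next I would form the mapping path space $P=A\times_B B^I$, the pullback of $\Theta$ along $d_0$, by applying Lemma \ref{pback} to $\Theta\otimes\mathrm{id}$ and the strict surjection $d_0\otimes\mathrm{id}$; since tensoring with $\Sigma N^*(\Delta^n)$ is exact over the field $\mathbb{K}$, this gives $P\otimes\Sigma N^*(\Delta^\bullet)$ a simplicial $\widehat{\Gamma\Lambda\mathcal{PL}_\infty}$-algebra structure. I then consider the projection $\mathrm{pr}_A\colon P\to A$, the map $q=d_1\circ\mathrm{pr}_{B^I}\colon P\to B$, and the section $s\colon A\to P$, $a\mapsto(a,s_0\Theta(a))$, which satisfy $\mathrm{pr}_A\,s=\mathrm{id}_A$ and $q\,s=\Theta$. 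On underlying dg modules, $\mathrm{pr}_A$ is surjective with kernel $\cong\ker(d_0)$ acyclic, hence an acyclic fibration; $q$ is surjective (for instance $q(0,b\otimes\underline{1}^\vee)=b$), and by two-out-of-three of quasi-isomorphisms (using $\mathrm{pr}_A s=\mathrm{id}$ and $qs=\Theta$) it is a quasi-isomorphism, hence also an acyclic fibration. The crucial point is that, because $\Theta$ is a genuine brace-algebra morphism and thus $\Theta\otimes\mathrm{id}$ is strict, the morphism $H$ produced by Lemma \ref{pback} has all higher components $H_n^0=(0,\sigma\,(\Theta\otimes\mathrm{id})_n^0\,\pi_A)$ equal to zero, so $H$ is strict and consequently $\mathrm{pr}_A\otimes\mathrm{id}$ and $q\otimes\mathrm{id}$ are strict $\infty$-morphisms.

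With this setup Lemma \ref{decoupage} applies to both $\mathrm{pr}_A$ and $q$, showing that $\mathcal{MC}_\bullet(\mathrm{pr}_A)$ and $\mathcal{MC}_\bullet(q)$ are weak equivalences. Since $\mathrm{pr}_A\,s=\mathrm{id}$, two-out-of-three forces $\mathcal{MC}_\bullet(s)$ to be a weak equivalence, and then $\mathcal{MC}_\bullet(\Theta)=\mathcal{MC}_\bullet(q)\circ\mathcal{MC}_\bullet(s)$ is one as well, all the simplicial sets involved being Kan complexes by Theorem \ref{kan}. The main obstacle I anticipate is not the formal homotopy-theoretic bookkeeping but the careful construction of the path object and the pullback entirely inside the $\widehat{\Gamma\Lambda\mathcal{PL}_\infty}$ framework: one must verify that every structure map stays strict and that the relevant underlying dg-module maps are acyclic fibrations. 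This is precisely the place where positive characteristic forces the use of the Barratt--Eccles resolution (Corollary \ref{bracee}, Proposition \ref{pathobject}) in place of a naive commutative interval, and where the strictness of the brace morphism $\Theta$ is what makes Lemma \ref{pback} yield strict legs for the pullback.
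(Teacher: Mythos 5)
Your proposal is correct and follows essentially the same route as the paper: factor $\Theta$ through the mapping path space $A\times_{B}B^{I}$ built from the Barratt--Eccles path object $B^{I}=B\otimes N^*(\Delta^1)$ via Lemma \ref{pback}, observe that the section and the two projections are strict with the projections acyclic fibrations, apply Lemma \ref{decoupage} twice, and conclude by two-out-of-three among Kan complexes. Your section $s$ and map $q=d_1\circ\mathrm{pr}_{B^I}$ are exactly the paper's $\Psi_\bullet$ and $P_\bullet$, so there is nothing substantive to add.
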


Before giving the proof, note that if $B$ is a complete brace algebra, and if we set $B^I=B\otimes N^*(\Delta^1)$, then we have the following decomposition of the diagonal map in the category of $\widehat{\Gamma\Lambda\mathcal{PL}_\infty}$-algebras
% https://q.uiver.app/#q=WzAsMyxbMCwwLCIgQlxcb3RpbWVzXFxTaWdtYSBOXiooXFxEZWx0YV5uKSJdLFsxLDAsIiBCXklcXG90aW1lc1xcU2lnbWEgTl4qKFxcRGVsdGFebikiXSxbMiwwLCIgKEJcXHRpbWVzIEIpXFxvdGltZXNcXFNpZ21hIE5eKihcXERlbHRhXm4pIl0sWzAsMiwiXFxEZWx0YSIsMCx7ImN1cnZlIjotMn1dLFswLDEsInNfMFxcb3RpbWVzIGlkIiwyLHsic3R5bGUiOnsidGFpbCI6eyJuYW1lIjoibW9ubyJ9fX1dLFsxLDIsIihkXzBcXHRpbWVzIGRfMSlcXG90aW1lcyBpZCIsMix7InN0eWxlIjp7ImhlYWQiOnsibmFtZSI6ImVwaSJ9fX1dXQ==
\[\begin{tikzcd}
	{ B\otimes\Sigma N^*(\Delta^n)} & { B^I\otimes\Sigma N^*(\Delta^n)} & { (B\times B)\otimes\Sigma N^*(\Delta^n)}
	\arrow["{s_0\otimes id}"', tail, from=1-1, to=1-2]
	\arrow["\Delta", curve={height=-12pt}, from=1-1, to=1-3]
	\arrow["{(d_0\times d_1)\otimes id}"', two heads, from=1-2, to=1-3]
\end{tikzcd}\]

\noindent \noindent for every $n\geq 0$. This decomposition comes from Proposition \ref{pathobject} with $\mathcal{P}=\Lambda\mathcal{B}race$ and $R=B\otimes\Sigma N^*(\Delta^n)$. The map $s_0:B\longrightarrow B^I$ is given by the simplicial map $s_0:N^*(\Delta^0)\longrightarrow N^*(\Delta^1)$ and the maps $d_0,d_1:B^I\longrightarrow B$ are given by $d_0,d_1:N^*(\Delta^1)\longrightarrow N^*(\Delta^0)$. In particular, the morphisms $s_0,d_0$ and $d_1$ induce strict morphisms of $\widehat{\Gamma\Lambda\mathcal{PL}_\infty}$-algebras, since the action of $\mathcal{E}$ on $N^*(\Delta^n)$ is compatible with its underlying simplicial structure.

\begin{proof}
    Lemma \ref{decoupage} proves the theorem in the case of an acyclic fibration $\Theta$. Consider now the general case. Since $d_0\otimes id:B^I\otimes\Sigma N^*(\Delta^n)\longrightarrow B\otimes\Sigma N^*(\Delta^n)$ is a strict morphism and surjective, we can apply Lemma \ref{pback}:
   % https://q.uiver.app/#q=WzAsNSxbMSwwLCIoQVxcdGltZXMgXFx0ZXh0e1xcbm9ybWFsZm9udCBLZXJ9KGRfMCkpXFxvdGltZXNcXFNpZ21hIE5eKihcXERlbHRhXm4pIl0sWzQsMCwiIEJeSVxcb3RpbWVzXFxTaWdtYSBOXiooXFxEZWx0YV5uKSJdLFs0LDMsIiBCXFxvdGltZXNcXFNpZ21hIE5eKihcXERlbHRhXm4pIl0sWzEsMywiIEFcXG90aW1lc1xcU2lnbWEgTl4qKFxcRGVsdGFebikiXSxbMCwwLCIgQVxcb3RpbWVzXFxTaWdtYSBOXiooXFxEZWx0YV5uKSJdLFswLDMsIlxccGlfe0F9XFxvdGltZXMgaWQiLDIseyJzdHlsZSI6eyJoZWFkIjp7Im5hbWUiOiJlcGkifX19XSxbMywyLCJcXFRoZXRhXFxvdGltZXMgaWQiLDJdLFsxLDIsImRfMFxcb3RpbWVzIGlkIiwwLHsic3R5bGUiOnsiaGVhZCI6eyJuYW1lIjoiZXBpIn19fV0sWzAsMSwiKFxccGlfe1xcdGV4dHtcXG5vcm1hbGZvbnQgS2VyfShkXzApfVxcb3RpbWVzIGlkKUhfbiJdLFs0LDMsIj0iLDEseyJjdXJ2ZSI6Miwic3R5bGUiOnsiaGVhZCI6eyJuYW1lIjoibm9uZSJ9fX1dLFs0LDEsIihzXzBcXFRoZXRhKVxcb3RpbWVzIGlkIiwwLHsiY3VydmUiOi01fV0sWzQsMCwiXFxleGlzdHNcXFBzaV9uIiwwLHsic3R5bGUiOnsiYm9keSI6eyJuYW1lIjoiZGFzaGVkIn19fV0sWzAsMiwiIiwyLHsic3R5bGUiOnsibmFtZSI6ImNvcm5lciJ9fV1d
\[\begin{tikzcd}
	{ A\otimes\Sigma N^*(\Delta^n)} & {(A\times \text{\normalfont Ker}(d_0))\otimes\Sigma N^*(\Delta^n)} &&& { B^I\otimes\Sigma N^*(\Delta^n)} \\
	\\
	\\
	& { A\otimes\Sigma N^*(\Delta^n)} &&& { B\otimes\Sigma N^*(\Delta^n)}
	\arrow["{\exists\Psi_n}", dashed, from=1-1, to=1-2]
	\arrow["{(s_0\Theta)\otimes id}", curve={height=-30pt}, from=1-1, to=1-5]
	\arrow["{=}"{description}, curve={height=12pt}, no head, from=1-1, to=4-2]
	\arrow["{(\pi_{\text{\normalfont Ker}(d_0)}\otimes id)H_n}", from=1-2, to=1-5]
	\arrow["{\pi_{A}\otimes id}"', two heads, from=1-2, to=4-2]
	\arrow["\lrcorner"{anchor=center, pos=0.125}, draw=none, from=1-2, to=4-5]
	\arrow["{d_0\otimes id}", two heads, from=1-5, to=4-5]
	\arrow["{\Theta\otimes id}"', from=4-2, to=4-5]
\end{tikzcd}\]

\noindent where $H_n:(A\times \text{\normalfont Ker}(d_0))\otimes\Sigma N^*(\Delta^n)\rightsquigarrow (A\times B^I)\otimes\Sigma N^*(\Delta^n)$ is given by Lemma \ref{pback}, and $\Psi_n:A\otimes\Sigma N^*(\Delta^n)\rightsquigarrow (A\times \text{\normalfont Ker}(d_0))\otimes\Sigma N^*(\Delta^n)$ is the unique $\infty$-morphism which makes the previous diagram commutative.\\

\noindent We recall the $\widehat{\Gamma\Lambda\mathcal{PL}_\infty}$-algebra structure on $(A\times \text{\normalfont Ker}(d_0))\otimes\Sigma N^*(\Delta^n)$. Let $Q_n$ be the coderivation on $\Gamma\text{\normalfont Perm}^c((A\times B^I)\otimes\Sigma N^*(\Delta^n))$ given by the product 
$$(A\times B^I)\otimes\Sigma N^*(\Delta^n)\simeq (A\otimes\Sigma N^*(\Delta^n))\times (B^I\otimes\Sigma N^*(\Delta^n)).$$

\noindent Consider the morphisms $H_n,J_n:(A\times B^I)\otimes\Sigma N^*(\Delta^n)\rightsquigarrow (A\times B^I)\otimes\Sigma N^*(\Delta^n)$ defined in the proof of Lemma \ref{pback}. We note that these morphisms are strict, as the morphism $\Theta\otimes id:A\otimes\Sigma N^*(\Delta^n)\longrightarrow B\otimes\Sigma N^*(\Delta^n)$ is strict, and are defined by
$$(H_n)_0^0((a,b)\otimes \underline{x})=(a,\sigma \Theta(a)+b)\otimes\underline{x};$$
$$(J_n)_0^0((a,b)\otimes\underline{x})=(a,b-\sigma\Theta(a))\otimes\underline{x},$$

\noindent for every $a\in A,b\in B$ and $\underline{x}\in\Sigma N^*(\Delta^n)$, and where $\sigma : B\longrightarrow B^I$ 
is a splitting of $d_0:B^I\longrightarrow B$. Then, the $\widehat{\Gamma\Lambda\mathcal{PL}_\infty}$-algebra structure on $(A\times \text{\normalfont Ker}(d_0))\otimes\Sigma N^*(\Delta^n)$ is given by
$$(\widetilde{Q}_n)_p^q=(J_n)_q^q(Q_n)_p^q(H_n)_p^p.$$

\noindent We thus see that the $\widehat{\Gamma\Lambda\mathcal{PL}_\infty}$-algebra structures on $(A\times \text{\normalfont Ker}(d_0))\otimes\Sigma N^*(\Delta^n)$ for all $n\geq 0$ endow the simplicial set $(A\times \text{\normalfont Ker}(d_0))\otimes\Sigma N^*(\Delta^\bullet)$ with the structure of a strict simplicial object in $\widehat{\Gamma\Lambda\mathcal{PL}_\infty}$. Moreover, the map $\pi_A:A\times \text{\normalfont Ker}(d_0)\longrightarrow A$ is an acyclic fibration, and a simple computation shows that $\pi_A\otimes id$ is a strict morphism. By Lemma \ref{decoupage}, we deduce that $\mathcal{MC}(\pi_{A}\otimes id)$ is a weak equivalence. By the $2$ out of $3$ axiom in $\text{\normalfont sSet}$, we also have that $\mathcal{MC}(\Psi_\bullet)$ is a weak equivalence of simplicial sets.\\

Let $h:A\times B^I\longrightarrow A\times B^I$ be the morphism such that $(H_n)_0^0= h\otimes id$. For every $n\geq 0$, we set
$$P_n=d_1\pi_{B^I}h\otimes id:(A\times\text{\normalfont Ker}(d_0))\otimes\Sigma N^*(\Delta^n)\longrightarrow  B\otimes\Sigma N^*(\Delta^n)$$

\noindent We show that $P_n$ is a strict acyclic fibration. First, for every $n\geq 0$, the morphism $P_n$ is strict as it is the composite of strict morphisms. Moreover, we have the identity $\Theta\otimes id=\Psi_\bullet P_\bullet$, which shows that $P_n$ is acyclic for every $n\geq 0$. We now prove that $P_n$ is surjective. For every $b\in B$ and $\underline{x}\in \Sigma N^*(\Delta^n)$, we have $P_n(0,b\otimes\underline{0}^\vee\otimes\underline{x})=b\otimes\underline{x}$ which proves that $P_n$ is surjective for every $n\geq 0$. By Lemma \ref{decoupage}, we have that $\mathcal{MC}(P_\bullet)$ is a weak equivalence. Finally, since we have $\Theta\otimes id=\Psi_\bullet P_\bullet$, it follows that $\mathcal{MC}(\Theta\otimes id)$ is also a weak equivalence, which proves the theorem.
\end{proof}

\subsection{Comparison with the deformation theory of shifted $\mathcal{L}ie_\infty$-algebras}\label{sec:244}

Let $\mathcal{L}ie_\infty$ be an operad which encodes Lie algebras up to homotopy, for instance $\mathcal{L}ie_\infty=B^c(\Lambda^{-1}\mathcal{C}om^\vee)$. We call a $\Lambda\mathcal{L}_\infty$-algebra any algebra over the operad $\Lambda\mathcal{L}ie_\infty$. These algebras have been widely studied in the literature. Recall (for instance from \cite[$\mathsection$2]{rogerslieinf}, or \cite{berglund} for the non-shifted analogue) that giving a $\Lambda\mathcal{L}_\infty$-algebra structure on a graded $\mathbb{K}$-module $V$ is equivalent to giving degree $-1$ brackets
$$[\underbrace{-,\ldots,-}_n]:(V^{\otimes n})_{\Sigma_n}\longrightarrow V$$

\noindent for every $n\geq 0$ such that we have the higher Jacobi relations:
$$\sum_{k=1}^n\sum_{\sigma\in\text{\normalfont Sh}(k,n-k)}\pm[[x_{\sigma(1)},\ldots,x_{\sigma(k)}],x_{\sigma(k+1)},\ldots,x_{\sigma(n)}]=0$$

\noindent for every $x_1,\ldots,x_n\in V$. In particular, the $0$-bracket $d:=[-]$ is a differential.\\ 

\begin{prop}\label{prelielie}
    There exists an operad morphism $\mathcal{L}ie_\infty\longrightarrow\mathcal{P}re\mathcal{L}ie_\infty$ which fits in the following commutative diagram:
    % https://q.uiver.app/#q=WzAsNCxbMCwwLCJcXG1hdGhjYWx7TH1pZV9cXGluZnR5Il0sWzEsMCwiXFxtYXRoY2Fse1B9cmVcXG1hdGhjYWx7TH1pZV9cXGluZnR5Il0sWzAsMSwiXFxtYXRoY2Fse0x9aWUiXSxbMSwxLCJcXG1hdGhjYWx7UH1yZVxcbWF0aGNhbHtMfWllIl0sWzAsMV0sWzAsMl0sWzEsM10sWzIsM11d
\[\begin{tikzcd}
	{\mathcal{L}ie_\infty} & {\mathcal{P}re\mathcal{L}ie_\infty} \\
	{\mathcal{L}ie} & {\mathcal{P}re\mathcal{L}ie}
	\arrow[from=1-1, to=1-2]
	\arrow[from=1-1, to=2-1]
	\arrow[from=1-2, to=2-2]
	\arrow[from=2-1, to=2-2]
\end{tikzcd}.\]

In particular, every $\Lambda\mathcal{PL}_\infty$-algebra is a $\Lambda\mathcal{L}_\infty$-algebra with the brackets
$$[x_1,\ldots,x_n]=\sum_{i=1}^n\pm x_i\llbrace x_1,\ldots,\widehat{x_i},\ldots, x_n\rrbrace.$$
\end{prop}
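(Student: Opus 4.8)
The plan is to produce the morphism as the cobar construction of a dualized operad map, to establish the square by a binary-generator check, and then to read off the brackets from the image of the $\mathcal{L}ie_\infty$ generator.

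First I would recall the operad morphism $p\colon\text{\normalfont Perm}\longrightarrow\mathcal{C}om$ sending every $e_i^n$ to $1$; it is a morphism of operads by Proposition \ref{compperm} together with the definition of $\mathcal{C}om$. Taking arity-wise linear duals (Remark \ref{coopdual}) gives a morphism of cooperads $p^\vee\colon\mathcal{C}om^\vee\longrightarrow\text{\normalfont Perm}^c$, and applying the cooperadic suspension $\Lambda^{-1}$ and the cobar functor $B^c$ yields
$$\mathcal{L}ie_\infty=B^c(\Lambda^{-1}\mathcal{C}om^\vee)\longrightarrow B^c(\Lambda^{-1}\text{\normalfont Perm}^c)=\mathcal{P}re\mathcal{L}ie_\infty,$$
which is precisely the morphism already recalled in $\mathsection$\ref{sec:233}. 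The point worth recording, and the one that controls everything below, is that since $p(e_i^n)=1$ for all $i$, the dual $p^\vee$ sends the cogenerator $1^\vee\in\mathcal{C}om^\vee(n)$ to $\sum_{i=1}^{n}(e_i^n)^\vee\in\text{\normalfont Perm}^c(n)$.

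Next I would verify the commutative square. The two vertical maps are the canonical projections onto homology, sending the binary generator to the binary operation and all higher generators to $0$, while the bottom map $\mathcal{L}ie\to\mathcal{P}re\mathcal{L}ie$ is the antisymmetrization $[x,y]\mapsto x\star y-(-1)^{|x||y|}y\star x$. Because $\mathcal{L}ie$ is generated in arity $2$, commutativity need only be checked on the binary generator: by the previous paragraph it maps to the sum of the two arity-$2$ generators of $\mathcal{P}re\mathcal{L}ie_\infty$ dual to $(e_1^2)^\vee$ and $(e_2^2)^\vee$, whose images under $\mathcal{P}re\mathcal{L}ie_\infty\to\mathcal{P}re\mathcal{L}ie$ are the pre-Lie product $\star$ and its opposite (Remark \ref{prelieinftoprelie} and $\Sigma_2$-equivariance). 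Their sum is exactly the antisymmetrized bracket, so the square reduces to this one-line identity.

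Finally, for the algebra statement, given $V\in\Lambda\mathcal{PL}_\infty$ I would write $V=\Sigma L$ with $L$ a $\mathcal{P}re\mathcal{L}ie_\infty$-algebra (Theorem \ref{caracgammaprelieinf}); pulling back the structure along the morphism makes $L$ a $\mathcal{L}ie_\infty$-algebra, hence $V=\Sigma L$ a $\Lambda\mathcal{L}_\infty$-algebra via the suspension equivalence between $\mathcal{P}$-algebras on $L$ and $\Lambda\mathcal{P}$-algebras on $\Sigma L$. In particular the higher Jacobi relations then hold automatically, with no direct computation. To identify the brackets, I would use that the $n$-ary bracket is the operation attached to the image of the $\mathcal{L}ie_\infty$ generator: by the identity $p^\vee(1^\vee)=\sum_{i}(e_i^n)^\vee$ and the definition $x\llbrace y_1,\ldots,y_{n-1}\rrbrace=l(x\otimes y_1\cdots y_{n-1})$ of the symmetric braces (Proposition \ref{caracprelieinf}), the summand indexed by $(e_i^n)^\vee$ contributes the brace whose root is the $i$-th argument, giving
$$[x_1,\ldots,x_n]=\sum_{i=1}^{n}\pm\,x_i\llbrace x_1,\ldots,\widehat{x_i},\ldots,x_n\rrbrace,$$
where the sign is the Koszul sign of moving $x_i$ to the front. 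The main obstacle is exactly this bookkeeping: tracking the desuspensions $\Sigma^{-1}\Lambda^{-1}$ built into $B^c$ and the Koszul signs, and confirming that the expression is well defined on the coinvariants $(V^{\otimes n})_{\Sigma_n}$ — both forced by the operadic construction but requiring explicit verification to match the stated formula.
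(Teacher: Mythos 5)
Your proposal is correct and follows essentially the same route as the paper: dualize the projection $\text{\normalfont Perm}\to\mathcal{C}om$ to get $\mathcal{C}om^\vee\to\text{\normalfont Perm}^c$, apply the cobar construction to obtain $\mathcal{L}ie_\infty\to\mathcal{P}re\mathcal{L}ie_\infty$, and read off both the commutative square and the bracket formula from the image $\sum_i(e_i^n)^\vee$ of the cogenerator. You simply spell out the generator checks and sign bookkeeping that the paper dismisses as ``immediate computation.''
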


\begin{proof}
    We have an operad morphism $\text{\normalfont Perm}\longrightarrow\mathcal{C}om$ defined by $e_i^n\longrightarrow 1$ for every $n\geq 1$ and $1\leq i\leq n$. By duzalization, this gives a cooperad morphism $\mathcal{C}om^\vee\longrightarrow\text{\normalfont Perm}^\vee$ defined by $1\longmapsto\sum_{i=1}^n(e_i^n)^\vee$ for every $n\geq 1$. Taking the cobar construction then gives a well-defined morphism $\mathcal{L}ie_\infty\longrightarrow\mathcal{P}re\mathcal{L}ie_\infty$. The commutativity of the square comes from immediate computation. The relation between the $\Lambda\mathcal{PL}_\infty$-algebra structure and its induced $\Lambda\mathcal{L}_\infty$-algebra structure comes from the morphism $\mathcal{C}om^\vee\longrightarrow\text{\normalfont Perm}^\vee$.
\end{proof}

% Then we have the following commutative:
% % https://q.uiver.app/#q=WzAsNixbMCwwLCJcXG1hdGhjYWx7TH1pZV9cXGluZnR5Il0sWzAsMiwiXFxtYXRoY2Fse0x9aWUiXSxbMiwwLCJcXG1hdGhjYWx7UH1yZVxcbWF0aGNhbHtMfWllX1xcaW5mdHkiXSxbMiwyLCJcXG1hdGhjYWx7UH1yZVxcbWF0aGNhbHtMfWllIl0sWzQsMCwiXFxtYXRoY2Fse0J9cmFjZVxcdW5kZXJzZXR7SH17XFxvdGltZXN9XFxtYXRoY2Fse0V9Il0sWzQsMiwiXFxtYXRoY2Fse0J9cmFjZSJdLFswLDJdLFsyLDRdLFs0LDVdLFszLDVdLFsxLDNdLFswLDFdLFsyLDNdXQ==
% \[\begin{tikzcd}
% 	{\mathcal{L}ie_\infty} && {\mathcal{P}re\mathcal{L}ie_\infty} && {\mathcal{B}race\underset{\text{\normalfont H}}{\otimes}\mathcal{E}} \\
% 	\\
% 	{\mathcal{L}ie} && {\mathcal{P}re\mathcal{L}ie} && {\mathcal{B}race}
% 	\arrow[from=1-1, to=1-3]
% 	\arrow[from=1-1, to=3-1]
% 	\arrow[from=1-3, to=1-5]
% 	\arrow[from=1-3, to=3-3]
% 	\arrow[from=1-5, to=3-5]
% 	\arrow[from=3-1, to=3-3]
% 	\arrow[from=3-3, to=3-5]
% \end{tikzcd}.\]

Proposition \ref{prelielie} implies that every complete $\Lambda\mathcal{PL}_\infty$-algebra is endowed with the structure of a complete $\Lambda\mathcal{L}_\infty$-algebra. For every $\Lambda\mathcal{PL}_\infty$-algebra $A$, we denote by $L(A)$ the underlying $\Lambda\mathcal{L}_\infty$-algebra structure on $A$.\\

From now, we work over a field $\mathbb{K}$ with $char(\mathbb{K})=0$. Using Theorem \ref{morph}, we can use the deformation theory developed in \cite{rogerslie} for $\Lambda\mathcal{L}_\infty$-algebras (called $L[1]_\infty$-algebras in this reference). Following \cite[$\mathsection$5.6]{rogerslie}, for every Lie algebra $L$, we set
$$\mathcal{MC}_\bullet(L)=\mathcal{MC}( L\widehat{\otimes}\Sigma\Omega^\ast(\Delta^\bullet)),$$

\noindent where $\Omega^\ast(\Delta^n)$ denotes the dg associative and commutative algebra of polynomial De Rham forms on the simplex $\Delta^n$, and where we consider, on the right hand-side, the Maurer-Cartan set of the $\Lambda\mathcal{L}_\infty$-algebra $ L\otimes\Sigma\Omega^\ast(\Delta^\bullet)$ (see \cite[$\mathsection$5.4]{rogerslie}).\\

Note that since $char(\mathbb{K})=0$, the category of $\Gamma(\mathcal{P}re\mathcal{L}ie_\infty,-)$-algebras is equivalent to the category of $\mathcal{P}re\mathcal{L}ie_\infty$-algebras, so that $\Gamma\Lambda\mathcal{PL}_\infty=\Lambda\mathcal{PL}_\infty$. The goal of this subsection is to prove that, for every complete brace algebra $A$, the simplicial sets $\mathcal{MC}_\bullet(A)$ and $\mathcal{MC}_\bullet(L(A))$ are weakly equivalent.\\

In the following, we distinguish the $\Lambda\mathcal{PL}_\infty$-algebra structure with the $\Lambda\mathcal{L}_\infty$-algebra structure. More precisely, for every complete $\Lambda\mathcal{PL}_\infty$-algebra $V$, we set:
    $$\mathcal{MC}^{\Lambda\mathcal{PL}_\infty}(V) = \mathcal{MC}(V)\ ;\ 
        \mathcal{MC}^{\Lambda\mathcal{L}_\infty}(V)=\mathcal{MC}(L(V)).$$

\noindent We also set, for every complete brace algebra $A$,
$$\mathcal{MC}_\bullet^{\Lambda\mathcal{PL}_\infty}(A)=\mathcal{MC}_\bullet(A)\ ;\ \mathcal{MC}^{\Lambda\mathcal{L}_\infty}_\bullet(A)=\mathcal{MC}_\bullet(L(A)),$$

\noindent where $L(A)$ is the Lie algebra endowed with the bracket $[x,y]=x\langle y\rangle-(-1)^{|x||y|}y\langle x\rangle$.

\begin{lm}
    Let $V$ be a $\widehat{\Lambda\mathcal{PL}_\infty}$-algebra. Then
    $$\mathcal{MC}^{\Lambda\mathcal{L}_\infty}(V)=\mathcal{MC}^{\Lambda\mathcal{PL}_\infty}(V).$$
\end{lm}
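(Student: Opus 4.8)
The plan is to prove the equality by comparing the two Maurer--Cartan equations termwise, using the explicit form of the $\Lambda\mathcal{L}_\infty$-brackets provided by Proposition \ref{prelielie}. First I would record the two conditions in a common normalization. Since $char(\mathbb{K})=0$, the last corollary of $\mathsection$\ref{sec:224} gives the weighted braces of $V$ as $x\llbrace x\rrbrace_n=\frac{1}{n!}x\llbrace\underbrace{x,\ldots,x}_n\rrbrace$, so that an element $x\in V_0$ lies in $\mathcal{MC}^{\Lambda\mathcal{PL}_\infty}(V)$ if and only if
$$d(x)+\sum_{n\geq 1}\frac{1}{n!}x\llbrace\underbrace{x,\ldots,x}_n\rrbrace=0.$$
On the other hand, $x\in V_0$ lies in $\mathcal{MC}^{\Lambda\mathcal{L}_\infty}(V)$ if and only if it satisfies the Maurer--Cartan equation of the complete $\Lambda\mathcal{L}_\infty$-algebra $L(V)$, namely
$$\sum_{n\geq 1}\frac{1}{n!}[\underbrace{x,\ldots,x}_n]=0.$$

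The key step would be to evaluate the brackets of $L(V)$ on a tuple of equal degree-$0$ arguments. By the formula of Proposition \ref{prelielie}, $[x_1,\ldots,x_n]=\sum_{i=1}^n\pm x_i\llbrace x_1,\ldots,\widehat{x_i},\ldots,x_n\rrbrace$, and setting all $x_i=x$ I would observe that the Koszul signs produced by moving $x_i$ to the front all vanish because $|x|=0$, while the $n$ summands become identical. This yields
$$[\underbrace{x,\ldots,x}_n]=n\,x\llbrace\underbrace{x,\ldots,x}_{n-1}\rrbrace.$$

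Substituting this into the $\Lambda\mathcal{L}_\infty$ equation and re-indexing by $m=n-1$ then gives
$$\sum_{n\geq 1}\frac{1}{n!}[\underbrace{x,\ldots,x}_n]=\sum_{n\geq 1}\frac{1}{(n-1)!}x\llbrace\underbrace{x,\ldots,x}_{n-1}\rrbrace=\sum_{m\geq 0}\frac{1}{m!}x\llbrace\underbrace{x,\ldots,x}_m\rrbrace.$$
Since the $m=0$ term is $x\llbrace\rrbrace=d(x)$, this last expression is exactly the left-hand side of the $\Lambda\mathcal{PL}_\infty$ Maurer--Cartan equation above, so the two vanishing conditions coincide for every $x\in V_0$, which is the asserted equality of sets.

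The only point requiring care is the sign and multiplicity bookkeeping in passing from the general bracket formula to the identity $[x,\ldots,x]=n\,x\llbrace x,\ldots,x\rrbrace$. Once the degree-$0$ hypothesis is invoked to kill every Koszul sign, the remaining verification is the elementary cancellation of factorials carried out above, so I expect no genuine obstacle beyond this routine check.
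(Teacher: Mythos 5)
Your proof is correct and follows the same route as the paper: both arguments rest on the formula $[x_1,\ldots,x_n]=\sum_i\pm x_i\llbrace x_1,\ldots,\widehat{x_i},\ldots,x_n\rrbrace$ from Proposition \ref{prelielie}, specialized to equal degree-$0$ arguments so that the Koszul signs disappear and the $n$ summands collapse to $n\,x\llbrace x,\ldots,x\rrbrace$, after which the factorials cancel and the two Maurer--Cartan equations coincide. The paper states this equivalence without writing out the multiplicity and reindexing step, which you make explicit; there is no substantive difference.
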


\begin{proof}
    By Proposition \ref{prelielie}, we have, for every $x_1,\ldots,x_n\in V$,
    $$[x_1,\ldots,x_n]=\sum_{k=1}^n\pm x_k\llbrace x_1,\ldots,\widehat{x_k},\ldots,x_n\rrbrace.$$

    \noindent Then, the Maurer-Cartan equation
    $$d(x)+\sum_{n\geq 1}\frac{1}{n!}x\llbrace\underbrace{x,\ldots,x}_n\rrbrace=0$$

    \noindent is equivalent to the equation
    $$d(x)+\sum_{n\geq 2}\frac{1}{n!}[\underbrace{x,\ldots,x}_n]=0,$$

    \noindent which is precisely the Maurer-Cartan equation in complete $\Lambda\mathcal{L}ie_\infty$-algebras.
\end{proof}

Let $A$ be a complete brace algebra, $B$ a dg commutative and associative algebra, and $E$ be a $\mathcal{E}$-algebra. Then the tensor products $( A\widehat{\otimes} B)\widehat{\otimes}\Sigma E$ and $(A\widehat{\otimes} E)\widehat{\otimes}\Sigma  B$ are endowed with a complete $\Lambda\mathcal{L}_\infty$-algebra structure. Indeed, the first one is induced by the composite
% https://q.uiver.app/#q=WzAsMyxbMCwwLCJcXG1hdGhjYWx7TH1pZV9cXGluZnR5Il0sWzEsMCwiXFxtYXRoY2Fse0J9cmFjZVxcdW5kZXJzZXR7SH17XFxvdGltZXN9XFxtYXRoY2Fse0V9Il0sWzIsMCwiKFxcbWF0aGNhbHtCfXJhY2VcXHVuZGVyc2V0e0h9XFxvdGltZXNcXG1hdGhjYWx7Q31vbSlcXHVuZGVyc2V0e0h9e1xcb3RpbWVzfVxcbWF0aGNhbHtFfSJdLFswLDFdLFsxLDIsIlxcc2ltZXEiXV0=
\[\begin{tikzcd}
	{\Lambda\mathcal{L}ie_\infty} & {\Lambda\mathcal{B}race\underset{\text{\normalfont H}}{\otimes}\mathcal{E}} & {(\mathcal{B}race\underset{\text{\normalfont H}}\otimes\mathcal{C}om)\underset{\text{\normalfont H}}{\otimes}\Lambda\mathcal{E}}
	\arrow[from=1-1, to=1-2]
	\arrow["\simeq", from=1-2, to=1-3]
\end{tikzcd},\]

\noindent while the second one is induced by the composite
% https://q.uiver.app/#q=WzAsMyxbMCwwLCJcXG1hdGhjYWx7TH1pZV9cXGluZnR5Il0sWzEsMCwiXFxtYXRoY2Fse0J9cmFjZVxcdW5kZXJzZXR7SH17XFxvdGltZXN9XFxtYXRoY2Fse0V9Il0sWzIsMCwiKFxcbWF0aGNhbHtCfXJhY2VcXHVuZGVyc2V0e0h9XFxvdGltZXNcXG1hdGhjYWx7RX0pXFx1bmRlcnNldHtIfXtcXG90aW1lc31cXG1hdGhjYWx7Q31vbSJdLFswLDFdLFsxLDIsIlxcc2ltZXEiXV0=
\[\begin{tikzcd}
	{\Lambda\mathcal{L}ie_\infty} & {\Lambda\mathcal{B}race\underset{\text{\normalfont H}}{\otimes}\mathcal{E}} & {(\mathcal{B}race\underset{\text{\normalfont H}}\otimes\mathcal{E})\underset{\text{\normalfont H}}{\otimes}\Lambda\mathcal{C}om}
	\arrow[from=1-1, to=1-2]
	\arrow["\simeq", from=1-2, to=1-3]
\end{tikzcd}.\]

\begin{lm}
    The isomorphism
    % https://q.uiver.app/#q=WzAsMixbMCwwLCIoQVxcb3RpbWVzIEIpXFxvdGltZXMgRSJdLFsxLDAsIihBXFxvdGltZXMgRSlcXG90aW1lcyBCIl0sWzAsMSwiXFxzaW1lcSJdXQ==
\[\begin{tikzcd}
	{( A\widehat{\otimes} B)\widehat{\otimes}\Sigma E} & {(A\widehat{\otimes} E)\widehat{\otimes} \Sigma B}
	\arrow["\simeq", from=1-1, to=1-2]
\end{tikzcd}\]
\noindent which exchanges $E$ and $B$ is an isomorphism of complete $\Lambda\mathcal{L}_\infty$-algebras.
\end{lm}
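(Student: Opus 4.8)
The plan is to prove the stronger statement that the exchange map is an isomorphism of $\Lambda\mathcal{L}_\infty$-algebras by exhibiting it as the restriction, along the fixed operad morphism $\Lambda\mathcal{L}ie_\infty\longrightarrow\Lambda\mathcal{B}race\underset{\text{\normalfont H}}{\otimes}\mathcal{E}$, of an isomorphism of $\Lambda\mathcal{B}race\underset{\text{\normalfont H}}{\otimes}\mathcal{E}$-algebras. Both $\Lambda\mathcal{L}_\infty$-structures in the statement are obtained from \emph{this same} morphism; only the subsequent symmetric-monoidal identification of $\Lambda\mathcal{B}race\underset{\text{\normalfont H}}{\otimes}\mathcal{E}$ differs. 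Since restriction along a fixed operad map is functorial, it will suffice to produce a single isomorphism of $\Lambda\mathcal{B}race\underset{\text{\normalfont H}}{\otimes}\mathcal{E}$-algebras between the two sides. First I would record, using that the Hadamard tensor product makes $\Sigma\mathcal{O}p$ symmetric monoidal with $\mathcal{C}om$ as unit, together with the identity $\Lambda=\Lambda\mathcal{C}om$, the chain of operad isomorphisms
$$(\mathcal{B}race\underset{\text{\normalfont H}}{\otimes}\mathcal{C}om)\underset{\text{\normalfont H}}{\otimes}\Lambda\mathcal{E}\simeq\Lambda\mathcal{B}race\underset{\text{\normalfont H}}{\otimes}\mathcal{E}\simeq(\mathcal{B}race\underset{\text{\normalfont H}}{\otimes}\mathcal{E})\underset{\text{\normalfont H}}{\otimes}\Lambda\mathcal{C}om,$$
which are exactly the two identifications used to define the two structures.

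Second, I would apply the shift correspondence of $\mathsection$\ref{sec:212} (giving a $\mathcal{P}$-algebra on $V$ is equivalent to giving a $\Lambda\mathcal{P}$-algebra on $\Sigma V$) to pull the suspension out of both tensor products. This identifies $(A\widehat{\otimes}B)\widehat{\otimes}\Sigma E\simeq\Sigma(A\widehat{\otimes}B\widehat{\otimes}E)$ and $(A\widehat{\otimes}E)\widehat{\otimes}\Sigma B\simeq\Sigma(A\widehat{\otimes}E\widehat{\otimes}B)$, and carries the two $\Lambda\mathcal{B}race\underset{\text{\normalfont H}}{\otimes}\mathcal{E}$-structures to the suspensions of two $\mathcal{B}race\underset{\text{\normalfont H}}{\otimes}\mathcal{E}$-structures on the desuspended modules $M_1:=A\widehat{\otimes}B\widehat{\otimes}E$ and $M_2:=A\widehat{\otimes}E\widehat{\otimes}B$. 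Unwinding the unit isomorphisms above, both structures are given by the \emph{same} recipe: an operation $T\otimes w$, with $T\in\mathcal{B}race(n)$ and $w\in\mathcal{E}(n)$, acts by the brace operation $T$ on the $A$-factors, by $w$ on the $E$-factors, and by the commutative product on the $B$-factors. The only difference between $M_1$ and $M_2$ is the tensor order of the factors $B$ and $E$.

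Third, I would conclude that the symmetry isomorphism $\tau_{B,E}\colon M_1\to M_2$ exchanging the $B$- and $E$-factors is an isomorphism of $\mathcal{B}race\underset{\text{\normalfont H}}{\otimes}\mathcal{E}$-algebras: this is the naturality of the operadic action of a Hadamard tensor product under a symmetry of the underlying symmetric monoidal category, the commutative factor $B$ being transported coherently while the brace- and $\mathcal{E}$-operations are unaffected by its position. Applying $\Sigma(-)$ then yields an isomorphism of $\Lambda\mathcal{B}race\underset{\text{\normalfont H}}{\otimes}\mathcal{E}$-algebras, hence of $\Lambda\mathcal{L}_\infty$-algebras, between the two sides. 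It remains to check that $\Sigma\tau_{B,E}$, read through the identifications $\Sigma M_1\simeq(A\widehat{\otimes}B)\widehat{\otimes}\Sigma E$ and $\Sigma M_2\simeq(A\widehat{\otimes}E)\widehat{\otimes}\Sigma B$, is indeed the exchange map of the statement; this amounts to verifying that moving the suspension from the front past $A\widehat{\otimes}B$ on one side and past $A\widehat{\otimes}E$ on the other recovers the evident symmetry on $B\widehat{\otimes}\Sigma E$ versus $E\widehat{\otimes}\Sigma B$.

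The step requiring the most care — and the only possible obstacle — is this last sign bookkeeping: the Koszul signs produced by commuting the suspension past the $A$-, $B$- and $E$-factors, by the symmetry $\tau_{B,E}$, and by the shift functor itself (which carries the sign $(-1)^{k|f|}$ of the formula defining $\Sigma^k f$) must assemble into the single sign of the exchange map. I expect these to cohere automatically by Mac Lane coherence for symmetric monoidal categories together with the naturality of the suspension functor $\Sigma(-)=\Sigma\otimes-$, so that no genuine difficulty arises beyond a careful, essentially formal, verification.
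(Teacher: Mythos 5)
Your proposal is correct; the paper itself dismisses this lemma with the single line ``straightforward computations,'' so there is no competing argument to measure against. Your packaging of the computation --- both structures restrict along the fixed morphism $\Lambda\mathcal{L}ie_\infty\longrightarrow\Lambda\mathcal{B}race\underset{\text{\normalfont H}}{\otimes}\mathcal{E}$, the two sides desuspend to the same factor-wise $\mathcal{B}race\underset{\text{\normalfont H}}{\otimes}\mathcal{E}\underset{\text{\normalfont H}}{\otimes}\mathcal{C}om$-action on $A\widehat{\otimes}B\widehat{\otimes}E$ up to the symmetry $\tau_{B,E}$, and the residual work is Koszul-sign bookkeeping governed by coherence and the naturality of $\Sigma(-)$ --- is exactly the content of that straightforward computation, and correctly identifies the one place (the signs from moving the suspension past the factors) where care is needed.
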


\begin{proof}
    Straightforward computations.
\end{proof}

We thus obtain the following theorem.

\begin{thm}
    Let $A$ be a complete brace algebra. Then there exists a simplicial set $\mathcal{S}_\bullet^A$ and a zig-zag of weak equivalences in simplicial sets:
    % https://q.uiver.app/#q=WzAsMyxbMCwwLCJcXG1hdGhjYWx7TUN9XntcXExhbWJkYVxcbWF0aGNhbHtQTH1fXFxpbmZ0eX0oQSkiXSxbMSwwLCJcXG1hdGhjYWx7U31fXFxidWxsZXReQSJdLFsyLDAsIlxcbWF0aGNhbHtNQ31ee0xbMV1fXFxpbmZ0eX0oQSkiXSxbMCwxLCJcXHNpbSJdLFsyLDEsIlxcc2ltIiwyXV0=
\[\begin{tikzcd}
	{\mathcal{MC}_\bullet^{\Lambda\mathcal{PL}_\infty}(A)} & {\mathcal{S}_\bullet^A} & {\mathcal{MC}_\bullet^{\Lambda\mathcal{L}_\infty}(A)}
	\arrow["\sim", from=1-1, to=1-2]
	\arrow["\sim"', from=1-3, to=1-2]
\end{tikzcd}.\]
\end{thm}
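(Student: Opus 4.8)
The plan is to construct the intermediate simplicial set $\mathcal{S}_\bullet^A$ as a Maurer--Cartan set of a $\Lambda\mathcal{L}_\infty$-algebra built from a suitable third tensor factor interpolating between $\Sigma N^*(\Delta^\bullet)$ and $\Sigma\Omega^*(\Delta^\bullet)$. First I would recall that, since $char(\mathbb{K})=0$, there is a chain of quasi-isomorphisms of dg commutative algebras relating the Sullivan forms $\Omega^*(\Delta^n)$ to the normalized cochains $N^*(\Delta^n)$; the standard tool is the integration map $\int:\Omega^*(\Delta^n)\longrightarrow N^*(\Delta^n)$ of Dupont, which is a natural weak equivalence in each simplicial degree and is compatible with the cosimplicial structure. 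The key point, however, is that $N^*(\Delta^n)$ carries an $\mathcal{E}$-algebra structure while $\Omega^*(\Delta^n)$ is merely commutative (hence a $\mathcal{C}om$-algebra, which is an $\mathcal{E}$-algebra via the augmentation $\mathcal{E}\overset{\sim}{\longrightarrow}\mathcal{C}om$). So the two constructions $A\otimes\Sigma N^*(\Delta^\bullet)$ and $A\widehat\otimes\Sigma\Omega^*(\Delta^\bullet)$ both receive a $\Lambda\mathcal{L}_\infty$-algebra structure through the composites displayed just above the statement, using the morphism $\mathcal{L}ie_\infty\longrightarrow\mathcal{B}race\underset{\text{\normalfont H}}{\otimes}\mathcal{E}$ of Theorem~\ref{morph} (via Proposition~\ref{prelielie}).

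The main structural idea is to exploit the two preceding lemmas on commuting the tensor factors. I would take $\mathcal{S}_\bullet^A$ to be a single Maurer--Cartan space of a $\Lambda\mathcal{L}_\infty$-algebra in which the roles of the $\mathcal{E}$-algebra and the commutative-algebra factors are both present, realised as a zig-zag through a common refinement. Concretely, one may set
$$\mathcal{S}_\bullet^A=\mathcal{MC}^{\Lambda\mathcal{L}_\infty}\!\left((A\widehat\otimes N^*(\Delta^\bullet))\widehat\otimes\Sigma\Omega^*(\Delta^\bullet)\right),$$
viewing $A\widehat\otimes N^*(\Delta^\bullet)$ as a brace algebra (brace structure tensored with the $\mathcal{E}$-action on $N^*$) and $\Omega^*$ as the commutative factor, so that the second of the two commuting lemmas identifies this with $(A\widehat\otimes\Omega^*(\Delta^\bullet))\widehat\otimes\Sigma N^*(\Delta^\bullet)$. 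Then I would produce the two comparison maps: the map $\mathcal{S}_\bullet^A\longrightarrow\mathcal{MC}_\bullet^{\Lambda\mathcal{L}_\infty}(A)$ is induced by the augmentation $N^*(\Delta^\bullet)\longrightarrow\mathbb{K}$ collapsing the $\mathcal{E}$-factor, and the map $\mathcal{S}_\bullet^A\longrightarrow\mathcal{MC}_\bullet^{\Lambda\mathcal{PL}_\infty}(A)$ is induced by the Dupont integration $\Omega^*(\Delta^\bullet)\longrightarrow\mathbb{K}$ (or by a further zig-zag collapsing $\Omega^*$). Here the first lemma above (that $\mathcal{MC}^{\Lambda\mathcal{L}_\infty}=\mathcal{MC}^{\Lambda\mathcal{PL}_\infty}$ on a $\widehat{\Lambda\mathcal{PL}_\infty}$-algebra) is what lets me freely pass between the brace/pre-Lie Maurer--Cartan description and the purely Lie one on each vertex.

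To conclude that both legs are weak equivalences, I would invoke Theorem~\ref{gm} (the Goldman--Millson theorem proved in this memoir) together with its $\Lambda\mathcal{L}_\infty$-analogue from \cite{rogerslie}. The essential input is that, for fixed $A$, tensoring a weak equivalence of coefficient dg algebras ($N^*\to\mathbb{K}$, respectively $\Omega^*\to\mathbb{K}$, in each cosimplicial degree, or a Dupont-type comparison) with $A$ yields a degreewise weak equivalence of the underlying dg $\mathbb{K}$-modules; then the homotopy-invariance result shows the induced map of simplicial Maurer--Cartan sets is a weak equivalence. The hard part will be ensuring strictness and simplicial naturality of all the intervening morphisms: the commuting-factor lemmas give isomorphisms of $\Lambda\mathcal{L}_\infty$-algebras in each degree, but I must check that these assemble into morphisms of \emph{simplicial} $\widehat{\Gamma\Lambda\mathcal{PL}_\infty}$-algebras (compatibly with face and degeneracy maps) so that Theorem~\ref{gm} and the Kan-complex structure from Theorem~\ref{kan} apply. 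The subtle technical obstruction is that $\Omega^*$ is only commutative, not an $\mathcal{E}$-algebra with a genuine homotopy-coherent cochain-level structure, so the comparison between $N^*$ and $\Omega^*$ cannot be realised by a direct operadic map but only through the integration/zig-zag; verifying that this zig-zag is compatible with the $\Lambda\mathcal{L}_\infty$-structures (not merely the underlying dg modules) is where the real work lies, and I would handle it by restricting to the underlying $\Lambda\mathcal{L}_\infty$-structures via Proposition~\ref{prelielie}, where the commutative factor suffices.
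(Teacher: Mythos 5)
Your intermediate object is exactly the paper's: the diagonal of the bisimplicial Maurer--Cartan set of $(A\widehat\otimes N^*(\Delta^p))\widehat\otimes\Sigma\Omega^*(\Delta^q)$, compared to the two ends via the factor-swap lemma and the identification $\mathcal{MC}^{\Lambda\mathcal{L}_\infty}=\mathcal{MC}^{\Lambda\mathcal{PL}_\infty}$ on $\widehat{\Lambda\mathcal{PL}_\infty}$-algebras, with homotopy invariance supplying the weak equivalences. So the strategy is right. But the comparison maps you propose do not exist: there is no simplicially natural augmentation $N^*(\Delta^\bullet)\longrightarrow\mathbb{K}$ (nor $\Omega^*(\Delta^\bullet)\longrightarrow\mathbb{K}$). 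Any algebra map $N^*(\Delta^n)\to\mathbb{K}$ is evaluation at a vertex, and no choice of vertex is compatible with all cofaces (e.g.\ $d^0$ sends vertex $0$ to vertex $1$); if such an augmentation existed, $\mathcal{MC}_\bullet(A)$ would retract onto the discrete simplicial set $\mathcal{MC}(A)$, contradicting the computation of the homotopy groups in Theorem \ref{theoremE}. The correct maps go the other way, into $\mathcal{S}_\bullet^A$ (as the arrows in the statement indicate): the unit inclusions $\mathbb{K}\to\Omega^*(\Delta^q)$ and $\mathbb{K}\to N^*(\Delta^p)$ \emph{are} simplicially natural maps of commutative (resp.\ $\mathcal{E}$-) algebras, and they induce $\mathcal{MC}_\bullet^{\Lambda\mathcal{PL}_\infty}(A)\to\mathcal{S}_\bullet^A$ and $\mathcal{MC}_\bullet^{\Lambda\mathcal{L}_\infty}(A)\to\mathcal{S}_\bullet^A$ via $A\to A\widehat\otimes\Omega^*(\Delta^q)$ and $A\to A\widehat\otimes N^*(\Delta^p)$.

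Two further points. First, Theorem \ref{gm} (and its $\Lambda\mathcal{L}_\infty$ analogue) only gives that these maps are weak equivalences \emph{levelwise} in one simplicial direction, for each fixed level of the other; to conclude on diagonals you need the bisimplicial diagonal lemma (Goerss--Jardine, Ch.~IV, Prop.~1.9), which your argument omits but which is the pivot of the proof. Second, the Dupont integration map and the attendant worry about comparing the $\mathcal{E}$-structure on $N^*$ with the commutative structure on $\Omega^*$ are red herrings: the proof never compares the two coefficient algebras directly. Both legs collapse a \emph{different} tensor factor to the constants, and the only point of contact is the isomorphism of $\Lambda\mathcal{L}_\infty$-algebras exchanging the $\mathcal{E}$-factor and the commutative factor, which is where passing from the pre-Lie to the Lie Maurer--Cartan description is used.
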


One major consequence of this theorem is that the homotopy groups that we have computed are isomorphic to the one's found in \cite{berglund} if the field is of characteristic $0$.

\begin{proof}
    We first remark that, for any $n\geq 0$, we have a morphism of complete brace algebras which is a weak equivalence:
    % https://q.uiver.app/#q=WzAsMixbMCwwLCJBIl0sWzEsMCwiQVxcb3RpbWVzXFxPbWVnYV5cXGFzdChcXERlbHRhXm4pIl0sWzAsMSwiXFxzaW0iXV0=
\[\begin{tikzcd}
	A & {A\widehat{\otimes}\Omega^\ast(\Delta^n).}
	\arrow["\sim", from=1-1, to=1-2]
\end{tikzcd}\]

\noindent By Theorem \ref{gm}, we obtain a weak equivalence
% https://q.uiver.app/#q=WzAsMixbMCwwLCJcXG1hdGhjYWx7TUN9XntcXG1hdGhjYWx7UH1yZVxcbWF0aGNhbHtMfWllX1xcaW5mdHl9X1xcYnVsbGV0KEEpIl0sWzEsMCwiXFxtYXRoY2Fse01DfV9cXGJ1bGxldF57XFxtYXRoY2Fse1B9cmVcXG1hdGhjYWx7TH1pZV9cXGluZnR5fSgoQVxcb3RpbWVzXFxPbWVnYV5cXGFzdChcXERlbHRhXm4pKVxcc2ltZXFcXG1hdGhjYWx7TUN9XntcXG1hdGhjYWx7UH1yZVxcbWF0aGNhbHtMfWllX1xcaW5mdHl9KChBXFxvdGltZXNcXE9tZWdhXlxcYXN0KFxcRGVsdGFebikpXFxvdGltZXMgTl4qKFxcRGVsdGFeXFxidWxsZXQpKSJdLFswLDEsIlxcc2ltIl1d
\[\begin{tikzcd}
	{\mathcal{MC}^{\Lambda\mathcal{PL}_\infty}_\bullet(A)} & {\mathcal{MC}_\bullet^{\Lambda\mathcal{PL}_\infty}(A\widehat{\otimes}\Omega^\ast(\Delta^n))=\mathcal{MC}^{\Lambda\mathcal{PL}_\infty}((A\widehat{\otimes}\Omega^\ast(\Delta^n))\widehat{\otimes}\Sigma  N^*(\Delta^\bullet)).}
	\arrow["\sim", from=1-1, to=1-2]
\end{tikzcd}\]

\noindent We now apply \cite[Chapter IV, Proposition 1.9]{goerss}. Recall that the \textit{diagonal} of a bisimplicial set $X$ (see \cite[Chapter IV, $\mathsection$1]{goerss}) is the simplicial set $Diag(X)$ defined by
$$Diag(X)_n=X_{nn}$$

\noindent for every $n\geq 0$. Since we have a point-wise weak equivalence, this extends to the following weak-equivalence of simplicial sets
% https://q.uiver.app/#q=WzAsMixbMCwwLCJcXG1hdGhjYWx7TUN9XntcXG1hdGhjYWx7UH1yZVxcbWF0aGNhbHtMfWllX1xcaW5mdHl9X1xcYnVsbGV0KEEpIl0sWzEsMCwiRGlhZyhcXG1hdGhjYWx7TUN9XntcXG1hdGhjYWx7UH1yZVxcbWF0aGNhbHtMfWllX1xcaW5mdHl9KChBXFxvdGltZXNcXE9tZWdhXlxcYXN0KFxcRGVsdGFeXFxidWxsZXQpKVxcb3RpbWVzIE5eKihcXERlbHRhXlxcYnVsbGV0KSkpIl0sWzAsMSwiXFxzaW0iXV0=
\[\begin{tikzcd}
	{\mathcal{MC}^{\Lambda\mathcal{PL}_\infty}_\bullet(A)} & {Diag(\mathcal{MC}^{\Lambda\mathcal{PL}_\infty}(( A\widehat{\otimes}\Omega^\ast(\Delta^\bullet))\widehat{\otimes} \Sigma N^*(\Delta^\bullet)))}
	\arrow["\sim", from=1-1, to=1-2]
\end{tikzcd},\]

\noindent Similarly, by \cite[Theorem 1.1]{rogers2}, we have a weak equivalence
% https://q.uiver.app/#q=WzAsMixbMCwwLCJEaWFnKFxcbWF0aGNhbHtNQ31ee0xbMV1fXFxpbmZ0eX0oKFxcU2lnbWEgQVxcb3RpbWVzIE5eXFxhc3QoXFxEZWx0YV5cXGJ1bGxldCkpXFxvdGltZXMgXFxPbWVnYV4qKFxcRGVsdGFeXFxidWxsZXQpKSkiXSxbMSwwLCJcXG1hdGhjYWx7TUN9X1xcYnVsbGV0XntMWzFdX1xcaW5mdHl9KEEpIl0sWzEsMCwiXFxzaW0iLDJdXQ==
\[\begin{tikzcd}
	{Diag(\mathcal{MC}^{\Lambda\mathcal{L}_\infty}(( A\otimes N^\ast(\Delta^\bullet))\otimes \Sigma\Omega^*(\Delta^\bullet)))} & {\mathcal{MC}_\bullet^{\Lambda\mathcal{L}_\infty}(A)}
	\arrow["\sim"', from=1-2, to=1-1]
\end{tikzcd}.\]

\noindent By combining the above weak equivalences with the two previous lemmas, we obtain the following diagram:
    % https://q.uiver.app/#q=WzAsNSxbMCwwLCJcXG1hdGhjYWx7TUN9X1xcYnVsbGV0XntcXExhbWJkYVxcbWF0aGNhbHtQTH1fXFxpbmZ0eX0oQSkiXSxbMSwwLCJEaWFnKFxcbWF0aGNhbHtNQ31ee1xcTGFtYmRhXFxtYXRoY2Fse1BMfV9cXGluZnR5fSgoQVxcb3RpbWVzXFxPbWVnYV5cXGFzdChcXERlbHRhXlxcYnVsbGV0KSlcXG90aW1lc1xcU2lnbWEgTl4qKFxcRGVsdGFeXFxidWxsZXQpKSkiXSxbMSwxLCJEaWFnKFxcbWF0aGNhbHtNQ31ee1xcTGFtYmRhXFxtYXRoY2Fse0x9X1xcaW5mdHl9KChBXFxvdGltZXNcXE9tZWdhXlxcYXN0KFxcRGVsdGFeXFxidWxsZXQpKVxcb3RpbWVzXFxTaWdtYSBOXiooXFxEZWx0YV5cXGJ1bGxldCkpKSJdLFsxLDIsIkRpYWcoXFxtYXRoY2Fse01DfV57XFxMYW1iZGFcXG1hdGhjYWx7TH1fXFxpbmZ0eX0oKCBBXFxvdGltZXMgTl5cXGFzdChcXERlbHRhXlxcYnVsbGV0KSlcXG90aW1lc1xcU2lnbWEgXFxPbWVnYV4qKFxcRGVsdGFeXFxidWxsZXQpKSkiXSxbMiwyLCJcXG1hdGhjYWx7TUN9X1xcYnVsbGV0XntcXExhbWJkYVxcbWF0aGNhbHtMfV9cXGluZnR5fShMKEEpKSJdLFswLDEsIlxcc2ltIl0sWzEsMiwiPSIsMCx7InN0eWxlIjp7ImhlYWQiOnsibmFtZSI6Im5vbmUifX19XSxbMiwzLCJcXHNpbWVxIl0sWzQsMywiXFxzaW0iLDJdXQ==
\[\begin{tikzcd}
	{\mathcal{MC}_\bullet^{\Lambda\mathcal{PL}_\infty}(A)} & {Diag(\mathcal{MC}^{\Lambda\mathcal{PL}_\infty}((A\otimes\Omega^\ast(\Delta^\bullet))\otimes\Sigma N^*(\Delta^\bullet)))} \\
	& {Diag(\mathcal{MC}^{\Lambda\mathcal{L}_\infty}((A\otimes\Omega^\ast(\Delta^\bullet))\otimes\Sigma N^*(\Delta^\bullet)))} \\
	& {Diag(\mathcal{MC}^{\Lambda\mathcal{L}_\infty}(( A\otimes N^\ast(\Delta^\bullet))\otimes\Sigma \Omega^*(\Delta^\bullet)))} & {\mathcal{MC}_\bullet^{\Lambda\mathcal{L}_\infty}(L(A))}
	\arrow["\sim", from=1-1, to=1-2]
	\arrow["{=}", no head, from=1-2, to=2-2]
	\arrow["\simeq", from=2-2, to=3-2]
	\arrow["\sim"', from=3-3, to=3-2]
\end{tikzcd},\]

\noindent which proves the theorem.
\end{proof}

\section{A mapping space in the category of non-symmetric operads}

In this section, we give an explicit construction of a mapping space $\text{\normalfont Map}_{\mathcal{O}p}(B^c(\mathcal{C}),\mathcal{P})$ in the category of non symmetric operads in terms of $\Gamma\Lambda\mathcal{PL}_\infty$ operations. Explicitly, we give a construction of a mapping space as the simplicial Maurer-Cartan set associated to the complete brace algebra $\text{\normalfont Hom}_{\text{\normalfont Seq}_\mathbb{K}}(\overline{\mathcal{C}},\overline{\mathcal{P}})$.\\

In $\mathsection$\ref{sec:251}, we make recollections on the construction of the free operad functor and on the model structure used for operads in this memoir. In this memoir, we use an explicit description of the free operad functor in terms of trees with inputs, which we define in this section.\\

In $\mathsection$\ref{sec:253}, we give an explicit construction of a cosimplicial frame associated to the cobar construction $B^c(\mathcal{C})$ of a coaugmented non symmetric cooperad as a sequence.\\

In $\mathsection$\ref{sec:254}, we finally prove Theorem \ref{theoremG}, which gives a description of a mapping space $\text{\normalfont Map}_{\mathcal{O}p}(B^c(\mathcal{C}),\mathcal{P})$ in the category of non symmetric operads as the simplicial Maurer-Cartan set associated to the brace algebra $\text{\normalfont Hom}_{\text{\normalfont Seq}_\mathbb{K}}(\overline{\mathcal{C}},\overline{\mathcal{P}})$. This gives a computation of the connected components and the homotopy groups of $\text{\normalfont Map}_{\mathcal{O}p}(B^c(\mathcal{C}),\mathcal{P})$ by using Theorem \ref{theoremE}.

\subsection{The free operad functor and the model structure on $\mathcal{O}p$}\label{sec:251}

We first recall the definition of the free operad functor and the model structure on operads. We will mostly follow conventions of \cite{muro}. Let $\text{\normalfont Seq}_\mathbb{K}$ be the category of sequences in $\text{dgMod}_\mathbb{K}$. Recall that we have an obvious model structure on $\text{\normalfont Seq}_\mathbb{K}$ which is defined arity wise, using the standard model structure on $\text{dgMod}_\mathbb{K}$.\\

\noindent The model structure on the category of non symmetric operads $\mathcal{O}p$ is obtained by transferring the model structure of $\text{\normalfont Seq}_\mathbb{K}$ from an adjunction
% https://q.uiver.app/#q=WzAsMixbMCwwLCJcXG1hdGhjYWx7Rn06XFx0ZXh0e1xcbm9ybWFsZm9udCBTZXF9X1xcbWF0aGJie0t9Il0sWzEsMCwiXFxtYXRoY2Fse099cDpcXG9tZWdhIl0sWzAsMSwiIiwwLHsib2Zmc2V0IjotMX1dLFsxLDAsIiIsMCx7Im9mZnNldCI6LTF9XV0=
\[\begin{tikzcd}
	{\mathcal{F}:\text{\normalfont Seq}_\mathbb{K}} & {\mathcal{O}p:\omega}
	\arrow[shift left, from=1-1, to=1-2]
	\arrow[shift left, from=1-2, to=1-1]
\end{tikzcd},\]
    
\noindent where $\omega:\mathcal{O}p\longrightarrow\text{\normalfont Seq}_\mathbb{K}$ is the functor which forgets the operad structure. The left adjoint $\mathcal{F}:\text{\normalfont Seq}_\mathbb{K}\longrightarrow\mathcal{O}p$ is the \textit{free operad} functor, for which we recall the construction.\\

We define the notion of tree with inputs, which is analogue to the notion of "planted planar tree with inputs" given in \cite[Definition 3.4]{muro}.

\begin{defi}\label{treeleave}
    Let $n\geq 0$. A (planar) {\normalfont tree with inputs} is the data of a tree ${T}\in\mathcal{PRT}(n)$ and, for each vertex of $T$, an integer which represents the number of ingoing arrows, which may includes some edges of $T$. We also add an outgoing arrow on the root of $T$. The ingoing arrows with only one vertex of $T$ are called the {\normalfont inputs} of the tree. 
    \begin{center}
        \begin{tikzpicture}[baseline={([yshift=-.5ex]current bounding box.center)},scale=0.8]
    \node (0) at (0,-1) {};
    \node[draw,circle,scale=0.65] (i) at (0,0) {$3$};
    \node[draw,circle,scale=0.65] (1) at (-1.5,1) {$1$};
    \node[draw,circle,scale=0.65] (1b) at (-2.5,2) {$4$};
    \node (1bc) at (-3,3) {};
    \node (1bcc) at (-2.5,3) {};
    \node (1bccc) at (-2,3) {};
    \node[draw,circle,scale=0.65] (1bb) at (-0.5,2) {$5$};
    \node (1bbc) at (-1,3) {};
    \node (1bbcc) at (0,3) {};
    \node (2) at (0,1) {};
    \node[draw,circle,scale=0.65] (3) at (1.25,1) {$2$};
    \node (3b) at (0.25,2) {};
    \node (3bb) at (2.25,2) {};
    \draw[stealth-] (0) -- (i);
    \draw[-stealth] (2) -- (i);
    \draw[stealth-] (1) -- (1b);
    \draw[stealth-] (1) -- (1bb);
    \draw[stealth-] (i) -- (1);
    \draw[stealth-] (i) -- (3);
    \draw[stealth-] (1b) -- (1bc);
    \draw[stealth-] (1b) -- (1bcc);
    \draw[stealth-] (1b) -- (1bccc);
    \draw[stealth-] (1bb) -- (1bbcc);
    \draw[stealth-] (1bb) -- (1bbc);
    % \draw[stealth-] (3) -- (3b);
    % \draw[stealth-] (3) -- (3bb);
    \end{tikzpicture}.
\end{center}
    We usually denote by $\underline{T}$ any tree with inputs with underlying tree $T\in\mathcal{PRT}$. We call $T$ the {\normalfont shape} of $\underline{T}$, and set $\text{\normalfont Shape}(\underline{T})=T$. We also set $V_{\underline{T}}=V_T$. For every vertex $v\in V_{\underline{T}}$, we denote by $\text{\normalfont val}_{\underline{T}}(v)$ the number of ingoing arrows which go to $v$. We denote by $\underline{\mathcal{PRT}_k}(n)$ the set of trees with $n$ vertices and $k$ inputs and $\underline{\mathcal{T}ree_k}(n)=\mathbb{K}[\underline{\mathcal{PRT}_k}(n)]$.
\end{defi}

As in Definition \ref{canonicaltree}, we can consider trees with inputs $\underline{T}\in\underline{\mathcal{PRT}_k}(a_1<\cdots <a_n)$ in a general totally ordered finite set $a_1<\cdots <a_n$. We say that $\underline{T}$ is \textit{canonical} (or \textit{in the canonical order}) if its shape $\text{\normalfont Shape}(\underline{T})\in\mathcal{PRT}(a_1<\cdots <a_n)$ is canonical.\\

For every tree $\underline{T}\in\underline{\mathcal{PRT}_k}(n)$, we endow the inputs with the canonical labeling from $1$ to $k$ obtained by following the canonical order of $T$. For instance, the tree given in the definition is seen as
\begin{center}
        \begin{tikzpicture}[baseline={([yshift=-.5ex]current bounding box.center)},scale=0.8]
    \node (0) at (0,-1) {};
    \node[draw,circle,scale=0.65] (i) at (0,0) {$3$};
    \node[draw,circle,scale=0.65] (1) at (-1.5,1) {$1$};
    \node[draw,circle,scale=0.65] (1b) at (-2.5,2) {$4$};
    \node[scale=0.8] (1bc) at (-3,3) {$1$};
    \node[scale=0.8] (1bcc) at (-2.5,3) {$2$};
    \node[scale=0.8] (1bccc) at (-2,3) {$3$};
    \node[draw,circle,scale=0.65] (1bb) at (-0.5,2) {$5$};
    \node[scale=0.8] (1bbc) at (-1,3) {$4$};
    \node[scale=0.8] (1bbcc) at (0,3) {$5$};
    \node[scale=0.8] (2) at (0,1) {$6$};
    \node[draw,circle,scale=0.65] (3) at (1.25,1) {$2$};
    % \node[scale=0.8] (3b) at (0.25,2) {$7$};
    % \node[scale=0.8] (3bb) at (2.25,2) {$8$};
    \draw[stealth-] (0) -- (i);
    \draw[-stealth] (2) -- (i);
    \draw[stealth-] (1) -- (1b);
    \draw[stealth-] (1) -- (1bb);
    \draw[stealth-] (i) -- (1);
    \draw[stealth-] (i) -- (3);
    \draw[stealth-] (1b) -- (1bc);
    \draw[stealth-] (1b) -- (1bcc);
    \draw[stealth-] (1b) -- (1bccc);
    \draw[stealth-] (1bb) -- (1bbcc);
    \draw[stealth-] (1bb) -- (1bbc);
    % \draw[stealth-] (3) -- (3b);
    % \draw[stealth-] (3) -- (3bb);
    \end{tikzpicture}\ .
\end{center}

As for trees in $\mathcal{PRT}$, we have the following definitions.

\begin{defi}
    Let $\underline{T}$ be a tree with inputs and with underlying shape $T\in\mathcal{PRT}$.
    
    \begin{itemize}
        \item A {\normalfont subtree} of $\underline{T}$ is the data of a subtree $S$ of $T$, endowed with the unique choice of arrows such that, for every $v\in V_{\underline{S}}\subset V_{\underline{T}}$, we have $\text{\normalfont val}_{\underline{S}}(v)=\text{\normalfont val}_{\underline{T}}(v)$.
        \item If $\underline{S}$ is a subtree of $\underline{T}$, we denote by $\underline{T}/\underline{S}$ the tree of shape $T/S$ obtained by contracting the tree $\underline{S}$ on the tree with only one vertex, denoted by $S$, with the same number of inputs as $\underline{S}$.
    \end{itemize}
\end{defi}

For instance, if we consider the above tree with inputs $\underline{T}$, then the following tree with inputs
    $$\underline{S}=\begin{tikzpicture}[baseline={([yshift=-.5ex]current bounding box.center)},scale=0.8]
    \node (0) at (0,-1) {};
    \node[draw,circle,scale=0.65] (i) at (0,0) {$3$};
    \node[draw,circle,scale=0.65] (1) at (-1.5,1) {$1$};
    \node[scale=0.8] (1b) at (-2.5,2) {$1$};
    \node[scale=0.8] (1bb) at (-0.5,2) {$2$};
    \node[scale=0.8] (2) at (0,1) {$3$};
    \node[scale=0.8] (3) at (1.25,1) {$4$};
    % \node[scale=0.8] (3b) at (0.25,2) {$7$};
    % \node[scale=0.8] (3bb) at (2.25,2) {$8$};
    \draw[stealth-] (0) -- (i);
    \draw[-stealth] (2) -- (i);
    \draw[stealth-] (1) -- (1b);
    \draw[stealth-] (1) -- (1bb);
    \draw[stealth-] (i) -- (1);
    \draw[stealth-] (i) -- (3);
    \end{tikzpicture}$$

    \noindent is a subtree of $\underline{T}$ such that
    $$\underline{T}/\underline{S}=\begin{tikzpicture}[baseline={([yshift=-.5ex]current bounding box.center)},scale=0.8]
    \node (0) at (0,-1) {};
    \node[draw,circle,scale=0.65] (i) at (0,0) {$S$};
    \node[draw,circle,scale=0.65] (1b) at (-1.5,1) {$4$};
    \node[scale=0.8] (1bc) at (-2,2) {$1$};
    \node[scale=0.8] (1bcc) at (-1.5,2) {$2$};
    \node[scale=0.8] (1bccc) at (-1,2) {$3$};
    \node[draw,circle,scale=0.65] (1bb) at (-0.5,1) {$5$};
    \node[scale=0.8] (1bbc) at (-0.75,2) {$4$};
    \node[scale=0.8] (1bbcc) at (-0.25,2) {$5$};
    \node[scale=0.8] (2) at (0.5,1) {$6$};
    \node[draw,circle,scale=0.65] (3) at (1.5,1) {$2$};
    % \node[scale=0.8] (3b) at (0.25,2) {$7$};
    % \node[scale=0.8] (3bb) at (2.25,2) {$8$};
    \draw[stealth-] (0) -- (i);
    \draw[-stealth] (2) -- (i);
    \draw[stealth-] (i) -- (3);
    \draw[stealth-] (i) -- (1b);
    \draw[stealth-] (i) -- (1bb);
    \draw[stealth-] (1b) -- (1bc);
    \draw[stealth-] (1b) -- (1bcc);
    \draw[stealth-] (1b) -- (1bccc);
    \draw[stealth-] (1bb) -- (1bbcc);
    \draw[stealth-] (1bb) -- (1bbc);
    \end{tikzpicture}.$$

Let $p,q,n,m\geq 0,1\leq i\leq p$ and $\underline{U}\in \underline{\mathcal{T}ree_p}(n),\underline{V}\in \underline{\mathcal{T}ree_q}(m)$. We let $\underline{U}\circ_i\underline{V}$ to be the tree in $\underline{\mathcal{T}ree_{p+q-1}}(n+m)$ given by the attachment of the unique outgoing arrow of $\underline{V}$ to the $i$-th ingoing arrow of $\underline{U}$. This defines a morphism
$$\circ_i:\underline{\mathcal{T}ree_p}(n)\otimes \underline{\mathcal{T}ree_q}(m)\longrightarrow \underline{\mathcal{T}ree_{p+q-1}}(n+m).$$

\begin{lm}\label{freeop}
    Let $\underline{\mathcal{T}ree}$ be the sequence defined by
    $$\underline{\mathcal{T}ree}(k)=\bigoplus_{n\geq 0}\underline{\mathcal{T}ree_k}(n).$$

    \noindent Then the morphisms $\circ_i:\underline{\mathcal{T}ree_p}\otimes \underline{\mathcal{T}ree_q}\longrightarrow \underline{\mathcal{T}ree_{p+q-1}}$ endow the sequence $\underline{\mathcal{T}ree}$ with the structure of an operad.
\end{lm}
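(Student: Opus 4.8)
The plan is to verify directly that the partial composition morphisms $\circ_i$ satisfy the three defining axioms of an operad: associativity, unitality, and (in the non-symmetric setting) the sequential/parallel compatibility relations for iterated partial compositions. Since $\underline{\mathcal{T}ree}(k)$ is defined as a free $\mathbb{K}$-module on the set of trees with inputs, it suffices to check each axiom on basis elements, i.e.\ on actual planar trees with inputs, where the compositions $\underline{U}\circ_i\underline{V}$ are given by the concrete grafting operation (attaching the output arrow of $\underline{V}$ to the $i$-th input of $\underline{U}$). This reduces every identity to a combinatorial statement about how grafting interacts with itself.

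First I would treat the unit. The natural candidate for the operadic unit in $\underline{\mathcal{T}ree}(1)$ is the tree with inputs whose shape is the single-vertex tree $\begin{tikzpicture}[baseline={([yshift=-.5ex]current bounding box.center)},scale=0.3]\node[draw,circle,scale=0.3] (i) at (0,0) {$1$};\end{tikzpicture}$ carrying exactly one ingoing arrow, so that it has a single input and a single output. Grafting this tree onto the $i$-th input of any $\underline{U}$, or grafting any $\underline{V}$ onto its unique input, visibly returns $\underline{U}$ (resp.\ $\underline{V}$) unchanged, which gives both unitality axioms. Next I would establish the two associativity-type relations. For the sequential case, given $\underline{U}\in\underline{\mathcal{T}ree_p}$, $\underline{V}\in\underline{\mathcal{T}ree_q}$, $\underline{W}\in\underline{\mathcal{T}ree_r}$ and $1\le i\le p$, $1\le j\le q$, one checks that $(\underline{U}\circ_i\underline{V})\circ_{i+j-1}\underline{W}=\underline{U}\circ_i(\underline{V}\circ_j\underline{W})$; for the parallel case, for $1\le i<k\le p$, one checks $(\underline{U}\circ_i\underline{V})\circ_{k+q-1}\underline{W}=(\underline{U}\circ_k\underline{W})\circ_i\underline{V}$. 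Each of these is proved by observing that grafting is a purely local operation on arrows: attaching $\underline{V}$ then $\underline{W}$ produces the same planar tree with inputs regardless of the order in which the two disjoint grafting sites are filled, and the index shifts $i+j-1$, $k+q-1$ are exactly the bookkeeping needed to track the canonical re-labeling of inputs from $1$ to $p+q-1$ (resp.\ $p+q+r-2$) after each graft. The signs are trivial here since we are in the free module on a set and the grafting is defined degreewise without extra Koszul contributions.

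The one genuinely delicate point, and the step I expect to be the main obstacle, is the interaction of the grafting operation with the canonical input labeling introduced just after Definition \ref{treeleave}. Because the inputs of each tree with inputs are labeled $1,\dots,k$ by following the canonical order of the underlying shape, I must confirm that the index $i$ in $\circ_i$ really selects the $i$-th input in this canonical enumeration, and that after grafting the composite inherits precisely the canonical labeling of its own shape. This requires matching the reindexing of inputs against the total order on vertices used in the definition of $V_T$ and the canonical order, so that the shift formulas in the associativity relations are the correct ones. I would handle this by carefully unwinding how the canonical enumeration of inputs is induced by the canonical order on vertices, and then verifying that grafting $\underline{V}$ into the $i$-th input slot inserts the inputs of $\underline{V}$ consecutively in positions $i,\dots,i+q-1$, pushing the later inputs of $\underline{U}$ upward by $q-1$; once this is established, the two associativity axioms become immediate consequences of the associativity of disjoint insertion. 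Finally, I would remark that all constructions are $\mathbb{K}$-linear and degree-preserving, so extending from basis trees to all of $\underline{\mathcal{T}ree}$ is automatic, completing the verification that $\underline{\mathcal{T}ree}$ is an operad.
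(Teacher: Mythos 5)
The paper states this lemma without proof (it is treated as a routine fact about grafting of planar trees), so your strategy of directly verifying unitality and the sequential/parallel associativity relations on basis trees is exactly the expected argument, and your index bookkeeping is correct: in the planar left-to-right enumeration of inputs, grafting $\underline{V}$ (with $q$ inputs) into the $i$-th input of $\underline{U}$ does place the inputs of $\underline{V}$ consecutively in positions $i,\dots,i+q-1$ and shifts the later inputs of $\underline{U}$ by $q-1$, which is precisely what the relations $(\underline{U}\circ_i\underline{V})\circ_{i+j-1}\underline{W}=\underline{U}\circ_i(\underline{V}\circ_j\underline{W})$ and $(\underline{U}\circ_i\underline{V})\circ_{k+q-1}\underline{W}=(\underline{U}\circ_k\underline{W})\circ_i\underline{V}$ require.

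There is, however, one concrete error: your candidate for the operadic unit. The tree with a single vertex carrying one ingoing arrow is \emph{not} a unit for $\circ_i$. Grafting it onto the $i$-th input of $\underline{U}$ attaches its outgoing arrow to that input and thereby inserts an extra vertex into $\underline{U}$, producing an element of $\underline{\mathcal{T}ree_p}(n+1)$ rather than returning $\underline{U}\in\underline{\mathcal{T}ree_p}(n)$; since $\underline{\mathcal{T}ree}$ is the free module on trees with no identifications, these are genuinely distinct basis elements. The correct unit is the tree with \emph{zero} vertices, i.e.\ a bare arrow whose unique input is its output, so that grafting it (in either direction) changes nothing; this is the convention the paper itself adopts later, in the $m=0$ case of Definition~\ref{treemon}, where the vertex-free tree is explicitly called ``the unit in the operad $\Sigma\underline{\mathcal{T}ree}$''. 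With the unit corrected, the rest of your verification goes through.
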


Using this notion of tree, we set
$$\mathcal{F}(M)(k)=\bigoplus_{n\geq 0}\left(\bigoplus_{\underline{T}\in \underline{\mathcal{PRT}_k}(n)}\underline{T}\otimes\bigotimes_{i=1}^n M(\text{\normalfont val}_{\underline{T}}(i))\right)_{\Sigma _{n}}$$

\noindent where, in the sum, we consider the action of $\Sigma_n$ on $\underline{\mathcal{PRT}_k}(n)$ by the permutation of the labels of the vertices, and the action of $\Sigma _{n}$ on $\bigotimes_{i=1}^nM(\text{\normalfont val}_{\underline{T}}(i))$ by permutations. The operadic structure of $\mathcal{F}(M)$ is given by the operadic structure of $\underline{\mathcal{T}ree}$ and the concatenation of the elements in $M$. We denote by $\mathcal{F}_{(\underline{T})}(M)=\bigotimes_{i=1}^nM(\text{\normalfont val}_{\underline{T}}(i))$ the $\underline{T}$-component of $\mathcal{F}(M)$ associated to some tree $\underline{T}\in \underline{\mathcal{PRT}_k}(n)$.\\

% \begin{remarque}
%     For every $M\in\text{\normalfont Seq}_\mathbb{K}$, the operad $\mathcal{F}(M)$ is augmented. Explicitly, we have $\mathcal{F}(M)\simeq I\oplus\overline{\mathcal{F}}(M)$ where $\overline{\mathcal{F}}(M)$ is given by trees with at least one vertex.
% \end{remarque}

We can check that the functor $\mathcal{F}:\text{\normalfont Seq}_\mathbb{K}\longrightarrow\mathcal{O}p$ is left adjoint to the forgetful functor $\omega:\mathcal{O}p\longrightarrow\text{\normalfont Seq}_\mathbb{K}$ which forgets the operad structure:
% https://q.uiver.app/#q=WzAsMixbMCwwLCJcXG1hdGhjYWx7Rn06XFx0ZXh0e1xcbm9ybWFsZm9udCBTZXF9X1xcbWF0aGJie0t9Il0sWzEsMCwiXFxtYXRoY2Fse099cDpcXG9tZWdhIl0sWzAsMSwiIiwwLHsib2Zmc2V0IjotMX1dLFsxLDAsIiIsMCx7Im9mZnNldCI6LTF9XV0=
\[\begin{tikzcd}
	{\mathcal{F}:\text{\normalfont Seq}_\mathbb{K}} & {\mathcal{O}p:\omega}
	\arrow[shift left, from=1-1, to=1-2]
	\arrow[shift left, from=1-2, to=1-1]
\end{tikzcd}.\]

\noindent This adjunction implies the following result.

\begin{prop}[{see \cite[Theorem 1.1]{muro}}]
    The category $\mathcal{O}p$ is endowed with a cofibrantly generated model structure such that the forgetful functor $\omega:\mathcal{O}p\longrightarrow\text{\normalfont Seq}_\mathbb{K}$ creates weak-equivalences and fibrations. Cofibrations are given by the left lifting property with respect to acyclic fibrations.
\end{prop}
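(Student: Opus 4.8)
The plan is to obtain the model structure by transfer along the free--forgetful adjunction $\mathcal{F}:\text{\normalfont Seq}_\mathbb{K}\rightleftarrows\mathcal{O}p:\omega$, using Kan's lifting theorem for cofibrantly generated model structures. The category $\text{\normalfont Seq}_\mathbb{K}$ carries the arity-wise projective model structure coming from the standard model structure on $\text{dgMod}_\mathbb{K}$, in which weak equivalences are the arity-wise quasi-isomorphisms, fibrations the arity-wise surjections, and which is cofibrantly generated with a set $I$ of generating cofibrations and a set $J$ of generating acyclic cofibrations. I would then \emph{define} a morphism $f$ of $\mathcal{O}p$ to be a weak equivalence (resp. a fibration) precisely when $\omega(f)$ is one in $\text{\normalfont Seq}_\mathbb{K}$, take $\mathcal{F}(I)$ and $\mathcal{F}(J)$ as candidate generating (acyclic) cofibrations, and characterise cofibrations by the left lifting property against acyclic fibrations. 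Kan's theorem reduces the existence of this transferred structure to three verifications.

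First I would check that $\mathcal{O}p$ is complete and cocomplete: limits are created by $\omega$, since they are computed arity-wise in $\text{\normalfont Seq}_\mathbb{K}$, while colimits are built from free operads through the standard reflexive-coequalizer presentation of an arbitrary operad, using that $\omega$ creates filtered colimits and reflexive coequalizers. Second comes the smallness hypothesis: the domains of the maps in $\mathcal{F}(I)$ and $\mathcal{F}(J)$ must be small relative to the relevant cell complexes. As $\text{dgMod}_\mathbb{K}$ is locally presentable, so is $\text{\normalfont Seq}_\mathbb{K}$, and every object is small; hence the small object argument applies to both $\mathcal{F}(I)$ and $\mathcal{F}(J)$, producing the required functorial factorizations once acyclicity is known.

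The crux, and the step I expect to be the main obstacle, is the acyclicity condition: every transfinite composite of pushouts of maps in $\mathcal{F}(J)$ must be carried by $\omega$ to a weak equivalence. To handle this I would exploit the explicit tree description of $\mathcal{F}$ recalled above, which in the non-symmetric setting writes $\mathcal{F}(M)(k)$ as a direct sum over canonical trees with inputs of tensor products $\bigotimes_{i}M(\text{\normalfont val}_{\underline{T}}(i))$, with \emph{no} nontrivial symmetric-group quotient to obstruct exactness. Given a pushout of operads along $\mathcal{F}(A)\longrightarrow\mathcal{F}(B)$ with $A\longrightarrow B$ in $J$, the underlying map of sequences admits the usual filtration indexed by the number of $B$-labelled vertices grafted into the trees; each associated graded layer is, arity-wise, a direct sum of tensor products of copies of the underlying sequence of the source operad with the cofiber of $A\longrightarrow B$. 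Since $\mathbb{K}$ is a field, tensor product over $\mathbb{K}$ is exact and every object is flat, so each layer is a quasi-isomorphism as soon as $A\longrightarrow B$ is; the filtration map is therefore an arity-wise quasi-isomorphism, and this property is preserved by transfinite composition because filtered colimits over a field are exact.

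Putting these together, Kan's transfer theorem applies and endows $\mathcal{O}p$ with the asserted cofibrantly generated model structure, generated by $\mathcal{F}(I)$ and $\mathcal{F}(J)$, in which $\omega$ creates weak equivalences and fibrations and cofibrations are exactly the maps with the left lifting property against acyclic fibrations. I would not reproduce the bookkeeping of the tree filtration in detail, since a complete treatment following precisely this scheme is given in \cite[Theorem 1.1]{muro}; the role of the present statement is only to record the resulting model structure for later use.
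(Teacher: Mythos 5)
The paper gives no proof of this proposition: it is recorded as a citation of \cite[Theorem 1.1]{muro}, and your sketch follows exactly the standard transfer argument used there (Kan's lifting theorem, with the acyclicity condition checked via the tree filtration on pushouts of free maps, which works cleanly in the non-symmetric setting over a field since no symmetric-group quotients intervene). Your outline is correct and is essentially the same approach as the cited source, so nothing further is needed.
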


\begin{remarque}
In the following subsections, we also use the notion of cofree cooperad generated by a sequence $M$ such that $M(0)=0$. For every $k\geq 0$ and $n\geq 1$, let $\underline{\mathcal{PRT}_k}^0(n)$ be the subset of $\underline{\mathcal{PRT}_k}(n)$ given by trees $\underline{T}$ such that $\text{\normalfont val}_{\underline{T}}(v)\neq 0$ for every $v\in V_{\underline{T}}$. Let $\underline{\mathcal{T}ree_k}^0(n):=\mathbb{K}[\underline{\mathcal{PRT}_k}^0(n)]$. Then the sequence $\underline{\mathcal{T}ree}^0$ defined by
$$\underline{\mathcal{T}ree}^0(n)=\bigoplus_{k\geq 0}\underline{\mathcal{T}ree_k}^0(n)$$

\noindent is a suboperad of $\underline{\mathcal{T}ree}$ such that, for every $n\geq 0$, the $\mathbb{K}$-module $\underline{\mathcal{T}ree}^0(n)$ is finite dimensional. By Remark \ref{coopdual}, the dual symmetric sequence $(\underline{\mathcal{T}ree}^0)^\vee$ is endowed with the structure of a cooperad. We then define
$$\mathcal{F}^c(M)(k)=\bigoplus_{n\geq 1}\left(\bigoplus_{\underline{T}\in\underline{\mathcal{PRT}_k}^0(n)}\underline{T}^\vee\otimes\bigotimes_{i=1}^nM(\text{\normalfont val}_{\underline{T}}(i))\right)^{\Sigma_{n}}$$

\noindent where we consider the action of $\Sigma_n$ on $\underline{T}^\vee$ by permutation of the vertices, and the action of $\Sigma_n$ on $\bigotimes_{i=1}^n M(\text{\normalfont val}_{\underline{T}}(i))$ by permutations. We endow $\mathcal{F}^c(M)$ with the cooperad structure given by the cooperadic structure of $(\underline{\mathcal{T}ree}^0)^\vee$, and by the deconcatenation coproduct in the tensor coalgebra of $\bigoplus_{n\geq 1}M(n)$. As for operads, we have an adjunction
% https://q.uiver.app/#q=WzAsMixbMCwwLCJcXG9tZWdhOlxcbWF0aGNhbHtPfXBeYyJdLFsxLDAsIlxcdGV4dHtcXG5vcm1hbGZvbnQgU2VxfV9cXG1hdGhiYntLfTpcXG1hdGhjYWx7Rn1eYyJdLFswLDEsIiIsMCx7Im9mZnNldCI6LTF9XSxbMSwwLCIiLDAseyJvZmZzZXQiOi0xfV1d
\[\begin{tikzcd}
	{\omega:\mathcal{O}p^c} & {\text{\normalfont Seq}_\mathbb{K}:\mathcal{F}^c}
	\arrow[shift left, from=1-1, to=1-2]
	\arrow[shift left, from=1-2, to=1-1]
\end{tikzcd}\]

\noindent where $\omega:\mathcal{O}p^c\longrightarrow\text{\normalfont Seq}_\mathbb{K}$ is the functor which forgets the cooperad structure.
\end{remarque}

% \subsection{The morphisms $\gamma_{(T)}$ and $\Delta_{(T)}$}\label{sec:252}

We will need to consider operadic compositions (resp. cooperadic cocompositions) shaped on trees with inputs. This can be formalized as follows. Let $\mathcal{P}$ be an augmented operad $\mathcal{P}\simeq I\oplus\overline{\mathcal{P}}$ and $\mathcal{C}$ be a coaugmented cooperad $\mathcal{C}\simeq I\oplus\overline{\mathcal{C}}$ such that $\mathcal{P}(0)=\mathcal{C}(0)=0$ and $\mathcal{P}(1)=\mathcal{C}(1)=\mathbb{K}$. By the universal property satisfied by $\mathcal{F}$, we have a unique operad morphism $\mathcal{F}(\mathcal{P})\longrightarrow\mathcal{P}$ which reduces to the identity on $\mathcal{P}\subset\mathcal{F}(\mathcal{P})$. Analogously, we have a unique cooperad morphism $\mathcal{C}\longrightarrow\mathcal{F}^c(\mathcal{C})$ whose projection on $\mathcal{C}$ is given by the identity on $\mathcal{C}$.

\begin{defi}\label{compodeltaarbres}
    Let $k\geq 1$ and $\underline{T}\in\underline{\mathcal{PRT}_k}^0$. We define $\gamma_{(\underline{T})}:\mathcal{F}_{(\underline{T})}(\overline{\mathcal{P}})\longrightarrow\overline{\mathcal{P}}$ and $\Delta_{(\underline{T})}:\overline{\mathcal{C}}\longrightarrow\mathcal{F}^c_{(\underline{T})}(\overline{\mathcal{C}})$ by the composites
    % https://q.uiver.app/#q=WzAsNCxbMCwwLCJcXGdhbW1hX3soXFx1bmRlcmxpbmV7VH0pfTooXFxtYXRoY2Fse0Z9X3tuc30pX3soXFx1bmRlcmxpbmV7VH0pfShcXG92ZXJsaW5le1xcbWF0aGNhbHtQfX0pIl0sWzEsMCwiKFxcbWF0aGNhbHtGfV97bnN9KV97KFxcdW5kZXJsaW5le1R9KX0oXFxtYXRoY2Fse1B9KSJdLFsyLDAsIlxcbWF0aGNhbHtQfSJdLFszLDAsIlxcb3ZlcmxpbmV7XFxtYXRoY2Fse1B9fSJdLFswLDEsIiIsMCx7InN0eWxlIjp7InRhaWwiOnsibmFtZSI6Imhvb2siLCJzaWRlIjoidG9wIn19fV0sWzEsMiwiXFxnYW1tYSJdLFsyLDMsIiIsMCx7InN0eWxlIjp7ImhlYWQiOnsibmFtZSI6ImVwaSJ9fX1dXQ==
\[\begin{tikzcd}
	{\gamma_{(\underline{T})}:\mathcal{F}_{(\underline{T})}(\overline{\mathcal{P}})} & {\mathcal{F}_{(\underline{T})}(\mathcal{P})} & {\mathcal{P}} & {\overline{\mathcal{P}}}
	\arrow[hook, from=1-1, to=1-2]
	\arrow["\gamma", from=1-2, to=1-3]
	\arrow[two heads, from=1-3, to=1-4]
\end{tikzcd};\]
% https://q.uiver.app/#q=WzAsNCxbMCwwLCJcXERlbHRhX3soXFx1bmRlcmxpbmV7VH0pfTpcXG92ZXJsaW5le1xcbWF0aGNhbHtDfX0iXSxbMSwwLCJcXG1hdGhjYWx7Q30iXSxbMiwwLCIoXFxtYXRoY2Fse0Z9X3tuc30pXmNfeyhcXHVuZGVybGluZXtUfSl9KFxcbWF0aGNhbHtDfSkiXSxbMywwLCIoXFxtYXRoY2Fse0Z9X3tuc30pXmNfeyhcXHVuZGVybGluZXtUfSl9KFxcb3ZlcmxpbmV7XFxtYXRoY2Fse0N9fSkiXSxbMCwxLCIiLDAseyJzdHlsZSI6eyJ0YWlsIjp7Im5hbWUiOiJob29rIiwic2lkZSI6InRvcCJ9fX1dLFsxLDIsIlxcRGVsdGEiXSxbMiwzLCIiLDAseyJzdHlsZSI6eyJoZWFkIjp7Im5hbWUiOiJlcGkifX19XV0=
\[\begin{tikzcd}
	{\Delta_{(\underline{T})}:\overline{\mathcal{C}}} & {\mathcal{C}} & {\mathcal{F}^c_{(\underline{T})}(\mathcal{C})} & {\mathcal{F}^c_{(\underline{T})}(\overline{\mathcal{C}})}
	\arrow[hook, from=1-1, to=1-2]
	\arrow["\Delta", from=1-2, to=1-3]
	\arrow[two heads, from=1-3, to=1-4]
\end{tikzcd}.\]
\end{defi}

For every $p,q,n,m\geq 0$ and $1\leq i\leq p$, we define a morphism
$$\bullet_i:\underline{\mathcal{T}ree_p}(n)\otimes\underline{\mathcal{T}ree_q}(m)\longrightarrow\underline{\mathcal{T}ree_{p}}(n+m-1)$$

\noindent by the following. Let $\underline{U}\in\underline{\mathcal{PRT}_p}(n)$ and $\underline{V}\in\underline{\mathcal{PRT}_q}(m)$. If the number of arrows on the $i$-th vertex of $\underline{U}$ is not $q$, then $\underline{U}\bullet_i\underline{V}=0$. Else, we define $\underline{U}\bullet_i\underline{V}$ as the unique tree obtained by putting $\underline{V}$ in the $i$-th vertex of $\underline{U}$, and attaching the ingoing arrows of the $i$-th vertex of $\underline{U}$ into the inputs of $\underline{V}$.

\begin{lm}
    Let $\underline{T}$ be a tree with inputs and $\underline{S}$ be a subtree of $\underline{T}$. Then $\underline{T}/\underline{S}\bullet_S\underline{S}=\underline{T}$.
\end{lm}

\begin{proof}
    It is an immediate consequence of the definitions.
\end{proof}

The morphisms defined in Definition \ref{compodeltaarbres} also behave well with the compositions $\circ_i$ and $\bullet_i$.

\begin{lm}\label{compbullet}
    Let $k\geq 1$, $\underline{T}\in\underline{\mathcal{PRT}_k}^0(n)$ and $\underline{S}\subset\underline{T}$. Then
$$\gamma_{(\underline{T}/\underline{S})}\circ_{\underline{S}}\gamma_{(\underline{S})}=\gamma_{(\underline{T})}\ ;\ \Delta_{(\underline{T}/\underline{S})}\circ_{\underline{S}}\Delta_{(\underline{S})}=\Delta_{(\underline{T})}$$

    \noindent in the endomorphism operad $\text{\normalfont End}_{\bigoplus_{n\geq 2}\mathcal{P}(n)}$ and in the coendomorphism operad $\text{\normalfont CoEnd}_{\bigoplus_{n\geq 2}\mathcal{C}(n)}$ respectively.
\end{lm}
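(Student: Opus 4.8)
The plan is to recognize each $\gamma_{(\underline{T})}$ as the $\mathcal{P}$-operation evaluated along the tree $\underline{T}$, and to deduce the two identities from the associativity of the operadic composition in $\mathcal{P}$ (dually, the coassociativity of the decomposition in $\mathcal{C}$), the key combinatorial input being the preceding lemma $\underline{T}/\underline{S}\bullet_S\underline{S}=\underline{T}$. Concretely, since $\mathcal{P}$ is an operad it is an algebra over the free-operad monad $\mathcal{F}$ on $\text{\normalfont Seq}_\mathbb{K}$, with structure map $\gamma\colon\mathcal{F}(\mathcal{P})\to\mathcal{P}$ the unique operad morphism restricting to the identity on $\mathcal{P}\subset\mathcal{F}(\mathcal{P})$; by definition $\gamma_{(\underline{T})}$ is the restriction of $\gamma$ to the $\underline{T}$-summand, pre-composed with $\mathcal{F}_{(\underline{T})}(\overline{\mathcal{P}})\hookrightarrow\mathcal{F}_{(\underline{T})}(\mathcal{P})$ and post-composed with $\mathcal{P}\twoheadrightarrow\overline{\mathcal{P}}$.

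For the $\gamma$-identity I would form the nested tree in $\mathcal{F}(\mathcal{F}(\mathcal{P}))$ whose outer shape is $\underline{T}/\underline{S}$ with the vertex $S$ decorated by the tree $\underline{S}$, and evaluate it in the two canonical ways. Applying the monad multiplication $\mu\colon\mathcal{F}(\mathcal{F}(\mathcal{P}))\to\mathcal{F}(\mathcal{P})$ flattens the nesting to the single tree $\underline{T}/\underline{S}\bullet_S\underline{S}$, which is $\underline{T}$ by the previous lemma, so $\gamma\circ\mu$ evaluated on this element produces $\gamma_{(\underline{T})}$. Applying instead $\mathcal{F}(\gamma)$ first contracts the inner $\underline{S}$-vertex to $\gamma_{(\underline{S})}$ and then $\gamma$ composes along $\underline{T}/\underline{S}$, which is exactly the slot-$S$ composite $\gamma_{(\underline{T}/\underline{S})}\circ_{\underline{S}}\gamma_{(\underline{S})}$ in $\text{\normalfont End}_{\bigoplus_{n\geq 2}\mathcal{P}(n)}$. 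The monad-algebra associativity $\gamma\circ\mu=\gamma\circ\mathcal{F}(\gamma)$ then gives the first equality, once one checks that the projection $\mathcal{P}\twoheadrightarrow\overline{\mathcal{P}}$ and the inclusion $\overline{\mathcal{P}}\hookrightarrow\mathcal{P}$ entering the definition of the $\gamma_{(-)}$ cancel on the relevant summands: all valences of a tree in $\underline{\mathcal{PRT}_k}^0$ that contribute nontrivially to $\mathcal{F}_{(-)}(\overline{\mathcal{P}})$ are $\geq 2$, so the arity-one unit of $\mathcal{P}=I\oplus\overline{\mathcal{P}}$ never intervenes, while if some valence equals $1$ both sides vanish and the identity is trivial.

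The $\Delta$-identity is handled by dualizing the same argument verbatim: $\Delta\colon\mathcal{C}\to\mathcal{F}^c(\mathcal{C})$ is the cooperad morphism restricting to the identity on $\mathcal{C}$, i.e. the coalgebra structure over the cofree-cooperad comonad; cocomposing along the nested tree built from $\underline{T}/\underline{S}$ and $\underline{S}$ agrees, by the comonad coassociativity and the identification $\underline{T}/\underline{S}\bullet_S\underline{S}=\underline{T}$, with cocomposing along $\underline{T}$. This yields $\Delta_{(\underline{T}/\underline{S})}\circ_{\underline{S}}\Delta_{(\underline{S})}=\Delta_{(\underline{T})}$ in $\text{\normalfont CoEnd}_{\bigoplus_{n\geq 2}\mathcal{C}(n)}$, the inclusion $\overline{\mathcal{C}}\hookrightarrow\mathcal{C}$ and projection $\mathcal{F}^c_{(-)}(\mathcal{C})\twoheadrightarrow\mathcal{F}^c_{(-)}(\overline{\mathcal{C}})$ again composing to the identity on the relevant summands for the same valence reason.

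I expect the main obstacle to be bookkeeping rather than conceptual: identifying the slot index $\underline{S}$ of the composition $\circ_{\underline{S}}$ in the (co)endomorphism operad with the vertex-substitution position of $\underline{T}/\underline{S}\bullet_S\underline{S}$, and tracking the Koszul signs generated by the $\Sigma_n$-coinvariants (resp. $\Sigma_n$-invariants) in $\mathcal{F}$ (resp. $\mathcal{F}^c$) when the tensor factors indexed by $V_{\underline{T}}=V_{\underline{T}/\underline{S}}\setminus\{S\}\sqcup V_{\underline{S}}$ are permuted across the grafting. Once $\gamma_{(\underline{T})}$ is precisely identified with the tree-shaped structure map, both equalities reduce to a single application of the (co)associativity axiom, so I would keep the explicit verification short.
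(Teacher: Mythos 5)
Your proposal is correct and follows the same route as the paper, whose proof consists of the single sentence that the identities are ``direct consequences of the (co)associativity axioms in a (co)operad''; your monad-algebra formulation $\gamma\circ\mu=\gamma\circ\mathcal{F}(\gamma)$ combined with $\underline{T}/\underline{S}\bullet_S\underline{S}=\underline{T}$ is exactly the formalization of that remark, and your handling of the $\overline{\mathcal{P}}$ versus $\mathcal{P}$ projection (using that $\overline{\mathcal{P}}(1)=0$ forces all relevant valences to be $\geq 2$) correctly fills in the only point the paper leaves implicit.
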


\begin{proof}
    These are direct consequences of the (co)associativity axioms in a (co)operad.
\end{proof}

\subsection{A cosimplicial frame for $B^c(\mathcal{C})$}\label{sec:253}

Let $\mathcal{C}\simeq I\oplus\overline{\mathcal{C}}$ be a coaugmented non-symmetric cooperad with $\mathcal{C}(0)=0$ and $\mathcal{C}(1)=\mathbb{K}$. The goal of this subsection is to construct a cosimplicial frame $B^c(\mathcal{C})\otimes\Delta^n$ associated to $B^c(\mathcal{C})$. We will explicitly define $B^c(\mathcal{C})\otimes\Delta^n$ as the free operad induced by a cooperad up to homotopy that will be given by $\overline{\mathcal{C}}\otimes N_*(\Delta^n)$.\\

% \textit{Example:} The shape of the element

%     \begin{equation*}
%         \begin{tikzpicture}[baseline={([yshift=-.5ex]current bounding box.center)},scale=0.6]
%     \node[draw,circle,scale=0.6] (i) at (0,0) {$\overline{c}_1$};
%     \node[draw,circle,scale=0.6] (2) at (-1,1) {$1$};
%     \node[draw,circle,scale=0.6] (3) at (1,1) {$\overline{c}_3$};
%     \node[draw,circle,scale=0.6] (3b) at (1.5,2) {$\overline{c}_4$};
%     \node[draw,circle,scale=0.6] (3b1) at (1.5,3) {$1$};
%     \node[draw,circle,scale=0.6] (3bb) at (0.5,2) {$1$};
%     \node[draw,circle,scale=0.6] (3bb1) at (0.5,3) {$\overline{c}_5$};
%     \node[draw,circle,scale=0.6] (2b) at (-1,2) {$\overline{c}_2$};
%     \node[draw,circle,scale=0.6] (2b1) at (-1,3) {$1$};
%     \node[scale=0.6] (2b11) at (-1,4) {$1$};
%     \node[scale=0.6] (3bb11) at (0,4) {$2$};
%     \node[scale=0.6] (3bb12) at (1,4) {$4$};
%     \node[scale=0.6] (3b11) at (1.5,4) {$3$};
%     \draw (2) -- (i);
%     \draw (i) -- (3);
%     \draw (3) -- (3b);
%     \draw (3) -- (3bb);
%     \draw (3b) -- (3b1);
%     \draw (3bb) -- (3bb1);
%     \draw (2b) -- (2b1);
%     \draw (2) -- (2b);
%     \draw[-stealth]  (2b11) -- (2b1);
%     \draw[-stealth] (3bb11) -- (3bb1);
%     \draw[-stealth] (3bb12) -- (3bb1);
%     \draw[-stealth] (3b11) -- (3b1);
%     \end{tikzpicture}\in(\mathcal{C}\circ_{Sh}\mathcal{C}\circ_{Sh}\mathcal{C}\circ_{Sh}\mathcal{C})(4) 
%     \end{equation*}

Let $E$ be a $\mathcal{E}$-coalgebra. We endow the operad $\mathcal{F}(\overline{\mathcal{C}}\otimes\Sigma^{-1} E)$ with a general twisting morphism such that if $E=N_*(\Delta^0)\simeq\mathbb{K}$, then $\mathcal{F}(\overline{\mathcal{C}}\otimes \Sigma^{-1}E)\simeq B^c(\mathcal{C})$. Explicitly, we construct $\beta^{E}:\overline{\mathcal{C}}\otimes \Sigma^{-1} E\longrightarrow\mathcal{F}(\overline{\mathcal{C}}\otimes\Sigma^{-1} E)$ such that the morphism $\partial_{\beta^E}:\mathcal{F}(\overline{\mathcal{C}}\otimes\Sigma^{-1} E)\longrightarrow \mathcal{F}(\overline{\mathcal{C}}\otimes\Sigma^{-1} E)$ obtained from $\beta^E$ by the Leibniz rule is a twisting morphism. If we denote by $d$ the differential induced by $\overline{\mathcal{C}}\otimes\Sigma^{-1} E$ on $\mathcal{F}^c(\overline{\mathcal{C}}\otimes \Sigma^{-1}E)$, then the morphism $\beta^E$ shall needs (see \cite{loday} or \cite{fressecyl} for instance) to be such that
$$d(\beta^E)+\partial_{\beta^E}\beta^E=0.$$

Let $k\geq 1$ and $\underline{T}\in\underline{\mathcal{PRT}_k}^0$ be a canonical tree with inputs with shape $T\in\mathcal{PRT}$. We define  $\beta^E_{(\underline{T})}:\overline{\mathcal{C}}\otimes \Sigma^{-1}E\longrightarrow\mathcal{F}_{(\underline{T})}(\overline{\mathcal{C}}\otimes\Sigma^{-1} E)$ by
$$\beta^E_{(\underline{T})}=\Delta_{(\underline{T})}\widetilde{\otimes}\Lambda\mu_T^E$$

\noindent where, for every $\mu\in\mathcal{E}(n)$, we denote by $\mu^E$ the morphism in $\text{\normalfont Hom}(E,E^{\otimes n})$ given by the $\mathcal{E}$-coalgebra structure $E$.\\

\noindent This gives a well defined morphism of sequences $\beta^E:\overline{\mathcal{C}}\otimes \Sigma^{-1}E\longrightarrow\mathcal{F}(\overline{\mathcal{C}}\otimes \Sigma^{-1}E)$ by summing over all canonical trees ${T}$. Note that such a sum of morphisms is will defined on $\overline{\mathcal{C}}\otimes\Sigma^{-1} E$ since $\overline{\mathcal{C}}(0)=\overline{\mathcal{C}}(1)=0$. It is also natural in $E$ by definition.

\begin{prop}
    The morphism $\beta^E$ defined above satisfies
    $$d(\beta^E)+\partial_{\beta^E}\beta^E=0.$$
    We thus have a derivation of operads $d+\partial_{\beta^E}$ on $\mathcal{F}(\overline{\mathcal{C}}\otimes \Sigma^{-1}E)$.
\end{prop}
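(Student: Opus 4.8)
The plan is to verify the identity componentwise over the tree decomposition of the free operad. Since $\beta^E$ and both $d(\beta^E)$ and $\partial_{\beta^E}\beta^E$ are morphisms $\overline{\mathcal{C}}\otimes\Sigma^{-1}E\longrightarrow\mathcal{F}(\overline{\mathcal{C}}\otimes\Sigma^{-1}E)$, and the target splits as a direct sum indexed by canonical trees with inputs $\underline{T}\in\underline{\mathcal{PRT}_k}^0$, it suffices to check that for each such $\underline{T}$ the $\underline{T}$-component of $d(\beta^E)+\partial_{\beta^E}\beta^E$ vanishes. Throughout I would keep in mind that $\beta^E$ has degree $-1$ (as $\Lambda\mu_T$ lies in degree $-1$ by Construction \ref{consmu}), so the convolution differential reads $d(\beta^E)=d\circ\beta^E+\beta^E\circ d$, and that the reorganization of the two tensor factors is governed by the $\widetilde{\otimes}$ convention of Definition \ref{otimesmodif}.

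First I would analyze the quadratic term. By definition of the derivation $\partial_{\beta^E}$ obtained from $\beta^E$ by the Leibniz rule, its contribution to a fixed tree $\underline{T}$ comes precisely from the ways of first applying $\beta^E$ and then re-expanding a single vertex by $\beta^E$; these are indexed exactly by the subtrees $\underline{S}\subset\underline{T}$, using the (unique) factorization $\underline{T}=\underline{T}/\underline{S}\,\bullet_S\,\underline{S}$. On the cooperad side, the two decomposition maps compose as $\Delta_{(\underline{T}/\underline{S})}\circ_{\underline{S}}\Delta_{(\underline{S})}=\Delta_{(\underline{T})}$ by Lemma \ref{compbullet}, while on the coalgebra side the two Barratt–Eccles operations compose to $\Lambda\mu_{T/S}^E\circ_S\Lambda\mu_S^E$. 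Matching the $\widetilde{\otimes}$-bookkeeping, the $\underline{T}$-component of $\partial_{\beta^E}\beta^E$ is therefore $\Delta_{(\underline{T})}\,\widetilde{\otimes}\,\bigl(\sum_{S\subset T}\Lambda\mu_{T/S}\circ_S\Lambda\mu_S\bigr)^E$, up to the Koszul signs produced by permuting $\beta^E$ past the relevant factors.

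Next I would treat the linear term. Because the cooperadic decomposition $\Delta$ (hence each $\Delta_{(\underline{T})}$) is a morphism of dg modules, it commutes with the internal differential of $\overline{\mathcal{C}}$, so the contributions of the $\overline{\mathcal{C}}$-differential to $d(\beta^E)$ cancel in the standard way and only the differential acting through $E$ survives. Since the coaction $\mathcal{E}\longrightarrow\text{CoEnd}_E$ is a chain map, differentiating $\Lambda\mu_T^E$ yields $(d\Lambda\mu_T)^E$, so the $\underline{T}$-component of $d(\beta^E)$ equals $\Delta_{(\underline{T})}\,\widetilde{\otimes}\,(d\Lambda\mu_T)^E$ with matching signs. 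Applying the coaction functor to the identity $d(\Lambda\mu_T)+\sum_{S\subset T}\Lambda\mu_{T/S}\circ_S\Lambda\mu_S=0$ of Theorem \ref{mu} then cancels this against the quadratic term found above; note that the trivial subtrees (one-vertex $S$ or one-vertex $T/S$) contribute nothing on either side, consistently with $\mu$ vanishing on a single vertex.

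The main obstacle I anticipate is not the conceptual cancellation, which is dictated by Theorem \ref{mu} and Lemma \ref{compbullet}, but the careful sign and reindexing bookkeeping: one must confirm that the $\widetilde{\otimes}$ rearrangement of Definition \ref{otimesmodif} makes the tensor of the composed cooperadic map with the composed coalgebra operation agree, including signs, with the genuine operadic composite $\Lambda\mu_{T/S}\circ_S\Lambda\mu_S$ appearing in Theorem \ref{mu}, and that the Koszul signs coming from commuting the degree $-1$ map $\beta^E$ through the factors of the tree are identical in the two terms. Verifying that the Leibniz-rule derivation produces each subtree $\underline{S}$ exactly once, via the uniqueness of the factorization $\underline{T}/\underline{S}\,\bullet_S\,\underline{S}=\underline{T}$, is what guarantees there is neither over- nor under-counting.
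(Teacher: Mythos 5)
Your proposal is correct and follows essentially the same route as the paper: reduce to the $\underline{T}$-component on generators, identify the quadratic term as a sum over subtrees via Lemma \ref{compbullet} (using that subtrees of $\underline{T}$ correspond to subtrees of its shape $T$), identify the linear term with $\Delta_{(\underline{T})}\widetilde{\otimes}(d\Lambda\mu_T)^E$ since $\Delta$ is a chain map, and conclude from Theorem \ref{mu}. The paper's proof is exactly this argument, stated more tersely and without dwelling on the sign bookkeeping you flag.
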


% \begin{proof}
%     Let $\underline{U},\underline{V}$ be trees such that their shape $U,V\in\mathcal{PRT}$ are in the canonical order. Let $1\leq k\leq \underline{U}$. Then
%     $$\beta_{(\underline{U})}^E\circ_k\beta_{(\underline{V})}^E=(\Delta_{(\underline{U})}\circ_k\Delta_{(\underline{V})})\widetilde{\otimes}(\Lambda\mu_{U}^E\circ_k\Lambda\mu_{V}^E),$$

%     \noindent where operadic composition $\circ_k$ are taken in the coendomorphism operad. We then have $\Delta_{(\underline{U})}\circ_k\Delta_{(\underline{V})}=\Delta_{(\underline{T})}$ where $\underline{T}=\underline{U}\bullet_k\underline{V}$. Recall that we can identify $V$ with a subtree of $T$ such that $U=T/V$. Since that taking a subtree $\underline{S}$ of $\underline{T}$ is equivalent to taking a subtree $S$ of $T$, we have the identity
%     $$\sum_{\underline{U},\underline{V}\in \underline{\mathcal{T}ree}}\sum_{k=1}^{|\underline{U}|}\beta_{(\underline{U})}^E\circ_k\beta_{(\underline{V})}^E=\sum_{\underline{T}\in \underline{\mathcal{T}ree}}\sum_{S\subset T}\Delta_{(\underline{T})}\widetilde{\otimes}(\Lambda\mu_{T/S}\circ_S\Lambda\mu_{S})^E.$$

%     \noindent On the other hand, because the coproduct of $\mathcal{C}$ commutes with the differential, we have that
% $$d\left(\sum_{\underline{T}\in \underline{\mathcal{T}ree}}\beta_{(\underline{T})}^E\right)=\sum_{\underline{T}\in \underline{\mathcal{T}ree}}\Delta_{(\underline{T})}\widetilde{\otimes}d(\Lambda\mu_T)^E.$$

% \noindent The proposition then follows by Theorem \ref{mu}.
% \end{proof}

\begin{proof}
    It is sufficient to prove the formula on $\overline{\mathcal{C}}\otimes\Sigma^{-1} E$. Let $\underline{T}$ be a tree with inputs with shape a canonical tree $T\in\mathcal{PRT}$. We show that the $\underline{T}$-component of the morphism $d(\beta^E)+\partial_{\beta^E}\beta^E$ is $0$. First, we have
    $$d(\beta_{(\underline{T})}^E)=\Delta_{(\underline{T})}\widetilde{\otimes}d(\Lambda \mu_T)^{\Sigma E},$$

    \noindent since the cooperadic structure on $\mathcal{C}$ is preserved by its differential. Next, we have by definition
    $$(\partial_{\beta^E}\beta^E)_{(\underline{T})}=\sum_{\underline{S}\subset\underline{T}}(\Delta_{(\underline{T}/\underline{S})}\circ_{\text{\normalfont Shape}(\underline{S})}\Delta_{(\underline{S})})\widetilde{\otimes}(\Lambda\mu_{T/\text{\normalfont Shape}(\underline{S})}\circ_{\text{\normalfont Shape}(\underline{S})}\Lambda\mu_{\text{\normalfont Shape}(\underline{S})})^{\Sigma E}.$$

    \noindent Note that taking a subtree $\underline{S}$ of $\underline{T}$ is equivalent to taking a subtree $S$ of $T$. We thus have
    $$(\partial_{\beta^E}\beta^E)_{(\underline{T})}=\sum_{S\subset T}\Delta_{(\underline{T})}\widetilde{\otimes}(\Lambda\mu_{T/S}\circ_{S}\Lambda\mu_{S})^{\Sigma E}.$$

    \noindent The proposition follows by Theorem \ref{mu}.
\end{proof}

% \begin{proof}
%     Let $U,V\in\mathcal{PRT}$ be canonical trees. Then
%     $$\beta_{(U)}^E\circ_k\beta_{(V)}^E=(\Delta_{(U)}\circ_k\Delta_{(V)})\widetilde{\otimes}(\Lambda\mu_{U}^E\circ_k\Lambda\mu_{V}^E),$$

% \noindent where the operadic compositions $\circ_k$ are taken in the relevant operads. By Proposition-Definition \ref{defdelta}, and since giving a tree $T$ occurring in $U\circ_k V$ is equivalent to chosing a subtree $S\subset T$ with $r(S)=k$, we have
% $$\partial_{\beta^E}\beta^E=\sum_{\substack{U,V\in\mathcal{PRT}\\ U,V\text{ canonical}}}\sum_{k=1}^{|U|}\beta_{(U)}^E\circ_k\beta_{(V)}^E=\sum_{\substack{T\in\mathcal{PRT}\\ T\text{ canonical}}}\sum_{S\subset T}\Delta_{(T)}\widetilde{\otimes}(\Lambda\mu_{T/S}\circ_S \Lambda\mu_{S})^E.$$

% \noindent On the other hand, because the coproduct of $\mathcal{C}$ commutes with the differential, we have that
% $$d\left(\sum_{\substack{T\in\mathcal{PRT}\\T\text{ canonical}}}\beta_{(T)}^E\right)=\sum_{\substack{T\in\mathcal{PRT}\\T\text{ canonical}}}\Delta_{(T)}\widetilde{\otimes}d(\Lambda\mu_T)^E.$$

% \noindent The proposition follows by Theorem \ref{mu}.
% \end{proof}

We can now construct a cosimplicial frame for $B^c(\mathcal{C})$. Recall that the normalized chain complex $N_*(\Delta^n)$ admits a structure of a $\mathcal{E}$-coalgebra.

\begin{defi}
    Let $n\geq 0$. We set
    $$B^c(\mathcal{C})\otimes\Delta^n=(\mathcal{F}(\overline{\mathcal{C}}\otimes \Sigma ^{-1}N_*(\Delta^n)),\partial_{\beta^{N_*(\Delta^n)}}).$$
\end{defi}

We immediately see that $B^c(\mathcal{C})\otimes\Delta^\bullet$ defines a cosimplicial object in the category of non symmetric operads. By Corollary \ref{calculdiff}, we also have that $B^c(\mathcal{C})\otimes\Delta^0=B^c(\mathcal{C})$.\\

Recall from \cite[$\mathsection$3.2.2-$\mathsection$3.2.3]{fresselivre} that a \textit{cosimplicial frame} associated to $B^c(\mathcal{C})$ is a cosimplicial set $B^c(\mathcal{C})\otimes\Delta^\bullet$ such that, for every $n\geq 0$, the morphism $B^c(\mathcal{C})\otimes\Delta^n\longrightarrow B^c(\mathcal{C})\otimes\Delta^0$ is a weak equivalence and the morphism $B^c(\mathcal{C})\otimes\partial\Delta^n\longrightarrow B^c(\mathcal{C})\otimes\Delta^n$ is a cofibration for every $n\geq 0$.

\begin{thm}\label{reperecos}
    The cosimplicial object $B^c(\mathcal{C})\otimes\Delta^\bullet$ defines a cosimplicial frame for $B^c(\mathcal{C})$ in the category of operads.
\end{thm}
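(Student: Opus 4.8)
The plan is to verify the two defining conditions of a cosimplicial frame from \cite[$\mathsection$3.2.2-$\mathsection$3.2.3]{fresselivre}: that each structure map $B^c(\mathcal{C})\otimes\Delta^n\longrightarrow B^c(\mathcal{C})\otimes\Delta^0$ is a weak equivalence of operads, and that each latching map $B^c(\mathcal{C})\otimes\partial\Delta^n\longrightarrow B^c(\mathcal{C})\otimes\Delta^n$ is a cofibration. Since the model structure on $\mathcal{O}p$ is transferred along the free-forgetful adjunction $\mathcal{F}:\text{\normalfont Seq}_\mathbb{K}\rightleftarrows\mathcal{O}p:\omega$ (and $\omega$ creates weak equivalences), the weak equivalence condition reduces to checking arity-wise quasi-isomorphisms on the underlying sequences, while cofibrations will be handled by exhibiting the latching maps as free extensions.

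First I would address the weak equivalence condition. The map $p_n:N_*(\Delta^n)\longrightarrow N_*(\Delta^0)$ of Proposition \ref{homonstar} is a deformation retract of $\mathcal{E}$-coalgebras, inducing a map $\overline{\mathcal{C}}\otimes\Sigma^{-1}N_*(\Delta^n)\longrightarrow\overline{\mathcal{C}}\otimes\Sigma^{-1}N_*(\Delta^0)$; by naturality of $\beta^E$ in $E$, this induces the cosimplicial structure map $B^c(\mathcal{C})\otimes\Delta^n\longrightarrow B^c(\mathcal{C})\otimes\Delta^0$ as a morphism of operads commuting with the twisting differentials. The strategy here is to filter the free operad $\mathcal{F}(\overline{\mathcal{C}}\otimes\Sigma^{-1}N_*(\Delta^n))$ by the number of vertices of the underlying trees. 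The twisting part $\partial_{\beta}$ strictly increases the number of vertices (it decomposes a cooperad element via $\Delta_{(\underline{T})}$), so on the associated graded the only surviving differential is the internal one induced by $d$ on $\overline{\mathcal{C}}\otimes\Sigma^{-1}N_*(\Delta^n)$. Since $p_n$ is a quasi-isomorphism, tensoring with $\overline{\mathcal{C}}$ and passing to trees (which is exact arity-wise as $\mathbb{K}$ is a field) shows the map is a quasi-isomorphism on each graded piece, hence on the underlying sequences by a spectral sequence / complete filtration comparison argument.

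For the cofibration condition, I would argue that the latching map is a free extension of operads. The object $B^c(\mathcal{C})\otimes\partial\Delta^n$ should be defined via the subcomplex $N_*(\partial\Delta^n)\subset N_*(\Delta^n)$ (again a $\mathcal{E}$-subcoalgebra stable under the interval cut operations), and the inclusion $N_*(\partial\Delta^n)\hookrightarrow N_*(\Delta^n)$ is a cofibration of dg $\mathbb{K}$-modules with cofibrant cofiber (the relative cells being the top nondegenerate simplex). Applying $\overline{\mathcal{C}}\otimes\Sigma^{-1}(-)$ preserves this, and the free operad functor $\mathcal{F}$, being the left adjoint in the transferred structure, sends generating cofibrations of $\text{\normalfont Seq}_\mathbb{K}$ to generating cofibrations of $\mathcal{O}p$; the key point is that $\partial_\beta$ is a derivation compatible with the inclusion, so the latching map is obtained by freely adjoining generators along a cofibration of sequences, which is a cofibration by the lifting characterization.

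The main obstacle I anticipate is the weak equivalence step, specifically controlling the twisted differential through the filtration argument. One must confirm that the filtration by tree-vertex-count is exhaustive and complete (or bounded below in each arity) so that the associated-graded quasi-isomorphism genuinely lifts to the twisted total complex; this requires verifying that $\partial_\beta$ is filtration-increasing and that $\beta^E$ is natural in the deformation retract data so that the homotopies $h_n^k$ of Proposition \ref{homonstar} can be transported to contracting homotopies on the free operad. The cofibration step, by contrast, is largely formal once the boundary inclusion is identified as a cofibration of $\mathcal{E}$-coalgebras and one invokes that $\mathcal{F}$ together with the transferred model structure turns it into an operadic cofibration.
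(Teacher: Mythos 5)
Your proposal follows essentially the same route as the paper: the weak equivalence is established by filtering the free operad by the number of tree vertices, observing that the twisting derivation increases vertex count so only the internal differential survives on the associated graded, and concluding via the resulting spectral sequence (which converges since, $\overline{\mathcal{C}}$ being reduced in arities $0$ and $1$, the filtration is finite in each arity); the cofibration condition is likewise obtained from the boundary inclusion $N_*(\partial\Delta^n)\hookrightarrow N_*(\Delta^n)$ being a cofibration of sequences, which the free-operad left adjoint converts into an operadic cofibration. Your worry about transporting the homotopies $h_n^k$ to the free operad is unnecessary for this argument — the quasi-isomorphism on each $E^0$-page follows directly from $p_n$ being a quasi-isomorphism together with the Künneth theorem over the field $\mathbb{K}$.
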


\begin{proof}
    Since the morphisms $N_*(\partial\Delta^n)\longrightarrow N_*(\Delta^n)$ are cofibrations for every $n\geq 0$, the morphisms $\overline{\mathcal{C}}\otimes\Sigma ^{-1}N_*(\partial\Delta^n)\longrightarrow\overline{\mathcal{C}}\otimes \Sigma ^{-1}N_*(\Delta^n)$ are cofibrations (see \cite[Proposition 1.4.13]{fressecyl}). We now prove that $B^c(\mathcal{C})\otimes\Delta^n\longrightarrow B^c(\mathcal{C})$ is a weak equivalence. We first note that $B^c(\mathcal{C})\otimes \Delta^n$ admits a natural filtration $(F_p (B^c(\mathcal{C})\otimes \Delta^n))_p$ defined by 
    $$F_p (B^c(\mathcal{C})\otimes \Delta^n)=\bigoplus_{k\geq 1}\bigoplus_{\substack{\underline{T}\in\underline{\mathcal{PRT}_k}^0\\\underline{T}\text{ canonical}\\ |\underline{T}|\geq p+1}}\underline{T}\otimes\mathcal{F}_{(\underline{T})}(\overline{\mathcal{C}}\otimes\Sigma ^{-1}N_\ast(\Delta^n)).$$
    
    \noindent By definition, the differential $\partial^n$ preserves this filtration. We thus have a spectral sequence which is convergent dimension-wise:
        $$E^{0}_q\Rightarrow H_\ast(B^c(\mathcal{C})\otimes \Delta^n)$$

        \noindent where we have set
        $$E_q^0=\bigoplus_{k\geq 1}\bigoplus_{\substack{\underline{T}\in\underline{\mathcal{PRT}_k}^0\\\underline{T}\text{ canonical}\\ |\underline{T}|= q+1}}\underline{T}\otimes\mathcal{F}_{(\underline{T})}(\overline{\mathcal{C}}\otimes\Sigma ^{-1}N_\ast(\Delta^n)).$$

        \noindent Because the twisting part $\partial^{N_*(\Delta^n)}$ increases the number of vertices, the differential is reduced to the internal differential $d$ on $E^0_q$. Because $\overline{\mathcal{C}}\otimes N_*(\Delta^n)\longrightarrow\overline{\mathcal{C}}$ is a weak equivalence, we have that the morphism $N_*(\Delta^n)\longrightarrow N_*(\Delta^0)$ induces a weak equivalence on $E_p^0$ for all $p$. It then induces a weak equivalence from $B^c(\mathcal{C})\otimes\Delta^n$ to $B^c(\mathcal{C})$.\\

        We then have the result.
\end{proof}

\subsection{Computation of $\text{\normalfont Map}_{\mathcal{O}p}(B^c(\mathcal{C}),\mathcal{P})$}\label{sec:254}

In this last subsection, we give an explicit description of a mapping space $\text{\normalfont Map}_{\mathcal{O}p}(B^c(\mathcal{C}),\mathcal{P})$. We know from Theorem \ref{reperecos} that we can set
$$\text{\normalfont Map}_{\mathcal{O}p}(B^c(\mathcal{C}),\mathcal{P})_n=\text{\normalfont Mor}_{\mathcal{O}p}(B^c(\mathcal{C})\otimes\Delta^n,\mathcal{P})$$

\noindent for every $n\geq 0$. The goal of this subsection is to link this object with some $\widehat{\Gamma\Lambda\mathcal{PL}_\infty}$-algebra structure on $\text{\normalfont Hom}_{\text{\normalfont Seq}_\mathbb{K}}(\overline{\mathcal{C}}\otimes\Sigma^{-1} N_*(\Delta^n),\overline{\mathcal{P}})$. We consider the dg $\mathbb{K}$-module
$$\mathcal{L}(\text{\normalfont Hom}(\overline{\mathcal{C}}\otimes N_*(\Delta^n),\overline{\mathcal{P}}))=\bigoplus_{k\geq 2}\text{\normalfont Hom}(\overline{\mathcal{C}}(k)\otimes N_*(\Delta^n),\overline{\mathcal{P}}(k)).$$

\noindent This dg $\mathbb{K}$-module is endowed with a filtration defined by
$$F_p(\mathcal{L}(\text{\normalfont Hom}(\overline{\mathcal{C}}\otimes N_*(\Delta^n),\overline{\mathcal{P}})))=\bigoplus_{k\geq p+1}\text{\normalfont Hom}(\overline{\mathcal{C}}(k)\otimes N_*(\Delta^n),\overline{\mathcal{P}}(k)),$$

\noindent so that $\text{\normalfont Hom}_{\text{\normalfont Seq}_\mathbb{K}}(\overline{\mathcal{C}}\otimes N_*(\Delta^n),\overline{\mathcal{P}})$ is the completion of $\mathcal{L}(\text{\normalfont Hom}(\overline{\mathcal{C}}\otimes N_*(\Delta^n),\overline{\mathcal{P}}))$ with respect to this filtration.

\begin{lm}\label{lembe}
    The dg $\mathbb{K}$-module $\mathcal{L}(\text{\normalfont Hom}(\overline{\mathcal{C}}\otimes N_*(\Delta^n),\overline{\mathcal{P}}))$ is endowed with the structure of a $\mathcal{B}race\underset{\text{\normalfont H}}{\otimes}\mathcal{E}$-algebra defined by
    $$(T\otimes\mu)(f_1,\ldots,f_k)=\pm\sum_{\underline{T}\in\text{\normalfont Shape}^{-1}(T)}\gamma_{(\underline{T})}\circ (f_{1}\otimes\cdots\otimes f_{k})\circ (\Delta_{(\underline{T})}\widetilde{\otimes}\mu^{N_*(\Delta^n)}),$$

    \noindent for any $T\in\mathcal{PRT}(k)$, $\mu\in\mathcal{E}(k)$ and $f_1,\ldots,f_k\in\mathcal{L}(\text{\normalfont Hom}(\overline{\mathcal{C}}\otimes N_*(\Delta^n),\overline{\mathcal{P}}))$ homogeneous, where we consider the tensor product $\widetilde{\otimes}$ (see Definition \ref{otimesmodif}). The sign is yielded by the commutation of $\mu$ with the $f_i$'s. Note that the sum is finite point-wise since we have supposed that $\mathcal{C}(0)=0$.
\end{lm}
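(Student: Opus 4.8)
The plan is to exhibit the structure as an operad morphism $\Phi\colon\mathcal{B}race\underset{\text{\normalfont H}}{\otimes}\mathcal{E}\longrightarrow\text{\normalfont End}_{\mathcal{L}}$, where I abbreviate $\mathcal{L}=\mathcal{L}(\text{\normalfont Hom}(\overline{\mathcal{C}}\otimes N_*(\Delta^n),\overline{\mathcal{P}}))$, defined on a basis element $T\otimes\mu\in\mathcal{B}race(k)\otimes\mathcal{E}(k)$ by the displayed formula and extended linearly; the $\mathcal{B}race\underset{\text{\normalfont H}}{\otimes}\mathcal{E}$-algebra structure is then read off from this morphism in the sense of the definition of a $\mathcal{P}$-algebra. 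Conceptually $\Phi$ assembles two independent ingredients: the \emph{convolution} data carried by the tree-indexed (co)compositions $\gamma_{(\underline{T})}$ and $\Delta_{(\underline{T})}$ of Definition \ref{compodeltaarbres}, which encode the $\mathcal{B}race$-action, and the $\mathcal{E}$-coalgebra structure on $N_*(\Delta^n)$, which supplies the assignment $\mu\mapsto\mu^{N_*(\Delta^n)}$. First I would check that the formula is well posed: the sum over $\underline{T}\in\text{\normalfont Shape}^{-1}(T)$ is finite when evaluated on a fixed element, because $\mathcal{C}(0)=0$ forces every vertex of an admissible $\underline{T}$ to have valence $\geq 1$ and only finitely many decompositions of a given element are nonzero; the result lands in $\mathcal{L}$; and since $\mathcal{B}race$ is concentrated in degree $0$ while $\gamma_{(\underline{T})},\Delta_{(\underline{T})}$ are of degree $0$ and $\mu^{N_*(\Delta^n)}$ has degree $|\mu|$, the operation $\Phi(T\otimes\mu)$ has the expected degree $|\mu|=|T\otimes\mu|$.

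Next I would verify the $\Sigma_k$-equivariance of $\Phi$: the diagonal action on $\mathcal{B}race(k)\otimes\mathcal{E}(k)$ corresponds under $\Phi$ to permuting the inputs $f_1,\ldots,f_k$, which follows from the equivariance of the tree-indexed maps $\gamma_{(\underline{T})},\Delta_{(\underline{T})}$ together with the equivariance of the coaction $\mu\mapsto\mu^{N_*(\Delta^n)}$, once the prescribed Koszul sign from commuting $\mu$ past the $f_i$'s is taken into account.

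The heart of the argument is the compatibility with operadic composition, namely $\Phi\big((T\circ_i T')\otimes(\mu\circ_i\mu')\big)=\Phi(T\otimes\mu)\circ_i\Phi(T'\otimes\mu')$. Here I would expand $\Phi$ of the left-hand composite and reorganize it as a double sum indexed by a tree with inputs $\underline{T}$ appearing in the $\mathcal{B}race$-composition $T\circ_i T'$ together with a distinguished subtree $\underline{S}$ of shape $T'$ such that $\underline{T}/\underline{S}$ has shape $T$; the identity $\underline{T}/\underline{S}\bullet_{\text{\normalfont Shape}(\underline{S})}\underline{S}=\underline{T}$ makes this reindexing a bijection, the sum inherent in the brace composition being absorbed into the choice of $\underline{S}$. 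Lemma \ref{compbullet} then gives $\gamma_{(\underline{T}/\underline{S})}\circ_{\underline{S}}\gamma_{(\underline{S})}=\gamma_{(\underline{T})}$ and $\Delta_{(\underline{T}/\underline{S})}\circ_{\underline{S}}\Delta_{(\underline{S})}=\Delta_{(\underline{T})}$, so the inner composition assembles into the operation shaped on $\underline{T}$. For the $\mathcal{E}$-factor, the statement that $N_*(\Delta^n)$ is an $\mathcal{E}$-coalgebra means exactly that $\mathcal{E}\longrightarrow\text{\normalfont CoEnd}_{N_*(\Delta^n)}$ is a morphism of operads, whence $(\mu\circ_i\mu')^{N_*(\Delta^n)}$ is the nested coaction matching the nested application of the $f_j$'s, and combining these yields the required equality. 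Finally I would confirm compatibility with the differentials: since $\mathcal{B}race$ carries the zero differential, one has $d(T\otimes\mu)=T\otimes d(\mu)$, and because $\gamma_{(\underline{T})},\Delta_{(\underline{T})}$ are chain maps and $\mathcal{E}\to\text{\normalfont CoEnd}_{N_*(\Delta^n)}$ is a dg operad morphism (so $d(\mu)^{N_*(\Delta^n)}$ equals the differential induced on $\mu^{N_*(\Delta^n)}$), the identity $\Phi(d(T\otimes\mu))=d\,\Phi(T\otimes\mu)$ holds once signs are matched.

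The main obstacle I anticipate is the bookkeeping in the composition step: reconciling the combinatorics of trees with inputs under $\circ_i$ and $\bullet_i$ with the double-sum reindexing, and—more delicately—tracking the Koszul signs coming from the modified tensor product $\widetilde\otimes$ of Definition \ref{otimesmodif} and from commuting the homogeneous maps $f_j$ past $\mu^{N_*(\Delta^n)}$, so that the sign prescribed in the statement is precisely the one making the composition axiom hold on the nose. Once the sign conventions are fixed, equivariance and differential compatibility are then routine.
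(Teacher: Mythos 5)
Your proposal is correct and follows essentially the same route as the paper: verify $\Sigma_k$-equivariance from the equivariance of $\gamma_{(\underline{T})}$, $\Delta_{(\underline{T})}$ and the coaction, then prove compatibility with $\circ_i$ by the reindexing $\underline{U}\bullet_i\underline{V}\leftrightarrow(\underline{T},\underline{S}\subset\underline{T})$ together with Lemma \ref{compbullet} and the fact that the $\mathcal{E}$-coaction on $N_*(\Delta^n)$ is an operad morphism. The additional checks you list (finiteness, degree, differentials) are routine and consistent with the paper, which leaves them implicit.
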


\begin{proof}
    Let $\sigma \in\Sigma _n$. By definition of $\gamma_{(\underline{T})}$ and $\Delta_{(\underline{T})}$ for every $\underline{T}\in\underline{\mathcal{PRT}}$, we have
    $$(\sigma \cdot T\otimes\sigma \cdot\mu)(f_1,\ldots,f_k)=\pm (T\otimes\mu)(f_{\sigma^{-1}(1)},\ldots,f_{\sigma^{-1}(k)}),$$

    \noindent where we consider the action of $\sigma$ on $(\overline{\mathcal{C}}\otimes N_\ast(\Delta^n))^{\otimes k}$ by permutation of the tensors.\\

    We now prove the compatibility with the operadic structure. Let $p,q\geq 0$ and $U\in\mathcal{PRT}(p),V\in\mathcal{PRT}(q),\mu\in\mathcal{E}(p),\nu\in\mathcal{E}(q)$ and $1\leq i\leq p$. By Lemma \ref{compbullet}, we have
    \begin{multline*}
        (U\otimes\mu)(f_1,\ldots,f_{i-1},(V\otimes\nu)(f_i,\ldots,f_{i+q-1}),f_{i+q},\ldots,f_{p+q-1})\\=\pm\sum_{\substack{\underline{U}\in\text{\normalfont Shape}^{-1}(U)\\\underline{V}\in\text{\normalfont Shape}^{-1}(V)}}\gamma_{(\underline{U}\bullet_i\underline{V})}\circ (f_1\otimes\cdots\otimes f_{p+q-1})\circ (\Delta_{(\underline{U}\bullet_i\underline{V})}\widetilde{\otimes }(\mu\circ_i\nu)^{N_*(\Delta^n)}).
    \end{multline*}

    \noindent Now, write $U\circ_iV=T_1+\cdots+T_m$ for some $T_1,\ldots,T_m\in\mathcal{PRT}$. By definition of the composition product in $\mathcal{B}race$, for every $\underline{U}\in\text{\normalfont Shape}^{-1}(U)$ and $\underline{V}\in\text{\normalfont Shape}^{-1}(V)$, the tree $\underline{U}\bullet_i\underline{V}$ has shape $T_j$ for some $1\leq j\leq m$ given by the particular choice of attachments forced by $\underline{V}$. In the converse direction, for every tree $\underline{T}$ with shape $T_j$, there exists a unique subtree $\underline{V}\subset\underline{T}$ with shape $V$ such that $\underline{U}:=\underline{T}/\underline{V}$ has shape $U$. This then proves that
$$(U\otimes\mu)(f_1,\ldots,f_{i-1},(V\otimes\nu)(f_i,\ldots,f_{i+q-1}),f_{i+q},\ldots,f_{p+q-1})=\pm(U\circ_i V\otimes\mu\circ_i\nu)(f_1\otimes\cdots\otimes f_{p+q-1}).$$
\end{proof}

\begin{prop}
    The dg $\mathbb{K}$-module $\text{\normalfont Hom}_{\text{\normalfont Seq}_\mathbb{K}}(\overline{\mathcal{C}}\otimes\Sigma^{-1} N_*(\Delta^n),\overline{\mathcal{P}})$ is endowed with the structure of a $\widehat{\Gamma\Lambda\mathcal{PL}_\infty}$-algebra.
\end{prop}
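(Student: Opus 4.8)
The plan is to deduce this from the $\mathcal{B}race\underset{\text{\normalfont H}}{\otimes}\mathcal{E}$-algebra structure produced in Lemma \ref{lembe}. First I would record the elementary isomorphism $\text{\normalfont Hom}(\overline{\mathcal{C}}\otimes\Sigma^{-1}N_*(\Delta^n),\overline{\mathcal{P}})\simeq\Sigma\,\text{\normalfont Hom}(\overline{\mathcal{C}}\otimes N_*(\Delta^n),\overline{\mathcal{P}})$, which follows from $\overline{\mathcal{C}}\otimes\Sigma^{-1}N_*(\Delta^n)\simeq\Sigma^{-1}(\overline{\mathcal{C}}\otimes N_*(\Delta^n))$ together with the identity $\text{\normalfont Hom}(\Sigma^{-1}V,W)\simeq\Sigma\,\text{\normalfont Hom}(V,W)$ recalled in $\mathsection$\ref{sec:211}. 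By Lemma \ref{lembe}, the dg $\mathbb{K}$-module $\mathcal{L}(\text{\normalfont Hom}(\overline{\mathcal{C}}\otimes N_*(\Delta^n),\overline{\mathcal{P}}))$ is a $\mathcal{B}race\underset{\text{\normalfont H}}{\otimes}\mathcal{E}$-algebra; Corollary \ref{bracee} then endows it with a $\Gamma(\mathcal{P}re\mathcal{L}ie_\infty,-)$-algebra structure, and Theorem \ref{thmfonda} promotes its suspension $\Sigma\,\mathcal{L}(\text{\normalfont Hom}(\overline{\mathcal{C}}\otimes N_*(\Delta^n),\overline{\mathcal{P}}))$ to an object of $\Gamma\Lambda\mathcal{PL}_\infty$, whose weighted braces $-\llbrace-,\ldots,-\rrbrace_{r_1,\ldots,r_n}$ are obtained by applying the composite of Corollary \ref{bracee} to the operations $\Sigma^{-1}e_1^{r+1}$.

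Next I would check that the arity filtration $F_p$ turns this into a \emph{filtered} $\Gamma\Lambda\mathcal{PL}_\infty$-algebra. The key observation is that, assigning to an element of $\text{\normalfont Hom}(\overline{\mathcal{C}}(k)\otimes N_*(\Delta^n),\overline{\mathcal{P}}(k))$ the \emph{weight} $k-1$, the filtration is exactly $F_p=\{\,\text{weight}\geq p\,\}$. Every structure operation of $\mathcal{B}race\underset{\text{\normalfont H}}{\otimes}\mathcal{E}$ is built, through Lemma \ref{lembe}, from the operadic composition $\gamma_{(\underline{T})}$ and cocomposition $\Delta_{(\underline{T})}$ shaped on a tree $\underline{T}$; composing operations of arities $a_1,\ldots,a_N$ along such a tree yields arity $\sum_i a_i-(N-1)$, hence weight $\sum_i(a_i-1)$, so that weight is additive under all these operations. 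Consequently, for the weighted brace $x\llbrace y_1,\ldots,y_n\rrbrace_{r_1,\ldots,r_n}$---which applies a $(1+\sum_i r_i)$-ary operation to $x$ together with $r_i$ copies of each $y_i$---the weight of the output equals $w(x)+\sum_i r_i\,w(y_i)$. Thus if $x\in F_m$ and $y_i\in F_{p_i}$ the output has weight $\geq m+\sum_i r_i p_i$, that is $F_mV\llbrace F_{p_1}V,\ldots,F_{p_n}V\rrbrace_{r_1,\ldots,r_n}\subset F_{m+p_1r_1+\cdots+p_nr_n}V$, which is precisely the filtered $\Gamma\Lambda\mathcal{PL}_\infty$-condition; the differential preserves $F_p$ since it preserves arities.

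Finally I would pass to the completion. Since $\text{\normalfont Hom}_{\text{\normalfont Seq}_\mathbb{K}}(\overline{\mathcal{C}}\otimes N_*(\Delta^n),\overline{\mathcal{P}})$ is by definition the completion of $\mathcal{L}(\text{\normalfont Hom}(\overline{\mathcal{C}}\otimes N_*(\Delta^n),\overline{\mathcal{P}}))$ with respect to $F_\bullet$, the Remark on completions of filtered $\Gamma\Lambda\mathcal{PL}_\infty$-algebras in $\mathsection$\ref{sec:223} equips its suspension with a complete $\widehat{\Gamma\Lambda\mathcal{PL}_\infty}$-algebra structure; as $\Sigma$ commutes with the inverse limit defining the completion, this is exactly the desired structure on $\Sigma\,\widehat{\mathcal{L}(\text{\normalfont Hom}(\cdots))}\simeq\text{\normalfont Hom}_{\text{\normalfont Seq}_\mathbb{K}}(\overline{\mathcal{C}}\otimes\Sigma^{-1}N_*(\Delta^n),\overline{\mathcal{P}})$. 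The only genuine content beyond citing earlier results is the weight bookkeeping of the previous paragraph, and I expect this additivity of the tree-arity under composition to be the step requiring the most care, although it is combinatorial rather than deep.
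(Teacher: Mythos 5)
Your proposal is correct and follows essentially the same route as the paper: the paper's proof simply invokes Lemma \ref{lembe} together with Theorem \ref{morph} (of which Corollary \ref{bracee} and Theorem \ref{thmfonda} are the packaged consequences you cite) to put a $\Gamma\Lambda\mathcal{PL}_\infty$-structure on $\Sigma\,\mathcal{L}(\text{\normalfont Hom}(\overline{\mathcal{C}}\otimes N_*(\Delta^n),\overline{\mathcal{P}}))$ and then "takes the completion." Your weight-additivity argument for the arity filtration is exactly the verification the paper leaves implicit, and it is carried out correctly.
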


\begin{proof}
    By Lemma \ref{lembe} and Theorem \ref{morph}, the dg $\mathbb{K}$-module $\Sigma \mathcal{L}(\text{\normalfont Hom}(\overline{\mathcal{C}}\otimes N_*(\Delta^n),\overline{\mathcal{P}}))$ is endowed with the structure of a $\Gamma\Lambda\mathcal{PL}_\infty$-algebra. By taking the completion, we obtain that $\text{\normalfont Hom}(\overline{\mathcal{C}}\otimes\Sigma^{-1} N_*(\Delta^n),\overline{\mathcal{P}})$ is a $\widehat{\Gamma\Lambda\mathcal{PL}_\infty}$-algebra.
\end{proof}

From the definition of the $\mathcal{B}race\underset{\text{\normalfont H}}{\otimes}\mathcal{E}$-algebra structure on $\text{\normalfont Hom}(\overline{\mathcal{C}}\otimes N_*(\Delta^n),\overline{\mathcal{P}})$, we deduce a first computation of $\text{\normalfont Map}_{\mathcal{O}p}(B^c(\mathcal{C}),\mathcal{P})$.

\begin{cor}
    We have the isomorphism of simplicial sets
    $$\text{\normalfont Map}_{\mathcal{O}p}(B^c(\mathcal{C}),\mathcal{P})\simeq\mathcal{MC}(\text{\normalfont Hom}_{\text{\normalfont Seq}_\mathbb{K}}(\overline{\mathcal{C}}\otimes\Sigma^{-1} N_*(\Delta^\bullet),\overline{\mathcal{P}})).$$
\end{cor}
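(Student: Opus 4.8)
The plan is to combine the cosimplicial frame of Theorem \ref{reperecos} with the free operad adjunction and to recognize the resulting condition on generators as a Maurer-Cartan equation. First, since $B^c(\mathcal{C})\otimes\Delta^\bullet$ is a cosimplicial frame for $B^c(\mathcal{C})$, I would set the $n$-simplices of the mapping space to be $\text{\normalfont Mor}_{\mathcal{O}p}(B^c(\mathcal{C})\otimes\Delta^n,\mathcal{P})$, with simplicial structure induced by the cosimplicial structure of $B^c(\mathcal{C})\otimes\Delta^\bullet$. As a graded operad, $B^c(\mathcal{C})\otimes\Delta^n$ is the free operad $\mathcal{F}(\overline{\mathcal{C}}\otimes\Sigma^{-1}N_*(\Delta^n))$, so by the adjunction $\mathcal{F}\dashv\omega$ a graded operad morphism $B^c(\mathcal{C})\otimes\Delta^n\longrightarrow\mathcal{P}$ amounts to the datum of a degree $0$ morphism of graded sequences $\overline{\mathcal{C}}\otimes\Sigma^{-1}N_*(\Delta^n)\longrightarrow\omega(\mathcal{P})$. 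The hypotheses $\mathcal{C}(0)=\mathcal{P}(0)=0$ and $\mathcal{C}(1)=\mathcal{P}(1)=\mathbb{K}$ force such a morphism to preserve operadic units and to land in $\overline{\mathcal{P}}$, so it is exactly a degree $0$ element $\tau\in\text{\normalfont Hom}_{\text{\normalfont Seq}_\mathbb{K}}(\overline{\mathcal{C}}\otimes\Sigma^{-1}N_*(\Delta^n),\overline{\mathcal{P}})$, which is where the $\widehat{\Gamma\Lambda\mathcal{PL}_\infty}$-algebra structure lives.

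Next I would translate the condition that the associated morphism $\phi$ be a morphism of dg operads, namely $\phi\circ(d+\partial_{\beta^{N_*(\Delta^n)}})=d_{\mathcal{P}}\circ\phi$, into an equation on $\tau$. Because $\phi$ is freely generated and the twisting derivation $\partial_{\beta^{N_*(\Delta^n)}}$ is determined on generators by $\beta^{N_*(\Delta^n)}=\sum_{\underline{T}}\Delta_{(\underline{T})}\widetilde{\otimes}\Lambda\mu_T^{N_*(\Delta^n)}$, it suffices to test the identity on $\overline{\mathcal{C}}\otimes\Sigma^{-1}N_*(\Delta^n)$. The internal part produces $d(\tau)$, while pushing each tree term $\Delta_{(\underline{T})}\widetilde{\otimes}\Lambda\mu_T^{N_*(\Delta^n)}$ through $\phi$ applies $\gamma_{(\underline{T})}$ to the $\tau$-labelled composite. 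Comparing this with the explicit $\mathcal{B}race\underset{\text{\normalfont H}}{\otimes}\mathcal{E}$-algebra structure of Lemma \ref{lembe} and the induced weighted braces described after Corollary \ref{bracee}, the sum over all canonical trees with $k+1$ vertices reassembles precisely $\tau\llbrace\tau\rrbrace_k$. Hence $\phi$ respects the differentials if and only if
$$d(\tau)+\sum_{k\geq 1}\tau\llbrace\tau\rrbrace_k=0,$$
that is, if and only if $\tau\in\mathcal{MC}(\text{\normalfont Hom}_{\text{\normalfont Seq}_\mathbb{K}}(\overline{\mathcal{C}}\otimes\Sigma^{-1}N_*(\Delta^n),\overline{\mathcal{P}}))$. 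Finally, since $\beta^E$, the brace structure of Lemma \ref{lembe}, and the adjunction bijection are all natural in $E=N_*(\Delta^n)$, and hence in the simplicial set $\Delta^\bullet$, these degree-wise bijections assemble into an isomorphism of simplicial sets, giving the claim.

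The hard part will be the matching carried out in the second step: one must verify that the combinatorics of the twisting derivation — the sum over tree shapes, the compositions $\bullet_i$ and $\circ_i$, and the grading by number of vertices — correspond precisely to the weight $k$ of the weighted braces, with compatible signs. This is exactly the point for which the $\widehat{\Gamma\Lambda\mathcal{PL}_\infty}$-structure was engineered through $\gamma_{(\underline{T})}$, $\Delta_{(\underline{T})}$ and $\mu^{N_*(\Delta^n)}$, so the identification is structural; nevertheless it requires the bookkeeping afforded by Lemma \ref{compbullet} and the shape-fibre description $\text{\normalfont Shape}^{-1}(T)$ in order to align each tree appearing in $\partial_{\beta^{N_*(\Delta^n)}}$ with the corresponding summand of $\tau\llbrace\tau\rrbrace_k$, together with a routine check that the degree $0$ condition on $\tau$ is precisely the condition that $\phi$ be degree-preserving.
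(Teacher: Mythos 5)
Your proposal is correct and follows essentially the same route the paper intends: identify $n$-simplices with operad morphisms out of the quasi-free operad $B^c(\mathcal{C})\otimes\Delta^n=(\mathcal{F}(\overline{\mathcal{C}}\otimes\Sigma^{-1}N_*(\Delta^n)),\partial_{\beta^{N_*(\Delta^n)}})$, use the free-operad adjunction to reduce to degree $0$ elements of $\text{\normalfont Hom}_{\text{\normalfont Seq}_\mathbb{K}}(\overline{\mathcal{C}}\otimes\Sigma^{-1}N_*(\Delta^n),\overline{\mathcal{P}})$, and observe that compatibility with the twisting derivation is exactly the Maurer--Cartan equation, because the $\mathcal{B}race\underset{\text{\normalfont H}}{\otimes}\mathcal{E}$-structure of Lemma \ref{lembe} is built from the very same data $\gamma_{(\underline{T})}$, $\Delta_{(\underline{T})}$, $\mu^{N_*(\Delta^n)}$ as $\beta^{N_*(\Delta^n)}$. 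The paper states this corollary as an immediate deduction, so your write-up simply makes explicit the tree-by-tree matching that the paper leaves implicit.
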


Our goal is now to link this computation with the simplicial Maurer-Cartan set of $\text{\normalfont Hom}_{\text{\normalfont Seq}_\mathbb{K}}(\overline{\mathcal{C}},\overline{\mathcal{P}})$. Recall that $N_*(\Delta^n)$ has a basis given by increasing sequence of integers $0\leq a_0<\cdots<a_r\leq n$ which we denote by $\underline{a_0\cdots a_r}$. We let $\mathcal{B}_n$ to be this basis.

\begin{lm}\label{dernier}
    Let $n\geq 0$. We set
\begin{center}
    $\begin{array}{ccc}
        \phi_n:\mathcal{L}(\text{\normalfont Hom}(\overline{\mathcal{C}}\otimes N_\ast(\Delta^n),\overline{\mathcal{P}})) & \longrightarrow & \mathcal{L}(\text{\normalfont Hom}(\overline{\mathcal{C}},\overline{\mathcal{P}}))\otimes N^*(\Delta^n)\\
        f & \longmapsto & \sum_{\underline{x}\in\mathcal{B}_n}f^{\underline{x}}\otimes\underline{x}^\vee 
    \end{array}$
\end{center}

    \noindent where, for every $\underline{x}\in N_*(\Delta^n)$ and $f\in\mathcal{L}(\text{\normalfont Hom}(\overline{\mathcal{C}}\otimes N_*(\Delta^n),\overline{\mathcal{P}}))$, we denote by $f^{\underline{x}}\in\mathcal{L}(\text{\normalfont Hom}(\overline{\mathcal{C}},\overline{\mathcal{P}}))$ the map defined by $f^{\underline{x}}(c)=(-1)^{|c||\underline{x}|}f(c\otimes\underline{x})$ for every $c\in\overline{\mathcal{C}}$.\\ Then $\phi_n$ is an isomorphism of $\mathcal{B}race\underset{\text{\normalfont H}}{\otimes}\mathcal{E}$-algebras.\\
    Moreover, the sequence of isomorphisms $(\phi_n)_{n\geq 0}$ is compatible with the simplicial structures.
\end{lm}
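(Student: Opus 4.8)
The plan is to recognise $\phi_n$ as the canonical isomorphism produced by the finite-dimensionality of $N_*(\Delta^n)$, and then to verify compatibility with all the structures by unravelling the two $\mathcal{B}race\underset{\text{\normalfont H}}{\otimes}\mathcal{E}$-algebra structures in terms of the duality between the $\mathcal{E}$-coalgebra $N_*(\Delta^n)$ and the $\mathcal{E}$-algebra $N^*(\Delta^n)$.

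First I would record that, arity by arity, $\phi_n$ is the composite of the adjunction isomorphism $\text{\normalfont Hom}(\overline{\mathcal{C}}(k)\otimes N_*(\Delta^n),\overline{\mathcal{P}}(k))\simeq\text{\normalfont Hom}(\overline{\mathcal{C}}(k),\text{\normalfont Hom}(N_*(\Delta^n),\overline{\mathcal{P}}(k)))$ with the isomorphism $\text{\normalfont Hom}(N_*(\Delta^n),\overline{\mathcal{P}}(k))\simeq\overline{\mathcal{P}}(k)\otimes N^*(\Delta^n)$ valid because $N_*(\Delta^n)$ is finite dimensional (see $\mathsection$\ref{sec:211}). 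Expanding in the basis $\mathcal{B}_n$ recovers exactly $f\longmapsto\sum_{\underline{x}\in\mathcal{B}_n}f^{\underline{x}}\otimes\underline{x}^\vee$, the factor $(-1)^{|c||\underline{x}|}$ being the Koszul sign of the adjunction. In particular $\phi_n$ is a linear isomorphism whose inverse sends $g\otimes\underline{x}^\vee$ to $(c\otimes\underline{y})\mapsto\pm\delta_{\underline{x},\underline{y}}g(c)$, and it is a morphism of dg $\mathbb{K}$-modules once one notes that the part of the source differential coming from $d_{N_*(\Delta^n)}$ dualises, with the standard sign, to the part of the target differential coming from $d_{N^*(\Delta^n)}$, the remaining pieces matching tautologically.

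The heart of the argument is compatibility with the $\mathcal{B}race\underset{\text{\normalfont H}}{\otimes}\mathcal{E}$-operations. Fix $T\otimes\mu$ with $T\in\mathcal{PRT}(k)$, $\mu\in\mathcal{E}(k)$, and homogeneous $f_1,\ldots,f_k$. On the source I would apply the formula of Lemma \ref{lembe}, evaluate on $c\otimes\underline{x}$, and expand $(\Delta_{(\underline{T})}\widetilde{\otimes}\mu^{N_*(\Delta^n)})(c\otimes\underline{x})$ using $\Delta_{(\underline{T})}(c)=\sum c_{(1)}\otimes\cdots\otimes c_{(k)}$ and the coaction $\mu^{N_*(\Delta^n)}(\underline{x})=\sum_{\underline{y}_1,\ldots,\underline{y}_k}\lambda^{\underline{x}}_{\underline{y}_1,\ldots,\underline{y}_k}\,\underline{y}_1\otimes\cdots\otimes\underline{y}_k$; after passing each $f_i$ through and rewriting $f_i(c_{(i)}\otimes\underline{y}_i)=\pm f_i^{\underline{y}_i}(c_{(i)})$, the $\underline{x}^\vee$-component of $\phi_n((T\otimes\mu)(f_1,\ldots,f_k))$, evaluated at $c$, becomes $\sum_{\underline{y}_1,\ldots,\underline{y}_k}\lambda^{\underline{x}}_{\underline{y}_1,\ldots,\underline{y}_k}\sum_{\underline{T}\in\text{\normalfont Shape}^{-1}(T)}\gamma_{(\underline{T})}(f_1^{\underline{y}_1}(c_{(1)})\otimes\cdots\otimes f_k^{\underline{y}_k}(c_{(k)}))$. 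On the target, the tensor-product structure sends $(T\otimes\mu)(\phi_n(f_1),\ldots,\phi_n(f_k))$ to $\sum_{\underline{y}_1,\ldots,\underline{y}_k}\pm\,T^A(f_1^{\underline{y}_1},\ldots,f_k^{\underline{y}_k})\otimes\mu^{N^*(\Delta^n)}(\underline{y}_1^\vee\otimes\cdots\otimes\underline{y}_k^\vee)$, where $T^A$ is the brace operation of Lemma \ref{lembe} for $n=0$; since $\mu^{N^*(\Delta^n)}$ is the linear dual of $\mu^{N_*(\Delta^n)}$, the coefficient of $\underline{x}^\vee$ is again $\lambda^{\underline{x}}_{\underline{y}_1,\ldots,\underline{y}_k}$, and $T^A(f_1^{\underline{y}_1},\ldots,f_k^{\underline{y}_k})(c)=\sum_{\underline{T}}\gamma_{(\underline{T})}(f_1^{\underline{y}_1}(c_{(1)})\otimes\cdots\otimes f_k^{\underline{y}_k}(c_{(k)}))$, so the two expressions coincide. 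I expect the main obstacle to be bookkeeping the Koszul signs: the sign $(-1)^{|c||\underline{x}|}$ defining $\phi_n$, the signs introduced by $\widetilde{\otimes}$ and by commuting $\mu^{N_*(\Delta^n)}$ past the cofactors of $c$ on the source, and the sign appearing when dualising $\mu^{N_*(\Delta^n)}$ to $\mu^{N^*(\Delta^n)}$ on the target, must all be shown to agree; the natural strategy is to reduce everything to the sign conventions of $\widetilde{\otimes}$ (Definition \ref{otimesmodif}) and of the dual of a finite-dimensional map.

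Finally, for the simplicial compatibility I would check that for every morphism $\alpha\colon[m]\to[n]$ in the simplex category the square relating $\phi_n$, $\phi_m$, precomposition by $\text{\normalfont id}\otimes\alpha_*$ on the source and $\text{\normalfont id}\otimes\alpha^*$ on the target commutes. This is precisely the naturality in the finite-dimensional variable of the isomorphism $\text{\normalfont Hom}(\overline{\mathcal{C}}(k)\otimes N_*(\Delta^n),\overline{\mathcal{P}}(k))\simeq\text{\normalfont Hom}(\overline{\mathcal{C}}(k),\overline{\mathcal{P}}(k))\otimes N^*(\Delta^n)$ with respect to the chain map $\alpha_*$ and its dual $\alpha^*$, which follows from $\langle\alpha_*\underline{x},\underline{y}^\vee\rangle=\langle\underline{x},\alpha^*\underline{y}^\vee\rangle$; expanding $f\circ(\text{\normalfont id}\otimes\alpha_*)$ in the basis $\mathcal{B}_m$ and comparing with $(\text{\normalfont id}\otimes\alpha^*)\phi_n(f)$ gives the claim, again up to the routine sign check.
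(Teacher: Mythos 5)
Your proposal is correct and follows essentially the same route as the paper: the key step is the same expansion of $(T\otimes\mu)$ on both sides through the coaction coefficients $\lambda^{\underline{x}}_{\underline{y}_1,\ldots,\underline{y}_k}$ of $\mu^{N_*(\Delta^n)}$ and the identification of $\mu^{N^*(\Delta^n)}$ as its linear dual, with the Koszul signs (which the paper tracks explicitly and which do match) correctly located where you flag them. Your packaging of bijectivity and the chain-map property via the adjunction and the finite-dimensional duality $\text{\normalfont Hom}(N_*(\Delta^n),\overline{\mathcal{P}}(k))\simeq\overline{\mathcal{P}}(k)\otimes N^*(\Delta^n)$ is a slightly more conceptual phrasing of the paper's explicit differential computation, but it rests on the same facts.
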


\begin{proof}
    We first prove that $\phi_n$ commutes with the differentials. Let $f\in\mathcal{L}(\text{\normalfont Hom}(\overline{\mathcal{C}}\otimes N_*(\Delta^n),\overline{\mathcal{P}}))$. Then
    $$d(\phi_n(f))=\sum_{\underline{x}\in\mathcal{B}_n}d(f^{\underline{x}})\otimes\underline{x}^\vee+\sum_{\underline{x}\in\mathcal{B}_n}(-1)^{|f|+|\underline{x}|}f^{\underline{x}}\otimes d(\underline{x}^\vee).$$

    \noindent For every $c\in\overline{\mathcal{C}}$, we have that
    $$d(f^{\underline{x}})(c)=(-1)^{|c||\underline{x}|}d(f(c\otimes\underline{x}))-(-1)^{|f|+|\underline{x}|(|c|+1)}f(d(c)\otimes\underline{x})$$

    \noindent which gives
    $$d(f^{\underline{x}})=d(f)^{\underline{x}}+(-1)^{|f|}f^{d(\underline{x})}.$$

    \noindent We thus obtain
    $$d(\phi_n(f))=\sum_{\underline{x}\in\mathcal{B}_n}d(f)^{\underline{x}}\otimes\underline{x}^\vee+\sum_{\underline{x}\in\mathcal{B}_n}(-1)^{|f|}\left(f^{d(\underline{x})}\otimes\underline{x}^\vee+(-1)^{|\underline{x}|}f^{\underline{x}}\otimes d(\underline{x}^\vee)\right).$$

    \noindent It remains to prove that
    $$\sum_{\underline{x}\in\mathcal{B}_n}f^{d(\underline{x})}\otimes\underline{x}^\vee=-\sum_{\underline{x}\in\mathcal{B}_n}(-1)^{|\underline{x}|}f^{\underline{x}}\otimes d(\underline{x}^\vee).$$

    \noindent For every $\underline{x}\in\mathcal{B}_n$, we write $d(\underline{x})=\sum_{\underline{x}\in\mathcal{B}_n}\lambda_{\underline{x}}^{\underline{y}}\underline{y}$ where $\lambda_{\underline{y}}^{\underline{x}}\in\{-1;0;1\}$. We thus have, for every $\underline{y}\in\mathcal{B}_n$,
    $$d(\underline{y}^\vee)=-(-1)^{|\underline{y}|}\sum_{\underline{x}\in\mathcal{B}_n}\lambda_{\underline{x}}^{\underline{y}}\underline{x}^\vee.$$

    \noindent We thus have
    \begin{center}
        $\begin{array}{lll}
        \displaystyle\sum_{\underline{x}\in\mathcal{B}_n}f^{d(\underline{x})}\otimes\underline{x}^\vee & = & \displaystyle\sum_{\underline{x},\underline{y}\in\mathcal{B}_n}\lambda_{\underline{x}}^{\underline{y}}f^{\underline{y}}\otimes\underline{x}^\vee\\
        & = & \displaystyle-\sum_{\underline{y}\in\mathcal{B}_n}(-1)^{|\underline{y}|}f^{\underline{y}}\otimes d(\underline{y}^\vee).
        \end{array}$
    \end{center}

    \noindent At the end, we have obtained that
    $$d(\phi_n(f))=\sum_{\underline{x}\in\mathcal{B}_n}d(f)^{\underline{x}}\otimes\underline{x}^\vee=\phi_n(d(f))$$

    \noindent so that $\phi_n$ commutes with the differentials.\\
    
    Now, let $f_1,\ldots,f_r\in\mathcal{L}(\text{\normalfont Hom}(\overline{\mathcal{C}}\otimes N_*(\Delta^n),\overline{\mathcal{P}}))$ be homogeneous elements. Let $T\in\mathcal{PRT}$ be a canonical tree with $r$ vertices and $\mu\in\mathcal{E}(r)$. We have
    \begin{multline*}
        (T\otimes\mu)(\phi_n(f_1),\ldots,\phi_n(f_r))\\=\sum_{\underline{T}\in\text{\normalfont Shape}^{-1}(T)}\sum_{\underline{x_1},\ldots,\underline{x_r}\in\mathcal{B}_n}\pm(\gamma_{(\underline{T})}\circ (f_1^{\underline{x_1}}\otimes\cdots\otimes f_r^{\underline{x_r}})\circ\Delta_{(\underline{T})})\otimes\mu^{N^*(\Delta^n)}(\underline{x_1}^{\vee},\ldots,\underline{x_r}^{\vee}),
    \end{multline*}

    \noindent where the sign is given by
    $$\prod_{i<j}(-1)^{|\underline{x_i}|(|f_j|+|\underline{x_j}|)}\times\prod_{j=1}^r(-1)^{|\mu|(|f_j|+|\underline{x_j}|)}.$$

    \noindent Now, for every $\underline{x}\in\mathcal{B}_n$, we write
    $$\mu^{N_*(\Delta^n)}(\underline{x})=\sum_{\underline{x_1},\ldots,\underline{x_r}\in\mathcal{B}_n}\lambda_{\underline{x}}^{\underline{x_1},\ldots,\underline{x_r}}\underline{x_1}\otimes\cdots\otimes\underline{x_r}$$

    \noindent where $\lambda_{\underline{x}}^{\underline{x_1},\ldots,\underline{x_r}}\in\{-1;0;1\}$ by definition of the interval cuts operations. This gives, for every $\underline{x_1},\ldots,\underline{x_r}\in\mathcal{B}_n$, 
    $$\mu^{N^*(\Delta^n)}(\underline{x_1}^\vee,\ldots,\underline{x_r}^\vee)=\sum_{\underline{x}\in\mathcal{B}_n}\pm\underline{x}^\vee$$

    \noindent where the sign is given by
    $$\lambda_{\underline{x}}^{\underline{x_1},\ldots,\underline{x_r}}\prod_{i<j}(-1)^{|\underline{x_i}||\underline{x_j}|}\prod_{j=1}^r(-1)^{|\mu||\underline{x_j}|}.$$

    \noindent We thus have
    $$(T\otimes\mu)(\phi_n(f_1),\ldots,\phi_n(f_r))=\sum_{\underline{T}\in\text{\normalfont Shape}^{-1}(T)}\sum_{\underline{x},\underline{x_1},\ldots,\underline{x_r}\in\mathcal{B}_n}\pm (\gamma_{(\underline{T})}\circ (f_1^{\underline{x_1}}\otimes\cdots\otimes f_r^{\underline{x_r}})\circ\Delta_{(\underline{T})})\otimes\underline{x}^\vee$$

    \noindent where the sign is
    $$\lambda_{\underline{x}}^{\underline{x_1},\ldots,\underline{x_r}}\prod_{i<j}(-1)^{|\underline{x_i}||\underline{f_j}|}\prod_{j=1}^r(-1)^{|\mu||\underline{f_j}|}.$$

    \noindent We now use that, for every $c\in\overline{\mathcal{C}}$,
    $$(f_1^{\underline{x_1}}\otimes\cdots\otimes f_r^{\underline{x_r}})(\Delta_{(\underline{T})}(c))=\pm (f_1\otimes\cdots\otimes f_r)\circ (\Delta_{(\underline{T})}(c)\widetilde{\otimes}(\underline{x_1}\otimes\cdots\otimes\underline{x_r}))$$

    \noindent where the sign is given by
    $$\prod_{i<j}(-1)^{|\underline{x_i}||f_j|}\times\prod_{j=1}^r(-1)^{|c||\underline{x_j}|}.$$

    \noindent We deduce
    $$(T\otimes\mu)(\phi_n(f_1),\ldots,\phi_n(f_r))(c)=\sum_{\underline{T}\in\text{\normalfont Shape}^{-1}(T)}\sum_{\underline{x}\in\mathcal{B}_n}\pm\gamma_{(\underline{T})}((f_1\otimes\cdots\otimes f_r)(\Delta_{(\underline{T})}(c)\widetilde{\otimes}\mu^{N_*(\Delta^n)}(\underline{x})))$$

    \noindent where the sign is
    $$\prod_{j=1}^r(-1)^{|\mu||f_j|}\times (-1)^{|c|(|\underline{x}|+|\mu|)},$$

    \noindent since, for every $\underline{x},\underline{x_1},\ldots,\underline{x_r}\in\mathcal{B}_n$ such that $\lambda_{\underline{x}}^{\underline{x_1},\ldots,\underline{x_r}}\neq 0$, we have $|\mu|-\sum_{i=1}^r|\underline{x_i}|=-|\underline{x}|$. We finally obtain
    $$(T\otimes\mu)(\phi_n(f_1),\ldots,\phi_n(f_r))=\sum_{\underline{x}\in\mathcal{B}_n}((T\otimes\mu)(f_1,\ldots,f_r))^{\underline{x}}\otimes\underline{x}^\vee=\phi_n((T\otimes\mu)(f_1,\ldots,f_r)).$$
    
    \noindent We thus have proved that $\phi_n:\mathcal{L}(\text{\normalfont Hom}(\overline{\mathcal{C}}\otimes N_*(\Delta^n),\overline{\mathcal{P}}))\longrightarrow\mathcal{L}(\text{\normalfont Hom}(\overline{\mathcal{C}},\overline{\mathcal{P}}))\otimes N^*(\Delta^n)$ is a morphism of complete $\mathcal{B}race\underset{\text{\normalfont H}}{\otimes}\mathcal{E}$-algebras, and $\phi$ is obviously a bijection, with as inverse
    $$\phi_n^{-1}(f\otimes\underline{x}^{\vee})=(c\otimes\underline{x}\longmapsto (-1)^{|c||\underline{x}|}f(c))$$

    \noindent for every $f\in\mathcal{L}(\text{\normalfont Hom}(\overline{\mathcal{C}},\overline{\mathcal{P}}))$ and $\underline{x}\in \mathcal{B}_n$.\\

    The compatibility of the sequence $(\phi_n)_{n\geq 0}$ with the simplicial structures follows directly from the definition of the $\phi_n$'s.
\end{proof}

In particular, the map $\phi_n$ induces an isomorphism of $\Gamma(\mathcal{P}re\mathcal{L}ie_\infty,-)$-algebras. We thus obtain Theorem \ref{theoremG}.

\begin{thm}
    We have the identity
    $$\text{\normalfont Map}_{\mathcal{O}p}(B^c(\mathcal{C}),\mathcal{P})=\mathcal{MC}_\bullet(\text{\normalfont Hom}_{\text{\normalfont Seq}_\mathbb{K}}(\overline{\mathcal{C}},\overline{\mathcal{P}})).$$
\end{thm}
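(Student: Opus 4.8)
The plan is to assemble the final identity from the results established immediately beforehand. By Theorem~\ref{reperecos}, the cosimplicial object $B^c(\mathcal{C})\otimes\Delta^\bullet$ is a genuine cosimplicial frame for $B^c(\mathcal{C})$, so that $\text{\normalfont Map}_{\mathcal{O}p}(B^c(\mathcal{C}),\mathcal{P})_n=\text{\normalfont Mor}_{\mathcal{O}p}(B^c(\mathcal{C})\otimes\Delta^n,\mathcal{P})$ for every $n\geq 0$. The starting point is then the corollary just proved, which identifies
$$\text{\normalfont Map}_{\mathcal{O}p}(B^c(\mathcal{C}),\mathcal{P})\simeq\mathcal{MC}(\text{\normalfont Hom}_{\text{\normalfont Seq}_\mathbb{K}}(\overline{\mathcal{C}}\otimes\Sigma^{-1} N_*(\Delta^\bullet),\overline{\mathcal{P}})).$$
Thus the only remaining task is to show that the right-hand side coincides with $\mathcal{MC}_\bullet(\text{\normalfont Hom}_{\text{\normalfont Seq}_\mathbb{K}}(\overline{\mathcal{C}},\overline{\mathcal{P}}))$, which by definition equals $\mathcal{MC}(\text{\normalfont Hom}_{\text{\normalfont Seq}_\mathbb{K}}(\overline{\mathcal{C}},\overline{\mathcal{P}})\otimes\Sigma N^*(\Delta^\bullet))$.

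The key tool is Lemma~\ref{dernier}. First I would observe that the isomorphism $\phi_n$ of that lemma is an isomorphism of (complete) $\mathcal{B}race\underset{\text{\normalfont H}}{\otimes}\mathcal{E}$-algebras between $\mathcal{L}(\text{\normalfont Hom}(\overline{\mathcal{C}}\otimes N_*(\Delta^n),\overline{\mathcal{P}}))$ and $\mathcal{L}(\text{\normalfont Hom}(\overline{\mathcal{C}},\overline{\mathcal{P}}))\otimes N^*(\Delta^n)$, and that the family $(\phi_n)_{n\geq 0}$ is compatible with the simplicial structures. Passing through Theorem~\ref{morph} (equivalently Corollary~\ref{bracee}), a $\mathcal{B}race\underset{\text{\normalfont H}}{\otimes}\mathcal{E}$-algebra isomorphism induces an isomorphism of the associated $\Gamma(\mathcal{P}re\mathcal{L}ie_\infty,-)$-algebra structures, and hence, after the shift of Theorem~\ref{thmfonda}, a strict isomorphism of $\widehat{\Gamma\Lambda\mathcal{PL}_\infty}$-algebras between $\Sigma\,\text{\normalfont Hom}(\overline{\mathcal{C}}\otimes\Sigma^{-1}N_*(\Delta^n),\overline{\mathcal{P}})$ and $\text{\normalfont Hom}(\overline{\mathcal{C}},\overline{\mathcal{P}})\otimes\Sigma N^*(\Delta^n)$. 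A strict isomorphism of complete $\widehat{\Gamma\Lambda\mathcal{PL}_\infty}$-algebras preserves the weighted braces and the differential verbatim, hence carries Maurer-Cartan elements to Maurer-Cartan elements bijectively; this is exactly the last assertion of the Proposition on $\mathcal{MC}(\phi)$ being a bijection when $\phi_0^0$ is an isomorphism.

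Applying $\mathcal{MC}(-)$ degreewise, the family $(\phi_n)_n$ then yields a bijection
$$\mathcal{MC}(\text{\normalfont Hom}_{\text{\normalfont Seq}_\mathbb{K}}(\overline{\mathcal{C}}\otimes\Sigma^{-1} N_*(\Delta^n),\overline{\mathcal{P}}))\simeq\mathcal{MC}(\text{\normalfont Hom}_{\text{\normalfont Seq}_\mathbb{K}}(\overline{\mathcal{C}},\overline{\mathcal{P}})\otimes\Sigma N^*(\Delta^n))$$
for each $n$, and the simplicial compatibility of $(\phi_n)_n$ upgrades this to an isomorphism of simplicial sets. Combining with the corollary gives
$$\text{\normalfont Map}_{\mathcal{O}p}(B^c(\mathcal{C}),\mathcal{P})\simeq\mathcal{MC}_\bullet(\text{\normalfont Hom}_{\text{\normalfont Seq}_\mathbb{K}}(\overline{\mathcal{C}},\overline{\mathcal{P}})),$$
as claimed.

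Since essentially all of the real work has been front-loaded into Theorem~\ref{reperecos}, the corollary, and Lemma~\ref{dernier}, I expect the proof of the final statement itself to be a short bookkeeping argument. The main point to be careful about is that all the intermediate identifications respect completeness and the filtrations, so that one works throughout in $\widehat{\Gamma\Lambda\mathcal{PL}_\infty}$ rather than merely $\Gamma\Lambda\mathcal{PL}_\infty$; this is why $\phi_n$ is stated as an isomorphism of \emph{complete} $\mathcal{B}race\underset{\text{\normalfont H}}{\otimes}\mathcal{E}$-algebras in Lemma~\ref{dernier}, guaranteeing that taking Maurer-Cartan sets is legitimate. The one subtlety worth verifying is that the simplicial structure used on $\mathcal{MC}(\text{\normalfont Hom}_{\text{\normalfont Seq}_\mathbb{K}}(\overline{\mathcal{C}}\otimes\Sigma^{-1} N_*(\Delta^\bullet),\overline{\mathcal{P}}))$ coming from the cosimplicial frame agrees, under $\phi_\bullet$, with the simplicial structure on $\mathcal{MC}_\bullet$ coming from the functoriality of $N^*(\Delta^\bullet)$ — but this is precisely the final compatibility clause of Lemma~\ref{dernier}, so no further argument is required.
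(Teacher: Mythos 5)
Your proposal is correct and follows essentially the same route as the paper: it invokes the corollary identifying $\text{\normalfont Map}_{\mathcal{O}p}(B^c(\mathcal{C}),\mathcal{P})$ with $\mathcal{MC}(\text{\normalfont Hom}_{\text{\normalfont Seq}_\mathbb{K}}(\overline{\mathcal{C}}\otimes\Sigma^{-1}N_*(\Delta^\bullet),\overline{\mathcal{P}}))$, then passes the isomorphism $\phi_n$ of Lemma~\ref{dernier} to completions and uses its simplicial compatibility. The only minor imprecision is that Lemma~\ref{dernier} is stated for the uncompleted $\mathcal{L}(-)$ algebras, with completeness obtained afterwards from the fact that $\phi_n$ preserves the filtrations, exactly as the paper does.
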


\begin{proof}
    For every $n\geq 0$, since the morphism $\phi_n$ given in Lemma \ref{dernier} preserves the filtrations, taking the completions gives an isomorphism
    % https://q.uiver.app/#q=WzAsMixbMCwwLCJcXHRleHR7XFxub3JtYWxmb250IEhvbX1fe1xcdGV4dHtcXG5vcm1hbGZvbnQgU2VxfV9cXG1hdGhiYntLfX0oXFxTaWdtYV57LTF9XFxvdmVybGluZXtcXG1hdGhjYWx7Q319XFxvdGltZXMgTl8qKFxcRGVsdGFebiksXFxvdmVybGluZXtcXG1hdGhjYWx7UH19KSJdLFsxLDAsIlxcU2lnbWFeMVxcdGV4dHtcXG5vcm1hbGZvbnQgSG9tfV97XFx0ZXh0e1xcbm9ybWFsZm9udCBTZXF9X1xcbWF0aGJie0t9fShcXG92ZXJsaW5le1xcbWF0aGNhbHtDfX0sXFxvdmVybGluZXtcXG1hdGhjYWx7UH19KVxcb3RpbWVzIE5eKihcXERlbHRhXm4pIl0sWzAsMSwiXFxzaW1lcSJdXQ==
\[\begin{tikzcd}
	{\text{\normalfont Hom}_{\text{\normalfont Seq}_\mathbb{K}}(\overline{\mathcal{C}}\otimes\Sigma^{-1} N_*(\Delta^n),\overline{\mathcal{P}})} & {\Sigma\text{\normalfont Hom}_{\text{\normalfont Seq}_\mathbb{K}}(\overline{\mathcal{C}},\overline{\mathcal{P}})\otimes N^*(\Delta^n)}
	\arrow["\simeq", from=1-1, to=1-2]
\end{tikzcd}\]
of $\widehat{\Gamma\Lambda\mathcal{PL}_\infty}$-algebras for every $n\geq 0$. Since this isomorphism preserves the simplicial structures, we obtain the theorem.
\end{proof}

\section{A mapping space in the category of symmetric connected operads}

In this last section, we show that we can describe a mapping space in the category of symmetric and connected operads as the degree-wise Maurer-Cartan set of some complete $\Gamma\Lambda\mathcal{PL}_\infty$-algebra.\\

In $\mathsection$\ref{sec:261}, we recall the construction of the free operad functor in the category of symmetric connected operads and the model structure on the latter category.\\

In $\mathsection$\ref{sec:262}, we use the surjection cooperad $\textbf{Sur}_\mathbb{K}$ to obtain a $\Sigma_\ast$-cofibrant replacement $B^c(\mathcal{C}\underset{\text{\normalfont H}}{\otimes}{\normalfont\textbf{Sur}}_\mathbb{K})$ of the cobar construction $B^c(\mathcal{C})$ associated to a symmetric cooperad $\mathcal{C}$ such that $\mathcal{C}(0)=0$. We construct an explicit cosimplicial frame associated to $B^c(\mathcal{C}\underset{\text{\normalfont H}}{\otimes}{\normalfont\textbf{Sur}}_\mathbb{K})$.\\

In $\mathsection$\ref{sec:263}, we finally deduce Theorem \ref{theoremH} which gives a computation of the mapping spaces in the category of symmetric connected operads in terms of a degree-wise simplicial Maurer-Cartan set of some $\Gamma\widehat{\Lambda\mathcal{PL}_\infty}$-algebras.

\subsection{The free symmetric operad functor and the model structure on $\Sigma\mathcal{O}p^0$}\label{sec:261}

In this subsection, we recall the construction of the free operad functor in the category $\Sigma\mathcal{O}p$ and recall the model structure on the category of symmetric connected operads $\Sigma\mathcal{O}p^0$.\\

We have a functor $-\otimes\Sigma:\text{\normalfont Seq}_\mathbb{K}\longrightarrow\Sigma\text{\normalfont Seq}_\mathbb{K}$ defined, for every $M\in\text{\normalfont Seq}_\mathbb{K}$, by
$$(M\otimes\Sigma)(n)=M(n)\otimes\mathbb{K}[\Sigma_n],$$

\noindent where $M(n)\otimes\mathbb{K}[\Sigma_n]$ is endowed with the $\Sigma_n$ action defined, for every $m\in M(n)$ and $\sigma,\tau\in\Sigma_n$, by
$$\sigma\cdot (m\otimes\tau)=m\otimes\sigma\tau.$$

\noindent The functor $-\otimes\Sigma$ fits in an adjunction
% https://q.uiver.app/#q=WzAsMixbMCwwLCItXFxvdGltZXNcXFNpZ21hOlxcdGV4dHtcXG5vcm1hbGZvbnQgU2VxfV9cXG1hdGhiYntLfSJdLFsxLDAsIlxcU2lnbWFcXHRleHR7XFxub3JtYWxmb250IFNlcX1fXFxtYXRoYmJ7S306XFxvbWVnYSJdLFswLDEsIiIsMCx7Im9mZnNldCI6LTF9XSxbMSwwLCIiLDAseyJvZmZzZXQiOi0xfV1d
\[\begin{tikzcd}
	{-\otimes\Sigma:\text{\normalfont Seq}_\mathbb{K}} & {\Sigma\text{\normalfont Seq}_\mathbb{K}:\omega}
	\arrow[shift left, from=1-1, to=1-2]
	\arrow[shift left, from=1-2, to=1-1]
\end{tikzcd},\]

\noindent where $\omega:\Sigma\text{\normalfont Seq}_\mathbb{K}\longrightarrow\text{\normalfont Seq}_\mathbb{K}$ is the functor which forgets the symmetric groups actions. We have the following result.

\begin{prop}[{see \cite[Proposition 11.4.A]{fressemod}}] The category $\Sigma\text{\normalfont Seq}_\mathbb{K}$ is endowed with a cofibrantly generated model category structure such that the forgetful functor $\omega:\Sigma\text{\normalfont Seq}_\mathbb{K}\longrightarrow\text{\normalfont Seq}_\mathbb{K}$ creates weak-equivalences and fibrations. Cofibrations are given by the left lifting property with respect to acyclic fibrations.
\end{prop}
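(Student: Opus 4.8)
The plan is to obtain the model structure on $\Sigma\text{\normalfont Seq}_\mathbb{K}$ by transfer along the adjunction $(-\otimes\Sigma)\dashv\omega$ from the cofibrantly generated model structure on $\text{\normalfont Seq}_\mathbb{K}$, instantiating the general lifting theorem of \cite[Proposition 11.4.A]{fressemod}. First I would fix generating cofibrations $I$ and generating acyclic cofibrations $J$ for $\text{\normalfont Seq}_\mathbb{K}$; these are obtained arity-wise from the generating (acyclic) cofibrations of $\text{\normalfont dgMod}_\mathbb{K}$ placed in a single arity $n\geq 0$. The transferred structure would then have $(-\otimes\Sigma)(I)$ and $(-\otimes\Sigma)(J)$ as generating cofibrations and generating acyclic cofibrations, weak equivalences and fibrations created by $\omega$, and cofibrations characterized by the left lifting property against acyclic fibrations.

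To apply the transfer theorem, I would verify its hypotheses in turn. The category $\Sigma\text{\normalfont Seq}_\mathbb{K}$ is complete and cocomplete, with limits and colimits computed arity-wise in $\text{\normalfont dgMod}_\mathbb{K}$ together with the induced $\Sigma_n$-actions. Smallness of the domains of $(-\otimes\Sigma)(I)$ and $(-\otimes\Sigma)(J)$ is automatic, since all objects in sight are built from chain complexes over the field $\mathbb{K}$, so that the small object argument is available. The essential point is the \emph{acyclicity condition}: every relative $(-\otimes\Sigma)(J)$-cell complex must be sent by $\omega$ to a weak equivalence in $\text{\normalfont Seq}_\mathbb{K}$.

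The key observation that makes the acyclicity condition tractable is that $\omega$ preserves all colimits. Indeed, forgetting the $\Sigma_n$-action admits not only the left adjoint $-\otimes\Sigma$, given arity-wise by $M(n)\mapsto M(n)\otimes\mathbb{K}[\Sigma_n]$, but also a right adjoint given arity-wise by coinduction $N(n)\mapsto\text{\normalfont Hom}(\mathbb{K}[\Sigma_n],N(n))$; hence $\omega$ commutes with pushouts and transfinite compositions. Moreover, the composite $\omega\circ(-\otimes\Sigma)$ is, in each arity $n$, the functor $-\otimes\mathbb{K}[\Sigma_n]$, and since $\mathbb{K}[\Sigma_n]$ is a finite-dimensional $\mathbb{K}$-vector space this functor is exact and therefore carries acyclic cofibrations of $\text{\normalfont Seq}_\mathbb{K}$ to acyclic cofibrations. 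Consequently, applying $\omega$ to a relative $(-\otimes\Sigma)(J)$-cell complex yields a transfinite composition of pushouts of acyclic cofibrations in $\text{\normalfont Seq}_\mathbb{K}$, which is itself an acyclic cofibration and in particular a weak equivalence.

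With the acyclicity condition established, the transfer theorem furnishes the cofibrantly generated model structure on $\Sigma\text{\normalfont Seq}_\mathbb{K}$ with exactly the stated properties, the characterization of cofibrations by the left lifting property being part of its conclusion. The main obstacle in any transfer argument is always the acyclicity condition; here it is defused by the exactness of $-\otimes\mathbb{K}[\Sigma_n]$ over a field and by the fact that $\omega$ preserves colimits. I would therefore expect the only real care to lie in recording these two facts precisely, the remaining verifications being formal.
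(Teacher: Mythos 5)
Your proposal is correct and follows the same route the paper intends: the statement is quoted from Fresse's book and the adjunction $(-\otimes\Sigma)\dashv\omega$ is set up immediately beforehand precisely so that the model structure is obtained by transfer, with the acyclicity condition handled exactly as you do, via the exactness of $-\otimes\mathbb{K}[\Sigma_n]$ over the field $\mathbb{K}$ and the fact that $\omega$ preserves colimits. No gaps to report.
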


In fact, the category $\Sigma\mathcal{O}p$ does not have a model structure but rather a \textit{semi-model} structure (see \cite[Theorem 3]{spitz}). We instead consider the subcategory $\Sigma\mathcal{O}p^0$ of operads $\mathcal{P}$ such that $\mathcal{P}(0)=0$. Such an operad is said to be \textit{connected}. We also denote by $\Sigma\text{\normalfont Seq}_\mathbb{K}^0$ the subcategory of $\Sigma\text{\normalfont Seq}_\mathbb{K}$ given by symmetric sequences $M$ such that $M(0)=0$. As for the non symmetric context, we are searching for a convenient adjunction
% https://q.uiver.app/#q=WzAsMixbMCwwLCJcXG1hdGhjYWx7Rn06XFxTaWdtYVxcdGV4dHtcXG5vcm1hbGZvbnQgU2VxfV9cXG1hdGhiYntLfV4wIl0sWzEsMCwiXFxTaWdtYVxcbWF0aGNhbHtPfXBeMDpcXG9tZWdhIl0sWzAsMSwiIiwwLHsib2Zmc2V0IjotMX1dLFsxLDAsIiIsMCx7Im9mZnNldCI6LTF9XV0=
\[\begin{tikzcd}
	{\mathcal{F}:\Sigma\text{\normalfont Seq}_\mathbb{K}^0} & {\Sigma\mathcal{O}p^0:\omega}
	\arrow[shift left, from=1-1, to=1-2]
	\arrow[shift left, from=1-2, to=1-1]
\end{tikzcd}\]

\noindent where $\omega:\Sigma\mathcal{O}p^0\longrightarrow\Sigma\text{\normalfont Seq}_\mathbb{K}^0$ is the functor which forgets the operad structure. To achieve this, recall that the functor $-\otimes\Sigma:\text{\normalfont Seq}_\mathbb{K}\longrightarrow\Sigma\text{\normalfont Seq}_\mathbb{K}$ restricts to a functor $-\otimes\Sigma:\mathcal{O}p\longrightarrow\Sigma\mathcal{O}p$ where, for every $\mathcal{P}\in\mathcal{O}p$, the operad structure on $\mathcal{P}\otimes\Sigma$ is defined by
$$\gamma((f\otimes\sigma)\otimes (g_1\otimes\tau_1)\otimes\cdots\otimes (g_n\otimes\tau_n))=\pm\gamma(f\otimes g_{\sigma^{-1}(1)}\otimes\cdots\otimes g_{\sigma^{-1}(n)})\otimes\sigma(\tau_1,\ldots,\tau_n),$$

\noindent where we consider the composite of $\sigma$ with $\tau_1,\ldots,\tau_n$ (see after Lemma \ref{compsymmetric}).

\begin{defi}\label{treesigma}
    Consider the operad $\underline{\mathcal{T}ree}$ defined in Lemma \ref{freeop}. We set
    $$\Sigma\underline{\mathcal{T}ree}=\underline{\mathcal{T}ree}\otimes\Sigma.$$

For every $\underline{T}\in\Sigma\underline{\mathcal{T}ree}$, we denote by $V_{\underline{T}}$ the set of vertices, and by $\text{\normalfont val}_{\underline{T}}(v)$ the number of ingoing arrows on a vertex $v\in V_{\underline{T}}$.
\end{defi}

The elements of $\Sigma\underline{\mathcal{T}ree}(n)$ can then be seen as trees with inputs endowed with a choice of labeling on the inputs. We identify such a choice with a permutation in $\Sigma_n$. For instance, if we consider the tree with inputs $\underline{T}\in\underline{\mathcal{T}ree}$ given in Definition \ref{treeleave}, then

$$\underline{T}\otimes (643215)=\begin{tikzpicture}[baseline={([yshift=-.5ex]current bounding box.center)},scale=0.8]
    \node (0) at (0,-1) {};
    \node[draw,circle,scale=0.8] (i) at (0,0) {$1$};
    \node[draw,circle,scale=0.8] (1) at (-1.5,1) {$2$};
    \node[draw,circle,scale=0.8] (1b) at (-2.5,2) {$3$};
    \node[scale=0.8] (1bc) at (-3,3) {$5$};
    \node[scale=0.8] (1bcc) at (-2.5,3) {$4$};
    \node[scale=0.8] (1bccc) at (-2,3) {$3$};
    \node[draw,circle,scale=0.8] (1bb) at (-0.5,2) {$4$};
    \node[scale=0.8] (1bbc) at (-1,3) {$2$};
    \node[scale=0.8] (1bbcc) at (0,3) {$6$};
    \node[scale=0.8] (2) at (0,1) {$1$};
    \node[draw,circle,scale=0.8] (3) at (1.25,1) {$5$};
    % \node[scale=0.8] (3b) at (0.25,2) {$4$};
    % \node[scale=0.8] (3bb) at (2.25,2) {$2$};
    \draw[stealth-] (0) -- (i);
    \draw[-stealth] (2) -- (i);
    \draw[stealth-] (1) -- (1b);
    \draw[stealth-] (1) -- (1bb);
    \draw[stealth-] (i) -- (1);
    \draw[stealth-] (i) -- (3);
    \draw[stealth-] (1b) -- (1bc);
    \draw[stealth-] (1b) -- (1bcc);
    \draw[stealth-] (1b) -- (1bccc);
    \draw[stealth-] (1bb) -- (1bbcc);
    \draw[stealth-] (1bb) -- (1bbc);
    % \draw[stealth-] (3) -- (3b);
    % \draw[stealth-] (3) -- (3bb);
    \end{tikzpicture}.$$

For every $M\in\Sigma\text{\normalfont Seq}_\mathbb{K}$, we consider the sequence
$$k\longmapsto\bigoplus_{n\geq 0}\bigoplus_{\underline{T}\in\underline{\mathcal{PRT}_k}(n)}\bigoplus_{\sigma\in\Sigma_k}(\underline{T}\otimes\sigma)\otimes\bigotimes_{i=1}^nM(\text{\normalfont val}_{\underline{T}}(i)).$$

\noindent This sequence is endowed with the structure of a non-symmetric operad given by the operadic structure of $\Sigma\underline{\mathcal{T}ree}$ and the concatenation of elements in the tensor algebra of $\bigoplus_{n\geq 0}M(n)$. In order to endow this sequence with the structure of a symmetric operad, we need to identify some elements. First, we endow this sequence with the $\Sigma_n$-action given by the left translation in $\Sigma_n$. For every $n\geq 0$, we identify the action of $\Sigma_n$ on $\underline{T}\in\underline{\mathcal{PRT}_k}(n)$ given by the permutation of the vertices with the action of $\Sigma_n$ on $\bigotimes_{i=1}^nM(\text{\normalfont val}_{\underline{T}}(i))$ given by the permutation of the factors. Next, consider a tree in $\Sigma\underline{\mathcal{T}ree}$ of the form
$$\underline{T}:=\begin{tikzpicture}[baseline={([yshift=-.5ex]current bounding box.center)},scale=0.8]
    \node (0) at (0,-1) {};
    \node[draw,circle,scale=0.8] (i) at (0,0) {$1$};
    \node (1) at (-1,1) {$\underline{T_1}$};
    \node[scale=0.8] (1b) at (-1.5,2) {$j_1^1$};
    \node[scale=0.8] (1bb) at (-1,2) {$\cdots$};
    \node[scale=0.8] (1bbb) at (-0.5,2) {$j_{k_1}^1$};
    \node (3) at (1,1) {$\underline{T_r}$};
    \node[scale=0.8] (3b) at (0.5,2) {$j_1^r$};
    \node[scale=0.8] (3bb) at (1,2) {$\cdots$};
    \node[scale=0.8] (3bbb) at (1.5,2) {$j_{k_r}^r$};
    \node (2) at (0,1) {$\cdots$};
    \draw[stealth-] (1) -- (1b);
    \draw[stealth-] (1) -- (1bbb);
    \draw[stealth-] (3) -- (3b);
    \draw[stealth-] (3) -- (3bbb);
    \draw[stealth-] (0) -- (i);
    \draw[stealth-] (i) -- (1);
    \draw[stealth-] (i) -- (3);
    \end{tikzpicture},$$

\noindent for some $r\geq 1$ and   $\underline{T_1}\in\underline{\mathcal{PRT}_{k_1}},\ldots,\underline{T_r}\in\underline{\mathcal{PRT}_{k_r}}$ with $k_1+\cdots+k_r=k$. Let $x\in M(r)$ and $Y\in\bigotimes_{i=2}^n M(\text{\normalfont val}_{\underline{T}}(i))$. For every $\mu\in\Sigma_r$, we make the identification
$$\begin{tikzpicture}[baseline={([yshift=-.5ex]current bounding box.center)},scale=0.8]
    \node (0) at (0,-1) {};
    \node[draw,circle,scale=0.8] (i) at (0,0) {$1$};
    \node (1) at (-1,1) {$\underline{T_1}$};
    \node[scale=0.8] (1b) at (-1.5,2) {$j_1^1$};
    \node[scale=0.8] (1bb) at (-1,2) {$\cdots$};
    \node[scale=0.8] (1bbb) at (-0.5,2) {$j_{k_1}^1$};
    \node (3) at (1,1) {$\underline{T_r}$};
    \node[scale=0.8] (3b) at (0.5,2) {$j_1^r$};
    \node[scale=0.8] (3bb) at (1,2) {$\cdots$};
    \node[scale=0.8] (3bbb) at (1.5,2) {$j_{k_r}^r$};
    \node (2) at (0,1) {$\cdots$};
    \draw[stealth-] (1) -- (1b);
    \draw[stealth-] (1) -- (1bbb);
    \draw[stealth-] (3) -- (3b);
    \draw[stealth-] (3) -- (3bbb);
    \draw[stealth-] (0) -- (i);
    \draw[stealth-] (i) -- (1);
    \draw[stealth-] (i) -- (3);
    \end{tikzpicture}\otimes x\otimes Y\equiv\begin{tikzpicture}[baseline={([yshift=-.5ex]current bounding box.center)},scale=0.8]
    \node (0) at (0,-1) {};
    \node[draw,circle,scale=0.8] (i) at (0,0) {$1$};
    \node (1) at (-1.5,1) {$\underline{T_{\mu^{-1}(1)}}$};
    \node[scale=0.7] (1b) at (-2.25,2) {$j_1^{\mu^{-1}(1)}$};
    \node[scale=0.8] (1bb) at (-1.5,2) {$\cdots$};
    \node[scale=0.7] (1bbb) at (-0.75,2) {$j_{k_{\mu^{-1}}(1)}^{\mu^{-1}(1)}$};
    \node (3) at (1.5,1) {$\underline{T_{\mu^{-1}(r)}}$};
    \node[scale=0.7] (3b) at (0.75,2) {$j_1^{\mu^{-1}(r)}$};
    \node[scale=0.8] (3bb) at (1.5,2) {$\cdots$};
    \node[scale=0.7] (3bbb) at (2.25,2) {$j_{k_{\mu^{-1}(r)}}^{\mu^{-1}(r)}$};
    \node (2) at (0,1) {$\cdots$};
    \draw[stealth-] (1) -- (1b);
    \draw[stealth-] (1) -- (1bbb);
    \draw[stealth-] (3) -- (3b);
    \draw[stealth-] (3) -- (3bbb);
    \draw[stealth-] (0) -- (i);
    \draw[stealth-] (i) -- (1);
    \draw[stealth-] (i) -- (3);
    \end{tikzpicture}\otimes\mu\cdot x\otimes Y.$$

% $$\gamma\left(\left(\begin{tikzpicture}[baseline={([yshift=-.5ex]current bounding box.center)},scale=0.8]
%     \node (0) at (0,-1) {};
%     \node[draw,circle,scale=0.8] (i) at (0,0) {$a$};
%     \node[scale=0.8] (1) at (-1,1) {$1$};
%     \node[scale=0.8] (3) at (1,1) {$r$};
%     \node (2) at (0,1) {$\cdots$};
%     \draw[stealth-] (0) -- (i);
%     \draw[stealth-] (i) -- (1);
%     \draw[stealth-] (i) -- (3);
%     \end{tikzpicture}\otimes\mu\otimes x\right)\otimes(\underline{T_1}\otimes\tau_1\otimes Y_1)\otimes\cdots\otimes(\underline{T_r}\otimes\tau_r\otimes Y_r)\right)$$
    
%     \noindent with the element
%     $$\gamma\left(\left(\begin{tikzpicture}[baseline={([yshift=-.5ex]current bounding box.center)},scale=0.8]
%     \node (0) at (0,-1) {};
%     \node[draw,circle,scale=0.8] (i) at (0,0) {$a$};
%     \node[scale=0.8] (1) at (-1,1) {$1$};
%     \node[scale=0.8] (3) at (1,1) {$r$};
%     \node (2) at (0,1) {$\cdots$};
%     \draw[stealth-] (0) -- (i);
%     \draw[stealth-] (i) -- (1);
%     \draw[stealth-] (i) -- (3);
%     \end{tikzpicture}\otimes id\otimes \mu\cdot x\right)\otimes(\underline{T_{\sigma^{-1}(1)}}\otimes\tau_{\sigma^{-1}(1)}\otimes Y_{\sigma^{-1}(1)})\otimes\cdots\otimes(\underline{T_{\sigma^{-1}(r)}}\otimes\tau_{\sigma^{-1}}(r)\otimes Y_{\sigma^{-1}(r)})\right).$$

    \noindent We iterate such identifications by induction on the number of vertices of the tree, using the operadic structure. It is an immediate check that we obtain a symmetric operad, which we denote by $\mathcal{F}(M)$, such that the functor $\mathcal{F}$ fits in the left-right adjunction
% https://q.uiver.app/#q=WzAsMixbMCwwLCJcXG1hdGhjYWx7Rn06XFxTaWdtYVxcdGV4dHtcXG5vcm1hbGZvbnQgU2VxfV9cXG1hdGhiYntLfV4wIl0sWzEsMCwiXFxTaWdtYVxcbWF0aGNhbHtPfXBeMDpcXG9tZWdhIl0sWzAsMSwiIiwwLHsib2Zmc2V0IjotMX1dLFsxLDAsIiIsMCx7Im9mZnNldCI6LTF9XV0=
\[\begin{tikzcd}
	{\mathcal{F}:\Sigma\text{\normalfont Seq}_\mathbb{K}^0} & {\Sigma\mathcal{O}p^0:\omega}
	\arrow[shift left, from=1-1, to=1-2]
	\arrow[shift left, from=1-2, to=1-1]
\end{tikzcd}.\]

\begin{prop}[{see \cite[$\mathsection$3.3]{hinicherratum}}] The category $\Sigma\mathcal{O}p^0$ is endowed with a cofibrantly generated model structure such that the forgetful functor $\omega:\Sigma\mathcal{O}p^0\longrightarrow\Sigma\text{\normalfont Seq}_\mathbb{K}^0$ creates weak-equivalences and fibrations. Cofibrations are given by the left lifting property with respect to acyclic fibrations.
\end{prop}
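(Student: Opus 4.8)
The plan is to obtain the model structure on $\Sigma\mathcal{O}p^0$ by transfer (Kan's lifting theorem for cofibrantly generated model structures) along the free-forgetful adjunction $\mathcal{F}:\Sigma\text{\normalfont Seq}_\mathbb{K}^0\rightleftarrows\Sigma\mathcal{O}p^0:\omega$ constructed above. Write $I$ and $J$ for the generating cofibrations and generating acyclic cofibrations of the model structure on $\Sigma\text{\normalfont Seq}_\mathbb{K}^0$ recalled in the previous proposition. One declares a morphism of connected operads to be a weak equivalence, respectively a fibration, exactly when $\omega$ sends it to one in $\Sigma\text{\normalfont Seq}_\mathbb{K}^0$, and proposes $\mathcal{F}I$ and $\mathcal{F}J$ as the generating cofibrations and generating acyclic cofibrations. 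The transfer theorem then produces the desired model structure, with cofibrations characterized by the left lifting property against acyclic fibrations, provided three conditions are met: $\Sigma\mathcal{O}p^0$ is bicomplete, the small object argument applies to $\mathcal{F}I$ and $\mathcal{F}J$, and the acyclicity condition holds.

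The first two conditions I would dispatch directly. For bicompleteness, observe that $\Sigma\mathcal{O}p^0$ is the category of algebras over the monad $\mathcal{F}\omega$ on the locally presentable category $\Sigma\text{\normalfont Seq}_\mathbb{K}^0$; limits are created by $\omega$ and are therefore computed arity-wise, while colimits exist because the monad is accessible, the tree description of $\mathcal{F}$ around Definition \ref{treesigma} exhibiting $\mathcal{F}(M)$ as a filtered colimit of its pieces with a bounded number of vertices. For the small object argument, it suffices that $\omega$ preserve filtered colimits and that the domains of $I$ and $J$ be small; both hold since over a field every dg $\mathbb{K}$-module is small and, by the explicit tree formula, the free operad functor is built arity-wise out of filtered colimits, so that $\omega$ commutes with them.

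The genuine obstacle is the acyclicity condition: every relative $\mathcal{F}J$-cell complex must be sent by $\omega$ to a weak equivalence. I would verify it by Quillen's path-object argument. In the candidate structure every object is fibrant, because fibrations are created arity-wise by $\omega$ and, over a field, every object of $\text{\normalfont dgMod}_\mathbb{K}$ is fibrant in the projective model structure. Hence it is enough to equip $\Sigma\mathcal{O}p^0$ with a functorial path object, i.e. for each connected operad $\mathcal{Q}$ a factorization of the diagonal $\mathcal{Q}\longrightarrow\mathcal{Q}\times\mathcal{Q}$ into a weak equivalence followed by an arity-wise surjection. The natural candidate is modelled on the interval $N^*(\Delta^1)$ viewed as an algebra over the Barratt-Eccles operad $\mathcal{E}$, exactly as in Proposition \ref{pathobject}; the delicate point in positive characteristic is that the Hadamard tensor $\mathcal{Q}\otimes N^*(\Delta^1)$ is not literally an operad, since $N^*(\Delta^1)$ is only commutative up to the coherent homotopies encoded by $\mathcal{E}$ rather than strictly commutative.

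It is precisely this point that forces the restriction to connected operads and that is handled in \cite[$\mathsection$3.3]{hinicherratum}: for a connected operad the free construction only involves reduced trees, which gives enough control to produce the required path object and to run the acyclicity verification. Concretely, one may also argue directly by filtering the free operad by the number of vertices, as in the proof of Theorem \ref{reperecos}; the associated spectral sequence converges arity-wise, its first page carries only the internal differential, and a pushout of a map in $\mathcal{F}J$ induces a quasi-isomorphism on each page, hence on the abutment. Granting the acyclicity condition, the transfer theorem applies verbatim and yields the statement, the characterization of cofibrations by the left lifting property being built into the theorem.
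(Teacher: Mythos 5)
The paper does not prove this proposition at all: it is stated as a citation to Hinich's erratum, just as the non-symmetric analogue earlier is deferred to Muro. So there is no in-paper argument to compare against; your proposal has to be judged against what the cited reference actually has to do. Your overall strategy --- transfer along the free--forgetful adjunction, with bicompleteness, smallness, and an acyclicity condition to check --- is exactly the right frame, and your treatment of the first two conditions is fine (over a field the domains of the generating maps are small, limits are created arity-wise, and colimits of operads exist by the usual monadic argument).

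The gap is in the acyclicity step, and you have in fact put your finger on it yourself without resolving it. The path-object route requires an actual operad $\mathcal{Q}^I$ with a factorization $\mathcal{Q}\rightarrowtail\mathcal{Q}^I\twoheadrightarrow\mathcal{Q}\times\mathcal{Q}$, and as you observe, $\mathcal{Q}\underset{\text{\normalfont H}}{\otimes}N^*(\Delta^1)$ is not an operad because $N^*(\Delta^1)$ is only an $\mathcal{E}$-algebra; the paper's Proposition on path objects applies to $\mathcal{P}\underset{\text{\normalfont H}}{\otimes}\mathcal{E}$-algebras, not to operads themselves. This is precisely the failure in positive characteristic that made the erratum necessary, so one cannot wave at the erratum to repair the path object and simultaneously present the path object as the proof. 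Your fallback --- analyze pushouts along $\mathcal{F}J$ directly --- is the correct route, but as written it is an assertion rather than an argument: the claim that ``a pushout of a map in $\mathcal{F}J$ induces a quasi-isomorphism on each page'' of the vertex-number filtration is the entire content of the acyclicity condition. To carry it out one needs (i) an explicit description of the pushout $\mathcal{P}\sqcup_{\mathcal{F}(M)}\mathcal{F}(N)$ as a sum over trees with vertices decorated alternately by $\mathcal{P}$ and by $N/M$ (this is where connectedness, $\mathcal{P}(0)=0$, is used to keep the filtration exhaustive and the sums finite arity-wise), and (ii) the fact that the generating acyclic cofibrations of $\Sigma\text{\normalfont Seq}_\mathbb{K}^0$ have the form $0\longrightarrow D\otimes\mathbb{K}[\Sigma_r]$ with $D$ equipped with an explicit contracting homotopy, so that the attached cells contribute acyclic summands on the associated graded. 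Neither ingredient appears in your sketch, and without them the spectral-sequence sentence does not constitute a proof. I would either cite the erratum for the acyclicity condition outright (as the paper does for the whole statement) or write out the two ingredients above.
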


Let $M\in\Sigma\text{\normalfont Seq}_\mathbb{K}^0$. In the definition of $\mathcal{F}(M)$, we can restrict to trees in $\Sigma\underline{\mathcal{T}ree}$ such that every vertex has at least one input. We denote by $\Sigma\underline{\mathcal{T}ree}^0$ the underlying sequence. In the following sections, we use an explicit choice of set of representatives for trees in $\mathcal{F}(M)$. Such a choice can be made by taking \textit{tree monomials} (see \cite[$\mathsection$3.1]{dotgrobner}), for which we recall the definition.

\begin{defi}\label{treemon}
    Let $\underline{T}\in\Sigma\underline{\mathcal{PRT}_{i_1<\cdots<i_n}}^0$ be a tree with $m$ vertices. The tree $\underline{T}$ is a {\normalfont tree monomial} if $\text{\normalfont Shape}(\underline{T})$ is in the canonical order, and if one of the three following conditions is fulfilled:

\begin{itemize}
    \item $m=0$ (so that $\underline{T}$ is the unit in the operad $\Sigma\underline{\mathcal{T}ree}$);
    \item $m=1$ and $\underline{T}$ is of the form
$$\underline{T}=\begin{tikzpicture}[baseline={([yshift=-.5ex]current bounding box.center)},scale=0.8]
    \node (0) at (0,-1) {};
    \node[draw,circle,scale=0.8] (i) at (0,0) {$a$};
    \node[scale=0.8] (1) at (-1,1) {$i_1$};
    \node[scale=0.8] (3) at (1,1) {$i_n$};
    \node (2) at (0,1) {$\cdots$};
    \draw[stealth-] (0) -- (i);
    \draw[stealth-] (i) -- (1);
    \draw[stealth-] (i) -- (3);
    \end{tikzpicture}$$
    \noindent for some vertex $a$;

\item $m\geq 2$ and $\underline{T}$ is of the form
$$\underline{T}=\begin{tikzpicture}[baseline={([yshift=-.5ex]current bounding box.center)},scale=0.8]
    \node (0) at (0,-1) {};
    \node[draw,circle,scale=0.8] (i) at (0,0) {$a$};
    \node (1) at (-1,1) {$\underline{T_1}$};
    \node[scale=0.8] (1b) at (-1.5,2) {$j_1^1$};
    \node[scale=0.8] (1bb) at (-1,2) {$\cdots$};
    \node[scale=0.8] (1bbb) at (-0.5,2) {$j_{n_1}^1$};
    \node (3) at (1,1) {$\underline{T_r}$};
    \node[scale=0.8] (3b) at (0.5,2) {$j_1^r$};
    \node[scale=0.8] (3bb) at (1,2) {$\cdots$};
    \node[scale=0.8] (3bbb) at (1.5,2) {$j_{n_r}^r$};
    \node (2) at (0,1) {$\cdots$};
    \draw[stealth-] (1) -- (1b);
    \draw[stealth-] (1) -- (1bbb);
    \draw[stealth-] (3) -- (3b);
    \draw[stealth-] (3) -- (3bbb);
    \draw[stealth-] (0) -- (i);
    \draw[stealth-] (i) -- (1);
    \draw[stealth-] (i) -- (3);
    \end{tikzpicture}$$

\noindent for some vertex $a$ and where $\underline{T_1}\in\Sigma\underline{\mathcal{PRT}_{\{j_1^1,\ldots,j_{n_1}^1\}}}^0\ldots,\underline{T_r}\in\Sigma\underline{\mathcal{PRT}_{\{j_1^r,\ldots,j_{n_r}^r\}}}^0$ are tree monomials such that $\min(j_1^1,\ldots,j_{n_1}^1)<\cdots<\min(j_1^r,\ldots,j_{n_r}^r)$.
\end{itemize}
\end{defi}

For instance, the tree
$$\begin{tikzpicture}[baseline={([yshift=-.5ex]current bounding box.center)},scale=0.8]
    \node (0) at (0,-1) {};
    \node[draw,circle,scale=0.8] (i) at (0,0) {$1$};
    \node[draw,circle,scale=0.8] (1) at (-1.5,1) {$2$};
    \node[draw,circle,scale=0.8] (1b) at (-2.5,2) {$3$};
    \node[scale=0.8] (1bc) at (-3,3) {$1$};
    \node[scale=0.8] (1bcc) at (-2.5,3) {$7$};
    \node[scale=0.8] (1bccc) at (-2,3) {$9$};
    \node[draw,circle,scale=0.8] (1bb) at (-0.5,2) {$4$};
    \node[scale=0.8] (1bbc) at (-1,3) {$2$};
    \node[scale=0.8] (1bbcc) at (0,3) {$4$};
    \node[scale=0.8] (2) at (0,1) {$3$};
    \node[draw,circle,scale=0.8] (3) at (1.25,1) {$5$};
    \node[scale=0.8] (3b) at (0.75,2) {$5$};
    \node[scale=0.8] (3bb) at (1.75,2) {$6$};
    \draw[stealth-] (0) -- (i);
    \draw[-stealth] (2) -- (i);
    \draw[stealth-] (1) -- (1b);
    \draw[stealth-] (1) -- (1bb);
    \draw[stealth-] (i) -- (1);
    \draw[stealth-] (i) -- (3);
    \draw[stealth-] (1b) -- (1bc);
    \draw[stealth-] (1b) -- (1bcc);
    \draw[stealth-] (1b) -- (1bccc);
    \draw[stealth-] (1bb) -- (1bbcc);
    \draw[stealth-] (1bb) -- (1bbc);
    \draw[stealth-] (3) -- (3b);
    \draw[stealth-] (3) -- (3bb);
    \end{tikzpicture}\in\Sigma\underline{\mathcal{PRT}_{1<\cdots<9}}$$

\noindent is a tree monomial. For every $n\geq 1$, we denote by $\mathcal{TM}_k(n)$ the set of tree monomials with $n$ vertices and $k$ inputs. We have the following result.

\begin{prop}\label{idmono}
    Let $M\in\Sigma\text{\normalfont Seq}_\mathbb{K}^0$. Then, for every $n\geq 1$,
$$\mathcal{F}(M)(k)\simeq\bigoplus_{n\geq 0}\bigoplus_{\underline{T}\in\mathcal{TM}_k(n)}\underline{T}\otimes\bigotimes_{i=1}^nM(\text{\normalfont val}_{\underline{T}}(i)).$$
\end{prop}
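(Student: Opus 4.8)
The plan is to exhibit the tree monomials as a complete and irredundant family of representatives for $\mathcal{F}(M)(k)$ in each arity $k$, by means of a normal-form argument run by induction on the number of vertices. Recall that $\mathcal{F}(M)(k)$ is built as a quotient of the free module on input-labeled decorated trees $(\underline{T}\otimes\sigma)\otimes\bigotimes_{i=1}^n M(\text{\normalfont val}_{\underline{T}}(i))$, with $\underline{T}\in\underline{\mathcal{PRT}_k}(n)$ and $\sigma\in\Sigma_k$, modulo the two families of relations imposed in the construction of $\mathcal{F}$: the \emph{vertex-permutation relation}, equalizing the $\Sigma_n$-action on the vertex labels of the shape with its action by permutation on the decoration $\bigotimes_{i=1}^n M(\text{\normalfont val}_{\underline{T}}(i))$, and the \emph{branch-permutation relation}, which allows a permutation $\mu\in\Sigma_r$ to reorder the branches sitting above a vertex provided one acts by $\mu$ on the $M(r)$-decoration of that vertex. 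It then suffices to prove that every class under these relations contains exactly one tree monomial, in the sense of Definition \ref{treemon}, and that no further identification is forced between the decorations attached to a fixed tree monomial.

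For existence I would argue by induction on the number of vertices $m$, using the unique decomposition of a tree with $m\geq 1$ vertices into its root corolla and maximal subtrees $\underline{T_1},\ldots,\underline{T_r}$. First I apply the branch-permutation relation at the root to reorder $\underline{T_1},\ldots,\underline{T_r}$ so that the minima of their input sets are strictly increasing; since the input sets of distinct branches are disjoint, these minima are pairwise distinct, so there is a \emph{unique} $\mu\in\Sigma_r$ realizing this ordering. Applying the induction hypothesis inside each branch puts every maximal subtree in tree-monomial form, and I then invoke the vertex-permutation relation with the permutation $\sigma_{\text{\normalfont Shape}(\underline{T})}$ of Definition \ref{canonicaltree} to bring the whole shape into the canonical order. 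The compatibility of the canonical order with the passage to subtrees and contractions (recorded in the Remark following Definition \ref{canonicaltree}) guarantees that these two reductions cohere and produce a genuine tree monomial.

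The crux, and the step I expect to be the main obstacle, is uniqueness: one must show the two relations are confluent, so that the resulting normal form does not depend on the order in which the reductions are carried out, and that each orbit meets the tree monomials in a single element. For this I would check that the canonical order is the unique vertex labeling making a given planar shape canonical, and that the branch-minimum ordering is an intrinsic invariant of the planar decorated tree; hence any composite of the two relations carrying one tree monomial to another must be induced by a permutation preserving both the canonical shape and the branch order, and such a permutation is necessarily the identity. The same rigidity shows that no nontrivial $\mu$ arising from the branch-permutation relation can fix a tree monomial while acting on its decoration, again because distinct branches have distinct input minima. Consequently the decoration $\bigotimes_{i=1}^n M(\text{\normalfont val}_{\underline{T}}(i))$ attached to a fixed tree monomial $\underline{T}$ passes untouched to the quotient, and summing over $\underline{T}\in\mathcal{TM}_k(n)$ and over $n$ yields the claimed direct-sum decomposition. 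Finally I would remark that the identification is natural and compatible with the symmetric sequence structure, the residual $\Sigma_k$-action being carried by the input labelings recorded in the tree monomials.
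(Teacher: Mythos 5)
Your proof is correct and follows essentially the same route as the paper's: the paper's one-line proof iterates the unique decomposition $\sigma=\omega\cdot\sigma(\tau_1,\ldots,\tau_n)$ with $\omega$ a pointed shuffle (second claim of Proposition \ref{shuffle}) together with the symmetry axioms of $\mathcal{F}(M)$, which is exactly your vertex-by-vertex normal form with branches reordered by increasing input minima. The only difference is that the uniqueness/confluence step you argue by hand is obtained in the paper for free from the uniqueness of that pointed-shuffle decomposition at each vertex.
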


\begin{proof}
    The proposition is obtained by iterating the second claim of Proposition \ref{shuffle}, and by using the symmetry axioms in the operad $\mathcal{F}(M)$.
\end{proof}

As in the non-symmetric context, we have the following remark.

\begin{remarque}
    Since, for every $k\geq 0$, the $\mathbb{K}$-module $\Sigma\underline{\mathcal{T}ree_k}^0$ is finite dimensional, we have that the dual symmetric sequence $(\Sigma\underline{\mathcal{T}ree}^0)^\vee$ is endowed with the structure of a cooperad. We can then define, for every $n\geq 0$,
    $$\mathcal{F}^c(M)(k)\simeq\bigoplus_{n\geq 0}\bigoplus_{\underline{T}\in\mathcal{TM}_k(n)}\underline{T}^\vee\otimes\bigotimes_{i=1}^nM(\text{\normalfont val}_{\underline{T}}(i)).$$

    \noindent One can show that this symmetric sequence is endowed with the structure of a cooperad given by the cooperad structure in $(\Sigma\underline{\mathcal{T}ree}^0)^\vee$, and by the deconcatenation coproduct in the tensor algebra of $\bigoplus_{n\geq 0}M(n)$. As for the free operad functor, we have an adjunction
% https://q.uiver.app/#q=WzAsMixbMCwwLCJcXG9tZWdhOihcXFNpZ21hXFxtYXRoY2Fse099cF5jKV4wIl0sWzEsMCwiXFxTaWdtYVxcdGV4dHtcXG5vcm1hbGZvbnQgU2VxfV9cXG1hdGhiYntLfV4wOlxcbWF0aGNhbHtGfV5jIl0sWzAsMSwiIiwwLHsib2Zmc2V0IjotMX1dLFsxLDAsIiIsMCx7Im9mZnNldCI6LTF9XV0=
\[\begin{tikzcd}
	{\omega:(\Sigma\mathcal{O}p^c)^0} & {\Sigma\text{\normalfont Seq}_\mathbb{K}^0:\mathcal{F}^c}
	\arrow[shift left, from=1-1, to=1-2]
	\arrow[shift left, from=1-2, to=1-1]
\end{tikzcd}\]

\noindent where we denote by $(\Sigma\mathcal{O}p^c)^0$ the subcategory of $\Sigma\mathcal{O}p^c$ given by connected cooperads, and where $\omega:(\Sigma\mathcal{O}p^c)^0\longrightarrow\Sigma\text{\normalfont Seq}_\mathbb{k}^0$ is the functor which forgets the cooperad structure.
\end{remarque}

As for the non symmetric context, we consider operadic (resp. cooperadic) compositions (resp. cocompositions) shaped on trees with inputs. Let $\mathcal{P}$ be an augmented operad $\mathcal{P}\simeq I\oplus\overline{\mathcal{P}}$ and $\mathcal{C}$ be a coaugmented cooperad $\mathcal{C}\simeq I\oplus\overline{\mathcal{C}}$ such that $\mathcal{P}(0)=\mathcal{C}(0)=0$ and $\mathcal{P}(1)=\mathcal{C}(1)=\mathbb{K}$. By the universal property satisfied by $\mathcal{F}$, we have a unique operad morphism $\mathcal{F}(\mathcal{P})\longrightarrow\mathcal{P}$ which reduces to the identity on $\mathcal{P}\subset\mathcal{F}(\mathcal{P})$. Analogously, we have a unique cooperad morphism $\mathcal{C}\longrightarrow\mathcal{F}^c(\mathcal{C})$ whose projection on $\mathcal{C}$ is given by the identity on $\mathcal{C}$.

\begin{defi}\label{compodeltaarbressym}
    Let $\underline{T}$ be a tree with inputs. We define $\gamma_{(\underline{T})}:\mathcal{F}_{(\underline{T})}(\overline{\mathcal{P}})\longrightarrow\overline{\mathcal{P}}$ and $\Delta_{(\underline{T})}:\overline{\mathcal{C}}\longrightarrow\mathcal{F}^c_{(\underline{T})}(\overline{\mathcal{C}})$ by the composites
    % https://q.uiver.app/#q=WzAsNCxbMCwwLCJcXGdhbW1hX3soXFx1bmRlcmxpbmV7VH0pfTooXFxtYXRoY2Fse0Z9X3tuc30pX3soXFx1bmRlcmxpbmV7VH0pfShcXG92ZXJsaW5le1xcbWF0aGNhbHtQfX0pIl0sWzEsMCwiKFxcbWF0aGNhbHtGfV97bnN9KV97KFxcdW5kZXJsaW5le1R9KX0oXFxtYXRoY2Fse1B9KSJdLFsyLDAsIlxcbWF0aGNhbHtQfSJdLFszLDAsIlxcb3ZlcmxpbmV7XFxtYXRoY2Fse1B9fSJdLFswLDEsIiIsMCx7InN0eWxlIjp7InRhaWwiOnsibmFtZSI6Imhvb2siLCJzaWRlIjoidG9wIn19fV0sWzEsMiwiXFxnYW1tYSJdLFsyLDMsIiIsMCx7InN0eWxlIjp7ImhlYWQiOnsibmFtZSI6ImVwaSJ9fX1dXQ==
\[\begin{tikzcd}
	{\gamma_{(\underline{T})}:\mathcal{F}_{(\underline{T})}(\overline{\mathcal{P}})} & {\mathcal{F}_{(\underline{T})}(\mathcal{P})} & {\mathcal{P}} & {\overline{\mathcal{P}}}
	\arrow[hook, from=1-1, to=1-2]
	\arrow["\gamma", from=1-2, to=1-3]
	\arrow[two heads, from=1-3, to=1-4]
\end{tikzcd};\]
% https://q.uiver.app/#q=WzAsNCxbMCwwLCJcXERlbHRhX3soXFx1bmRlcmxpbmV7VH0pfTpcXG92ZXJsaW5le1xcbWF0aGNhbHtDfX0iXSxbMSwwLCJcXG1hdGhjYWx7Q30iXSxbMiwwLCIoXFxtYXRoY2Fse0Z9X3tuc30pXmNfeyhcXHVuZGVybGluZXtUfSl9KFxcbWF0aGNhbHtDfSkiXSxbMywwLCIoXFxtYXRoY2Fse0Z9X3tuc30pXmNfeyhcXHVuZGVybGluZXtUfSl9KFxcb3ZlcmxpbmV7XFxtYXRoY2Fse0N9fSkiXSxbMCwxLCIiLDAseyJzdHlsZSI6eyJ0YWlsIjp7Im5hbWUiOiJob29rIiwic2lkZSI6InRvcCJ9fX1dLFsxLDIsIlxcRGVsdGEiXSxbMiwzLCIiLDAseyJzdHlsZSI6eyJoZWFkIjp7Im5hbWUiOiJlcGkifX19XV0=
\[\begin{tikzcd}
	{\Delta_{(\underline{T})}:\overline{\mathcal{C}}} & {\mathcal{C}} & {\mathcal{F}^c_{(\underline{T})}(\mathcal{C})} & {\mathcal{F}^c_{(\underline{T})}(\overline{\mathcal{C}})}
	\arrow[hook, from=1-1, to=1-2]
	\arrow["\Delta", from=1-2, to=1-3]
	\arrow[two heads, from=1-3, to=1-4]
\end{tikzcd}.\]
\end{defi}

For every $p,q,n,m\geq 0$ and $1\leq i\leq p$, as for the non symmetric context, we define a morphism
$$\bullet_i:\Sigma\underline{\mathcal{T}ree_p}(n)\otimes\Sigma\underline{\mathcal{T}ree_q}(m)\longrightarrow\Sigma\underline{\mathcal{T}ree_{p}}(n+m-1)$$

\noindent defined as follows. Let $\underline{U}\in\Sigma\underline{\mathcal{T}ree_p}(n)$ and $\underline{V}\in\Sigma\underline{\mathcal{T}ree_q}(m)$. If $\text{\normalfont val}_{\underline{U}}(i)\neq q$, we set $\underline{U}\bullet_i\underline{V}=0$. Else, we define $\underline{U}\bullet_i\underline{V}$ as the tree obtained by changing the $i$-th vertex of $\underline{U}$ into the tree $\underline{V}$. The attachment of the $q$ arrows on the $i$-th vertex of $\underline{U}$ on the tree $\underline{V}$ are given following the order of the labeling in $\underline{V}$.\\

As for the non symmetric context, we have the two following lemmas.

\begin{lm}
    Let $\underline{T}$ be a tree with inputs and $\underline{S}$ be a subtree of $\underline{T}$. Then $\underline{T}/\underline{S}\bullet_S\underline{S}=\underline{T}$.
\end{lm}

\begin{lm}\label{compbulletsym}
    Let $\underline{T}\in\underline{\mathcal{PRT}}(n)$ and $\underline{S}\subset\underline{T}$. Then
$$\gamma_{(\underline{T}/\underline{S})}\circ_S\gamma_{(\underline{S})}=\gamma_{(\underline{T})}\ ;\ \Delta_{(\underline{T}/\underline{S})}\circ_S\Delta_{(\underline{S})}=\Delta_{(\underline{T})}$$

    \noindent in the endomorphism operad $\text{\normalfont End}_{\bigoplus_{n\geq 2}\mathcal{P}(n)}$ and in the coendomorphism operad $\text{\normalfont CoEnd}_{\bigoplus_{n\geq 2}\mathcal{C}(n)}$ respectively.
\end{lm}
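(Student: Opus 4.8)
The plan is to prove the symmetric version exactly as in the non-symmetric Lemma \ref{compbullet}: reduce both identities to the associativity of $\mathcal{P}$ (respectively the coassociativity of $\mathcal{C}$) combined with the grafting identity $\underline{T}/\underline{S}\bullet_S\underline{S}=\underline{T}$ furnished by the immediately preceding lemma. Recall from Definition \ref{compodeltaarbressym} that $\gamma_{(\underline{T})}$ is the restriction to the $\underline{T}$-component of the canonical structure morphism $\mathcal{F}(\mathcal{P})\longrightarrow\mathcal{P}$ (pre- and post-composed with the inclusion $\overline{\mathcal{P}}\hookrightarrow\mathcal{P}$ and the projection $\mathcal{P}\twoheadrightarrow\overline{\mathcal{P}}$), and dually that $\Delta_{(\underline{T})}$ is built from the cooperad comorphism $\mathcal{C}\longrightarrow\mathcal{F}^c(\mathcal{C})$. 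The left-hand side $\gamma_{(\underline{T}/\underline{S})}\circ_S\gamma_{(\underline{S})}$ is the partial composition in $\text{\normalfont End}_{\bigoplus_{n\geq 2}\mathcal{P}(n)}$ that plugs $\gamma_{(\underline{S})}$ into the slot indexed by $S$ of $\gamma_{(\underline{T}/\underline{S})}$.

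The concrete argument I would carry out is the following. Because $\mathcal{F}(\mathcal{P})\longrightarrow\mathcal{P}$ is an operad morphism, it sends an iterated tree grafting in the free operad to the corresponding iterated composition in $\mathcal{P}$; applied to $\underline{T}/\underline{S}\bullet_S\underline{S}$ this says precisely that evaluating the grafted tree equals first evaluating the subtree $\underline{S}$ and then evaluating $\underline{T}/\underline{S}$ on the result, which is exactly $\gamma_{(\underline{T}/\underline{S})}\circ_S\gamma_{(\underline{S})}$. Since $\underline{T}/\underline{S}\bullet_S\underline{S}=\underline{T}$ by the preceding lemma, the left-hand side equals the evaluation of $\underline{T}$, namely $\gamma_{(\underline{T})}$. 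The dual statement for $\Delta$ is obtained by the same reasoning with arrows reversed: the comorphism $\mathcal{C}\longrightarrow\mathcal{F}^c(\mathcal{C})$ turns the coassociativity of $\mathcal{C}$ into the factorization $\Delta_{(\underline{T})}=\Delta_{(\underline{T}/\underline{S})}\circ_S\Delta_{(\underline{S})}$ in $\text{\normalfont CoEnd}_{\bigoplus_{n\geq 2}\mathcal{C}(n)}$, again using $\underline{T}/\underline{S}\bullet_S\underline{S}=\underline{T}$.

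The step I expect to be the main obstacle, and the only genuine difference from the non-symmetric case, is the bookkeeping of the $\Sigma$-equivariance. In the symmetric setting the trees with inputs of Definition \ref{treesigma} carry a labeling of their inputs, the free operad $\mathcal{F}(\mathcal{P})$ is formed after the identifications $\equiv$ of $\mathsection$\ref{sec:261}, and the composition $\bullet_S$ reassembles these labels according to the order of the labeling in $\underline{S}$. One must therefore check that $\gamma_{(\underline{T})}$ and $\Delta_{(\underline{T})}$ are well defined on the quotient and that the reshuffling of input labels produced by $\bullet_S$ matches the block-permutation prescription of Lemma \ref{compsymmetric}. The clean way to discharge this is to observe that, by their universal properties, $\mathcal{F}(\mathcal{P})\longrightarrow\mathcal{P}$ and $\mathcal{C}\longrightarrow\mathcal{F}^c(\mathcal{C})$ are morphisms of \emph{symmetric} (co)operads, hence automatically equivariant for the $\Sigma_n$-actions; consequently the equivariance relations are respected and the identity $\gamma_{(\underline{T}/\underline{S})}\circ_S\gamma_{(\underline{S})}=\gamma_{(\underline{T})}$ follows from the associativity axiom together with the symmetry axioms, exactly as the non-symmetric case followed from associativity alone. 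No further computation beyond this verification is needed, so the proof reduces, as in Lemma \ref{compbullet}, to a direct consequence of the (co)associativity and symmetry axioms in the (co)operad.
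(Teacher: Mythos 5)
Your proposal is correct and follows essentially the same route as the paper, which proves the non-symmetric analogue (Lemma \ref{compbullet}) by declaring it a direct consequence of the (co)associativity axioms and leaves the symmetric case without further comment; your elaboration via the grafting identity $\underline{T}/\underline{S}\bullet_S\underline{S}=\underline{T}$ and the equivariance of the universal morphisms $\mathcal{F}(\mathcal{P})\longrightarrow\mathcal{P}$ and $\mathcal{C}\longrightarrow\mathcal{F}^c(\mathcal{C})$ is exactly the argument the paper intends. The only addition you make is to spell out the $\Sigma$-equivariance bookkeeping, which the paper tacitly absorbs into the symmetry axioms.
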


\subsection{A cosimplicial frame for $B^c(\mathcal{C}\underset{\text{\normalfont H}}{\otimes}\textbf{Sur}_\mathbb{K})$}\label{sec:262}

Let $\textbf{Sur}_\mathbb{K}$ be the surjection cooperad defined in \cite[Theorem A.1]{pdoperads}. This cooperad is actually equal, as a symmetric sequence, to the surjection operad $\chi$ recalled in $\mathsection$\ref{sec:214}. Note however that the cooperad structure on $\textbf{Sur}_\mathbb{K}$ is not the cooperad structure obtained by dualizing the operad structure on $\chi$. We have a weak-equivalence $B^c(\mathcal{C}\underset{\text{\normalfont H}}{\otimes}{\normalfont\textbf{Sur}}_\mathbb{K})\overset{\sim}{\longrightarrow}B^c(\mathcal{C})$, which provides a $\Sigma_\ast$-cofibrant replacement of $B^c(\mathcal{C})$.\\

In this section, we construct an explicit cosimplicial frame associated to $B^c(\mathcal{C}\underset{\text{\normalfont H}}{\otimes}{\normalfont\textbf{Sur}}_\mathbb{K})$ for every symmetric coaugmented cooperad $\mathcal{C}$. To be more precise, we construct a twisted differential $\partial^n:\mathcal{F}(\overline{\mathcal{C}}\underset{\text{\normalfont H}}{\otimes}\overline{\normalfont\textbf{Sur}}_\mathbb{K}\otimes\Sigma^{-1} N_*(\Delta^n))\longrightarrow \mathcal{F}(\overline{\mathcal{C}}\underset{\text{\normalfont H}}{\otimes}\overline{\normalfont\textbf{Sur}}_\mathbb{K}\otimes\Sigma^{-1} N_*(\Delta^n))$ by an inductive process analogue to the one given in Theorem \ref{rec}.\\

For every $k\geq 1$, we define $\Phi_n^{0},H_n^{0}:(\overline{\mathcal{C}}\underset{\text{\normalfont H}}{\otimes}\overline{\normalfont\textbf{Sur}}_\mathbb{K}\otimes\Sigma^{-1}N_*(\Delta^n))^{\otimes k}\longrightarrow (\overline{\mathcal{C}}\underset{\text{\normalfont H}}{\otimes}\overline{\normalfont\textbf{Sur}}_\mathbb{K}\otimes\Sigma^{-1}N_*(\Delta^n))^{\otimes k}$ by
$$\Phi_n^0=id_{\overline{\mathcal{C}}\underset{\text{\normalfont H}}{\otimes}\overline{\normalfont\textbf{Sur}}_\mathbb{K}}^{\otimes k}\widetilde{\otimes}\phi_n^0;$$
$$H_n^0=id_{\overline{\mathcal{C}}\underset{\text{\normalfont H}}{\otimes}\overline{\normalfont\textbf{Sur}}_\mathbb{K}}^{\otimes k}\widetilde{\otimes}h_n^0,$$

\noindent where we use the tensor product $\widetilde{\otimes}$ defined in Definition \ref{otimesmodif}, and the morphisms $\phi_n^0,h_n^0:(\Sigma^{-1}N_*(\Delta^n))^{\otimes k}\longrightarrow (\Sigma^{-1}N_*(\Delta^n))^{\otimes k}$ defined after Lemma \ref{firstdiff}. We extend $\Phi_n^0$ and $H_n^0$ on $\mathcal{F}(\overline{\mathcal{C}}\underset{\text{\normalfont H}}{\otimes}\overline{\normalfont\textbf{Sur}}_\mathbb{K}\otimes\Sigma^{-1} N_*(\Delta^n))$ by using the identification given in Proposition \ref{idmono}. Note however that the morphism $H_n^0$ does not preserve the action of the symmetric groups on $\mathcal{F}(\overline{\mathcal{C}}\underset{\text{\normalfont H}}{\otimes}\overline{\normalfont\textbf{Sur}}_\mathbb{K}\otimes\Sigma^{-1} N_*(\Delta^n))$.\\

Since the action of the symmetric groups on $\mathcal{C}\underset{\text{\normalfont H}}{\otimes}\textbf{Sur}_\mathbb{K}$ is free, we can chose an explicit choice of representatives for the orbits. For every $n\geq 1$, we let $\textbf{Sur}_\mathbb{K}^{id}(n)$ to be the dg $\mathbb{K}$-module generated by surjections $u\in\textbf{Sur}_\mathbb{K}(n)$ of the form
$$u=\left|\begin{array}{cccc}
         u_0(1) & \cdots & u_0(r_0-1) & u_0(r_0) \\
         \vdots &  &\vdots \\
         u_{d-1}(1) & \cdots & u_{d-1}(r_{d-1}-1) & u_{d-1}(r_{d-1})\\
         u_d(1) & \cdots & u_d(r_d-1) & u_d(r_d)\\
    \end{array}\right.$$

\noindent with
$$u_0(1)\cdots u_0(r_0-1)\cdots u_{d-1}(1)\cdots u_{d-1}(r_{d-1}-1)u_d(1)\cdots u_d(r_d)=1\cdots n.$$

\noindent We thus have an isomorphism of graded symmetric sequences $\textbf{Sur}_\mathbb{K}\simeq\textbf{Sur}_\mathbb{K}^{id}\otimes\Sigma$. This gives an isomorphism
$$\mathcal{C}\underset{\text{\normalfont H}}{\otimes}\textbf{Sur}_\mathbb{K}\simeq (\mathcal{C}\otimes\textbf{Sur}_\mathbb{K}^{id})\otimes\Sigma$$

\noindent defined by sending $c\otimes (u\otimes\sigma)\in\mathcal{C}\underset{\text{\normalfont H}}{\otimes}(\textbf{Sur}_\mathbb{K}^{id}\otimes\Sigma)$ to $(\sigma^{-1}\cdot c\otimes u)\otimes\sigma\in (\mathcal{C}\underset{\text{\normalfont H}}{\otimes}\textbf{Sur}_\mathbb{K}^{id})\otimes\Sigma$. Note that the differential $d_\mathcal{C}$ preserves such a decomposition, but not the differential $d_{\textbf{Sur}_\mathbb{K}}$, since it does not preserve $\textbf{Sur}_\mathbb{K}^{id}$. For every $l\geq 0$, we let $F_l\textbf{Sur}_\mathbb{K}^{id}$ to be the sequence given by surjections of degree equal or less than $l$ in $\textbf{Sur}_\mathbb{K}^{id}$ and we set $F_l(\mathcal{C}\underset{\text{\normalfont H}}{\otimes}\textbf{Sur}_\mathbb{K}^{id})=\mathcal{C}\underset{\text{\normalfont H}}{\otimes} F_l\textbf{Sur}_\mathbb{K}^{id}$.\\

For every $n\geq 0$, we aim to define a derivation of operads on $\mathcal{F}(\overline{\mathcal{C}}\underset{\text{\normalfont H}}{\otimes}\overline{\normalfont\textbf{Sur}}_\mathbb{K}\otimes\Sigma^{-1} N_*(\Delta^n))$ which reduces to the internal differential of $\overline{\mathcal{C}}\underset{\text{\normalfont H}}{\otimes}\overline{\normalfont\textbf{Sur}}_\mathbb{K}\otimes\Sigma^{-1} N_*(\Delta^n)$ on trees with only one vertex. We denote by $d_{{\mathcal{C}}},d_{\textbf{Sur}_\mathbb{K}}$ and $d_{\Sigma^{-1} N_*(\Delta^n)}$ the corresponding differentials on $\overline{\mathcal{C}}\underset{\text{\normalfont H}}{\otimes}\overline{\textbf{Sur}_\mathbb{K}}\otimes\Sigma^{-1} N_*(\Delta^n)$. Let $\mathcal{C}_{ns}:=\mathcal{C}\underset{\text{\normalfont H}}{\otimes}\textbf{Sur}_\mathbb{K}^{id}\otimes id$. We construct $\beta^n:\overline{\mathcal{C}_{ns}}\otimes\Sigma^{-1} N_*(\Delta^n)\longrightarrow\mathcal{F}(\overline{\mathcal{C}}\underset{\text{\normalfont H}}{\otimes}\overline{\normalfont\textbf{Sur}}_\mathbb{K}\otimes\Sigma^{-1} N_*(\Delta^n))$ which reduces to $d_{\textbf{Sur}_\mathbb{K}}+ d_{\Sigma^{-1}N_*(\Delta^n)}$ on trees with one vertex and which is such that
$$d_{{\mathcal{C}}}(\beta^n)+\partial^n\beta^n=0,$$

\noindent where $\partial^n:\mathcal{F}(\overline{\mathcal{C}}\underset{\text{\normalfont H}}{\otimes}\overline{\normalfont\textbf{Sur}}_\mathbb{K}\otimes\Sigma^{-1} N_*(\Delta^n))\longrightarrow\mathcal{F}(\overline{\mathcal{C}}\underset{\text{\normalfont H}}{\otimes}\overline{\normalfont\textbf{Sur}}_\mathbb{K}\otimes\Sigma^{-1} N_*(\Delta^n))$ is the morphism obtained from $\beta^n$ by applying the Leibniz rule in $\mathcal{F}(\overline{\mathcal{C}}\underset{\text{\normalfont H}}{\otimes}\overline{\normalfont\textbf{Sur}}_\mathbb{K}\otimes\Sigma^{-1} N_*(\Delta^n))$.\\

In the following, we endow the sequence of operads $\mathcal{F}(\overline{\mathcal{C}}\underset{\text{\normalfont H}}{\otimes}\overline{\normalfont\textbf{Sur}}_\mathbb{K}\otimes\Sigma^{-1} N_*(\Delta^\bullet))$ with the structure of a cosimplicial set with as coface maps (resp. codegeneracy maps) the coface maps (resp. codegeneracy maps) of the cosimplicial set $\overline{\mathcal{C}}\underset{\text{\normalfont H}}{\otimes}\overline{\normalfont\textbf{Sur}}_\mathbb{K}\otimes\Sigma^{-1} N_*(\Delta^\bullet)$ taken tensor-wise. Recall that the cosimplicial relations are given by the following:
\begin{itemize}
    \item If $i<j$, then $d^jd^i=d^id^{j-1};$
    \item If $i<j$, then $s^jd^i=d^is^{j-1};$
    \item $s^jd^j=s^jd^{j+1}=id;$
    \item If $i>j+1$, then $s^jd^i=d^{i-1}s^j;$
    \item If $i\leq j$, then $s^js^i=s^is^{j+1}.$
\end{itemize}

\noindent Note that we have an extra codegeneracy $s^{-1}:N_*(\Delta^n)\longrightarrow N_*(\Delta^{n-1})$ defined for every $0\leq a_0<\cdots<a_r\leq n$ by $s^{-1}(\underline{a_0\cdots a_r})=\underline{(a_0-1)\cdots (a_r-1)}$, with the convention $s^{-1}(\underline{a_0\cdots a_r})=0$ if $a_0=0$. One can easily check that the above relations are still satisfied with the addition of this degeneracy.

\begin{cons}\label{consbeta}
    We define a sequence of degree $-1$ morphisms $\beta^n:\overline{\mathcal{C}}\underset{\text{\normalfont H}}{\otimes}\overline{{\normalfont \textbf{Sur}}_\mathbb{K}}\otimes\Sigma^{-1} N_*(\Delta^n)\longrightarrow\mathcal{F}(\overline{\mathcal{C}}\underset{\text{\normalfont H}}{\otimes}\overline{{\normalfont \textbf{Sur}}_\mathbb{K}}\otimes\Sigma^{-1} N_*(\Delta^n))$ by induction on $n\geq 0$. Let $\partial^n:\mathcal{F}(\overline{\mathcal{C}}\underset{\text{\normalfont H}}{\otimes}\overline{{\normalfont \textbf{Sur}}_\mathbb{K}}\otimes\Sigma^{-1} N_*(\Delta^n))\longrightarrow\mathcal{F}(\overline{\mathcal{C}}\underset{\text{\normalfont H}}{\otimes}\overline{{\normalfont \textbf{Sur}}_\mathbb{K}}\otimes\otimes\Sigma^{-1} N_*(\Delta^n))$ be the morphism obtained from $\beta^n$ by the Leibniz rule.\\ We let $\beta^0$ to be such that $d_{\overline{\mathcal{C}}}+\partial^0$ is the differential of the cobar construction $B^c(\mathcal{C}\underset{\text{\normalfont H}}{\otimes}{{\normalfont \textbf{Sur}}_\mathbb{K}})\simeq\mathcal{F}(\overline{\mathcal{C}}\underset{\text{\normalfont H}}{\otimes}\overline{{\normalfont \textbf{Sur}}_\mathbb{K}}\otimes \Sigma^{-1}N_*(\Delta^0))$. We set $\beta_{(0)}^n=d_{{\normalfont \textbf{Sur}}_\mathbb{K}}+d_{\Sigma^{-1}N_*(\Delta^n)}$. For every $k\geq 1$, we define the component $\beta_{(k)}^n$ of $\beta^n$ given by trees with $k+1$ vertices by induction on $k$ and $n$. More precisely, we define $\beta_{(k)}^n$ on $F_l\overline{\mathcal{C}_{ns}}\otimes\Sigma^{-1} N_*(\Delta^n)$ by induction on $l\geq 0$. Let $c\in F_l\overline{\mathcal{C}_{ns}}$ and $\underline{x}\in N_*(\Delta^n)$ be a basis element.
    
    \begin{itemize}
        \item If $\underline{x}\neq\underline{0\cdots n}$, let $0\leq i\leq n$ be such that $\underline{x}=\underline{0\cdots (i-1)a_i\cdots a_r}$ with $i<a_i<\cdots<a_r\leq n$. We set
    $$\beta_{(k)}^n(c\otimes \Sigma^{-1}\underline{x})=d^i\beta_{(k)}^{n-1}(c\otimes \Sigma^{-1} s^{i-1}\underline{x});$$

    \item If $\underline{x}=\underline{0\cdots n}$, we set
    \begin{multline*}
    \beta_{(k)}^n(c\otimes\Sigma^{-1}\underline{0\cdots n})=(-1)^{|c|}H_n^{0}d^0\beta_{(k)}^{n-1}(c\otimes \Sigma^{-1}\underline{0\cdots (n-1)})\\ -H_n^0\beta_{(k)}^n(d_{{\normalfont\textbf{Sur}}_\mathbb{K}}(c)\otimes\Sigma^{-1}\underline{0\cdots n})-\sum_{\substack{p+q=k\\ p,q\neq 0}}H_n^{0}\partial_{(p)}^n\beta_{(q)}^n(c\otimes \Sigma^{-1}\underline{0\cdots n}).
    \end{multline*}
    \end{itemize}

    \noindent The morphism $\beta^n_{(k)}$ is then extended on $F_l\overline{\mathcal{C}_{ns}}\otimes\Sigma\otimes\Sigma^{-1} N_*(\Delta^n)$ by symmetry.
\end{cons}

\begin{lm}\label{comsim}
    For every $n,k\geq 0$, we have
    $$\forall 0\leq j\leq n-1, d^j\beta^{n-1}_{(k)}=\beta^{n}_{(k)}d^j;$$
    $$\forall 0\leq j\leq n, s^j\beta^n_{(k)}=\beta^{n-1}_{(k)}s^j,$$

    \noindent where we consider the morphisms $\beta_{(k)}^n$ defined in Construction \ref{consbeta}.
\end{lm}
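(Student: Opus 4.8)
The plan is to prove the two identities simultaneously by an induction mirroring the inductive structure of Construction \ref{consbeta}: an outer induction on the number of vertices indexed by $k$, then on the dimension $n$, and finally on the filtration degree $l$ of the surjection component of the source (recall that the term $-H_n^0\beta_{(k)}^n(d_{\textbf{Sur}_\mathbb{K}}(c)\otimes\Sigma^{-1}\underline{0\cdots n})$ strictly lowers this degree, so $\beta_{(k)}^n$ is genuinely defined by recursion on $l$). Since the coface and codegeneracy operators act only on the $N_*(\Delta^\bullet)$-tensor factor, they commute up to Koszul sign with the internal surjection differential $d_{\textbf{Sur}_\mathbb{K}}$ and with the symmetric group actions used to extend $\beta^n$ from $\overline{\mathcal{C}_{ns}}\otimes\Sigma^{-1}N_*(\Delta^n)$ to the whole generating sequence; hence it suffices to verify both identities on generators $c\otimes\Sigma^{-1}\underline{x}$ with $c\in\overline{\mathcal{C}_{ns}}$ and $\underline{x}$ a basis element.

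First I would treat the coface identity. For a basis element $\underline{x}\in N_*(\Delta^{n-1})$ and $0\leq j\leq n-1$, the element $d^j\underline{x}$ has the same number of entries as $\underline{x}$, hence cannot be the top simplex $\underline{0\cdots n}$; therefore $\beta_{(k)}^n(c\otimes\Sigma^{-1}d^j\underline{x})$ is computed by the non-top branch of Construction \ref{consbeta}, namely $d^i\beta_{(k)}^{n-1}(c\otimes\Sigma^{-1}s^{i-1}d^j\underline{x})$, where $i$ is the length of the maximal initial run $0,1,\ldots,i-1$ appearing in $d^j\underline{x}$. The desired equality $\beta_{(k)}^nd^j=d^j\beta_{(k)}^{n-1}$ then follows from the cosimplicial relations among $d^i$, $s^{i-1}$ and $d^j$ combined with the induction hypothesis in dimension $n-1$ (for both cofaces and codegeneracies); this is a finite case analysis on the relative position of $j$ and $i$, involving no substantial difficulty.

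The codegeneracy identity splits in the same way. When $\underline{x}$ is not the top simplex, applying $s^j$ to the non-top branch of the definition and invoking the cosimplicial relations together with the induction hypothesis reduces $s^j\beta_{(k)}^n(c\otimes\Sigma^{-1}\underline{x})$ to $\beta_{(k)}^{n-1}(c\otimes\Sigma^{-1}s^j\underline{x})$. The genuinely new case is the top simplex $\underline{x}=\underline{0\cdots n}$: since $s^j\underline{0\cdots n}=0$ in the normalized complex (its image is degenerate), the right-hand side $\beta_{(k)}^{n-1}(c\otimes\Sigma^{-1}s^j\underline{0\cdots n})$ vanishes, so I must show that $s^j$ annihilates $\beta_{(k)}^n(c\otimes\Sigma^{-1}\underline{0\cdots n})$. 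I would apply $s^j$ to the defining formula term by term, using that $s^j$ commutes with $d_{\textbf{Sur}_\mathbb{K}}$, that $s^j$ commutes with the free-operad composition so that the induction hypothesis applies to each lower factor $\partial_{(p)}^n\beta_{(q)}^n$ (with $p,q<k$), and the commutation relations $s^jH_n^0=H_{n-1}^0s^j$ and $s^j\Phi_n^0=\Phi_{n-1}^0s^j$ between the codegeneracies and the operators $H_n^0=\mathrm{id}\,\widetilde{\otimes}\,h_n^0$, $\Phi_n^0=\mathrm{id}\,\widetilde{\otimes}\,\varphi_n^0$, extended to the free operad through Proposition \ref{idmono} exactly as $H_n^0d^i=d^iH_{n-1}^0$ was used in the proof of Theorem \ref{rec}. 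The vanishing then propagates from $s^j\underline{0\cdots n}=0$ through every summand.

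The main obstacle is precisely this last verification, and within it the case $j=0$. The homotopy $h_n^0$ is built around the vertex $0$, and the codegeneracy $s^0$ does not commute with it as cleanly as $s^j$ for $j\geq 1$; establishing the correct relation between $s^0$ and $h_n^0$ (and the associated projection $\varphi_n^0$), with careful attention to Koszul signs and to the fact that $H_n^0$ is not equivariant, is the delicate technical point. Once these homotopy-level compatibilities are secured at the level of the free operad, the term-by-term annihilation on the top simplex and the collapse in the coface step are routine, and the coface and codegeneracy identities close the induction together.
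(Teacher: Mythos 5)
Your overall strategy coincides with the paper's: the same triple induction on $k$, $n$ and the filtration degree $l$ of the surjection factor, the reduction to generators $c\otimes\Sigma^{-1}\underline{x}$ with $c\in\overline{\mathcal{C}_{ns}}$ by equivariance, the case split between the top simplex and the rest, and the term-by-term application of $s^j$ to the defining formula using $s^jH_n^0=H_{n-1}^0s^j$ and the induction hypothesis on $\partial^n_{(p)}\beta^n_{(q)}$. The coface step and the non-top codegeneracy step are handled exactly as in the paper.

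There is, however, a gap in your treatment of the one case you correctly single out as the crux, namely $s^j$ applied to $\beta^n_{(k)}(c\otimes\Sigma^{-1}\underline{0\cdots n})$ with $j=0$. You locate the difficulty in a commutation relation between $s^0$ and $h_n^0$; in fact $s^jh_n^0=h_{n-1}^0s^j$ holds for all $j$, including $j=0$ (a direct check on basis elements), so that is not where the problem lies. The problem is your claim that ``the vanishing propagates from $s^j\underline{0\cdots n}=0$ through every summand'': the first summand of the defining formula, $(-1)^{|c|}H_n^0d^0\beta^{n-1}_{(k)}(c\otimes\Sigma^{-1}\underline{0\cdots(n-1)})$, does not contain $\underline{0\cdots n}$ at all, and for $j=0$ one has $s^0d^0=\mathrm{id}$, so after commuting $s^0$ past $H_n^0$ you are left with $H_{n-1}^0\beta^{n-1}_{(k)}(c\otimes\Sigma^{-1}\underline{0\cdots(n-1)})$, which does not vanish for any reason you have supplied. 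The missing idea is structural rather than a sign computation: by Construction \ref{consbeta} the element $\beta^{n-1}_{(k)}(c\otimes\Sigma^{-1}\underline{0\cdots(n-1)})$ lies in the image of $H_{n-1}^0$, and $H_{n-1}^0H_{n-1}^0=0$, whence the term dies. (For $j>0$ one instead uses $s^jd^0=d^0s^{j-1}$ and the induction hypothesis in dimension $n-1$, since $s^{j-1}$ kills the top simplex of $\Delta^{n-1}$.) Without this observation the induction does not close.
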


\begin{proof}
    Since the coface maps and codegeneracy maps preserve the action of the symmetric groups on $\overline{\mathcal{C}}\underset{\text{\normalfont H}}{\otimes}\overline{\textbf{Sur}_\mathbb{K}}\otimes\Sigma^{-1} N_*(\Delta^n)$, it is sufficient to prove it on $F_l\overline{\mathcal{C}_{ns}}\otimes\Sigma^{-1} N_*(\Delta^n)$ for every $l\geq 0$. Let $c\in\overline{\mathcal{C}_{ns}}$ and let $\underline{x}\in N_*(\Delta^n)$ be a basis element. We prove the formulas by induction on $n,k,l\geq 0$. The assertion is obviously true for $n=0$, and for $n\geq 1$ and $k=0$. We now suppose that $n,k\geq 1$.\\
    
    We prove the first line of the lemma. Let $0\leq j\leq n-1$. If $\underline{x}=\underline{0\cdots (n-1)}$, then we indeed have $d^j\beta_{(k)}^{n-1}(c\otimes\Sigma^{-1}\underline{x})=\beta_{(k)}^nd^j(c\otimes\Sigma^{-1}\underline{x})$ by definition of $\beta_{(k)}^n$. Suppose now that $\underline{x}\neq\underline{0\cdots (n-1)}$. Then there exists $0\leq i\leq n-1$ such that $\underline{x}=\underline{0\cdots (i-1)a_i\cdots a_r}$ with $i<a_i<\cdots <a_r\leq n-1$.\\ If $j=i$, then
    \begin{center}
        $\begin{array}{lll}
            \beta_{(k)}^nd^j(c\otimes\Sigma^{-1}\underline{x}) & = &  \beta_{(k)}^n(c\otimes \Sigma^{-1}d^i\underline{x})\\
             & = & d^i\beta_{(k)}^n(c\otimes\Sigma^{-1} s^{i-1}d^i\underline{x})\\
             & = & d^j\beta_{(k)}^n(c\otimes\Sigma^{-1}\underline{x}),
        \end{array}$
    \end{center}

    \noindent since $s^{i-1}d^i=id$. If $j>i$, then
    \begin{center}
        $\begin{array}{lll}
        \beta_{(k)}^nd^j(c\otimes\Sigma^{-1} \underline{x}) & = & \beta_{(k)}^n(c\otimes\Sigma^{-1} d^j\underline{x})\\
        & = & d^i\beta_{(k)}^{n-1}(c\otimes\Sigma^{-1} s^{i-1}d^j\underline{x})\\
        & = & d^i\beta_{(k)}^{n-1}(c\otimes\Sigma^{-1} d^{j-1}s^{i-1}\underline{x}),
        \end{array}$
    \end{center}

    \noindent since $s^{i-1}d^j=d^{j-1}s^{i-1}$. By induction hypothesis on $n-1$, we deduce
    \begin{center}
        $\begin{array}{lll}
        \beta_{(k)}^nd^j(c\otimes\Sigma^{-1} \underline{x}) & = &  d^id^{j-1}\beta_{(k)}^{n-2}(c\otimes\Sigma^{-1} s^{i-1}\underline{x})\\
        & = & d^jd^i\beta_{(k)}^{n-2}(c\otimes \Sigma^{-1}s^{i-1}\underline{x})\\
        & = & d^j\beta_{(k)}^{n-1}(c\otimes\Sigma^{-1}\underline{x}).
        \end{array}$
    \end{center}

    \noindent If $j<i$, then
    \begin{center}
        $\begin{array}{lll}
        \beta_{(k)}^nd^j(c\otimes\Sigma^{-1} \underline{x}) & = &  d^j\beta_{(k)}^{n-1}(c\otimes\Sigma^{-1} s^{j-1}d^j\underline{x})\\
        & = & d^j\beta_{(k)}^{n-1}(c\otimes\Sigma^{-1}\underline{x}),
        \end{array}$
    \end{center}

    \noindent since $s^{j-1}d^j=id$. We thus have proved that $d^j\beta_{(k)}^{n-1}=\beta_{(k)}^nd^j$.\\
    
    We now prove the second line of the lemma. Let $\underline{x}\in N_*(\Delta^n)$. We first consider $\underline{x}=\underline{0\cdots n}$. Then, by definition of $\beta_{(k)}^n$,
    \begin{multline*}
    s^j\beta_{(k)}^n(c\otimes\Sigma^{-1}\underline{0\cdots n})=(-1)^{|c|}s^jH_n^{0}d^0\beta_{(k)}^{n-1}(c\otimes\Sigma^{-1}\underline{0\cdots (n-1)})\\-s^jH_n^0\beta_{(k)}^n(d_{\textbf{Sur}_\mathbb{K}}(c)\otimes\Sigma^{-1}\underline{0\cdots n})-\sum_{\substack{p+q=k\\ p,q\neq 0}}s^jH_n^{0}\partial_{(p)}^n\beta_{(q)}^n(c\otimes\Sigma^{-1}\underline{0\cdots n}).
    \end{multline*}

    \noindent Since $s^jH_n^0=H_{n-1}^0s^j$, we have
    \begin{multline*}
    s^j\beta_{(k)}^n(c\otimes\Sigma^{-1}\underline{0\cdots n})=(-1)^{|c|}H_{n-1}^{0}s^jd^0\beta_{(k)}^{n-1}(c\otimes\Sigma^{-1}\underline{0\cdots (n-1)})\\-H_{n-1}^0s^j\beta_{(k)}^n(d_{\textbf{Sur}_\mathbb{K}}(c)\otimes\Sigma^{-1}\underline{0\cdots n})-\sum_{\substack{p+q=k\\ p,q\neq 0}}H_{n-1}^{0}s^j\partial_{(p)}^n\beta_{(q)}^n(c\otimes\Sigma^{-1}\underline{0\cdots n}).
    \end{multline*}

    \noindent By induction hypothesis, we have that $s^j$ commutes with $\partial_{(p)}^n\beta_{(q)}^n$ for every $p,q\neq 0$ such that $p+q=n$. Since $s^j(\underline{0\cdots n})=0$, we have that the sum in the above identity is $0$. Analogously, by induction hypothesis on $l\geq 0$, we have that $s^j\beta_{(k)}^n(d_{\textbf{Sur}_\mathbb{K}}(c)\otimes\Sigma^{-1}\underline{0\cdots n})=0$. If $j>0$, we have $s^jd^0=d^0 s^{j-1}$ so that the first term is also $0$. If $j=0$, then $s^jd^0=id$. We thus have
    $$s^j\beta_{(k)}^n(c\otimes\Sigma^{-1}\underline{0\cdots n})=(-1)^{|c|}H_{n-1}^0\beta_{(k)}^{n-1}(c\otimes\Sigma^{-1}\underline{0\cdots (n-1)}).$$

    \noindent By definition of $\beta_{(k)}^{n-1}$, the term $\beta_{(k)}^{n-1}(c\otimes\Sigma^{-1}\underline{0\cdots (n-1)})$ is in the image of $H_{n-1}^0$. Since $H_{n-1}^0H_{n-1}^0=0$, we obtain $s^j\beta_{(k)}^n(c\otimes\Sigma^{-1}\underline{0\cdots n})=0$. We thus have proved that $s^j\beta_{(k)}^n(c\otimes\Sigma^{-1}\underline{0\cdots n})=\beta_{(k)}^ns^j(c\otimes\Sigma^{-1}\underline{0\cdots n})=0$. Suppose now that $\underline{x}\neq\underline{0\cdots n}$. Then there exists $0\leq i\leq n$ such that $\underline{x}=\underline{0\dots (i-1)a_i\cdots a_r}$ with $i<a_i<\cdots <a_r\leq n$. We thus have
    $$s^j\beta_{(k)}^n(c\otimes\Sigma^{-1}\underline{x})=s^jd^i\beta_{(k)}^{n-1}(c\otimes \Sigma^{-1}s^{i-1}\underline{x}).$$

    \noindent If $i<j$, then $s^jd^i=d^is^{j-1}$ so that
    $$s^j\beta_{(k)}^n(c\otimes\Sigma^{-1}\underline{x})=d^is^{j-1}\beta_{(k)}^{n-1}(c\otimes\Sigma^{-1} s^{i-1}\underline{x}).$$

    \noindent By induction hypothesis on $n-1$, we obtain
    \begin{center}
        $\begin{array}{lll}
        s^j\beta_{(k)}^n(c\otimes\Sigma^{-1}\underline{x}) & = & d^i\beta_{(k)}^{n-2}(c\otimes\Sigma^{-1} s^{j-1}s^{i-1}\underline{x})\\
        & = & d^i\beta_{(k)}^{n-2}(c\otimes \Sigma^{-1}s^{i-1}s^j\underline{x})\\
        & = & \beta_{(k)}^ns^j(c\otimes\Sigma^{-1}\underline{x}).
        \end{array}$
    \end{center}

    \noindent If $i=j,j+1$, then
    $$s^j\beta_{(k)}^n(c\otimes\Sigma^{-1}\underline{x})=\beta_{(k)}^{n-1}(c\otimes\Sigma^{-1} s^{i-1}\underline{x}).$$

    \noindent Since we have $s^{i-1}\underline{x}=s^i\underline{x}$, in any case, this gives
    $$s^j\beta_{(k)}^n(c\otimes\Sigma^{-1}\underline{x})=\beta_{(k)}^{n-1}s^j(c\otimes\Sigma^{-1}\underline{x}).$$

    \noindent If $i>j+1$, then, by induction hypothesis on $n-1$,
    \begin{center}
        $\begin{array}{lll}
        s^j\beta_{(k)}^n(c\otimes\Sigma^{-1}\underline{x}) & = & d^{i-1}\beta_{(k)}^{n-2}(c\otimes \Sigma^{-1}s^{j}s^{i-1}\underline{x})\\
        & = & d^{i-1}\beta_{(k)}^{n-2}(c\otimes \Sigma^{-1}s^{i-2}s^j\underline{x})\\
        & = & 0\\
        & =& \beta_{(k)}^ns^j(c\otimes\Sigma^{-1}\underline{x}),
        \end{array}$
    \end{center}

    \noindent since $s^j\underline{x}=0$. At the end, we have proved that $s^j\beta_{(k)}^n=\beta_{(k)}^{n-1}s^j$ and thus the lemma.
\end{proof}

\begin{remarque}
    In particular, this lemma implies that
    $$\beta_{(k)}^n(c\otimes\Sigma^{-1}\underline{0\cdots n})=-\sum_{\substack{p+q=k\\ p\neq 0}}H_n^{0}\partial_{(p)}^n\beta_{(q)}^n(c\otimes\Sigma^{-1}\underline{0\cdots n})$$

    \noindent for every $n,k\geq 0$ and $c\in\overline{\mathcal{C}_{ns}}$. Indeed, we have
    \begin{center}
        $\begin{array}{lll}
            \partial_{(k)}^n\beta_{(0)}^n(c\otimes\Sigma^{-1}\underline{0\cdots n}) & = & \displaystyle-(-1)^{|c|}\sum_{i=0}^n(-1)^i\beta_{(k)}^n(c\otimes\Sigma^{-1}\underline{0\cdots\widehat{i}\cdots n)}\\
            & & +\beta_{(k)}^n(d_{{\normalfont \textbf{Sur}}_\mathbb{K}}(c)\otimes\Sigma^{-1}\underline{0\cdots n}) \\
            & = & \displaystyle-(-1)^{|c|}\sum_{i=0}^n(-1)^id^i\beta_{(k)}^{n-1}(c\otimes\Sigma^{-1}\underline{0\cdots (n-1)})\\
            & & +\beta_{(k)}^n(d_{{\normalfont \textbf{Sur}}_\mathbb{K}}(c)\otimes\Sigma^{-1}\underline{0\cdots n}).
        \end{array}$
    \end{center}

    \noindent By an immediate computation, if $i\neq 0$, then $ d^iH_{n-1}^0=H_n^0d^i$. Since $\beta_{(k)}^{n-1}(c\otimes\Sigma^{-1}\underline{0\cdots (n-1)})$ is in the image of $H_{n-1}^0$ by construction, and that $H_{n}^0H_{n}^0=0$, we obtain
    $$H_n^0\partial_{(k)}^n\beta_{(0)}^n(c\otimes\Sigma^{-1}\underline{0\cdots n})=-(-1)^{|c|}H_n^0d^0\beta_{(k)}^{n-1}(c\otimes\Sigma^{-1}\underline{0\cdots (n-1)})+H_n^0\beta_{(k)}^n(d_{{\normalfont \textbf{Sur}}_\mathbb{K}}(c)\otimes\Sigma^{-1}\underline{0\cdots n})$$

    \noindent which proves the above formula.
\end{remarque}

\begin{thm}\label{repcossym}
    Let $n\geq 0$. The morphism $\partial^n:\mathcal{F}(\overline{\mathcal{C}}\underset{\text{\normalfont H}}{\otimes}\overline{{\normalfont\textbf{Sur}}_\mathbb{K}}\otimes\Sigma^{-1} N_*(\Delta^n))\longrightarrow \mathcal{F}(\overline{\mathcal{C}}\underset{\text{\normalfont H}}{\otimes}\overline{{\normalfont\textbf{Sur}}_\mathbb{K}}\otimes\Sigma^{-1} N_*(\Delta^n))$ is such that $d_{\mathcal{C}}+\partial^n$ is a derivation of operads. Moreover, the sequence $\partial^\bullet:\mathcal{F}(\overline{\mathcal{C}}\underset{\text{\normalfont H}}{\otimes}\overline{{\normalfont\textbf{Sur}}_\mathbb{K}}\otimes\Sigma^{-1} N_*(\Delta^\bullet))\longrightarrow \mathcal{F}(\overline{\mathcal{C}}\underset{\text{\normalfont H}}{\otimes}\overline{{\normalfont\textbf{Sur}}_\mathbb{K}}\otimes\Sigma^{-1} N_*(\Delta^\bullet))$ is a morphism of cosimplicial sets.
\end{thm}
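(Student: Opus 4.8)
The statement has two parts: that $d_{\mathcal{C}}+\partial^n$ squares to zero, so that it genuinely equips $\mathcal{F}(\overline{\mathcal{C}}\underset{\text{\normalfont H}}{\otimes}\overline{\textbf{Sur}}_\mathbb{K}\otimes\Sigma^{-1}N_*(\Delta^n))$ with a dg-operad structure, and that the $\partial^\bullet$ assemble into a morphism of cosimplicial objects. Since $\partial^n$ is obtained from $\beta^n$ by the Leibniz rule, the operator $d_{\mathcal{C}}+\partial^n$ is automatically an operadic derivation of degree $-1$; the graded commutator of two such derivations is again a derivation, so $(d_{\mathcal{C}}+\partial^n)^2$ is a derivation and vanishes as soon as it vanishes on the generating sequence $\overline{\mathcal{C}}\underset{\text{\normalfont H}}{\otimes}\overline{\textbf{Sur}}_\mathbb{K}\otimes\Sigma^{-1}N_*(\Delta^n)$. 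Evaluating on a generator $g$ and using $d_{\mathcal{C}}^2=0$ together with $\partial^n(g)=\beta^n(g)$ gives $(d_{\mathcal{C}}+\partial^n)^2(g)=d_{\mathcal{C}}(\beta^n(g))+\beta^n(d_{\mathcal{C}}(g))+\partial^n(\beta^n(g))$, which is exactly the value on $g$ of the Maurer–Cartan expression $d_{\mathcal{C}}(\beta^n)+\partial^n\beta^n$. Thus the first part reduces to the identity $d_{\mathcal{C}}(\beta^n)+\partial^n\beta^n=0$. As $\beta^n$ and $\partial^n$ are $\Sigma_*$-equivariant (the construction extends unambiguously by symmetry from representatives, the $\Sigma_*$-action on $\overline{\mathcal{C}}\underset{\text{\normalfont H}}{\otimes}\overline{\textbf{Sur}}_\mathbb{K}$ being free), it suffices to verify this identity on the representatives $\overline{\mathcal{C}_{ns}}\otimes\Sigma^{-1}N_*(\Delta^n)$.

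The plan for the Maurer–Cartan identity is a double induction on the simplicial dimension $n$ and on the number of vertices, mirroring Theorem \ref{rec} and Lemma \ref{découpe}. The base case $n=0$ holds because $(\mathcal{F}(\overline{\mathcal{C}}\underset{\text{\normalfont H}}{\otimes}\overline{\textbf{Sur}}_\mathbb{K}\otimes\Sigma^{-1}N_*(\Delta^0)),d_{\mathcal{C}}+\partial^0)$ is by construction the cobar construction $B^c(\mathcal{C}\underset{\text{\normalfont H}}{\otimes}\textbf{Sur}_\mathbb{K})$, whose differential squares to zero; the vertex count zero reduces to the internal differentials of $\overline{\mathcal{C}}\underset{\text{\normalfont H}}{\otimes}\overline{\textbf{Sur}}_\mathbb{K}\otimes\Sigma^{-1}N_*(\Delta^n)$ squaring to zero. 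For the inductive step I would project onto trees with a fixed number $k+1$ of vertices and distinguish according to the simplex $\underline{x}$. On a face $\underline{x}\neq\underline{0\cdots n}$, the recursion of Construction \ref{consbeta} writes $\beta^n_{(k)}$ through a coface $d^i$ applied to $\beta^{n-1}_{(k)}$, and Lemma \ref{comsim} lets me commute $d^i$ past the operators involved and conclude by the inductive hypothesis at level $n-1$. The essential case is $\underline{x}=\underline{0\cdots n}$, where $\beta^n_{(k)}(c\otimes\Sigma^{-1}\underline{0\cdots n})$ is defined as $-H_n^0$ applied to $\sum_{p+q=k,\,p\neq0}\partial_{(p)}^n\beta_{(q)}^n$, up to the lower-dimensional and $d_{\textbf{Sur}_\mathbb{K}}$ correction terms; applying $\partial_{(0)}^n$ and invoking the contraction relation provided by $H_n^0$ (the lift of Proposition \ref{homonstar}) forces the full sum $\sum_{p+q=k}\partial_{(p)}^n\beta_{(q)}^n$ together with the internal differential to cancel.

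The main obstacle is precisely this $\underline{0\cdots n}$ case. After applying $\partial_{(0)}^n$ to the recursive definition, one must show that the terms $\Phi_n^0(\cdots)$ and $\partial_{(0)}^nH_n^0(\cdots)$ produced by the homotopy relation either vanish or reorganize into the inductively known lower-weight identities. This rests on two points used in Theorem \ref{rec}: that the trees occurring in $\beta_{(q)}^n(c\otimes\Sigma^{-1}\underline{0\cdots n})$ already lie in the image of $H_n^0$, so that $H_n^0H_n^0=0$ kills the spurious contributions, and that $\Phi_n^0$ annihilates $\Sigma^{-1}\underline{0\cdots n}$ for $n\geq 1$, the degeneracy-vanishing phenomenon following Proposition \ref{homonstar}. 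Carefully tracking the Koszul signs through $H_n^0$, $\Phi_n^0$ and the Leibniz rule is the delicate computational point, but no new idea beyond the non-symmetric computation is required.

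For the second assertion, the cosimplicial structure maps on the operads $\mathcal{F}(\overline{\mathcal{C}}\underset{\text{\normalfont H}}{\otimes}\overline{\textbf{Sur}}_\mathbb{K}\otimes\Sigma^{-1}N_*(\Delta^\bullet))$ are the operad morphisms $\mathcal{F}(d^i)$ and $\mathcal{F}(s^j)$ induced by functoriality of the free operad from the cofaces and codegeneracies of $N_*(\Delta^\bullet)$. To prove that $\partial^\bullet$ is cosimplicial, I would observe that for an operad morphism $\phi$ and derivations $\partial,\partial'$, both $\phi\circ\partial$ and $\partial'\circ\phi$ are $\phi$-derivations, hence are determined by their restriction to generators. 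On generators one has $\mathcal{F}(d^i)\circ\partial^{n-1}=\mathcal{F}(d^i)\circ\beta^{n-1}$ and $\partial^n\circ\mathcal{F}(d^i)=\beta^n\circ d^i$, and these coincide by Lemma \ref{comsim}, which is exactly the compatibility $d^i\beta^{n-1}_{(k)}=\beta^n_{(k)}d^i$; the same argument with $s^j$ treats the codegeneracies. Hence $\partial^\bullet$ commutes with all cofaces and codegeneracies, which is the claim.
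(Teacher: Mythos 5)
Your proposal is correct and follows essentially the same route as the paper: reduce to the Maurer--Cartan identity $d_{\mathcal{C}}(\beta^n)+\partial^n\beta^n=0$ on the generators (checked on the representatives $\overline{\mathcal{C}_{ns}}\otimes\Sigma^{-1}N_*(\Delta^n)$ by equivariance), run the induction on $n$ and the vertex count using the cofaces and Lemma \ref{comsim} for $\underline{x}\neq\underline{0\cdots n}$ and the contraction relation $\partial_{(0)}^nH_n^0+H_n^0\partial_{(0)}^n=id-\Phi_n^0$ together with $\Phi_n^0(\Sigma^{-1}\underline{0\cdots n})=0$ for the top simplex, and deduce the cosimplicial compatibility from Lemma \ref{comsim} since derivations are determined on generators. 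The only detail you gloss over is that the paper's induction also runs over the filtration level $l$ of $\textbf{Sur}_\mathbb{K}^{id}$ to absorb the $d_{\textbf{Sur}_\mathbb{K}}$ correction term, but you do flag that term, so this is a matter of bookkeeping rather than a gap.
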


\begin{proof}
By Lemma \ref{comsim}, the morphisms $\partial^\bullet$ preserve the cosimplicial structure of $\mathcal{F}(\overline{\mathcal{C}}\underset{\text{\normalfont H}}{\otimes}\overline{{\normalfont\textbf{Sur}}_\mathbb{K}}\otimes\Sigma^{-1} N_*(\Delta^\bullet))$. We need to prove that $d_\mathcal{C}+\partial^n$ is a derivation of operads. This is equivalent to prove that
    $$d_\mathcal{C}(\beta^n)+\partial^n\beta^n=0$$

    \noindent for every $n\geq 0$. By an immediate induction, we have $d_\mathcal{C}(\beta^n)=0$. It remains to prove that $\partial^n\beta^n=0$. This is equivalent to prove that
    $$\sum_{p+q=k}\partial^n_{(p)}\beta^n_{(q)}=0$$

    \noindent for every $k\geq 0$. We prove it on $\overline{\mathcal{C}_{ns}}\otimes\Sigma^{-1}N_*(\Delta^n)$ by induction on $n,k\geq 0$, since all the maps are symmetric by construction. It is true for $n=0$, and for $n\geq 1$ and $k=0$. We now suppose that $n\geq 1$ and $k\geq 1$. Let $c\in \overline{\mathcal{C}_{ns}}$ and $\underline{x}\in N_*(\Delta^n)$ be a basis element. If $\underline{x}\neq\underline{0\cdots n}$, then there exists $0\leq i\leq n-1$ and $\underline{y}\in N_*(\Delta^{n-1})$ such that $d^i\underline{y}=\underline{x}$. Since the morphisms $d_\mathcal{C}+\partial^\bullet$ are compatible with the cosimplicial structure of $\mathcal{F}(\overline{\mathcal{C}}\underset{\text{\normalfont H}}{\otimes}\overline{{\normalfont\textbf{Sur}}_\mathbb{K}}\otimes\Sigma^{-1} N_*(\Delta^n))$ by Lemma \ref{comsim}, we have
    $$\sum_{p+q=k}\partial_{(p)}^n\beta_{(q)}^n(c\otimes\Sigma^{-1}\underline{x})=\sum_{p+q=k}d^i\partial_{(p)}^{n-1}\beta_{(q)}^{n-1}(c\otimes\Sigma^{-1}\underline{y})$$
    
    \noindent which is $0$ by induction hypothesis on $n-1$. Suppose now that $\underline{x}=\underline{0\cdots n}$. By using that
$$\partial_{(0)}^n H_n^{0}+H_n^{0}\partial_{(0)}^n=id-\Phi_n^{0},$$

\noindent we have
\begin{multline*}
\partial_{(0)}^n\beta_{(k)}^n(c\otimes\Sigma^{-1}\underline{0\cdots n})=\sum_{\substack{p+q=k\\ p\neq 0}}H_n^{0}\partial_{(0)}^n\partial_{(p)}^n\beta_{(q)}^n(c\otimes\Sigma^{-1}\underline{0\cdots n})\\-\sum_{\substack{p+q=k\\ p\neq 0}}\partial_{(p)}^n\beta_{(q)}^n(c\otimes\Sigma^{-1}\underline{0\cdots n})+\sum_{\substack{p+q=k\\ p\neq 0}}\Phi_n^0\partial_{(p)}^n\beta_{(q)}^n(c\otimes\Sigma^{-1}\underline{0\cdots n}).
\end{multline*}

\noindent By Lemma \ref{comsim}, the morphism $\Phi_n^0$ commutes with the $\partial^n_{(p)}$'s. We thus have that the last sum is $0$, since $\phi_n^0(\underline{0\cdots n})=0$ because $n\geq 1$. Now, we claim that
$$\sum_{\substack{p+q=k\\ p\neq 0}}\partial_{(0)}^n\partial_{(p)}^n\beta_{(q)}^n(c\otimes\Sigma^{-1}\underline{0\cdots n})=0.$$

\noindent We write
$$\sum_{\substack{p+q=k\\ p\neq 0}}\partial_{(0)}^n\partial_{(p)}^n\beta_{(q)}^n(c\otimes\Sigma^{-1}\underline{0\cdots n})=\partial_{(0)}^n\partial_{(k)}^n\beta_{(0)}^n(c\otimes\Sigma^{-1}\underline{0\cdots n})+\sum_{\substack{p+q=k\\ p,q\neq 0}}\partial_{(0)}^n\partial_{(p)}^n\beta_{(q)}^n(c\otimes\Sigma^{-1}\underline{0\cdots n}).$$

\noindent We first deal with the sum at the right hand-side. Since $p<k$, we can use our induction hypothesis on $p$ to obtain
$$\sum_{\substack{p+q=k\\ p,q\neq 0}}\partial_{(0)}^n\partial_{(p)}^n\beta_{(q)}^n(c\otimes\Sigma^{-1}\underline{0\cdots n})=-\sum_{\substack{p+q=k\\ p,q\neq 0}}\sum_{\substack{s+t=p\\ s\neq 0}}\partial_{(s)}^n\partial_{(t)}^n\beta_{(q)}^n(c\otimes\Sigma^{-1}\underline{0\cdots n}).$$

\noindent By a variable substitution, this gives
$$\sum_{\substack{p+q=k\\ p,q\neq 0}}\partial_{(0)}^n\partial_{(p)}^n\beta_{(q)}^n(c\otimes\Sigma^{-1}\underline{0\cdots n})=-\sum_{\substack{s+t=k\\ s,t\neq 0}}\partial_{(s)}^n\left(\sum_{\substack{p+q=t\\ q\neq 0}}\partial_{(p)}^n\beta_{(q)}^n(c\otimes\Sigma^{-1}\underline{0\cdots n})\right).$$

\noindent Now, since applying $\beta_{(0)}^n$ on $c\otimes\Sigma^{-1}\underline{0\cdots n}$ allows us to apply our induction hypothesis, we have
$$\partial_{(0)}^n\partial_{(k)}^n\beta_{(0)}^n(c\otimes\Sigma^{-1}\underline{0\cdots n})=-\sum_{\substack{s+t=k\\ s,t\neq 0}}\partial_{(s)}^{n}\partial_{(t)}^{n}\beta_{(0)}^n(c\otimes\Sigma^{-1}\underline{0\cdots n}).$$

\noindent At the end, we obtain that
$$\sum_{\substack{p+q=k\\ p,q\neq 0}}\partial_{(0)}^n\partial_{(p)}^n\beta_{(q)}^n(c\otimes\Sigma^{-1}\underline{0\cdots n})=-\sum_{\substack{s+t=k\\ s,t\neq 0}}\partial_{(s)}^n\left(\sum_{p+q=t}\partial_{(p)}^n\beta_{(q)}^n(c\otimes\Sigma^{-1}\underline{0\cdots n})\right),$$

\noindent and this last sum is $0$ by induction hypothesis on $t<k$. We thus have proved that
$$\partial_{(0)}^n\beta_{(k)}^n(c\otimes\Sigma^{-1}\underline{0\cdots n})=-\sum_{\substack{p+q=k\\ p\neq 0}}\partial_{(p)}^n\beta_{(q)}^n(c\otimes\Sigma^{-1}\underline{0\cdots n}),$$

\noindent which is equivalent to
$$\sum_{p+q=k}\partial_{(p)}^n\beta_{(q)}^n(c\otimes\Sigma^{-1}\underline{0\cdots n})=0.$$

The theorem is proved.
\end{proof}

\begin{thm}
    Let $\mathcal{C}$ be a symmetric cooperad. Then
    $$B^c(\mathcal{C}\otimes{\normalfont\textbf{Sur}}_\mathbb{K})\otimes\Delta^\bullet:=(\mathcal{F}(\overline{\mathcal{C}}\underset{\text{\normalfont H}}{\otimes}\overline{\normalfont\textbf{Sur}}_\mathbb{K}\otimes \Sigma^{-1}N_*(\Delta^\bullet)),\partial^\bullet)$$

    \noindent where $\partial^\bullet$ is the twisting derivation constructed in Theorem \ref{repcossym} is a cosimplicial frame associated to $B^c(\mathcal{C})$.
\end{thm}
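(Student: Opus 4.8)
The plan is to verify the two defining conditions of a cosimplicial frame recalled just before Theorem \ref{reperecos}, following closely the strategy of the non-symmetric Theorem \ref{reperecos}, with the surjection cooperad supplying the cofibrancy that is needed in the symmetric setting. By Theorem \ref{repcossym}, the pair $(\mathcal{F}(\overline{\mathcal{C}}\underset{\text{\normalfont H}}{\otimes}\overline{\normalfont\textbf{Sur}}_\mathbb{K}\otimes\Sigma^{-1}N_*(\Delta^\bullet)),\partial^\bullet)$ is already a well-defined cosimplicial object in $\Sigma\mathcal{O}p^0$, and by construction of $\beta^0$ its value at $\Delta^0$ is $B^c(\mathcal{C}\underset{\text{\normalfont H}}{\otimes}\textbf{Sur}_\mathbb{K})$. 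Since the map $B^c(\mathcal{C})\overset{\sim}{\longrightarrow}B^c(\mathcal{C}\underset{\text{\normalfont H}}{\otimes}\textbf{Sur}_\mathbb{K})$ is a $\Sigma_\ast$-cofibrant replacement, it suffices to produce a frame for $B^c(\mathcal{C}\underset{\text{\normalfont H}}{\otimes}\textbf{Sur}_\mathbb{K})$.

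First I would treat the cofibration condition. The key input is that the action of $\Sigma_n$ on $\overline{\mathcal{C}}(n)\otimes\overline{\normalfont\textbf{Sur}}_\mathbb{K}(n)$ is free for every $n\geq 1$, so that the symmetric sequence $\overline{\mathcal{C}}\underset{\text{\normalfont H}}{\otimes}\overline{\normalfont\textbf{Sur}}_\mathbb{K}$ is cofibrant in $\Sigma\text{\normalfont Seq}_\mathbb{K}^0$. As the maps $N_*(\partial\Delta^n)\longrightarrow N_*(\Delta^n)$ are cofibrations of dg $\mathbb{K}$-modules, a pushout-product argument as in \cite[Proposition 1.4.13]{fressecyl}, combined with the freeness of the action, shows that $\overline{\mathcal{C}}\underset{\text{\normalfont H}}{\otimes}\overline{\normalfont\textbf{Sur}}_\mathbb{K}\otimes\Sigma^{-1}N_*(\partial\Delta^n)\longrightarrow\overline{\mathcal{C}}\underset{\text{\normalfont H}}{\otimes}\overline{\normalfont\textbf{Sur}}_\mathbb{K}\otimes\Sigma^{-1}N_*(\Delta^n)$ is a cofibration in $\Sigma\text{\normalfont Seq}_\mathbb{K}^0$. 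Since $\mathcal{F}$ is a left Quillen functor it preserves cofibrations, and the twisting derivation does not alter the underlying generating map, so $B^c(\mathcal{C}\underset{\text{\normalfont H}}{\otimes}\textbf{Sur}_\mathbb{K})\otimes\partial\Delta^n\longrightarrow B^c(\mathcal{C}\underset{\text{\normalfont H}}{\otimes}\textbf{Sur}_\mathbb{K})\otimes\Delta^n$ is a cofibration of operads.

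Next I would treat the weak-equivalence condition, namely that $B^c(\mathcal{C}\underset{\text{\normalfont H}}{\otimes}\textbf{Sur}_\mathbb{K})\otimes\Delta^n\longrightarrow B^c(\mathcal{C}\underset{\text{\normalfont H}}{\otimes}\textbf{Sur}_\mathbb{K})$ is an arity-wise quasi-isomorphism. Using the tree-monomial decomposition of Proposition \ref{idmono}, I equip the source with the decreasing filtration by number of vertices, $F_p=\bigoplus_{k\geq 1}\bigoplus_{\underline{T}\in\mathcal{TM}_k(m),\,m\geq p+1}\underline{T}\otimes\mathcal{F}_{(\underline{T})}(\overline{\mathcal{C}}\underset{\text{\normalfont H}}{\otimes}\overline{\normalfont\textbf{Sur}}_\mathbb{K}\otimes\Sigma^{-1}N_*(\Delta^n))$. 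The vertex-preserving part of $d_\mathcal{C}+\partial^n$ is exactly the tensor-wise internal differential $d_\mathcal{C}+d_{\normalfont\textbf{Sur}_\mathbb{K}}+d_{\Sigma^{-1}N_*(\Delta^n)}$ coming from $\beta^n_{(0)}$, while the components $\partial^n_{(k)}$ with $k\geq 1$ strictly raise the number of vertices and hence send $F_p$ into $F_{p+1}$, so the filtration is preserved. Since each vertex carries valence $\geq 2$ (because $\overline{\mathcal{C}}(0)=\overline{\mathcal{C}}(1)=0$), in each fixed arity only finitely many tree shapes occur and the filtration is finite, so the associated spectral sequence converges arity-wise. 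On the $E^0$-page the differential reduces to the internal one, and by Proposition \ref{homonstar} the deformation retraction of $N_*(\Delta^n)$ onto $N_*(\Delta^0)$ induces a quasi-isomorphism $\overline{\mathcal{C}}\underset{\text{\normalfont H}}{\otimes}\overline{\normalfont\textbf{Sur}}_\mathbb{K}\otimes N_*(\Delta^n)\longrightarrow\overline{\mathcal{C}}\underset{\text{\normalfont H}}{\otimes}\overline{\normalfont\textbf{Sur}}_\mathbb{K}$; applied tensor-wise on each graded piece this yields an isomorphism on $E^1$, whence the claimed weak equivalence by convergence.

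The main obstacle I anticipate is the bookkeeping in the weak-equivalence step: one must verify carefully that, despite the non-equivariant auxiliary maps $H_n^0$ entering the definition of $\partial^n$, the total twisting derivation strictly increases the number of vertices on the associated graded, so that the vertex-preserving part is precisely the internal differential. This is exactly where the inductive Construction \ref{consbeta} and the compatibility Lemma \ref{comsim} are needed. The freeness of the $\Sigma_n$-action, which is the entire reason for replacing $\mathcal{C}$ by $\mathcal{C}\underset{\text{\normalfont H}}{\otimes}\textbf{Sur}_\mathbb{K}$, is what makes the cofibration step go through in the symmetric setting, where it would otherwise fail.
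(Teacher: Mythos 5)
Your proposal is correct and follows essentially the same route as the paper, whose own proof simply defers to the argument of Theorem \ref{reperecos}: the cofibration condition from the cofibrancy of $N_*(\partial\Delta^n)\longrightarrow N_*(\Delta^n)$ (with the freeness of the $\Sigma_n$-action on $\overline{\mathcal{C}}\underset{\text{\normalfont H}}{\otimes}\overline{\normalfont\textbf{Sur}}_\mathbb{K}$ supplying cofibrancy in $\Sigma\text{\normalfont Seq}_\mathbb{K}^0$), and the weak equivalence from the vertex-count filtration whose associated graded carries only the internal differential. Your additional remarks — that $\partial^n_{(k)}$ for $k\geq 1$ strictly raises the vertex count despite the non-equivariant homotopies $H_n^0$ entering its construction, and that the filtration is finite arity-wise because every vertex has valence at least $2$ — are exactly the points the paper leaves implicit.
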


\begin{proof}
    The proof uses the same arguments as Theorem \ref{reperecos}, with the differentials constructed in Theorem \ref{repcossym}.
\end{proof}

\subsection{Computation of $\text{\normalfont Map}_{\Sigma\mathcal{O}p^0}(B^c(\mathcal{C}),\mathcal{P})$}\label{sec:263}

We now describe a mapping space $\text{\normalfont Map}_{\Sigma\mathcal{O}p^0}(B^c(\mathcal{C}\underset{\text{\normalfont H}}{\otimes}\textbf{Sur}_\mathbb{K}),\mathcal{P})$ for some coaugmented connected cooperad $\mathcal{C}$ and for some augmented connected operad $\mathcal{P}$. We recall the following definition.

\begin{defi}
    Let $M\in\Sigma\text{\normalfont Seq}_\mathbb{K}^0$ be an augmented sequence $M\simeq I\oplus\overline{M}$ with differential $d$. The sequence $M$ is an {\normalfont operad up to homotopy} if there exists a derivation of cooperads of the form $d+\partial$ on $\mathcal{F}^c(\Sigma\overline{M})$ with $\partial_{|\Sigma\overline{M}}=0$.
\end{defi}

In this situation, we say that $\partial$ is a \textit{twisting morphism}, and that $d+\partial$ is a \textit{twisted derivation}. Recall that giving such a differential is equivalent to giving a morphism $\beta:\mathcal{F}^c(\Sigma \overline{M})\longrightarrow\Sigma\overline{M}$ such that $\beta_{|\Sigma\overline{M}}=0$ and, if we denote by $\partial$ the morphism obtained from $\beta$ by the Leibniz rule on $\mathcal{F}^c(\Sigma\overline{M})$, then
$$d(\beta)+\beta\partial=0.$$
    
\begin{prop}
    Let $M\in\Sigma\text{\normalfont Seq}_\mathbb{K}^0$ be an operad up to homotopy. Then $\mathcal{L}(M):=\bigoplus_{n\geq 2}M(n)^{\Sigma_n}$ is endowed with the structure of a $\Gamma(\mathcal{P}re\mathcal{L}ie_\infty,-)$-algebra.
\end{prop}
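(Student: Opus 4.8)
The plan is to reduce everything to the recognition results for $\Gamma\Lambda\mathcal{PL}_\infty$-algebras. By Theorem \ref{thmfonda}, endowing $\mathcal{L}(M)$ with a $\Gamma(\mathcal{P}re\mathcal{L}ie_\infty,-)$-algebra structure is equivalent to endowing $\Sigma\mathcal{L}(M)$ with a $\Gamma\Lambda\mathcal{PL}_\infty$-algebra structure, i.e. a degree $-1$ coderivation $Q$ on $\Gamma\text{\normalfont Perm}^c(\Sigma\mathcal{L}(M))$ with $Q^2=0$ and $Q_0^0=\Sigma d$. By Proposition \ref{bijendo} such a coderivation is completely determined by its projection $Q^0=\pi_{\Sigma\mathcal{L}(M)}\circ Q$, through $Q=\widetilde{\Psi}(Q^0)$, and moreover $Q^2=0$ is equivalent to the single identity $Q^0Q=0$. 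So I would first construct $Q^0$ out of the twisting morphism $\beta$ of $M$, and then verify $Q^0Q=0$.

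To build $Q^0$ I would use a grafting map $\iota\colon\Gamma\text{\normalfont Perm}^c(\Sigma\mathcal{L}(M))\longrightarrow\mathcal{F}^c(\Sigma\overline{M})$ into the cofree cooperad. On a basis element $x\otimes\mathcal{O}(y_1^{\otimes r_1}\cdots y_n^{\otimes r_n})$ (using the orbit map $\mathcal{O}$ of Proposition \ref{orbit}) I would send it to the sum of all tree monomials (Proposition \ref{idmono}) whose root vertex carries $x$ and whose remaining vertices carry the $y_i$, each with multiplicity $r_i$, grafted in all admissible ways onto the inputs. This is the homotopy-operad analogue of the formula $f(e_1^n)=-\sum_{r(T)=1}T\otimes\Sigma^{-1}T^\vee$ of Lemma \ref{morph2}, in which the brace operation already ranges over \emph{all} tree shapes rooted at the distinguished vertex, not only corollas. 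Since $x$ and the $y_i$ are $\Sigma$-invariant and $\beta$ is a morphism of symmetric sequences, the composite $\beta\circ\iota$ lands in the invariants, i.e. in $\Sigma\mathcal{L}(M)$. I would then set $Q^0=\pi\circ(d+\partial)\circ\iota$, where $d+\partial$ is the twisted derivation of $\mathcal{F}^c(\Sigma\overline{M})$ and $\pi$ the projection onto $\Sigma\overline{M}$; concretely the single-vertex ($n=0$) part recovers the internal differential $\Sigma d$ (because $\beta_{|\Sigma\overline{M}}=0$), while the $n\geq 1$ parts are the $\beta$-contributions. Thus $Q^0$ encodes weighted braces in the sense of Theorem \ref{relationsprelieinfinite}.

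For the verification I would show that $\iota$ intertwines the coderivation $Q=\widetilde{\Psi}(Q^0)$ with $d+\partial$, i.e. that $\iota\circ\widetilde{\Psi}(Q^0)=(d+\partial)\circ\iota$. Granting this, and using $\pi\circ\iota=\pi_{\Sigma\mathcal{L}(M)}$ on the relevant summand, one gets
$$Q^0Q=\pi\circ(d+\partial)\circ\iota\circ\widetilde{\Psi}(Q^0)=\pi\circ(d+\partial)\circ(d+\partial)\circ\iota=\pi\circ(d+\partial)^2\circ\iota=0,$$
where $(d+\partial)^2=0$ is exactly the operad-up-to-homotopy condition $d(\beta)+\beta\partial=0$. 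This is the conceptual heart: the square-zero relation for the weighted braces on $\Sigma\mathcal{L}(M)$ is nothing but the square-zero relation of the twisting morphism of $M$, read off through $\iota$.

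The hard part will be establishing the intertwining identity. Both $\widetilde{\Psi}(Q^0)$ and $d+\partial$ are reconstructed from their projections by coassociativity-type formulas, but through \emph{different} coproducts: on the source side the decomposition of $Q$ is governed by $\Delta_{\Gamma\text{\normalfont Perm}}$ (Lemma \ref{shcoprod}), which splits off one vertex together with a shuffled remainder, whereas on the target side the coderivation $d+\partial$ decomposes via the cocomposition of $\mathcal{F}^c(\Sigma\overline{M})$, i.e. by isolating a subtree and contracting it with $\beta$ (Lemma \ref{compbulletsym} and the maps $\Delta_{(\underline T)},\gamma_{(\underline T)}$ of Definition \ref{compodeltaarbressym}). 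The obstacle is to match these two decompositions under the grafting map, so that "splitting off one brace argument" on the $\Gamma\text{\normalfont Perm}^c$ side corresponds to "contracting a subtree" on the cofree side, while keeping track of the Koszul signs coming from the suspensions and of the passage to $\Sigma_N$-invariants via $\mathcal{O}$. An equivalent but more computational route would be to bypass $\iota$ and instead define the weighted braces $x\llbrace y_1,\dots,y_n\rrbrace_{r_1,\dots,r_n}$ directly and check relations $(i)$--$(vi)$ of Theorem \ref{relationsprelieinfinite}, the relations $(i)$--$(v)$ being formal and $(vi)$ again reducing to $d(\beta)+\beta\partial=0$ through the same tree combinatorics.
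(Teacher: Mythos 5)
Your proposal is correct in strategy, and your ``more computational route'' at the end is in fact exactly the paper's proof: the author defines the weighted braces directly by
$x\llbrace y_1,\ldots,y_n\rrbrace_{r_1,\ldots,r_n}=\sum_{\underline{T}\in\mathcal{TM}(r+1)}\beta(\underline{T}^\vee\otimes x\otimes\psi\mathcal{O}(Y_1^{r_1}\cdots Y_n^{r_n}))$, checks $\Sigma_N$-invariance and relations $(i)$--$(v)$ formally, and derives $(vi)$ from $d(\beta)+\beta\partial=0$ by computing the partial cocomposition $\Delta_{(1)}$ of the cofree cooperad on the sum of tree monomials and regrouping the two resulting families of terms (subtrees containing the root versus subtrees meeting only the $y_i$-vertices) into the two sums of relation $(vi)$. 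Your primary route --- a grafting map $\iota$ intertwining $\widetilde{\Psi}(Q^0)$ with $d+\partial$ so that $Q^0Q=\pi\circ(d+\partial)^2\circ\iota=0$ --- is a legitimate and arguably cleaner repackaging, and you correctly locate the difficulty: the intertwining identity is precisely the statement that the $\dgperm$-coproduct of Lemma \ref{shcoprod} (split off one brace argument, with weights $p_i+q_i=r_i$ and a shuffle insertion) matches, under $\iota$, the subtree-contraction decomposition of $\mathcal{F}^c(\Sigma\overline{M})$ together with the bijection between pairs $(\underline{T},\underline{S}\subset\underline{T})$ and pairs $(\underline{U},\underline{V})$ with $\underline{U}\bullet_i\underline{V}=\underline{T}$. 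Proving that identity requires the same orbit-sum and variable-substitution bookkeeping that the paper carries out explicitly, so the conceptual route defers rather than avoids the combinatorial work; as written, that step remains an assertion rather than a proof, but the overall architecture (reduce to Theorem \ref{thmfonda} and Proposition \ref{bijendo}, source the square-zero relation from $d(\beta)+\beta\partial=0$) is exactly right.
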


\begin{proof}
    Let $\partial$ be the twisting part of the differential on $\mathcal{F}^c(\Sigma\overline{M})$. We denote by $\beta$ its composite with the projection on $\Sigma\overline{M}$. Let $x,y_1,\ldots,y_n\in\mathcal{L}(M)$ be elements with homogeneous degrees and arities, and $r_1,\ldots,r_n\geq 0$. We let $E$ to be the symmetric sequence spanned by abstract invariant variables $Y_1,\ldots,Y_n, dY_1,\ldots,dY_n$ of the same arities and degrees as $y_1,\ldots,y_n,d(y_1),\ldots,d(y_n)$. Let $\psi: {E}\longrightarrow \Sigma \overline{M}$ be the morphism which sends the $Y_i$'s to the $y_i$'s. This gives a morphism of coalgebras $\psi:\Gamma(\mathcal{L}( E))\longrightarrow\Gamma(\Sigma\mathcal{L}(M))$. Let $r_1,\ldots,r_n\geq 0$ be such that $r:=r_1+\cdots +r_n\neq 0$ (with, for every $1\leq i\leq n$, the assumption that $r_i=1$ if $Y_i$ has an odd degree). We set $x\llbrace\rrbrace=d(x)$ and
    $$x\llbrace y_1,\ldots,y_n\rrbrace_{r_1,\ldots,r_n}=\sum_{\underline{T}\in\mathcal{TM}({r+1})}\beta\left(\underline{T}^\vee\otimes x\otimes\psi\mathcal{O}(Y_1^{r_1} \cdots Y_n^{r_n})\right),$$

    \noindent where we consider the orbit map $\mathcal{O}$ defined in Proposition \ref{orbit} and where, in this sum, we identify every tensor $\underline{T}^\vee\otimes z$ such that $z\notin\bigotimes_{i=1}^{r+1}\Sigma \overline{M}(\text{\normalfont val}_{\underline{T}}(i))$ with $0$. In particular, the above sum is finite.\\
    
    We first note that these operations preserve $\mathcal{L}(M)$. Indeed, the symmetry relations in the cooperad $\mathcal{F}^c(\Sigma\overline{M})$ will only make involve either actions of symmetric groups elements on $x$ and the $Y_i$'s, which are invariant, or actions on the tensors given by $\mathcal{O}(Y_1^{\otimes r_1}\cdots Y_n^{\otimes r_n})$, which is invariant under the action of $\Sigma_{r}$. It remains to prove formulas of Theorem \ref{relationsprelieinfinite}. It is an immediate check that the operations $-\llbrace -,\ldots,-\rrbrace_{r_1,\ldots,r_n}$ satisfy the relations $(i)-(v)$. We now check relation $(vi)$. First, we have
$$\sum_{\underline{T}\in\mathcal{TM}({r+1})}d(\beta(\underline{T}^\vee\otimes x\otimes\psi\mathcal{O}(Y_1^{r_1}\cdots Y_n^{r_n}))) = x\llbrace y_1,\ldots,y_n\rrbrace_{r_1,\ldots,r_n}\llbrace\rrbrace;$$
             $$\sum_{\underline{T}\in\mathcal{TM}({r+1})} \beta(\underline{T}^\vee\otimes d(x)\otimes\psi\mathcal{O}(Y_1^{r_1}\cdots Y_n^{r_n})) = x\llbrace\rrbrace\llbrace y_1,\ldots,y_n\rrbrace_{r_1,\ldots,r_n} ;$$

\noindent and  
\medskip
             \begin{multline*}
                 \sum_{\underline{T}\in\mathcal{TM}({r+1})}\beta(\underline{T}^\vee\otimes x\otimes\psi\mathcal{O}(dY_k\cdot  Y_1^{r_1}\cdots Y_k^{r_k-1}\cdots Y_n^{r_n}))\\ = x\llbrace y_k\llbrace\rrbrace,y_1,\ldots,y_n\rrbrace_{1,r_1,\ldots, r_k-1,\ldots,r_n}.
             \end{multline*}
             \medskip
    
    \noindent for every $1\leq k\leq n$. This gives
    \medskip
    \begin{multline*}
        \sum_{\underline{T}\in\mathcal{TM}({r+1})}d(\beta)(\underline{T}^\vee\otimes x\otimes\psi\mathcal{O}(Y_1^{r_1}\cdots Y_n^{r_n}))\\=x\llbrace y_1,\ldots,y_n\rrbrace_{r_1,\ldots,r_n}\llbrace\rrbrace+x\llbrace\rrbrace\llbrace y_1,\ldots,y_n\rrbrace_{r_1,\ldots,r_n}\\+\sum_{k=1}^n\pm x\llbrace y_k\llbrace\rrbrace,y_1,\ldots,y_n\rrbrace_{1,r_1,\ldots,r_k-1,\ldots,r_n}.
    \end{multline*}
    \medskip
    
    \noindent Let $\Delta:\mathcal{F}^c(\Sigma \overline{M})\longrightarrow\mathcal{F}^c(\mathcal{F}^c(\Sigma \overline{M}))$ be the cooperad morphism induced by the cooperad structure of $\mathcal{F}^c(\Sigma \overline{M})$. Let $\pi_{\Sigma \overline{M}}:\mathcal{F}^c(\Sigma \overline{M})\longrightarrow\Sigma \overline{M}$ be the projection on $\Sigma \overline{M}$. We keep the notation $\partial$ for the morphism defined on $\mathcal{F}^c(\mathcal{F}^c(\Sigma \overline{M}))$ obtained from $\partial$ by the Leibniz rule. Since $\partial$ is compatible with the cooperad structure on $\mathcal{F}^c(\Sigma \overline{M})$, we have the following commutative diagram:
    \medskip
    % https://q.uiver.app/#q=WzAsNSxbMCwwLCJcXG1hdGhjYWx7Rn1eYyhcXFNpZ21hXFxvdmVybGluZSBNKSJdLFswLDEsIlxcbWF0aGNhbHtGfV5jKFxcU2lnbWFcXG92ZXJsaW5lIE0pIl0sWzEsMCwiXFxtYXRoY2Fse0Z9XmMoXFxtYXRoY2Fse0Z9XmMoXFxTaWdtYVxcb3ZlcmxpbmUgTSkpIl0sWzEsMSwiXFxtYXRoY2Fse0Z9XmMoXFxtYXRoY2Fse0Z9XmMoXFxTaWdtYVxcb3ZlcmxpbmUgTSkpIl0sWzEsMiwiXFxtYXRoY2Fse0Z9XmMoXFxTaWdtYVxcb3ZlcmxpbmUgTSkiXSxbMCwyLCJcXERlbHRhIl0sWzIsMywiXFxwYXJ0aWFsIl0sWzMsNCwiXFxtYXRoY2Fse0Z9XmMoXFxwaV97XFxTaWdtYVxcb3ZlcmxpbmUgTX0pIl0sWzEsNCwiPSIsMix7InN0eWxlIjp7ImhlYWQiOnsibmFtZSI6Im5vbmUifX19XSxbMCwxLCJcXHBhcnRpYWwiLDJdLFsxLDMsIlxcRGVsdGEiLDJdXQ==
\[\begin{tikzcd}
	{\mathcal{F}^c(\Sigma\overline M)} & {\mathcal{F}^c(\mathcal{F}^c(\Sigma\overline M))} \\
	{\mathcal{F}^c(\Sigma\overline M)} & {\mathcal{F}^c(\mathcal{F}^c(\Sigma\overline M))} \\
	& {\mathcal{F}^c(\Sigma\overline M)}
	\arrow["\Delta", from=1-1, to=1-2]
	\arrow["\partial"', from=1-1, to=2-1]
	\arrow["\partial", from=1-2, to=2-2]
	\arrow["\Delta"', from=2-1, to=2-2]
	\arrow["{=}"', no head, from=2-1, to=3-2]
	\arrow["{\mathcal{F}^c(\pi_{\Sigma\overline M})}", from=2-2, to=3-2]
\end{tikzcd}.\]
\medskip

\noindent Let $\mathcal{F}^c_{(\geq 2)}(\Sigma \overline{M})$ be the sub symmetric sequence of $\mathcal{F}^c(\Sigma \overline{M})$ given by trees with at least $2$ vertices, so that $\mathcal{F}^c(\Sigma \overline{M})\simeq\Sigma \overline{M}\oplus\mathcal{F}^c_{(\geq 2)}(\Sigma \overline{M})$ as a symmetric sequence. We denote by $\mathcal{F}^c(\Sigma \overline{M};\mathcal{F}^c_{(\geq 2)}(\Sigma \overline{M}))$ the sub symmetric sequence of $\mathcal{F}^c(\mathcal{F}^c(\Sigma\overline{M}))$ given by trees with only one vertex in $\mathcal{F}^c_{(\geq 2)}(\Sigma \overline{M})$, and the other in $\Sigma \overline{M}$. Then we have the following commutative diagram:
\medskip
    % https://q.uiver.app/#q=WzAsNixbMCwwLCJcXG1hdGhjYWx7Rn1eYyhcXG1hdGhjYWx7Rn1eYyhcXFNpZ21hXjFNKSkiXSxbMCwxLCJcXG1hdGhjYWx7Rn1eYyhcXG1hdGhjYWx7Rn1eYyhcXFNpZ21hXjFNKSkiXSxbMCwyLCJcXG1hdGhjYWx7Rn1eYyhcXFNpZ21hXjEgTSkiXSxbMSwwLCJcXG1hdGhjYWx7Rn1eYyhcXFNpZ21hXjEgTTtcXG1hdGhjYWx7Rn1eY197KFxcZ2VxIDIpfShcXFNpZ21hXjEgTSkpIl0sWzEsMSwiXFxtYXRoY2Fse0Z9XmMoXFxTaWdtYV4xTTtcXG1hdGhjYWx7Rn1eYyhcXFNpZ21hXjEgTSkpIl0sWzEsMiwiXFxtYXRoY2Fse0Z9XmMoXFxTaWdtYV4xIE07XFxTaWdtYV4xIE0pIl0sWzAsMSwiXFxwYXJ0aWFsIiwyXSxbMSwyLCJcXG1hdGhjYWx7Rn1eYyhcXHBpX3tcXFNpZ21hXjEgTX0pIiwyLHsic3R5bGUiOnsiaGVhZCI6eyJuYW1lIjoiZXBpIn19fV0sWzAsMywiIiwwLHsic3R5bGUiOnsiaGVhZCI6eyJuYW1lIjoiZXBpIn19fV0sWzMsNCwiXFxtYXRoY2Fse0Z9XmMoXFxTaWdtYV4xIE07XFxwYXJ0aWFsKSJdLFs0LDUsIlxcbWF0aGNhbHtGfV5jKFxcU2lnbWFeMSBNO1xccGlfe1xcU2lnbWFeMSBNfSkiXSxbNSwyLCJcXG1hdGhjYWx7Rn1eYyhpZF97XFxTaWdtYV4xIE19XFxvcGx1cyBpZF97XFxTaWdtYV4xIE19KSIsMCx7InN0eWxlIjp7ImhlYWQiOnsibmFtZSI6ImVwaSJ9fX1dXQ==
\[\begin{tikzcd}
	{\mathcal{F}^c(\mathcal{F}^c(\Sigma \overline{M}))} & {\mathcal{F}^c(\Sigma \overline{M};\mathcal{F}^c_{(\geq 2)}(\Sigma \overline{M}))} \\
	{\mathcal{F}^c(\mathcal{F}^c(\Sigma \overline{M}))} & {\mathcal{F}^c(\Sigma \overline{M};\mathcal{F}^c(\Sigma \overline{M}))} \\
	{\mathcal{F}^c(\Sigma \overline{M})} & {\mathcal{F}^c(\Sigma \overline{M};\Sigma \overline{M})}
	\arrow[two heads, from=1-1, to=1-2]
	\arrow["\partial"', from=1-1, to=2-1]
	\arrow["{\mathcal{F}^c(\Sigma \overline{M};\partial)}", from=1-2, to=2-2]
	\arrow["{\mathcal{F}^c(\pi_{\Sigma \overline{M}})}"', two heads, from=2-1, to=3-1]
	\arrow["{\mathcal{F}^c(\Sigma \overline{M};\pi_{\Sigma \overline{M}})}", from=2-2, to=3-2]
	\arrow["{\mathcal{F}^c(id_{\Sigma \overline{M}}\oplus id_{\Sigma \overline{M}})}", two heads, from=3-2, to=3-1]
\end{tikzcd}.\]
\medskip

\noindent The above commutative diagrams prove that $\partial:\mathcal{F}^c_{(\geq 2)}(\Sigma \overline{M})\longrightarrow\mathcal{F}^c(\Sigma \overline{M})$ is given by the composite:
\medskip
% https://q.uiver.app/#q=WzAsNCxbMSwwLCJcXG1hdGhjYWx7Rn1eYyhcXFNpZ21hXFxvdmVybGluZXtNfTtcXG1hdGhjYWx7Rn1eY197KFxcZ2VxIDIpfShcXFNpZ21hXFxvdmVybGluZXtNfSkpIl0sWzMsMCwiXFxtYXRoY2Fse0Z9XmMoXFxTaWdtYSBcXG92ZXJsaW5le019O1xcU2lnbWFcXG92ZXJsaW5le019KSJdLFs1LDAsIlxcbWF0aGNhbHtGfV5jKFxcU2lnbWFcXG92ZXJsaW5le019KSJdLFswLDAsIlxccGFydGlhbDpcXG1hdGhjYWx7Rn1eY197KFxcZ2VxIDIpfShcXFNpZ21hXFxvdmVybGluZXtNfSkiXSxbMCwxLCJcXG1hdGhjYWx7Rn1eYyhcXFNpZ21hXFxvdmVybGluZXtNfTtcXGJldGEpIl0sWzEsMiwiXFxtYXRoY2Fse0Z9XmMoaWRfe1xcU2lnbWFcXG92ZXJsaW5le019fVxcb3BsdXMgaWRfe1xcU2lnbWFcXG92ZXJsaW5le019fSkiLDAseyJzdHlsZSI6eyJoZWFkIjp7Im5hbWUiOiJlcGkifX19XSxbMywwLCJcXERlbHRhX3soMSl9Il1d
\[\begin{tikzcd}
	{\partial:\mathcal{F}^c_{(\geq 2)}(\Sigma\overline{M})} & {\mathcal{F}^c(\Sigma\overline{M};\mathcal{F}^c_{(\geq 2)}(\Sigma\overline{M}))} && {\mathcal{F}^c(\Sigma \overline{M};\Sigma\overline{M})} && {\mathcal{F}^c(\Sigma\overline{M})}
	\arrow["{\Delta_{(1)}}", from=1-1, to=1-2]
	\arrow["{\mathcal{F}^c(\Sigma\overline{M};\beta)}", from=1-2, to=1-4]
	\arrow["{\mathcal{F}^c(id_{\Sigma\overline{M}}\oplus id_{\Sigma\overline{M}})}", two heads, from=1-4, to=1-6]
\end{tikzcd}\]
\medskip
    
    \noindent where $\Delta_{(1)}$ is the composite of $\Delta:\mathcal{F}^c_{(\geq 2)}(\Sigma \overline{M})\longrightarrow\mathcal{F}^c(\mathcal{F}^c(\Sigma \overline{M}))$ with the projection on trees with only one vertex in $\mathcal{F}^c_{(\geq 2)}(\Sigma \overline{M})$. By definition, for every $\underline{T}\in\mathcal{TM}({r+1})$, we have
    \medskip
    \begin{multline*}
        \Delta_{(1)}(\underline{T}^\vee\otimes x\otimes\psi\mathcal{O}(Y_1^{r_1}\cdots Y_n^{r_n}))\\=\sum_{\substack{p+q=r\\ p\neq 0}}\sum_{\substack{\underline{U}\in\mathcal{TM}({q+1})\\\underline{V}\in\mathcal{TM}({p+1})\\\underline{U}\bullet_1\underline{V}=\underline{T}}}\sum_{\substack{p_i+q_i=r_i\\ p_1+\cdots+p_n=p\\ q_1+\cdots +q_n=q}}\pm\underline{U}^\vee\otimes (\underline{V}^\vee\otimes x\otimes\psi\mathcal{O}(Y_1^{p_1}\cdots Y_n^{p_n}))\otimes\psi\mathcal{O}(Y_1^{q_1}\cdots Y_n^{q_n})\\
        +\sum_{\substack{p+q=r-1\\ p\neq 0}}\sum_{k=0}^{q}\sum_{\substack{\underline{U}\in\mathcal{TM}({q+1})\\\underline{V}\in\mathcal{TM}({p+1})\\\underline{U}\bullet_k\underline{V}=\underline{T}}}\sum_{\substack{s_i+p_i'+t_i=r_i\\ p_1'+\cdots+p_n'=p+1\\ s_1+\cdots+s_n=k\\ t_1+\cdots+t_n=q-k}}\pm\underline{U}^\vee\otimes x\otimes\psi\mathcal{O}(Y_1^{s_1}\cdots Y_n^{s_n})\\\otimes\left(\underline{V}^\vee\otimes\psi\mathcal{O}(Y_1^{p_1'}\cdots Y_n^{p_n'})\right)\otimes\psi\mathcal{O}(Y_1^{t_1}\cdots Y_n^{t_n}).
    \end{multline*}
    \medskip

    \noindent By some variable substitutions, summing over $\underline{T}\in\mathcal{TM}({r+1})$ gives
    \medskip
    \begin{multline*}
    \sum_{\substack{p+q=r\\ p\neq 0}}\sum_{\substack{p_i+q_i=r_i\\ p_1+\cdots+p_n=p\\ q_1+\cdots +q_n=q}}\sum_{\underline{U}\in\mathcal{TM}({q+1})}\pm\underline{U}^\vee\otimes \left(\sum_{\underline{V}\in\mathcal{TM}({p+1})}\underline{V}^\vee\otimes x\otimes\psi\mathcal{O}(Y_1^{p_1}\cdots Y_n^{p_n})\right)\otimes\psi\mathcal{O}(Y_1^{q_1}\cdots Y_n^{q_n})\\
        +\sum_{\substack{p+q=r-1\\ p\neq 0}}\sum_{\substack{p_i'+q_i=r_i\\ p_1'+\cdots+p_n'=p+1\\ q_1+\cdots+q_n=q}}\sum_{\underline{U}\in\mathcal{TM}({q+1})}\pm\underline{U}^\vee\otimes x\\\otimes\text{\normalfont Sh}\left(\left(\sum_{\underline{V}\in\mathcal{TM}({p+1})}\underline{V}^\vee\otimes \psi\mathcal{O}(Y_1^{p_1'}\cdots Y_n^{p_n'})\right);\psi\mathcal{O}(Y_1^{q_1}\cdots Y_n^{q_n})\right),
    \end{multline*}
    \medskip

    \noindent where $\text{\normalfont Sh}$ is defined analogously as in Definition \ref{defsh}. We now use that
    $$\psi\mathcal{O}(Y_1^{p_1'}\cdots Y_n^{p_n'})=\sum_{k=1}^n\pm y_k\otimes\psi\mathcal{O}(Y_1^{p_1'}\cdots Y_k^{p_k'-1}\cdots Y_n^{p_n'})$$
    
    \noindent for every $p_1',\ldots,p_n'\geq 0$ (if $p_k'=0$ for some $k$, we just remove the corresponding term). For a chosed $1\leq k\leq n$, we set $p_i=p_i'$ for every $i\neq k$ and $p_k=p_k'-1$. Then, applying $\mathcal{F}^c(\Sigma\overline{M};\beta)$, gives
    \medskip
    \begin{multline*}
    \sum_{\substack{p+q=r\\ p\neq 0}}\sum_{\substack{p_i+q_i=r_i\\ p_1+\cdots+p_n=p\\ q_1+\cdots +q_n=q}}\sum_{\underline{U}\in\mathcal{TM}({q+1})}\pm\underline{U}^\vee\otimes \left(x\llbrace y_1,\ldots,y_n\rrbrace_{p_1,\ldots,p_n}\right)\otimes\psi\mathcal{O}(Y_1^{q_1}\cdots Y_n^{q_n})\\
        +\sum_{\substack{p+q=r-1\\ p\neq 0}}\sum_{k=1}^{n}\sum_{\substack{p_i+q_i=r_i, i\neq k\\p_k+q_k=r_k-1\\ p_1+\cdots+p_n=p\\ q_1+\cdots+q_n=q}}\sum_{\underline{U}\in\mathcal{TM}({q+1})}\pm\underline{U}^\vee\otimes x\otimes\text{\normalfont Sh}\left(y_k\llbrace y_1,\ldots,y_n\rrbrace_{p_1,\ldots,p_n};\psi\mathcal{O}(Y_1^{q_1}\cdots Y_n^{q_n})\right).
    \end{multline*}
    \medskip

    \noindent Applying $\beta$ again gives
    \medskip
    \begin{multline*}
    \sum_{\substack{p_i+q_i=r_i\\ p_1+\cdots+p_n\neq 0\\ q_1+\cdots +q_n\neq 0}}\pm x\llbrace y_1,\ldots,y_n\rrbrace_{p_1,\ldots,p_n}\llbrace y_1,\ldots,y_n\rrbrace_{q_1,\ldots,q_n}\\
        +\sum_{k=1}^{n}\sum_{\substack{p_i+q_i=r_i, i\neq k\\p_k+q_k=r_k-1\\ p_1+\cdots+p_n\neq 0}}\pm x\llbrace y_k\llbrace y_1,\ldots,y_n\rrbrace_{p_1,\ldots,p_n},y_1,\ldots,y_n\rrbrace_{1,q_1,\ldots,q_n}.
    \end{multline*}
    \medskip

    \noindent Finally, the equation $d(\beta)+\beta\partial=0$ applied on $\sum_{\underline{T}\in\mathcal{TM}({r+1})}\underline{T}\otimes x\otimes\psi\mathcal{O}(Y_1^{r_1}\cdots Y_n^{r_n})$ gives
    \begin{multline*}
        \sum_{p_i+q_i=r_i}\pm x\llbrace y_1,\ldots,y_n\rrbrace_{p_1,\ldots,p_n}\llbrace y_1,\ldots,y_n\rrbrace_{q_1,\ldots,q_n}\\
+\sum_{k=1}^n\sum_{\substack{p_i+q_i=r_i,i\neq k\\ p_k+q_k=r_k-1}}\pm x\llbrace y_k\llbrace y_1,\ldots,y_n\rrbrace_{p_1,\ldots,p_n},y_1,\ldots,y_n\rrbrace_{1,q_1,\ldots,q_n}=0
    \end{multline*}

    \noindent as desired.
\end{proof}

\begin{cor}\label{gammaprelieinfcomplet}
    Let $M\in\Sigma\text{\normalfont Seq}_\mathbb{K}^0$ be an operad up to homotopy. Then the completion of $\mathcal{L}(\Sigma M)$, which is $\prod_{n\geq 1}\Sigma M(n)^{\Sigma_n}$, is endowed with the structure of a complete $\Gamma\Lambda\mathcal{PL}_\infty$-algebra.
\end{cor}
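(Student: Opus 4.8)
The plan is to deduce this corollary directly from the preceding proposition together with the shift equivalence of Theorem \ref{thmfonda}, after exhibiting a suitable arity filtration. First I would observe that suspension commutes with taking invariants, so that there is an identification $\Sigma\mathcal{L}(M)\simeq\mathcal{L}(\Sigma M)=\bigoplus_{n\geq 2}\Sigma M(n)^{\Sigma_n}$, the suspension being taken arity-wise. The preceding proposition endows $\mathcal{L}(M)$ with the structure of a $\Gamma(\mathcal{P}re\mathcal{L}ie_\infty,-)$-algebra, so by Theorem \ref{thmfonda} the graded module $\mathcal{L}(\Sigma M)$ carries the structure of a $\Gamma\Lambda\mathcal{PL}_\infty$-algebra, whose weighted braces $-\llbrace-,\ldots,-\rrbrace_{r_1,\ldots,r_n}$ are the operations constructed from the twisting morphism $\beta$ in the proof of that proposition. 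Alternatively, one notes that the proof of the preceding proposition already verifies the relations $(i)$-$(vi)$ of Theorem \ref{relationsprelieinfinite} for these operations, which is precisely the criterion for membership in $\Gamma\Lambda\mathcal{PL}_\infty$.

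Next I would equip $\mathcal{L}(\Sigma M)$ with the arity filtration
$$F_p\mathcal{L}(\Sigma M)=\bigoplus_{n\geq p+1}\Sigma M(n)^{\Sigma_n},$$
in analogy with the filtrations used before Lemma \ref{lembe} and in $\mathsection$\ref{sec:232}, so that a generator of arity $n$ lies in $F_{n-1}$ but not in $F_n$.

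The heart of the argument, and the step I expect to be the main obstacle, is to check that this filtration is compatible with the weighted braces, that is, that $F_m\mathcal{L}(\Sigma M)\llbrace F_{p_1}\mathcal{L}(\Sigma M),\ldots,F_{p_n}\mathcal{L}(\Sigma M)\rrbrace_{r_1,\ldots,r_n}\subset F_{m+p_1r_1+\cdots+p_nr_n}\mathcal{L}(\Sigma M)$. This reduces to tracking the arity of the output of $x\llbrace y_1,\ldots,y_n\rrbrace_{r_1,\ldots,r_n}$, which by definition is a sum over tree monomials $\underline{T}\in\mathcal{TM}(r+1)$ of operadic compositions $\beta(\underline{T}^\vee\otimes x\otimes\ldots)$ shaped on $\underline{T}$. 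Since operadic composition in $M$ sends arities to their sum, grafting an input $y_j$ of arity $b_j$ into a leaf of $x$ raises the total arity by $b_j-1$; summing over the $r_j$ copies of each $y_j$ shows that, if $x$ has arity $a$ and each $y_j$ has arity $b_j$, then every summand has arity $a+\sum_j r_j(b_j-1)$. Translating arities into filtration degrees through $m=a-1$ and $p_j=b_j-1$ then yields exactly the required inclusion, so that $\mathcal{L}(\Sigma M)$ is a filtered $\Gamma\Lambda\mathcal{PL}_\infty$-algebra. I expect this bookkeeping to be the only nontrivial point, and it is entirely parallel to the arity estimates already used in the non-symmetric setting.

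Finally I would pass to the completion. By the remark following the definition of a filtered $\Gamma\Lambda\mathcal{PL}_\infty$-algebra, the completion $\widehat{\mathcal{L}(\Sigma M)}$, namely the product $\prod_n\Sigma M(n)^{\Sigma_n}$ appearing in the statement, inherits the structure of a complete filtered $\Gamma\Lambda\mathcal{PL}_\infty$-algebra, which is exactly the asserted structure and completes the proof.
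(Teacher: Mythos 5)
Your argument is correct and is essentially the argument the paper itself invokes: its proof of this corollary simply cites the analogous completion result in the reference, which rests on exactly the arity filtration $F_p=\bigoplus_{n\geq p+1}\Sigma M(n)^{\Sigma_n}$ and the observation that a brace shaped on a tree with vertices of arities $a,b_1,\ldots$ lands in arity $a+\sum_j r_j(b_j-1)$. Your bookkeeping translating this into $F_m\llbrace F_{p_1},\ldots,F_{p_n}\rrbrace_{r_1,\ldots,r_n}\subset F_{m+p_1r_1+\cdots+p_nr_n}$ is exactly right, so nothing further is needed.
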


\begin{proof}
    It is the same proof as for \cite[Corollary 2.18]{moi}.
\end{proof}

We now apply this proposition to $M=\text{\normalfont Hom}({\mathcal{C}}\underset{\text{\normalfont H}}{\otimes}{\normalfont \textbf{Sur}}_\mathbb{K}\otimes N_*(\Delta^n),{\mathcal{P}})$ for every $n\geq 0$, which will give Theorem \ref{theoremH}.

\begin{thm}
    Let $\mathcal{C}$ be a symmetric cooperad and $\mathcal{P}$ be a symmetric augmented operad such that $\mathcal{P}(0)=\mathcal{C}(0)=0$ and $\mathcal{P}(1)=\mathcal{C}(1)=\mathbb{K}$. Then, for every $n\geq 0$, the symmetric sequence $\text{\normalfont Hom}({\mathcal{C}}\underset{\text{\normalfont H}}{\otimes}{\normalfont \textbf{Sur}}_\mathbb{K}\otimes N_*(\Delta^n),{\mathcal{P}})$ is an operad up to homotopy such that the underlying $\widehat{\Gamma\Lambda\mathcal{PL}_\infty}$-algebra structure on $\Sigma\text{\normalfont Hom}_{\Sigma\text{\normalfont Seq}_\mathbb{K}}(\overline{\mathcal{C}}\underset{\text{\normalfont H}}{\otimes}\overline{\normalfont \textbf{Sur}}_\mathbb{K}\otimes N_*(\Delta^n),\overline{\mathcal{P}})$ satisfies
    $$\text{\normalfont Map}^h_{\Sigma\mathcal{O}p^0}(B^c(\mathcal{C}),\mathcal{P})\simeq\mathcal{MC}(\Sigma\text{\normalfont Hom}_{\Sigma\text{\normalfont Seq}_\mathbb{K}}(\overline{\mathcal{C}}\underset{\text{\normalfont H}}{\otimes}\overline{\normalfont \textbf{Sur}}_\mathbb{K}\otimes N_*(\Delta^\bullet),\overline{\mathcal{P}})),$$

    \noindent where we have set $\text{\normalfont Map}^h_{\Sigma\mathcal{O}p^0}(B^c(\mathcal{C}),\mathcal{P})=\text{\normalfont Map}_{\Sigma\mathcal{O}p^0}(B^c(\mathcal{C}\otimes{\normalfont\textbf{Sur}}_\mathbb{K}),\mathcal{P})$
\end{thm}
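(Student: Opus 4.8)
The plan is to follow the template of the non-symmetric case (Theorem \ref{theoremG}), but to replace the direct $\mathcal{B}race\underset{\text{\normalfont H}}{\otimes}\mathcal{E}$-algebra structure of Lemma \ref{lembe}, which is unavailable here because the twisting derivation is not equivariant for $n\geq 2$, by the \emph{operad up to homotopy} route of Corollary \ref{gammaprelieinfcomplet}. Write $M^n:=\text{\normalfont Hom}(\mathcal{C}\underset{\text{\normalfont H}}{\otimes}\textbf{Sur}_\mathbb{K}\otimes N_*(\Delta^n),\mathcal{P})$. The first and central step is to equip $M^n$ with the structure of an operad up to homotopy, i.e. to produce a twisting morphism $\beta$ on $\mathcal{F}^c(\Sigma\overline{M^n})$. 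I would obtain $\beta$ by convolution from the twisting morphism $\beta^n$ of Construction \ref{consbeta}: for a tree monomial $\underline{T}$ (Definition \ref{treemon}) and homogeneous $f_1,\ldots,f_r\in\overline{M^n}$, the $\underline{T}$-component of $\beta$ should be the tree-shaped operadic composition $\gamma_{(\underline{T})}$ of $\mathcal{P}$ (Definition \ref{compodeltaarbressym}) applied to $f_1\otimes\cdots\otimes f_r$, precomposed with the corresponding component of $\beta^n$, which itself packages the cooperadic decomposition $\Delta_{(\underline{T})}$ of $\mathcal{C}$ together with the $\mu_T$-coaction of $\textbf{Sur}_\mathbb{K}$ on $N_*(\Delta^n)$. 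This is the exact symmetric analogue of the formula of Lemma \ref{lembe}.

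The equation $d(\beta)+\beta\partial=0$ required by the definition of an operad up to homotopy would then be deduced from the identity $d_{\mathcal{C}}(\beta^n)+\partial^n\beta^n=0$ established in Theorem \ref{repcossym}, using the associativity relations $\gamma_{(\underline{T}/\underline{S})}\circ_S\gamma_{(\underline{S})}=\gamma_{(\underline{T})}$ of Lemma \ref{compbulletsym} to match the two sides tree by tree. Once $\beta$ is in place, Corollary \ref{gammaprelieinfcomplet} immediately endows the completion $\prod_{k\geq 1}\Sigma M^n(k)^{\Sigma_k}=\Sigma\text{\normalfont Hom}_{\Sigma\text{\normalfont Seq}_\mathbb{K}}(\overline{\mathcal{C}}\underset{\text{\normalfont H}}{\otimes}\overline{\textbf{Sur}}_\mathbb{K}\otimes N_*(\Delta^n),\overline{\mathcal{P}})$ with the asserted complete $\widehat{\Gamma\Lambda\mathcal{PL}_\infty}$-algebra structure.

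Next I would identify Maurer--Cartan elements with simplices. By Theorem \ref{repcossym} the cosimplicial object $B^c(\mathcal{C}\underset{\text{\normalfont H}}{\otimes}\textbf{Sur}_\mathbb{K})\otimes\Delta^\bullet=(\mathcal{F}(\overline{\mathcal{C}}\underset{\text{\normalfont H}}{\otimes}\overline{\textbf{Sur}}_\mathbb{K}\otimes\Sigma^{-1}N_*(\Delta^\bullet)),\partial^\bullet)$ is a cosimplicial frame for $B^c(\mathcal{C})$, so $\text{\normalfont Map}^h_{\Sigma\mathcal{O}p^0}(B^c(\mathcal{C}),\mathcal{P})_n=\text{\normalfont Mor}_{\Sigma\mathcal{O}p^0}(\mathcal{F}(\overline{\mathcal{C}}\underset{\text{\normalfont H}}{\otimes}\overline{\textbf{Sur}}_\mathbb{K}\otimes\Sigma^{-1}N_*(\Delta^n)),\mathcal{P})$. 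By freeness of $\mathcal{F}$ and the adjunction recalled in $\mathsection$\ref{sec:261}, such a morphism amounts to a degree-$0$ equivariant map $\overline{\mathcal{C}}\underset{\text{\normalfont H}}{\otimes}\overline{\textbf{Sur}}_\mathbb{K}\otimes\Sigma^{-1}N_*(\Delta^n)\to\overline{\mathcal{P}}$, that is a degree-$0$ element $x$ of $\Sigma\text{\normalfont Hom}_{\Sigma\text{\normalfont Seq}_\mathbb{K}}(\overline{\mathcal{C}}\underset{\text{\normalfont H}}{\otimes}\overline{\textbf{Sur}}_\mathbb{K}\otimes N_*(\Delta^n),\overline{\mathcal{P}})$, subject to the condition that the induced operad morphism commutes with $d_{\mathcal{C}}+\partial^n$ on the source and $d_{\mathcal{P}}$ on the target. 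Unwinding this commutation through the explicit weighted braces produced in the proof of Corollary \ref{gammaprelieinfcomplet}, where $x\llbrace x\rrbrace_k$ is precisely the $\beta$-composition along trees with $k+1$ vertices, shows that it is equivalent to $d(x)+\sum_{k\geq 1}x\llbrace x\rrbrace_k=0$, i.e. $x\in\mathcal{MC}$. This yields a degreewise bijection.

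Finally I would verify compatibility with the simplicial structure: since $\partial^\bullet$ is a morphism of cosimplicial sets (Theorem \ref{repcossym}) and the convolution $\beta$, hence the braces and the above correspondence, are natural in $n$, the degreewise bijections assemble into the desired isomorphism of simplicial sets. The main obstacle is the first step. Since $\beta^n$ (equivalently $\partial^n$) is only defined by the induction of Construction \ref{consbeta}, with no closed form, establishing $d(\beta)+\beta\partial=0$ abstractly from $\partial^n\beta^n=0$ — propagating the inductive identity through the convolution with $\mathcal{P}$ while tracking the shifts and Koszul signs introduced by $\Sigma^{-1}N_*(\Delta^n)$ and by the passage to $\Sigma\overline{M^n}$ — is the delicate part, as is confirming that the internal differentials $d_{\mathcal{C}}$, $d_{\textbf{Sur}_\mathbb{K}}$ and $d_{N_*(\Delta^n)}$ recombine correctly into the single differential $d=Q_0^0$ of the resulting $\Gamma\Lambda\mathcal{PL}_\infty$-algebra.
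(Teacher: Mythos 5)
Your proposal follows essentially the same route as the paper: the twisting morphism $\beta$ on $\mathcal{F}^c(\Sigma\overline{M^n})$ is defined by exactly the convolution formula $\beta(\underline{T}^\vee\otimes f_1\otimes\cdots\otimes f_m)=\gamma_{(\underline{T})}\circ(f_1\otimes\cdots\otimes f_m)\circ\beta^n_{(\underline{T})}$, and the identity $d(\beta)+\beta\partial=0$ is obtained, as you suggest, by using $d(\beta^n)=0$ together with Lemma \ref{compbulletsym} to rewrite the $\underline{T}$-component of $\beta\partial$ as $\gamma_{(\underline{T})}\circ(f_1\otimes\cdots\otimes f_m)$ precomposed with the $\underline{T}$-component of $\partial^n\beta^n$, which vanishes by Theorem \ref{repcossym}. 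The final identification of simplices with Maurer--Cartan elements via the cosimplicial frame and Corollary \ref{gammaprelieinfcomplet} is also the paper's argument, so the step you flag as delicate is handled exactly as you anticipate.
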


\begin{proof}
    Let $n\geq 0$. We first note that we have an isomorphism
    $$\Sigma\text{\normalfont Hom}(\overline{\mathcal{C}}\underset{\text{\normalfont H}}{\otimes}\overline{\normalfont\textbf{Sur}}_\mathbb{K}\otimes N_*(\Delta^n),\mathcal{P})\simeq\text{\normalfont Hom}(\overline{\mathcal{C}}\underset{\text{\normalfont H}}{\otimes}\overline{\normalfont\textbf{Sur}}_\mathbb{K}\otimes\Sigma^{-1} N_*(\Delta^n),\mathcal{P}).$$
    
    \noindent We thus need to construct a morphism
    $$\beta:\mathcal{F}^c_{(\geq 2)}(\text{\normalfont Hom}(\overline{\mathcal{C}}\underset{\text{\normalfont H}}{\otimes}\overline{\normalfont \textbf{Sur}}_\mathbb{K}\otimes\Sigma^{-1} N_*(\Delta^n),\overline{\mathcal{P}}))\longrightarrow\text{\normalfont Hom}(\overline{\mathcal{C}}\underset{\text{\normalfont H}}{\otimes}\overline{\normalfont \textbf{Sur}}_\mathbb{K}\otimes\Sigma^{-1} N_*(\Delta^n),\overline{\mathcal{P}})$$
    \noindent such that, if we denote by $d$ the differential induced by the internal differential of $\text{\normalfont Hom}(\overline{\mathcal{C}}\underset{\text{\normalfont H}}{\otimes}\overline{\normalfont \textbf{Sur}}_\mathbb{K}\otimes\Sigma^{-1} N_*(\Delta^n),\overline{\mathcal{P}})$, and if we denote by $\partial:\mathcal{F}^c_{(\geq 2)}(\text{\normalfont Hom}(\overline{\mathcal{C}}\underset{\text{\normalfont H}}{\otimes}\overline{\normalfont \textbf{Sur}}_\mathbb{K}\otimes\Sigma^{-1} N_*(\Delta^n),\overline{\mathcal{P}}))\longrightarrow\mathcal{F}^c(\text{\normalfont Hom}(\overline{\mathcal{C}}\underset{\text{\normalfont H}}{\otimes}\overline{\normalfont \textbf{Sur}}_\mathbb{K}\otimes\Sigma^{-1} N_*(\Delta^n),\overline{\mathcal{P}}))$ the morphism obtained from $\beta$ by the Leibniz rule, then $d(\beta)+\beta\partial=0$.\\
    
    \noindent We set, for every $f_1,\ldots,f_m\in \text{\normalfont Hom}(\overline{\mathcal{C}}\underset{\text{\normalfont H}}{\otimes}\overline{\normalfont \textbf{Sur}}_\mathbb{K}\otimes\Sigma^{-1} N_*(\Delta^n),\overline{\mathcal{P}})$ and $\underline{T}\in\mathcal{TM}(m)$,
    $$\beta(\underline{T}^\vee\otimes f_1\otimes\cdots\otimes f_m)=\gamma_{(\underline{T})}\circ (f_1\otimes\cdots\otimes f_m)\circ\beta^n_{(\underline{T})}$$

    \noindent where $\beta^n_{(\underline{T})}$ is the composite of the morphism $\beta^n$ defined in Construction \ref{consbeta} with the projection on the $\underline{T}$-component. We first note that $d(\beta)=0$, since $d(\beta^n)=0$. Now, note that the $\underline{T}$-component of $\beta\partial(\underline{T}^\vee\otimes f_1\otimes\cdots\otimes f_m)$ is
    $$\sum_{\underline{S}\subset\underline{T}}\gamma_{(\underline{T}/\underline{S})}\circ \left(f_1\otimes\cdots\otimes f_{r(\underline{S})-1}\otimes\left(\gamma_{(\underline{S})}\circ\left(\bigotimes_{i\in V_{\underline{S}}}f_i\right)\circ \beta_{(\underline{S})}^n\right)\otimes\bigotimes_{i\in V_{\underline{T}/\underline{S}}\setminus\{1,\ldots,r(\underline{S})-1\}}f_i\right)\circ\beta_{(\underline{T}/\underline{S})}^n.$$

    \noindent By Lemma \ref{compbulletsym}, this is equal to
    $$-\gamma_{(\underline{T})}\circ (f_1\otimes\cdots\otimes f_m)\circ\left(\sum_{\underline{S}\subset\underline{T}}\beta_{(\underline{T}/\underline{S})}^n\circ_{\underline{S}}\beta_{(\underline{S})}^n\right).$$

    \noindent This terms is $0$, since the sum $\sum_{\underline{S}\subset\underline{T}}\beta_{(\underline{T}/\underline{S})}^n\circ_{\underline{S}}\beta_{(\underline{S})}^n$ is precisely the $\underline{T}$-component of $\partial^n\beta^n$, which is $0$ by the proof of Theorem \ref{repcossym}.\\

    The computation of $\text{\normalfont Map}^h_{\Sigma\mathcal{O}p^0}(B^c(\mathcal{C}),\mathcal{P})$ comes from the construction of $B^c(\mathcal{C}\underset{\text{\normalfont H}}{\otimes}{\normalfont\textbf{Sur}}_\mathbb{K})\otimes\Delta^\bullet$ given by Construction \ref{consbeta} and from the complete $\Gamma(\mathcal{P}re\mathcal{L}ie_\infty,-)$-algebra structure given by Corollary \ref{gammaprelieinfcomplet}.
\end{proof}

\printbibliography[heading=bibintoc,title={References}]

\end{document}